\documentclass[10pt]{article}
\usepackage[dvipsnames,table,xcdraw]{xcolor}
\usepackage{amsmath}
\usepackage{amsthm}
\usepackage{aliascnt}
\usepackage{mathtools}
\usepackage{amssymb}
\usepackage{graphicx}
\usepackage{titling}
\usepackage[hang,flushmargin]{footmisc}
\usepackage[hidelinks]{hyperref}
\usepackage[nameinlink,noabbrev,capitalise]{cleveref}
\usepackage{float}
\usepackage{makecell}
\usepackage{hhline}
\usepackage{nicematrix}
\usepackage{array}
\usepackage[margin=16pt]{caption}
\usepackage{subcaption}
\usepackage{enumitem}
\usepackage{xfrac}
\usepackage{booktabs}
\usepackage{ytableau}
\colorlet{yty}{yellow!60}
\usepackage{xparse}
\usepackage{comment}

\usepackage[pagewise]{lineno}
\usepackage{tkz-berge}
\tikzset{
    cell/.style={
        anchor=south west,
        draw,
        minimum size=.9cm,
    },
  }
\usetikzlibrary{arrows.meta}

\usepackage{tikz}
\usetikzlibrary{positioning,decorations.pathreplacing,arrows.meta,calc,matrix,fit,backgrounds,positioning}

\usepackage[a4paper, total={7in, 10in}]{geometry}

\usepackage{fancyhdr}
\fancypagestyle{plain}{%
  \fancyhf{}%
  \fancyfoot[C]{\thepage}%
}

\usepackage[textsize=tiny]{todonotes}

\def\R{\mathbb R}
\def\Z{\mathbb Z}
\def\Q{\mathbb Q}
\def\F{\mathbb F}
\newcommand{\core}{\mathrm{core}}
\DeclareMathOperator{\linspan}{span}
\DeclareMathOperator{\aff}{aff}
\DeclareMathOperator{\fl}{fl}
\DeclareMathOperator{\vrefl}{vrefl}
\DeclareMathOperator{\conv}{conv}
\DeclareMathOperator{\supp}{supp}
\DeclareMathOperator{\Catalan}{Cat}
\DeclareMathOperator{\ext}{ext}
\DeclareMathOperator{\extBL}{ext_{BL}}
\DeclareMathOperator{\extUL}{ext_{UL}}
\DeclareMathOperator{\extBR}{ext_{BR}}
\DeclareMathOperator{\extUR}{ext_{UR}}
\DeclareMathOperator{\extI}{\ext^{\shortmid}}
\DeclareMathOperator{\extL}{\ext^{\llcorner}}
\DeclareMathOperator{\extRL}{\ext^{\lrcorner}}
\DeclarePairedDelimiter{\floor}{\lfloor}{\rfloor}
\DeclarePairedDelimiter{\ceil}{\lceil}{\rceil}

\newcommand{\ol}{\overline}

\numberwithin{equation}{section}

\newcommand{\DeclareEquationCrefFormat}[1]{%
  \crefformat{#1}{(##2##1##3)}%
  \Crefformat{#1}{(##2##1##3)}%
  \crefrangeformat{#1}{(##3##1##4)\nobreakdash--(##5##2##6)}%
  \Crefrangeformat{#1}{(##3##1##4)\nobreakdash--(##5##2##6)}%
  \crefmultiformat{#1}{(##2##1##3)}{~and~(##2##1##3)}{,~(##2##1##3)}{,~and~(##2##1##3)}%
  \Crefmultiformat{#1}{(##2##1##3)}{~and~(##2##1##3)}{,~(##2##1##3)}{,~and~(##2##1##3)}%
  \crefrangemultiformat{#1}{(##3##1##4)\nobreakdash--(##5##2##6)}{~and~(##3##1##4)\nobreakdash--(##5##2##6)}{,~(##3##1##4)\nobreakdash--(##5##2##6)}{,~and~(##3##1##4)\nobreakdash--(##5##2##6)}%
  \Crefrangemultiformat{#1}{(##3##1##4)\nobreakdash--(##5##2##6)}{~and~(##3##1##4)\nobreakdash--(##5##2##6)}{,~(##3##1##4)\nobreakdash--(##5##2##6)}{,~and~(##3##1##4)\nobreakdash--(##5##2##6)}%
}
\DeclareEquationCrefFormat{equation}
\DeclareEquationCrefFormat{subequation}
\DeclareEquationCrefFormat{parentequation}
\DeclareEquationCrefFormat{align}
\DeclareEquationCrefFormat{alignat}
\DeclareEquationCrefFormat{xalignat}
\DeclareEquationCrefFormat{xxalignat}
\DeclareEquationCrefFormat{flalign}
\DeclareEquationCrefFormat{gather}
\DeclareEquationCrefFormat{multline}

\newcommand{\DeclareItemCrefFormat}[1]{%
  \crefformat{#1}{##2##1##3}%
  \Crefformat{#1}{##2##1##3}%
  \crefrangeformat{#1}{##3##1##4\nobreakdash--##5##2##6}%
  \Crefrangeformat{#1}{##3##1##4\nobreakdash--##5##2##6}%
  \crefmultiformat{#1}{##2##1##3}{ and~##2##1##3}{, ##2##1##3}{, and~##2##1##3}%
  \Crefmultiformat{#1}{##2##1##3}{ and~##2##1##3}{, ##2##1##3}{, and~##2##1##3}%
  \crefrangemultiformat{#1}{##3##1##4\nobreakdash--##5##2##6}{ and~##3##1##4\nobreakdash--##5##2##6}{, ##3##1##4\nobreakdash--##5##2##6}{, and~##3##1##4\nobreakdash--##5##2##6}%
  \Crefrangemultiformat{#1}{##3##1##4\nobreakdash--##5##2##6}{ and~##3##1##4\nobreakdash--##5##2##6}{, ##3##1##4\nobreakdash--##5##2##6}{, and~##3##1##4\nobreakdash--##5##2##6}%
}
\DeclareItemCrefFormat{enumi}
\DeclareItemCrefFormat{enumii}
\DeclareItemCrefFormat{enumiii}
\DeclareItemCrefFormat{enumiv}

\newaliascnt{theorem}{THM}
\newtheorem{theorem}[theorem]{Theorem}
\aliascntresetthe{theorem}

\newaliascnt{lemma}{THM}
\newtheorem{lemma}[lemma]{Lemma}
\aliascntresetthe{lemma}

\newaliascnt{claim}{THM}

\aliascntresetthe{claim}

\newaliascnt{corollary}{THM}
\newtheorem{corollary}[corollary]{Corollary}
\aliascntresetthe{corollary}

\newaliascnt{proposition}{THM}
\newtheorem{proposition}[proposition]{Proposition}
\aliascntresetthe{proposition}

\newaliascnt{conjecture}{THM}

\aliascntresetthe{conjecture}

\newaliascnt{example}{THM}

\aliascntresetthe{example}

\theoremstyle{definition}
\newaliascnt{definition}{THM}

\aliascntresetthe{definition}

\newaliascnt{remark}{THM}
\newtheorem{remark}[remark]{Remark}
\aliascntresetthe{remark}

\newaliascnt{note}{THM}

\aliascntresetthe{note}

\newaliascnt{problem}{THM}
\newtheorem{problem}[problem]{Problem}
\aliascntresetthe{problem}

\newcommand{\DeclareDashCrefRange}[2]{%
  \crefrangeformat{#1}{#2~##3##1##4\nobreakdash--##5##2##6}%
  \Crefrangeformat{#1}{#2~##3##1##4\nobreakdash--##5##2##6}%
  \crefrangemultiformat{#1}{#2~##3##1##4\nobreakdash--##5##2##6}{ and~##3##1##4\nobreakdash--##5##2##6}{, ##3##1##4\nobreakdash--##5##2##6}{, and~##3##1##4\nobreakdash--##5##2##6}%
  \Crefrangemultiformat{#1}{#2~##3##1##4\nobreakdash--##5##2##6}{ and~##3##1##4\nobreakdash--##5##2##6}{, ##3##1##4\nobreakdash--##5##2##6}{, and~##3##1##4\nobreakdash--##5##2##6}%
}
\DeclareDashCrefRange{section}{Sections}
\DeclareDashCrefRange{theorem}{Theorems}
\DeclareDashCrefRange{lemma}{Lemmas}
\DeclareDashCrefRange{proposition}{Propositions}
\DeclareDashCrefRange{corollary}{Corollaries}
\DeclareDashCrefRange{claim}{Claims}
\DeclareDashCrefRange{definition}{Definitions}
\DeclareDashCrefRange{remark}{Remarks}
\DeclareDashCrefRange{conjecture}{Conjectures}
\DeclareDashCrefRange{example}{Examples}
\DeclareDashCrefRange{note}{Notes}
\DeclareDashCrefRange{problem}{Problems}

\usepackage[yyyymmdd]{datetime}

\makeatletter
\NewDocumentEnvironment{mymatrix}{ O{0em} }{%
  \begingroup
  \edef\mymatrix@saved@boxframe{\boxframe@YT}%
  \ytableausetup{boxframe=#1}%
  \begin{bmatrix}
  \begin{ytableau}
}{%
  \end{ytableau}
  \end{bmatrix}%
  \ytableausetup{boxframe=\mymatrix@saved@boxframe}%
  \endgroup
}
\makeatother

\newcommand{\ThreeDotsAt}[4][]{%
  \node[draw=none,#1] at ($#2 + ({#4+180}:#3)$) {$\boldsymbol\cdot$};
  \node[draw=none,#1] at  #2                   {$\boldsymbol\cdot$};
  \node[draw=none,#1] at ($#2 + (#4:#3)$)      {$\boldsymbol\cdot$};
}

\newcommand{\VS}{\mathrm{VS}}
\newcommand{\HS}{\mathrm{HS}}

\newcommand{\HTS}{\mathrm{HTS}}
\newcommand{\QTS}{\mathrm{QTS}}
\newcommand{\DS}{\mathrm{DS}}
\newcommand{\DAS}{\mathrm{DAS}}
\newcommand{\TS}{\mathrm{TS}}

\newcommand{\ASM}{\mathrm{ASM}}
\newcommand{\VSASM}{\mathrm{VSASM}}

\newcommand{\VHSASM}{\mathrm{VHSASM}}
\newcommand{\HTSASM}{\mathrm{HTSASM}}
\newcommand{\QTSASM}{\mathrm{QTSASM}}
\newcommand{\DSASM}{\mathrm{DSASM}}
\newcommand{\DASASM}{\mathrm{DASASM}}
\newcommand{\TSASM}{\mathrm{TSASM}}
\newcommand{\XASM}{\mathrm{XASM}}
\begin{document}
\title{
  Polytopes of alternating sign matrices with dihedral symmetries%
}

\author{
  P\'eter Madarasi\thanks{HUN-REN Alfr\'ed R\'enyi Institute of Mathematics, Re\'altanoda u.\ 13--15, Budapest H-1053, Hungary; and Department of Operations Research, ELTE E\"otv\"os Lor\'and University, P\'azm\'any P.\ s.\ 1/c, Budapest H-1117, Hungary. E-mail: \texttt{madarasi@renyi.hu}}
}

\date{}

\maketitle

\begin{abstract}
  We study the convex hulls of $n \times n$ alternating sign matrices invariant under subgroups of the dihedral group of the square.
  For each non-trivial symmetry class, the symmetry determines the full matrix affinely from a smaller set of entries, allowing us to study the corresponding convex hull in a lower-dimensional space.
  For the vertical, vertical--horizontal, half-turn, diagonal, diagonal--antidiagonal, and total symmetry classes, we give polynomial-size linear inequality descriptions, determine the dimensions, identify all facets, and give exact facet counts.
  For the quarter-turn class, the natural fixed-point relaxation is not integral.
  We obtain the exact hull by adding parity-type Chv\'atal--Gomory inequalities, determine its dimension, and construct a family of facets indexed by Ferrers diagrams of Catalan-number cardinality, while the complete facet structure remains open.
  The formulations yield strongly polynomial-time algorithms for linear optimization over every non-empty symmetry class.
  We also show that the full-matrix polytopes of all classes except the quarter-turn class have the integer Carath\'eodory property and hence the integer decomposition property.

  \medskip
  \noindent \textbf{Keywords:} alternating sign matrices; dihedral symmetry classes; convex polytopes; facet description; linear optimization; integer Carath\'eodory property; polyhedral combinatorics.
\end{abstract}

\section{Introduction}

An \emph{alternating sign matrix (ASM)} is an $n\times n$ matrix with entries in $\{0,\pm1\}$ such that, in each row and each column, the non-zero entries alternate in sign and the first and last non-zero entries are~$1$.
The symmetry group of the square of matrix positions is the dihedral group $\mathcal{D}_4 = \{\mathcal{I},\mathcal{V},\mathcal{H},\mathcal{D},\mathcal{A},\mathcal{R}_{\pi/2},\mathcal{R}_{\pi},\mathcal{R}_{-\pi/2}\}$.
Here $\mathcal{I}$ denotes the identity, $\mathcal{V}$, $\mathcal{H}$, $\mathcal{D}$, and $\mathcal{A}$ denote the reflections across the vertical, horizontal, diagonal, and antidiagonal axes, respectively, while $\mathcal{R}_{\pi/2}$, $\mathcal{R}_{\pi}$, and $\mathcal{R}_{-\pi/2}$ denote the three non-trivial rotations.
For a subgroup $\mathcal{G}\subseteq \mathcal{D}_4$, this paper studies the convex hull of the ASMs invariant under every element of $\mathcal{G}$.
Conjugate subgroups give the same symmetry class after relabeling matrix positions; up to this equivalence, the eight symmetry classes of ASMs are as follows~\cite{kuperberg2002symmetry}.
\begin{enumerate}
\item \emph{Unrestricted ASMs (ASMs)}:
  No additional symmetry is imposed beyond the standard ASM conditions.
  The symmetry subgroup is $\mathcal{G} = \{\mathcal{I}\}$.
\item \emph{Vertically symmetric ASMs (VSASMs)}:
  The ASMs that are invariant under reflection across the vertical axis.
  The symmetry subgroup is $\mathcal{G} = \{\mathcal{I}, \mathcal{V}\}$.
\item \emph{Vertically and horizontally symmetric ASMs (VHSASMs)}:
  The ASMs that are invariant under both vertical and horizontal reflections, and hence also under rotation by~$\pi$.
  The symmetry subgroup is $\mathcal{G} = \{\mathcal{I}, \mathcal{V}, \mathcal{H}, \mathcal{R}_{\pi}\}$.
\item \emph{Half-turn symmetric ASMs (HTSASMs)}:
  The ASMs that are invariant under rotation by~$\pi$.
  The symmetry subgroup is $\mathcal{G} = \{\mathcal{I}, \mathcal{R}_{\pi}\}$.
\item \emph{Quarter-turn symmetric ASMs (QTSASMs)}:
  The ASMs that are invariant under rotation by~$\pi/2$, and hence also under rotations by~$\pi$ and~$-\pi/2$.
  The symmetry subgroup is $\mathcal{G} = \{\mathcal{I}, \mathcal{R}_{\pi/2}, \mathcal{R}_{\pi}, \mathcal{R}_{-\pi/2}\}$.
\item \emph{Diagonally symmetric ASMs (DSASMs)}:
  The ASMs that are invariant under reflection across the main diagonal.
  The symmetry subgroup is $\mathcal{G} = \{\mathcal{I}, \mathcal{D}\}$.
\item \emph{Diagonally and antidiagonally symmetric ASMs (DASASMs)}:
  The ASMs that are invariant under reflection across both the main diagonal and the antidiagonal, and hence also under rotation by~$\pi$.
  The symmetry subgroup is $\mathcal{G} = \{\mathcal{I}, \mathcal{D}, \mathcal{A}, \mathcal{R}_{\pi}\}$.
\item \emph{Totally symmetric ASMs (TSASMs)}:
  The ASMs that are invariant under the full dihedral symmetry group of the square, that is, under all reflections and rotations.
  The symmetry subgroup is $\mathcal{G} = \mathcal{D}_4$.
\end{enumerate}

\medskip
Some symmetry classes are empty for certain sizes.
We call a size \emph{admissible} for a class when that class is non-empty.
Our central goal is to study the convex hulls associated with the eight symmetry classes, with emphasis on linear descriptions, dimensions, facets, linear optimization, and decomposition properties of integer points.

\paragraph{Enumeration background.}
Alternating sign matrices rose to prominence through the product formula conjectured by Mills, Robbins, and Rumsey~\cite{mills1983alternating}, proved by Zeilberger~\cite{zeilberger1996}, and reproved by Kuperberg using the six-vertex model~\cite{kuperberg1996another}.
Symmetry-class enumeration developed in parallel, with both existence and formulas depending delicately on the imposed symmetry and on the parity of~$n$.
Kuperberg's six-vertex-model framework expressed several symmetry-class partition functions as determinants or Pfaffians and led to product formulas for a number of classes~\cite{kuperberg2002symmetry}.
Okada evaluated several of these expressions through classical-group characters and proved the VHSASM enumeration conjecture~\cite{okada2006enumeration}.
Behrend, Fischer, and Konvalinka later proved Robbins's odd-order DASASM enumeration conjecture, the last of the classical conjectured product formulas for symmetric ASMs to be established~\cite{behrend2017diagonally}.
More recently, Behrend, Fischer, and Koutschan derived a Pfaffian formula for diagonally symmetric ASMs, giving an exact enumeration formula beyond the classical product-formula cases~\cite{behrend2023diagonally}.

\paragraph{Polyhedral background.}
The polyhedral study of alternating sign matrices began with a linear description of the ASM polytope.
Behrend and Knight~\cite{behrend2007higher} and, independently, Striker~\cite{striker2007alternating,striker2009alternating} showed in 2007 that the convex hull $P_{\ASM}$ of $n\times n$ ASMs admits a compact system of prefix-sum inequalities; we recall this characterization in \cref{sec:ASM}.
Building on this description, Knight's 2009 thesis gave a systematic polyhedral treatment of dihedral symmetry~\cite[Chapter~4]{knightPhD2009alternating}.
For each of the seven non-trivial symmetry classes, he analyzed the fixed-point polytope $P_{\ASM}\cap P_{\mathcal{G}}$, where $P_{\mathcal{G}}$ is the linear subspace of $\mathcal{G}$-invariant real matrices, obtaining vertex descriptions and Ehrhart polynomial or quasi-polynomial results.
The convex hull of the $\mathcal{G}$-invariant ASMs is contained in this fixed-point polytope.
Because some of the fixed-point polytopes have fractional vertices, however, the containment can be strict.
Knight accordingly singled out the convex hulls of invariant ASM vertices as a separate problem~\cite[Section~6.2.3]{knightPhD2009alternating}.

Subsequent work has developed the geometry of $P_{\ASM}$ and related sign-matrix polytopes in several directions.
Brualdi and Dahl studied the ASM cone, gave another proof of the linear description, and investigated faces and edges of $P_{\ASM}$~\cite{brualdi2017alternatingExtensions}.
M\'esz\'aros, Morales, and Striker identified a family of faces of $P_{\ASM}$ that are simultaneously order polytopes and flow polytopes, with consequences for Ehrhart theory and volume~\cite{meszaros2019flow}.
Solhjem and Striker studied related sign-matrix polytopes, while Heuer and Striker treated partial-ASM polytopes, including their inequality descriptions, facets, and face lattices~\cite{heuer2022partial,solhjem2019sign}.
A 2025 preprint by Dinkelman and Morris studies the general face geometry of $P_{\ASM}$, including 2-levelness and dimension-dependent bounds on the numbers of vertices and facets of its faces~\cite{dinkelman2025geometric}.
In 2026, Heuer, Solhjem, and Striker identified a further family of partial-ASM faces that are simultaneously order polytopes and flow polytopes~\cite{heuer2026nu}.
Together, these papers develop the broader geometry of the ASM polytope and related sign-matrix polytopes, but they do not describe the convex hulls of dihedrally symmetric ASMs across all seven non-trivial classes.
The present paper addresses this gap by determining those hulls for every admissible size in all seven non-trivial classes.

\paragraph{Main contributions.}
For each non-trivial dihedral symmetry class, denoted generically by XASM, and each admissible size, we use a uniform \emph{core--assembly} reduction.
We specify a set of \emph{core positions} $C$ and an affine \emph{assembly map} $\varphi : \R^{C} \to \R^{n\times n}$ such that every matrix in the class is uniquely determined by its core and $\varphi$ reconstructs the full matrix from that core.
Thus the convex hull $P_{\XASM}$ of XASMs is affinely isomorphic to a lower-dimensional \emph{core polytope} $P^\core_{\XASM}$.
The reduction allows us to translate the ASM prefix-sum constraints into core coordinates and obtain explicit linear descriptions of the core polytopes and the corresponding full-matrix hulls.
For the vertical, vertical--horizontal, half-turn, diagonal, diagonal--antidiagonal, and total symmetry classes, these descriptions have polynomial size, and we determine the dimension, identify all facet-defining inequalities, and give exact facet counts.
The quarter-turn class behaves differently: its exact description requires, in addition to the ASM constraints and quarter-turn symmetry equations, a structured family of parity-type Chv\'atal--Gomory inequalities.
These inequalities cut off the fractional vertices of the fixed-point relaxation and yield the convex hull of QTSASMs.
Although the full quarter-turn facet structure remains open, we determine its dimension and construct an explicit family of facets indexed by Ferrers diagrams whose cardinality grows exponentially with~$n$.
The descriptions also characterize the inadmissible sizes in every class.
The resulting formulations yield strongly polynomial-time algorithms for linear optimization over every non-empty symmetry class.
Finally, for every non-trivial symmetry class other than the quarter-turn class and every admissible size, we prove the integer Carath\'eodory property, and hence the integer decomposition property, for the full-matrix and core polytopes, extending the known unrestricted ASM case~\cite{borsik2026prefix}.
Whether the quarter-turn full-matrix and core polytopes have either property for every admissible size remains open.
\Cref{tab:intro:summary} summarizes the dimensions and facet information.
\begin{table}[h!]
\centering
\newcommand{\mystrut}{\rule{0pt}{3.2ex}\rule[-1.6ex]{0pt}{0pt}}
\newcommand{\mystrutt}{\rule{0pt}{2ex}\rule[-1.8ex]{0pt}{0pt}}
\renewcommand{\arraystretch}{1.5}
\begin{tabular}{|>{\columncolor{cyan!15}}l|c|c|}
\hline
\rowcolor{cyan!15}
Symmetry class & Dimension & Number of facets\\
\specialrule{.75pt}{0pt}{0pt}
\mystrut ASM
  & $(n-1)^2$\quad\cite{behrend2007higher,striker2007alternating,striker2009alternating}
  & {$4\bigl((n - 2)^2 + 1\bigr)$\quad if $n \geq 3$\quad\cite{striker2007alternating,striker2009alternating}}\\
  \hline
\mystrut VSASM ($2 \nmid n$)
  & $\frac{(n-3)^2}{2}$\quad if $n \geq 3$
  & $2n^2-19n+49$\quad if $n \geq 7$\\
\hline
\mystrut VHSASM ($2 \nmid n$)
  & $\frac{(n-5)^2}{4}$\quad if $n \geq 5$
  & $n^2-15n+60$\quad if $n \geq 9$\\
\hline
\mystrutt HTSASM
  & $\ceil*{\frac{(n-1)^2}{2}}$
  & $2\bigl((n-2)^2+\chi_{2 \mid n}\bigr)$\quad if $n \geq 4$\\
\hline
\mystrutt  QTSASM ($n \not\equiv 2 \pmod 4$)
  & $\floor*{\frac{(n-1)^2}{4}} - 2$\quad if $n \geq 5$
  & \begin{tabular}{@{}c@{}}at least\ $\Catalan_{\floor*{n/2}-1}-1 \geq \frac{2^{n-3}}{n^2}-1$ if $n \geq 4$\\ at most\ $5^{\floor*{n^2/4}} + n(n-1)$ if $n \geq 1$\end{tabular}\\
\hline
\mystrut DSASM
  & $\frac{n(n-1)}{2}$
  & {$2(n-2)^2+3$}\quad if $n \geq 3$\\
\hline
\mystrutt DASASM
  & $\floor*{\frac{n^2}{4}}$
  & $(n-2)^2 + 2$\quad if $n \geq 2$\\
\hline
\mystrut TSASM ($2 \nmid n$)
  & $\frac{(n-5)(n-3)}{8}$\quad if $n \geq 3$
  & $\frac{n^2 -15n + 62}{2}$\quad if $n \geq 9$\\
\hline
\end{tabular}
\caption{Summary of the dimensions and facet counts or bounds.
  Here $\Catalan_r$ denotes the $r^\text{th}$ Catalan number; facet formulas apply in the stated ranges.}\label{tab:intro:summary}
\end{table}

\paragraph{Organization of the paper.}
The remainder of this section fixes notation, records the polyhedral tools used in the proofs, and formalizes the core--assembly framework.
\Cref{sec:ASM} recalls the prefix-sum description of the ASM polytope.
\Crefrange{sec:VSASM}{sec:TSASM} treat the seven non-trivial symmetry classes in turn, following the uniform core--assembly approach.
\Cref{sec:linear-optimization} derives the strongly polynomial linear\nobreakdash-optimization result and proves the integer Carath\'eodory property for $P_\ASM$ and, at every admissible size, for the full-matrix and core polytopes of the six non-trivial classes other than the quarter-turn class; \cref{sec:open-problems} records the remaining questions, including the quarter-turn integer Carath\'eodory and integer decomposition questions.

\subsection{Preliminaries}\label{sec:preliminaries}
We first fix notation.
For a symmetry class XASM, we denote the set of $n \times n$ XASMs by $\XASM(n)$, and the convex hull of XASMs in $\R^{n \times n}$ by $P_{\XASM(n)}$.
We often write $P_{\XASM}$ if $n$ is clear from the context.
For integers $n \leq m$, we write $[n,m] = \{n, \dots, m\}$, and we set $[n,m] = \emptyset$ whenever $n > m$.
For an integer $n$, we use the shorthand $[n] = [1,n]$.
The elements of $\mathcal{D}_4$ act on matrix positions $[n]\times[n]$ by
\begin{align*}
  \mathcal{I}(i,j) &= (i,j), &
  \mathcal{V}(i,j) &= (i,n+1-j),\\
  \mathcal{H}(i,j) &= (n+1-i,j), &
  \mathcal{D}(i,j) &= (j,i),\\
  \mathcal{A}(i,j) &= (n+1-j,n+1-i), &
  \mathcal{R}_{\pi/2}(i,j) &= (j,n+1-i),\\
  \mathcal{R}_{\pi}(i,j) &= (n+1-i,n+1-j), &
  \mathcal{R}_{-\pi/2}(i,j) &= (n+1-j,i),
\end{align*}
for $i,j\in[n]$.
Accordingly, for $g\in \mathcal{D}_4$, a matrix $X=(x_{i,j})\in\R^{n\times n}$ is invariant under $g$ if $x_{i,j}=x_{i',j'}$ for every $i,j\in[n]$, where $g(i,j)=(i',j')$.
For $X \in \R^{m\times n}$ and $I\subseteq[m]$, $J\subseteq[n]$, let $X_{I,J} \in \R^{|I|\times|J|}$ be the submatrix formed by the rows with indices in $I$ and columns with indices in $J$.
For singleton index sets, we use the shorthand $X_{i,J} = X_{\{i\},J}, X_{I,j} = X_{I,\{j\}}, X_{i,j} = X_{\{i\},\{j\}}$.
We often write $x_{i,j}$ for the entry $X_{i,j}$.
For a finite set $S$ and a subset $T \subseteq S$, we write $\chi_T \in \{0,1\}^S$ for the characteristic vector of $T$; for a singleton, we abbreviate $\chi_s=\chi_{\{s\}}$; and for $S=[n]\times[n]$ we write $\chi_{i,j}$ for the matrix unit supported on $(i,j)$.
When the subscript is a predicate $\mathcal P$ (e.g., $\chi_{2 \mid n}$), we interpret $\chi_{\mathcal P}$ as the scalar indicator of $\mathcal P$, i.e., $\chi_{\mathcal P}=1$ if $\mathcal P$ holds and $0$ otherwise.
For a finite set $S$ and a function $f : S \to \R$, we extend $f$ to subsets $T \subseteq S$ by $f(T) = \sum_{t \in T} f(t)$.
We do not distinguish between row and column vectors; expressions are interpreted so that all products are conformable, with inner products preferred.

\medskip
We next record the polyhedral tools used repeatedly below: total unimodularity for laminar and network systems, an integer-hull description for bidirected systems, an elementary rank lemma, and an irredundancy criterion for facets.
For laminar systems, we use the following observation of Edmonds~\cite[Theorem~(39)]{edmonds1970submodular}; see also Schrijver~\cite[Theorem~41.11]{schrijver2003combinatorial}. %
\begin{theorem}[Edmonds~\cite{edmonds1970submodular}]\label{thm:edmonds-lemma}
  Let $\mathcal{L}$ be the union of two laminar families on a ground set $S$.
  Then the incidence matrix of $\mathcal{L}$ is totally unimodular.
\end{theorem}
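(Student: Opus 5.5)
We will prove the statement with the Ghouila-Houri characterization of total unimodularity: a $\{0,\pm1\}$-matrix $M$ is totally unimodular if and only if every subset $R$ of its rows can be split into two classes $R_1,R_2$ so that, in every column, the sum of the entries in the rows of $R_1$ minus the sum of the entries in the rows of $R_2$ lies in $\{0,\pm1\}$. Here the rows of the incidence matrix are indexed by the sets in $\mathcal{L}=\mathcal{L}_1\cup\mathcal{L}_2$, where $\mathcal{L}_1,\mathcal{L}_2$ are laminar, and the columns by the elements of $S$. So it suffices to take an arbitrary subfamily $\mathcal{R}\subseteq\mathcal{L}$ and produce a signing $\sigma:\mathcal{R}\to\{\pm1\}$ such that, for every $s\in S$, the sum $\sum_{A\in\mathcal{R},\,s\in A}\sigma(A)$ lies in $\{0,\pm1\}$. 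Since a subfamily of a laminar family is again laminar, we may simply replace $\mathcal{L}_i$ by $\mathcal{R}\cap\mathcal{L}_i$. Thus it is enough to sign the union of two laminar families. If a set belongs to both families, we assign it to just one of them; if this creates duplicates within one family, we treat them as distinct members of a chain, which is still laminar.

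We sign each laminar family by depth. Order $\mathcal{L}_1$ by inclusion; it is a forest. Let $d_1(A)$ be the number of members of $\mathcal{L}_1$ that properly contain $A$; for repeated copies, break ties in an arbitrary linear order so that they form a chain. Set $\sigma(A)=(-1)^{d_1(A)}$ for $A\in\mathcal{L}_1$. For $A\in\mathcal{L}_2$, define $d_2$ in the same way and set $\sigma(A)=-(-1)^{d_2(A)}$.

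Now fix $s\in S$. By laminarity, the members of $\mathcal{L}_1$ that contain $s$ form a chain $A_0\supsetneq A_1\supsetneq\cdots\supsetneq A_k$ in which $d_1(A_t)=t$. The depth counts exactly the ancestors in the forest, and every ancestor of a set containing $s$ also contains $s$. Hence the signs along this chain alternate $+,-,+,\dots$, and their sum is $1$ if the chain has odd length and $0$ otherwise. In the same way, the $\mathcal{L}_2$-contribution is $-1$ or $0$. Their total therefore lies in $\{-1,0,1\}$, which is exactly the Ghouila-Houri condition.

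The main point requiring care is the claim that, for each $s$, the depths of the sets containing $s$ are exactly $0,1,\dots,k$ with no gaps. This holds because any set properly containing a member $A$ of the chain also contains $s$, and so lies in the chain itself. The handling of sets common to both families, and of repeated rows, is routine once we allow multisets. We could instead cite Schrijver's Theorem~41.11 directly, but the argument above is self-contained.
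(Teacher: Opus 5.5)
Your proof is correct. The paper gives no proof of this statement; it quotes it from Edmonds and from Schrijver's Theorem~41.11 and uses it as a black box, so your argument supplies what the text leaves to the literature. Your route is the standard self-contained one. It consists of three steps:
\begin{itemize}
\item Ghouila-Houri's row-partition criterion;
\item restriction to an arbitrary subfamily, with depths recomputed inside $\mathcal{R}\cap\mathcal{L}_i$;
\item depth-parity signs, with opposite conventions in the two families.
\end{itemize}
The point you single out is exactly the right one. Every member of $\mathcal{L}_i$ that properly contains a set through $s$ also contains $s$, so the depths along the chain through $s$ are $0,1,\dots,k$. Hence the two families contribute values in $\{0,1\}$ and $\{0,-1\}$, respectively. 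Your tie-breaking for repeated rows is also sound. One cosmetic remark: the inclusion order is not literally a forest if $\emptyset$ is a member. This does not matter, since you only use the chain property for sets containing a fixed $s$, and an empty set gives a zero row whose sign is irrelevant.

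The other common route is to show that the incidence matrix of $\mathcal{L}_1\cup\mathcal{L}_2$ is a network matrix. Represent $\mathcal{L}_1$ by an arborescence directed away from a root $r$ and $\mathcal{L}_2$ by one directed towards $r$, with one tree arc per set. Then let each $s\in S$ be the non-tree arc whose fundamental path runs forward through exactly the arcs of the sets containing $s$. That argument needs more setup, but it gives more than total unimodularity. It is essentially the structure behind the ``standard construction from inclusion forests'' that the paper later uses to cast the laminar systems as minimum-cost flow problems in the linear-optimization section. The Ghouila-Houri signing certifies total unimodularity only.
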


Together with the well-known fact that a system with a totally unimodular constraint matrix and integer right-hand side has an integral feasible region, this yields the following consequence.
\begin{theorem}\label{thm:laminarSystem}
  Let $\mathcal{L}_1, \mathcal{L}_2 \subseteq 2^S$ be two laminar families on a ground set $S$, and let $f_i,g_i : \mathcal{L}_i \to \Z$ be integer-valued bounding functions for $i = 1,2$.
  Then the linear inequality system
  \begin{align}
                              x&_s \in \R      &\forall s \in S,\\
    f_1(Z) \leq \sum_{s \in Z} x&_s \leq g_1(Z) &\forall Z \in \mathcal{L}_1,\\
    f_2(Z) \leq \sum_{s \in Z} x&_s \leq g_2(Z) &\forall Z \in \mathcal{L}_2
  \end{align}
  defines an integral polyhedron.
  \medskip
\end{theorem}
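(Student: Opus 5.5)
The statement follows by writing the system as $Ax \leq b$ and checking that $A$ is totally unimodular and $b$ is integral.

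Let $\mathcal{L} = \mathcal{L}_1 \cup \mathcal{L}_2$, and let $M \in \{0,1\}^{\mathcal{L} \times S}$ be its incidence matrix, with row $Z$ equal to $\chi_Z$. If a set $Z$ lies in both families, we keep a single row for it and impose both pairs of bounds on that row. This only adds constraints of the form $\chi_Z x \le c$ for rows already present, so it does not change the matrix structure used below. By \cref{thm:edmonds-lemma}, $M$ is totally unimodular.

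Writing $f(Z)$ and $g(Z)$ for the relevant bounds, the system becomes
\[
  \begin{pmatrix} M \\ -M \end{pmatrix} x \leq \begin{pmatrix} g \\ -f \end{pmatrix}.
\]
If a set appears in both families with different bounds, we either duplicate its row or keep only the tighter bound. Duplicating rows and negating rows both preserve total unimodularity: a square submatrix of the stacked matrix either contains two rows that are equal up to sign, in which case its determinant is $0$, or it is a square submatrix of $M$ with some rows multiplied by $-1$, in which case its determinant lies in $\{0,\pm1\}$. Hence the constraint matrix is totally unimodular, and the right-hand side is integral because $f_i$ and $g_i$ are integer-valued.

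By the Hoffman--Kruskal theorem, $\{x : Ax \le b\}$ is then an integral polyhedron, meaning every nonempty face contains an integer point. This also covers the case where the polyhedron is not pointed: a minimal face $\{x : A' x = b'\}$ with $A'$ totally unimodular and $b'$ integral contains an integer point, because a nonsingular maximal square submatrix of $A'$ has determinant $\pm1$ and so has an integral inverse. Since $x_s$ is unrestricted in $\R$, there are no further constraints to account for. The only point requiring care is the bookkeeping for sets that belong to both families, handled as above; everything else follows directly from \cref{thm:edmonds-lemma}.
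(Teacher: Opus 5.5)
Your proof is correct and takes the same route as the paper. The paper combines Edmonds' result (\cref{thm:edmonds-lemma}) that the incidence matrix of $\mathcal{L}_1\cup\mathcal{L}_2$ is totally unimodular with the standard fact that a totally unimodular system with integral right-hand side defines an integral polyhedron; your extra bookkeeping (stacking $M$ and $-M$, handling sets that lie in both families, and treating the non-pointed case) only spells out details the paper leaves implicit.
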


Another standard source of integrality in our arguments relies on the fact that the incidence matrices of directed graphs are totally unimodular; see, for example, Schrijver~\cite[Chapter~19]{schrijver2003combinatorial}.
This implies the following.
\begin{theorem}\label{thm:digraphIncidenceTU}
  Let $M\in\{-1,0,1\}^{m \times n}$ be the node--arc incidence matrix of a digraph or its transpose.
  Then, for any integer vectors $a,b\in\Z^m$ and $c,d\in\Z^n$, the linear inequality system
  $
    \left\{x\in\R^n : c \leq x \leq d,\ a \leq M x \leq b\right\}
  $
  defines an integral polyhedron.
\end{theorem}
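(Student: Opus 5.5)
The plan is to reduce the statement to the standard integrality criterion for totally unimodular systems: if $A$ is totally unimodular and $a,b,c,d$ are integer vectors, then every vertex of $\{x : c\le x\le d,\ a\le Ax\le b\}$ is integral. For pointed polyhedra this is the classical Hoffman--Kruskal direction; in general we read ``integral'' as ``every non-empty face contains an integer point'' and argue the same way. With this criterion in hand, it suffices to show that $M$ is totally unimodular, and then to stack the two-sided constraints into a single system whose matrix is still totally unimodular.

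First, I will show that $M$ is totally unimodular. I will use the classical argument by induction on the order $k$ of a square submatrix $B$ of the node--arc incidence matrix. Each column of $B$ has at most one $+1$ and at most one $-1$. If some column of $B$ is zero, then $\det B=0$. If some column has exactly one non-zero entry, I expand along that column and apply induction. Otherwise every column has exactly one $+1$ and one $-1$, so the rows of $B$ sum to zero, and again $\det B=0$. The transpose case follows because the determinant is invariant under transposition. Hence every square subdeterminant of $M$ lies in $\{-1,0,1\}$.

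Next, I will rewrite the system as $A'x\le b'$, where $A'$ consists of $M$, $-M$, $I$, and $-I$ stacked vertically, and $b'=(b,-a,d,-c)$, which is integral. Appending rows of an identity matrix preserves total unimodularity: expanding any square submatrix along a unit row reduces it to a square submatrix of the remaining rows. Negating rows and duplicating rows also preserve total unimodularity, since duplicated rows give determinant $0$ and negation only changes sign. So $A'$ is totally unimodular. The explicit bounds $c\le x\le d$ make the polyhedron bounded, hence a polytope, so it is pointed.

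Finally, I will take any vertex $x^*$ and apply the standard argument. The vertex is the unique solution of $A'_{J}x=b'_{J}$ for some row set $J$ with $A'_J$ an $n\times n$ non-singular submatrix. Then $\det A'_J=\pm1$, and Cramer's rule gives $x^*\in\Z^n$. Since the polytope is the convex hull of its vertices, it is integral.

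I expect no real obstacle here. The only step needing care is confirming that stacking $\pm M$ and $\pm I$ preserves total unimodularity, which I handle by the unit-row expansion described above. Alternatively, the statement follows at once by citing Schrijver's Chapter~19, as the text preceding the theorem suggests.
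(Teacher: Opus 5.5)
Your proposal is correct and follows the same route as the paper: total unimodularity of the (transposed) node--arc incidence matrix, combined with the standard integrality theorem for totally unimodular systems with integral bounds and right-hand sides. The paper only cites Schrijver, Chapter~19, for both facts, while you prove them directly via the determinant induction, the stacking of $\pm M$ and $\pm I$, and Cramer's rule at vertices, and these steps are sound.
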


In the quarter-turn case, we rely on an explicit polyhedral description of the convex hull of the integer points defined by a linear inequality system with a \emph{bidirected} constraint matrix, i.e., an integer matrix $M\in\Z^{m\times n}$ such that $\sum_{i=1}^m|M_{ij}|=2$ for every $j\in[n]$.
The following theorem appears in Edmonds and Johnson~\cite{edmonds1973matching}; see also Schrijver~\cite[Chapter~36]{schrijver2003combinatorial}.
\begin{theorem}[Edmonds and Johnson~\cite{edmonds1973matching}]\label{thm:bidirPolytope}
  For a bidirected matrix $M\in\Z^{m\times n}$ and for arbitrary vectors $a,b\in\Z^m$ and $c,d\in\Z^n$, the convex hull of the integer solutions to $\{x\in\Z^n : c\leq x\leq d,\ a\leq Mx\leq b\}$ is described by the system
  \begin{subequations}
    \begin{align}\label{lp:simultan:bidirProglemHull}
             x&\in \R^n,\\
       c\leq x&\leq d,\\
      a\leq Mx&\leq b,\\
      \frac{\bigl((\chi_U-\chi_V)M+\chi_F-\chi_H\bigr)x}{2}&\leq \floor*{\frac{b(U)-a(V)+d(F)-c(H)}{2}}&\begin{aligned}
                                                                                                         &\text{for every disjoint $U,V\subseteq[m]$ and}\\
                                                                                                         &\text{partition $F,H$ of $\delta(U\cup V)$,}
                                                                                                 \end{aligned}\label{eq:simultan:bidirProglemHull:oddCon}
    \end{align}
  \end{subequations}
  where $\delta(U\cup V)=\{j\in [n] : \sum_{i\in U\cup V}|M_{ij}|=1\}$.
\end{theorem}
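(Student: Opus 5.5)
Since this is the classical result of Edmonds and Johnson, I would not reprove the matching-theoretic core from scratch. Instead I would reduce the statement to Edmonds' description of the capacitated $b$-matching polytope of an undirected graph (possibly with loops), as stated in Schrijver~\cite[Chapter~36]{schrijver2003combinatorial}. I would then check that the odd-set inequalities transform exactly into \eqref{eq:simultan:bidirProglemHull:oddCon}. The proof has two parts: validity of the stated system for all integer solutions, and sufficiency, i.e.\ that every vertex of the described polyhedron is integral.

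\emph{Validity.} I would show that each inequality \eqref{eq:simultan:bidirProglemHull:oddCon} is a rank-one Chv\'atal--Gomory cut, which makes it valid for the integer hull. Add four groups of valid inequalities:
\begin{itemize}
\item the upper row constraints $M_i x\le b_i$ for $i\in U$;
\item the negated lower row constraints $-M_i x\le -a_i$ for $i\in V$;
\item the upper bounds $x_j\le d_j$ for $j\in F$;
\item the negated lower bounds $-x_j\le -c_j$ for $j\in H$.
\end{itemize}
The resulting coefficient vector is $(\chi_U-\chi_V)M+\chi_F-\chi_H$. I claim every entry of this vector is even. Fix a column $j$. Since $\sum_i|M_{ij}|=2$, the column is supported either on a single row with entry $\pm2$, or on two rows with entries $\pm1$.
\begin{itemize}
\item If $j\in\delta(U\cup V)$, then exactly one $\pm1$ of column $j$ lies in $U\cup V$. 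It is paired with exactly one bound term, because $F,H$ partition $\delta(U\cup V)$. The entry is therefore $\pm1\pm1\in\{-2,0,2\}$.
\item Otherwise, column $j$ meets $U\cup V$ in total absolute value $0$ or $2$, and receives no bound term. The entry again lies in $\{-2,0,2\}$.
\end{itemize}
Dividing by $2$ gives an integral left-hand side, so rounding down the halved right-hand side $\tfrac{b(U)-a(V)+d(F)-c(H)}{2}$ is valid for integer $x$.

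\emph{Sufficiency via reduction.} I would normalize in three steps.
\begin{enumerate}
\item Substitute $x=c+y$ to make lower bounds zero.
\item For each column $j$ whose nonzero entries have mixed signs, or are both negative, complement the variable as $y_j\mapsto (d_j-c_j)-y_j$ where needed, so that every column becomes nonnegative. Columns with one $+1$ and one $-1$ cannot be fixed by complementing alone. For those, split the arc into two edges through a new auxiliary node with degree constraint equal to $d_j-c_j$: a half-edge to one endpoint and its complement to the other.
\item Turn the two-sided row constraints $a_i\le M_iy\le b_i$ into exact degree constraints. Add a slack edge with capacity $b_i-a_i$ incident to node $i$ and to a new node, or use a loop-free gadget with an even-degree dummy.
\end{enumerate}
The result is a capacitated perfect $b$-matching problem $\{z\ge0,\ z\le u,\ Nz=\beta\}$ with $N$ an undirected incidence matrix. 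Its integer hull is given by Edmonds' theorem: degree and capacity constraints plus, for each node set $W$ and edge subset $K\subseteq\delta(W)$, the inequality $z(E[W])+z(K)\le\lfloor(\beta(W)+u(K))/2\rfloor$. Since the integer hull of an affine image equals the affine image of the integer hull, and our maps are unimodular affine substitutions plus coordinate projections, $\conv$ of the original integer points is the projection of this description.

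\emph{Main obstacle.} The hard step is this last bookkeeping. I must show that, after eliminating the auxiliary and slack variables, each projected odd-set inequality is dominated by a nonnegative combination of the original constraints and the family \eqref{eq:simultan:bidirProglemHull:oddCon}. The intended correspondence is as follows: original row nodes in $W$ split into $U$ or $V$ according to whether their slack was complemented, and $K$ splits into $F$ or $H$ according to variable complementation. To carry this out I would first try the projection directly, using Fourier--Motzkin on the slack variables only, since each appears in just two constraints. If the case analysis becomes unwieldy, my fallback is to prove instead that the system is totally dual integral, by adapting Schrijver's uncrossing proof for $b$-matchings~\cite[Chapter~36]{schrijver2003combinatorial} directly to bidirected matrices.
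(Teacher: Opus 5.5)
The paper gives no proof of this statement; it imports it from Edmonds and Johnson via Schrijver, Chapter~36. Your validity half is correct and complete. The column-by-column parity check shows that $(\chi_U-\chi_V)M+\chi_F-\chi_H$ is an even vector, so every inequality in \cref{eq:simultan:bidirProglemHull:oddCon} is a Chv\'atal--Gomory cut.

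The sufficiency half, however, is only a plan. What you leave open is exactly where the particular shape of the cut family (disjoint $U,V$, and $F,H$ partitioning $\delta(U\cup V)$) has to be established. There are two concrete problems.
\begin{itemize}
\item Step~3 does not produce a perfect $b$-matching instance. The new endpoint of a slack edge has no exact degree constraint: fixing its degree would fix the slack, and closing it off with a loop would impose a parity condition on the slack. You would need something like a doubling construction, with correspondingly heavier casework.
\item The back-translation of Edmonds' inequalities is stated only as an ``intended correspondence''. It is phrased in terms of complemented slacks, which your construction never introduces. The fallback of proving total dual integrality by uncrossing is a re-proof of Edmonds' theorem, not a step of this one.
\end{itemize}
Incidentally, no Fourier--Motzkin elimination is needed: every auxiliary variable is fixed by an equation, so projection is plain substitution.

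An economical way to close the gap is to drop the slack gadget altogether. Reduce instead to the undirected special case of the statement itself: nonnegative $M$, i.e.\ a graph with loops, with two-sided degree bounds and edge bounds. This is Edmonds' $b$-matching theorem with upper and lower bounds, and its cuts already have the required shape.

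After $x=c+y$ and complementing the columns with no positive entry (which in net effect just swaps $F$ and $H$ on those columns), only the mixed columns need your auxiliary-node gadget. For a column $j$ with $+1$ in row $r$ and $-1$ in row $s$, add a node $w_j$ with both degree bounds equal to $u_j=d_j-c_j$, and edges $rw_j$ and $w_js$ carrying $y_j$ and $y_j'=u_j-y_j$; shift the bounds of row $s$ by $u_j$.

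Now take a lifted cut with sets $U',V'$. Substituting $y_j'=u_j-y_j$ produces a constant on the left that is an integer multiple of $u_j$: before halving, its coefficient is a sum of two signs or zero. So this constant passes through the floor. Set $U=U'\cap[m]$ and $V=V'\cap[m]$.
\begin{itemize}
\item If exactly one of $r,s$ lies in $U'\cup V'$, exactly one gadget edge crosses. You obtain exactly a cut of the required form, with $j$ labelled like the crossing edge if it carries $y_j$, and oppositely if it carries $y_j'$.
\item If both or neither of $r,s$ lie in $U'\cup V'$, you obtain the corresponding original cut, in which $j\notin\delta(U\cup V)$. It may be weakened (numerator increased by $u_j$), or added to one of the bounds $y_j\le u_j$ or $-y_j\le 0$.
\end{itemize}
Writing out this case check turns your outline into a complete proof relative to the undirected theorem.
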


We will repeatedly use the following elementary rank statement when determining the dimensions of our polytopes.
Since it follows from a routine linear-algebra argument, we omit its proof.
\begin{lemma}\label{lem:prelim:rowcol-rank}
  Let $m,n \geq 1$ be integers, $r \in \R^m$, and $c \in \R^n$.
  Consider the system
  \begin{align}
                x&_{i,j}\in \R & \forall i \in [m], j \in [n],\label{eq:prelim:real}\\
    \sum_{j=1}^n x&_{i,j} = r_i & \forall i \in [m],\label{eq:prelim:eq}\\
    \sum_{i=1}^m x&_{i,j} = c_j & \forall j \in [n]\label{eq:prelim:eq2}.
  \end{align}
  Then the coefficient matrix of the equations in~\cref{eq:prelim:eq,eq:prelim:eq2} has rank $m+n-1$, and any subsystem obtained by deleting one equation is linearly independent.
\end{lemma}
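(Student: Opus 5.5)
We view the coefficient matrix of \cref{eq:prelim:eq,eq:prelim:eq2} as the transpose of the node--arc incidence structure of the complete bipartite graph $K_{m,n}$. Its rows are indexed by the $m+n$ vertices, that is, by the row-equations $R_1,\dots,R_m$ and the column-equations $C_1,\dots,C_n$. Its columns are indexed by the $mn$ edges $(i,j)$, with a $1$ in rows $R_i$ and $C_j$. The plan is to show directly that the row space has dimension $m+n-1$ by exhibiting the unique linear dependency.

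First, we produce the dependency. Summing all row-equations gives $\sum_{i,j} x_{i,j}$, and so does summing all column-equations. Hence $\sum_i R_i - \sum_j C_j = 0$, and the rank is at most $m+n-1$.

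Second, we show that this is the only dependency up to scaling. Suppose $\sum_i \alpha_i R_i + \sum_j \beta_j C_j = 0$. Reading off the coefficient of $x_{i,j}$ gives $\alpha_i + \beta_j = 0$ for all $i\in[m]$, $j\in[n]$. Since $K_{m,n}$ is connected, this forces $\alpha_i = \alpha_1$ for every $i$ and $\beta_j = -\alpha_1$ for every $j$. Concretely, $\alpha_i = -\beta_1 = \alpha_1$, and $\beta_j = -\alpha_1$. So the space of dependencies is one-dimensional, spanned by $(\mathbf 1,-\mathbf 1)$, and the rank equals $m+n-1$.

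Third, we deduce the statement about deleting one equation. The unique dependency $(\mathbf 1,-\mathbf 1)$ has every coordinate non-zero. Any dependency among the remaining $m+n-1$ equations would be a dependency of the full system that vanishes on the deleted coordinate. By the second step it would be a scalar multiple of $(\mathbf 1,-\mathbf 1)$, and that multiple can vanish on a coordinate only if the scalar is zero. Hence every such subsystem is linearly independent.

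There is no real obstacle; the only point requiring care is the connectivity argument in the second step. This argument uses $m,n\ge 1$, so that every $\alpha_i$ is linked to every $\beta_j$ through some entry $x_{i,j}$.
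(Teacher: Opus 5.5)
Your argument is correct and complete. The left-kernel computation $\alpha_i+\beta_j=0$ shows that the only dependency is $(\mathbf 1,-\mathbf 1)$, and since this vector has full support, it gives both the rank $m+n-1$ and the independence after deleting any one equation. The paper omits the proof as routine, and yours is exactly that routine argument. One cosmetic remark: the coefficient matrix is the unsigned vertex--edge incidence matrix of $K_{m,n}$ itself, with rows indexed by vertices, not the transpose of a node--arc incidence matrix, but nothing in your proof depends on this label.
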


\smallskip
A recurring step in our facet proofs is to show that a proposed inequality description is minimal.
For this, we invoke the following classical result: after splitting off the implicit equations of a non-empty polyhedron, the facets are exactly the supporting hyperplanes given by the non-redundant remaining inequalities, and each such inequality defines a unique facet; see, for example, Schrijver~\cite[Theorem~8.1]{schrijver1998TheoryOfLP}. %
Formally, we state the following theorem.
\begin{theorem}\label{thm:irredundant-ineq-facets}
  Let $P=\{x:Ax\leq b\}$ be non-empty.
  Partition the inequalities of $Ax\leq b$ into those that are tight for all $x\in P$ (implicit equations) and the remaining ones; denote these subsystems by $A^{=}x=b^{=}$ and $A^{+}x\leq b^{+}$, respectively.
  Assume no inequality in $A^{+}x\leq b^{+}$ is redundant in the full system $Ax\leq b$.
  Then each facet of $P$ has the form
  $
    F=\{x\in P: a_i x=b_i\}
  $
  for a unique inequality $a_i x\leq b_i$ from $A^{+}x\leq b^{+}$, yielding a bijection between facets of $P$ and inequalities in $A^{+}x\leq b^{+}$.
\end{theorem}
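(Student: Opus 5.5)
The plan is the standard argument via a relative interior point and irredundancy. Write $P=\{x:Ax\leq b\}$ with the splitting $A^{=}x\leq b^{=}$, $A^{+}x\leq b^{+}$ as in the statement.

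\emph{Step 1: a relative interior point.} For every inequality $a_kx\leq b_k$ in $A^{+}x\leq b^{+}$, choose $x^k\in P$ with $a_kx^k<b_k$; such a point exists because the inequality is not an implicit equation. Let $x^0$ be the average of these points (or any point of $P$ if $A^{+}$ is empty). By convexity $x^0\in P$, and $x^0$ satisfies every inequality of $A^{+}x\leq b^{+}$ strictly. Moreover, the affine hull of $P$ is $\{x:A^{=}x=b^{=}\}$. Indeed, for any $y$ in this set, the point $x^0+\varepsilon(y-x^0)$ lies in $P$ for small $\varepsilon>0$, since the equations are preserved and the strict inequalities survive a small perturbation.

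\emph{Step 2: every inequality of $A^{+}$ defines a facet.} Fix $a_ix\leq b_i$ in $A^{+}x\leq b^{+}$ and set $F_i=\{x\in P:a_ix=b_i\}$. By irredundancy there is a point $y$ with $a_iy>b_i$ that satisfies every other inequality of $Ax\leq b$, including those of $A^{=}x\leq b^{=}$ as inequalities. On the segment from $x^0$ to $y$, take the unique point $z=\lambda x^0+(1-\lambda)y$ with $a_iz=b_i$; here $0<\lambda<1$.
\begin{itemize}
\item The point $z$ satisfies all inequalities of $Ax\leq b$, so $z\in P$. In particular $z$ satisfies $A^{=}z=b^{=}$, since these hold with equality on all of $P$.
\item Every other inequality $a_kx\leq b_k$ of $A^{+}$ is strict at $z$, because $\lambda>0$ and $a_kx^0<b_k$.
\end{itemize}
Hence a neighbourhood of $z$ inside $\{x:A^{=}x=b^{=},\ a_ix=b_i\}$ is contained in $F_i$. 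The function $a_i$ is not constant on $\operatorname{aff}P$, since it is strict at $x^0$ and tight at $z$. Therefore $\dim F_i=\dim P-1$, and $F_i$ is a facet.

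\emph{Step 3: every facet arises this way, and uniquely.} Any face $F$ of $P$ has the form $\{x\in P:A'x=b'\}$ for some subsystem $A'x\leq b'$ of $Ax\leq b$. Inequalities from $A^{=}$ impose nothing beyond membership in $P$, so if $F$ is a proper face, then $A'$ contains some inequality $a_ix\leq b_i$ of $A^{+}$. Then $F\subseteq F_i\subsetneq P$, and maximality of a facet among proper faces forces $F=F_i$.

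For injectivity, let $i\neq j$ both index inequalities of $A^{+}$. The point $z$ built in Step 2 for $i$ lies in $F_i$ and satisfies $a_jz<b_j$. Hence $z\notin F_j$, so $F_i\neq F_j$. This gives the claimed bijection.

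The only delicate point is Step 2. There one must use that the violating point $y$ still satisfies the implicit-equation inequalities, so that $z$ really lies in $P$, and that $z$ stays strictly inside all other inequalities of $A^{+}$. Both follow from $0<\lambda<1$ together with the strictness of $x^0$ from Step 1.
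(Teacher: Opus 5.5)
Your proof is correct; the only ingredient you quote rather than prove is the standard fact used in Step~3, that every non-empty face of $P$ has the form $\{x\in P: A'x=b'\}$ for some subsystem $A'x\leq b'$ of $Ax\leq b$. The paper does not prove this theorem itself but cites Schrijver's Theorem~8.1, and your argument is essentially the standard proof given there: take a point strict on all of $A^{+}x\leq b^{+}$, take an irredundancy witness $y$, form the convex combination $z$ landing on $a_ix=b_i$, and use the subsystem description of faces.
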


\subsection{General core--assembly framework}

We now formalize the core--assembly reduction used throughout the paper.
Fix a symmetry class XASM of $n\times n$ ASMs, where $n \geq 1$.
Let $C \subseteq [n] \times [n]$ be a set of positions of an $n \times n$ matrix, called the \emph{core positions} for XASMs, and let $\pi_C:\R^{n\times n} \to \R^{C}$ denote the coordinate-wise projection onto $C$.
For an $n \times n$ matrix $X$, its projection $\pi_C(X)$ is referred to as the \emph{core} of $X$.
We call an affine map $\varphi: \R^{C} \to \R^{n \times n}$ an \emph{assembly map} if $\pi_C(\varphi(Y)) = Y$ for every $Y \in \R^{C}$, and $\varphi(\pi_C(X)) = X$ for every $X \in \XASM(n)$.
Note that $\varphi$ is injective by the first condition: for every $Y, Y' \in \R^C$, if $\varphi(Y) = \varphi(Y')$, then $Y = \pi_C(\varphi(Y)) = \pi_C(\varphi(Y')) = Y'$.
We often denote an $n \times n$ matrix --- typically an XASM or an element of the convex hull of XASMs --- by $X$, and its core by $Y$.
Define the \emph{core polytope} for XASMs as $P^\core_{\XASM(n)} = \conv\{\pi_C(X) : X \in \XASM(n)\} \subseteq \R^{C}$, and also introduce the \emph{lifting} of the core polytope $\widehat P^\core_{\XASM(n)} = \{X \in \R^{n\times n} : \pi_C(X) \in P^\core_{\XASM(n)}\}$.
When the size is clear from the context, we write $P^\core_\XASM$ and $\widehat P^\core_\XASM$.

\begin{theorem}\label{thm:xasm:assembly}
  With the setup above, $P_{\XASM} = \varphi\left(P^\core_{\XASM}\right) = \widehat P^\core_{\XASM}\cap \varphi(\R^{C})$.
\end{theorem}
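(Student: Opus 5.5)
We prove the two equalities separately, using only that $\varphi$ is affine, that $\pi_C$ is linear, and the two defining properties of an assembly map.

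\emph{First equality.} We show $P_{\XASM}=\varphi(P^\core_{\XASM})$. Write $S=\{\pi_C(X):X\in\XASM(n)\}$. By definition $P^\core_{\XASM}=\conv S$. An affine map commutes with taking convex hulls, so $\varphi(\conv S)=\conv\varphi(S)$. The second assembly property gives $\varphi(\pi_C(X))=X$ for every $X\in\XASM(n)$, hence $\varphi(S)=\XASM(n)$. Therefore $\varphi(P^\core_{\XASM})=\conv\XASM(n)=P_{\XASM}$.

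\emph{Second equality, inclusion $\subseteq$.} We show $\varphi(P^\core_{\XASM})=\widehat P^\core_{\XASM}\cap\varphi(\R^C)$. Take $X=\varphi(Y)$ with $Y\in P^\core_{\XASM}$. Clearly $X\in\varphi(\R^C)$. By the first assembly property $\pi_C(X)=\pi_C(\varphi(Y))=Y\in P^\core_{\XASM}$, so $X\in\widehat P^\core_{\XASM}$ by the definition of the lifting.

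\emph{Second equality, inclusion $\supseteq$.} Take $X\in\widehat P^\core_{\XASM}\cap\varphi(\R^C)$, and write $X=\varphi(Y)$ for some $Y\in\R^C$. Then $Y=\pi_C(\varphi(Y))=\pi_C(X)$. Since $X$ lies in the lifting, $\pi_C(X)\in P^\core_{\XASM}$, so $Y\in P^\core_{\XASM}$. Hence $X=\varphi(Y)\in\varphi(P^\core_{\XASM})$.

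None of these steps is a real obstacle. The one point needing care is that "affine image of a convex hull equals the convex hull of the image" uses affineness of $\varphi$ and not merely injectivity. The underlying reason is that affine maps preserve convex combinations whose coefficients sum to~$1$.
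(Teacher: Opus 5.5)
Your proof is correct and follows essentially the paper's argument. For the first equality you cite the general fact that an affine map sends $\conv S$ to $\conv \varphi(S)$, whereas the paper checks both inclusions with explicit convex combinations, which is the same argument unpacked. Your proof of the second equality is identical to the paper's.
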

\begin{proof}
  First, we prove that $\varphi\left(P^\core_{\XASM}\right)\subseteq P_{\XASM}$.
  Let $Y \in P^\core_{\XASM}$ and write $Y = \sum_t\lambda_t (\pi_C(X_t))$ as a convex combination of the matrices $X_t \in \XASM(n)$ projected to the core.
  By affinity and the second property of $\varphi$,
  \[
    \varphi(Y) = \sum_t\lambda_t\varphi\left(\pi_C(X_t)\right) = \sum_t\lambda_tX_t \in \conv(\XASM(n)) = P_{\XASM}.
  \]
  For the reverse inclusion, let $X = \sum_t\lambda_t X_t$ be a convex combination with $X_t \in \XASM(n)$.
  Again by affinity and the second property of $\varphi$,
  \[
    X = \sum_t\lambda_t\varphi\left(\pi_C(X_t)\right) = \varphi \Bigl(\sum_t\lambda_t \pi_C(X_t)\Bigr) \in \varphi\left(P^\core_{\XASM}\right).
  \]
  We conclude that $P_{\XASM} = \varphi\left(P^\core_{\XASM}\right)$.

  It remains to prove that $\varphi\left(P^\core_{\XASM}\right) = \widehat P^\core_{\XASM}\cap \varphi(\R^{C})$.
  First, let $X = \varphi(Y)$ with $Y \in P^\core_{\XASM}$.
  Then $\pi_C(X) = \pi_C(\varphi(Y)) = Y \in P^\core_{\XASM}$ by the first property of $\varphi$, so $X \in \widehat P^\core_{\XASM}$.
  Conversely, let $X \in \widehat P^\core_{\XASM} \cap \varphi(\R^{C})$ and write $X = \varphi(Y)$ for some $Y \in \R^{C}$.
  Since $X \in \widehat P^\core_{\XASM}$, we have $Y = \pi_C(X) \in P^\core_{\XASM}$  by the first property of $\varphi$; hence $X = \varphi(Y) \in \varphi\left(P^\core_{\XASM}\right)$.
\end{proof}

\section{Unrestricted alternating sign matrices (ASMs)}\label{sec:ASM}
We begin with the unrestricted ASM polytope, corresponding to the symmetry subgroup $\mathcal{G}=\{\mathcal{I}\}$.
The following prefix-sum description, established independently by Behrend and Knight and by Striker in 2007, is the baseline for all later symmetry-class formulations.
\begin{theorem}[Behrend and Knight~\cite{behrend2007higher}, and Striker~\cite{striker2007alternating,striker2009alternating}]\label{thm:ASMpolytope}
  The convex hull $P_{\ASM}$ of ASMs is the set of real $n\times n$ matrices satisfying the system
  \begin{align}
                          x&_{i,j} \in \R    &\forall i, j \in [n],\label{eq:asm:real}\\
    0 \leq \sum_{j'=1}^{j} x&_{i,j'} \leq 1   &\forall i \in [n], j \in [n-1],\label{eq:asm:row-prefix}\\
    0 \leq \sum_{i'=1}^{i} x&_{i',j} \leq 1 &\forall i \in [n-1], j \in [n],\label{eq:asm:col-prefix}\\
    \sum_{j=1}^{n}         x&_{i,j} = 1      &\forall i \in [n],\label{eq:asm:row-sum}\\
    \sum_{i=1}^{n}         x&_{i,j} = 1      &\forall j \in [n].\label{eq:asm:col-sum}
  \end{align}
\end{theorem}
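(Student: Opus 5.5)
The plan is to prove the two inclusions separately. Write $Q$ for the polytope defined by \cref{eq:asm:real,eq:asm:row-prefix,eq:asm:col-prefix,eq:asm:row-sum,eq:asm:col-sum}.

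The inclusion $P_{\ASM}\subseteq Q$ is immediate from convexity of $Q$, so it suffices to show that every ASM lies in $Q$. Alternation of the non-zero entries in a row, together with the first and last non-zero entries being $1$, means that the partial sums along the row run through the pattern $0,\dots,0,1,\dots,1,0,\dots,0,1,\dots$. They therefore stay in $\{0,1\}$, and the full row sum is $1$. The same argument applies to columns.

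For the reverse inclusion, I will pass to prefix-sum coordinates, where the constraint system should become totally unimodular. Set $r_{i,j}=\sum_{j'\le j}x_{i,j'}$ and $c_{i,j}=\sum_{i'\le i}x_{i',j}$. Better still, I will use the two-dimensional corner sums $s_{i,j}=\sum_{i'\le i,\,j'\le j}x_{i',j'}$, with $s_{0,j}=s_{i,0}=0$, $s_{n,j}=j$ and $s_{i,n}=i$; the last two boundary values encode the equations \cref{eq:asm:row-sum,eq:asm:col-sum}. The map $X\mapsto S$ is a unimodular affine bijection between $\{X:\text{\cref{eq:asm:row-sum,eq:asm:col-sum}}\}$ and $\R^{[n-1]\times[n-1]}$, since $x_{i,j}=s_{i,j}-s_{i-1,j}-s_{i,j-1}+s_{i-1,j-1}$. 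Under this map the row prefix sum equals $s_{i,j}-s_{i-1,j}$ and the column prefix sum equals $s_{i,j}-s_{i,j-1}$. Hence $Q$ is mapped onto
\[
\bigl\{S: 0\le s_{i,j}-s_{i-1,j}\le 1,\ 0\le s_{i,j}-s_{i,j-1}\le 1\bigr\}
\]
with the fixed boundary values. This is a system of difference constraints: its constraint matrix is the transpose of the node--arc incidence matrix of the grid digraph, after moving the fixed boundary variables to the right-hand side, so the right-hand sides stay integral. By \cref{thm:digraphIncidenceTU} the image polytope is integral.

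The remaining step is to check that integral points of the image correspond exactly to ASMs. Given integral $S$, the row prefix sums of $X$ are integers in $\{0,1\}$, so each $x_{i,j}$ is a difference of two consecutive such values and lies in $\{0,\pm1\}$. A $0/1$ sequence that starts at $0$ and ends at $1$ has increments that alternate $+1,-1,\dots$, beginning and ending with $+1$. The column argument is identical, so $X$ is an ASM. Since the map is affine and bijective, vertices correspond to vertices, and therefore $Q=\conv(\ASM(n))=P_{\ASM}$.

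I expect the main obstacle to be the bookkeeping in this change of variables. One must verify that the boundary equations absorb exactly the row and column sum constraints, and that the prefix constraints indexed by $j=n$ or $i=n$ (excluded from \cref{eq:asm:row-prefix,eq:asm:col-prefix}) are automatic. An alternative route is to apply \cref{thm:laminarSystem} directly in the $x$-coordinates: row prefixes form a laminar family, and so do column prefixes. The equations $\sum_j x_{i,j}=1$ are the full row sets, which are members of the row-prefix chain, and the column equations likewise belong to the column chain. So the whole system is the union of two laminar families with integer bounds, and integrality follows at once; I would present this as the primary argument and the corner-sum route as a check.
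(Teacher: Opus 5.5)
Your proposal is correct: both the laminar-family route via \cref{thm:laminarSystem} and the corner-sum route via \cref{thm:digraphIncidenceTU} establish integrality. In both routes, your observation that integral points have $0/1$ prefix sums, and are therefore ASMs, completes the argument. The paper does not prove this theorem itself (it quotes Behrend--Knight and Striker), but your primary route is exactly the argument it later invokes for this very system in \cref{cor:xasm:integer-caratheodory} and for the VSASM, VHSASM, and HTSASM cores. Your corner-sum variant is the substitution it uses in \cref{lem:asm:height-functions} and in the proofs of \cref{thm:dsasm:corepolytope,thm:tsasm:corepolytope}.
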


The constraints in~\cref{eq:asm:row-prefix,eq:asm:col-prefix} require that, for each fixed row, the row-prefix sums other than the full row sum lie between $0$ and~$1$; similarly, the column-prefix sums other than the full column sum lie between $0$ and $1$.
Meanwhile, the equations in~\cref{eq:asm:row-sum,eq:asm:col-sum} enforce that each row and each column of the matrix sums to~$1$.

\medskip
The dimension of $P_{\ASM}$ and the number of its facets are already known in the literature:
\begin{theorem}[Behrend and Knight~\cite{behrend2007higher}, and Striker~\cite{striker2007alternating,striker2009alternating}]
  The dimension of $P_{\ASM}$ is $(n-1)^2$ for $n \geq 3$.
\end{theorem}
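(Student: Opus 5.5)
The plan is to prove the upper and lower bounds on $\dim P_{\ASM}$ separately.

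For the upper bound, note that $P_{\ASM}$ lies in the affine subspace of $\R^{n\times n}$ cut out by the row-sum and column-sum equations~\cref{eq:asm:row-sum,eq:asm:col-sum}. By \cref{lem:prelim:rowcol-rank} with $m=n$ and $r=c=\mathbf 1$, these $2n$ equations have rank $2n-1$. Their solution set is therefore an affine subspace of dimension $n^2-(2n-1)=(n-1)^2$, which gives $\dim P_{\ASM}\le (n-1)^2$.

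For the lower bound, I would exhibit $(n-1)^2+1$ affinely independent ASMs; equivalently, the differences of ASMs from a fixed one should span the linear space $L=\{X : \text{all row and column sums are } 0\}$, which has dimension $(n-1)^2$. A basis of $L$ is given by the elementary ``rectangles'' $E_{i,j}=\chi_{i,j}-\chi_{i,j+1}-\chi_{i+1,j}+\chi_{i+1,j+1}$ for $i,j\in[n-1]$. These are linearly independent because their corner-sum transforms are distinct unit vectors. It therefore suffices to show that each $E_{i,j}$, or each $-E_{i,j}$, is a difference of two ASMs.

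The main step is constructing these pairs, and this is where $n\ge 3$ is used. For each $(i,j)\in[n-1]^2$, I would choose a permutation matrix $P$ with $P_{i,j+1}=P_{i+1,j}=1$; such a $P$ exists since $i\neq i+1$ and $j\neq j+1$. Swapping these two entries gives the permutation matrix $P+E_{i,j}$, which is again an ASM. Hence $E_{i,j}\in\linspan\{A-A' : A,A'\in \ASM(n)\}$, and no signs $-1$ are even needed.

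Consequently, the affine hull of the permutation matrices, and a fortiori that of $P_{\ASM}$, is the full affine subspace, so the dimension is exactly $(n-1)^2$. The only delicate point is the linear independence of the $E_{i,j}$. I would argue it by noting that the map $X\mapsto\bigl(\sum_{i'\le a,\,j'\le b}x_{i',j'}\bigr)_{a,b\in[n-1]}$ sends $E_{i,j}$ to $\chi_{i,j}$. This argument actually works for all $n\ge 1$; the hypothesis $n\ge3$ is only a convention matching the facet statements.
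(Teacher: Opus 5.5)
The paper gives no proof of this statement---it is quoted from Behrend--Knight and Striker---so there is no in-text argument to match line by line. Your argument is correct and complete. Your upper bound is exactly the template the paper uses for every symmetry class (rank $2n-1$ of the line-sum equations via \cref{lem:prelim:rowcol-rank}). Your independence check is also right, since the corner-sum transform of $E_{i,j}$ is $\chi_{i,j}$ on $[n-1]^2$.

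Your lower bound, however, follows a different route from the paper's template (compare \cref{thm:htsasm:dim,thm:dsasm:dim}). Transplanted to $P_{\ASM}$, that template would take the all-$1/n$ matrix, which satisfies every prefix inequality of \cref{thm:ASMpolytope} strictly. It would then perturb this matrix by $\varepsilon(\chi_{i,j}-\chi_{i,n}-\chi_{n,j}+\chi_{n,n})$ for $i,j\in[n-1]$, which relies on the inequality description and produces fractional points. You instead show that differences of permutation matrices already span the direction space, i.e.\ that the Birkhoff polytope sitting inside $P_{\ASM}$ is already $(n-1)^2$-dimensional.

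Your route needs nothing beyond ``permutation matrices are ASMs'' and ``ASMs have unit line sums''. It holds verbatim for every $n\ge 1$, and it yields the slightly stronger fact that the affine hull of the permutation matrices alone is $\aff(P_{\ASM})$. The perturbation route, in return, carries over directly to the core polytopes of the symmetric classes. There one only needs a point that is strict on the relevant inequalities, rather than an explicit spanning family of integral points.

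One small slip: you announce that the swap construction is ``where $n\ge 3$ is used''. It only uses $i\neq i+1$ and $j\neq j+1$, as your own closing remark correctly notes.
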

\begin{theorem}[Striker~\cite{striker2007alternating,striker2009alternating}]
  The number of facets of $P_{\ASM}$ is $4((n - 2)^2 + 1)$ for $n \geq 3$.
\end{theorem}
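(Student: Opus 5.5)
The plan is to start from the description of $P_{\ASM}$ in \cref{thm:ASMpolytope}. I would first remove all redundant inequalities. Then I would show that every remaining inequality is irredundant and is not an implicit equation. \Cref{thm:irredundant-ineq-facets} then turns this into a facet count. Since $\dim P_{\ASM}=(n-1)^2$, the only implicit equations are the row- and column-sum equations: their span has rank $2n-1$ by \cref{lem:prelim:rowcol-rank}, and $n^2-(2n-1)=(n-1)^2$. So the task reduces to classifying the $4n(n-1)$ prefix-sum inequalities in \cref{eq:asm:row-prefix,eq:asm:col-prefix}.

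\textbf{Step 1 (redundancy).} Write $r_{i,j}=\sum_{j'\le j}x_{i,j'}$ and $c_{i,j}=\sum_{i'\le i}x_{i',j}$.
In the first row, $c_{1,j}=x_{1,j}$, so the column constraints give $0\le x_{1,j}\le 1$. Together with $r_{1,n}=1$, this gives $0\le r_{1,j}\le 1$, so every row-prefix inequality of row $1$ is redundant. The same argument disposes of row $n$ (using suffix sums $1-r_{n,j}$), column $1$ and column $n$.
Next, $r_{i,n-1}\ge 0$ is equivalent to $x_{i,n}\le 1$. This follows from $x_{i,n}=c_{i,n}-c_{i-1,n}\le 1-0$, so it is redundant. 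Symmetrically, $r_{i,1}\le 1$, $c_{n-1,j}\ge0$ and $c_{1,j}\le 1$ are redundant.

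I would carry out this elimination systematically. The aim is to show that what survives is: for each of the four families (row $\ge0$, row $\le1$, column $\ge0$, column $\le1$), an $(n-2)\times(n-2)$ block of prefix inequalities indexed by interior positions, together with the four corner nonnegativity constraints $x_{1,1},x_{1,n},x_{n,1},x_{n,n}\ge 0$. These corner constraints are the inequalities $r_{1,1}\ge0$ and $c_{1,1}\ge0$ (which coincide), and their analogues at the other three corners. Some of them were discarded above as redundant in one family, but each must be kept once. This bookkeeping should give exactly $4(n-2)^2+4$ inequalities.

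\textbf{Step 2 (irredundancy).} For each surviving inequality $a x\le b$, I would exhibit a matrix $Z$ that satisfies all the equations and all the other surviving inequalities but violates $a x\le b$. By Step 1, $Z$ then satisfies all the discarded inequalities too, so $ax\le b$ is not redundant in the full system.
The natural base point is the barycenter $\frac1n J$, where $J$ is the all-ones matrix. At $\frac1n J$ every interior prefix sum lies strictly between $0$ and $1$, so all surviving inequalities are strict for $n\ge3$. This also shows that none of them is an implicit equation.
From $\frac1n J$ I would add a small $\pm$ ``rectangle'' perturbation $t(\chi_{i,j}-\chi_{i,j+1}-\chi_{i+1,j}+\chi_{i+1,j+1})$, which preserves all row and column sums. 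Such a perturbation changes exactly one row prefix in each of rows $i$ and $i+1$ and exactly one column prefix in each of columns $j$ and $j+1$. By choosing its sign, and taking $t$ large, I would push the single targeted prefix sum past its bound. A variant supported at a corner would handle the corner constraints.

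\textbf{Main obstacle.} The hard part is Step 2. A single rectangle moves four prefix sums at once, so after pushing the target past its bound, one of the other three may also leave $[0,1]$. I would try first to replace the barycenter by a base point adapted to each target, for example an ASM or a convex combination of ASMs for which the three companion prefix sums are far from their bounds. An alternative is to use a longer alternating ``staircase'' perturbation that changes only the targeted prefix sum among the surviving inequalities. Carrying this out correctly near the boundary rows and columns, with $i$ or $j\in\{2,n-1\}$, is where I expect the real case analysis, and where the hypothesis $n\ge3$ enters.
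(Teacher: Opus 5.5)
Your architecture is sound, and your list of survivors is the correct one. It consists of $r_{i,j}\ge0$ for $i\in[2,n-1]$, $j\in[n-2]$; $r_{i,j}\le1$ for $i\in[2,n-1]$, $j\in[2,n-1]$; the two transposed column blocks; and $x_{1,1},x_{1,n},x_{n,1},x_{n,n}\ge0$, each of which occurs twice in the original system (e.g.\ $x_{1,n}\ge0$ is both $c_{1,n}\ge0$ and $r_{1,n-1}\le1$). Step~1 goes through once you order the derivation. First obtain $x\ge0$ on the whole boundary from survivors: $x_{1,j}=c_{1,j}$, $x_{i,1}=r_{i,1}$, $x_{i,n}=1-r_{i,n-1}$, $x_{n,j}=1-c_{n-1,j}$, plus the corners. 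Then derive all remaining bounds from the line sums: the prefix bounds on the boundary lines, and $r_{i,n-1}\ge0$, $r_{i,1}\le1$, $c_{n-1,j}\ge0$, $c_{1,j}\le1$. This removes the apparent circularity of invoking column-$n$ prefix bounds you had already discarded.

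The genuine gap is Step~2. That step is what gives the lower bound on the number of facets, and you supply no witness; the tool you propose fails. Take $n=4$ with target $r_{2,2}\ge0$. At $\tfrac14J$ you need $t<-\tfrac12$. The rectangle on rows $2,3$ also moves $c_{2,2}$ by $t$, and the one on rows $1,2$ moves $c_{1,3}$ by $t$. Either choice drives a surviving column bound negative. The same happens for every target $r_{i,j}\ge0$ with $2\le i\le j\le n-2$.

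Separately, ``By Step~1, $Z$ then satisfies all the discarded inequalities'' does not follow. The discarded inequalities are implied by the survivors \emph{including} the target. The claim is also unnecessary: apply \cref{thm:irredundant-ineq-facets} to the reduced system, which describes the same polytope.

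The missing idea is to drop perturbation and write down integral one-defect witnesses directly. This is the kind of certificate the paper builds for its symmetric classes; the theorem itself is only cited there.

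For $r_{i,j}\ge0$ with $i\in[2,n-1]$, $j\in[n-2]$, choose $a<i<b$ and define $Z$ by
\begin{itemize}
\item $z_{i,j}=-1$ and $z_{i,j+1}=z_{i,n}=z_{a,j}=z_{b,j}=1$;
\item an arbitrary permutation matrix on rows $[n]\setminus\{a,i,b\}$ and columns $[n]\setminus\{j,j+1,n\}$;
\item zeros elsewhere.
\end{itemize}
All line sums are $1$, and every row- and column-prefix sum lies in $\{0,1\}$ except $r_{i,j}=-1$.

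The other families follow by symmetry:
\begin{itemize}
\item Reflecting columns $j\mapsto n+1-j$ sends $r_{i,j}$ to $1-r_{i,n-j}$. This gives the witnesses for the $r\le1$ block.
\item Transposing gives the two column blocks.
\item For $x_{1,1}\ge0$, take $z_{1,1}=-1$, $z_{1,2}=z_{1,3}=z_{2,1}=z_{3,1}=1$, and a permutation matrix on $[4,n]\times[4,n]$. Reflect this for the other three corners; here $n\ge3$ is needed.
\end{itemize}
Each witness violates exactly one inequality of the original system, counting the two copies of a corner inequality as one. So each survivor is irredundant in the reduced system, and no two survivors define the same facet. Together with strictness at $\tfrac1nJ$, this gives exactly $4((n-2)^2+1)$ facets.
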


\section{Vertically symmetric ASMs (VSASMs)}\label{sec:VSASM}
In this class, we impose invariance under reflection across the vertical axis.
The corresponding symmetry subgroup is $\mathcal{G} = \{\mathcal{I}, \mathcal{V}\}$.
Let $P_\VS$ denote the linear subspace of vertically symmetric real matrices, i.e.,
\[
  P_\VS
  =
  \left\{
    X \in \R^{n \times n} : x_{i,j} = x_{i,n+1-j}\ \forall i,j \in [n]
  \right\}.
\]
Clearly, any VSASM satisfies the ASM constraints in~\crefrange{eq:asm:real}{eq:asm:col-sum} and also the symmetry constraints defining $P_\VS$; thus $P_\VSASM \subseteq P_\ASM \cap P_\VS$.
We note, however, that $P_\ASM \cap P_\VS$ does not equal $P_\VSASM$.
In fact, we show that $P_\VSASM \subsetneq P_\ASM \cap P_\VS$ for every $n \geq 2$.
Let $I_n$ and $I'_n$ denote the $n \times n$ identity matrix and its reflection across the vertical axis.
It is easy to see that the fractional matrix $\frac{1}{2}(I_n + I'_n)$ is a vertex of $P_\ASM \cap P_\VS$; e.g., for $n = 3$, we have
\[
  \frac{I_3 + I'_3}{2} =
  \begin{mymatrix}
    \sfrac{1}{2} & 0 & \sfrac{1}{2}\\
    0            & 1 & 0\\
    \sfrac{1}{2} & 0 & \sfrac{1}{2}
  \end{mymatrix}.
\]

\begin{lemma}\label{lem:vsasm:noEvenN}
  There is no $n \times n$ VSASM if $n$ is even.
\end{lemma}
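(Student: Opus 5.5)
The plan is a parity argument on the first row, using only the ASM sign conditions and vertical symmetry. Let $n=2m$ be even, and suppose for contradiction that $X$ is an $n\times n$ VSASM. Consider the first row $X_{1,[n]}$. By \cref{thm:ASMpolytope}, or directly from the definition, every row prefix sum of an ASM lies in $\{0,1\}$ and the full row sum is $1$. For the first row something stronger holds: every column prefix sum $\sum_{i'\le 1}x_{i',j}=x_{1,j}$ must lie in $[0,1]$, so $x_{1,j}\in\{0,1\}$ for all $j$. Since the row sums to $1$, the first row contains exactly one entry equal to $1$, say at column $j_0$, and all other entries are zero.

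Vertical symmetry gives $x_{1,j_0}=x_{1,n+1-j_0}$. Because $n$ is even, $n+1$ is odd, so $j_0\neq n+1-j_0$. Hence the first row contains at least two entries equal to $1$, and its sum is at least $2$. This contradicts $\sum_j x_{1,j}=1$.

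An alternative argument uses the whole matrix. The fixed-point-free involution $j\mapsto n+1-j$ pairs up the columns, so every row sum equals twice the sum over the left half $[1,m]$, which is even. The row sum is $1$, which is odd, so this is again a contradiction. This version is even simpler, since it only needs integrality of the entries and the row-sum condition, and I would present it as the main proof.

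I expect no real obstacle. The only point needing care is justifying that the column $j\mapsto n+1-j$ has no fixed point when $n$ is even. That is immediate, since $j=n+1-j$ would force $2j=n+1$, which is impossible because $n+1$ is odd.
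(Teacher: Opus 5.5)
Your main argument is exactly the paper's proof: for even $n$ each row of a vertically symmetric integer matrix is palindromic, so every row sum is $2\sum_{j=1}^{n/2}x_{i,j}$, which is even and hence cannot equal $1$. Your first-row argument is also correct; it shows the first row would contain two entries equal to $1$. It is just not needed.
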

\begin{proof}
  Let $n$ be even and set $k = n/2$.
  In any vertically symmetric integer matrix $X \in \Z^{n \times n}$, each row $i$ is palindromic, i.e., $X_{i,[n]} = (x_{i,1}, \dots, x_{i,k}, x_{i,k}, \dots, x_{i,1})$ for $i \in [n]$.
  Therefore, the sum of the entries in row $i$ is $\sum_{j=1}^n x_{i,j} = 2 \sum_{j=1}^k x_{i,j}$, which is an even integer for every $i \in [n]$.
  On the other hand, in any ASM, every row sums to $1$.
  Hence no $n \times n$ VSASM exists for even $n$.
\end{proof}

Thus, $P_\VSASM = \emptyset$ for even $n$; hence, we assume that $n$ is odd in the rest of the section.
We need the following two lemmas before introducing the core and assembly map for VSASMs.

\begin{lemma}\label{lem:vsasm:symmetricHalfExtends}
  Let $n \geq 1$ be odd and set $k = \floor*{n/2}$.
  Let $x \in \Z^n$ be symmetric, that is, $x_j = x_{n+1-j}$ for every $j \in [k]$.
  Suppose that the first $k$ prefix sums lie in $\{0,1\}$ and that the sum of the entries of $x$ is $1$.
  Then $x_{k+1} = 1 - 2 \sum_{j=1}^k x_j \in \{\pm1\}$ and all prefix sums lie in $\{0,1\}$.
\end{lemma}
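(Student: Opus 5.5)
Write $s_j=\sum_{j'=1}^{j}x_{j'}$ for the prefix sums, so that by hypothesis $s_1,\dots,s_k\in\{0,1\}$ and $s_n=1$. The plan is to compute the middle entry first, then handle the prefix sums on either side of it, and finally use the reflection $j\mapsto n+1-j$ to reduce the right half to the left half.

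\emph{Middle entry.} Since $n=2k+1$ and $x$ is symmetric, the entries $x_{k+2},\dots,x_n$ are $x_k,\dots,x_1$ in reverse order. Hence
\[
1=\sum_{j=1}^n x_j = 2s_k + x_{k+1},
\]
which gives $x_{k+1}=1-2s_k$. Because $s_k\in\{0,1\}$, this value is $1$ or $-1$. This also settles $s_{k+1}=s_k+x_{k+1}=1-s_k\in\{0,1\}$.

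\emph{Right half.} Now take $j\in[k+1,n]$, and let $j'=n-j\in[0,k]$. By symmetry, the suffix sum $x_{j+1}+\dots+x_n$ equals $x_1+\dots+x_{j'}=s_{j'}$, with the convention $s_0=0$. Therefore
\[
s_j = 1 - s_{n-j}.
\]
Since $n-j\le k$, the value $s_{n-j}$ lies in $\{0,1\}$ (either by hypothesis, or because $s_0=0$). So $s_j\in\{0,1\}$ as well. The case $j=n$ gives $s_n=1$, consistent with the hypothesis, and the case $j=k+1$ recovers the computation above.

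None of these steps is a real obstacle; the argument is direct. The only point needing care is the index bookkeeping in the reflection: we must check that $n-j$ ranges exactly over $[0,k]$ as $j$ ranges over $[k+1,n]$, and that the suffix of length $n-j$ is the mirror image of the prefix of length $n-j$. Both follow from $n=2k+1$ and the symmetry $x_j=x_{n+1-j}$.
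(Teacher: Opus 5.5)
Your proposal is correct and follows essentially the same route as the paper: both compute $x_{k+1}=1-2s_k$ from $2s_k+x_{k+1}=1$, and then obtain every prefix sum past position $k$ as $1$ minus the complementary suffix sum, which by symmetry equals a prefix sum of length at most $k$ (possibly the empty one). Your reindexing $s_j=1-s_{n-j}$ for $j\in[k+1,n]$ is exactly the paper's identity for $j=k+r$ with $r\in[k+1]$.
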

\begin{proof}
  By symmetry, $\sum_{j=1}^n x_j = 2\sum_{j=1}^k x_j + x_{k+1}$.
  Since the total sum equals $1$, we have $x_{k+1} = 1 - 2\sum_{j=1}^k x_j \in \{\pm1\}$.
  For any $r \in [k+1]$, we have
  $
  \sum_{j=1}^{k+r} x_j
  = 1 - \sum_{j=k+1+r}^n x_j
  = 1 - \sum_{j=1}^{k+1-r} x_j,
  $
  where the last equality follows from symmetry.
  For $r \in [k+1]$, the sum $\sum_{j=1}^{k+1-r} x_j$ lies in $\{0,1\}$ by assumption; hence the sum $\sum_{j=1}^{k+r} x_j$ also lies in $\{0,1\}$.
  Hence every prefix sum of $x$ lies in $\{0,1\}$.
\end{proof}

\begin{lemma}\label{lem:vsasm:middleColAlternate}
  Let $n \geq 1$ be odd and set $k = \floor*{n/2}$.
  For every VSASM $X \in \{0,\pm1\}^{n \times n}$, we have $x_{i,k+1} = (-1)^{i+1}$ for every $i \in [n]$.
\end{lemma}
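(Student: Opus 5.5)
By \cref{lem:vsasm:symmetricHalfExtends}, or directly from the first paragraph of its proof, each row $i$ of a VSASM $X$ satisfies $x_{i,k+1} = 1 - 2\sum_{j=1}^k x_{i,j}$. Since the row-prefix sums of an ASM lie in $\{0,1\}$, the entry $x_{i,k+1}$ lies in $\{\pm 1\}$. So every entry of the middle column is non-zero. The middle column is itself a column of an ASM, so its non-zero entries alternate in sign and the first one is $1$. Since all $n$ entries are non-zero, they alternate in consecutive positions starting from $x_{1,k+1}=1$, and therefore $x_{i,k+1} = (-1)^{i+1}$ for every $i\in[n]$.

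Concretely, I would carry this out in three steps. First, fix a row $i$ and use vertical symmetry, $x_{i,j}=x_{i,n+1-j}$, together with the row-sum condition, to write $1 = 2\sum_{j=1}^{k} x_{i,j} + x_{i,k+1}$. Second, note that $\sum_{j=1}^{k} x_{i,j}$ is a row-prefix sum of an ASM and hence lies in $\{0,1\}$. This gives $x_{i,k+1}\in\{1,-1\}$. For $n=1$ we have $k=0$, so the sum is empty and $x_{1,1}=1$, consistent with the claim. Third, apply the alternating-sign condition to column $k+1$: its non-zero entries are $x_{1,k+1},\dots,x_{n,k+1}$ in order, the first is $+1$, and consecutive ones have opposite signs. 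Induction on $i$ then gives $x_{i,k+1}=(-1)^{i+1}$.

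There is no real obstacle here. The only point needing care is the second step: the fact that row-prefix sums of an integer ASM lie in $\{0,1\}$ follows from the definition, since the partial sums of an alternating sequence starting with $+1$ alternate between $1$ and $0$. I would state that explicitly rather than appeal to the polytope description. As a consistency check, with $n$ odd the last entry is $(-1)^{n+1}=1$, as the ASM conditions require.
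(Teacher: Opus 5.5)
Your proposal is correct and follows essentially the same route as the paper, which also applies \cref{lem:vsasm:symmetricHalfExtends} row by row to get $x_{i,k+1}\in\{\pm1\}$ and then uses the alternating-sign condition on column $k+1$ to conclude $x_{i,k+1}=(-1)^{i+1}$. Your explicit justification that ASM row-prefix sums lie in $\{0,1\}$ and your check of the case $n=1$ are harmless additions to that argument.
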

\begin{proof}
  Applying \cref{lem:vsasm:symmetricHalfExtends} to each row of $X$, we obtain that the middle column has no zeros, i.e., $x_{i,k+1} \in \{\pm1\}$ for every $i \in [n]$.
  In any column of an ASM, the non-zero entries alternate in sign and the first non-zero entry is $+1$; therefore, the entry in the central column in row $i$ must be $x_{i,k+1} = (-1)^{i+1}$ for every $i \in [n]$.
\end{proof}

\paragraph{Core and assembly map.}
Assume $n$ is odd and let $k = \floor*{n/2}$.
Let the \emph{core} of a VSASM be its left $n\times k$ block, i.e.,
\[
  C = [n] \times [k]
\]
is the set of \emph{core positions}, and let $\pi_C$ be the coordinate-wise projection onto $C$.
Define the affine map $\varphi : \R^C\to\R^{n\times n}$ by
\[
  \varphi(Y)_{i,j}=
  \begin{cases}
    y_{i,j}          & \text{if } j \in [k],\\
    (-1)^{i+1}       & \text{if } j = k+1,\\
    y_{i,n+1-j}      & \text{if } j \in [k+2,n]
  \end{cases}
\]
for $Y \in \R^C$ and $i,j \in [n]$.
Thus $\varphi$ places the core $Y$ on the left, fixes the middle column to the alternating vector from \cref{lem:vsasm:middleColAlternate}, and fills the right $n \times k$ block by vertical reflection.
Clearly, the map $\varphi$ is an assembly map: it is affine, satisfies $\pi_C(\varphi(Y)) = Y$ for every $Y \in \R^C$, and $\varphi(\pi_C(X)) = X$ for every $X \in \VSASM(n)$, because $X$ is determined by its core together with the prescribed middle column and vertical symmetry.

\bigskip
We now describe the core polytope of VSASMs.
\begin{theorem}\label{thm:vsasm:corepolytope}
  Let $n \geq 1$ be odd, and set $k = \floor*{n/2}$.
  Then the core polytope $P^\core_\VSASM \subseteq \R^C$ of $n \times n$ VSASMs is described by the following system.
  \begin{align}
                          y&_{i,j} \in \R                       &\forall i \in [n], j \in [k],\label{eq:vsasm:real}\\\displaybreak[0]
    0 \leq \sum_{j'=1}^{j} y&_{i,j'} \leq 1                      &\forall i \in [n], j \in [k-1],\label{eq:vsasm:row-prefix}\\\displaybreak[0]
    0 \leq \sum_{i'=1}^{i} y&_{i',j} \leq 1                      &\forall i \in [n-1], j \in [k],\label{eq:vsasm:col-prefix}\\\displaybreak[0]
    \sum_{j=1}^{k}         y&_{i,j} = \chi_{2 \mid i}&\forall i \in [n],\label{eq:vsasm:row-sum}\\
    \sum_{i=1}^{n}         y&_{i,j} = 1                          &\forall j \in [k].\label{eq:vsasm:col-sum}
  \end{align}
\end{theorem}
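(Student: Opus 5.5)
Let $Q\subseteq\R^C$ denote the polyhedron defined by \crefrange{eq:vsasm:real}{eq:vsasm:col-sum}. The plan has two steps: first show that the integer points of $Q$ are exactly the cores of VSASMs, and then show that $Q$ is integral. Together these give $Q=\conv(Q\cap\Z^C)=P^\core_\VSASM$. Note that $Q$ is bounded, since each entry is a difference of two consecutive column-prefix sums, each lying in $[0,1]$.

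\textbf{Containment and integer points.} Let $X\in\VSASM(n)$ with core $Y=\pi_C(X)$.
\begin{itemize}[nosep]
\item \Cref{eq:vsasm:row-prefix,eq:vsasm:col-prefix,eq:vsasm:col-sum} are restrictions of the ASM constraints of \cref{thm:ASMpolytope}.
\item For \cref{eq:vsasm:row-sum}, \cref{lem:vsasm:middleColAlternate} gives $x_{i,k+1}=(-1)^{i+1}$. From $2\sum_{j\le k}x_{i,j}+x_{i,k+1}=1$ we get $\sum_{j\le k}y_{i,j}=(1-(-1)^{i+1})/2=\chi_{2\mid i}$.
\end{itemize}
Hence $P^\core_\VSASM\subseteq Q$ by convexity.

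Conversely, let $Y\in Q\cap\Z^C$ and set $X=\varphi(Y)$.
\begin{itemize}[nosep]
\item \emph{Rows.} Each row of $X$ is symmetric. Its first $k-1$ prefix sums lie in $\{0,1\}$ by \cref{eq:vsasm:row-prefix}, and its $k$-th prefix sum is $\chi_{2\mid i}\in\{0,1\}$. The row sum is $2\chi_{2\mid i}+(-1)^{i+1}=1$. So \cref{lem:vsasm:symmetricHalfExtends} shows that all row prefix sums lie in $\{0,1\}$.
\item \emph{Columns $j\le k$.} These are columns of $Y$, and they satisfy the prefix and total-sum conditions directly. Columns $j\ge k+2$ are mirror copies of these.
\item \emph{Middle column.} It is $(1,-1,1,\dots,1)$, whose prefix sums alternate $1,0,1,\dots$ and whose total is $1$ because $n$ is odd.
\end{itemize}
An integer matrix with all row and column prefix sums in $\{0,1\}$ and all line sums equal to $1$ is an ASM. So $X\in\VSASM(n)$ and $Y=\pi_C(X)$.

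\textbf{Integrality.} This is the main step. I will apply \cref{thm:laminarSystem} on the ground set $S=C$.
\begin{itemize}[nosep]
\item $\mathcal L_1$ is the family of row-prefix sets $\{(i,1),\dots,(i,j)\}$ for $i\in[n]$ and $j\in[k]$. This includes $j=k$, which carries the equation \cref{eq:vsasm:row-sum}, encoded with $f_1=g_1=\chi_{2\mid i}$.
\item $\mathcal L_2$ is the family of column-prefix sets $\{(1,j),\dots,(i,j)\}$ for $j\in[k]$ and $i\in[n]$, with $f_2=g_2=1$ when $i=n$.
\end{itemize}
Within one row the prefixes form a chain, and sets from different rows are disjoint, so $\mathcal L_1$ is laminar; the same argument applies to $\mathcal L_2$. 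All bounds are integers, so $Q$ is integral, and since it is bounded, $Q$ is the convex hull of its integer points. Combined with the first step, $Q=P^\core_\VSASM$.

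The only delicate point is the converse in the first step: the $k$-th row prefix is pinned to $\chi_{2\mid i}$ rather than merely lying in $[0,1]$, and the middle column must be checked separately because it is not part of the core. Both are handled by \cref{lem:vsasm:symmetricHalfExtends} and the explicit alternating middle column.
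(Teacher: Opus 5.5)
Your proof is correct and follows the paper's argument essentially step for step. You identify the integer solutions with VSASM cores via $\varphi$, \cref{lem:vsasm:symmetricHalfExtends}, and the alternating middle column. You then obtain integrality from \cref{thm:laminarSystem} using the row-prefix and column-prefix laminar families, which is exactly the alternative the paper offers to citing the prefix-bounded-matrix result. Your explicit boundedness remark is a small but welcome addition: it is what turns integrality of the polyhedron into equality with the convex hull of its integer points.
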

\begin{proof}
  We show that the integer solutions to the system~\crefrange{eq:vsasm:real}{eq:vsasm:col-sum} are exactly the cores of VSASMs, and then we argue that the system defines an integral polytope.

  \medskip
  First, let $X$ be an $n\times n$ VSASM, and let $Y$ be its core, that is, $y_{i,j} = x_{i,j}$ for every $i \in [n]$ and $j \in [k]$.
  Since every VSASM is in particular an ASM, the ASM constraints in~\cref{eq:asm:real,eq:asm:row-prefix,eq:asm:col-prefix} of \cref{thm:ASMpolytope} directly imply the corresponding constraints in~\cref{eq:vsasm:real,eq:vsasm:row-prefix,eq:vsasm:col-prefix} for $Y$.
  For the row-sum equations in~\cref{eq:vsasm:row-sum}, \cref{lem:vsasm:middleColAlternate} gives $x_{i,k+1} = (-1)^{i+1}$ for every $i \in [n]$; hence, for every $i \in [n]$,
  \[
    \sum_{j=1}^{k} y_{i,j}
    = \sum_{j=1}^{k} x_{i,j}
    = \frac{1 - x_{i,k+1}}{2}
    =\chi_{2 \mid i}
  \]
  as required.
  Finally, the ASM column-sum equations in~\cref{eq:asm:col-sum} give the column-sum equations in~\cref{eq:vsasm:col-sum} for $Y$.
  Thus the cores of VSASMs satisfy the system~\crefrange{eq:vsasm:real}{eq:vsasm:col-sum}.

  \medskip
  Second, we show that every integer solution to the system~\crefrange{eq:vsasm:real}{eq:vsasm:col-sum} is the core of a VSASM\@.
  Let $Y \in\Z^C$ satisfy the system~\crefrange{eq:vsasm:real}{eq:vsasm:col-sum}, and set $X = \varphi(Y)$.
  By construction, $X$ is a vertically symmetric $n \times n$ integer matrix, its middle column is given by $x_{i,k+1} = (-1)^{i+1}$ for $i \in [n]$, and its core is $Y$.
  We verify that $X$ satisfies the ASM constraints given in \cref{thm:ASMpolytope}.
  For $i \in [n]$, we obtain
  \[
    \sum_{j=1}^{n} x_{i,j}
    = \sum_{j=1}^{k} y_{i,j} + x_{i, k+1} + \sum_{j=1}^{k} y_{i,j}
    = 2\chi_{2 \mid i} + (-1)^{i+1}
    = 1
  \]
  for the sum of row $i$, by~\cref{eq:vsasm:row-sum}.
  Furthermore, the prefix sums within each row of $X$ are in $\{0,1\}$ by the row-prefix bounds in~\cref{eq:vsasm:row-prefix}, the row-sum equations in~\cref{eq:vsasm:row-sum}, and \cref{lem:vsasm:symmetricHalfExtends}; thus the row-prefix bounds hold for $X$.
  For $j \in [k]$, column $j$ sums to $1$ by~\cref{eq:vsasm:col-sum}.
  For $j = k+1$, the middle column entries alternate between $+1$ and $-1$, with the first entry equal to $+1$.
  Since $n$ is odd, this column also sums to~$1$.
  For $j \in [k]$, the prefix bounds for column $j$ are precisely the bounds in~\cref{eq:vsasm:col-prefix}; in the middle column $j = k+1$, the entries alternate between $+1$ and $-1$, with the first entry equal to $+1$, so every prefix sum is either $0$ or $1$.
  For the last $k$ columns, the constraints follow by vertical symmetry.
  Thus $X \in \Z^{n \times n}$ satisfies all ASM constraints in \cref{thm:ASMpolytope}, hence $X$ is an ASM\@.
  We conclude that the integer solutions to the system~\crefrange{eq:vsasm:real}{eq:vsasm:col-sum} are exactly the cores of VSASMs.

  \medskip
  It remains to prove that the system~\crefrange{eq:vsasm:real}{eq:vsasm:col-sum} defines an integral polytope.
  The constraints in~\cref{eq:vsasm:row-prefix,eq:vsasm:col-prefix} impose lower and upper bounds on the sums of the entries within prefixes of each row and column, while the equations in~\cref{eq:vsasm:row-sum,eq:vsasm:col-sum} prescribe the full row and column sums.
  Therefore, the cores of VSASMs are so\nobreakdash-called \emph{prefix\nobreakdash-bounded matrices}, and the polytope of prefix\nobreakdash-bounded matrices is known to be described by the system above~\cite[Corollary~5.2]{borsik2026prefix}.
  Alternatively, the result also directly follows from \cref{thm:laminarSystem} applied to the laminar families of row and column prefixes.
\end{proof}

From \cref{thm:xasm:assembly,thm:vsasm:corepolytope}, we obtain the following description of the polytope $P_\VSASM$ of VSASMs.
\begin{theorem}\label{thm:vsasm:coreDescr}
  Let $n \geq 1$ be odd, and let $k = \floor*{n/2}$ and $\widehat P^\core_\VSASM = \{X \in \R^{n \times n} : \pi_C(X) \in P^\core_\VSASM\}$.
  Then
  \[
    P_\VSASM
    = \widehat P^\core_\VSASM \cap P_\VS \cap \{X \in \R^{n \times n} : x_{i,k+1} = (-1)^{i+1}\ \forall i \in [n]\}.
  \]
\end{theorem}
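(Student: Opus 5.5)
The plan is to derive the statement directly from \cref{thm:xasm:assembly}, which gives $P_\VSASM = \widehat P^\core_\VSASM \cap \varphi(\R^C)$. It therefore suffices to identify the image $\varphi(\R^C)$ with the affine subspace
\[
  L = P_\VS \cap \{X \in \R^{n\times n} : x_{i,k+1} = (-1)^{i+1}\ \forall i\in[n]\}.
\]
After that, the claimed equality is a direct substitution. \Cref{thm:vsasm:corepolytope} is not needed for this step, except to make $\widehat P^\core_\VSASM$ concrete.

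For $\varphi(\R^C) \subseteq L$, let $Y \in \R^C$ and put $X = \varphi(Y)$. By definition the middle column is $x_{i,k+1} = (-1)^{i+1}$. For vertical symmetry, check $x_{i,j} = x_{i,n+1-j}$ in three cases:
\begin{itemize}
  \item if $j\in[k]$, then $n+1-j\in[k+2,n]$, and both entries equal $y_{i,j}$;
  \item if $j\in[k+2,n]$, the same argument applies with the roles of $j$ and $n+1-j$ swapped;
  \item if $j=k+1$, then $n+1-j = k+1$ because $n = 2k+1$, so there is nothing to check.
\end{itemize}

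For $L \subseteq \varphi(\R^C)$, take $X\in L$ and set $Y=\pi_C(X)$; I claim $\varphi(Y)=X$ entrywise.
\begin{itemize}
  \item On the columns $j\in[k]$, this holds because $\varphi(Y)_{i,j} = y_{i,j} = x_{i,j}$.
  \item On the middle column, both matrices have the prescribed entries $(-1)^{i+1}$.
  \item On the columns $j\in[k+2,n]$, we have $\varphi(Y)_{i,j} = y_{i,n+1-j} = x_{i,n+1-j} = x_{i,j}$, using $n+1-j\in[k]$ and the vertical symmetry of $X$.
\end{itemize}
Hence $\varphi(\R^C)=L$, and the theorem follows.

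I expect no genuine obstacle. The only point needing care is the index bookkeeping: that $j\mapsto n+1-j$ swaps $[k]$ and $[k+2,n]$ and fixes $k+1$, which uses that $n$ is odd.
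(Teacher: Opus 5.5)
Your proposal is correct and follows the paper's own proof: reduce via \cref{thm:xasm:assembly} to showing $\varphi(\R^C)=P_\VS\cap\{X : x_{i,k+1}=(-1)^{i+1}\ \forall i\in[n]\}$, then prove both inclusions, using $\varphi(\pi_C(X))=X$ for the reverse one. Your explicit index bookkeeping (that $j\mapsto n+1-j$ swaps $[k]$ and $[k+2,n]$ and fixes $k+1$ for odd $n$) fills in details the paper leaves implicit.
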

\begin{proof}
  By \cref{thm:xasm:assembly,thm:vsasm:corepolytope}, it suffices to prove that $\varphi(\R^C) = P_\VS \cap \{X \in \R^{n \times n} : x_{i,k+1} = (-1)^{i+1}\ \forall i \in [n]\}$.
  Let $P$ denote the right-hand side.
  By definition, $\varphi$ inserts its argument as the left half, fixes the $i^\text{th}$ entry of the middle column to $(-1)^{i+1}$ for every $i$, and fills the right half by vertical reflection.
  Thus, $\varphi(Y) \in P$ for every $Y \in \R^C$.
  Conversely, take any $X \in P$, and notice that $\varphi(\pi_C(X)) = X$; hence $X \in \varphi(\R^C)$.
\end{proof}

\begin{theorem}\label{thm:vsasm:descr}
  Let $n \geq 1$ be arbitrary, and set $k = \floor*{n/2}$.
  Then
  \[
    P_\VSASM = P_\ASM \cap P_\VS \cap \{X \in \R^{n \times n} : x_{i,k+1} = (-1)^{i+1}\ \forall i \in [n]\}.
  \]
\end{theorem}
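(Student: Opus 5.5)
Write $Q = P_\ASM \cap P_\VS \cap \{X : x_{i,k+1} = (-1)^{i+1}\ \forall i\}$ for the right-hand side. The plan is to reduce the statement to \cref{thm:vsasm:coreDescr} for odd $n$, and to an emptiness argument for even $n$.

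\textbf{Even $n$.} By \cref{lem:vsasm:noEvenN}, $P_\VSASM=\emptyset$, so it suffices to show $Q=\emptyset$. Take any $X\in P_\ASM\cap P_\VS$. Vertical symmetry with $n=2k$ gives, for every row $i$,
\[
\sum_{j=1}^k x_{i,j}=\tfrac12\sum_{j=1}^n x_{i,j}=\tfrac12 .
\]
Now look at column $k+1$, which the extra constraint prescribes as $(-1)^{i+1}$. Its first prefix sum is $x_{1,k+1}=1$, and its second prefix sum is $0$. The row-prefix inequality~\cref{eq:asm:row-prefix} for row~$1$ at position $k+1\le n-1$ (valid when $n\ge4$) gives $\tfrac12 + x_{1,k+1}\le 1$. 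This forces $x_{1,k+1}\le\tfrac12$, contradicting $x_{1,k+1}=1$. For $n=2$ we have $k+1=2=n$, so use column sums instead: column $2$ equals $(1,-1)$ and sums to $0\neq1$, which is also a contradiction. Hence $Q=\emptyset$.

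\textbf{Odd $n$.} The inclusion $P_\VSASM\subseteq Q$ is immediate. Every VSASM lies in $P_\ASM\cap P_\VS$, satisfies the middle-column condition by \cref{lem:vsasm:middleColAlternate}, and $Q$ is convex.

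For the reverse inclusion, let $X\in Q$ and put $Y=\pi_C(X)$. By \cref{thm:vsasm:coreDescr} it suffices to show $Y\in P^\core_\VSASM$, and by \cref{thm:vsasm:corepolytope} this means checking that $Y$ satisfies~\crefrange{eq:vsasm:real}{eq:vsasm:col-sum}.
\begin{itemize}
\item The row-prefix bounds for $j\le k-1$ and the column-prefix bounds are inherited directly from~\cref{eq:asm:row-prefix,eq:asm:col-prefix}.
\item The column sums follow from~\cref{eq:asm:col-sum}.
\item For the row sums, symmetry gives $1=\sum_j x_{i,j}=2\sum_{j\le k}y_{i,j}+(-1)^{i+1}$. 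Hence $\sum_{j\le k}y_{i,j}=\tfrac{1-(-1)^{i+1}}2=\chi_{2\mid i}$.
\end{itemize}
Finally, $X\in P_\VS$ together with the middle-column condition means $X\in\varphi(\R^C)$, and \cref{thm:vsasm:coreDescr} then yields $X\in P_\VSASM$.

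The argument is largely bookkeeping; the only delicate point is the even case. There the prescribed middle column is not automatically incompatible with symmetry, so the contradiction has to come from a specific row-prefix inequality, with the small case $n=2$ handled separately via column sums.
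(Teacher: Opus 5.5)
Your proof is correct. For odd $n$ it is the paper's argument made explicit. The paper only says the statement follows by straightforward transformations of the core system. Your verification that $\pi_C(X)$ satisfies~\crefrange{eq:vsasm:real}{eq:vsasm:col-sum}, in particular the row sums $\chi_{2\mid i}$ from the palindromic rows and the prescribed middle column, followed by \cref{thm:vsasm:coreDescr}, is exactly that transformation.

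For even $n$ you take a detour. The paper simply observes that the prescribed column $k+1$ alternates $1,-1,\dots,1,-1$. Since $n$ is even, this column sums to $0$, contradicting~\cref{eq:asm:col-sum}. That is precisely the argument you use for $n=2$, and it works verbatim for every even $n$, with no need for vertical symmetry or any row-prefix inequality. Your $n\ge4$ argument, combining the half-row sum $\tfrac12$ with the prefix bound at $(1,k+1)$, is valid. However, the case split is unnecessary. Your closing remark that the contradiction ``has to come from a specific row-prefix inequality'' is also inaccurate.
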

\begin{proof}
  For odd $n$, we obtain the statement by straightforward transformations of the system given in \cref{thm:vsasm:corepolytope}.
  For even $n$, the polytope $P_\VSASM$ is empty; thus we need to show that the right-hand side is empty as well.
  Notice that $\{X \in \R^{n \times n} : x_{i,k+1} = (-1)^{i+1}\ \forall i \in [n]\}$ forces the entries in column $k+1$ to alternate between $+1$ and $-1$, with the first entry equal to $+1$, and thus $\sum_{i=1}^n x_{i,k+1} = 0$.
  On the other hand,~\cref{eq:asm:col-sum} gives $\sum_{i=1}^n x_{i,k+1} = 1$; thus the right-hand side is empty.
\end{proof}

Next, we discuss the dimension of $P_\VSASM$ for odd $n$.
\begin{theorem}\label{thm:vsasm:dim}
  For every odd $n \geq 3$, the dimension of $P_\VSASM$ is $\frac{(n-3)^2}{2}$.
\end{theorem}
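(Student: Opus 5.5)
The approach is to work in the core. By \cref{thm:vsasm:coreDescr}, $P_\VSASM$ is the image of $P^\core_\VSASM$ under the injective affine map $\varphi$, so $\dim P_\VSASM=\dim P^\core_\VSASM$. I will compute the latter from the description in \cref{thm:vsasm:corepolytope}. Write $n=2k+1$ with $k\geq1$; the target is $\frac{(n-3)^2}{2}=2(k-1)^2$. The core has $nk$ variables. The row- and column-sum equations~\cref{eq:vsasm:row-sum,eq:vsasm:col-sum} have rank $n+k-1$ by \cref{lem:prelim:rowcol-rank}, which only gives the upper bound $nk-n-k+1=2k(k-1)$. So $2(k-1)$ further implicit equations must be found.

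\emph{Step 1 (extra implicit equations).} Row $1$ is odd, so~\cref{eq:vsasm:row-sum} gives $\sum_j y_{1,j}=0$. The column-prefix bound~\cref{eq:vsasm:col-prefix} with $i=1$ gives $y_{1,j}\geq0$ for every $j\in[k]$, hence $y_{1,j}=0$ for all $j$.

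Row $n$ is also odd. Combining the column prefix up to $n-1$ (at most $1$) with the column sum $1$ gives $y_{n,j}\geq0$, and since the row sum is $0$, we get $y_{n,j}=0$ for all $j$.

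Adding these $2k$ equations to the sum equations makes the row-sum equations for rows $1$ and $n$ redundant. What remains is equivalent to: rows $1$ and $n$ vanish, together with row- and column-sum equations on the $(2k-1)\times k$ block of rows $2,\dots,n-1$. By \cref{lem:prelim:rowcol-rank} that block system has rank $(2k-1)+k-1$. Hence the affine subspace $L$ cut out by all these equations has dimension
\[
(2k-1)k-(3k-2)=2k^2-4k+2=2(k-1)^2 .
\]
Therefore $\dim P^\core_\VSASM\leq 2(k-1)^2$.

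\emph{Step 2 (lower bound via a relative interior point).} I will exhibit a point $Y^\ast\in P^\core_\VSASM$ that satisfies strictly every inequality of~\cref{eq:vsasm:row-prefix,eq:vsasm:col-prefix} not already forced to equality on $L$. The forced ones are exactly the column prefixes ending at row $1$ or row $n-1$, which equal $0$ and $1$ on $L$. Such a point shows that the affine hull of the polytope is all of $L$, since near $Y^\ast$ every direction in $L$ stays feasible. The case $k=1$ is trivial: there are no row-prefix constraints, and the claimed dimension is $0$.

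For $k\geq2$, I start from the point that puts $1/k$ in every entry of each even row and $0$ elsewhere. I then perturb each odd interior row $i\in\{3,5,\dots,n-2\}$ to $(\varepsilon,0,\dots,0,-\varepsilon)$. The row prefixes of that row then equal $\varepsilon\in(0,1)$, which is strict. To restore the column sums, I subtract and add the corresponding small amounts in even rows, for example spread over all even rows, which keeps their entries close to $1/k$. The even-row prefixes are then close to $j/k\in(0,1)$ for $j\leq k-1$. The column prefixes over rows $1..i$ with $2\leq i\leq n-2$ lie between roughly $1/k$ and $1-1/k$, up to $O(\varepsilon)$, hence are strictly inside $(0,1)$ for small $\varepsilon$.

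The main obstacle is this bookkeeping in Step 2. One must check that the compensating perturbation in the even rows keeps every row and column prefix strict and preserves all sum equations. One must also confirm that the only tight prefix inequalities are the forced ones identified in Step 1. If the explicit perturbation becomes messy, the fallback is to take the average of suitable VSASM cores, for each non-forced inequality exhibiting a VSASM core on which it is slack.
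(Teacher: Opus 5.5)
Your proof is correct. The upper bound is essentially the paper's: you derive $y_{1,j}=y_{n,j}=0$ and apply \cref{lem:prelim:rowcol-rank} to rows $2,\dots,n-1$, which gives $5k-2$ independent equations.

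The lower bound takes a somewhat different route. The paper takes $\ol Y$ to be the barycenter of all VSASM cores. It certifies that $\ol Y$ is slack on every non-forced prefix inequality by exhibiting explicit VSASMs: a permutation matrix placed in the even rows, and an alternating last column plus a permutation matrix in the odd rows. It then writes down $(n-3)(k-1)$ explicit perturbations $\ol Y+\varepsilon(\chi_{i,j}-\chi_{i,k}-\chi_{n-1,j}+\chi_{n-1,k})$.

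You instead write down a fractional point directly and invoke the relative-interior principle. Your point works once the compensation is done inside each even row, subtracting $(k-1)\varepsilon/k$ in column $1$ and adding $(k-1)\varepsilon/k$ in column $k$:
\begin{itemize}
\item all sum equations hold;
\item even-row prefixes equal $j/k-(k-1)\varepsilon/k$;
\item odd interior row prefixes equal $\varepsilon$;
\item column prefixes up to row $i\in[2,n-2]$ are $\lfloor i/2\rfloor/k+O(\varepsilon)$.
\end{itemize}

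Your version is shorter and purely arithmetic, since no VSASMs need to be constructed. It relies entirely on the exactness of \cref{thm:vsasm:corepolytope} to know that $Y^\ast\in P^\core_\VSASM$, but the paper's perturbation step relies on that too. The paper's barycenter has the side benefit of showing each non-forced inequality is slack on an actual VSASM. It is also reused in \cref{thm:vsasm:facets} to rule out implicit equations, a role your $Y^\ast$ would fill equally well.

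One small slip: your list of inequalities forced on $L$ is incomplete. The row-prefix bounds in rows $1$ and $n$ are identically $0$ on $L$, so their lower bounds are tight at every point of $L$, including $Y^\ast$; read literally, your claim that $Y^\ast$ is strict on everything else is false. The argument survives, because the relative-interior principle only needs every inequality that is not strict at $Y^\ast$ to be constant (and satisfied) on $L$, and these are.
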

\begin{proof}
  It suffices to prove that the dimension of $P^\core_\VSASM$ is $\frac{(n-3)^2}{2}$, because the assembly map $\varphi$ restricts to an affine isomorphism between $P^\core_\VSASM$ and $P_\VSASM$, which preserves dimension.
  First, we give an upper bound.
  The constraints in~\cref{eq:vsasm:col-prefix,eq:vsasm:row-sum,eq:vsasm:col-sum} force all row-prefix sums in rows $1$ and $n$ to be~$0$; hence
  \begin{align}
    y&_{1,j} = 0 &\forall j \in [k],\label{eq:vsasm:dim:firstrow}\\
    y&_{n,j} = 0 &\forall j \in [k].\label{eq:vsasm:dim:lastrow}
  \end{align}
  Using the equations in~\cref{eq:vsasm:dim:firstrow,eq:vsasm:dim:lastrow}, we eliminate the variables $y_{1,j}$ and $y_{n,j}$ from the column-sum equations in~\cref{eq:vsasm:col-sum} as follows: for each $j\in[k]$, we replace the $j^\text{th}$ equation of~\cref{eq:vsasm:col-sum} by the difference of that equation and the two equations $y_{1,j}=0$ and $y_{n,j}=0$.
  This elementary row operation does not change the solution set, so it does not change the rank of the equation system.
  After this replacement, none of the modified equations in~\cref{eq:vsasm:col-sum} involves a variable of the form $y_{1,j}$ or $y_{n,j}$.
  Moreover, the equations in~\cref{eq:vsasm:row-sum} for $i\in\{1,n\}$ become redundant.
  By \cref{lem:prelim:rowcol-rank}, $n+k-3$ independent equations remain in~\cref{eq:vsasm:row-sum,eq:vsasm:col-sum}, which --- after the elimination --- involve only variables $y_{i,j}$ with $i\in[2,n-1]$, and thus have disjoint support from~\cref{eq:vsasm:dim:firstrow,eq:vsasm:dim:lastrow}.
  We obtain $(n+k-3) + 2k = 5k-2$ independent equations.
  These define an affine subspace of dimension
  $
    |C| - (5k - 2) = nk - (5k - 2) = 2k^2 - 4k + 2 = \frac{(n-3)^2}{2},
  $
  which contains $P^\core_\VSASM$ and hence gives the bound $\dim(P^\core_\VSASM)\leq \frac{(n-3)^2}{2}$.

  \medskip
  Second, we construct $\frac{(n-3)^2}{2}+1$ affinely independent cores in $P^\core_\VSASM$ and hence obtain a matching lower bound.
  Let $\ol Y$ denote the average of the cores of VSASMs.
  We claim that $\ol Y$ reaches neither the lower nor the upper bound in~\cref{eq:vsasm:row-prefix} for any $i \in [2,n-1], j \in [k-1]$.
  It suffices to show that, for each such pair $i$ and $j$, there exists a VSASM for which the sum of the first $j$ entries in row $i$ is $1$, and there exists a VSASM for which this sum is $0$.

  To see this, place a $k \times k$ permutation matrix into the submatrix formed by the first $k$ entries of the $k$ rows with even index in such a way that $y_{i,1} = 1$ for some fixed even $i \in [2,n-1]$, and fill the rest of the core entries with $0$.
  For each even $i \in [2,n-1]$, we obtain the core of a VSASM in which, for the fixed index $i$, the sum of the first $j$ entries in row $i$ equals $1$ for every $j \in [k]$, and equals $0$ for every row with odd index and every $j \in [k]$.

  Next, set $y_{i,k} = (-1)^i$ for every $i \in [2,n-1]$.
  Place a $(k-1) \times (k-1)$ permutation matrix into the submatrix formed by the first $k-1$ entries of the $k-1$ rows with odd index in such a way that $y_{i,1} = 1$ for some fixed odd $i \in [3,n-2]$, and fill the rest of the core entries with $0$.
  This yields the core of a VSASM in which, for the fixed index $i$, the sum of the first $j$ entries in row $i$ equals $1$ for every $j \in [k-1]$, and equals $0$ for every even $i \in [2,n-1]$ and every $j \in [k-1]$.
  Thus, $\ol Y$ does not reach equality in~\cref{eq:vsasm:row-prefix} for any $i \in [2,n-1], j \in [k-1]$.
  An analogous argument shows that $\ol Y$ does not reach equality in~\cref{eq:vsasm:col-prefix} for any $i \in [2,n-2], j \in [k]$.

  For each $i \in [2,n-2], j \in [k-1]$, define
  $
    \ol Y^{i,j}
    = \ol Y
    + \varepsilon \chi_{i,j}
    - \varepsilon \chi_{i,k}
    - \varepsilon \chi_{n-1,j}
    + \varepsilon \chi_{n-1,k}
  $, where $\varepsilon$ is a small positive constant.
  By the argument above, $\ol Y^{i,j}$ does not violate any inequality in~\cref{eq:vsasm:row-prefix,eq:vsasm:col-prefix} when $\varepsilon > 0$ is small enough, and it satisfies the equations in~\cref{eq:vsasm:row-sum,eq:vsasm:col-sum} by the definition of $\ol Y^{i,j}$; thus $\ol Y^{i,j} \in P^\core_\VSASM$.
  The cores $\ol Y$ and $\ol Y^{i,j}$ for $i \in [2,n-2], j \in [k-1]$ are affinely independent: only the difference $\ol Y^{i,j}-\ol Y$ has a non-zero entry at $(i,j)$, so the cores $\{\ol Y^{i,j}-\ol Y : i \in [2,n-2], j \in [k-1]\}$ are linearly independent.
  Therefore, the dimension of $P^\core_\VSASM$ is at least $(n-3)(k-1) = \frac{(n-3)^2}{2}$.

  Combining the lower and upper bounds yields $\dim(P^\core_\VSASM)=\dim(P_\VSASM)=\frac{(n-3)^2}{2}$.
\end{proof}

\begin{theorem}\label{thm:vsasm:facets}
  Let $n \geq 7$ be odd, and set $k=\floor*{n/2}$.
  The facets of $P^\core_\VSASM$ are given by tightening the lower bound in~\cref{eq:vsasm:row-prefix} to equality for
  $
    (i,j) \in \{ (i,j) : i \in [3,n-2], j \in [k-1 - \chi_{2 \mid i}] \}
  $
  and the upper bound for
  $
    (i,j) \in \{ (i,j) : i \in [4,n-3], j \in [2,k-1-\chi_{2 \nmid i}]\}
  $;
  and by tightening the lower bound in~\cref{eq:vsasm:col-prefix} to equality for
  $
    (i,j) \in \{(2,1)\} \cup \{(i,j) : i \in [2,n-4], j \in [2,k-\chi_{2 \mid i}]\}
  $,
  and the upper bound for
  $
    (i,j) \in \{(n-2,1)\} \cup \{(i,j) : i \in [4,n-2], j \in [2,k-\chi_{2 \nmid i}]\}
  $.
  In particular, the number of facets of $P^\core_\VSASM$ is $2n^2-19n+49$.
\end{theorem}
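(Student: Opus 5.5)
The plan is to apply \cref{thm:irredundant-ineq-facets} to the system~\crefrange{eq:vsasm:real}{eq:vsasm:col-sum}, which describes $P^\core_\VSASM$ by \cref{thm:vsasm:corepolytope}. This needs three steps:
\begin{enumerate}[label=(\roman*)]
\item identify every inequality of~\cref{eq:vsasm:row-prefix,eq:vsasm:col-prefix} that is an implicit equation;
\item discard the inequalities that are redundant, meaning implied by the equations together with other inequalities, and among duplicates keep one representative;
\item prove that each listed inequality is irredundant in the remaining system.
\end{enumerate}
Since the proof of \cref{thm:vsasm:dim} already exhibits the $5k-2$ independent implicit equations, with \cref{eq:vsasm:dim:firstrow,eq:vsasm:dim:lastrow} among them, step (i) amounts to checking which prefix constraints become equalities on the affine hull. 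This holds, for example, for the column-prefix bounds at $i=1$ (prefix sum $0$) and at $i=n-1$ (prefix sum $1$), and for the row-prefix bounds in rows $1$ and $n$. Every other prefix constraint is strict at the barycenter $\ol Y$ by the constructions in the proof of \cref{thm:vsasm:dim}, so it is not an implicit equation.

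For step (ii) I would go through the boundary rows and columns. The relevant facts are the parity of the row sums $\chi_{2\mid i}$, the zero rows $1$ and $n$, and the fact that column $k$ must carry the alternating pattern near the middle. Each excluded index pair from the statement should be written explicitly as a non-negative combination of equations and other listed inequalities. Typical relations of this kind are the following.
\begin{itemize}
\item The row-$2$ prefix sums coincide with column-prefix sums at $i=2$, because row $1$ vanishes.
\item The row-$(n-1)$ prefix sums are complements of column-prefix sums at $i=n-2$.
\item A row-prefix upper bound in an odd row $i$ (row sum $0$) follows from the lower bound on the complementary suffix, and the suffix is in turn controlled by column-prefix constraints in column $k$.
\item The duplicated constraints near the corners, which explain the isolated pairs $(2,1)$ and $(n-2,1)$.
\end{itemize}
I would organize this as a short list of cases according to the parity of $i$ and whether $j\in\{1,k-1,k\}$. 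Each case should be a two- or three-line identity.

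For step (iii), the main obstacle, I would show for each listed inequality $a^\top y\le\beta$ that there is a point $Z$ in the affine hull of $P^\core_\VSASM$ that violates it but satisfies every other inequality. The construction starts from an integer core $Y$ (a VSASM core) that is tight at the given inequality and, where possible, has slack in all other constraints involving the same row and column prefixes. Then I perturb $Y$ by $\varepsilon$ times an alternating $\pm$ pattern on a rectangle $\{i,i'\}\times\{j,j'\}$ (or a longer alternating cycle) that crosses the boundary of the given prefix. Such a perturbation preserves all row and column sums, shifts only the prefix sums of the given inequality and a few neighbors, and these neighbors must be slack in $Y$. Building $Y$ uses the same permutation-matrix-in-even/odd-rows construction as in the proof of \cref{thm:vsasm:dim}, adapted so that the given row or column prefix is saturated. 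I expect the hardest part to be the rows and columns adjacent to the middle and to the border, where parity forces many prefix sums; I would try first to treat these by explicit small patterns, checking $n=7$ by hand to fix them.

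Finally, the count follows by summing the four index sets:
\begin{itemize}
\item row lower bounds: $\sum_{i=3}^{n-2}(k-1-\chi_{2\mid i})$;
\item row upper bounds: $\sum_{i=4}^{n-3}(k-2-\chi_{2\nmid i})$;
\item column lower bounds: $1+\sum_{i=2}^{n-4}(k-1-\chi_{2\mid i})$;
\item column upper bounds: $1+\sum_{i=4}^{n-2}(k-1-\chi_{2\nmid i})$.
\end{itemize}
Substituting $n=2k+1$ and simplifying should give $2n^2-19n+49$, which I would sanity-check at $n=7$, where it gives $14$.
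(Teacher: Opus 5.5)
Your skeleton matches the paper's, and your count is correct: for $n=2k+1$ your four sums total $8k^2-30k+32=2n^2-19n+49$. The skeleton is to split off the implicit equations using the barycenter $\ol Y$ from the proof of \cref{thm:vsasm:dim}, show that the unlisted prefix bounds follow from the listed ones plus the equations, give one separating point per listed inequality, and conclude with \cref{thm:irredundant-ineq-facets}.

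The genuine gap is step (iii), which carries the real content of the theorem and is never carried out. The statement covers every odd $n\geq 7$ and $2n^2-19n+49$ inequalities. Your plan is to handle the delicate cases ``by explicit small patterns, checking $n=7$ by hand''. That gives neither a parametrized family of certificates nor an induction on $n$.

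Your perturbation scheme also needs more than you indicate. At a VSASM core every prefix sum is $0$ or $1$, so every prefix constraint is tight on one side, and ``slack in all other constraints'' is impossible. For each listed $(i,j)$ and each $n$ you would need two things:
\begin{itemize}
\item a VSASM core, and
\item a cycle avoiding rows $1$ and $n$ (these are fixed to zero on the affine hull),
\end{itemize}
such that every prefix sum the cycle changes, other than the target, sits at the bound opposite to its direction of change. None of this is constructed.

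The paper fills exactly this hole with integral certificates that lie outside $P^\core_\VSASM$, each violating exactly one facet inequality. It gives explicit certificates for $n=7$, plus the extra base case $U^{9,\mathrm H}_{5,2}$, which has no predecessor. It then uses the operators $\extUL,\extUR,\extBL,\extBR$. Each operator embeds a size-$(n-2)$ certificate and adjoins two new rows and one new column, all zero except a single $1$ at the adjacent corner. This preserves the equations and transports the index of the unique violated inequality to $(i+2,j+1)$, $(i+2,j)$, $(i,j+1)$, or $(i,j)$, respectively. Some uniform mechanism of this kind is indispensable.

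Step (ii) is also only gestured at, and some of the relations you quote are off.
\begin{itemize}
\item Row-$2$ prefix sums do not coincide with column prefixes at $i=2$. On the affine hull only the single entries do, via $y_{2,j}=y_{1,j}+y_{2,j}$.
\item Likewise in row $n-1$, it is the entries $y_{n-1,j}=1-\sum_{i'\le n-2}y_{i',j}$ that are complements, not the prefix sums.
\item To bound those row prefixes you need $y_{2,j}\geq 0$ and $y_{n-1,j}\geq 0$ for all $j\in[k]$, including $j=k$, where no listed inequality gives this directly. The paper gets $y_{2,k}\geq 0$ from $y_{3,k}\le 0$ (the horizontal facet lower bound at $(3,k-1)$ plus the row-$3$ equation) together with the vertical facet lower bound at $(3,k)$. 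It gets $y_{n-1,k}\geq 0$ from $y_{n-2,k}\le 0$ together with the vertical facet upper bound at $(n-3,k)$.
\item Several non-facet bounds are not local two- or three-line identities. The row-prefix upper bounds in row $n-2$ and the column-prefix lower bounds at $(n-3,j)$ require summing the row equations over rows $2,\dots,n-3$ (total $k-1$) against the vertical facet upper bounds at $(n-3,j')$ for all $j'\in[2,k]$.
\end{itemize}
Farkas guarantees such combinations exist once redundancy holds, but exhibiting them is part of the proof.
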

\begin{proof}
  The facets are obtained by tightening a single inequality in one of~\cref{eq:vsasm:row-prefix,eq:vsasm:col-prefix} to equality for the index pairs listed in the statement of the theorem.
  We call the instances of the lower bounds in~\cref{eq:vsasm:row-prefix} that are tightened to equality the \emph{horizontal facet lower bounds}, and we define the \emph{horizontal facet upper bounds} analogously.
  Likewise, we call the instances of the lower bounds in~\cref{eq:vsasm:col-prefix} that are tightened to equality the \emph{vertical facet lower bounds}, and we define the \emph{vertical facet upper bounds} analogously.
  We refer to the union of these four families as the \emph{facet inequalities}.

  We proceed in two steps.
  First, we show that the facet inequalities together with the equations in~\crefrange{eq:vsasm:row-sum}{eq:vsasm:dim:lastrow} imply every inequality in~\cref{eq:vsasm:row-prefix,eq:vsasm:col-prefix}.
  Then, for every facet inequality, we construct a core of an $n \times n$ matrix violating that facet inequality and no other, thereby proving that no facet inequality is redundant.
  The core $\ol Y$ constructed in the second step of the proof of \cref{thm:vsasm:dim} shows that no facet inequality is an implicit equation; thus the two steps together imply that the facet inequalities form a minimal system that, extended with the equations in~\crefrange{eq:vsasm:row-sum}{eq:vsasm:dim:lastrow}, describes the convex hull of the cores of VSASMs, which proves the theorem.

  \medskip
  Next, we prove that the facet inequalities together with the equations in~\crefrange{eq:vsasm:row-sum}{eq:vsasm:dim:lastrow} imply every inequality in~\cref{eq:vsasm:row-prefix,eq:vsasm:col-prefix}.
  Clearly, we need to treat only those inequalities in~\cref{eq:vsasm:row-prefix,eq:vsasm:col-prefix} that are \emph{non-facet} inequalities, namely, the lower bounds in~\cref{eq:vsasm:row-prefix} for
  \[
    (i,j) \in \bigl(\{1,2,n-1,n\}\times[k-1]\bigr)\ \cup\ \bigl(\{i\in[4,n-3] : 2 \mid i\} \times \{k-1\}\bigr)
  \]
  and the upper bounds for
  \[
    (i,j) \in \bigl(\{1,2,3,n-2,n-1,n\}\times[k-1]\bigr)\ \cup\ \bigl([4,n-3]\times\{1\}\bigr)\ \cup\ \bigl(\{i\in[5,n-4] : 2 \nmid i\} \times \{k-1\}\bigr);
  \]
  and the lower bounds in~\cref{eq:vsasm:col-prefix} for
  \[
    (i,j) \in \bigl(\{1\}\times[k]\bigr)\ \cup\ \bigl([3,n-1]\times\{1\}\bigr)\ \cup\ \bigl(\{n-3,n-2,n-1\}\times[2,k]\bigr)\ \cup\ \bigl(\{i\in[2,n-5] : 2 \mid i\} \times \{k\}\bigr)
  \]
  and the upper bounds for
  \[
    (i,j) \in \bigl(\{1,2,3,n-1\}\times[k]\bigr)\ \cup\ \bigl([4,n-3]\times\{1\}\bigr)\ \cup\ \bigl(\{i\in[5,n-2] : 2 \nmid i\} \times \{k\}\bigr).
  \]

  We start with the non-facet inequalities in~\cref{eq:vsasm:row-prefix}.
  By the equations in~\cref{eq:vsasm:dim:firstrow,eq:vsasm:dim:lastrow}, we have $y_{1,j}=y_{n,j}=0$ for $j\in[k]$; hence the bounds in~\cref{eq:vsasm:row-prefix} hold for $i \in \{1,n\}, j \in [k-1]$.
  The vertical facet lower bound at $(2,1)$ gives $y_{2,1}\geq 0$, and the horizontal facet lower bounds at $(i,1)$ give $y_{i,1}\geq 0$ for all $i\in[3,n-2]$.
  Moreover, the vertical facet upper bound at $(n-2,1)$ yields $\sum_{i'=1}^{n-2}y_{i',1}\leq 1$, hence $y_{n-1,1}=1-\sum_{i'=1}^{n-2}y_{i',1}\geq 0$ by~\cref{eq:vsasm:col-sum,eq:vsasm:dim:lastrow}.
  Consequently $y_{i,1}\geq 0$ for all $i\in[n]$, which also implies $y_{i,1}\leq 1$ for all $i\in[n]$ by~\cref{eq:vsasm:col-sum} for $j=1$, and thus the non-facet row-prefix upper bounds at $(i,1)$ for $i \in [n]$ hold.

  For $i=2,j\in[k-1]$, the vertical facet lower bound gives $y_{2,j}\geq 0$.
  Moreover, we have $y_{3,k}\leq 0$ by the horizontal facet lower bound at $(3,k-1)$ and the equation in~\cref{eq:vsasm:row-sum} for $i=3$, which together with the vertical facet lower bound for $i=3,j=k$, i.e., $0\leq y_{1,k}+y_{2,k}+y_{3,k}=y_{2,k}+y_{3,k}$, imply $y_{2,k}\geq 0$.
  Thus $y_{2,j}\geq 0$ for every $j \in [k]$, and therefore both the lower and the upper bound in~\cref{eq:vsasm:row-prefix} hold for $i=2,j\in[k-1]$.
  For $i=n-1,j\in[k-1]$, the vertical facet upper bound gives $\sum_{i'=1}^{n-2}y_{i',j}\leq 1$, hence
  $
    y_{n-1,j}= \sum_{i'=1}^{n}y_{i',j}-\sum_{i'=1}^{n-2}y_{i',j} = 1-\sum_{i'=1}^{n-2}y_{i',j}\geq 0
  $
  by~\cref{eq:vsasm:col-sum,eq:vsasm:dim:lastrow}.
  Furthermore, using the horizontal facet lower bound at $(n-2,k-1)$ and the equation in~\cref{eq:vsasm:row-sum} for $i = n-2$, we obtain $y_{n-2,k}\leq 0$, which together with the vertical facet upper bound for $i=n-3,j=k$ imply $\sum_{i'=1}^{n-3}y_{i',k}\leq 1$, hence $\sum_{i'=1}^{n-2}y_{i',k}\leq 1$ and thus $y_{n-1,k}=1-\sum_{i'=1}^{n-2}y_{i',k}\geq 0$.
  Therefore, $y_{n-1,j}\geq 0$ for all $j\in[k]$.
  By the equations in~\cref{eq:vsasm:row-sum},
  $
  0\leq \sum_{j'=1}^{j} y_{n-1,j'}\leq \sum_{j'=1}^{k} y_{n-1,j'}=1
  $
  for all $j\in[k-1]$, and thus the bounds in~\cref{eq:vsasm:row-prefix} hold for $i=n-1,j\in[k-1]$.

  Next, let $i=3,j\in[k-1]$.
  For every $j'\in[j+1,k]$, the vertical facet lower bound at $(3,j')$ gives $0\leq y_{2,j'}+y_{3,j'}$, hence $-y_{3,j'}\leq y_{2,j'}$.
  Using the equations in~\cref{eq:vsasm:row-sum} for $i=2,3$, we obtain
  $
    \sum_{j'=1}^{j} y_{3,j'}
    = -\sum_{j'=j+1}^{k} y_{3,j'}
    \leq \sum_{j'=j+1}^{k} y_{2,j'}
    = 1-\sum_{j'=1}^{j} y_{2,j'}
    \leq 1,
  $
  proving the upper bound in~\cref{eq:vsasm:row-prefix} for $i=3,j\in[k-1]$.

  Now let $i=n-2$.
  We have already verified the upper bound in~\cref{eq:vsasm:row-prefix} for $j=1$.
  Fix $j\in[2,k-1]$.
  Summing the first $j$ vertical facet upper bounds for $i=n-2$ yields
  $
    \sum_{j'=1}^{j} y_{n-2,j'} \leq j-\sum_{j'=1}^{j}\sum_{i'=1}^{n-3} y_{i',j'}
  $
  after simple rearrangement.
  Summing the equations in~\cref{eq:vsasm:row-sum} over $[2,n-3]$ gives $\sum_{i'=2}^{n-3}\sum_{j'=1}^{k} y_{i',j'}=k-1$.
  Moreover, for every $j' \in [j+1,k] \subseteq [2,k]$, we have $\sum_{i'=2}^{n-3} y_{i',j'}\leq 1$ by the vertical facet upper bound at $(n-3,j')$.
  Hence $\sum_{i=2}^{n-3}\sum_{j'=1}^{j} y_{i,j'}\geq j-1$, and since the first row is zero we obtain $\sum_{j'=1}^{j}\sum_{i'=1}^{n-3} y_{i',j'}\geq j-1$.
  Plugging this into the previous display gives $\sum_{j'=1}^{j} y_{n-2,j'}\leq 1$ for all $j\in[2,k-1]$, which is exactly the upper bound in~\cref{eq:vsasm:row-prefix} for $i=n-2$.

  If $i\in[4,n-3]$ is even, then the vertical facet lower bound at $(i-1,k)$ and the vertical facet upper bound at $(i,k)$ together imply $y_{i,k}\leq 1$, hence $\sum_{j'=1}^{k-1}y_{i,j'}=1-y_{i,k}\geq 0$ by~\cref{eq:vsasm:row-sum}, which is the non-facet lower bound in~\cref{eq:vsasm:row-prefix} at $(i,k-1)$.
  If $i\in[5,n-4]$ is odd, then the vertical facet upper bound at $(i-1,k)$ and the vertical facet lower bound at $(i,k)$ together imply $y_{i,k}\geq -1$, hence $\sum_{j'=1}^{k-1}y_{i,j'}=-y_{i,k}\leq 1$ by~\cref{eq:vsasm:row-sum}, which is the non-facet upper bound in~\cref{eq:vsasm:row-prefix} at $(i,k-1)$.
  This completes the derivation of all bounds in~\cref{eq:vsasm:row-prefix}.

  \medskip
  We now turn to the non-facet inequalities in~\cref{eq:vsasm:col-prefix}.
  For $i=1,j\in[k]$, the inequalities follow because the first row is uniformly zero by~\cref{eq:vsasm:dim:firstrow}.
  We have already shown above that $y_{i',1}\geq 0$ for all $i'\in[n]$ and $\sum_{i'=1}^{n}y_{i',1}=1$ by~\cref{eq:vsasm:col-sum}, and thus the lower and upper bounds in~\cref{eq:vsasm:col-prefix} for $i\in[n],j=1$ follow.

  Let $i=2,j\in[2,k]$.
  As shown above, we have $y_{2,j'}\geq 0$ for all $j'\in[k]$, and the equation in~\cref{eq:vsasm:row-sum} for $i=2$ gives $\sum_{j'=1}^{k}y_{2,j'}=1$.
  Hence $y_{2,j}\leq 1$, and since $y_{1,j}=0$ by~\cref{eq:vsasm:dim:firstrow}, we obtain
  $
    y_{1,j} + y_{2,j} = y_{2,j}\leq 1
  $,
  proving the non-facet upper bound in~\cref{eq:vsasm:col-prefix} at $(2,j)$.

  Next, we derive the non-facet upper bound in~\cref{eq:vsasm:col-prefix} for $i=3,j\in[k]$.
  We have $y_{2,1}\geq 0$ by the vertical facet lower bound at $(2,1)$, and $y_{3,1}\geq 0$ by the horizontal facet lower bound at $(3,1)$, hence $y_{2,1}+y_{3,1}\geq 0$.
  For every $j'\in[2,k-1]$, the vertical facet lower bound at $(3,j')$ yields
  $
    0\leq y_{1,j'} + y_{2,j'} + y_{3,j'} = y_{2,j'}+y_{3,j'},
  $
  because $y_{1,j'}=0$ by~\cref{eq:vsasm:dim:firstrow}.
  Moreover, we have
  $
    \sum_{j'=1}^{k}(y_{2,j'}+y_{3,j'})
    = \sum_{j'=1}^{k}y_{2,j'}+\sum_{j'=1}^{k}y_{3,j'}
    = 1+0
    = 1
  $
  by~\cref{eq:vsasm:row-sum}.
  Since $y_{2,j'}+y_{3,j'}\geq 0$ for all $j'\in[k]$, it follows that $y_{2,j}+y_{3,j}\leq 1$, and hence
  $
    y_{1,j} + y_{2,j} + y_{3,j} = y_{2,j}+y_{3,j}\leq 1,
  $
  proving the non-facet upper bound in~\cref{eq:vsasm:col-prefix} for $i=3,j\in[k]$.

  For $i=n-1$, we have
  $
    \sum_{i'=1}^{n-1}y_{i',j} = \sum_{i'=1}^{n}y_{i',j} = 1
  $
  by~\cref{eq:vsasm:col-sum,eq:vsasm:dim:lastrow}, proving the non-facet lower and upper bound in~\cref{eq:vsasm:col-prefix} for $i=n-1,j\in[k]$.

  Fix $j\in[2,k]$.
  Summing the equations in~\cref{eq:vsasm:row-sum} over $[2,n-3]$ gives
  $
    \sum_{i=2}^{n-3}\sum_{j'=1}^{k} y_{i,j'}=k-1.
  $
  For every $j'\in[2,k]$, the vertical facet upper bound at $(n-3,j')$ yields
  $
    \sum_{i=2}^{n-3}y_{i,j'}\leq 1,
  $
  and we also have $\sum_{i=2}^{n-3}y_{i,1}\leq \sum_{i=1}^{n}y_{i,1}=1$ by~\cref{eq:vsasm:col-sum}.
  Therefore,
  $
    \sum_{i=2}^{n-3}y_{i,j}
    = (k-1) - \sum_{j'\in[k]\setminus\{j\}}\sum_{i=2}^{n-3}y_{i,j'}
    \geq (k-1) - (k-1)
    = 0,
  $
  and, since the first row is zero, we obtain
  $
    0\leq \sum_{i'=1}^{n-3}y_{i',j},
  $
  which is the non-facet lower bound in~\cref{eq:vsasm:col-prefix} at $(n-3,j)$.
  Using $\sum_{i=1}^n y_{i,k}=1$, $y_{n,k}=0$, and the vertical facet upper bound in~\cref{eq:vsasm:col-prefix} for $(n - 3,k)$, we obtain
  $
    y_{n-1,k}
    =\sum_{i=1}^n y_{i,k}-\sum_{i=1}^{n-3} y_{i,k}-y_{n-2,k}-y_{n,k}
    \geq -\,y_{n-2,k}
  $.
  Hence $y_{n-1,k}\geq 0$, since $y_{n-2,k}\leq 0$ follows from $\sum_{j=1}^{k} y_{n-2,j}=0$ together with the horizontal facet lower bound for $(n-2, k-1)$.
  Moreover, as $y_{n-1,j}\geq 0$ was already established for all $j\in[k-1]$, every summand in
  $
    \sum_{j=1}^{k} y_{n-1,j}=1
  $
  is non-negative, and therefore $y_{n-1,j}\leq 1$ for all $j \in [k]$.
  Combining this with $\sum_{i'=1}^n y_{i',j} = 1$ and $y_{n,j} = 0$ from~\cref{eq:vsasm:dim:lastrow}, we get, for each $j \in [k]$,
  $
    \sum_{i'=1}^{n-2} y_{i',j}
    = \sum_{i'=1}^n y_{i',j}-y_{n-1,j}
    = 1 - y_{n-1,j}
    \leq 1
  $.
  This proves the non-facet upper bound in~\cref{eq:vsasm:col-prefix} for $(n-2, j)$.
  
  Now we derive the non-facet bounds in~\cref{eq:vsasm:col-prefix} for $j = k$.
  For every odd $i\in[3,n-2]$,~\cref{eq:vsasm:row-sum} gives $\sum_{j'=1}^{k}y_{i,j'} = 0$ and the horizontal facet lower bound at $(i, k-1)$ gives $\sum_{j'=1}^{k-1}y_{i,j'} \geq 0$, hence $y_{i,k} = -\sum_{j'=1}^{k-1}y_{i,j'} \leq 0$.
  Now let $i\in[2,n-5]$ be even.
  Then $i+1$ is odd and lies in $[3, n-4]$, so $\sum_{i'=1}^{i+1}y_{i',k} \geq 0$ is a vertical facet lower bound.
  Together with $y_{i+1,k} \leq 0$ this implies
  $
    \sum_{i'=1}^{i}y_{i',k}
    = \sum_{i'=1}^{i+1}y_{i',k} - y_{i+1,k}
    \geq 0,
  $
  which proves the non-facet lower bounds in~\cref{eq:vsasm:col-prefix} for even $i\in[2,n-5]$ and $j=k$.

  Similarly, let $i\in[5,n-2]$ be odd.
  Then $i-1$ is even and lies in $[4,n-3]$, so $\sum_{i'=1}^{i-1}y_{i',k}\leq 1$ is a vertical facet upper bound.
  Together with $y_{i,k}\leq 0$ this implies
  $
    \sum_{i'=1}^{i}y_{i',k}
    = \sum_{i'=1}^{i-1}y_{i',k} + y_{i,k}
    \leq 1,
  $
  which proves the non-facet upper bounds in~\cref{eq:vsasm:col-prefix} for odd $i\in[5,n-2]$ and $j=k$.

  \medskip
  It remains to show that no facet inequality is redundant.
  For every horizontal facet lower bound, we construct a core $L^{n,\mathrm H}_{i,j} \in \R^C$ that violates the facet lower bound in~\cref{eq:vsasm:row-prefix} for the given $i,j$ and satisfies every other facet inequality as well as the equations in~\crefrange{eq:vsasm:row-sum}{eq:vsasm:dim:lastrow}.
  Similarly, we construct the cores $L^{n,\mathrm V}_{i,j}, U^{n,\mathrm H}_{i,j}, U^{n,\mathrm V}_{i,j} \in \R^C$ defined analogously for the vertical facet lower bounds, the horizontal and vertical facet upper bounds for the respective indices $i$ and $j$.

  Now we are ready to construct $L^{n,\mathrm H}_{i,j}$, $L^{n,\mathrm V}_{i,j}$, $U^{n,\mathrm H}_{i,j}$, and $U^{n,\mathrm V}_{i,j}$ via an inductive approach.
  For $n=7$, we set
  \[
    \mathrlap{L^{7,\mathrm H}_{3,1}}\hphantom{U^{7,\mathrm V}_{5,1}}
    =
    \vcenter{\hbox{\begin{ytableau}
          0 & 0 & 0\\
          1 & 0 & 0\\
          -1 & 1 & 0\\
          1 & 0 & 0\\
          0 & 0 & 0\\
          0 & 0 & 1\\
          0 & 0 & 0
        \end{ytableau}}}\,,
    \quad
    \mathrlap{L^{7,\mathrm H}_{3,2}}\hphantom{U^{7,\mathrm V}_{4,2}}
    =
    \vcenter{\hbox{\begin{ytableau}
          0 & 0 & 0\\
          0 & 1 & 0\\
          0 &-1 & 1\\
          0 & 1 & 0\\
          0 & 0 & 0\\
          1 & 0 & 0\\
          0 & 0 & 0
        \end{ytableau}}}\,,
    \quad
    \mathrlap{L^{7,\mathrm H}_{4,1}}\hphantom{U^{7,\mathrm V}_{5,2}}
    =
    \vcenter{\hbox{\begin{ytableau}
          0 & 0 & 0\\
          0 & 0 & 1\\
          1 & 0 &-1\\
          -1 & 1 & 1\\
          1 & 0 &-1\\
          0 & 0 & 1\\
          0 & 0 & 0
        \end{ytableau}}}\,,
    \quad
    \mathrlap{L^{7,\mathrm H}_{5,1}}\hphantom{U^{7,\mathrm V}_{4,3}}
    =
    \vcenter{\hbox{\begin{ytableau}
          0 & 0 & 0\\
          0 & 0 & 1\\
          0 & 0 & 0\\
          1 & 0 & 0\\
          -1 & 1 & 0\\
          1 & 0 & 0\\
          0 & 0 & 0
        \end{ytableau}}}\,,
    \quad
    \mathrlap{L^{7,\mathrm H}_{5,2}}\hphantom{U^{7,\mathrm H}_{4,2}}
    =
    \vcenter{\hbox{\begin{ytableau}
          0 & 0 & 0\\
          1 & 0 & 0\\
          0 & 0 & 0\\
          0 & 1 & 0\\
          0 &-1 & 1\\
          0 & 1 & 0\\
          0 & 0 & 0
        \end{ytableau}}}\,,
    \]
    \[
    L^{7,\mathrm V}_{2,1}=
    \vcenter{\hbox{\begin{ytableau}
          0 & 0 & 0\\
          -1 & 1 & 1\\
          1 &-1 & 0\\
          1 & 0 & 0\\
          0 & 0 & 0\\
          0 & 1 & 0\\
          0 & 0 & 0
        \end{ytableau}}}\,,
    \quad
    L^{7,\mathrm V}_{2,2}=
    \vcenter{\hbox{\begin{ytableau}
          0 & 0 & 0\\
          1 &-1 & 1\\
          0 & 1 &-1\\
          0 & 1 & 0\\
          0 & 0 & 0\\
          0 & 0 & 1\\
          0 & 0 & 0
        \end{ytableau}}}\,,
    \quad
    L^{7,\mathrm V}_{3,2}=
    \vcenter{\hbox{\begin{ytableau}
          0 & 0 & 0\\
          0 & 0 & 1\\
          1 &-1 & 0\\
          0 & 1 & 0\\
          0 & 0 & 0\\
          0 & 1 & 0\\
          0 & 0 & 0
        \end{ytableau}}}\,,
    \quad
    L^{7,\mathrm V}_{3,3}=
    \vcenter{\hbox{\begin{ytableau}
          0 & 0 & 0\\
          1 & 0 & 0\\
          0 & 1 &-1\\
          0 & 0 & 1\\
          0 & 0 & 0\\
          0 & 0 & 1\\
          0 & 0 & 0
        \end{ytableau}}}\,,
  \]
  \[
    U^{7,\mathrm H}_{4,2}=
    \vcenter{\hbox{\begin{ytableau}
          0 & 0 & 0\\
          0 & 0 & 1\\
          0 & 0 & 0\\
          1 & 1 &-1\\
          0 & 0 & 0\\
          0 & 0 & 1\\
          0 & 0 & 0
        \end{ytableau}}}\,,
    \quad
    U^{7,\mathrm V}_{4,2}=
    \vcenter{\hbox{\begin{ytableau}
          0 & 0 & 0\\
          0 & 1 & 0\\
          0 & 0 & 0\\
          0 & 1 & 0\\
          1 &-1 & 0\\
          0 & 0 & 1\\
          0 & 0 & 0
        \end{ytableau}}}\,,
    \quad
    U^{7,\mathrm V}_{4,3}=
    \vcenter{\hbox{\begin{ytableau}
          0 & 0 & 0\\
          0 & 0 & 1\\
          0 & 0 & 0\\
          0 & 0 & 1\\
          0 & 1 &-1\\
          1 & 0 & 0\\
          0 & 0 & 0
        \end{ytableau}}}\,,
    \quad
    U^{7,\mathrm V}_{5,1}=
    \vcenter{\hbox{\begin{ytableau}
          0 & 0 & 0\\
          1 & 0 & 0\\
          0 & 0 & 0\\
          0 & 1 & 0\\
          1 &-1 & 0\\
          -1 & 1 & 1\\
          0 & 0 & 0
        \end{ytableau}}}\,,
    \quad
    U^{7,\mathrm V}_{5,2}=
    \vcenter{\hbox{\begin{ytableau}
          0 & 0 & 0\\
          0 & 1 & 0\\
          0 & 0 & 0\\
          0 & 0 & 1\\
          0 & 1 &-1\\
          1 &-1 & 1\\
          0 & 0 & 0
        \end{ytableau}}}\,.
    \]

  \medskip
  For the case $n\geq 9$, assume that all certifying cores for size $n-2$ have already been defined.
  That is, for every facet inequality of $P^\core_\VSASM$ for size $n-2$, we already have the core $L^{n-2,\mathrm H}_{i,j}$, $L^{n-2,\mathrm V}_{i,j}$, $U^{n-2,\mathrm H}_{i,j}$, or $U^{n-2,\mathrm V}_{i,j}$ that violates that particular facet inequality and satisfies every other facet inequality as well as the equations in~\crefrange{eq:vsasm:row-sum}{eq:vsasm:dim:lastrow}.

  In order to build the certifying cores of $n\times n$ VSASMs from the certifying cores for size $n-2$, we define the four \emph{extension operators}
  \begin{center}
    \begin{tikzpicture}[scale=.8, baseline=(current bounding box.center)]
      \node at (-1.4,2.5) {$\extUL(Z)\ =$};
      \node[draw=none] at (4.15,2.5) {$,$};
      \draw[fill=cyan!15] (1,0) rectangle (4,3);
      \draw (0,0) rectangle (4,5);

      \node at (0.5,4.5) {$0$};
      \node at (1.5,4.5) {$0$};
      \node at (3.5,4.5) {$0$};
      \ThreeDotsAt{(2.5,4.5)}{.21213203}{0}

      \node at (0.5,3.5) {$1$};
      \node at (1.5,3.5) {$0$};
      \ThreeDotsAt{(2.5,3.5)}{.21213203}{0}
      \node at (3.5,3.5) {$0$};

      \node at (0.5,2.5) {$0$};
      \ThreeDotsAt{(0.5,1.5)}{.21213203}{90}
      \node at (2.5,1.5) {$Z$};
      \node at (0.5,0.5) {$0$};

      \begin{scope}[xshift=7.5cm]
        \node at (-1.4,2.5) {$\extUR(Z)\ =$};
        \node[draw=none] at (4.15,2.5) {$,$};
        \draw[fill=cyan!15] (0,0) rectangle (3,3);
        \draw (0,0) rectangle (4,5);

        \node at (0.5,4.5) {$0$};
        \node at (2.5,4.5) {$0$};
        \node at (3.5,4.5) {$0$};
        \ThreeDotsAt{(1.5,4.5)}{.21213203}{0}

        \node at (0.5,3.5) {$0$};
        \node at (2.5,3.5) {$0$};
        \ThreeDotsAt{(1.5,3.5)}{.21213203}{0}
        \node at (3.5,3.5) {$1$};

        \node at (1.5,1.5) {$Z$};
        \node at (3.5,2.5) {$0$};
        \ThreeDotsAt{(3.5,1.5)}{.21213203}{90}
        \node at (3.5,0.5) {$0$};
      \end{scope}
    \end{tikzpicture}

    \vspace{12pt}

    \begin{tikzpicture}[scale=.8, baseline=(current bounding box.center)]
      \node at (-1.4,2.5) {$\extBL(Z)\ =$};
      \node[draw=none] at (4.15,2.5) {$,$};
      \draw[fill=cyan!15] (1,2) rectangle (4,5);
      \draw (0,0) rectangle (4,5);

      \node at (0.5,4.5) {$0$};
      \node at (2.5,3.5) {$Z$};

      \ThreeDotsAt{(0.5,3.5)}{.21213203}{90}
      \node at (0.5,2.5) {$0$};

      \node at (0.5,1.5) {$1$};
      \node at (1.5,1.5) {$0$};
      \ThreeDotsAt{(2.5,1.5)}{.21213203}{0}
      \node at (3.5,1.5) {$0$};

      \node at (0.5,0.5) {$0$};
      \node at (1.5,0.5) {$0$};
      \ThreeDotsAt{(2.5,0.5)}{.21213203}{0}
      \node at (3.5,0.5) {$0$};

      \begin{scope}[xshift=7.5cm]
        \node at (-1.4,2.5) {$\extBR(Z)\ =$};
        \node[draw=none] at (4.15,2.5) {$.$};
        \draw[fill=cyan!15] (0,2) rectangle (3,5);
        \draw (0,0) rectangle (4,5);
        \node at (3.5,4.5) {$0$};
        \node at (1.5,3.5) {$Z$};
        \node at (3.5,2.5) {$0$};
        \ThreeDotsAt{(3.5,3.5)}{.21213203}{90}

        \node at (0.5,1.5) {$0$};
        \node at (2.5,1.5) {$0$};
        \ThreeDotsAt{(1.5,1.5)}{.21213203}{0}
        \node at (3.5,1.5) {$1$};

        \node at (0.5,0.5) {$0$};
        \node at (2.5,0.5) {$0$};
        \ThreeDotsAt{(1.5,0.5)}{.21213203}{0}
        \node at (3.5,0.5) {$0$};
      \end{scope}
    \end{tikzpicture}
  \end{center}
  More precisely, each operator takes a core of an $(n-2)\times (n-2)$ matrix and yields the core of an $n \times n$ matrix.
  In particular, $\extUL$ and $\extUR$ adjoin two new top rows, whereas $\extBL$ and $\extBR$ adjoin two new bottom rows; $\extBL$ and $\extUL$ adjoin a new first column, whereas $\extBR$ and $\extUR$ adjoin a new last column.
  In each case, the unique new entry at the corner cell adjacent to the old core is set to $1$, and the other new entries are set to $0$.

  \medskip
  By the definition of the four extension operators, every prefix sum in~\cref{eq:vsasm:row-prefix,eq:vsasm:col-prefix} either coincides with the corresponding prefix sum on $Z$ (after possibly a simple index shift), because all new entries in those prefixes are $0$; or lies in $\{0,1\}$ because it involves only newly inserted entries.
  Moreover, the index $(i,j)$ of the unique violated facet inequality is transported as follows.
  Under $\extBR$ the endpoint remains $(i,j)$, under $\extBL$ it becomes $(i,j+1)$, under $\extUR$ it becomes $(i+2,j)$, and under $\extUL$ it becomes $(i+2,j+1)$.
  Accordingly, the recursive definitions will invoke the smaller certificate with indices $(i,j)$, $(i,j-1)$, $(i-2,j)$, or $(i-2,j-1)$, respectively.
  Finally, it is straightforward to see that the equations in~\crefrange{eq:vsasm:row-sum}{eq:vsasm:dim:lastrow} are preserved under these extensions.

  We now give the recursive definitions of the certifying cores for $n \geq 9$.
  For $i \in [3,n-2]$ and $j \in [k-1 - \chi_{2 \mid i}]$, define
  \[
    L^{n,\mathrm H}_{i,j}=
    \begin{cases}
      \extBR\bigl(L^{n-2,\mathrm H}_{i,j}\bigr)     & \text{if } i \in [3,n-4] \text{ and } j \in [k-2-\chi_{2 \mid i}],\\
      \extBL\bigl(L^{n-2,\mathrm H}_{i,j-1}\bigr)   & \text{if } i \in [3,n-4] \text{ and } j = k-1-\chi_{2 \mid i},\\
      \extUR\bigl(L^{n-2,\mathrm H}_{i-2,j}\bigr)   & \text{if } i\in\{n-3,n-2\} \text{ and } j \in [k-2-\chi_{2 \mid i}],\\
      \extUL\bigl(L^{n-2,\mathrm H}_{i-2,j-1}\bigr) & \text{if } i\in\{n-3,n-2\} \text{ and } j = k-1-\chi_{2 \mid i}.
    \end{cases}
  \]
  For $i \in [4,n-3]$ and $j \in [2,k-1-\chi_{2 \nmid i}]$, define
  \[
    U^{n,\mathrm H}_{i,j}=
    \begin{cases}
          \vcenter{\vspace{3pt}\hbox{\begin{ytableau}
          0& 0& 0& 0\\
          0& 0& 0& 1\\
          0& 0& 0& 0\\
          0& 0& 1& 0\\
          1& 1&-1&-1\\
          0& 0& 1& 0\\
          0& 0& 0& 0\\
          0& 0& 0& 1\\
          0& 0& 0& 0
        \end{ytableau}}}                          & \text{if } n=9,i=5, \text{ and } j=2,\\[68pt]
      \extBR\bigl(U^{n-2,\mathrm H}_{i,j}\bigr)     & \text{if } i \in [4,n-5] \text{ and } j \in [2,k-2-\chi_{2 \nmid i}],\\
      \extBL\bigl(U^{n-2,\mathrm H}_{i,j-1}\bigr)   & \text{if } i \in [4,n-5] \text{ and } j = k-1-\chi_{2 \nmid i},\\
      \extUR\bigl(U^{n-2,\mathrm H}_{i-2,j}\bigr)  & \text{if } i\in\{n-4,n-3\}\setminus\{5\} \text{ and } j \in [2,k-2-\chi_{2 \nmid i}],\\
      \extUL\bigl(U^{n-2,\mathrm H}_{i-2,j-1}\bigr) & \text{if } i\in\{n-4,n-3\}\setminus\{5\} \text{ and } j = k-1-\chi_{2 \nmid i}.
    \end{cases}
  \]
  Note that we include $U^{9,\mathrm H}_{5,2}$ as an additional base case because it has no predecessor under the index shifts induced by the extension operators.
  For every $(i,j) \in \{(2,1)\} \cup \{(i,j) : i \in [2,n-4], j \in [2,k-\chi_{2 \mid i}]\}$, define
  \[
    L^{n,\mathrm V}_{i,j}=
    \begin{cases}
      \extBR\bigl(L^{n-2,\mathrm V}_{i,j}\bigr)     & \text{if } (i,j)=(2,1) \text{ or } (i \in [2,n-6] \text{ and } j \in [2,k-1-\chi_{2 \mid i}]),\\
      \extBL\bigl(L^{n-2,\mathrm V}_{i,j-1}\bigr)   & \text{if } i \in [2,n-6] \text{ and } j= k-\chi_{2 \mid i},\\
      \extUR\bigl(L^{n-2,\mathrm V}_{i-2,j}\bigr)   & \text{if } i \in \{n-5,n-4\} \text{ and } j \in [2,k-1-\chi_{2 \mid i}],\\
      \extUL\bigl(L^{n-2,\mathrm V}_{i-2,j-1}\bigr) & \text{if } i\in\{n-5,n-4\} \text{ and } j = k - \chi_{2 \mid i}.
    \end{cases}
  \]
  For every $(i,j) \in \{(n-2,1)\} \cup \{(i,j) : i \in [4,n-2], j \in [2,k-\chi_{2 \nmid i}]\}$, define
  \[
    U^{n,\mathrm V}_{i,j}=
    \begin{cases}
      \extBR\bigl(U^{n-2,\mathrm V}_{i,j}\bigr)     & \text{if } i \in [4,n-4] \text{ and } j \in [2,k-1-\chi_{2 \nmid i}],\\
      \extBL\bigl(U^{n-2,\mathrm V}_{i,j-1}\bigr)   & \text{if } i \in [4,n-4] \text{ and } j = k-\chi_{2 \nmid i},\\
      \extUR\bigl(U^{n-2,\mathrm V}_{i-2,j}\bigr)   & \text{if } (i,j) = (n-2,1) \text{ or } (i\in\{n-3,n-2\} \text{ and } j \in [2,k-1-\chi_{2 \nmid i}]),\\
      \extUL\bigl(U^{n-2,\mathrm V}_{i-2,j-1}\bigr) & \text{if } i\in\{n-3,n-2\} \text{ and } j = k-\chi_{2 \nmid i}.
    \end{cases}
  \]

  We prove by induction on $n$ that these definitions provide the desired certificates.
  The cores given explicitly for $n=7$ and $U^{9,\mathrm H}_{5,2}$ satisfy the claim by direct inspection.

  Now let $n\geq 9$, and assume that the claim holds for size $n-2$.
  By construction, every certifying core for size $n$ is obtained from a certifying core for size $n-2$ by one of the four extension operators.
  The applied extension embeds the corresponding $(n-2) \times (k-1)$ certificate as a submatrix and assigns the value $0$ to every other entry, except for a single new entry equal to $1$ at the corner cell adjacent to the embedded copy.
  In particular, every facet inequality for size $n$ comes from a row-prefix or column-prefix bound whose defining prefix lies either inside the embedded submatrix apart from new zero entries, or entirely in the newly inserted rows or columns.
  If the defining prefix lies inside the embedded copy, then its prefix sum coincides with the corresponding prefix sum of the smaller certificate after the evident index shift.
  Hence it is satisfied by the induction hypothesis, except for the single facet inequality violated by the smaller certificate.
  The extension operator was chosen so that this unique violated facet inequality is transported to the intended facet inequality for size $n$, via the endpoint shifts described above.
  If the defining prefix lies in one of the newly inserted rows or in the newly inserted column, then its prefix sum belongs to $\{0,1\}$, because all new entries are $0$ except for the single new entry equal to $1$.
  Therefore, all such facet inequalities are satisfied.
  Finally, the equations in~\crefrange{eq:vsasm:row-sum}{eq:vsasm:dim:lastrow} are preserved under the extension operators.

  Note that $L^{7,\mathrm V}_{2,1}$ violates not only the facet lower bound in~\cref{eq:vsasm:col-prefix} at $(2,1)$, but also the non-facet lower bound in~\cref{eq:vsasm:row-prefix} at $(2,1)$; similarly, $U^{7,\mathrm V}_{5,1}$ also violates the non-facet inequality in~\cref{eq:vsasm:row-prefix} at $(6,1)$.
  It is straightforward to verify that the extra violations remain non-facet under the index transports throughout the recursion.

  Therefore, for every facet inequality for size $n$, we have constructed a core that violates that facet inequality and satisfies every other facet inequality as well as the equations in~\crefrange{eq:vsasm:row-sum}{eq:vsasm:dim:lastrow}.
  This shows that no facet inequality is redundant.
  Together with the first part of the proof and the fact that no facet inequality is an implicit equation, the facet inequalities, together with the equations in~\crefrange{eq:vsasm:row-sum}{eq:vsasm:dim:lastrow}, form a minimal description of $P^\core_\VSASM$.
\end{proof}

\section{Vertically and horizontally symmetric ASMs (VHSASMs)}\label{sec:VHSASM}
In this class, we impose invariance under reflections across both the vertical and horizontal axes; hence the matrix is symmetric both left-to-right and top-to-bottom.
As a consequence, the matrix is also invariant under rotation by $\pi$.
The corresponding symmetry subgroup is $\mathcal{G} = \{\mathcal{I}, \mathcal{V}, \mathcal{H}, \mathcal{R}_{\pi}\}$.
Let $P_\HS$ denote the linear subspace of horizontally symmetric real matrices, i.e.,
\[
  P_\HS
  =
  \left\{
    X \in \R^{n \times n} : x_{i,j} = x_{n+1-i,j}\ \forall i,j \in [n]
  \right\}.
\]
Clearly, any VHSASM satisfies the VSASM constraints and also the symmetry constraints defining $P_\HS$; thus $P_\VHSASM \subseteq P_\VSASM \cap P_\HS$.
We note, however, that $P_\VSASM \cap P_\HS$ does not equal $P_\VHSASM$.
In fact, we show that $P_\VHSASM \subsetneq P_\VSASM \cap P_\HS$ for every odd $n \geq 5$.
It is straightforward to verify that the fractional matrix defined below is a vertex of $P_\VSASM\cap P_{\HS}$.
In this matrix, the first two and last two columns have entries $\sfrac{1}{2}$ in rows $(n+1) / 2 \pm1$, and $0$ in every other row.
All remaining columns follow the diamond pattern: for each $j \in [3,n-2]$, the first and last $|(n+1) / 2 - j|$ entries are $0$, and the entries alternate between $+1$ and $-1$, with the first entry equal to $+1$.
For example, we obtain the following matrix for $n = 7$
\[
  \begin{mymatrix}
    0 & 0 & 0 & 1 & 0 & 0 & 0 \\
    0 & 0 & 1 & -1 & 1 & 0 & 0 \\
    \sfrac{1}{2} & \sfrac{1}{2} & -1 & 1 & -1 & \sfrac{1}{2} & \sfrac{1}{2} \\
    0 & 0 & 1 & -1 & 1 & 0 & 0 \\
    \sfrac{1}{2} & \sfrac{1}{2} & -1 & 1 & -1 & \sfrac{1}{2} & \sfrac{1}{2} \\
    0 & 0 & 1 & -1 & 1 & 0 & 0 \\
    0 & 0 & 0 & 1 & 0 & 0 & 0
  \end{mymatrix}.
\]

\Cref{lem:vsasm:noEvenN} immediately implies the following.
\begin{lemma}\label{lem:vhsasm:noEvenN}
  There is no $n \times n$ VHSASM if $n$ is even.
\end{lemma}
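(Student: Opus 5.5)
The plan is to derive this directly from \cref{lem:vsasm:noEvenN}. Every VHSASM is, by definition, invariant under every element of $\mathcal{G}=\{\mathcal{I},\mathcal{V},\mathcal{H},\mathcal{R}_{\pi}\}$. In particular it is invariant under $\mathcal{V}$, so it is a vertically symmetric ASM, that is, $\VHSASM(n)\subseteq\VSASM(n)$.

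Now suppose $n$ is even. By \cref{lem:vsasm:noEvenN}, $\VSASM(n)=\emptyset$. The inclusion then gives $\VHSASM(n)=\emptyset$, and hence $P_\VHSASM=\emptyset$ as well.

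For completeness, the underlying reason can be stated directly. For even $n$, each row of a vertically symmetric integer matrix is palindromic with no middle entry, so each row sum is even. This contradicts the ASM row-sum equation~\cref{eq:asm:row-sum}.

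There is no real obstacle here. The only point to make explicit is that the symmetry subgroup for VHSASMs contains $\mathcal{V}$, which is what yields the inclusion into the VSASM class.
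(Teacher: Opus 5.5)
Your proposal is correct and matches the paper: the paper likewise deduces the statement immediately from \cref{lem:vsasm:noEvenN}, because every VHSASM is invariant under $\mathcal{V}$ and is therefore a VSASM. Your direct parity remark simply restates the proof of that VSASM lemma.
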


Thus, $P_\VHSASM = \emptyset$ for even $n$; hence, we assume that $n$ is odd in the rest of the section.
Applying \cref{lem:vsasm:middleColAlternate} to a VHSASM and its transpose, we obtain the following.

\begin{lemma}\label{lem:vhsasm:middleColAndRowAlternate}
  Let $n \geq 1$ be odd and set $k = \floor*{n/2}$.
  For every VHSASM $X \in \{0,\pm1\}^{n \times n}$, we have $x_{i,k+1} = (-1)^{i+1}$ for every $i \in [n]$ and $x_{k+1,j} = (-1)^{j+1}$ for every $j \in [n]$.
\end{lemma}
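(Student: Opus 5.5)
The plan is to derive both claims from \cref{lem:vsasm:middleColAlternate}, once for $X$ and once for its transpose. Fix an odd $n$ and a VHSASM $X$.

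For the middle column, note that $X$ is invariant under $\mathcal{V}$, so it is in particular a VSASM. Applying \cref{lem:vsasm:middleColAlternate} directly then gives $x_{i,k+1}=(-1)^{i+1}$ for every $i\in[n]$.

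For the middle row, pass to the transpose $X^\top$ and check three facts.
\begin{itemize}
\item $X^\top$ is an ASM. The defining conditions (alternation of non-zero entries, first and last non-zero entries equal to $1$) are imposed on rows and columns alike, and transposition swaps rows with columns.
\item $X^\top$ is vertically symmetric. We have $(X^\top)_{i,n+1-j}=x_{n+1-j,i}=x_{j,i}=(X^\top)_{i,j}$, where the middle equality is the $\mathcal{H}$-invariance $x_{i',j'}=x_{n+1-i',j'}$ of $X$.
\item Consequently $X^\top$ is a VSASM, and \cref{lem:vsasm:middleColAlternate} gives $(X^\top)_{j,k+1}=(-1)^{j+1}$ for every $j$, that is, $x_{k+1,j}=(-1)^{j+1}$.
\end{itemize}

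There is no real obstacle. The only point needing care is the index check that $\mathcal{H}$-invariance of $X$ is the same as $\mathcal{V}$-invariance of $X^\top$, which is the one-line computation above. Oddness of $n$ is needed only so that \cref{lem:vsasm:middleColAlternate} applies, with $k+1$ the central index.
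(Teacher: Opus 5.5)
Your proposal is correct and follows the paper's own argument: apply \cref{lem:vsasm:middleColAlternate} to $X$ for the middle column and to $X^\top$ for the middle row. Your index check that $\mathcal{H}$-invariance of $X$ is the same as $\mathcal{V}$-invariance of $X^\top$ supplies exactly the detail the paper leaves implicit.
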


\paragraph{Core and assembly map.}
Assume $n$ is odd and let $k = \floor*{n/2}$.
Let the \emph{core} of a VHSASM be its upper-left $k \times k$ block, i.e.,
\[
  C = [k] \times [k]
\]
is the set of \emph{core positions}.
Define the affine map $\varphi : \R^C \to \R^{n\times n}$ by
\[
  \varphi(Y)_{i,j} =
  \begin{cases}
    y_{i,j}        & \text{if } i \in [k], j \in [k],\\
    (-1)^{i+1}     & \text{if } j = k+1,\\
    (-1)^{j+1}     & \text{if } i = k+1,\\
    y_{i,n+1-j}    & \text{if } i \in [k], j \in [k+2, n],\\
    y_{n+1-i,j}    & \text{if } i \in [k+2, n], j \in [k],\\
    y_{n+1-i,n+1-j}& \text{if } i \in [k+2, n], j \in [k+2, n]
  \end{cases}
\]
for $Y \in \R^C$ and $i,j \in [n]$.
Note that the second and third cases both apply for $(i,j) = (k+1,k+1)$; however, they give the same value $(-1)^{k+2}$, so $\varphi(Y)$ is well defined.
By definition, $\varphi$ places the core $Y$ in the upper-left $k\times k$ block, fixes the middle row and the middle column to the alternating pattern from \cref{lem:vhsasm:middleColAndRowAlternate}, and completes the matrix by reflecting $Y$ across the middle column and across the middle row, thereby filling all four quadrants by vertical and horizontal reflections.
Clearly, the map $\varphi$ is an assembly map: it is affine, satisfies $\pi_C(\varphi(Y)) = Y$ for every $Y \in \R^C$, and $\varphi(\pi_C(X)) = X$ for every $X \in \VHSASM(n)$, because $X$ is determined by its core together with the prescribed middle row and column and the imposed vertical and horizontal symmetries, where $\pi_C$ is the coordinate-wise projection onto $C$.

\bigskip
We now describe the core polytope of VHSASMs.
\begin{theorem}\label{thm:vhsasm:corepolytope}
  Let $n \geq 1$ be odd, and set $k = \floor*{n/2}$.
  Then the core polytope $P^\core_\VHSASM \subseteq \R^C$ of $n \times n$ VHSASMs is described by the following system.
  \begin{align}
                          y&_{i,j} \in \R                        &\forall i,j \in [k],\label{eq:vhsasm:real}\\
    0 \leq \sum_{j'=1}^{j} y&_{i,j'} \leq 1                       &\forall i \in [k], j \in [k-1],\label{eq:vhsasm:row-prefix}\\
    0 \leq \sum_{i'=1}^{i} y&_{i',j} \leq 1                       &\forall i \in [k-1], j \in [k],\label{eq:vhsasm:col-prefix}\\
    \sum_{j=1}^{k}         y&_{i,j} = \chi_{2 \mid i}              &\forall i \in [k],\label{eq:vhsasm:row-sum}\\
    \sum_{i=1}^{k}         y&_{i,j} = \chi_{2 \mid j}              &\forall j \in [k].\label{eq:vhsasm:col-sum}
  \end{align}
\end{theorem}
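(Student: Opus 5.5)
The plan follows the proof of \cref{thm:vsasm:corepolytope}. First we show that the integer points of the system~\crefrange{eq:vhsasm:real}{eq:vhsasm:col-sum} are exactly the cores of VHSASMs. Then we show that the system defines an integral polytope.

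\medskip
For the forward direction, let $X$ be a VHSASM with core $Y$. The prefix bounds in~\cref{eq:vhsasm:row-prefix,eq:vhsasm:col-prefix} are restrictions of the ASM bounds in~\cref{eq:asm:row-prefix,eq:asm:col-prefix} to rows and columns indexed by $[k]$. Both prefix families stop at $k-1$, so they involve only core entries.

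For the row sums, fix $i\in[k]$. Row $i$ of $X$ is palindromic with middle entry $x_{i,k+1}=(-1)^{i+1}$ by \cref{lem:vhsasm:middleColAndRowAlternate}. Hence
\[
  \sum_{j=1}^k y_{i,j}=\frac{1-(-1)^{i+1}}{2}=\chi_{2\mid i}.
\]
The column sums in~\cref{eq:vhsasm:col-sum} follow in the same way. Column $j\in[k]$ is palindromic by horizontal symmetry, and its middle entry is $x_{k+1,j}=(-1)^{j+1}$.

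\medskip
For the converse, let $Y\in\Z^C$ be feasible and set $X=\varphi(Y)$. We check the conditions of \cref{thm:ASMpolytope} row by row.

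Consider a row $i\in[k]$. It is symmetric with middle entry $(-1)^{i+1}$, so its total is $2\chi_{2\mid i}+(-1)^{i+1}=1$. Its first $k-1$ prefix sums lie in $\{0,1\}$ by~\cref{eq:vhsasm:row-prefix}. Its $k$-th prefix sum equals $\chi_{2\mid i}\in\{0,1\}$ by~\cref{eq:vhsasm:row-sum}. Therefore \cref{lem:vsasm:symmetricHalfExtends} applies and all prefix sums of the row lie in $\{0,1\}$.

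Rows $i\in[k+2,n]$ are copies of rows $n+1-i\in[k]$, so they satisfy the same conditions. The middle row $k+1$ alternates $+1,-1,\dots$ and starts with $+1$. Since $n$ is odd, its sum is $1$ and its prefix sums lie in $\{0,1\}$.

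The columns are handled by the same argument with rows and columns exchanged, using~\cref{eq:vhsasm:col-prefix,eq:vhsasm:col-sum}. Hence $X$ is an integer matrix in $P_\ASM$, that is, an ASM. By construction it is vertically and horizontally symmetric, so $X$ is a VHSASM with core $Y$.

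\medskip
It remains to show that the system is integral. The row prefixes $\{(i,1),\dots,(i,j)\}$ for $j\in[k]$ form a laminar family $\mathcal L_1$ on $C$: prefixes in the same row are nested, and prefixes in different rows are disjoint. The column prefixes form a laminar family $\mathcal L_2$ in the same way. The full row and column sums are the prefixes with $j=k$ or $i=k$, with both bounds equal to $\chi_{2\mid i}$ or $\chi_{2\mid j}$.

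All bounds are integers, so \cref{thm:laminarSystem} gives an integral polyhedron. Boundedness follows from the prefix bounds, since each entry is a difference of two consecutive prefix sums. Alternatively, the system describes prefix-bounded matrices, so one may invoke~\cite[Corollary~5.2]{borsik2026prefix}. In either case the polytope equals the convex hull of its integer points, which are exactly the cores of VHSASMs.

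The only step needing care is the converse direction. There we must confirm that the hypotheses of \cref{lem:vsasm:symmetricHalfExtends} are met, in particular that the $k$-th prefix sum lies in $\{0,1\}$. This holds because $\chi_{2\mid i}\in\{0,1\}$. We must also check that the middle row and middle column are consistent at the centre cell $(k+1,k+1)$, which holds because both cases of $\varphi$ assign it the value $(-1)^{k+2}$.
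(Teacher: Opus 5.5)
Your proposal is correct and follows essentially the same route as the paper. You show that the integer feasible points are exactly the VHSASM cores using the assembly map and \cref{lem:vsasm:symmetricHalfExtends}, and you get integrality from \cref{thm:laminarSystem} (or the prefix-bounded matrix result); the only cosmetic difference is that the paper obtains the forward-direction constraints by viewing $Y$ and $Y^\top$ as the first $k$ rows of VSASM cores, whereas you derive them directly from the ASM constraints and \cref{lem:vhsasm:middleColAndRowAlternate}.
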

\begin{proof}
  We show that the integer solutions to the system~\crefrange{eq:vhsasm:real}{eq:vhsasm:col-sum} are exactly the cores of VHSASMs, and then we argue that the system defines an integral polytope.

  \medskip
  First, let $X$ be an $n\times n$ VHSASM, and let $Y$ be its core, that is, $y_{i,j} = x_{i,j}$ for every $i,j \in [k]$.
  Since every VHSASM is in particular a VSASM, we obtain that $Y$ is the first $k$ rows of the core of a VSASM; thus the constraints in~\cref{eq:vsasm:real,eq:vsasm:row-prefix,eq:vsasm:col-prefix,eq:vsasm:row-sum} of \cref{thm:vsasm:corepolytope} directly imply the corresponding constraints in~\cref{eq:vhsasm:real,eq:vhsasm:row-prefix,eq:vhsasm:col-prefix,eq:vhsasm:row-sum} for $Y$.
  Since the transpose of every VHSASM is again a VSASM, we obtain that $Y^\top$ is the first $k$ rows of the core of a VSASM, and thus the equations in~\cref{eq:vsasm:row-sum} directly imply the equations in~\cref{eq:vhsasm:col-sum} for $Y$.
  Thus the cores of VHSASMs satisfy the system~\crefrange{eq:vhsasm:real}{eq:vhsasm:col-sum}.

  \medskip
  Second, we show that every integer solution to the system~\crefrange{eq:vhsasm:real}{eq:vhsasm:col-sum} is the core of a VHSASM\@.
  Let $Y \in \Z^C$ satisfy the system~\crefrange{eq:vhsasm:real}{eq:vhsasm:col-sum}, and set $X = \varphi(Y)$.
  By construction, $X$ is vertically and horizontally symmetric, its middle column is given by $x_{i,k+1} = (-1)^{i+1}$ for $i \in [n]$, its middle row is given by $x_{k+1,j} = (-1)^{j+1}$  for $j \in [n]$, and its core is $Y$.
  We verify that $X$ satisfies the ASM constraints given in \cref{thm:ASMpolytope}.
  For $i \in [k]$, we obtain
  \[
    \sum_{j=1}^{n} x_{i,j}
    = \sum_{j=1}^{k} y_{i,j} + x_{i,k+1} + \sum_{j=1}^{k} y_{i,j}
    = 2\chi_{2 \mid i} + (-1)^{i+1}
    = 1
  \]
  for the sum of row $i$, by~\cref{eq:vhsasm:row-sum}.
  By horizontal symmetry, the same holds for row $n+1-i$.
  In the middle row $i = k+1$, the entries alternate between $+1$ and $-1$, with the first entry equal to $+1$.
  Since $n$ is odd, this row also sums to $1$.
  Furthermore, all row-prefix sums within the first $k$ rows are in $\{0,1\}$ by the row-prefix bounds in~\cref{eq:vhsasm:row-prefix}, the row-sum equations in~\cref{eq:vhsasm:row-sum}, and \cref{lem:vsasm:symmetricHalfExtends}; the same holds for every row by horizontal symmetry.
  Repeating the argument for columns, using the column-prefix bounds in~\cref{eq:vhsasm:col-prefix}, the column-sum equations in~\cref{eq:vhsasm:col-sum}, and \cref{lem:vsasm:symmetricHalfExtends}, shows that $X$ satisfies the ASM constraints on column-prefix sums and column sums.
  Thus $X \in \Z^{n \times n}$ satisfies all ASM constraints in \cref{thm:ASMpolytope}, hence $X$ is an ASM\@.
  Together with the construction, this shows that $X$ is a VHSASM\@.
  We conclude that the integer solutions to the system~\crefrange{eq:vhsasm:real}{eq:vhsasm:col-sum} are exactly the cores of VHSASMs.

  \medskip
  It remains to prove that the system~\crefrange{eq:vhsasm:real}{eq:vhsasm:col-sum} defines an integral polytope.
  The constraints in~\cref{eq:vhsasm:row-prefix,eq:vhsasm:col-prefix} impose lower and upper bounds on the sums of the entries within prefixes of each row and column of the $k\times k$ core, while the equations in~\cref{eq:vhsasm:row-sum,eq:vhsasm:col-sum} fix the row and column sums to $0$ or $1$.
  Therefore, the cores of VHSASMs are prefix\nobreakdash-bounded matrices with prescribed (integer) row and column sums, and the polytope of such prefix\nobreakdash-bounded matrices is known to be described by the system above~\cite[Corollary~5.2]{borsik2026prefix}.
  Alternatively, the result also directly follows from \cref{thm:laminarSystem} applied to the laminar families of row and column prefixes.
\end{proof}

From \cref{thm:xasm:assembly,thm:vhsasm:corepolytope}, we obtain the following description of the polytope $P_\VHSASM$ of VHSASMs.
\begin{theorem}\label{thm:vhsasm:coreDescr}
  Let $n \geq 1$ be odd, and let $k = \floor*{n/2}$ and $\widehat P^\core_\VHSASM = \{X \in \R^{n \times n} : \pi_C(X) \in P^\core_\VHSASM\}$.
  Then
  \[
    P_\VHSASM
    = \widehat P^\core_\VHSASM \cap P_\VS \cap P_\HS \cap \left\{X \in \R^{n \times n} : x_{i,k+1} = x_{k+1,i} = (-1)^{i+1}\ \forall i \in [n]\right\}.
  \]
\end{theorem}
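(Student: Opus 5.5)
The plan mirrors the proof of \cref{thm:vsasm:coreDescr}. By \cref{thm:xasm:assembly} together with \cref{thm:vhsasm:corepolytope}, we already know that $P_\VHSASM = \widehat P^\core_\VHSASM \cap \varphi(\R^C)$. Let $P$ denote the affine subspace
\[
  P = P_\VS \cap P_\HS \cap \left\{X \in \R^{n \times n} : x_{i,k+1} = x_{k+1,i} = (-1)^{i+1}\ \forall i \in [n]\right\}.
\]
It therefore suffices to show that $\varphi(\R^C) = P$.

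For the inclusion $\varphi(\R^C) \subseteq P$, take any $Y \in \R^C$ and read off the six cases in the definition of $\varphi$. The middle column is $(-1)^{i+1}$ and the middle row is $(-1)^{j+1}$; at the centre these agree, as already noted after the definition. For vertical symmetry we check $\varphi(Y)_{i,j} = \varphi(Y)_{i,n+1-j}$ case by case. If $j \in [k]$, the reflected index $n+1-j$ lies in $[k+2,n]$, and both entries equal $y_{i,j}$ or $y_{n+1-i,j}$, according to whether $i \le k$ or $i \ge k+2$. If $j = k+1$, the index is fixed. If $i = k+1$, the entries are $(-1)^{j+1}$ and $(-1)^{n-j+2}$, which coincide because $n$ is odd. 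Horizontal symmetry follows by the symmetric argument with rows and columns exchanged.

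For the reverse inclusion, take $X \in P$ and show that $\varphi(\pi_C(X)) = X$ entrywise. On the core block this holds by the first property of $\varphi$. On the middle row and column it holds because both $X$ and $\varphi(\pi_C(X))$ carry the prescribed alternating values. On the remaining three quadrants it holds because $X$ satisfies $x_{i,j} = x_{i,n+1-j} = x_{n+1-i,j} = x_{n+1-i,n+1-j}$, which expresses each such entry through a core entry exactly as $\varphi$ does. Hence $X \in \varphi(\R^C)$.

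No real obstacle is expected. The only point needing care is the parity bookkeeping: one must confirm that the prescribed middle row and column are compatible with the reflections, i.e.\ that $(-1)^{j+1} = (-1)^{(n+1-j)+1}$ for odd $n$.
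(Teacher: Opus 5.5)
Your proposal is correct and follows the paper's proof. You reduce via \cref{thm:xasm:assembly} to showing $\varphi(\R^C)=P$, then check both inclusions, with the reverse inclusion resting on $X$ being determined by its upper-left $k\times k$ block; your explicit case checks, including the parity identity $(-1)^{j+1}=(-1)^{n+2-j}$ for odd $n$, just spell out what the paper states in a single line.
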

\begin{proof}
  By \cref{thm:xasm:assembly,thm:vhsasm:corepolytope}, it suffices to prove that
  $
    \varphi(\R^C)
    = P_\VS \cap P_\HS \cap \{X \in \R^{n \times n} : x_{i,k+1} = x_{k+1,i} = (-1)^{i+1}\ \forall i \in [n]\}.
  $
  Let $P$ denote the right-hand side.
  By definition, $\varphi$ inserts its argument as the upper-left $k \times k$ block, fixes the $i^\text{th}$ entries of the middle row and the middle column to $(-1)^{i+1}$ for every $i$, and fills the remaining entries by vertical and horizontal reflection.
  Thus, $\varphi(Y) \in P$ for every~$Y \in \R^C$.

  Conversely, take any $X \in P$.
  The equations defining $P_\VS \cap P_\HS$ together with the prescribed middle row and column imply that $X$ is completely determined by its upper-left $k \times k$ block, that is, by its core $\pi_C(X)$.
  Therefore, $\varphi(\pi_C(X)) = X$, and hence $X \in \varphi(\R^C)$.
  This shows $\varphi(\R^C) = P$, and the statement follows.
\end{proof}

\begin{theorem}\label{thm:vhsasm:descr}
  Let $n \geq 1$ be arbitrary, and set $k = \floor*{n/2}$.
  Then
  \[
    P_\VHSASM = P_\ASM \cap P_\VS \cap P_\HS \cap \left\{X \in \R^{n \times n} : x_{i,k+1} = x_{k+1,i} = (-1)^{i+1}\ \forall i \in [n]\right\}.
  \]
\end{theorem}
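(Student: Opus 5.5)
We split into the cases $n$ odd and $n$ even, following the pattern of the proof of \cref{thm:vsasm:descr}. Throughout, $Q$ denotes the right-hand side of the statement.

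For odd $n$, we first show that $Q$ equals the description in \cref{thm:vhsasm:coreDescr}. This reduces, via \cref{thm:xasm:assembly} and the identity $\varphi(\R^C)=P_\VS\cap P_\HS\cap\{x_{i,k+1}=x_{k+1,i}=(-1)^{i+1}\}$ established there, to showing that for $X$ in this affine subspace, $X\in P_\ASM$ holds if and only if $\pi_C(X)$ satisfies \crefrange{eq:vhsasm:real}{eq:vhsasm:col-sum}.

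For the direction from $X\in P_\ASM$ to the core system, we argue directly. The row-prefix and column-prefix bounds of the core are among the ASM prefix bounds. For each $i\in[k]$, the row-sum equation $1=2\sum_{j\le k}y_{i,j}+(-1)^{i+1}$ gives \cref{eq:vhsasm:row-sum}, and the column sums give \cref{eq:vhsasm:col-sum} in the same way.

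For the converse, we reuse the fractional analogue of the argument in \cref{lem:vsasm:symmetricHalfExtends}. For a symmetric real vector with total sum $1$, the prefix sum of length $k+r$ equals $1$ minus the prefix sum of length $k+1-r$. Hence the bounds in $[0,1]$ on the first $k$ prefixes transfer to all prefixes. The bound on the $k$-th prefix comes from the row-sum equation, whose value $\chi_{2\mid i}$ lies in $\{0,1\}$.
\begin{itemize}
\item For rows $i\in[k]$, this gives all ASM row constraints, and rows $i\in[k+2,n]$ follow by horizontal symmetry.
\item The middle row alternates $+1,-1,\dots$, starting and ending with $+1$ since $n$ is odd, so its prefix sums lie in $\{0,1\}$ and it sums to $1$.
\item Columns are handled by the same argument with rows and columns transposed.
\end{itemize}
This shows $Q=P_\VHSASM$ for odd $n$.

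For even $n$, \cref{lem:vhsasm:noEvenN} gives $P_\VHSASM=\emptyset$, so we need $Q=\emptyset$. With $k=n/2$, the constraint $x_{i,k+1}=(-1)^{i+1}$ makes column $k+1$ alternate across an even number of rows, so its sum is $0$. This contradicts the column-sum equation \cref{eq:asm:col-sum}, so $Q=\emptyset$.

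The only step needing care is the fractional version of the prefix-extension lemma. \Cref{lem:vsasm:symmetricHalfExtends} is stated for integer vectors, but its computation is purely linear and works verbatim with the interval $[0,1]$ in place of $\{0,1\}$. We will state this explicitly rather than cite the lemma directly.
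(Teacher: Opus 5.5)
Your proof is correct and follows the paper's argument. For odd $n$ you reduce to \cref{thm:vhsasm:coreDescr}, making explicit, via the linear fractional version of \cref{lem:vsasm:symmetricHalfExtends}, the ``straightforward transformations'' the paper leaves implicit; for even $n$ you use the same contradiction between the alternating middle column summing to $0$ and \cref{eq:asm:col-sum}.
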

\begin{proof}
  For odd $n$, we obtain the statement by straightforward transformations of the system given in \cref{thm:vhsasm:coreDescr}, in complete analogy with the passage from \cref{thm:vsasm:coreDescr} to \cref{thm:vsasm:descr}.

  For even $n$, the polytope $P_\VHSASM$ is empty; thus we need to show that the right-hand side is empty as well.
  Notice that
  $
  \{X \in \R^{n \times n} : x_{i,k+1} = x_{k+1,i} = (-1)^{i+1}\ \forall i \in [n]\}
  $
  forces the entries of column $k+1$ to alternate between $+1$ and $-1$, with the first entry equal to $+1$.
  Hence $\sum_{i=1}^n x_{i,k+1} = 0$ because $n$ is even.
  On the other hand, if $X \in P_\ASM$, then by~\cref{eq:asm:col-sum} we have $\sum_{i=1}^n x_{i,k+1} = 1$, a contradiction.
  Thus the right-hand side is empty for even $n$.
\end{proof}

\begin{theorem}\label{thm:vhsasm:dim}
  For every odd $n \geq 5$, the dimension of $P_\VHSASM$ is $\frac{(n-5)^2}{4}$.
\end{theorem}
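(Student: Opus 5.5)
The plan is to follow the proof of \cref{thm:vsasm:dim}, working with $P^\core_\VHSASM$. The assembly map $\varphi$ is an affine isomorphism onto $P_\VHSASM$, so it suffices to show $\dim P^\core_\VHSASM = (k-2)^2$, where $k=\floor*{n/2}=(n-1)/2$. Indeed $(k-2)^2 = \frac{(n-5)^2}{4}$.

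\emph{Upper bound.} First derive the implicit equations of the system in \cref{thm:vhsasm:corepolytope}.
\begin{itemize}
\item For $j\in[k]$, the column-prefix bound at $(1,j)$ gives $y_{1,j}\geq 0$. Row $1$ sums to $\chi_{2\mid 1}=0$ by~\cref{eq:vhsasm:row-sum}, so $y_{1,j}=0$ for all $j\in[k]$.
\item Symmetrically, column $1$ sums to $0$ by~\cref{eq:vhsasm:col-sum} and the row-prefix bound at $(i,1)$ gives $y_{i,1}\geq 0$, so $y_{i,1}=0$ for all $i\in[k]$.
\end{itemize}
This gives $2k-1$ independent equations. Subtract them from the row- and column-sum equations, as in the VSASM proof. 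The equations for row $1$ and column $1$ become redundant. The remaining $2(k-1)$ equations involve only the block $[2,k]\times[2,k]$, so by \cref{lem:prelim:rowcol-rank} they have rank $2k-3$. Together these give $4k-4$ independent equations, so
\[
\dim P^\core_\VHSASM \leq k^2-(4k-4)=(k-2)^2 .
\]

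\emph{Lower bound.} If $k=2$ there is nothing to prove. For $k\geq 3$, let $\ol Y$ be the average of all VHSASM cores. For $i,j\in[2,k-1]$ set
\[
\ol Y^{i,j}=\ol Y+\varepsilon\bigl(\chi_{i,j}-\chi_{i,k}-\chi_{k,j}+\chi_{k,k}\bigr).
\]
These perturbations preserve all row and column sums. They change only row-prefix sums of rows $i$ and $k$ at positions $j'\in[j,k-1]$, and column-prefix sums of columns $j$ and $k$ at positions $i'\in[i,k-1]$. All of these lie in rows and columns $\geq 2$ and use prefix lengths $\geq 2$. Only $\ol Y^{i,j}-\ol Y$ has a nonzero entry at $(i,j)$, so these $(k-2)^2$ differences are linearly independent, and $\ol Y$ together with the $\ol Y^{i,j}$ is affinely independent. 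It therefore suffices to show that $\ol Y$ is strictly inside the bounds for every row-prefix inequality at $(i,j)$ with $i\in[2,k]$, $j\in[2,k-1]$, and every column-prefix inequality at $(i,j)$ with $i\in[2,k-1]$, $j\in[2,k]$. For each such inequality we need one VHSASM core attaining prefix sum $0$ and one attaining prefix sum $1$.

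\emph{Main obstacle: the witness cores.} Witnesses are needed for all $k\geq 3$, so I will describe families of integer solutions of~\crefrange{eq:vhsasm:real}{eq:vhsasm:col-sum}. Each is the core of a VHSASM by \cref{thm:vhsasm:corepolytope}.
\begin{itemize}
\item \emph{Permutation cores.} Put a permutation matrix on the even rows $\times$ even columns and zeros elsewhere. This satisfies all constraints. Choosing the permutation suitably makes a given even-row prefix equal $1$ (or $0$), and makes the odd-row prefixes equal $0$.
\item \emph{Alternating-border cores.} To obtain value $1$ on odd rows, imitate the second VSASM construction. Put alternating signs in the last column and last row of the core, and a permutation on the odd rows $\times$ odd columns of the block $[2,k-1]\times[2,k-1]$. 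A typical pattern to use is the diamond structure of the fractional example before \cref{lem:vhsasm:noEvenN}, restricted to the corner.
\end{itemize}
The delicate part is making these cores satisfy both row and column parity conditions at once, since both sums are prescribed in the core. I would first try nested diamond (``$\pm1$ staircase'') patterns anchored at the bottom-right corner $(k,k)$ of the core, and check prefixes row by row. Column-prefix bounds then follow by the symmetry $Y\mapsto Y^\top$, which preserves the system.
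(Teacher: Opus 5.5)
\paragraph{Same approach as the paper, with a gap in the witness step.} Your upper bound and perturbation scheme match the paper's proof. The paper also derives $y_{1,j}=y_{i,1}=0$, applies \cref{lem:prelim:rowcol-rank} to the block $[2,k]\times[2,k]$, and takes the directions $\chi_{i,j}-\chi_{i,k}-\chi_{k,j}+\chi_{k,k}$ for $i,j\in[2,k-1]$ around the average core $\ol Y$. This reduces the lower bound to one claim: every row-prefix sum at $(i,j)\in[2,k]\times[2,k-1]$ (and its transpose) takes both values $0$ and $1$ on VHSASM cores. That claim is where your proposal falls short, and you leave it open.

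Your alternating-border cores are valid, but they do not supply the value $1$ everywhere it is needed.
\begin{itemize}
\item Because the permutation sits on odd rows $\times$ odd columns, every odd $i\in[3,k-1]$ has $y_{i,2}=0$. So the prefix at $(i,2)$ is $0$ in both of your families.
\item When $k$ is odd, row $k$ is the alternating row in every core of that family, and it is zero in the permutation cores. So its prefix at odd $j\in[3,k-1]$ is never $1$.
\end{itemize}
The lower bounds at these positions are therefore not shown to be slack. Restricting to one side of the bounds does not rescue this: for $k=5$ and $(i,j)=(3,2)$, neither sign of the perturbation is certified by your two families. The ``nested diamond'' idea is only a plan.

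\paragraph{Fix (the paper's construction).} Do not alternate the last row.
\begin{itemize}
\item Set $y_{r,k}=(-1)^r$ for $r\in[2,k]$.
\item Place a permutation matrix on the rows with odd index in $[2,k]$ (including $k$ when $k$ is odd) and the columns with even index in $[2,k-1]$. Both index sets have $\floor*{(k-1)/2}$ elements.
\item Choose the permutation so that $y_{i,2}=1$ for the given odd $i\in[3,k]$.
\end{itemize}
This core satisfies the system:
\begin{itemize}
\item Column $k$ sums to $\sum_{r=2}^k(-1)^r=\chi_{2\mid k}$, with prefixes $0,1,0,1,\dots$.
\item Every even column below $k$ contains exactly one $1$, and every odd column below $k$ is zero.
\item Each odd row $r\in[3,k]$ has one $1$ in an even column below $k$ and $-1$ in column $k$.
\item Each even row has its only nonzero entry, $1$, in column $k$.
\end{itemize}
Row $i$ then has prefix $1$ at every $j\in[2,k-1]$, and every even row has prefix $0$ there. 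Together with your permutation cores, this gives both values at all required positions, and transposition handles the column prefixes.
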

\begin{proof}
  Let $n\geq 5$ be odd and set $k=\floor*{n/2}$.
  It suffices to prove that $\dim(P^\core_\VHSASM)=\frac{(n-5)^2}{4}$, because the assembly map $\varphi$ restricts to an affine isomorphism between $P^\core_\VHSASM$ and $P_\VHSASM$, which preserves dimension.
  First, we give an upper bound.
  By \cref{thm:vhsasm:corepolytope}, the core polytope $P^\core_\VHSASM\subseteq\R^C$ is described by the system~\crefrange{eq:vhsasm:real}{eq:vhsasm:col-sum}.
  The case $i=1$ of the inequalities in~\cref{eq:vhsasm:col-prefix} gives $0\leq y_{1,j}\leq 1$ for every $j\in[k]$.
  Since the case $i=1$ of the equations in~\cref{eq:vhsasm:row-sum} gives $\sum_{j=1}^k y_{1,j}=0$, it follows that
  \begin{align}
    y&_{1,j}=0 &\forall j\in[k].\label{eq:vhsasm:dim:firstrow}
  \intertext{Likewise, the case $j=1$ of the inequalities in~\cref{eq:vhsasm:row-prefix} gives $0\leq y_{i,1}\leq 1$ for every $i\in[k]$, and the case $j=1$ of the equations in~\cref{eq:vhsasm:col-sum} gives $\sum_{i=1}^k y_{i,1}=0$, hence}
    y&_{i,1}=0 &\forall i\in[k].\label{eq:vhsasm:dim:firstcol}
  \end{align}
  Using the equations in~\cref{eq:vhsasm:dim:firstrow,eq:vhsasm:dim:firstcol}, we eliminate the variables in the first row and first column as follows: for each $i\in[2,k]$, we replace the $i^\text{th}$ equation of~\cref{eq:vhsasm:row-sum} by the difference of that equation and the equation $y_{i,1}=0$; for each $j\in[2,k]$, we replace the $j^\text{th}$ equation of~\cref{eq:vhsasm:col-sum} by the difference of that equation and the equation $y_{1,j}=0$.
  These elementary row operations do not change the solution set, and thus do not change the rank of the equation system.
  After this replacement, the $i=1$ equation in~\cref{eq:vhsasm:row-sum} and the $j=1$ equation in~\cref{eq:vhsasm:col-sum} become redundant, and none of the remaining modified equations involves a variable from the first row or the first column.

  The remaining row- and column-sum equations are those for $i,j\in[2,k]$.
  By \cref{lem:prelim:rowcol-rank} applied for $m=n=k-1$, they contain $2(k-1)-1=2k-3$ independent equations.
  Moreover, these $2k-3$ equations involve only variables $y_{i,j}$ with $i,j\in[2,k]$ after the elimination, and hence have disjoint support from~\cref{eq:vhsasm:dim:firstrow,eq:vhsasm:dim:firstcol}.
  Therefore, the two families of equations are linearly independent, and so we obtain $(2k-3)+(2k-1)=4k-4$ independent equations in total.
  Since $|C| = k^2$, these define an affine subspace of dimension
  $
    k^2-(4k-4)=\frac{(n-5)^2}{4},
  $
  which contains $P^\core_\VHSASM$ and hence gives the bound $\dim(P^\core_\VHSASM)\leq \frac{(n-5)^2}{4}$.

  \medskip
  Second, we construct $\frac{(n-5)^2}{4} + 1 = (k-2)^2 + 1$ affinely independent cores in $P^\core_\VHSASM$.
  Let $\ol Y$ denote the average of the cores of all VHSASMs.
  We claim that $\ol Y$ reaches neither the lower nor the upper bound in~\cref{eq:vhsasm:row-prefix} for any $i \in [2,k]$ and $j \in [2,k-1]$.
  It suffices to show that, for each such $i,j$, there exists a core for which the sum of the first $j$ entries in row $i$ is $1$, and there exists a core for which this sum is $0$.
  To see this, place a permutation matrix into the submatrix formed by the rows and columns with even index of the $k\times k$ core in such a way that $y_{i,2}=1$ for some fixed even $i \in [2,k]$; fill the remaining core entries with $0$.
  This yields an integer solution to the system~\crefrange{eq:vhsasm:real}{eq:vhsasm:col-sum}, and hence the core of a VHSASM\@.
  For the fixed index $i$, the sum of the first $j$ entries in row $i$ equals $1$ for every $j \in [2,k-1]$, while for every odd row index the same sum equals $0$ for every $j \in [2,k-1]$.
  Next, set $y_{r,k}=(-1)^r$ for every $r \in [2,k]$, and place a permutation matrix into the submatrix formed by the rows with odd index in $[2,k]$ and the columns with even index in $[2,k-1]$ in such a way that $y_{i,2}=1$ for some fixed odd $i \in [3,k]$; fill the remaining core entries with $0$.
  Again we obtain an integer solution to the system~\crefrange{eq:vhsasm:real}{eq:vhsasm:col-sum}.
  For the fixed index $i$, the sum of the first $j$ entries in row $i$ equals $1$ for every $j \in [2,k-1]$, while for every even row index the same sum equals $0$ for every $j \in [2,k-1]$.
  Thus, for every $i \in [2,k]$ and $j \in [2,k-1]$, the row-prefix sum in~\cref{eq:vhsasm:row-prefix} attains both values $0$ and $1$ on VHSASM cores, and therefore $\ol Y$ does not reach equality in~\cref{eq:vhsasm:row-prefix} for any such $i,j$.
  An analogous argument applied to transposed cores shows that $\ol Y$ reaches neither bound in~\cref{eq:vhsasm:col-prefix} for any $i\in[2,k-1]$ and $j\in[2,k]$.

  For each $i,j\in[2,k-1]$, define
  $
    \ol Y^{i,j}
    = \ol Y
    + \varepsilon \chi_{i,j}
    - \varepsilon \chi_{i,k}
    - \varepsilon \chi_{k,j}
    + \varepsilon \chi_{k,k},
  $
  where $\varepsilon$ is a small positive constant.
  By definition, $\ol Y^{i,j}$ satisfies the equations in~\cref{eq:vhsasm:row-sum,eq:vhsasm:col-sum}.
  By the claim above, choosing $\varepsilon>0$ small enough ensures that $\ol Y^{i,j}$ violates no inequality in~\cref{eq:vhsasm:row-prefix,eq:vhsasm:col-prefix}, hence $\ol Y^{i,j}\in P^\core_\VHSASM$.
  The cores $\ol Y$ and $\ol Y^{i,j}$ for $i,j\in[2,k-1]$ are affinely independent: only the difference $\ol Y^{i,j}-\ol Y$ has a non-zero entry at $(i,j)$, so the cores $\{\ol Y^{i,j}-\ol Y : i,j\in[2,k-1]\}$ are linearly independent.
  Therefore, the dimension of $P^\core_\VHSASM$ is at least $(k-2)^2=\frac{(n-5)^2}{4}$.

  Combining the lower and upper bounds yields $\dim(P^\core_\VHSASM)=\dim(P_\VHSASM)=\frac{(n-5)^2}{4}$.
\end{proof}

\begin{theorem}\label{thm:vhsasm:facets}
  Let $n \geq 9$ be odd, and set $k=\floor*{n/2}$.
  The facets of $P^\core_\VHSASM$ are given by tightening the lower bound in~\cref{eq:vhsasm:row-prefix} to equality for
  $
    (i,j)\in
    \{(2,2)\}
    \cup
    \{(i,j) : i \in [3,k-1], j \in [2,k-1-\chi_{2 \mid i}]\}
    \cup
    \bigl(\{k\} \times \{j\in[3,k-1] : 2 \nmid j\}\bigr)
  $,
  and the upper bound for
  $
    (i,j)\in
    \{(i,j) : i \in [4,k-1], j \in [4,k-1-\chi_{2 \nmid i}]\}
    \cup
    \bigl(\{k\} \times \{j\in[4,k-1] : 2 \mid j\}\bigr)
  $;
  and by tightening the lower bound in~\cref{eq:vhsasm:col-prefix} to equality for
  $
    (i,j)\in
    \{(i,j): j\in[3,k-1], i\in[2,k-1-\chi_{2 \mid j}]\}
    \cup
    \bigl(\{i \in [3,k-2] : 2 \nmid i\} \times \{k\}\bigr)
  $,
  and the upper bound for
  $
    (i,j)\in
    \{(i,j): j\in[4,k-1], i\in[4,k-1-\chi_{2 \nmid j}]\}
    \cup
    \bigl(\{i \in [4,k-2] : 2 \mid i\} \times \{k\}\bigr)
  $.
  In particular, the number of facets of $P^\core_\VHSASM$ is $n^2-15n+60$.
\end{theorem}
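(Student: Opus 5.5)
We follow the template of the proof of \cref{thm:vsasm:facets}. By \cref{thm:vhsasm:corepolytope,thm:vhsasm:dim}, the affine hull of $P^\core_\VHSASM$ is cut out by the equations in~\cref{eq:vhsasm:row-sum,eq:vhsasm:col-sum} together with~\cref{eq:vhsasm:dim:firstrow,eq:vhsasm:dim:firstcol}. The average core $\ol Y$ from the proof of \cref{thm:vhsasm:dim} is strictly inside every prefix bound with $i,j\geq 2$ except those forced by the first row and column, so none of the listed inequalities is an implicit equation. By \cref{thm:irredundant-ineq-facets}, it then suffices to prove two things: (a) the listed facet inequalities, together with these equations, imply every inequality in~\cref{eq:vhsasm:row-prefix,eq:vhsasm:col-prefix}; and (b) each listed inequality is irredundant. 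The count $n^2-15n+60$ then follows by summing the sizes of the four index sets in terms of $k=(n-1)/2$ and substituting.

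For (a), we first dispose of the trivial bounds: row~$1$ and column~$1$ vanish, so the bounds at $i=1$ or $j=1$ hold. Next we derive sign information along the borders. For odd rows $i$, the row sum is $0$ and the facet lower bound at $(i,k-1)$ give $y_{i,k}\le 0$; symmetrically, $y_{k,j}\le 0$ for odd $j$. For even $i$, the facet upper and lower bounds around $(i,k-1)$ and $(i-1,k)$ give $y_{i,k}\le 1$. These in turn yield the missing prefix bounds at $j=k-1$ and $i=k-1$. Rows and columns $2$ and $3$ are handled as rows $2,3$ and $n-1,n-2$ were in the VSASM proof: $y_{2,j}\ge 0$ follows from column-prefix lower bounds, and the upper bounds for row~$3$ follow by comparing with row~$2$ via $0\le y_{2,j'}+y_{3,j'}$. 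For the low-index upper bounds (indices $2,3$ in the non-listed range), we sum row-sum equations over a block of rows and use the listed column-prefix upper bounds, exactly as in the $i=n-2$ derivation for VSASMs. Row and column $k$ play the role of the last rows there: their prefix bounds are recovered from the column-sum equations and the listed column-prefix bounds at $(k-1,\cdot)$. Throughout, the argument is symmetric under transposition, so we write only the row half and invoke $Y\mapsto Y^\top$ for the column half. This swaps the roles of the two parities, which matches the transposed index sets in the statement.

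For (b), for each listed inequality we construct a certificate core that violates it and satisfies all other listed inequalities and all the equations. We do this inductively in $n$ with step $4$ (i.e., $k\mapsto k+2$), since the parity pattern of the row and column sums $\chi_{2\mid i}$, $\chi_{2\mid j}$ must be preserved. The extension operators are analogues of $\extUL,\extUR,\extBL,\extBR$ that adjoin two new rows and two new columns, padded with zeros. The new entries are a $2\times2$ pattern carrying a single $1$ in the even-even new position, placed adjacent to the old core, so that every new prefix sum lies in $\{0,1\}$ and the row- and column-sum parities are correct. The index of the violated inequality is transported by shifts of $0$ or $2$ in each coordinate. The base cases are $n=9$ ($k=4$) and $n=11$ ($k=5$), given by explicit tables; a few extra base cases arise for indices with no predecessor under the shifts, as happened with $U^{9,\mathrm H}_{5,2}$. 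We would also check that any extra violations of non-facet inequalities in the base certificates remain at non-facet indices under transport.

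We expect the main obstacle to be step (b): producing the base-case certificates and arranging the extension operators so that the last row and column $k$ — where the sets in the statement have the special parity-restricted parts $\{k\}\times\{j:2\nmid j\}$, etc. — are correctly reached by the recursion. The first thing to try is to let the extensions insert the new rows and columns at the top-left (index shift $+2$) when the target index is near $k$, and at the bottom-right otherwise, mirroring the case split in the VSASM recursion.
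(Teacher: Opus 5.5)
Your architecture is the paper's. Through \cref{thm:irredundant-ineq-facets} you reduce to (a) deriving every bound in~\cref{eq:vhsasm:row-prefix,eq:vhsasm:col-prefix} from the listed ones and the equations, and (b) certificates built inductively by extension operators, with $\ol Y$ excluding implicit equations.

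The difference is in the operators. Only the paper's $\extUL$, which is your top-left operator, steps from $k-2$ to $k$. Its $\extUR$ and $\extBL$ step from $k-1$ to $k$: they insert a zero row (column) on one side and an alternating $\pm1$ column (row) on the other, and this exactly repairs the parity flip of every $\chi_{2\mid i}$ caused by the shift. Its $\extBR$ appends a zero row and column with corner entry $\chi_{2\mid k}$. So the parity obstruction that led you to steps of $4$ in $n$ is only apparent. Your scheme does preserve the equations, but it needs more explicit base cases: there are no upper-bound facets at all for $n=9$, so every upper-bound certificate for $n=13$ must be given explicitly.

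The real gap is the inductive claim, carried over from the VSASM template, that padding a certificate which violates exactly one listed inequality at size $k$ yields one that violates exactly the transported inequality. Padding makes old boundary rows and columns interior, and bounds that are not listed at size $k$ become listed at size $k+2$.

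At the corner this failure is forced, not accidental. For even $k$, a core violating the horizontal lower bound at $(k,k-1)$ has $y_{k,k}>1$ by the row-$k$ sum, and hence violates the vertical lower bound at $(k-1,k)$ by the column-$k$ sum. At size $k$ these are one inequality modulo~\cref{eq:vhsasm:row-sum,eq:vhsasm:col-sum}. That is why the vertical lists stop at $i\leq k-2$, so the statement's index sets are not exact transposes, contrary to your remark. After bottom-right padding they become two distinct listed facets at size $k+2$, and both are violated. The same happens for the upper bounds at these positions when $k$ is odd.

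Concretely, under your four shifts the horizontal lower bound at $(4,3)$ for $n=13$ has only one predecessor, the corner bound $(4,3)$ for $n=9$, and that transport always fails. Analogous promotions of unlisted bounds occur in three other places:
\begin{itemize}
\item when your top-right operator makes the old last column interior;
\item when your bottom-left operator makes the old last row interior;
\item when your top-left operator moves old rows and columns $2,3$ to $4,5$.
\end{itemize}
So the check you reserve for base certificates must be part of the induction hypothesis, recording which unlisted bounds each certificate satisfies. The case split must also never pad a corner certificate at the bottom-right. The paper's recursion is arranged this way: targets in the last two rows come from $\extUR$ or $\extUL$, which keep the old last row last, and $\extBR$, $\extBL$ are used only for targets in rows $i\leq k-2$. (The paper's write-up also asserts the transport claim without spelling out this invariant.)

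Two smaller points in (a).
\begin{itemize}
\item The unlisted upper bounds at $(i,2)$ and $(i,3)$ cannot come from column-prefix upper bounds, since none are listed in columns $2$ and $3$. Instead use $y_{i,2}\geq0$ and $y_{i,2}+y_{i,3}\geq0$ for all $i$, obtained from the lower bounds, together with the column sums for $j=2,3$.
\item The derivation $y_{i,k}\leq0$ via the bound at $(i,k-1)$ is unavailable for $i=k$ with $k$ odd, because $(k,k-1)$ is then not listed. Row $k$ and the case $i=k-1$ need separate parity-dependent arguments.
\end{itemize}
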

\begin{proof}
  The facets are obtained by tightening a single inequality in one of~\cref{eq:vhsasm:row-prefix,eq:vhsasm:col-prefix} to equality for the index pairs listed in the statement of the theorem.
  We call the instances of the lower bounds in~\cref{eq:vhsasm:row-prefix} that are tightened to equality the \emph{horizontal facet lower bounds}, and we define the \emph{horizontal facet upper bounds} analogously.
  Likewise, we call the instances of the lower bounds in~\cref{eq:vhsasm:col-prefix} that are tightened to equality the \emph{vertical facet lower bounds}, and we define the \emph{vertical facet upper bounds} analogously.
  We refer to the union of these four families as the \emph{facet inequalities}.

  We proceed in two steps.
  First, we show that the facet inequalities together with the equations in~\crefrange{eq:vhsasm:row-sum}{eq:vhsasm:dim:firstcol} imply every inequality in~\cref{eq:vhsasm:row-prefix,eq:vhsasm:col-prefix}.
  Then, for every facet inequality, we construct a core of an $n\times n$ matrix violating that facet inequality and no other, thereby proving that no facet inequality is redundant.
  The core $\ol Y$ constructed in the second step of the proof of \cref{thm:vhsasm:dim} shows that no facet inequality is an implicit equation; thus the two steps together imply that the facet inequalities form a minimal system that, extended with the equations in~\crefrange{eq:vhsasm:row-sum}{eq:vhsasm:dim:firstcol}, describes the convex hull of the cores of VHSASMs, which proves the theorem.

  \medskip
  Now we prove that the facet inequalities together with the equations in~\crefrange{eq:vhsasm:row-sum}{eq:vhsasm:dim:firstcol} imply every inequality in~\cref{eq:vhsasm:row-prefix,eq:vhsasm:col-prefix}.
  Clearly, we need to treat only those inequalities in~\cref{eq:vhsasm:row-prefix,eq:vhsasm:col-prefix} that are \emph{non-facet} inequalities, namely, the lower bounds in~\cref{eq:vhsasm:row-prefix} for
  \begin{align*}
    (i,j)\in\;&
                (\{1\}\times [k-1])
                \ \cup\
                (\{2\}\times(\{1\}\cup[3,k-1]))
                \ \cup\
                ([3,k]\times\{1\})\\
              &\cup\ (\{i\in[4,k-1]:2 \mid i\}\times\{k-1\})
                \ \cup\ (\{k\} \times \{j\in[2,k-1]:2 \mid j\}),
  \end{align*}
  and the upper bounds for
  \begin{align*}
    (i,j)\in\;&
                ([k]\times[3])
                \ \cup\ ([3]\times[4,k-1])
                \ \cup\ (\{i\in[5,k-1]:2 \nmid i\}\times\{k-1\})\\
              &\cup\ (\{k\}\times\{j\in[5,k-1]:2 \nmid j\});
  \end{align*}
  and the lower bounds in~\cref{eq:vhsasm:col-prefix} for
  \begin{align*}
    (i,j)\in\;&
                ([k-1]\times[2])
                \ \cup\ (\{1\}\times[3,k])
                \ \cup\ (\{k-1\}\times\{j\in[4,k-1]:2 \mid j\})\\
              &\cup\ \Bigl(\bigl(\{i\in[2,k-2]:2 \mid i\} \cup \{k-1\}\bigr)\times\{k\}\Bigr),
  \end{align*}
  and the upper bounds for
  \begin{align*}
    (i,j)\in\;&
                ([k-1]\times[3])
                \ \cup\ ([3]\times[4,k])
                \ \cup\ (\{k-1\}\times\{j\in[5,k-1]:2 \nmid j\})\\
              &\cup\ \Bigl(\bigl(\{i \in [5,k-2] : 2 \nmid i\} \cup \{k-1\}\bigr)\times\{k\}\Bigr).
  \end{align*}

  \medskip
  We start with the non-facet inequalities in~\cref{eq:vhsasm:row-prefix}.
  The equations in~\cref{eq:vhsasm:dim:firstrow,eq:vhsasm:dim:firstcol} give $y_{1,j}=0$ for all $j\in[k]$ and $y_{i,1}=0$ for all $i\in[k]$; hence the bounds in~\cref{eq:vhsasm:row-prefix} hold whenever $i=1$ or $j=1$.

  Next, consider $i=2$.
  The horizontal facet lower bound at $(2,2)$ gives $0\leq y_{2,1}+y_{2,2}=y_{2,2}$.
  Moreover, for every $j\in[3,k-1]$, the vertical facet lower bound at $(2,j)$ yields $0\leq y_{1,j}+y_{2,j}=y_{2,j}$.
  Hence $y_{2,j}\geq 0$ for all $j\in[2,k-1]$, and therefore the non-facet lower bounds in~\cref{eq:vhsasm:row-prefix} at $(2,j)$ with $j\in\{1\}\cup[3,k-1]$ follow.

  To obtain the corresponding non-facet upper bounds for $i=2$, we first note that
  $
    \sum_{j'=1}^{k} y_{2,j'}=\chi_{2 \mid 2}=1
  $
  by~\cref{eq:vhsasm:row-sum}.
  Furthermore, using the horizontal facet lower bound at $(3,k-1)$ and the equation in~\cref{eq:vhsasm:row-sum} for $i=3$, we obtain
  $
    y_{3,k} = -\sum_{j'=1}^{k-1}y_{3,j'} \leq 0.
  $
  If $k\geq 5$, then the vertical facet lower bound at $(3,k)$ gives $0\leq y_{1,k}+y_{2,k}+y_{3,k}=y_{2,k}+y_{3,k}$, hence $y_{2,k}\geq 0$.
  For $n=9$, the remaining non-facet bounds can be checked directly.
  Thus $y_{2,j}\geq 0$ for all $j\in[k]$ and $\sum_{j'=1}^k y_{2,j'}=1$, which implies
  $
  0\leq \sum_{j'=1}^{j} y_{2,j'} \leq 1
  $
  for every $j\in[k-1]$, and hence all non-facet upper bounds in~\cref{eq:vhsasm:row-prefix} at $(2,j)$ follow.

  Next, we derive the non-facet upper bounds in~\cref{eq:vhsasm:row-prefix} for $i=3$.
  Fix $j \in [2, k-1]$.
  For every $j'\in[j+1,k-1]$, the vertical facet lower bound at $(3,j')$ yields $0\leq y_{1,j'}+y_{2,j'}+y_{3,j'}=y_{2,j'}+y_{3,j'}$, hence $-y_{3,j'}\leq y_{2,j'}$.
  If $k\geq 5$, the same inequality also holds for $j'=k$ by the vertical facet lower bound at $(3,k)$.
  Using the equations in~\cref{eq:vhsasm:row-sum} for $i=2,3$, we obtain
  $
    \sum_{j'=1}^{j} y_{3,j'}
    = -\sum_{j'=j+1}^{k} y_{3,j'}
    \leq \sum_{j'=j+1}^{k} y_{2,j'}
    = 1-\sum_{j'=1}^{j} y_{2,j'}
    \leq 1
  $,
  proving the required non-facet row-prefix upper bounds for $i=3$.

  Next let $i\in[5,k-1]$ be odd.
  The corresponding equation in~\cref{eq:vhsasm:row-sum} gives $\sum_{j'=1}^k y_{i,j'}=0$, so $\sum_{j'=1}^{k-1} y_{i,j'}=-y_{i,k}$.
  Hence it suffices to show $y_{i,k}\geq -1$.
  If $i\leq k-2$, then the vertical facet upper bound at $(i-1,k)$ and the vertical facet lower bound at $(i,k)$ give
  $
  \sum_{r=1}^{i-1} y_{r,k}\leq 1
  $
  and
  $
    \sum_{r=1}^{i} y_{r,k}\geq 0
  $,
  and subtracting yields $y_{i,k}\geq -1$.
  If $k$ is even and $i=k-1$, then the horizontal facet lower bound at $(k,k-1)$ implies $y_{k,k}=1-\sum_{j'=1}^{k-1}y_{k,j'}\leq 1$ by~\cref{eq:vhsasm:row-sum}, hence $\sum_{r=1}^{k-1} y_{r,k}=1-y_{k,k}\geq 0$ by~\cref{eq:vhsasm:col-sum}.
  Together with the vertical facet upper bound at $(k-2,k)$ this gives $y_{k-1,k}\geq -1$.
  Therefore in all cases $\sum_{j'=1}^{k-1} y_{i,j'}=-y_{i,k}\leq 1$.

  Now let $i\in[4,k-1]$ be even.
  We show the non-facet lower bound in~\cref{eq:vhsasm:row-prefix} at $(i,k-1)$.
  If $i\leq k-2$, then the vertical facet lower bound at $(i-1,k)$ and the vertical facet upper bound at $(i,k)$ imply $y_{i,k}\leq 1$, hence
  $
  \sum_{j'=1}^{k-1} y_{i,j'} = \chi_{2 \mid i}-y_{i,k} = 1-y_{i,k}\geq 0
  $
  by~\cref{eq:vhsasm:row-sum}.
  If $k$ is odd and $i=k-1$, we use $\sum_{j'=1}^{k}y_{k,j'}=0$ from~\cref{eq:vhsasm:row-sum} and the horizontal facet upper bound at $(k,k-1)$ to get $y_{k,k}\geq -1$, hence $\sum_{i'=1}^{k-1}y_{i',k}=-y_{k,k}\leq 1$; together with the vertical facet lower bound at $(k-2,k)$ this gives $y_{k-1,k}\leq 1$ and thus $\sum_{j'=1}^{k-1} y_{i,j'} \geq 0$.

  For the remaining non-facet upper bounds in~\cref{eq:vhsasm:row-prefix} with $j\in\{2,3\}$, note that $y_{i,1}=0$ for all $i$ by~\cref{eq:vhsasm:dim:firstcol}.
  Thus the horizontal facet lower bound at $(i,2)$ yields $y_{i,2}\geq 0$ for all $i\in[3,k]$, and together with the already established $y_{2,2}\geq 0$ we obtain $y_{i,2}\geq 0$ for all $i\in[k]$.
  Using the $j=2$ equation in~\cref{eq:vhsasm:col-sum}, we have $\sum_{i=1}^k y_{i,2}=1$, hence $y_{i,2}\leq 1$ for all $i$, proving the upper bound at $(i,2)$.
  Similarly, the horizontal facet lower bound at $(i,3)$ gives $y_{i,2}+y_{i,3}\geq 0$ for all $i\in[3,k]$, and with the already established $y_{2,2}+y_{2,3}\geq 0$ we get $y_{i,2}+y_{i,3}\geq 0$ for all $i\in[k]$.
  Summing over $i$ and using the equations in~\cref{eq:vhsasm:col-sum} for $j=2,3$ yields
  $
    \sum_{i=1}^k (y_{i,2}+y_{i,3}) = \sum_{i=1}^k y_{i,2} + \sum_{i=1}^k y_{i,3} = 1
  $,
  so each summand satisfies $y_{i,2}+y_{i,3}\leq 1$, which proves the upper bound at $(i,3)$.

  Finally, we treat the case $i=k$.
  For every odd $j'\in[3,k-1]$, the vertical facet lower bound at $(k-1,j')$ gives $\sum_{i'=1}^{k-1}y_{i',j'}\geq 0$, and since $\sum_{i'=1}^{k}y_{i',j'}=\chi_{2 \mid j'}=0$ by~\cref{eq:vhsasm:col-sum}, we obtain
  $
    y_{k,j'} = -\sum_{i'=1}^{k-1}y_{i',j'}\leq 0
  $.
  Likewise, for every even $j'\in[4,k-1]$, the vertical facet upper bound at $(k-1,j')$ gives $\sum_{i'=1}^{k-1}y_{i',j'}\leq 1$, and
  $\sum_{i'=1}^{k}y_{i',j'}=1$, hence
  $
    y_{k,j'} = 1-\sum_{i'=1}^{k-1}y_{i',j'}\geq 0
  $.
  Now let $j\in[2,k-1]$ be even.
  If $j\leq k-2$, then $j+1$ is odd and the horizontal facet lower bound at $(k,j+1)$ gives
  $\sum_{j'=1}^{j+1}y_{k,j'}\geq 0$, so
  $
    \sum_{j'=1}^{j}y_{k,j'}
    = \sum_{j'=1}^{j+1}y_{k,j'} - y_{k,j+1}
    \geq 0
  $,
  because $y_{k,j+1}\leq 0$ as shown above.
  If $k$ is odd and $j=k-1$, then $j$ is even and the vertical facet upper bound at $(k-1,k-1)$ together with $\sum_{i'=1}^{k}y_{i',k-1}=1$ gives $y_{k,k-1}\geq 0$, hence $\sum_{j'=1}^{k-1}y_{k,j'}\geq \sum_{j'=1}^{k-2}y_{k,j'}\geq 0$.
  This proves the non-facet lower bounds in~\cref{eq:vhsasm:row-prefix} at $(k,j)$ with $j$ even.

  For the non-facet upper bounds in~\cref{eq:vhsasm:row-prefix} in row $k$ at odd $j\in[5,k-1]$, we use that $j+1$ is even.
  If $j\leq k-2$, then the horizontal facet upper bound at $(k,j+1)$ gives
  $
    \sum_{j'=1}^{j+1} y_{k,j'}\leq 1
  $,
  and since $y_{k,j+1}\geq 0$ as above, we obtain
  $
    \sum_{j'=1}^{j} y_{k,j'}
    = \sum_{j'=1}^{j+1} y_{k,j'} - y_{k,j+1}
    \leq 1
  $.
  If $k$ is even and $j=k-1$, then by~\cref{eq:vhsasm:col-sum}, we have $\sum_{i'=1}^{k} y_{i',k}=1$, and the vertical facet upper bound at $(k-1,k)$ yields $\sum_{i'=1}^{k-1} y_{i',k}\leq 1$, hence
  $
    y_{k,k}=1-\sum_{i'=1}^{k-1} y_{i',k}\geq 0
  $.
  Together with $\sum_{j'=1}^{k} y_{k,j'}=1$ from~\cref{eq:vhsasm:row-sum}, this implies
  $
    \sum_{j'=1}^{k-1} y_{k,j'} = 1-y_{k,k}\leq 1
  $, which completes the derivation of all bounds in~\cref{eq:vhsasm:row-prefix}.

  The non-facet inequalities in~\cref{eq:vhsasm:col-prefix} follow analogously by applying the argument above with rows and columns interchanged.

  \medskip
  It remains to show that no facet inequality is redundant.
  For every horizontal facet lower bound, we construct a core $L^{n,\mathrm H}_{i,j}\in\R^{C}$ that violates that facet inequality and satisfies every other facet inequality as well as the equations defining $P^\core_\VHSASM$.
  Similarly, we construct certifying cores $U^{n,\mathrm H}_{i,j}$ for the horizontal facet upper bounds.
  Since the vertical certificates $L^{n,\mathrm V}_{i,j}$ and $U^{n,\mathrm V}_{i,j}$ can be obtained by transposition, i.e., $L^{n,\mathrm V}_{i,j}=\bigl(L^{n,\mathrm H}_{j,i}\bigr)^\top$ and $U^{n,\mathrm V}_{i,j}=\bigl(U^{n,\mathrm H}_{j,i}\bigr)^\top$, we focus on the horizontal certificates.

  In order to build the certifying cores of $n\times n$ VHSASMs from the certifying cores for smaller sizes, we define the four \emph{extension operators}
  \begin{center}
    \begin{tikzpicture}[scale=.8, baseline=(current bounding box.center)]
      \node at (-1.4,2.5) {$\extUL(Z)\ =$};
      \node[draw=none] at (5.15,2.5) {$,$};
      \draw[fill=cyan!15] (2,0) rectangle (5,3);
      \draw (0,0) rectangle (5,5);

      \node at (0.5,4.5) {$0$};
      \node at (0.5,3.5) {$0$};
      \ThreeDotsAt{(0.5,1.5)}{.21213203}{90}
      \node at (0.5,2.5) {$0$};
      \node at (0.5,0.5) {$0$};

      \node at (1.5,4.5) {$0$};
      \node at (1.5,3.5) {$1$};
      \ThreeDotsAt{(1.5,1.5)}{.21213203}{90}
      \node at (1.5,2.5) {$0$};
      \node at (1.5,0.5) {$0$};

      \node at (2.5,3.5) {$0$};
      \ThreeDotsAt{(3.5,3.5)}{.21213203}{0}
      \node at (4.5,3.5) {$0$};

      \node at (2.5,4.5) {$0$};
      \ThreeDotsAt{(3.5,4.5)}{.21213203}{0}
      \node at (4.5,4.5) {$0$};

      \node at (3.5,1.5) {$Z$};

      \begin{scope}[xshift=8.5cm]
        \node at (-1.4,2.5) {$\extUR(Z)\ =$};
        \node[draw=none] at (5.15,2.5) {$,$};
        \draw[fill=cyan!15] (0,0) rectangle (4,4);
        \draw (0,0) rectangle (5,5);

        \node at (0.5,4.5) {$0$};
        \ThreeDotsAt{(2,4.5)}{.21213203}{0}
        \node at (4.5,4.5) {$0$};
        \node at (3.5,4.5) {$0$};

        \node at (4.5,3.5) {$1$};
        \node at (4.5,2.5) {$-1$};
        \node at (4.5,1.5) {$1$};
        \ThreeDotsAt{(4.5,0.5)}{.21213203}{90}

        \node at (2,2) {$Z$};
      \end{scope}
    \end{tikzpicture}

    \vspace{18pt}

    \begin{tikzpicture}[scale=.8, baseline=(current bounding box.center)]
      \node at (-1.4,2.5) {$\extBL(Z)\ =$};
      \node[draw=none] at (5.15,2.5) {$,$};
      \draw[fill=cyan!15] (1,1) rectangle (5,5);
      \draw (0,0) rectangle (5,5);

      \node at (0.5,4.5) {$0$};
      \node at (0.5,1.5) {$0$};
      \node at (3,3) {$Z$};

      \ThreeDotsAt{(0.5,3)}{.21213203}{90}

      \node at (0.5,0.5) {$0$};
      \node at (1.5,0.5) {$1$};
      \node at (2.5,0.5) {$-1$};
      \node at (3.5,0.5) {$1$};
      \ThreeDotsAt{(4.5,0.5)}{.21213203}{0}

      \begin{scope}[xshift=8.5cm]
        \node at (-1.4,2.5) {$\extBR(Z)\ =$};
        \node[draw=none] at (5.15,2.5) {$.$};
        \draw[fill=cyan!15] (0,1) rectangle (4,5);
        \draw (0,0) rectangle (5,5);
        \node at (2,3) {$Z$};
        \node at (4.5,4.5) {$0$};
        \ThreeDotsAt{(4.5,3)}{.21213203}{90}
        \node at (4.5,1.5) {$0$};

        \node at (0.5,0.5) {$0$};
        \ThreeDotsAt{(2,0.5)}{.21213203}{0}
        \node at (3.5,0.5) {$0$};

        \node at (4.5,0.5) {$\chi_{2 \mid k}$};
      \end{scope}
    \end{tikzpicture}
  \end{center}
  More precisely, the operator $\extUL$ takes a $(k-2)\times (k-2)$ core $Z$ and yields a $k\times k$ core by placing $Z$ in the lower-right block, adjoining two new top rows and two new left columns, all zero except for a single $1$ at the corner cell adjacent to the embedded copy.
  The other three operators take a $(k-1)\times (k-1)$ core $Z$ and yield a $k\times k$ core.
  In particular, $\extUR$ extends a $(k-1)\times (k-1)$ core $Z$ by inserting a new top row of zeros and appending a new last column whose entries alternate between $+1$ and $-1$, with the first entry equal to $+1$; $\extBL$ extends $Z$ by inserting a new first column of zeros and appending a new bottom row whose entries alternate between $+1$ and $-1$, with the first entry equal to $+1$; and $\extBR$ extends $Z$ by appending a new last column and a new bottom row, all zero except possibly for the bottom-right corner entry, which is set to $\chi_{2 \mid k}$.

  By the definition of our four extension operators, every prefix sum in~\cref{eq:vhsasm:row-prefix,eq:vhsasm:col-prefix} either coincides with the corresponding prefix sum on the embedded core (after a simple index shift), because all newly inserted entries contributing to that prefix are $0$; or belongs to $\{0,1\}$ because the defining prefix lies entirely in the newly inserted boundary rows or columns, and the alternating patterns yield prefix sums alternating between $0$ and $1$.
  Moreover, the endpoint $(i,j)$ of the unique violated facet inequality is transported as follows.
  Under $\extUL$ it becomes $(i+2,j+2)$, under $\extUR$ it becomes $(i+1,j)$, under $\extBL$ it becomes $(i,j+1)$, and under $\extBR$ the endpoint remains $(i,j)$.
  Accordingly, the recursive definitions invoke the smaller certificate with indices $(i-2,j-2)$, $(i-1,j)$, $(i,j-1)$, or $(i,j)$, respectively.
  Finally, it is straightforward to see that the equations in~\crefrange{eq:vhsasm:row-sum}{eq:vhsasm:dim:firstcol} are preserved under $\extUL$, $\extUR$, $\extBL$, and $\extBR$.

  Now we are ready to construct $L^{n,\mathrm H}_{i,j}$, $L^{n,\mathrm V}_{i,j}$, $U^{n,\mathrm H}_{i,j}$, and $U^{n,\mathrm V}_{i,j}$ via an inductive approach.
  For $n=9$, we set
  \[
    L^{9,\mathrm H}_{2,2}=
    \vcenter{\hbox{\begin{ytableau}
          0& 0& 0& 0\\
          0&-1& 1& 1\\
          0& 1&-1& 0\\
          0& 1& 0& 0
        \end{ytableau}}}\,,
    \quad
    L^{9,\mathrm H}_{3,2}=
    \vcenter{\hbox{\begin{ytableau}
          0& 0& 0& 0\\
          0& 1& 0& 0\\
          0&-1& 1& 0\\
          0& 1&-1& 1
        \end{ytableau}}}\,,
    \quad
    L^{9,\mathrm H}_{3,3}=
    \vcenter{\hbox{\begin{ytableau}
          0& 0& 0& 0\\
          0& 0& 1& 0\\
          0& 0&-1& 1\\
          0& 1& 0& 0
        \end{ytableau}}}\,,
    \quad
    L^{9,\mathrm H}_{4,3}=
    \vcenter{\hbox{\begin{ytableau}
          0& 0& 0& 0\\
          0& 1& 0& 0\\
          0& 0& 1&-1\\
          0& 0&-1& 2
        \end{ytableau}}}\,.
  \]
  We define $L^{9,\mathrm V}_{i,j}=(L^{9,\mathrm H}_{j,i})^\top$ for $(i,j)\in\{(2,3),(3,3)\}$.

  For $n=11$, we define $L^{11,\mathrm H}_{i,j}$ as follows.
  For $(i,j)\in\{(2,2),(3,2),(3,3)\}$, set
  $
  L^{11,\mathrm H}_{i,j}=\extBR\bigl(L^{9,\mathrm H}_{i,j}\bigr)
  $;
  for $(i,j)\in\{(4,2),(4,3),(5,3)\}$, set
  $
  L^{11,\mathrm H}_{i,j}=\extUR\bigl(L^{9,\mathrm H}_{i-1,j}\bigr)
  $;
  for $(i,j)=(3,4)$, set
  $
  L^{11,\mathrm H}_{3,4}=\extBL\bigl(L^{9,\mathrm H}_{3,3}\bigr)
  $.
  We define
  \[
    U^{11,\mathrm H}_{4,4}=
    \vcenter{\hbox{\begin{ytableau}
          0& 0& 0& 0& 0\\
          0& 0& 0& 0& 1\\
          0& 0& 0& 0& 0\\
          0& 1& 0& 1&-1\\
          0& 0& 0& 0& 0
        \end{ytableau}}}\,,
    \quad
    U^{11,\mathrm H}_{5,4}=
    \vcenter{\hbox{\begin{ytableau}
          0& 0& 0& 0& 0\\
          0& 0& 0& 0& 1\\
          0& 0& 0& 0& 0\\
          0& 0& 0& 0& 1\\
          0& 1& 0& 1&-2
        \end{ytableau}}}\,.
  \]
  We define $L^{11,\mathrm V}_{i,j}=(L^{11,\mathrm H}_{j,i})^\top$ for $(i,j) \in \{(2,3),(2,4),(3,3),(3,4),(3,5),(4,3)\}$ and $U^{11,\mathrm V}_{4,4}=(U^{11,\mathrm H}_{4,4})^\top$.

  \medskip
  We now give the recursive definitions of the certifying cores for odd $n\geq 13$.
  Assume that the certificates have already been defined for sizes $n-2$ and $n-4$.
  For horizontal facet lower bounds, define $L^{n,\mathrm H}_{i,j}$ for every index $(i,j)$ appearing in the horizontal facet lower family as follows:
  \[
    L^{n,\mathrm H}_{i,j}=
    \begin{cases}
      \extBR\bigl(L^{n-2,\mathrm H}_{2,2}\bigr)     & \text{if } (i,j) = (2,2),\\
      \extBR\bigl(L^{n-2,\mathrm H}_{i,j}\bigr)     & \text{if } i \in [3, k-2] \text{ and } j \in [2, k-2-\chi_{2 \mid i}],\\
      \extBL\bigl(L^{n-2,\mathrm H}_{i,j-1}\bigr)   & \text{if } i \in [3, k-2] \text{ and } j = k-1-\chi_{2 \mid i},\\
      \extUR\bigl(L^{n-2,\mathrm H}_{k-2,j}\bigr)   & \text{if } i = k-1 \text{ and } j \in [2, k-3-\chi_{2 \nmid k}],\\
      \extUL\bigl(L^{n-4,\mathrm H}_{k-3,j-2}\bigr) & \text{if } i = k-1 \text{ and } k-2-\chi_{2 \nmid k} \leq j,\\
      \extUR\bigl(L^{n-2,\mathrm H}_{k-1,j}\bigr)   & \text{if } i = k \text{ and } j \in [3, k-2-\chi_{2 \nmid k}],\\
      \extUL\bigl(L^{n-4,\mathrm H}_{k-2,j-2}\bigr) & \text{if } i = k \text{ and } j = k-1-\chi_{2 \nmid k}.
    \end{cases}
  \]
  For horizontal facet upper bounds, define $U^{n,\mathrm H}_{i,j}$ for every index $(i,j)$ appearing in the horizontal facet upper family as follows:
  \[
    U^{n,\mathrm H}_{i,j}=
    \begin{cases}
      \vcenter{\vspace{3pt}\hbox{\begin{ytableau}
            0& 0& 0& 0& 0& 0\\
            0& 0& 0& 0& 0& 1\\
            0& 0& 0& 0& 0& 0\\
            0& 0& 0& 0& 1& 0\\
            0& 0& 0& 0& 0& 0\\
            0& 1& 0& 1&-1& 0
          \end{ytableau}}}                        & \text{if } n=13, (i,j)=(k,k-2),\\[46pt]
      \extUL\bigl(U^{n-4,\mathrm H}_{i-2,j-2}\bigr) & \text{if } n \geq 17,\ 2 \mid k \text{ and } (i,j)=(k,k-2),\\
      \extBR\bigl(U^{n-2,\mathrm H}_{i,j}\bigr)     & \text{if } i \in [4, k-2] \text{ and } j \in [4, k-2-\chi_{2 \nmid i}],\\
      \extBL\bigl(U^{n-2,\mathrm H}_{i,j-1}\bigr)   & \text{if } i \in [4, k-2] \text{ and } j = k-1-\chi_{2 \nmid i},\\
      \extUR\bigl(U^{n-2,\mathrm H}_{k-2,j}\bigr)   & \text{if } i = k-1 \text{ and } j \in [4, k-2-\chi_{2 \nmid k}],\\
      \extBL\bigl(U^{n-2,\mathrm H}_{k-1,k-3}\bigr) & \text{if } 2 \nmid k \text{ and } (i,j)=(k-1,k-2),\\
      \extUL\bigl(U^{n-4,\mathrm H}_{k-3,k-3}\bigr) & \text{if } 2 \nmid k \text{ and } (i,j)=(k-1,k-1),\\
      \extUR\bigl(U^{n-2,\mathrm H}_{k-1,j}\bigr)   & \text{if } i = k \text{ and } j \in [4, k-2-\chi_{2 \mid k}],\\
      \extUL\bigl(U^{n-4,\mathrm H}_{k-2,k-3}\bigr) & \text{if } 2 \nmid k \text{ and } (i,j)=(k,k-1).
    \end{cases}
  \]

  Finally, inspection of the index sets in~\cref{thm:vhsasm:facets} shows that, for every vertical lower (respectively, upper) facet index $(i,j)$, the transposed index $(j,i)$ is a horizontal lower (respectively, upper) facet index.
  Hence all certificates on the right-hand side below have already been defined, and we set
  \[
    L^{n,\mathrm V}_{i,j} = \bigl(L^{n,\mathrm H}_{j,i}\bigr)^\top,
    \qquad
    U^{n,\mathrm V}_{i,j} = \bigl(U^{n,\mathrm H}_{j,i}\bigr)^\top.
  \]

  \medskip
  We claim that these definitions provide the desired certificates.
  We prove this by induction on~$n$.
  The explicitly listed cores for $n=9$ (horizontal lower), for $n=11$ (horizontal upper), and the additional base case $U^{13,\mathrm H}_{k,k-2}$ satisfy the claim by direct inspection.
  Now let $n\geq 13$ and assume that the claim holds for sizes $n-2$ and $n-4$.

  \smallskip
  By construction, every certifying core for size $n$ is obtained from a smaller certifying core (for size $n-2$ or size $n-4$) by applying one of the four extension operators.
  Each extension embeds the smaller $k'\times k'$ core as a submatrix and fills the remaining new rows and columns with entries of the fixed boundary patterns used in the definition of $\extUL,\extUR,\extBL,\extBR$ (zeros, or alternatingly $1$ and $-1$, or a single corner entry), so that every row-prefix or column-prefix sum whose endpoint lies entirely in the newly inserted part takes a value in $\{0,1\}$.
  Consequently, every facet inequality whose defining prefix lies in the newly inserted rows or columns is satisfied.

  \smallskip
  If the endpoint $(i,j)$ of a facet inequality lies in the embedded copy of the smaller certificate, then its defining prefix is contained in that copy (after the evident index shift), and its prefix sum coincides with the corresponding prefix sum of the smaller certificate.
  Hence it is satisfied by the induction hypothesis, except for the unique facet inequality violated by the smaller certificate.
  The extension operator in the recursive definition is chosen so that this uniquely violated facet inequality is transported to the intended facet inequality for size $n$: under $\extBR$ the endpoint remains $(i,j)$, under $\extBL$ it becomes $(i,j+1)$, under $\extUR$ it becomes $(i+1,j)$, and under $\extUL$ it becomes $(i+2,j+2)$.
  Accordingly, the recursive definitions invoke the predecessor certificate with indices $(i,j)$, $(i,j-1)$, $(i-1,j)$, or $(i-2,j-2)$, respectively (and size $n-2$ or size $n-4$ as indicated).

  \smallskip
  Finally, each of the four extension operators preserves the equations in~\crefrange{eq:vhsasm:row-sum}{eq:vhsasm:dim:firstcol}; this follows directly from their definitions.
  Therefore, for every facet inequality for size $n$ we have constructed a core that violates that facet inequality and satisfies every other facet inequality as well as all defining equations.
  This shows that no facet inequality is redundant.
\end{proof}

\section{Half-turn symmetric ASMs (HTSASMs)}\label{sec:HTSASM}
In this class, we impose invariance under rotation by $\pi$.
The corresponding symmetry subgroup is $\mathcal{G} = \{\mathcal{I}, \mathcal{R}_{\pi}\}$.
Let $P_\HTS$ denote the linear subspace of half-turn symmetric real matrices, i.e.,
\[
  P_\HTS
  =
  \left\{
    X \in \R^{n \times n} : x_{i,j} = x_{n+1-i,n+1-j}\ \forall i, j \in [n]
  \right\}.
\]
Clearly, any HTSASM satisfies the ASM constraints in~\crefrange{eq:asm:real}{eq:asm:col-sum} and also the symmetry constraints defining $P_\HTS$; thus $P_\HTSASM \subseteq P_\ASM \cap P_\HTS$.
We will prove that these constraints are in fact sufficient, i.e., $P_\HTSASM = P_\ASM \cap P_\HTS$.

\paragraph{Core and assembly map.}
Let the \emph{core} of an HTSASM be given by the first $k = \floor*{n/2}$ columns, together with the entries strictly above the central row in the middle column when $n$ is odd, i.e.,
\[
  C =
  \begin{cases}
    [n] \times [k]                             & \text{if } 2 \mid n,\\
    ([n] \times [k]) \cup ([k] \times \{k+1\}) & \text{if } 2 \nmid n
  \end{cases}
\]
is the set of \emph{core positions}, and let $\pi_C$ be the coordinate-wise projection onto $C$.
Define the affine map $\varphi : \R^C \to \R^{n \times n}$ by
\[
  \varphi(Y)_{i,j} =
  \begin{cases}
    y_{i,j}                        & \text{if } (i,j) \in C,\\
    1 - 2\sum_{j'=1}^{k} y_{k+1,j'} & \text{if } 2 \nmid n\text{ and } i=j=k+1,\\
    y_{n+1-i,n+1-j}                 & \text{otherwise}
  \end{cases}
\]
for $Y \in \R^C$ and $i,j \in [n]$.
Thus $\varphi$ places the core $Y$ on the core positions, assigns the central entry according to the second case when $n$ is odd, and fills the remaining entries by a rotation by $\pi$, yielding a half-turn symmetric matrix.
Clearly, the map $\varphi$ is an assembly map: it is affine, satisfies $\pi_C(\varphi(Y)) = Y$ for every $Y \in \R^C$, and $\varphi(\pi_C(X)) = X$ for every $X \in \HTSASM(n)$, because $X$ is completely determined by its entries in $C$ together with the imposed half-turn symmetry, including the middle entry for odd $n$, which is uniquely determined in an HTSASM by the rest of the palindromic middle row.

\bigskip
We now describe the core polytope of HTSASMs.
\begin{theorem}\label{thm:htsasm:corepolytope}
  Let $n \geq 1$, $k = \floor*{n/2}$, and $\ell = \ceil*{n/2}$.
  Then the core polytope $P^\core_\HTSASM \subseteq \R^C$ of $n \times n$ HTSASMs is described by the following system.
  \begin{align}
                                     y&_{i,j} \in \R                                       &\forall (i,j) \in C, \label{eq:htsasm:real}\\
    0 \leq \sum_{j'=1}^{\min\{\ell,j\}} y&_{i,j'} + \sum_{j'=\ell+1}^{j} y_{n+1-i,n+1-j'} \leq 1 &\forall i \in [k], j \in [n-1], \label{eq:htsasm:row-prefix}\displaybreak[0]\\
    0 \leq \sum_{i'=1}^{i}            y&_{i',j} \leq 1                                      &\forall i \in [n-1], j \in [k], \label{eq:htsasm:col-prefix}\\
    \sum_{j'=1}^{\ell}                 y&_{i,j'} + \sum_{j'=\ell+1}^{n} y_{n+1-i,n+1-j'} = 1     &\forall i \in [k], \label{eq:htsasm:row-sum}\\
    \sum_{i=1}^{n}                    y&_{i,j} = 1                                          &\forall j \in [k]; \label{eq:htsasm:col-sum}\\
    \intertext{if $n$ is odd, then we also add the constraints}
    0 \leq \sum_{j'=1}^{j}            y&_{\ell,j'} \leq 1                                    &\forall j \in [k], \label{eq:htsasm:mid-row-pref}\\
    0 \leq \sum_{i'=1}^{i}            y&_{i',\ell} \leq 1                                    &\forall i \in [k]. \label{eq:htsasm:mid-col-pref}
  \end{align}
\end{theorem}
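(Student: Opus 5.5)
The plan follows the proofs of \cref{thm:vsasm:corepolytope,thm:vhsasm:corepolytope}. First I show that the integer solutions of the system~\crefrange{eq:htsasm:real}{eq:htsasm:mid-col-pref} are exactly the cores of HTSASMs. Then I show that the system defines an integral polytope via \cref{thm:laminarSystem}.

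\emph{Cores satisfy the system.} Let $X$ be an HTSASM with core $Y$. For $i\in[k]$ and $j'\geq \ell+1$, half-turn symmetry gives $x_{i,j'}=x_{n+1-i,n+1-j'}$, and $n+1-j'\leq k$, so this entry is a core variable. Hence the expressions in~\cref{eq:htsasm:row-prefix,eq:htsasm:row-sum} are precisely the row-prefix sums and row sums of row $i$ of $X$, and they satisfy the bounds by \cref{thm:ASMpolytope}. The column constraints~\cref{eq:htsasm:col-prefix,eq:htsasm:col-sum} are literally ASM constraints. For odd $n$, \cref{eq:htsasm:mid-row-pref,eq:htsasm:mid-col-pref} are the first $k$ prefix sums of the middle row and of the middle column, so they also follow from \cref{thm:ASMpolytope}.

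\emph{Integer solutions are cores.} Conversely, let $Y\in\Z^C$ satisfy the system and set $X=\varphi(Y)$; then $X$ is an integer half-turn symmetric matrix. Rows $i\in[k]$ satisfy all ASM row constraints by~\cref{eq:htsasm:row-prefix,eq:htsasm:row-sum}. For row $n+1-i$, its prefix of length $j$ equals, by rotation, the suffix of row $i$ of length $j$. That suffix is $1$ minus the prefix of row $i$ of length $n-j$, so it lies in $[0,1]$ and is integral, hence in $\{0,1\}$. The same complement argument transports the column constraints of columns $j\in[k]$ to columns $n+1-j$.

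For odd $n$ the middle row is palindromic and its centre is defined as $1-2\sum_{j'\leq k}y_{\ell,j'}$. Therefore it sums to $1$, and \cref{lem:vsasm:symmetricHalfExtends}, together with~\cref{eq:htsasm:mid-row-pref}, gives all of its prefix bounds.

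The middle column is also palindromic, but its sum is not directly prescribed. I need to check that
\[
  2\sum_{i\leq k}y_{i,\ell}+x_{\ell,\ell}=1,
  \qquad\text{i.e.,}\qquad
  \sum_{i\leq k}y_{i,\ell}=\sum_{j\leq k}y_{\ell,j}.
\]
I expect this to follow from a double count. Summing the row equations~\cref{eq:htsasm:row-sum} over $i\in[k]$, together with the middle row, counts every entry of $X$ outside the middle column twice via rotation. Comparing this with the column equations~\cref{eq:htsasm:col-sum} summed over $j\in[k]$ should yield the identity. I regard verifying this implicit equation carefully as the main obstacle, since it is the one ASM constraint not visibly present in the system. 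Once it holds, \cref{lem:vsasm:symmetricHalfExtends} applied to the middle column, using~\cref{eq:htsasm:mid-col-pref}, completes the check. Thus $X$ satisfies \cref{thm:ASMpolytope} with integer entries. All prefix sums are then in $\{0,1\}$, so the entries are in $\{0,\pm1\}$ and alternate properly, and $X$ is an HTSASM.

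\emph{Integrality.} Each variable lies in exactly one \emph{row chain} and one \emph{column chain}:
\begin{itemize}
  \item for $i\in[k]$, the row chain is $y_{i,1},\dots,y_{i,\ell},y_{n+1-i,k},\dots,y_{n+1-i,1}$;
  \item for odd $n$, there is additionally the middle-row chain $y_{\ell,1},\dots,y_{\ell,k}$;
  \item the column chains are the columns $j\in[k]$, and for odd $n$ the partial middle column $y_{1,\ell},\dots,y_{k,\ell}$.
\end{itemize}
The row chains partition the core positions: rows $[k]$ and $[k+2,n]$ are paired, and the middle row is handled separately. The column chains partition them as well. Hence the prefixes of the row chains form one laminar family, and the prefixes of the column chains form another. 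All constraints, with equations written as $f=g$, bound sums over members of these two families by integers. Therefore \cref{thm:laminarSystem} shows that the system defines an integral polytope. Its vertices are integer solutions, i.e.\ HTSASM cores, so the polytope equals $P^\core_\HTSASM$.
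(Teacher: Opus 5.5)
Your proposal is correct and follows the same route as the paper. You show that HTSASM cores satisfy the system, rebuild $X=\varphi(Y)$ and transport the constraints by half-turn symmetry and \cref{lem:vsasm:symmetricHalfExtends}, and obtain integrality from \cref{thm:laminarSystem} using row-chain and column-chain prefix families. The middle-column identity you left as an expectation does hold, and this is how the paper proves it: subtract the sum of \cref{eq:htsasm:row-sum} over $i\in[k]$ from the sum of \cref{eq:htsasm:col-sum} over $j\in[k]$. The blocks $[k]\times[k]$ and $[\ell+1,n]\times[k]$ cancel and leave $\sum_{i\le k}y_{i,\ell}=\sum_{j\le k}y_{\ell,j}$, so there is no need to bring in the middle row.
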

\begin{proof}
  We prove that, when $n$ is even, the integer solutions to the system~\crefrange{eq:htsasm:real}{eq:htsasm:col-sum} are exactly the cores of HTSASMs, while for odd $n$ the cores arise precisely as the integer solutions to the extended system~\crefrange{eq:htsasm:real}{eq:htsasm:mid-col-pref}.
  We then show that the system defines an integral polytope in both cases.

  \medskip
  First, let $X$ be an $n\times n$ HTSASM, and let $Y = \pi_C(X)$ be its core.
  Since $X$ is half-turn symmetric, we have $X = \varphi(Y)$.
  By the definition of $\varphi$, for every $i \in [k]$ and $j \in [n]$,
  \begin{equation}\label{eq:htsasm:phi-row-prefix-used}
    \sum_{j'=1}^{j} x_{i,j'}
    = \sum_{j'=1}^{\min\{\ell,j\}} y_{i,j'}
    + \sum_{j'=\ell+1}^{j} y_{n+1-i,n+1-j'}.
  \end{equation}
  For the first $k$ columns, we similarly have, for every $j\in[k]$ and $i\in[n]$,
  \begin{equation}\label{eq:htsasm:phi-col-prefix-used}
    \sum_{i'=1}^{i} x_{i',j} = \sum_{i'=1}^{i} y_{i',j}.
  \end{equation}
  If $n$ is odd, then $(\ell,j)\in C$ for every $j\in[k]$, and $(i,\ell)\in C$ for every $i\in[k]$, so for the middle row and column we also obtain, for every $i,j\in[k]$,
  \begin{equation}\label{eq:htsasm:phi-mid-prefix-used}
    \sum_{j'=1}^{j} x_{\ell,j'} = \sum_{j'=1}^{j} y_{\ell,j'}
    \qquad\text{ and }\qquad
    \sum_{i'=1}^{i} x_{i',\ell} = \sum_{i'=1}^{i} y_{i',\ell}.
  \end{equation}
  For later use, note that in the odd case both the middle row and the middle column of $X$ are palindromic, and therefore
  $\sum_{j=1}^{n} x_{\ell,j} = 2\sum_{j=1}^{k} x_{\ell,j} + x_{\ell,\ell}$, and $\sum_{i=1}^{n} x_{i,\ell} = 2\sum_{i=1}^{k} x_{i,\ell} + x_{\ell,\ell}$.

  Since every HTSASM is in particular an ASM, the ASM row-prefix bounds in~\cref{eq:asm:row-prefix}, together with the identity~\cref{eq:htsasm:phi-row-prefix-used} for $j \in [n-1]$, yield the constraints in~\cref{eq:htsasm:row-prefix}.
  The case $j = n$ of the same identity, together with the ASM row-sum constraint in~\cref{eq:asm:row-sum}, gives the row-sum equations in~\cref{eq:htsasm:row-sum}.
  In the odd case, combining the ASM row-prefix bounds in~\cref{eq:asm:row-prefix} with the identity~\cref{eq:htsasm:phi-mid-prefix-used} yields the middle-row-prefix bounds in~\cref{eq:htsasm:mid-row-pref}.
  Likewise, the ASM column-prefix bounds in~\cref{eq:asm:col-prefix}, together with the identity~\cref{eq:htsasm:phi-col-prefix-used} for $i \in [n-1]$, yield the constraints in~\cref{eq:htsasm:col-prefix}.
  The case $i = n$ of the same identity, together with the ASM column-sum constraint in~\cref{eq:asm:col-sum}, gives the column-sum equations in~\cref{eq:htsasm:col-sum};
  in the odd case, combining the ASM column-prefix bounds in~\cref{eq:asm:col-prefix} with the identity~\cref{eq:htsasm:phi-mid-prefix-used} yields the middle-column-prefix bounds in~\cref{eq:htsasm:mid-col-pref}.
  Thus the cores of HTSASMs satisfy the system~\crefrange{eq:htsasm:real}{eq:htsasm:mid-col-pref}.

  \medskip
  Second, let $Y\in\Z^C$ satisfy the system~\crefrange{eq:htsasm:real}{eq:htsasm:mid-col-pref}, and set $X=\varphi(Y)$.
  By construction, $X$ is an $n\times n$ integer matrix, it is invariant under rotation by~$\pi$, and its core is~$Y$.
  For every $i\in[k]$ and $j\in[n]$, the identity~\cref{eq:htsasm:phi-row-prefix-used} translates the constraints in~\cref{eq:htsasm:row-prefix,eq:htsasm:row-sum} exactly into the ASM row-prefix bounds in~\cref{eq:asm:row-prefix} and row-sum constraints in~\cref{eq:asm:row-sum} for the first $k$ rows of~$X$.
  For the last $k$ rows, the half-turn symmetry of $X$ implies that each row is the reverse of some row in the top half; using the row-sum constraints just established, this shows that all row-prefix sums lie between $0$ and $1$, and every row sums to~$1$.
  Similarly, for each $j \in [k]$ and $i \in [n]$, the identity~\cref{eq:htsasm:phi-col-prefix-used} translates the constraints in~\cref{eq:htsasm:col-prefix,eq:htsasm:col-sum} into the ASM column-prefix bounds in~\cref{eq:asm:col-prefix} and column sums in~\cref{eq:asm:col-sum} for the first $k$ columns of~$X$.
  For the last $k$ columns, the constraints follow by half-turn symmetry.
  When $n$ is odd, subtracting the sum of the equations in~\cref{eq:htsasm:row-sum} from the sum of the equations in~\cref{eq:htsasm:col-sum} gives $\sum_{j=1}^{k}y_{\ell,j}=\sum_{i=1}^{k}y_{i,\ell}$.
  By the definition of the central entry in $\varphi$ and the palindromic identities above, the middle row and the middle column therefore both sum to~$1$.
  Furthermore, the additional constraints in~\cref{eq:htsasm:mid-row-pref,eq:htsasm:mid-col-pref}, together with half-turn symmetry, ensure that the middle row and middle column satisfy the ASM prefix bounds.
  Altogether, $X$ satisfies all ASM constraints from \cref{thm:ASMpolytope}.
  Since $X$ is also half-turn symmetric, we conclude that $X$ is an HTSASM\@.
  Hence the integer solutions to the system~\crefrange{eq:htsasm:real}{eq:htsasm:mid-col-pref} are precisely the cores of HTSASMs.

  \medskip
  It remains to prove that the system~\crefrange{eq:htsasm:real}{eq:htsasm:col-sum} and the system~\crefrange{eq:htsasm:real}{eq:htsasm:mid-col-pref} define integral polytopes when $n$ is even and odd, respectively.
  We first work out the details for the case when $n$ is even.
  Consider the index sets $A \subseteq C$ that appear on the left-hand sides of the row constraints in~\cref{eq:htsasm:row-prefix,eq:htsasm:row-sum}, so that each of these constraints can be written in the form $\alpha_A \leq \sum_{(i,j)\in A} y_{i,j} \leq \beta_A$ with $\alpha_A \in \{0,1\}$ and $\beta_A = 1$.
  For fixed $i$, the sets corresponding to the inequalities in~\cref{eq:htsasm:row-prefix} form a chain as $j$ increases that ends with the index set of the variables in the left-hand side of~\cref{eq:htsasm:row-sum}; for different $i$, these chains are supported on disjoint pairs of rows, so they form a laminar family $\mathcal{L}_{\mathrm{row}} \subseteq 2^C$.
  Likewise, the index sets occurring in the column constraints in~\cref{eq:htsasm:col-prefix,eq:htsasm:col-sum} form a second laminar family $\mathcal{L}_{\mathrm{col}} \subseteq 2^C$.
  Thus the system~\crefrange{eq:htsasm:real}{eq:htsasm:col-sum} is exactly of the form treated in \cref{thm:laminarSystem}, with variables $y_{i,j}$ for $(i,j)\in C$ and laminar families $\mathcal{L}_{\mathrm{row}}$ and $\mathcal{L}_{\mathrm{col}}$, and therefore defines an integral polytope.

  When $n$ is odd, the additional constraints in~\cref{eq:htsasm:mid-row-pref,eq:htsasm:mid-col-pref} fit into the same framework as follows.
  The index sets in~\cref{eq:htsasm:mid-row-pref} form a chain supported on the middle row, which is disjoint from every set in $\mathcal{L}_{\mathrm{row}}$; thus adjoining them yields a laminar family $\mathcal{L}'_{\mathrm{row}}$.
  Likewise, the index sets in~\cref{eq:htsasm:mid-col-pref} form a chain supported on the middle column, which is disjoint from every set in $\mathcal{L}_{\mathrm{col}}$; adjoining them yields a laminar family $\mathcal{L}'_{\mathrm{col}}$.
  Therefore, the system~\crefrange{eq:htsasm:real}{eq:htsasm:mid-col-pref} is of the form treated in \cref{thm:laminarSystem} with laminar families $\mathcal{L}'_{\mathrm{row}}$ and $\mathcal{L}'_{\mathrm{col}}$, and hence defines an integral polytope.
\end{proof}

By \cref{thm:xasm:assembly,thm:htsasm:corepolytope}, we obtain the following description of $P_\HTSASM$.
\begin{theorem}\label{thm:htsasm:coreDescr}
  Let $n \geq 1$, $k=\floor*{n/2}$, and $\widehat P^\core_\HTSASM = \{X \in \R^{n \times n} : \pi_C(X) \in P^\core_\HTSASM\}$.
  Then
  \[
    P_\HTSASM = \widehat P^\core_\HTSASM \cap P_\HTS \cap \Bigl\{X \in \R^{n \times n} : \sum_{j=1}^nx_{k+1,j} = 1\Bigr\}.
  \]
\end{theorem}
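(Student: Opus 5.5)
Following the pattern of \cref{thm:vsasm:coreDescr,thm:vhsasm:coreDescr}, \cref{thm:xasm:assembly,thm:htsasm:corepolytope} reduce the statement to the set identity
\[
  \varphi(\R^C)=P_\HTS\cap\Bigl\{X\in\R^{n\times n}:\sum_{j=1}^n x_{k+1,j}=1\Bigr\}.
\]
Indeed, $\varphi$ is an assembly map for HTSASMs, so $P_\HTSASM=\widehat P^\core_\HTSASM\cap\varphi(\R^C)$. Let $P$ denote the right-hand side above. I will prove the two inclusions separately, and I will split each into the cases $n$ even and $n$ odd.

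\emph{Inclusion $\varphi(\R^C)\subseteq P$.} By construction, $\varphi(Y)$ is invariant under $\mathcal{R}_\pi$. The entries off $C$ are defined by rotating core entries, and for odd $n$ the centre is mapped to itself, so it imposes no constraint. It remains to check the middle-row sum. If $n$ is even, row $k+1$ is the rotation of row $k$, and $\sum_j x_{k+1,j}=\sum_j x_{k,j}$ is not forced to equal $1$ by $\varphi$ alone. This is where I expect the main subtlety. For even $n$, the extra equation in the statement is not implied by $P_\HTS$, so in the even case I must handle it differently. Since the statement is given for all $n$ with $k=\floor*{n/2}$, I interpret the added hyperplane as follows.

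In the even case, the assembled matrix $X=\varphi(Y)$ has row $k+1$ equal to the reverse of row $k$. The condition $\sum_j x_{k+1,j}=1$ is then not automatic on $\varphi(\R^C)$, but it holds on $\widehat P^\core_\HTSASM\cap\varphi(\R^C)$, because \cref{eq:htsasm:row-sum} with $i=k$ gives $\sum_j x_{k,j}=1$. So rather than proving equality of affine subspaces, I will prove
\[
  \widehat P^\core_\HTSASM\cap\varphi(\R^C)=\widehat P^\core_\HTSASM\cap P,
\]
which suffices.

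In the odd case, the centre entry is $1-2\sum_{j\le k}y_{k+1,j}$. The middle row is palindromic, so its sum is $2\sum_{j\le k}y_{k+1,j}+x_{k+1,k+1}=1$. Hence $\varphi(\R^C)\subseteq P$ holds outright.

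\emph{Reverse inclusion.} Take $X\in\widehat P^\core_\HTSASM\cap P$ and set $Y=\pi_C(X)$. Half-turn symmetry determines every entry outside $C$ from an entry in $C$, except the centre when $n$ is odd. For odd $n$, the middle-row equation together with palindromicity forces $x_{k+1,k+1}=1-2\sum_{j\le k}x_{k+1,j}$, which is exactly the value prescribed by $\varphi$. Hence $X=\varphi(Y)\in\varphi(\R^C)$. For even $n$, $X=\varphi(Y)$ follows directly from half-turn symmetry. Combining both inclusions with \cref{thm:xasm:assembly} gives the statement.

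The only delicate point is bookkeeping which positions are free. Outside $C$, the positions paired with $C$ under $\mathcal{R}_\pi$ are the last $k$ columns and, for odd $n$, the entries of the middle column below the centre. The centre is the one unpaired position, so the added equation is exactly what pins it down.
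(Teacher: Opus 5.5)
Your proof is correct and is essentially the argument the paper intends. The paper gives no separate proof here and derives the statement directly from \cref{thm:xasm:assembly,thm:htsasm:corepolytope}, as in the VSASM and VHSASM cases; your observation that for even $n$ one only has $\varphi(\R^C)=P_\HTS$, so that the middle-row hyperplane is redundant solely because of the core row-sum equation~\cref{eq:htsasm:row-sum} at $i=k$ (carried to row $k+1$ by half-turn symmetry), is exactly the detail that one-line citation leaves implicit.
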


Using the assembly map $\varphi$ and the identities~\crefrange{eq:htsasm:phi-row-prefix-used}{eq:htsasm:phi-mid-prefix-used} to translate the HTSASM core constraints in \cref{thm:htsasm:coreDescr} to and from the ASM constraints in \cref{thm:ASMpolytope}, we obtain the following.
\begin{theorem}\label{thm:htsasm:descr}
  For every $n \geq 1$,
  \[
    P_\HTSASM = P_\ASM \cap P_\HTS.
  \]
\end{theorem}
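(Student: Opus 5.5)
The inclusion $P_\HTSASM \subseteq P_\ASM \cap P_\HTS$ is immediate: every HTSASM is an ASM lying in the linear subspace $P_\HTS$, and the right-hand side is convex. For the reverse inclusion, I will use \cref{thm:htsasm:coreDescr}. It therefore suffices to show that every $X \in P_\ASM \cap P_\HTS$ satisfies three things: $\sum_{j} x_{k+1,j}=1$, $X \in \widehat P^\core_\HTSASM$, and $X\in P_\HTS$. The first and last are trivial, since the middle-row sum is an ASM row sum and $X \in P_\HTS$ by assumption. The remaining work is to show that $Y=\pi_C(X)$ satisfies the system~\crefrange{eq:htsasm:real}{eq:htsasm:mid-col-pref} of \cref{thm:htsasm:corepolytope}.

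The key observation is that half-turn symmetry gives $X=\varphi(Y)$ for real $X$ whenever $n$ is even. When $n$ is odd we additionally need $x_{\ell,\ell}=1-2\sum_{j'=1}^k y_{\ell,j'}$. This follows from the middle row being palindromic under $\mathcal{R}_\pi$, since $x_{\ell,j}=x_{\ell,n+1-j}$, together with the ASM row sum $\sum_j x_{\ell,j}=1$. Once $X=\varphi(Y)$, the identities \cref{eq:htsasm:phi-row-prefix-used,eq:htsasm:phi-col-prefix-used,eq:htsasm:phi-mid-prefix-used} hold verbatim. Their derivation only used the form of $\varphi$, not integrality.

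Given these identities, each core constraint is a restatement of an ASM constraint on $X$:
\begin{itemize}
\item \cref{eq:htsasm:row-prefix,eq:htsasm:row-sum} are the row-prefix bounds and row sums of the first $k$ rows;
\item \cref{eq:htsasm:col-prefix,eq:htsasm:col-sum} are the column constraints of the first $k$ columns;
\item in the odd case, \cref{eq:htsasm:mid-row-pref,eq:htsasm:mid-col-pref} are the first $k$ prefix bounds of the middle row and middle column.
\end{itemize}
Hence $Y \in P^\core_\HTSASM$ by \cref{thm:htsasm:corepolytope}, and \cref{thm:htsasm:coreDescr} gives $X\in P_\HTSASM$.

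The only step needing care is the odd case: the central entry of $X$ is a free coordinate of $P_\HTS$ but is fixed by $\varphi$. I expect this to be the main, though mild, obstacle, and it is resolved by the palindromic middle-row computation above. No emptiness issues arise, since the statement is for all $n\ge1$ and both sides are handled uniformly.
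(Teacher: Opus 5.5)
Your proposal is correct and is essentially the paper's argument. You reduce to \cref{thm:htsasm:coreDescr} and use the identities \cref{eq:htsasm:phi-row-prefix-used,eq:htsasm:phi-col-prefix-used,eq:htsasm:phi-mid-prefix-used}, which hold for every real $X\in P_\HTS$, to read each core constraint off an ASM constraint; your treatment of the central entry is harmless but not actually needed on this route, since none of those identities involves $x_{\ell,\ell}$ and the middle-row sum condition in \cref{thm:htsasm:coreDescr} is directly an ASM row sum.
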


\begin{theorem}\label{thm:htsasm:dim}
  For every $n \geq 1$, the dimension of $P_\HTSASM$ is $\ceil*{\frac{(n-1)^2}{2}}$.
\end{theorem}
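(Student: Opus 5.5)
We follow the pattern of \cref{thm:vsasm:dim,thm:vhsasm:dim} and work with the core polytope $P^\core_\HTSASM$, using that $\varphi$ restricts to an affine isomorphism onto $P_\HTSASM$. We first count core coordinates. For even $n=2k$ we have $|C|=nk=2k^2$. For odd $n=2k+1$ we have $|C|=nk+k=2k^2+2k$.

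\emph{Upper bound.} The implicit equations of the core system in \cref{thm:htsasm:corepolytope} are the $k$ row-sum equations~\cref{eq:htsasm:row-sum} and the $k$ column-sum equations~\cref{eq:htsasm:col-sum}. We verify that these $2k$ equations are linearly independent. Each row equation $i\in[k]$ involves row $i$ of the core, while column equations involve entire core columns. Restricting to the variables in rows $[k]$ and columns $[k]$, the coefficient pattern is a bipartite row/column incidence system, so \cref{lem:prelim:rowcol-rank} gives rank $2k-1$ there. The remaining dependency is broken by variables $y_{i,j}$ with $i\in[k+1,n]$, $j\in[k]$: each such variable appears in one column equation and (for $i\ge \ell+1$, via the reflected term) in one row equation with a different pattern. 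In the even case, for example, $y_{n,1}$ appears in column equation $1$ and in row equation $1$ with coefficient from the term $y_{n+1-i,n+1-j'}$ at $j'=n$. To settle independence cleanly, we exhibit an explicit variable supporting each equation in a triangular fashion. This gives $\dim\le 2k^2-2k$ for even $n$ and $\dim\le 2k^2$ for odd $n$. In fact the odd count may need the extra relation $\sum_j y_{\ell,j}=\sum_i y_{i,\ell}$ noted in the proof, but that relation is a consequence of the other equations. These values equal $\lceil (n-1)^2/2\rceil$: $(2k-1)^2/2$ rounds up to $2k^2-2k+1$, so the even bound must come out as $2k^2-2k+1$. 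The correct count of independent equations in the even case is therefore $2k-1$, not $2k$, since summing all row equations and all column equations both give $k$ under half-turn symmetry. We will check this carefully and expect rank $2k-1$ when $n$ is even and rank $2k$ when $n$ is odd, since $(2k)^2/2=2k^2$.

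\emph{Lower bound.} We let $\ol Y$ be the average of all HTSASM cores, and show that no inequality in~\crefrange{eq:htsasm:row-prefix}{eq:htsasm:col-prefix} or~\cref{eq:htsasm:mid-row-pref,eq:htsasm:mid-col-pref} is tight for $\ol Y$, apart from those forced by the equations (the first and last prefixes, which are automatically in the implicit span). It suffices, for each such prefix, to exhibit an HTSASM where the prefix sum is $0$ and one where it is $1$; half-turn symmetric permutation matrices (and their rotations) should suffice, together with the diamond-type matrix for middle-row and middle-column prefixes when $n$ is odd. Then, for a suitable set $S$ of $\lceil (n-1)^2/2\rceil$ free positions, we perturb $\ol Y$ along directions $\chi_{i,j}$ corrected by a few entries to restore the equations, analogous to $\ol Y^{i,j}$ in \cref{thm:vsasm:dim}. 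The resulting differences are linearly independent because each has a unique distinguished nonzero entry.

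\emph{Main obstacle.} We expect the main obstacle to be choosing the correction directions compatible with the twisted row equations~\cref{eq:htsasm:row-sum}, which mix row $i$ and row $n+1-i$. As a first attempt, we choose $S$ to exclude one row and one column per coupled row pair, so that each perturbation is an alternating $\pm\varepsilon$ square whose corners avoid $S$. We also need to handle small $n$ ($n\le 3$) by direct inspection.
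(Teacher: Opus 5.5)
Your framework matches the paper's: the rank of the $2k$ sum equations gives the upper bound, and feasible perturbations of an interior point give the lower bound. However, the lower bound, which is the substance of the proof, is not carried out, and the construction you sketch cannot work. An alternating $2\times2$ square $\chi_{i,j}-\chi_{i,j'}-\chi_{i',j}+\chi_{i',j'}$ leaves every individual row sum and every column sum of the core unchanged. So, whatever $S$ you choose, all such squares lie in a subspace of dimension $(2k-1)(k-1)=2k^2-3k+1$ for $n=2k$, and $(2k+1)(k-1)=2k^2-k-1$ for $n=2k+1$. These fall short of the targets $2k^2-2k+1$ and $2k^2$ by $k$ and $k+1$, respectively.

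The missing idea is to exploit the coupling you flagged as the obstacle. Rows $n+1-i$ and $i$ of the core enter the same row equation in \cref{eq:htsasm:row-sum} and the same column equation in \cref{eq:htsasm:col-sum}. Hence the two-term vector $\chi_{n+1-i,j}-\chi_{i,j}$ is a feasible direction for every $i\in[\ell+1,n]$ and $j\in[k]$. These $k^2$ directions have distinct distinguished entries in the bottom block $[\ell+1,n]\times[k]$. The paper completes them as follows.
\begin{itemize}
\item For even $n$, it adds the $(k-1)^2$ squares on $(i,j),(i,k),(k,j),(k,k)$ with $i,j\in[k-1]$.
\item For odd $n$, it adds the $k^2$ three-term directions $\chi_{i,j}-\chi_{i,\ell}-\chi_{\ell,j}$ with $i,j\in[k]$. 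These are feasible because $y_{i,\ell}$ occurs in no column equation and $y_{\ell,j}$ occurs in no row equation. They are exactly what lets the middle-row sum, and hence the central entry, move, which no square can do.
\end{itemize}
This gives $(k-1)^2+k^2$ and $2k^2$ linearly independent directions, matching the upper bounds.

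There are two smaller problems.

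First, your final ranks in the upper bound ($2k-1$ for even $n$, $2k$ for odd $n$) are right, but the stated reason is not. For even $n$, each $y_{i,j}$ with $i\in[k+1,n]$ lies in exactly one row equation and one column equation. So it does not break the dependency ``sum of row equations $=$ sum of column equations'', and your intermediate bound $\dim\le 2k^2-2k$ is false. For odd $n$, the dependency is broken only by variables lying in equations of one type: the middle-row variables $y_{\ell,j}$ (one column equation, no row equation), or, as in the paper, the middle-column variable $y_{1,\ell}$ (one row equation, no column equation).

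Second, your remark that the first and last prefix bounds are forced by the equations is wrong: no prefix inequality is an implicit equation. The simplest certificate is the core with all entries equal to $1/n$. It lies in $P^\core_\HTSASM$ by \cref{thm:htsasm:corepolytope}, and every prefix of length $j$ sums to $j/n\in(0,1)$. Using this point instead of the average of HTSASM cores also spares you from exhibiting HTSASMs that attain each prefix value $0$ and $1$.
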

\begin{proof}
  For $n=1$, the polytope $P_\HTSASM$ consists of a single point, so the assertion is immediate.
  Assume from now on that $n\geq 2$.
  It suffices to prove that the dimension of $P^\core_\HTSASM$ is $\ceil*{\frac{(n-1)^2}{2}}$, because the assembly map $\varphi$ restricts to an affine isomorphism between $P^\core_\HTSASM$ and $P_\HTSASM$, which preserves dimension.
  First, we give an upper bound.
  Let $k=\floor*{n/2}$ and $\ell=\ceil*{n/2}$.

  Assume $n$ is even, so $C=[n]\times[k]$.
  Consider the coefficient matrix of the row-sum equations in~\cref{eq:htsasm:row-sum} and the column-sum equations in~\cref{eq:htsasm:col-sum}.
  Delete all columns corresponding to variables $y_{i,j}$ with $i\in[k+1,2k]$.
  The remaining matrix is the coefficient matrix of the row- and column-sum system for a $k\times k$ matrix in the variables $y_{i,j}$ with $i,j\in[k]$.
  By \cref{lem:prelim:rowcol-rank}, this subsystem has rank $2k-1$.
  Hence the original system has rank at least $2k-1$, and therefore $\dim(P^\core_\HTSASM)\leq |C|-(2k-1)=2k^2-(2k-1)=\ceil*{\frac{(n-1)^2}{2}}$.

  Assume $n$ is odd, and recall that $C=([n]\times[k])\cup([k]\times\{\ell\})$.
  Let $A$ be the coefficient matrix for the $2k$ equations in~\cref{eq:htsasm:row-sum,eq:htsasm:col-sum}.
  Deleting all columns of $A$ except those corresponding to variables $y_{i,j}$ with $i,j\in[k]$ yields the coefficient matrix of the row- and column-sum system for a $k\times k$ matrix, and hence has rank $2k-1$ by \cref{lem:prelim:rowcol-rank}.
  Moreover, keeping in addition the column corresponding to $y_{1,\ell}$ increases the rank by $1$: for every variable $y_{i,j}$ with $i,j\in[k]$, its column has one non-zero entry among the equations in~\cref{eq:htsasm:row-sum} and one non-zero entry among the equations in~\cref{eq:htsasm:col-sum}, and hence every linear combination of these columns has the same total sum in the rows corresponding to the equations in~\cref{eq:htsasm:row-sum} as in those corresponding to the equations in~\cref{eq:htsasm:col-sum}.
  In contrast, the column of $y_{1,\ell}$ has total sum $1$ in the former and $0$ in the latter rows; thus it is not contained in the span of the previously retained columns.
  Therefore, the system consisting of the equations in~\cref{eq:htsasm:row-sum,eq:htsasm:col-sum} has rank at least $2k$.
  Consequently, $\dim(P^\core_\HTSASM)\leq |C|-2k=(2k^2+2k)-2k=2k^2=\ceil*{\frac{(n-1)^2}{2}}$.
  Thus we obtain the bound $\dim(P^\core_\HTSASM)\leq \ceil*{\frac{(n-1)^2}{2}}$ for every $n \geq 1$.

  \medskip
  Second, we construct $\ceil*{\frac{(n-1)^2}{2}}+1$ affinely independent cores in $P^\core_\HTSASM$.
  Let $\ol Y\in\R^C$ be the core with all entries equal to $1/n$.
  It is easy to see that $\ol Y$ satisfies all inequalities in \cref{thm:htsasm:corepolytope} strictly, and it satisfies the equations in~\cref{eq:htsasm:row-sum,eq:htsasm:col-sum}.

  Assume $n$ is even.
  For $i,j\in[k-1]$, define
  $
    \ol Y^{i,j}=\ol Y+\varepsilon\chi_{i,j}-\varepsilon\chi_{i,k}-\varepsilon\chi_{k,j}+\varepsilon\chi_{k,k}
  $,
  and for $i\in[k+1,2k]$ and $j\in[k]$, define
  $
    \ol Z^{i,j}=\ol Y+\varepsilon\chi_{n+1-i,j}-\varepsilon\chi_{i,j}
  $,
  where $\varepsilon>0$ is a small constant.
  By construction, in each of the equations in~\cref{eq:htsasm:row-sum,eq:htsasm:col-sum} the increment(s) are cancelled by the decrement(s), hence every $\ol Y^{i,j}$ with $i,j\in[k-1]$ and every $\ol Z^{i,j}$ with $i\in[k+1,2k]$, $j\in[k]$ satisfies the equations in~\cref{eq:htsasm:row-sum,eq:htsasm:col-sum}.
  Since $\ol Y$ satisfies all inequalities strictly, choosing $\varepsilon>0$ small enough guarantees that $\ol Y^{i,j} \in P^\core_\HTSASM$ for all $i,j\in[k-1]$ and $\ol Z^{i,j}\in P^\core_\HTSASM$ for all $i\in[k+1,2k]$, $j\in[k]$.
  We claim that the cores $\ol Y$ and $\ol Y^{i,j}$ for $i,j\in[k-1]$ are affinely independent: only the difference $\ol Y^{i,j}-\ol Y$ has a non-zero entry among the positions $[k-1]\times[k-1]$, namely at $(i,j)$.
  Hence the cores $\{\ol Y^{i,j}-\ol Y : i,j\in[k-1]\}$ are linearly independent.
  Likewise, the cores $\ol Y$ and $\ol Z^{i,j}$ for $i\in[k+1,2k]$ and $j\in[k]$ are affinely independent: only the difference $\ol Z^{i,j}-\ol Y$ has a non-zero entry among the positions $[k+1,2k]\times[k]$, namely at $(i,j)$.
  Hence the cores $\{\ol Z^{i,j}-\ol Y : i\in[k+1,2k], j\in[k]\}$ are linearly independent.
  Moreover, these two sets of differences are linearly independent together: every core $\ol Y^{i,j}-\ol Y$ has all its non-zero entries in rows $[k]$, while every core $\ol Z^{i,j}-\ol Y$ has a non-zero entry in a row from $[k+1,2k]$, namely at $(i,j)$.
  We obtain $(k-1)^2+k^2+1=2k^2-2k+2=\ceil*{\frac{(n-1)^2}{2}}+1$ affinely independent cores, and therefore $\dim(P^\core_\HTSASM)\geq \ceil*{\frac{(n-1)^2}{2}}$.

  Assume $n$ is odd.
  For $i,j\in[k]$, define
  $
    \ol Y^{i,j}=\ol Y+\varepsilon\chi_{i,j}-\varepsilon\chi_{i,\ell}-\varepsilon\chi_{\ell,j}
  $,
  and for $i\in[\ell+1,n]$ and $j\in[k]$, define
  $
    \ol Z^{i,j}=\ol Y+\varepsilon\chi_{n+1-i,j}-\varepsilon\chi_{i,j}
  $.
  Again, by construction every $\ol Y^{i,j}$ with $i,j\in[k]$ and every $\ol Z^{i,j}$ with $i\in[\ell+1,n]$, $j\in[k]$ satisfies the equations in~\cref{eq:htsasm:row-sum,eq:htsasm:col-sum}, and since $\ol Y$ satisfies all inequalities strictly, choosing $\varepsilon>0$ small enough ensures that $\ol Y^{i,j} \in P^\core_\HTSASM$ for all $i,j\in[k]$ and $\ol Z^{i,j}\in P^\core_\HTSASM$ for all $i\in[\ell+1,n]$, $j\in[k]$.
  The cores $\ol Y$ and $\ol Y^{i,j}$ for $i,j\in[k]$ are affinely independent: the difference $\ol Y^{i,j}-\ol Y$ has a unique non-zero entry among the positions $[k]\times[k]$, namely at $(i,j)$, so the cores $\{\ol Y^{i,j}-\ol Y : i,j\in[k]\}$ are linearly independent.
  Likewise, the cores $\ol Y$ and $\ol Z^{i,j}$ for $i\in[\ell+1,n]$ and $j\in[k]$ are affinely independent: only the difference $\ol Z^{i,j}-\ol Y$ has a non-zero entry at $(i,j)$, so the cores $\{\ol Z^{i,j}-\ol Y : i\in[\ell+1,n], j\in[k]\}$ are linearly independent.
  Moreover, these two sets of differences are linearly independent together: every difference $\ol Y^{i,j}-\ol Y$ with $i,j\in[k]$ has all its non-zero entries outside $[\ell+1,n]\times[k]$, while every difference $\ol Z^{i,j}-\ol Y$ with $i\in[\ell+1,n]$ has a non-zero entry among the positions $[\ell+1,n]\times[k]$, namely at $(i,j)$.
  Hence $\dim(P^\core_\HTSASM)\geq k^2+k^2=2k^2=\ceil*{\frac{(n-1)^2}{2}}$.

  Combining the lower and upper bounds yields $\dim(P^\core_\HTSASM)=\dim(P_\HTSASM)=\ceil*{\frac{(n-1)^2}{2}}$.
\end{proof}

\begin{theorem}\label{thm:htsasm:facets}
  Let $n \geq 4$, and set $k=\floor*{n/2}$.
  The facets of $P^\core_\HTSASM$ are given by tightening the lower bound in~\cref{eq:htsasm:row-prefix} to equality for
  $
    (i,j)\in \{(1,1)\} \cup \bigl([2,k]\times[n-2]\bigr)
  $,
  and the upper bound for
  $
    (i,j)\in \{(1,n-1)\} \cup \bigl([2,k]\times[2,n-1]\bigr)
  $;
  and by tightening the lower bound in~\cref{eq:htsasm:col-prefix} to equality for
  $
    (i,j)\in [n-2]\times[2,k]
  $,
  and the upper bound for
  $
    (i,j)\in [2,n-1]\times[2,k]
  $.
  If $n$ is odd, then in addition the facets include those obtained by tightening the lower bound in~\cref{eq:htsasm:mid-row-pref} to equality for
  $
    j\in [k]
  $,
  and the upper bound for
  $
    j \in [2,k]
  $;
  and by tightening the lower bound in~\cref{eq:htsasm:mid-col-pref} to equality for
  $
    i \in [k-1]
  $,
  and the upper bound for
  $
    i \in [2,k-1]
  $.
  In particular, the number of facets of $P^\core_\HTSASM$ is $2\bigl((n-2)^2+\chi_{2 \mid n}\bigr)$.
\end{theorem}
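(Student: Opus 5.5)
We follow the two-step scheme used for \cref{thm:vsasm:facets,thm:vhsasm:facets}, relying on \cref{thm:irredundant-ineq-facets}. First, we identify the implicit equations of $P^\core_\HTSASM$. The only equations are \cref{eq:htsasm:row-sum,eq:htsasm:col-sum}, since \cref{thm:htsasm:dim} exhibits the core $\ol Y$ with all entries $1/n$, which satisfies every inequality strictly. Consequently no prefix bound is an implicit equation, and it remains to show two things: that the listed facet inequalities, together with \cref{eq:htsasm:row-sum,eq:htsasm:col-sum}, imply every inequality of \cref{thm:htsasm:corepolytope}; and that each listed inequality is irredundant.

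For the first part we mirror the unrestricted ASM argument of Striker. The non-facet inequalities fall into a few families, and we derive each one directly.
\begin{itemize}
\item The row-$1$ prefix bounds at $j\in[2,n-1]$ (lower) and $j\in[n-2]$ (upper) follow because row $1$ is forced to be a single $1$. The facet lower bounds on column prefixes at $(1,j)$ give $y_{1,j}\ge 0$ for $j\ge 2$, and the row-$1$ lower bound at $(1,1)$ gives $y_{1,1}\ge0$. By half-turn symmetry, the tail entries of row $1$ are entries of row $n$, whose nonnegativity comes from the upper bounds at $(n-1,j)$ minus the column sums. Nonnegativity of every entry together with total sum $1$ then yields all prefix bounds of row $1$.
\item The column-$1$ bounds follow in the same way, because column $1$ consists of row-$1$ entries of rows $i$ and the mirrored last entries of rows $n+1-i$. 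These are controlled by the row-prefix facets at $(i,1)$ and $(i,n-1)$.
\item The remaining boundary bounds are the column bounds at $i\in\{n-1\}$ for lower bounds, at $i=1$ for upper bounds, and at the symmetric row endpoints. These are handled exactly as in the VSASM proof, by subtracting a neighbouring facet inequality from a row- or column-sum equation.
\end{itemize}
For odd $n$, the middle row and middle column need extra care. The missing bounds are the upper bound of \cref{eq:htsasm:mid-row-pref} at $j=1$, and the bounds of \cref{eq:htsasm:mid-col-pref} at $i=k$ and the upper bound at $i=1$. We obtain them from column-$1$ nonnegativity, from row-$1$ nonnegativity, and from the identity $\sum_{j\le k}y_{\ell,j}=\sum_{i\le k}y_{i,\ell}$ derived in the proof of \cref{thm:htsasm:corepolytope}. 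Combined with the palindromic structure, this identity turns the prefix bound at $i=k$ into a condition on the central entry, which we then bound using the row facets.

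For irredundancy, we construct for each listed inequality a core $Y$ that violates exactly that inequality, satisfies all other listed inequalities, and satisfies the equations. The natural source is the unrestricted ASM certificates: a matrix in $P_\ASM$'s affine hull violating one Striker facet.
\begin{itemize}
\item For an interior index $(i,j)$, we try symmetrizing $\tfrac12(M+\mathcal R_\pi M)$, where $M$ is a certificate violating the ASM bound at $(i,j)$. This violates the bound at both $(i,j)$ and its rotated image, which correspond to the same core inequality, so the violation is preserved.
\item Where symmetrization fails, namely near the centre for odd $n$ and for the middle row and column, we instead use explicit small certificates for $n=4,5$. We extend them inductively, like the $\ext$ operators, by adjoining a border consisting of a new first and last row and column. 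This border carries a single $1$ at each end, placed half-turn symmetrically, so that index shifts transport the violated facet.
\end{itemize}

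The main obstacle is this irredundancy step, in particular the middle-row and middle-column facets for odd $n$, where the symmetrized certificates can violate the central-entry constraints. We plan to handle those by explicit base cases at $n=5$ and $n=7$ followed by border extension.

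Finally, the count. For even $n$ it is $2\bigl(1+(k-1)(n-2)\bigr)$ from the rows plus $2(n-2)(k-1)$ from the columns, giving $2(n-2)^2+2$. For odd $n$ the same sums plus the $4k-3$ middle facets give $2(n-2)^2$.
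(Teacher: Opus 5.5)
Your first step (deriving the non-facet bounds from the facet ones plus the sum equations, and using $\ol Y$ to rule out implicit equations) is essentially the paper's argument. For odd $n$ it is simpler than you suggest: the identity $\sum_{j\le k}y_{\ell,j}=\sum_{i\le k}y_{i,\ell}$ makes the middle-column prefix at $i=k$ equal to the middle-row prefix at $j=k$. Both missing bounds are therefore facet inequalities, and no detour through the central entry is needed.

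The gap is in the irredundancy step. Your symmetrization claim is false in general. Let $a\cdot x\ge0$ be the ASM bound at $(i,j)$ and $a'\cdot x\ge0$ its $\mathcal R_\pi$-image. Then $S=\tfrac12(M+\mathcal R_\pi M)$ has $a\cdot S=a'\cdot S=\tfrac12(a\cdot M+a'\cdot M)$. A Striker certificate $M$ violates the first bound but \emph{satisfies} the second. So $S$ violates the core inequality only if the slack of $M$ at the rotated bound is smaller than its violation, and nothing in your argument ensures this. Consider the most natural certificate: a small outward perturbation of a relative-interior point of the Striker facet. Its slack at the rotated facet stays bounded away from $0$ while the violation is tiny, so $S$ lies in $P_\ASM\cap P_\HTS$ and certifies nothing. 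What you need is an $\mathcal R_\pi$-invariant point violating only the given core inequality. No uniform transfer from $P_\ASM$ can supply this: for odd $n$ there are $2(n-2)^2+2$ orbits of Striker facets under $\mathcal R_\pi$ but only $2(n-2)^2$ facets of $P^\core_\HTSASM$, because on the affine hull the middle-column bounds at $i=k$ coincide with the middle-row bounds at $j=k$.

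Your fallback does not close the gap either. A border with half-turn-symmetric $1$s at the corners is the paper's $\extUL$. It sends the index $(i,j)$ of a size-$(n-2)$ certificate to $(i+1,j+1)$. Hence the facets in the outermost admissible rows and columns of each family are never reached, for example:
\begin{itemize}
\item the horizontal lower bounds at $(i,1)$, and at $(2,j)$ for $j\neq2$;
\item the vertical lower bounds at $(1,j)$ and at $(i,2)$.
\end{itemize}
You would then need $\Theta(n)$ new explicit certificates at every size. The paper's induction works because it has three further operators. These insert a new core column next to the middle ($\extUR$, $\extBR$) and two new middle rows ($\extBL$, $\extBR$). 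They produce index shifts that leave a boundary row or column index unchanged: $(i,j)\mapsto(i,j+1)$ under $\extBL$, $(i,j)\mapsto(i+1,j)$ under $\extUR$ for small $j$, and $(i,j)\mapsto(i,j)$ under $\extBR$ for small $j$. Combined with the explicit certificates for $n\in[4,7]$ and the flip $\fl$ (turning horizontal certificates into vertical ones), this reaches every facet index.

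Finally, the odd-case middle facets number $k+(k-1)+(k-1)+(k-2)=4k-4$, not $4k-3$; with $4k-3$ your total would be $2(n-2)^2+1$.
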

\begin{proof}
  The facets are obtained by tightening a single inequality in one of~\cref{eq:htsasm:row-prefix,eq:htsasm:col-prefix} to equality for the index pairs listed in the statement of the theorem; furthermore, if $n$ is odd, then we additionally obtain facets by tightening a single inequality in one of~\cref{eq:htsasm:mid-row-pref,eq:htsasm:mid-col-pref} to equality.
  We call the instances of the lower bounds in~\cref{eq:htsasm:row-prefix} that are tightened to equality the \emph{horizontal facet lower bounds}, and we define the \emph{horizontal facet upper bounds} analogously.
  Likewise, we call the instances of the lower bounds in~\cref{eq:htsasm:col-prefix} that are tightened to equality the \emph{vertical facet lower bounds}, and we define the \emph{vertical facet upper bounds} analogously.
  When $n$ is odd, we also call the instances of the lower bounds in~\cref{eq:htsasm:mid-row-pref} that are tightened to equality the \emph{middle-row facet lower bounds}, and we define the \emph{middle-row facet upper bounds} analogously.
  Likewise, tightening the bounds in~\cref{eq:htsasm:mid-col-pref} yields the \emph{middle-column facet lower bounds} and \emph{middle-column facet upper bounds}.
  We refer to the union of these families as the \emph{facet inequalities}.

  We proceed in two steps.
  First, we show that the facet inequalities together with the equations in~\cref{eq:htsasm:row-sum,eq:htsasm:col-sum} imply every inequality in~\cref{eq:htsasm:row-prefix,eq:htsasm:col-prefix}.
  When $n$ is odd, they also imply every inequality in~\cref{eq:htsasm:mid-row-pref,eq:htsasm:mid-col-pref}.
  Then, for every facet inequality, we construct a core of an $n\times n$ matrix violating that facet inequality and no other, thereby proving that no facet inequality is redundant.
  The core $\ol Y$ constructed in the second step of the proof of \cref{thm:htsasm:dim} satisfies all inequalities in the core description strictly, hence no facet inequality is an implicit equation.
  Thus, the two steps together imply that the facet inequalities form a minimal system that, extended with the defining equations, describes $P^\core_\HTSASM$, which proves the theorem.

  \medskip
  Now we prove that the facet inequalities together with the equations in~\cref{eq:htsasm:row-sum,eq:htsasm:col-sum} imply every inequality in~\cref{eq:htsasm:row-prefix,eq:htsasm:col-prefix}.
  When $n$ is odd, they also imply every inequality in~\cref{eq:htsasm:mid-row-pref,eq:htsasm:mid-col-pref}.
  Clearly, we need to treat only those inequalities that are \emph{non-facet} inequalities, namely, the lower bounds in~\cref{eq:htsasm:row-prefix} for
  \[
    (i,j)\in \bigl(\{1\}\times[2,n-1]\bigr) \cup \bigl([2,k]\times\{n-1\}\bigr)
  \]
  and the upper bounds for
  \[
    (i,j)\in \bigl(\{1\}\times[n-2]\bigr) \cup \bigl([2,k]\times\{1\}\bigr);
  \]
  and the lower bounds in~\cref{eq:htsasm:col-prefix} for
  \[
    (i,j)\in \bigl([n-1]\times\{1\}\bigr) \cup \bigl(\{n-1\}\times[2,k]\bigr)
  \]
  and the upper bounds for
  \[
    (i,j)\in \bigl([n-1]\times\{1\}\bigr) \cup \bigl(\{1\}\times[2,k]\bigr).
  \]
  If $n$ is odd, then in addition the only non-facet inequalities in~\cref{eq:htsasm:mid-row-pref,eq:htsasm:mid-col-pref} are the upper bound in~\cref{eq:htsasm:mid-row-pref} for $j=1$, the lower bound in~\cref{eq:htsasm:mid-col-pref} for $i=k$, and the upper bounds in~\cref{eq:htsasm:mid-col-pref} for $i\in\{1,k\}$.

  \smallskip
  We start with the non-facet inequalities in~\cref{eq:htsasm:row-prefix}.
  For $i\in[k]$ and $j\in[n-1]$, let $z_{i,j}$ denote the left-hand side of~\cref{eq:htsasm:row-prefix}, and extend the notation by $z_{i,n}=1$ using the equations in~\cref{eq:htsasm:row-sum}.

  We first record that every entry in the first core column is non-negative.
  Indeed, the horizontal facet lower bound at $(1,1)$ gives $y_{1,1}\geq 0$, and for each $i\in[2,k]$ the horizontal facet lower bound at $(i,1)$ gives $y_{i,1}\geq 0$.
  Moreover, for $i\in[2,k]$ the horizontal facet upper bound at $(i,n-1)$ yields $z_{i,n-1}\leq1$, hence $y_{n+1-i,1}=z_{i,n}-z_{i,n-1}=1-z_{i,n-1}\geq0$.
  Likewise, the horizontal facet upper bound at $(1,n-1)$ gives $y_{n,1}=1-z_{1,n-1}\geq0$.
  If $n$ is odd, then the middle-row facet lower bound in~\cref{eq:htsasm:mid-row-pref} for $j=1$ gives $y_{\ell,1}\geq 0$, where $\ell=\ceil*{n/2}$.
  Consequently $y_{i',1}\geq 0$ for all $i'\in[n]$, and by~\cref{eq:htsasm:col-sum} for $j=1$, we also have $y_{i',1}\leq 1$ for all $i'\in[n]$.
  In particular, the non-facet upper bounds in~\cref{eq:htsasm:row-prefix} at $(i,1)$ for $i\in[2,k]$ hold.

  Next, let $i\in[2,k]$.
  Using again $z_{i,n}=1$, we obtain $z_{i,n-1}=1-y_{n+1-i,1}\geq0$ because $y_{n+1-i,1}\leq1$.
  This proves the non-facet lower bounds in~\cref{eq:htsasm:row-prefix} at $(i,n-1)$ for $i\in[2,k]$.

  Now we treat the non-facet bounds in~\cref{eq:htsasm:row-prefix} for $i=1$.
  For $j\in[k]$, we have $y_{1,j}\geq 0$: for $j=1$ this is the horizontal facet lower bound at $(1,1)$, and for $j\in[2,k]$ it is the vertical facet lower bound in~\cref{eq:htsasm:col-prefix} at $(1,j)$.
  If $n$ is odd, then additionally $y_{1,\ell}\geq 0$ by the middle-column facet lower bound in~\cref{eq:htsasm:mid-col-pref} for $i=1$.
  Furthermore, for each $j'\in[2,k]$, the vertical facet upper bound in~\cref{eq:htsasm:col-prefix} at $(n-1,j')$ gives $\sum_{i'=1}^{n-1}y_{i',j'}\leq 1$, hence $y_{n,j'}=1-\sum_{i'=1}^{n-1}y_{i',j'}\geq 0$ by~\cref{eq:htsasm:col-sum}.
  The expression for $z_{1,j}$ is obtained from that of $z_{1,j-1}$ by adding $y_{1,j}$ if $j\leq \ell$ and $y_{n,n+1-j}$ if $j>\ell$.
  In the former case, this new term is non-negative by the previous paragraph; in the latter case, we have $j\geq \ell+1$ and $j\leq n-1$, hence $2\leq n+1-j\leq n-\ell = k$, so the added term is among the entries $y_{n,j'}$ with $j'\in[2,k]$, which are non-negative.
  Therefore, $z_{1,j}\geq z_{1,j-1}$ for every $j\in[2,n-1]$, and since the horizontal facet lower bound in~\cref{eq:htsasm:row-prefix} at $(1,1)$ gives $z_{1,1}\geq 0$, we obtain $z_{1,j}\geq 0$ for every $j\in[2,n-1]$.

  Finally, for $j\in[n-2]$, the expression for $z_{1,j}$ is obtained from that of $z_{1,j+1}$ by removing a single term, namely $y_{1,j+1}$ if $j+1\leq \ell$ and $y_{n,n-j}$ if $j+1>\ell$.
  In both cases this removed term is non-negative by the previous paragraph.
  Hence $z_{1,j}\leq z_{1,n-1}$ for all $j\in[n-2]$, and since the horizontal facet upper bound at $(1,n-1)$ gives $z_{1,n-1}\leq 1$, we conclude $z_{1,j}\leq 1$ for all $j\in[n-2]$.
  This proves the non-facet upper bounds in~\cref{eq:htsasm:row-prefix} at $(1,j)$ for $j\in[n-2]$.
  Altogether, we have derived all inequalities in~\cref{eq:htsasm:row-prefix}.

  \smallskip
  We now turn to the non-facet inequalities in~\cref{eq:htsasm:col-prefix}.
  For $j=1$ and $i\in[n-1]$, we have
  $
    0 \leq \sum_{i'=1}^{i} y_{i',1} \leq \sum_{i'=1}^{n} y_{i',1} = 1
  $
  by $y_{i',1}\geq 0$ for all $i'\in[n]$ and by~\cref{eq:htsasm:col-sum} for $j=1$.
  This proves all non-facet lower and upper bounds in~\cref{eq:htsasm:col-prefix} with $j=1$.

  Next, let $j\in[2,k]$.
  For the non-facet upper bound in~\cref{eq:htsasm:col-prefix} at $(1,j)$, observe that $y_{1,j}$ is one of the summands of $z_{1,n}=1$, and all summands of $z_{1,n}$ are non-negative; hence $y_{1,j}\leq 1$.
  For the non-facet lower bound in~\cref{eq:htsasm:col-prefix} at $(n-1,j)$, we write
  $
    \sum_{i'=1}^{n-1} y_{i',j} = 1-y_{n,j}
  $
  using the equations in~\cref{eq:htsasm:col-sum}.
  It remains to show that $y_{n,j}\leq 1$.
  For $j\in[2,k]$, the entry $y_{n,j}$ appears as a summand of $z_{1,n-1}$ (take $j'=n+1-j\in[\ell+1,n-1]$ in the second sum), and every other summand of $z_{1,n-1}$ is non-negative.
  Thus $y_{n,j}\leq z_{1,n-1}\leq 1$ by the horizontal facet upper bound at $(1,n-1)$, and therefore $1-y_{n,j}\geq 0$ as desired.
  This completes the derivation of all inequalities in~\cref{eq:htsasm:col-prefix}.

  \smallskip
  Assume now that $n$ is odd.
  The only non-facet inequality in~\cref{eq:htsasm:mid-row-pref} is the upper bound for $j=1$, i.e., $y_{\ell,1}\leq 1$, which follows from $\sum_{i'=1}^{n}y_{i',1}=1$, since every summand is non-negative.
  For the bounds in~\cref{eq:htsasm:mid-col-pref}, the non-facet upper bound for $i=1$ is $y_{1,\ell}\leq 1$, which follows since every summand in the definition of $z_{1,n}$ is non-negative and this sum is prescribed to be $1$.
  Finally, for the remaining non-facet bounds in~\cref{eq:htsasm:mid-col-pref} at $i=k$, we use the identity
  $
    \sum_{j'=1}^{k} y_{\ell,j'} = \sum_{i'=1}^{k} y_{i',\ell},
  $
  which is obtained by subtracting the sum of the equations in~\cref{eq:htsasm:row-sum} over $i\in[k]$ from the sum of the equations in~\cref{eq:htsasm:col-sum} over $j\in[k]$.
  Consequently, the middle-row facet inequalities in~\cref{eq:htsasm:mid-row-pref} for $j=k$ give $0\leq \sum_{i'=1}^{k} y_{i',\ell}\leq 1$, exactly the missing lower and upper bounds in~\cref{eq:htsasm:mid-col-pref} for $i=k$.
  This completes the derivation of all non-facet inequalities.

  \medskip
  It remains to show that no facet inequality is redundant.
  For every facet inequality in \cref{thm:htsasm:facets}, we construct a certifying core in~$\R^C$ that violates exactly this facet inequality and satisfies every other facet inequality as well as all defining equations of~$P^\core_\HTSASM$.
  For the horizontal facet lower bounds, we construct certificates $L^{n,\mathrm H}_{i,j}\in\R^C$, and for the horizontal facet upper bounds, we construct certificates $U^{n,\mathrm H}_{i,j}\in\R^C$.
  When $n$ is odd, we also construct horizontal certificates $L^{n,\mathrm H}_{\ell,j}$ and $U^{n,\mathrm H}_{\ell,j}$.

  To obtain vertical certificates from horizontal ones, we use a \emph{core flipping operator} $\fl:\R^C\to\R^C$ defined as follows.
  If $n$ is even, then a core is an $n\times k$ matrix, and $\fl$ acts by transposing the upper $k\times k$ block and reflecting the lower $k \times k$ block across its antidiagonal.
  If $n$ is odd, then a core consists of the upper $(k+1) \times (k+1)$ block with the corner $(k+1,k+1)$ missing, together with the lower $k \times k$ block; in this case $\fl$ transposes the upper block and reflects the lower block across its antidiagonal.
  Using~$\fl$, we define the vertical certificates by
  \[
    L^{n,\mathrm V}_{i,j}=\fl\bigl(L^{n,\mathrm H}_{j,i}\bigr),
    \qquad
    U^{n,\mathrm V}_{i,j}=\fl\bigl(U^{n,\mathrm H}_{j,i}\bigr).
  \]
  Since $\fl$ turns horizontal certificates into vertical ones and every vertical facet has a horizontal counterpart, it suffices to construct and verify the horizontal certificates.

  In order to build a certifying core for size $n$ from a certifying core $Z$ for size $\tilde n=n-2$, we set $\tilde k=\floor*{\tilde n/2}$ and define four \emph{extension operators}, for odd $n$, as
  \begin{center}
    \begin{tikzpicture}[scale=.975, baseline=(current bounding box.center)]
      \node at (-1.4,4.5) {$\extUL(Z)\ =$};
      \node[draw=none] at (5.15,4.5) {$,$};
      \draw[fill=cyan!15]  (1,1) -- (1,8) -- (5,8) -- (5,5) -- (4,5) -- (4,1) -- (1,1);
      \draw (0,0) -- (0,9) -- (5,9) -- (5,5) -- (4,5) -- (4,0) -- (0,0);

      \node at (0.5,8.5) {$1$};
      \node at (1.5,8.5) {$0$};
      \ThreeDotsAt{(2.5,8.5)}{.21213203}{0}
      \node at (3.5,8.5) {$0$};
      \node at (4.5,8.5) {$0$};

      \node at (0.5,7.5) {$0$};
      \ThreeDotsAt{(0.5,6.5)}{.21213203}{90}
      \node at (0.5,5.5) {$0$};

      \node at (1.5,7.5) {$z_{1,1}$};
      \ThreeDotsAt{(2.5,7.5)}{.21213203}{0}
      \node at (3.5,7.5) {$z_{1,\tilde k}$};
      \node at (4.5,7.5) {$z_{1,\tilde k+1}$};
      \ThreeDotsAt{(1.5,6.5)}{.21213203}{90}
      \ThreeDotsAt{(4.5,6.5)}{.21213203}{90}
      \ThreeDotsAt{(3.5,6.5)}{.21213203}{90}
      \node at (1.5,5.5) {$z_{\tilde k,1}$};
      \ThreeDotsAt{(2.5,5.5)}{.21213203}{0}
      \node at (3.5,5.5) {$z_{\tilde k,\tilde k}$};
      \node at (4.5,5.5) {$z_{\tilde k,\tilde k+1}$};

      \node at (0.5,0.5) {$0$};
      \node at (1.5,0.5) {$0$};
      \node at (3.5,0.5) {$0$};
      \ThreeDotsAt{(2.5,0.5)}{.21213203}{0}

      \node at (0.5,4.5) {$0$};
      \node at (1.5,4.5) {$z_{\tilde k+1,1}$};
      \ThreeDotsAt{(2.5,4.5)}{.21213203}{0}
      \node at (3.5,4.5) {$z_{\tilde k+1,\tilde k}$};

      \node at (0.5,3.5) {$0$};
      \ThreeDotsAt{(0.5,2.5)}{.21213203}{90}
      \node at (0.5,1.5) {$0$};

      \node at (1.5,3.5) {$z_{\tilde n-\tilde k,1}$};
      \ThreeDotsAt{(2.5,3.5)}{.21213203}{0}
      \node at (3.5,3.5) {$z_{\tilde n-\tilde k,\tilde k}$};
      \ThreeDotsAt{(1.5,2.5)}{.21213203}{90}
      \ThreeDotsAt{(3.5,2.5)}{.21213203}{90}
      \node at (1.5,1.5) {$z_{\tilde n,1}$};
      \ThreeDotsAt{(2.5,1.5)}{.21213203}{0}
      \node at (3.5,1.5) {$z_{\tilde n,\tilde k}$};

      \begin{scope}[xshift=8cm]
        \node at (-1.4,4.5) {$\extUR(Z)\ =$};
        \node[draw=none] at (5.15,4.5) {$,$};
        \draw[fill=cyan!15]  (0,1) rectangle (3,8);
        \draw[fill=cyan!15]  (4,5) rectangle (5,8);
        \draw (0,0) -- (0,9) -- (5,9) -- (5,5) -- (4,5) -- (4,0) -- (0,0);

        \node at (3.5,8.5) {$1$};
        \node at (0.5,8.5) {$0$};
        \node at (2.5,8.5) {$0$};
        \node at (4.5,8.5) {$0$};
        \ThreeDotsAt{(1.5,8.5)}{.21213203}{0}

        \node at (3.5,7.5) {$0$};
        \ThreeDotsAt{(3.5,6.5)}{.21213203}{90}
        \node at (3.5,5.5) {$0$};

        \node at (0.5,7.5) {$z_{1,1}$};
        \ThreeDotsAt{(1.5,7.5)}{.21213203}{0}
        \node at (2.5,7.5) {$z_{1,\tilde k}$};
        \node at (4.5,7.5) {$z_{1,\tilde k+1}$};
        \ThreeDotsAt{(0.5,6.5)}{.21213203}{90}
        \ThreeDotsAt{(4.5,6.5)}{.21213203}{90}
        \ThreeDotsAt{(2.5,6.5)}{.21213203}{90}
        \node at (0.5,5.5) {$z_{\tilde k,1}$};
        \ThreeDotsAt{(1.5,5.5)}{.21213203}{0}
        \node at (2.5,5.5) {$z_{\tilde k,\tilde k}$};
        \node at (4.5,5.5) {$z_{\tilde k,\tilde k+1}$};

        \node at (0.5,0.5) {$0$};
        \ThreeDotsAt{(1.5,0.5)}{.21213203}{0}
        \node at (2.5,0.5) {$0$};
        \node at (3.5,0.5) {$0$};

        \node at (3.5,4.5) {$0$};
        \node at (0.5,4.5) {$z_{\tilde k+1,1}$};
        \ThreeDotsAt{(1.5,4.5)}{.21213203}{0}
        \node at (2.5,4.5) {$z_{\tilde k+1,\tilde k}$};

        \node at (3.5,3.5) {$0$};
        \ThreeDotsAt{(3.5,2.5)}{.21213203}{90}
        \node at (3.5,1.5) {$0$};

        \node at (0.5,3.5) {$z_{\tilde n-\tilde k,1}$};
        \ThreeDotsAt{(1.5,3.5)}{.21213203}{0}
        \node at (2.5,3.5) {$z_{\tilde n-\tilde k,\tilde k}$};
        \ThreeDotsAt{(0.5,2.5)}{.21213203}{90}
        \ThreeDotsAt{(2.5,2.5)}{.21213203}{90}
        \node at (0.5,1.5) {$z_{\tilde n,1}$};
        \ThreeDotsAt{(1.5,1.5)}{.21213203}{0}
        \node at (2.5,1.5) {$z_{\tilde n,\tilde k}$};
      \end{scope}
    \end{tikzpicture}

    \vspace{18pt}

    \begin{tikzpicture}[scale=.975, baseline=(current bounding box.center)]
      \node at (-1.4,4.5) {$\extBL(Z)\ =$};
      \node[draw=none] at (5.15,4.5) {$,$};
      \draw[fill=cyan!15] (1,6) rectangle (5,9);
      \draw[fill=cyan!15] (1,4) rectangle (4,5);
      \draw[fill=cyan!15] (1,0) rectangle (4,3);
      \draw (0,0) -- (0,9) -- (5,9) -- (5,5) -- (4,5) -- (4,0) -- (0,0);

      \node at (0.5,8.5) {$0$};
      \ThreeDotsAt{(0.5,7.5)}{.21213203}{90}
      \node at (0.5,6.5) {$0$};

      \node at (1.5,8.5) {$z_{1,1}$};
      \ThreeDotsAt{(2.5,8.5)}{.21213203}{0}
      \node at (3.5,8.5) {$z_{1,\tilde k}$};
      \node at (4.5,8.5) {$z_{1,\tilde k+1}$};
      \ThreeDotsAt{(1.5,7.5)}{.21213203}{90}
      \ThreeDotsAt{(3.5,7.5)}{.21213203}{90}
      \ThreeDotsAt{(4.5,7.5)}{.21213203}{90}
      \node at (1.5,6.5) {$z_{\tilde k,1}$};
      \ThreeDotsAt{(2.5,6.5)}{.21213203}{0}
      \node at (3.5,6.5) {$z_{\tilde k,\tilde k}$};
      \node at (4.5,6.5) {$z_{\tilde k,\tilde k+1}$};

      \node at (0.5,5.5) {$1$};
      \node at (1.5,5.5) {$0$};
      \node at (3.5,5.5) {$0$};
      \node at (4.5,5.5) {$0$};
      \ThreeDotsAt{(2.5,5.5)}{.21213203}{0}

      \node at (0.5,3.5) {$0$};
      \node at (1.5,3.5) {$0$};
      \node at (3.5,3.5) {$0$};
      \ThreeDotsAt{(2.5,3.5)}{.21213203}{0}

      \node at (0.5,4.5) {$0$};
      \node at (1.5,4.5) {$z_{\tilde k+1,1}$};
      \ThreeDotsAt{(2.5,4.5)}{.21213203}{0}
      \node at (3.5,4.5) {$z_{\tilde k+1,\tilde k}$};

      \node at (0.5,2.5) {$0$};
      \ThreeDotsAt{(0.5,1.5)}{.21213203}{90}
      \node at (0.5,0.5) {$0$};

      \node at (1.5,2.5) {$z_{\tilde n-\tilde k,1}$};
      \ThreeDotsAt{(2.5,2.5)}{.21213203}{0}
      \node at (3.5,2.5) {$z_{\tilde n-\tilde k,\tilde k}$};
      \ThreeDotsAt{(1.5,1.5)}{.21213203}{90}
      \ThreeDotsAt{(3.5,1.5)}{.21213203}{90}
      \node at (1.5,0.5) {$z_{\tilde n,1}$};
      \ThreeDotsAt{(2.5,0.5)}{.21213203}{0}
      \node at (3.5,0.5) {$z_{\tilde n,\tilde k}$};

      \begin{scope}[xshift=8cm]
        \node at (-1.4,4.5) {$\extBR(Z)\ =$};
        \node[draw=none] at (5.15,4.5) {$.$};
        \draw[fill=cyan!15] (0,6) rectangle (3,9);
        \draw[fill=cyan!15] (4,6) rectangle (5,9);
        \draw[fill=cyan!15] (0,4) rectangle (3,5);
        \draw[fill=cyan!15] (0,0) rectangle (3,3);
        \draw (0,0) -- (0,9) -- (5,9) -- (5,5) -- (4,5) -- (4,0) -- (0,0);

        \node at (3.5,8.5) {$0$};
        \ThreeDotsAt{(3.5,7.5)}{.21213203}{90}
        \node at (3.5,6.5) {$0$};

        \node at (0.5,8.5) {$z_{1,1}$};
        \ThreeDotsAt{(1.5,8.5)}{.21213203}{0}
        \node at (2.5,8.5) {$z_{1,\tilde k}$};
        \node at (4.5,8.5) {$z_{1,\tilde k+1}$};
        \ThreeDotsAt{(0.5,7.5)}{.21213203}{90}
        \ThreeDotsAt{(2.5,7.5)}{.21213203}{90}
        \ThreeDotsAt{(4.5,7.5)}{.21213203}{90}
        \node at (0.5,6.5) {$z_{\tilde k,1}$};
        \ThreeDotsAt{(1.5,6.5)}{.21213203}{0}
        \node at (2.5,6.5) {$z_{\tilde k,\tilde k}$};
        \node at (4.5,6.5) {$z_{\tilde k,\tilde k+1}$};

        \node at (3.5,5.5) {$1$};
        \node at (0.5,5.5) {$0$};
        \node at (4.5,5.5) {$0$};
        \node at (2.5,5.5) {$0$};
        \ThreeDotsAt{(1.5,5.5)}{.21213203}{0}

        \node at (0.5,3.5) {$0$};
        \ThreeDotsAt{(1.5,3.5)}{.21213203}{0}
        \node at (2.5,3.5) {$0$};
        \node at (3.5,3.5) {$0$};

        \node at (0.5,4.5) {$z_{\tilde k+1,1}$};
        \node at (3.5,4.5) {$0$};
        \ThreeDotsAt{(1.5,4.5)}{.21213203}{0}
        \node at (2.5,4.5) {$z_{\tilde k+1,\tilde k}$};

        \node at (3.5,2.5) {$0$};
        \ThreeDotsAt{(3.5,1.5)}{.21213203}{90}
        \node at (3.5,0.5) {$0$};

        \node at (0.5,2.5) {$z_{\tilde n-\tilde k,1}$};
        \ThreeDotsAt{(1.5,2.5)}{.21213203}{0}
        \node at (2.5,2.5) {$z_{\tilde n-\tilde k,\tilde k}$};
        \ThreeDotsAt{(0.5,1.5)}{.21213203}{90}
        \ThreeDotsAt{(2.5,1.5)}{.21213203}{90}
        \node at (0.5,0.5) {$z_{\tilde n,1}$};
        \ThreeDotsAt{(1.5,0.5)}{.21213203}{0}
        \node at (2.5,0.5) {$z_{\tilde n,\tilde k}$};
      \end{scope}
    \end{tikzpicture}
  \end{center}
  To obtain the corresponding definition for even $n$, remove the middle row and the half-column on the right.
  More precisely, for a core of an $(n-2) \times (n-2)$ matrix, the operators $\extUL$ and $\extUR$ adjoin a new first row and a new last row to the core; furthermore, $\extUL$ adjoins a new first column, whereas $\extUR$ adjoins a new last column of height $n$ (before the half column if $n$ is odd).
  All newly created entries are set to $0$, except for a single new entry equal to $1$ in position $(1,1)$ for $\extUL$ and in position $(1,k)$ for $\extUR$.
  Hence both $\extUL$ and $\extUR$ yield the core of an $n\times n$ matrix.

  The operators $\extBL$ and $\extBR$ adjoin two new middle rows to the core of an $(n-2) \times (n-2)$ matrix: if $n$ is odd, then the new rows are inserted to the positions $k$ and $k+2$; if $n$ is even, then they are inserted to the positions $k$ and $k+1$.
  Moreover, $\extBL$ adjoins a new first column, whereas $\extBR$ adjoins a new last column.
  Again, all newly created entries are set to $0$, except for a single new entry equal to $1$ in position $(k,1)$ for $\extBL$ and in position $(k,k)$ for $\extBR$.
  Therefore, $\extBL$ and $\extBR$ again yield the core of an $n\times n$ matrix.

  Now we are ready to construct $L^{n,\mathrm H}_{i,j}$, $L^{n,\mathrm V}_{i,j}$, $U^{n,\mathrm H}_{i,j}$, and $U^{n,\mathrm V}_{i,j}$ via an inductive approach.
  For $n=4$, set
  \begin{align*}
    L^{4,\mathrm H}_{1,1}
    &=
      \vcenter{\hbox{\begin{ytableau}
            -1 & 1\\
            1 & 0\\
            0 & 0\\
            1 & 0
          \end{ytableau}}}\,,
    &
      L^{4,\mathrm H}_{2,1}
    &=
      \vcenter{\hbox{\begin{ytableau}
            1 & 0\\
            -1 & 1\\
            1 & 0\\
            0 & 0
          \end{ytableau}}}\,,
    &
      L^{4,\mathrm H}_{2,2}
    &=
      \vcenter{\hbox{\begin{ytableau}
            0 & 1\\
            0 &-1\\
            1 & 1\\
            0 & 0
          \end{ytableau}}}\,,
    \\
    U^{4,\mathrm H}_{1,3}
    &=
      \vcenter{\hbox{\begin{ytableau}
            0 & 1\\
            1 &-1\\
            1 & 0\\
            -1 & 1
          \end{ytableau}}}\,,
    &
      U^{4,\mathrm H}_{2,2}
    &=
      \vcenter{\hbox{\begin{ytableau}
            0 & 0\\
            1 & 1\\
            0 &-1\\
            0 & 1
          \end{ytableau}}}\,,
    &
      U^{4,\mathrm H}_{2,3}
    &=
      \vcenter{\hbox{\begin{ytableau}
            0 & 0\\
            1 & 0\\
            -1 & 1\\
            1 & 0
          \end{ytableau}}}\,.
  \end{align*}
  For $n=5$, set
  \begin{align*}
    L^{5,\mathrm H}_{1,1}
    &=
      \vcenter{\hbox{\begin{ytableau}
            -1 & 1 & 0\\
            1 & 0 & 0\\
            0 & 0 & \none\\
            0 & 0 & \none\\
            1 & 0 & \none
          \end{ytableau}}}\,,
    &
      L^{5,\mathrm H}_{2,1}
    &=
      \vcenter{\hbox{\begin{ytableau}
            1 & 0 & 0\\
            -1 & 1 & 0\\
            0 & 0 & \none\\
            1 & 0 & \none\\
            0 & 0 & \none
          \end{ytableau}}}\,,
    &
      L^{5,\mathrm H}_{2,2}
    &=
      \vcenter{\hbox{\begin{ytableau}
            0 & 1 & 0\\
            0 &-1 & 1\\
            0 & 1 & \none\\
            1 & 0 & \none\\
            0 & 0 & \none
          \end{ytableau}}}\,,
    \\
    L^{5,\mathrm H}_{2,3}
    &=
      \vcenter{\hbox{\begin{ytableau}
            0 & 0 & 1\\
            0 & 0 &-1\\
            0 & 0 & \none\\
            1 & 1 & \none\\
            0 & 0 & \none
          \end{ytableau}}}\,,
    &
      L^{5,\mathrm H}_{3,1}
    &=
      \vcenter{\hbox{\begin{ytableau}
            0 & 0 & 0\\
            1 & 0 & 0\\
            -1 & 1 & \none\\
            0 & 0 & \none\\
            1 & 0 & \none
          \end{ytableau}}}\,,
    &
      L^{5,\mathrm H}_{3,2}
    &=
      \vcenter{\hbox{\begin{ytableau}
            0 & 0 & 0\\
            0 & 1 &-1\\
            0 &-1 & \none\\
            1 & 0 & \none\\
            0 & 1 & \none
          \end{ytableau}}}\,,\\
    U^{5,\mathrm H}_{1,4}
    &=
      \vcenter{\hbox{\begin{ytableau}
            1 & 0 & 0\\
            0 & 0 & 0\\
            0 & 0 & \none\\
            1 & 0 & \none\\
            -1 & 1 & \none
          \end{ytableau}}}\,,
    &
      U^{5,\mathrm H}_{2,2}
    &=
      \vcenter{\hbox{\begin{ytableau}
            0 & 0 & 1\\
            1 & 1 &-1\\
            0 & 0 & \none\\
            0 & 0 & \none\\
            0 & 0 & \none
          \end{ytableau}}}\,,
    &
      U^{5,\mathrm H}_{2,3}
    &=
      \vcenter{\hbox{\begin{ytableau}
            0 & 0 & 0\\
            0 & 1 & 1\\
            1 & 0 & \none\\
            0 &-1 & \none\\
            0 & 1 & \none
          \end{ytableau}}}\,,
    \\
    U^{5,\mathrm H}_{2,4}
    &=
      \vcenter{\hbox{\begin{ytableau}
            0 & 0 & 0\\
            1 & 0 & 0\\
            0 & 0 & \none\\
            -1 & 1 & \none\\
            1 & 0 & \none
          \end{ytableau}}}\,,
    &
      U^{5,\mathrm H}_{3,2}
    &=
      \vcenter{\hbox{\begin{ytableau}
            0 & 0 & 1\\
            0 & 0 & 1\\
            1 & 1 & \none\\
            0 & 0 & \none\\
            0 & 0 & \none
          \end{ytableau}}}\,.
  \end{align*}
  Furthermore, for $n=6$, we include the further explicit certificates
  \[
    L^{6,\mathrm H}_{2,4}=
    \vcenter{\hbox{\begin{ytableau}
          0&0&0\\
          0&0&0\\
          0&0&0\\
          0&0&1\\
          1&1&-1\\
          0&0&1
        \end{ytableau}}}\,,
    \quad
    L^{6,\mathrm H}_{3,4}=
    \vcenter{\hbox{\begin{ytableau}
          0&0&1\\
          0&0&0\\
          0&0&0\\
          1&1&-1\\
          0&0&1\\
          0&0&0
        \end{ytableau}}}\,,
    \quad
    U^{6,\mathrm H}_{2,2}=
    \vcenter{\hbox{\begin{ytableau}
          0&0&1\\
          1&1&-1\\
          0&0&1\\
          0&0&0\\
          0&0&0\\
          0&0&0
        \end{ytableau}}}\,,
    \quad
    U^{6,\mathrm H}_{3,2}=
    \vcenter{\hbox{\begin{ytableau}
          0&0&0\\
          0&0&1\\
          1&1&-1\\
          0&0&0\\
          0&0&0\\
          0&0&1
        \end{ytableau}}}\,,
  \]
  and, for $n=7$, we define
  \[
    L^{7,\mathrm H}_{2,5}=
    \vcenter{\hbox{\begin{ytableau}
          0&0&0&0\\
          0&0&0&0\\
          0&0&0&0\\
          0&0&0&\none\\
          0&0&1&\none\\
          1&1&-1&\none\\
          0&0&1&\none
        \end{ytableau}}}\,,
    \quad
    L^{7,\mathrm H}_{3,5}=
    \vcenter{\hbox{\begin{ytableau}
          0&0&0&1\\
          0&0&0&0\\
          0&0&0&0\\
          0&0&1&\none\\
          1&1&-1&\none\\
          0&0&1&\none\\
          0&0&0&\none
        \end{ytableau}}}\,,
  \]
  \[
    U^{7,\mathrm H}_{2,2}=
    \vcenter{\hbox{\begin{ytableau}
          0&0&1&0\\
          1&1&-1&0\\
          0&0&1&0\\
          0&0&0&\none\\
          0&0&0&\none\\
          0&0&0&\none\\
          0&0&0&\none
        \end{ytableau}}}\,,
    \quad
    U^{7,\mathrm H}_{3,2}=
    \vcenter{\hbox{\begin{ytableau}
          0&0&0&0\\
          0&0&1&0\\
          1&1&-1&0\\
          0&0&0&\none\\
          0&0&0&\none\\
          0&0&0&\none\\
          0&0&1&\none
        \end{ytableau}}}\,,
    \quad
    U^{7,\mathrm H}_{4,2}=
    \vcenter{\hbox{\begin{ytableau}
          0&0&0&1\\
          0&0&0&0\\
          0&0&1&0\\
          1&1&-1&\none\\
          0&0&0&\none\\
          0&0&1&\none\\
          0&0&0&\none
        \end{ytableau}}}\,.
  \]
  Let $n\geq 6$, and assume that all certificates for size $n-2$ are already defined.
  For $n\in\{6,7\}$, some certificates are given explicitly above.
  All remaining certificates for $n \geq 6$ are obtained by the recursive rules below, where the explicitly listed certificates for $n=6$ and $n=7$ are omitted from the recursive cases.
  For every remaining $(i,j)\in \{(1,1)\}\cup([2,k]\times[n-2])$, and for odd $n$ also for every remaining $(i,j)=(k+1,j)$ with $j\in[k]$, define $L^{n,\mathrm H}_{i,j}$ recursively by
  \[
    L^{n,\mathrm H}_{i,j}=
    \begin{cases}
      \extBR\bigl(L^{n-2,\mathrm H}_{i,1}\bigr)    & \text{if } j=1 \text{ and } i\in[k-1],\\
      \extUR\bigl(L^{n-2,\mathrm H}_{i-1,1}\bigr)  & \text{if } j=1 \text{ and } i\in\{k,k+1\},\\
      \extBL\bigl(L^{n-2,\mathrm H}_{i,j-1}\bigr)  & \text{if } i\in[2,k-1] \text{ and } j\in[2,n-3],\\
      \extBR\bigl(L^{n-2,\mathrm H}_{i,n-4}\bigr)  & \text{if } i\in[2,k-1] \text{ and } j=n-2,\\
      \extUR\bigl(L^{n-2,\mathrm H}_{k-1,j-2}\bigr)& \text{if } i=k \text{ and } j\in[\ell+2,n-2],\\
      \extUL\bigl(L^{n-2,\mathrm H}_{k-1,j-1}\bigr)& \text{if } i=k \text{ and } j\in[2,\ell+1],\\
      \extUL\bigl(L^{n-2,\mathrm H}_{k,j-1}\bigr)  & \text{if } 2 \nmid n,\ i=k+1, \text{ and } j\in[2,k].
    \end{cases}
  \]
  For every remaining $(i,j)\in \{(1,n-1)\}\cup([2,k]\times[2,n-1])$, and for odd $n$ also for every remaining $(i,j)=(k+1,j)$ with $j\in[2,k]$, define $U^{n,\mathrm H}_{i,j}$ recursively by
  \[
    U^{n,\mathrm H}_{i,j}=
    \begin{cases}
      \extBR\bigl(U^{n-2,\mathrm H}_{i,2}\bigr)    & \text{if } j=2 \text{ and } i\in[k-1],\\
      \extUR\bigl(U^{n-2,\mathrm H}_{i-1,2}\bigr)  & \text{if } j=2 \text{ and } i\in\{k,k+1\},\\
      \extBR\bigl(U^{n-2,\mathrm H}_{i,j-2}\bigr)  & \text{if } (i \in [2,k-1] \text{ and } j\in[\ell+2,n-1]) \text{ or } (i, j) = (1, n-1),\\
      \extBL\bigl(U^{n-2,\mathrm H}_{i,j-1}\bigr)  & \text{if } i\in[2,k-1] \text{ and } j\in[3,\ell+1],\\
      \extUR\bigl(U^{n-2,\mathrm H}_{k-1,j-2}\bigr)& \text{if } i=k \text{ and } j\in[\ell+2,n-1],\\
      \extUL\bigl(U^{n-2,\mathrm H}_{k-1,j-1}\bigr)& \text{if } i=k \text{ and } j\in[3,\ell+1],\\
      \extUL\bigl(U^{n-2,\mathrm H}_{k,j-1}\bigr)  & \text{if } 2 \nmid n,\ i=k+1, \text{ and } j\in[3,k].
    \end{cases}
  \]

  \medskip
  We prove by induction on~$n$ that the cores defined above are the desired certificates.
  For the explicitly listed cores with $n\in[4,7]$, this is checked by direct inspection.
  This completes the proof for $n\in\{4,5\}$.
  Now let $n\geq 6$ and assume that the claim holds for size~$n-2$.
  Let $Y$ be a certifying core for size~$n$ that is defined recursively, and let $Z$ be its predecessor for size~$n-2$ so that $Y=\ext(Z)$, where $\ext$ is one of $\extUL,\extUR,\extBL,\extBR$.
  By the definition of the extension operators, $Y$ agrees with $Z$ on an embedded copy of the core positions for size~$n-2$, and every newly created entry of~$Y$ is~$0$ except for a single new entry equal to~$1$ at the corner cell adjacent to the embedded copy.

  \smallskip
  We now check the facet inequalities for size~$n$.
  Every facet inequality is one of the row-prefix or column-prefix bounds in~\cref{eq:htsasm:row-prefix,eq:htsasm:col-prefix}; when $n$ is odd, it may also be one of the middle-row-prefix or middle-column-prefix bounds in~\cref{eq:htsasm:mid-row-pref,eq:htsasm:mid-col-pref}.
  Fix such a facet inequality and consider its left-hand side.

  If the endpoint of the defining prefix lies in the embedded copy, then any newly created entries in that prefix are equal to~$0$, since the unique new~$1$ lies outside the embedded copy by construction.
  Hence the prefix sum on~$Y$ coincides with the corresponding prefix sum on~$Z$ after the evident index shift determined by the extension operator.
  Therefore, the inequality holds for~$Y$ by the induction hypothesis, except possibly for the unique facet inequality violated by~$Z$.
  Moreover, the case distinctions in the recursive definitions of the certificates were chosen precisely so that this uniquely violated facet inequality of~$Z$ is transported by the extension operator to the intended facet inequality for~$Y$.

  If the endpoint of the defining prefix lies in a newly created row or column, then either the defining prefix involves only newly created entries, or any newly created entries in that prefix are equal to~$0$.
  In the former case, all such entries are~$0$ except for the single inserted~$1$, so the corresponding prefix sum belongs to~$\{0,1\}$ and satisfies the facet lower and upper bounds.
  In the latter case, the prefix sum on~$Y$ coincides with the corresponding prefix sum on~$Z$ after the evident index shift determined by the extension operator, and the construction ensures that this prefix sum on $Z$ is non-violating.

  Finally, it is straightforward to verify from the definitions of $\extUL,\extUR,\extBL,\extBR$ that all defining equations of $P^\core_\HTSASM$ are preserved under these extensions: the equations corresponding to the embedded part are inherited from~$Z$, and the newly created rows and columns clearly satisfy the required equations.

  Thus, for every facet inequality for size~$n$, we have constructed a core that violates that facet inequality and satisfies every other facet inequality as well as all defining equations.
  This shows that no facet inequality is redundant; hence the facet inequalities in \cref{thm:htsasm:facets}, together with the defining equations, form a minimal description of $P^\core_\HTSASM$.
\end{proof}

\section{Quarter-turn symmetric ASMs (QTSASMs)}\label{sec:QTSASM}
In this class, we impose invariance under rotation by $\pi/2$.
The corresponding symmetry subgroup is $\mathcal{G} = \{\mathcal{I}, \mathcal{R}_{\pi/2}, \mathcal{R}_{\pi},\allowbreak \mathcal{R}_{-\pi/2}\}$.
Let $P_\QTS$ denote the linear subspace of quarter-turn symmetric real matrices, i.e.,
\[
  P_\QTS
  =
  \left\{
    X \in \R^{n \times n} : x_{i,j} = x_{j,n+1-i}\ \forall i,j \in [n]
  \right\}.
\]
Clearly, any QTSASM satisfies the ASM constraints in~\crefrange{eq:asm:real}{eq:asm:col-sum} and also the symmetry constraints defining $P_\QTS$; thus $P_\QTSASM \subseteq P_\ASM \cap P_\QTS$.
In contrast to the half-turn symmetric case in \cref{sec:HTSASM}, these constraints are not sufficient in general: the polytope $P_\ASM \cap P_\QTS$ can have fractional vertices and strictly contain $P_\QTSASM$.
For example, it is easy to see that the fractional matrix $\frac{1}{2}(I_n + I'_n)$ is a fractional vertex of $P_\ASM \cap P_\QTS$ when $n \geq 2$; e.g., for $n = 4$, we have
\[
  \frac{I_4 + I'_4}{2} =
  \begin{mymatrix}
    \sfrac{1}{2} &           0  &           0  & \sfrac{1}{2} \\
    0            & \sfrac{1}{2} & \sfrac{1}{2} & 0 \\
    0            & \sfrac{1}{2} & \sfrac{1}{2} & 0 \\
    \sfrac{1}{2} &           0  &           0  & \sfrac{1}{2}
  \end{mymatrix}.
\]
To obtain the actual hull, we first introduce a core and derive an exact parity-augmented inequality description; we then determine its dimension in height coordinates and construct an exponentially large family of Ferrers facets.
Before defining the core, we record two simple structural facts.
The first is the following parity obstruction, included for completeness.
\begin{lemma}\label{lem:QTSASM-parity}
  There is no $n \times n$ QTSASM if $n \equiv 2 \pmod 4$.
\end{lemma}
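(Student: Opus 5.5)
We argue by counting the entries equal to $+1$ and $-1$, using the orbits of $\mathcal{R}_{\pi/2}$ on matrix positions. Let $n\equiv 2\pmod 4$ and suppose $X$ is an $n\times n$ QTSASM. Since $n$ is even, the rotation $\mathcal{R}_{\pi/2}$ has no fixed position. Moreover, no position is fixed by $\mathcal{R}_{\pi}$, because $(i,j)=(n+1-i,n+1-j)$ would force $2i=n+1$, which is odd. Hence every orbit of $\langle\mathcal{R}_{\pi/2}\rangle$ on $[n]\times[n]$ has size exactly $4$. Quarter-turn invariance means that $X$ is constant on each orbit. Consequently, the number $p$ of entries equal to $+1$ and the number $m$ of entries equal to $-1$ are both multiples of $4$.

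On the other hand, we count entries row by row. Every row of an ASM has nonzero entries alternating in sign and beginning and ending with $+1$, so each row contains exactly one more $+1$ than $-1$. Summing over the $n$ rows gives $p-m=n$. Since $p$ and $m$ are multiples of $4$, this forces $4\mid n$, which contradicts $n\equiv 2\pmod 4$. Therefore no QTSASM of such size exists.

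Equivalently, the total sum of the entries of $X$ equals $n$ by the row-sum equations~\cref{eq:asm:row-sum}, and it is also $4$ times the sum of the values over the orbit representatives. The latter is divisible by $4$ because the entries are integers. No step here is a real obstacle. The only point requiring care is that all orbits have size $4$, which relies on the nonexistence of a central position, a central $2\times2$ fixed pattern, or positions fixed by $\mathcal{R}_\pi$ when $n$ is even. This is exactly the short verification given above.
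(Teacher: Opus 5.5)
Your proof is correct and is essentially the paper's argument. The paper splits the matrix (with $n=2k$) into its four $k\times k$ quadrants, which the quarter-turn permutes cyclically, so the total entry sum $n$ equals four times the upper-left quadrant sum — that quadrant is just an explicit set of representatives for your size-$4$ orbits — and counting the $+1$ and $-1$ entries separately, as in your main paragraph, is a harmless variant of the same total-sum count.
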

\begin{proof}
  Let $X$ be an $n\times n$ QTSASM, where $n=2k$ is even.
  The lines between rows $k$ and $k+1$ and between columns $k$ and $k+1$ divide the square into four disjoint $k \times k$ blocks:
  the upper-left, upper-right, lower-right, and lower-left blocks.
  By rotational symmetry, the total sum of entries is four times the sum of the entries in the upper-left $k \times k$ block and is therefore a multiple of $4$.
  On the other hand, in any ASM, each row sums to~$1$, so the total sum of all entries of $X$ is $n = 2k$.
  Thus $2k$ must be divisible by $4$, which means that $n$ is a multiple of~$4$.
\end{proof}

\begin{lemma}\label{lem:qtsasm:center}
  Let $n \geq 1$ be odd, and set $k = \floor*{n/2}$.
  For every QTSASM $X \in \{0,\pm1\}^{n \times n}$, the central entry is $x_{k+1,k+1} = (-1)^k$.
\end{lemma}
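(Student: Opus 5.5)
The plan is a counting argument modulo $4$, in the same spirit as the proof of \cref{lem:QTSASM-parity}. Write $n = 2k+1$ and $\ell = k+1$. The central position $(\ell,\ell)$ is the unique fixed point of $\mathcal{R}_{\pi/2}$ acting on $[n]\times[n]$. It is fixed because $\mathcal{R}_{\pi/2}(\ell,\ell) = (\ell, n+1-\ell) = (\ell,\ell)$.

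First, I will show that every other position lies in an orbit of size exactly $4$. The orbit size divides $4$. A position fixed by $\mathcal{R}_{\pi}$ satisfies $i = n+1-i$ and $j = n+1-j$, so it is the center. Hence every non-central orbit has size $4$. Concretely, I will exhibit the pinwheel fundamental domain $T = [k]\times[\ell]$, the upper-left $k \times (k+1)$ block. I will check that the four images $T$, $\mathcal{R}_{\pi/2}(T)$, $\mathcal{R}_{\pi}(T)$, $\mathcal{R}_{-\pi/2}(T)$ are pairwise disjoint and cover $[n]\times[n]\setminus\{(\ell,\ell)\}$. For the counting argument it is enough to know that the orbits have size $4$, and this already holds.

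Second, let $X$ be a QTSASM and let $S$ be the sum of its entries over one representative per non-central orbit, for instance $S = \sum_{(i,j)\in T} x_{i,j}$. Quarter-turn invariance makes $X$ constant on orbits. Therefore the total sum of the entries of $X$ is $4S + x_{\ell,\ell}$, with $S \in \Z$. On the other hand, every row of an ASM sums to $1$, so the total sum is $n = 2k+1$. Hence $x_{\ell,\ell} \equiv 2k+1 \pmod 4$.

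Finally, since $x_{\ell,\ell} \in \{0,\pm1\}$ and $2k+1$ is odd, $x_{\ell,\ell} \neq 0$. If $k$ is even, then $2k+1 \equiv 1 \pmod 4$, which forces $x_{\ell,\ell} = 1$. If $k$ is odd, then $2k+1 \equiv 3 \equiv -1 \pmod 4$, which forces $x_{\ell,\ell} = -1$. In both cases $x_{k+1,k+1} = (-1)^k$. I do not expect a real obstacle here; the only point needing care is the orbit-size claim in the first step, and the fixed-point computation for $\mathcal{R}_{\pi}$ settles it.
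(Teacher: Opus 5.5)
Your proposal is correct and follows essentially the same route as the paper: both arguments observe that every non-central position lies in a quarter-turn orbit of size $4$, so the total sum is $4t + x_{k+1,k+1}$, and equating this with $n$ forces $x_{k+1,k+1}\equiv n \pmod 4$, hence $x_{k+1,k+1}=(-1)^k$. Your explicit fundamental domain $[k]\times[k+1]$ checks out but is not needed, since the orbit-size argument via the fixed points of $\mathcal{R}_{\pi}$ already suffices, as you note yourself.
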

\begin{proof}
  Let $X$ be a QTSASM\@.
  By quarter-turn symmetry, every entry other than the central entry belongs to an orbit of four matrix positions.
  Hence the total sum of entries is $4t + x_{k+1,k+1}$ for an integer $t$.
  Since the total sum of entries is $n$ by the ASM constraints, we have $4t + x_{k+1,k+1} = n$.
  If $n \equiv 1 \pmod 4$, then we must have $x_{k+1,k+1} = 1$; if $n \equiv 3 \pmod 4$, then $x_{k+1,k+1} = -1$.
  In the former case, $k$ is even; in the latter, $k$ is odd.
  Therefore, we can equivalently write $x_{k+1,k+1} = (-1)^k$.
\end{proof}

\paragraph{Core and assembly map.}
Let $n \geq 1$, $k = \floor*{n/2}$, and $\ell = \ceil*{n/2}$.
Let the \emph{core} of a QTSASM be its upper-left $\ell \times k$ block, i.e.,
\[
  C = [\ell] \times [k]
\]
is the set of \emph{core positions}, and let $\pi_C : \R^{n \times n} \to \R^C$ be the coordinate-wise projection onto~$C$.
We now define the affine map $\varphi : \R^C \to \R^{n \times n}$ by
\[
  \varphi(Y)_{i,j} =
  \begin{cases}
    y_{i,j}         & \text{if } i \in [\ell], j \in [k],\\
    (-1)^k          & \text{if } 2 \nmid n, i = j = \ell,\\
    y_{j,n+1-i}      & \text{if } i \in [\ell+1,n], j \in [\ell],\\
    y_{n+1-i,n+1-j}  & \text{if } i \in [k+1,n], j \in [\ell+1,n],\\
    y_{n+1-j,i}      & \text{if } i \in [k], j \in [k+1,n]
  \end{cases}
\]
for $Y \in \R^C$ and $i,j \in [n]$.
By construction, $\varphi(Y)$ is quarter-turn symmetric, and for odd $n$ its central entry agrees with \cref{lem:qtsasm:center}.
Thus $\varphi$ places the core $Y$ into the upper-left rectangle $C$, fixes the central entry when $n$ is odd, and fills the remaining entries by successive quarter-turn rotations.
It is immediate from the definition that $\pi_C(\varphi(Y)) = Y$ for every $Y \in \R^C$.
Conversely, if $X \in \QTSASM(n)$, then $X$ is completely determined by its entries in $C$ together with the imposed quarter-turn symmetry and, for odd $n$, the central entry, which is uniquely determined by \cref{lem:qtsasm:center}.
Hence $\varphi(\pi_C(X)) = X$ for every $X \in \QTSASM(n)$, so $\varphi$ is an assembly map.

\medskip
Now we provide a valid inequality that cuts off the fractional vertex $\frac{I_4+I'_4}{2}$ of $P_\ASM \cap P_\QTS$.
Let $n=4$, and let $y \in \R^C$.
By rotational symmetry and the ASM constraints, the following inequalities are valid for the convex hull of integer QTSASM cores:
\begin{gather*}
  2y_{1,1} + y_{1,2} + y_{2,1} \leq 1,\\
  y_{2,1} + 2y_{2,2} + y_{1,2} \geq 1,\\
  y_{2,1} \geq 0,\\
  y_{2,1} + 2y_{2,2} \leq 1.\\
  \intertext{Turning all inequalities into upper bounds and adding them, we obtain}
  2 y_{1,1} \leq 1.\\
  \intertext{Since $y_{1,1}$ is integral for integer QTSASM cores, we can strengthen this bound to}
  y_{1,1} \leq \floor*{\sfrac{1}{2}} = 0.
\end{gather*}
Thus $y_{1,1} \leq 0$ is valid for the convex hull of integer cores of $4 \times 4$ QTSASMs.
However, the core of $\frac{I_4+I'_4}{2}$ violates this inequality.

\medskip
We prove that a family of parity inequalities derived in a similar manner cuts off every fractional vertex of $P_\ASM \cap P_\QTS$, and thus yields a description of the core polytope of QTSASMs.
We start with the description of the core polytope.
\begin{theorem}\label{thm:qtsasm:corepolytope}
  Let $n \geq 1$ and set $k = \floor*{n/2}$.
  Define
  \[
    \widetilde C =
    \begin{cases}
      [k] \times [n]                              & \text{if } 2 \mid n,\\
      [k] \times [n] \cup \{(k+1,j) : j \in [k]\} & \text{if } 2 \nmid n,
    \end{cases}
  \]
  and
  \[
    \mathcal{S} = \Bigl\{S \in \{0,\pm1\}^{\widetilde C} : 2 \mid \sum_{j'=j}^{n_i} s_{i,j'} + \sum_{i'=n+1-i}^ns_{j,i'}\ \forall (i,j) \in C\Bigr\},
  \]
  where $n_i = n$ for all $i \in [k]$, and, when $n$ is odd, $n_{k+1} = k$.
  Then the core polytope $P^\core_\QTSASM \subseteq \R^C$ of $n \times n$ QTSASMs is described by the following system.
  \begin{align}
                                                                       y_{i,j} &\in \R &\forall (i,j) \in C, \label{eq:qtsasm:real}\\
    0 \leq \sum_{j'=1}^{\min\{j,k\}}      y_{i,j'} + \sum_{j'=k+1}^{j} y_{n+1-j',i} &\leq 1 &\hspace*{-3cm}\forall (i,j) \in \widetilde C \setminus ([k] \times \{n\}), \label{eq:qtsasm:prefix}\\
                    \sum_{j'=1}^{k}      y_{i,j'} + \sum_{j'=k+1}^{n} y_{n+1-j',i} &= 1    &\forall i \in [k], \label{eq:qtsasm:full}\\
    \nonumber\dfrac{\displaystyle \sum_{(i,j) \in \widetilde C} s_{i,j} \left(\sum_{j'=1}^{\min\{j,k\}}  y_{i,j'} + \sum_{j'=k+1}^{j} y_{n+1-j',i} \right)}{2} &&\\
            &\hspace*{-1.5cm}\leq\floor*{\dfrac{|\{(i,j) \in \widetilde C : s_{i,j}=1\}| - |\{i \in [k] : s_{i,n}=-1\}|}{2}} &\forall S \in \mathcal{S}. \label{eq:qtsasm:cut}
  \end{align}
\end{theorem}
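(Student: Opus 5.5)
The plan is to follow the two-stage pattern of \cref{thm:htsasm:corepolytope}. First, the integer solutions of \cref{eq:qtsasm:real,eq:qtsasm:prefix,eq:qtsasm:full} should be exactly the cores of QTSASMs. Second, adding the parity inequalities \cref{eq:qtsasm:cut} should give precisely the convex hull of these integer points.

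Throughout I use chain coordinates. For $(i,j)\in\widetilde C$, let $z_{i,j}$ be the left-hand side of \cref{eq:qtsasm:prefix}. By the definition of $\varphi$, $z_{i,j}=\sum_{j'\le j}\varphi(Y)_{i,j'}$ is a genuine row-prefix sum of $X=\varphi(Y)$ for the rows $i\in[k]$ and, for odd $n$, row $k+1$ up to column $k$. Every other row and column prefix of $X$ is a rotated copy of one of these chains, or its complement once a full sum equals $1$.

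\emph{Integer points.} For a QTSASM $X$, the ASM bounds of \cref{thm:ASMpolytope} give \cref{eq:qtsasm:prefix,eq:qtsasm:full} immediately. Conversely, let $Y$ be an integer solution and set $X=\varphi(Y)$. The row constraints for rows $[k]$ are literally \cref{eq:qtsasm:prefix,eq:qtsasm:full}. Every column of $X$ is a rotated row, so column constraints follow from row constraints by quarter-turn symmetry. For odd $n$, the middle row is palindromic in a rotated sense: its second half is the middle column read from the other end. Its prefixes are therefore handled as in \cref{lem:vsasm:symmetricHalfExtends}, using the central entry $(-1)^k$ from \cref{lem:qtsasm:center}. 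Its total is $1$, because summing \cref{eq:qtsasm:full} and using the orbit count gives total sum $4t+(-1)^k=n$.

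\emph{Validity of \cref{eq:qtsasm:cut}.} Each core variable $y_{a,b}$ occurs in exactly two chains. In chain $a$ it appears at position $b$. In chain $b$ it appears at position $n+1-a$, which lies in the rotated part. Hence the coefficient of $y_{a,b}$ in $\sum s_{i,j}z_{i,j}$ is $\sum_{j'\ge b}s_{a,j'}+\sum_{i'\ge n+1-a}s_{b,i'}$, and membership in $\mathcal S$ makes this even. So the left-hand side of \cref{eq:qtsasm:cut} is integral on integer cores. Bound $z\le1$ where $s=1$. Bound $z\ge0$ where $s=-1$, except that $z_{i,n}=1$ by \cref{eq:qtsasm:full}, which contributes $-1$. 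This gives the displayed right-hand side before rounding, and the floor is then a Chv\'atal--Gomory rounding.

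\emph{Sufficiency (main obstacle).} I would show that the hull is contained in the system by exhibiting the integer program as a bidirected system and invoking \cref{thm:bidirPolytope}. The key structural fact is the one just used: each $y$ is a consecutive difference in exactly two chains. So I would take the chain values $z$ as variables, with box $0\le z\le1$ and $z_{i,n}$ fixed to $1$, and take the $y$'s as edges gluing chain $a$ to chain $b$.

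The step I expect to be hardest is finding a change of variables in which each glued pair of differences is an edge with exactly two nonzero entries. My first attempt is to use increments $y$ as the variables and replace each prefix bound by a constraint between consecutive chain nodes, after complementing alternate chains, so that each $y$ column has exactly two nonzeros of total absolute value $2$.

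Once this works, \cref{thm:bidirPolytope} yields the hull via the odd-set cuts \cref{eq:simultan:bidirProglemHull:oddCon}. It then remains to translate those cuts back to $z$-coordinates and show the following. The cuts with $U=V=\emptyset$ are exactly \cref{eq:qtsasm:cut}, where the partition $F,H$ corresponds to the sign pattern $s$ and the evenness requirement defining $\mathcal S$ corresponds to $\delta$. The cuts with $U\cup V\ne\emptyset$ are either of the same form after re-encoding rows as chain endpoints, or sums of \cref{eq:qtsasm:cut} with the box and equality constraints. If no clean bidirected form exists, my fallback is a direct vertex argument. At a fractional vertex of \cref{eq:qtsasm:real,eq:qtsasm:prefix,eq:qtsasm:full}, the fractional $y$'s form half-integral cycles through chains, as in $\tfrac12(I_n+I_n')$. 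An odd such cycle then determines an $S\in\mathcal S$ whose inequality is violated, mirroring the $n=4$ computation.
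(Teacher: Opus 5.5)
Your overall strategy is the paper's: box-bounded chain values $z$, free $y$'s gluing two chains, then \cref{thm:bidirPolytope}. Your integer-point and validity arguments also agree with it, with one repair for odd $n$. There the orbit count alone is circular. You need that summing \cref{eq:qtsasm:full} gives $\sum_{j\le k}y_{k+1,j}\equiv k \pmod 2$. Together with the prefix bound $0\le\sum_{j\le k}y_{k+1,j}\le1$, this forces the middle row sum $2\sum_{j\le k}y_{k+1,j}+(-1)^k$ to equal $1$.

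The genuine gap is sufficiency, which is the real content of the theorem: you never write down the bidirected system. Your ``first attempt'' of keeping only the increments $y$ as variables cannot work. The variable $y_{a,b}$ enters every prefix $z_{a,j}$ with $j\ge b$ and every $z_{b,j}$ with $j\ge n+1-a$. Complementing chains changes signs, not supports, and is not needed anyway, since bidirectedness only counts absolute values.

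What works, and what the paper does, is the following:
\begin{itemize}
\item Keep both $y$ and $z$ as columns. Impose $0\le z_{i,j}\le 1$, $z_{i,n}\in[1,1]$, and large bounds $\pm N$ on $y$.
\item Take as rows the recursion equations $z_{i,j}-z_{i,j-1}-y_{i,j}=0$ for $j\le k$ and $z_{i,j}-z_{i,j-1}-y_{n+1-j,i}=0$ for $j>k$, with $z_{i,0}=0$, one per $(i,j)\in\widetilde C$.
\item Then $y_{a,b}$ has exactly two entries $-1$, in rows $(a,b)$ and $(b,n+1-a)$, and each $z_{i,j}$ has one $+1$ and one $-1$.
\item The exceptions are the chain ends $z_{i,n}$ and, for odd $n$, $z_{k+1,k}$. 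Each occurs in only one row, so each needs an extra dummy row with infinite bounds.
\end{itemize}

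Your planned translation of the Edmonds--Johnson cuts is wrong as stated. For $U=V=\emptyset$ one has $\delta(\emptyset)=\emptyset$, so those cuts are just $0\le 0$, and \cref{eq:qtsasm:cut} cannot come from them. It comes from nonempty $W=U\cup V$. Every non-dummy row is an equation with right-hand side $0$, so $(\chi_U-\chi_V)M(y,z)$ vanishes on the feasible set. Each cut therefore reduces to $\tfrac12(\chi_F-\chi_H)(y,z)\le\lfloor (d(F)-c(H))/2\rfloor$.

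Cuts that use a dummy row, or that have some $y$ in $\delta(W)$, are redundant because of the huge bounds. So you may assume $W$ contains both rows of each $y$ or neither. Adding the rows of $W$ then gives an identity in which each $y$ has coefficient $0$ or $-2$, and exactly the $z$'s in $\delta(W)$ have coefficients $\pm1$. Adjusting signs by even multiples of the defining identities shows that $\sum s_{i,j}z_{i,j}$, rewritten in the $y$-variables, has even coefficients. This is precisely the condition $S\in\mathcal S$, and $d(F)-c(H)$ is exactly the right-hand side of \cref{eq:qtsasm:cut}. So the cuts with $U\cup V\neq\emptyset$ are not something to be reduced to the family; they \emph{are} the family.

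Your fallback does not close the gap either. Exhibiting a violated member of \cref{eq:qtsasm:cut} at each fractional vertex of the relaxation does not show that the strengthened system is integral, because adding cuts creates new vertices.
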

\begin{proof}
  We prove that the integer solutions to the system~\crefrange{eq:qtsasm:real}{eq:qtsasm:full} are exactly the cores of QTSASMs.
  We then show that the inequalities in~\cref{eq:qtsasm:cut} do not cut any integer solution, and the system~\crefrange{eq:qtsasm:real}{eq:qtsasm:cut} defines an integral polytope.

  \medskip
  First, let $X$ be an $n \times n$ QTSASM, and let $Y = \pi_C(X)$ be its core.
  Since $X$ is quarter-turn symmetric, we have $X = \varphi(Y)$ for the assembly map $\varphi$ defined above.
  By the definition of $\varphi$, for every $(i,j) \in \widetilde C$, the row-prefix sum within row $i$ up to column $j$
  can be written in terms of the core entries as
  \begin{equation}\label{eq:qtsasm:phi-row-prefix-used}
    \sum_{j'=1}^{j} x_{i,j'}
    = \sum_{j'=1}^{\min\{j,k\}} y_{i,j'}
    + \sum_{j'=k+1}^{j} y_{n+1-j',i}.
  \end{equation}
  Since every QTSASM is in particular an ASM, the ASM row-prefix bounds in~\cref{eq:asm:row-prefix}, together with the identity~\cref{eq:qtsasm:phi-row-prefix-used} for $(i,j) \in \widetilde C \setminus ([k] \times \{n\})$, yield the inequalities in~\cref{eq:qtsasm:prefix}.
  For $(i,j) \in [k] \times \{n\}$, the same identity, together with the ASM row-sum constraint in~\cref{eq:asm:row-sum}, gives the equations in~\cref{eq:qtsasm:full}.
  Thus the cores of QTSASMs satisfy the system~\crefrange{eq:qtsasm:real}{eq:qtsasm:full}.

  \medskip
  Conversely, let $Y$ be an integer solution to the system~\crefrange{eq:qtsasm:real}{eq:qtsasm:full}, and put $X = \varphi(Y)$.
  By construction, $X$ is an $n\times n$ integer matrix and its core is $Y$.
  The identity~\cref{eq:qtsasm:phi-row-prefix-used} shows that the inequalities in~\cref{eq:qtsasm:prefix} give the ASM row-prefix bounds for the first $k$ rows of $X$.
  For $j = n$, the same identity, together with the equations in~\cref{eq:qtsasm:full}, gives the ASM row-sum constraints for those rows.
  When $n$ is odd, the first $k$ prefixes in row $k+1$ lie in $\{0,1\}$ by~\cref{eq:qtsasm:prefix}.
  Although the sum of row $k+1$ is not prescribed to be one in~\cref{eq:qtsasm:full} when $n$ is odd, we now show that it is implied by the other constraints.
  Summing all equations in~\cref{eq:qtsasm:full}, every $y$-variable appears with coefficient $2$, except the variables $y_{k+1,j}$ for $j \in [k]$, which appear with coefficient $1$; hence $\sum_{j=1}^k y_{k+1,j} \equiv k \pmod 2$.
  Moreover, we have $\sum_{j=1}^k y_{k+1,j} \in \{0,1\}$ by~\cref{eq:qtsasm:prefix}.
  In particular, $\sum_{j=1}^k y_{k+1,j}$ is uniquely determined, and using the definition of $\varphi$, the sum of row $k+1$ becomes $2\sum_{j=1}^k y_{k+1,j} + (-1)^k = 1$.
  Thus the row-sum constraint for row $k+1$ follows from the system when $n$ is odd.
  Furthermore, by \cref{lem:vsasm:symmetricHalfExtends}, every prefix of row $k+1$ lies in $\{0,1\}$.
  The row constraints for the last $k$ rows and all column constraints follow by quarter-turn symmetry of $X$.
  Hence $X$ satisfies all ASM constraints from \cref{thm:ASMpolytope}, so $X$ is a QTSASM and $Y = \pi_C(X)$ is its core.
  This proves that the integer solutions to the system~\crefrange{eq:qtsasm:real}{eq:qtsasm:full} are exactly the cores of QTSASMs.

  \medskip
  It remains to prove that the cores of QTSASMs satisfy the inequalities in~\cref{eq:qtsasm:cut} and the system~\crefrange{eq:qtsasm:real}{eq:qtsasm:cut} defines an integral polytope.
  For $(i,j) \in \widetilde C$, define
  \begin{equation}\label{eq:qtsasm:z-def}
    z_{i,j} = \sum_{j'=1}^{\min\{j,k\}}  y_{i,j'} + \sum_{j'=k+1}^{j} y_{n+1-j',i},
  \end{equation}
  that is, $z_{i,j}$ denotes the left-hand side of~\cref{eq:qtsasm:prefix} for $(i,j) \in \widetilde C \setminus ([k] \times \{n\})$, and of~\cref{eq:qtsasm:full} when $(i,j) \in [k] \times \{n\}$.
  We record how $z_{i,j}$ changes with $j$.
  For $(i,j) \in \widetilde C$, we have
  \begin{equation}\label{eq:qtsasm:z-recursion:cases}
    z_{i,j} =
    \begin{cases}
      y_{i,1}                 & \text{if } j = 1,\\
      z_{i,j-1} + y_{i,j}     & \text{if } j \in [2,k],\\
      z_{i,j-1} + y_{n+1-j,i} & \text{if } j \in [k+1,n].
    \end{cases}
  \end{equation}
  Indeed, increasing $j$ by $1$ adds exactly one new term to the defining sum for $z_{i,j}$: when $j \leq k$, this term is $y_{i,j}$ in the first sum; when $j > k$, it is $y_{n+1-j,i}$ in the second sum.

  By definition, the prefix constraints in~\cref{eq:qtsasm:prefix} become
  \begin{align}
          0 \leq z&_{i,j} \leq 1                       & \forall (i,j) \in \widetilde C \setminus ([k] \times \{n\}),\label{eq:qtsasm:z-bounds}\\
    \intertext{likewise, the equations in~\cref{eq:qtsasm:full} turn into}
                 z&_{i,n} = 1                          &\forall i \in [k].\label{eq:qtsasm:z-rowsum}\\
    \intertext{Furthermore, we rearrange the recursion in~\cref{eq:qtsasm:z-recursion:cases} to the equivalent form}
                 z&_{i,1} - y_{i,1} = 0                 &\forall i \in [\ell],\label{eq:qtsasm:z-recursion1}\\
                 z&_{i,j} - z_{i,j-1} - y_{i,j} = 0      &\forall i \in [\ell], j \in [2,k],\label{eq:qtsasm:z-recursion2}\\
                 z&_{i,j} - z_{i,j-1} - y_{n+1-j,i} = 0  &\forall i \in [k], j \in [k+1,n],\label{eq:qtsasm:z-recursion3}\\
    \intertext{where $\ell = \ceil*{n/2}$. Finally, we add the redundant constraint}
    -\infty \leq z&_{i,n} \leq \infty                  &\forall i \in [k];\label{eq:qtsasm:redund}
    \intertext{if $n$ is odd, then we also add}
    -\infty \leq z&_{\ell,k} \leq \infty.\label{eq:qtsasm:redund2}
  \end{align}
  Here $\infty$ denotes a sufficiently large positive integer and $-\infty$ its negative, chosen so that every cut involving one of these bounds is redundant.
  Let the auxiliary system consist of the constraints in~\crefrange{eq:qtsasm:z-bounds}{eq:qtsasm:redund}, together with the constraint in~\cref{eq:qtsasm:redund2} when $n$ is odd.
  Clearly, the constraints in~\cref{eq:qtsasm:prefix,eq:qtsasm:full} in the $y$-variables are equivalent to this auxiliary system in the $y$- and $z$-variables.
  We consider the bounds in~\cref{eq:qtsasm:z-bounds,eq:qtsasm:z-rowsum} as entry bounds, and the remaining rows as our linear constraints.
  There are no effective entry bounds for the $y$-variables, which we express as $-\infty\leq y_{i,j}\leq\infty$ for every $(i,j)\in C$.

  We claim that the coefficient matrix of these linear constraints is bidirected.
  To see this, observe that each variable $y_{i,j}$ for $(i,j) \in C$ appears exactly twice with coefficient $-1$, namely in the equations defining $z_{i,j}$ and $z_{j,n+1-i}$.
  For $i \in [k],j \in [n-1]$ and, when $n$ is odd, also $i = \ell,j \in [k-1]$, the variable $z_{i,j}$ appears once with coefficient $1$ and once with coefficient $-1$, in the equations defining $z_{i,j}$ and $z_{i,j+1}$, respectively.
  For $i \in [k]$, the variable $z_{i,n}$ appears once with coefficient $1$ in~\cref{eq:qtsasm:z-recursion3} when defining $z_{i,n}$ itself and once with coefficient $1$ in~\cref{eq:qtsasm:redund}.
  When $n$ is odd, $z_{\ell,k}$ appears with coefficient $1$ in~\cref{eq:qtsasm:z-recursion2} and with coefficient $1$ in~\cref{eq:qtsasm:redund2}.
  Thus every column of the coefficient matrix has two $\pm1$ entries, so, by definition, the matrix is bidirected.

  Applying \cref{thm:bidirPolytope}, we obtain that the convex hull of the integer solutions to the auxiliary system is described by adding the cuts
  \begin{align}
    \dfrac{\bigl((\chi_U-\chi_V)M+\chi_F-\chi_H\bigr)(y,z)}{2}
    &\leq \floor*{\dfrac{b(U)-a(V)+d(F)-c(H)}{2}}&&\begin{aligned}
                                                  &\text{for every disjoint subsets $U$}\\
                                                  &\text{and $V$ of the rows, and every}\\
                                                  &\text{partition $F,H$ of $\delta(U\cup V)$,}
                                                 \end{aligned}\label{eq:qtsasm:edmonds-johnson-cut}
  \intertext{where $\delta(U\cup V)$ is the set of those $y$- and $z$-variables that appear in exactly one row in $U\cup V$.
  We may assume that $U\cup V$ does not contain any row corresponding to the constraint in~\cref{eq:qtsasm:redund} or, when $n$ is odd, to the constraint in~\cref{eq:qtsasm:redund2}; otherwise, the cut is redundant.
  For every other row, the lower and upper bounds $a$ and $b$ are both zero.
  Thus, $(\chi_U-\chi_V)M(y,z) = 0$ on the feasible region, and our cuts simplify to}
    \dfrac{(\chi_F-\chi_H)(y,z)}{2}
    &\leq \floor*{\dfrac{d(F)-c(H)}{2}}&&\begin{aligned}
                                        &\text{for every subset $W$ of the rows,}\\
                                        &\text{and partition $F,H$ of $\delta(W)$.}
                                      \end{aligned}\label{eq:qtsasm:simplified-cut}
  \end{align}

  Notice that if a $y$-variable lies in $\delta(W)$, then the corresponding cut is redundant.
  Therefore, we may and shall assume that $W$ contains either both or none of the rows in which a given $y$-variable appears.
  Then, $\delta(W)$ consists of those $z$-variables that occur in exactly one row contained in $W$.

  \medskip
  Now we prove that every cut in~\cref{eq:qtsasm:simplified-cut} appears in~\cref{eq:qtsasm:cut}.
  Each partition $F,H$ of $\delta(W)$ determines a sign $s_{i,j} \in \{0,\pm1\}$ for the variable $z_{i,j}$, namely,
  \begin{equation}\label{eq:qtsasm:s-def}
    s_{i,j} =
    \begin{cases}
      +1 & \text{if } z_{i,j} \in F,\\
      -1 & \text{if } z_{i,j} \in H,\\
       0 & \text{if } z_{i,j} \notin \delta(W),
    \end{cases}
  \end{equation}
  for $(i,j) \in \widetilde C$.
  As observed above, we may assume that $\delta(W)$ contains only $z$-variables, so for the numerator of the left-hand side of~\cref{eq:qtsasm:simplified-cut}, we have
  \begin{equation}\label{eq:qtsasm:expressz-with-s}
    (\chi_F-\chi_H)(y,z) = \sum_{(i,j) \in \widetilde C} s_{i,j} z_{i,j}.
  \end{equation}

  Now we express every $z$-variable in a given cut from~\cref{eq:qtsasm:simplified-cut} in terms of $y$-variables by~\cref{eq:qtsasm:z-def}, and we claim that the coefficients of all $y$-variables in the numerator are even integers.
  To see this, add up all equations in~\crefrange{eq:qtsasm:z-recursion1}{eq:qtsasm:z-recursion3} whose rows lie in $W$.
  Each $y$-variable appears in exactly two such equations, and $W$ contains either both of these rows or none.
  Since its coefficient is $-1$ in each such equation, the coefficient of any $y$-variable in this sum is therefore $0$ or $-2$, in particular an even integer.
  Moreover, each $z$-variable appears in at most two recursion equations, with opposite signs, so in the sum it has coefficient $\pm1$ if and only if it lies in $\delta(W)$, and coefficient $0$ otherwise.

  Rearranging this equation, we can express $\sum_{(i,j) \in \widetilde C} s_{i,j} z_{i,j}$, i.e., the numerator of the left-hand side of~\cref{eq:qtsasm:simplified-cut}, in terms of $y$-variables by possibly flipping the sign of the coefficient of $z_{i,j}$ for some $z_{i,j}\in\delta(W)$.
  Changing the sign of the coefficient of a given $z_{i,j}$ can be achieved by adding or subtracting the defining equation for $z_{i,j}$ in~\cref{eq:qtsasm:z-def}, multiplied by two.
  This operation adjusts the coefficient of $z_{i,j}$ by $\pm2$ and changes the coefficients of the $y$-variables by $\pm2$, so all $y$-coefficients remain even.
  Hence, the numerator of the left-hand side of~\cref{eq:qtsasm:simplified-cut} can be expressed as a linear combination of the $y$-variables with even integer coefficients.

  Furthermore, direct inspection of~\cref{eq:qtsasm:z-def} yields that the coefficient of $y_{i,j}$ in the numerator of~\cref{eq:qtsasm:simplified-cut} is exactly
  \[
    \sum_{j'=j}^{n_i} s_{i,j'} + \sum_{i'=n+1-i}^ns_{j,i'},
  \]
  where we set $n_i = n$ for all $i \in [k]$, and, when $n$ is odd, $n_{k+1} = k$.
  By the parity argument above, this integer is even for every $(i,j)\in C$, which is exactly the defining condition for $S=(s_{i,j})$ to belong to the set $\mathcal{S}$ defined in the statement of the theorem.

  The bounds in~\cref{eq:qtsasm:z-bounds,eq:qtsasm:z-rowsum} on the $z$-variables give $c_{i,j}=0$ and $d_{i,j}=1$ for all $(i,j) \in \widetilde C \setminus ([k] \times \{n\})$, while $c_{i,n}=d_{i,n}=1$ for all $i\in[k]$.
  Hence
  \[
    d(F)-c(H) = |\{(i,j) \in \widetilde C : s_{i,j}=1\}| - |\{i \in [k] : s_{i,n}=-1\}|.
  \]

  Thus each choice of $W$, $F$, and $H$ determines a sign matrix $S=(s_{i,j})\in\mathcal{S}$ via the definition in~\cref{eq:qtsasm:s-def}, and the corresponding cut from \cref{thm:bidirPolytope} is precisely the inequality in~\cref{eq:qtsasm:simplified-cut} associated with this $S$.
  In particular, every non-redundant cut produced by \cref{thm:bidirPolytope} for the auxiliary system appears among the family of inequalities in~\cref{eq:qtsasm:cut}.

  \medskip
  Conversely, fix $S\in\mathcal{S}$.
  The bounds on the $z$-variables give the valid inequality
  \[
    \sum_{(i,j)\in \widetilde C} s_{i,j} z_{i,j}
    \leq
    |\{(i,j) \in \widetilde C : s_{i,j}=1\}|-|\{i \in [k] : s_{i,n}=-1\}|.
  \]
  After expressing the left-hand side in terms of the $y$-variables using the identity~\cref{eq:qtsasm:z-def}, the parity condition defining $\mathcal{S}$ ensures that all coefficients in the numerator of the left-hand side of~\cref{eq:qtsasm:cut} are even.
  Since this left-hand side is even for every integer solution, dividing the inequality by~$2$ and taking the floor of the right-hand side gives a valid inequality for the convex hull of the integer solutions to the system~\crefrange{eq:qtsasm:real}{eq:qtsasm:full}, i.e., the convex hull of QTSASMs.

  \medskip
  In conclusion, every inequality in~\cref{eq:qtsasm:simplified-cut} appears among the inequalities in~\cref{eq:qtsasm:cut}, and every inequality in~\cref{eq:qtsasm:cut} is valid for the convex hull of the integer solutions to the system~\crefrange{eq:qtsasm:real}{eq:qtsasm:full}.
  By \cref{thm:bidirPolytope}, adding all inequalities in~\cref{eq:qtsasm:cut} to the auxiliary system yields the convex hull of its integer solutions.
  Since the constraints in~\cref{eq:qtsasm:redund} and, when $n$ is odd, the constraint in~\cref{eq:qtsasm:redund2} are redundant, translating back from $(y,z)$ to $y$ using the identity~\cref{eq:qtsasm:z-def} gives the claimed description.
  We conclude that the system~\crefrange{eq:qtsasm:real}{eq:qtsasm:cut} defines the core polytope $P^\core_\QTSASM$.
\end{proof}

By \cref{thm:xasm:assembly,thm:qtsasm:corepolytope}, we obtain the following description of $P_\QTSASM$.
\begin{theorem}\label{thm:qtsasm:coreDescr}
  Let $n \geq 1$, $k = \floor*{n/2}$, and $\widehat P^\core_\QTSASM = \{X \in \R^{n \times n} : \pi_C(X) \in P^\core_\QTSASM\}$.
  Then
  \[
    P_\QTSASM =
    \begin{cases}
      \widehat P^\core_\QTSASM \cap P_\QTS                                                    & \text{if } 2 \mid n,\\
      \widehat P^\core_\QTSASM \cap P_\QTS \cap \{X \in \R^{n \times n} : x_{k+1,k+1} = (-1)^k\} & \text{if } 2 \nmid n.
    \end{cases}
  \]
\end{theorem}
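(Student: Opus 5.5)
By \cref{thm:xasm:assembly,thm:qtsasm:corepolytope}, it suffices to show that
\[
  \varphi(\R^C) =
  \begin{cases}
    P_\QTS & \text{if } 2 \mid n,\\
    P_\QTS \cap \{X \in \R^{n \times n} : x_{k+1,k+1} = (-1)^k\} & \text{if } 2 \nmid n.
  \end{cases}
\]
The argument mirrors the proofs of \cref{thm:vsasm:coreDescr,thm:vhsasm:coreDescr}. Indeed, \cref{thm:xasm:assembly} gives $P_\QTSASM = \widehat P^\core_\QTSASM \cap \varphi(\R^C)$. Denote by $P$ the right-hand side of the display above.

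For the inclusion $\varphi(\R^C) \subseteq P$, the construction of $\varphi$ already does the work. For odd $n$, the central entry is set to $(-1)^k$. The four remaining cases of the definition send $y_{i,j}$ to its quarter-turn orbit. I will check that $\varphi(Y)_{i,j} = \varphi(Y)_{j,n+1-i}$ holds for every position by going through which case $(i,j)$ and $(j,n+1-i)$ fall into. This confirms that $\varphi(Y) \in P_\QTS$.

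For the reverse inclusion, take $X \in P$ and set $Y = \pi_C(X)$. The key combinatorial fact is that $C = [\ell] \times [k]$, together with the centre when $n$ is odd, is a fundamental domain for the action of $\langle \mathcal{R}_{\pi/2} \rangle$ on $[n] \times [n]$. That is, every non-central orbit meets $C$ in exactly one position. This follows because the upper-left $\ell \times k$ block and its three rotations tile the square minus the centre: for even $n$ they are the four $k \times k$ quadrants, and for odd $n$ they form the usual pinwheel of $\ell \times k$ rectangles.

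Quarter-turn invariance of $X$ then forces each entry of $X$ to equal the entry of $Y$ at the unique representative of its orbit in $C$, which is exactly the formula used in $\varphi$. For odd $n$, the central entry is $(-1)^k$ by the defining constraint of $P$. Hence $X = \varphi(Y) \in \varphi(\R^C)$.

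The only step I expect to need care is the pinwheel tiling verification for odd $n$. The cleanest way is to check that the four index ranges used in the cases of $\varphi$ partition $[n]^2 \setminus \{(\ell,\ell)\}$: namely $[\ell]\times[k]$, $[\ell+1,n]\times[\ell]$, $[k+1,n]\times[\ell+1,n]$, and $[k]\times[k+1,n]$. For odd $n$ we have $k+1=\ell$, and these four ranges are pairwise disjoint and cover everything except the centre. The proof then finishes by counting.
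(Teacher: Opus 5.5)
Your proposal is correct and follows the paper's route: the paper obtains this theorem directly from \cref{thm:xasm:assembly}, and the only content is identifying $\varphi(\R^C)$ with $P_\QTS$ (intersected with $\{x_{k+1,k+1}=(-1)^k\}$ for odd $n$), exactly as in \cref{thm:vsasm:coreDescr,thm:vhsasm:coreDescr}. Your fundamental-domain argument supplies that step correctly, namely that $C$, $\mathcal{R}_{\pi/2}(C)$, $\mathcal{R}_{\pi}(C)$, $\mathcal{R}_{-\pi/2}(C)$ are precisely the four case regions of $\varphi$ and partition $[n]\times[n]$ minus the centre.
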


Using the assembly map $\varphi$ and the identity~\cref{eq:qtsasm:phi-row-prefix-used} to translate the QTSASM core constraints in \cref{thm:qtsasm:corepolytope} to and from the ASM constraints in \cref{thm:ASMpolytope}, we obtain the following.

\begin{theorem}\label{thm:qtsasm:descr}
  For every $n \geq 1$, $P_\QTSASM \subseteq \R^{n \times n}$ is described by the system
  \begin{align}
                                                                          X &\in P_\ASM \cap P_\QTS,\\
    \dfrac{\displaystyle \sum_{(i,j) \in \widetilde C} s_{i,j} \sum_{j'=1}^j  x_{i,j'}}{2} &\leq\floor*{\dfrac{|\{(i,j) \in \widetilde C : s_{i,j}=1\}| - |\{i \in [k] : s_{i,n}=-1\}|}{2}}  &\forall S \in \mathcal{S},
  \end{align}
  where $\widetilde C$ and $\mathcal{S}$ are as defined in \cref{thm:qtsasm:corepolytope}.
\end{theorem}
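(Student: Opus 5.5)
The plan is to derive the statement from \cref{thm:qtsasm:coreDescr} together with \cref{thm:xasm:assembly}, exactly as the analogous full-matrix descriptions \cref{thm:vsasm:descr} and \cref{thm:htsasm:descr} were derived from their core versions. Write $Q$ for the polytope defined by the system in the statement. We show $Q=P_\QTSASM$ by checking both inclusions through the assembly map $\varphi$: $P_\QTSASM=\varphi(P^\core_\QTSASM)$, and $\varphi(\R^C)$ is $P_\QTS$, cut by the central-entry equation when $n$ is odd.

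\emph{Inclusion $P_\QTSASM\subseteq Q$.} Every QTSASM lies in $P_\ASM\cap P_\QTS$. For $X=\varphi(Y)$ with $Y$ a QTSASM core, the identity~\cref{eq:qtsasm:phi-row-prefix-used} says that $\sum_{j'=1}^j x_{i,j'}=z_{i,j}(Y)$ for every $(i,j)\in\widetilde C$. Hence each parity inequality in the statement is literally the inequality~\cref{eq:qtsasm:cut} evaluated at $Y$. That inequality is valid by \cref{thm:qtsasm:corepolytope}. Convexity then gives the inclusion.

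\emph{Inclusion $Q\subseteq P_\QTSASM$.} Take $X\in Q$ and set $Y=\pi_C(X)$. Since $X\in P_\QTS$, every entry outside $C$, except the center for odd $n$, is obtained from $Y$ by rotation, exactly as in $\varphi$. Thus~\cref{eq:qtsasm:phi-row-prefix-used} again holds for all $(i,j)\in\widetilde C$. Consequently:
\begin{itemize}
\item the ASM row-prefix bounds and row sums of the first $k$ rows (and, for odd $n$, the first $k$ prefixes of row $k+1$) become~\cref{eq:qtsasm:prefix} and~\cref{eq:qtsasm:full} for $Y$;
\item the parity inequalities become~\cref{eq:qtsasm:cut}.
\end{itemize}
So $Y\in P^\core_\QTSASM$, and $X\in\widehat P^\core_\QTSASM\cap P_\QTS$.

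\emph{Main obstacle: the odd case.} Here we also need $x_{k+1,k+1}=(-1)^k$, so that $X\in\varphi(\R^C)$ and \cref{thm:qtsasm:coreDescr} applies. By quarter-turn symmetry, the ASM total-sum condition gives $n=\sum_{i,j}x_{i,j}=4\sum_{(i,j)\in C}y_{i,j}+x_{k+1,k+1}$. Summing~\cref{eq:qtsasm:full} over $i\in[k]$ gives $2\sum_{C}y-\sum_{j\le k}y_{k+1,j}=k$. The quantity $\sum_{j\le k}y_{k+1,j}$ is not fixed by linear constraints alone, so it must be pinned down in a different way.

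The approach is to avoid that issue. \Cref{thm:qtsasm:corepolytope} tells us that $Y$ is a convex combination $\sum_t\lambda_t Y_t$ of QTSASM cores. The map $\varphi$ agrees with $X$ on every position except possibly the center. So $X-\varphi(Y)$ is a multiple of $\chi_{k+1,k+1}$, and $\varphi(Y)$ satisfies all ASM row sums, being a combination of QTSASMs. Both $X$ and $\varphi(Y)$ have middle row summing to $1$: for $X$ this is the ASM row sum, and for $\varphi(Y)$ it holds because it is a combination of QTSASMs. These middle rows share all non-central entries, so their central entries coincide. Hence $X=\varphi(Y)\in\varphi(P^\core_\QTSASM)=P_\QTSASM$.

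This also covers $n\equiv2\pmod 4$ with no separate argument. There $P^\core_\QTSASM=\emptyset$ by \cref{lem:QTSASM-parity}, and the argument shows that $Q$ is empty as well.
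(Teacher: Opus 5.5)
Your proof is correct and follows the paper's route: you translate the core description of \cref{thm:qtsasm:corepolytope} into full-matrix coordinates through $\varphi$ and the prefix identity~\cref{eq:qtsasm:phi-row-prefix-used}, and then invoke \cref{thm:xasm:assembly}. Your convex-combination argument, which forces the central entry to equal $(-1)^k$ for odd $n$, supplies a step the paper leaves implicit, and the case $n\equiv2\pmod4$ is handled correctly because $\pi_C(X)$ would have to lie in the empty core polytope.
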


The cut inequalities in~\cref{eq:qtsasm:cut} are indexed by the sign patterns $S \in \mathcal{S}$.
Ignoring the parity condition in the definition of $\mathcal{S}$ gives at most $3^{|\widetilde C|}=9^{\floor*{n^2/4}}$ possible sign patterns, and hence the same crude bound for the number of cut inequalities.
The following theorem uses the parity condition to improve this bound.

\begin{theorem}\label{thm:qtsasm:corepolytope-inequality-count}
  The description of $P^\core_\QTSASM$ in \cref{thm:qtsasm:corepolytope} contains at most $5^{\floor*{n^2/4}} + n(n-1)$ linear inequalities, counting the lower and upper bounds in~\cref{eq:qtsasm:prefix} separately.
  In particular, both $P^\core_\QTSASM$ and $P_\QTSASM$ have at most this many facets.
\end{theorem}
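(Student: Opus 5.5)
We count the two families of inequalities in \cref{thm:qtsasm:corepolytope} separately. The bounds in~\cref{eq:qtsasm:prefix} are indexed by $\widetilde C\setminus([k]\times\{n\})$, which has $kn - k$ elements for even $n$ and $kn-k+k = kn$ elements for odd $n$. Counting lower and upper bounds separately gives $2k(n-1) = n(n-1)$ inequalities for even $n$, and $2kn = (n-1)n$ for odd $n$. In both cases this is exactly $n(n-1)$. It remains to show $|\mathcal S|\leq 5^{\floor*{n^2/4}}$. The facet statement then follows: every facet of a polytope given by a finite system is defined by some inequality of the system, and $P_\QTSASM$ is affinely isomorphic to the core polytope via $\varphi$ by \cref{thm:xasm:assembly}.

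For $|\mathcal S|$, first note that $|\widetilde C| = 2|C| = 2\floor*{n^2/4}$. Indeed, $|\widetilde C|$ equals $kn$ for even $n$ and $kn+k = k(2k+2) = 2k\ell$ for odd $n$, and in both cases this is $2|C|$ with $|C| = \ell k = \floor*{n^2/4}$. The plan is to pair up the positions of $\widetilde C$ into $|C|$ pairs and show that the parity conditions allow at most $5$ of the $9$ sign combinations on each pair, given the entries chosen earlier.

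Concretely, I would process the conditions for $(i,j)\in C$ one at a time. Write $c_{i,j}=\sum_{j'=j}^{n_i}s_{i,j'}+\sum_{i'=n+1-i}^{n}s_{j,i'}$. Then $c_{i,j}-c_{i,j+1}$ (for $j<k$, or the analogous difference at the boundary) equals $s_{i,j}+s_{j,n+1-i}$ up to shifting terms, since the sums are suffix sums and each step adds exactly one new entry to each. So the conditions ``$c_{i,j}$ even for all $(i,j)$'' are equivalent, after telescoping, to conditions of the form $s_{i,j}+s_{j,n+1-i}\equiv$ (a parity determined by entries already fixed). More precisely, each $y_{i,j}$ contributes to exactly the two $z$-variables $z_{i,j}$ and $z_{j,n+1-i}$, by the bidirected structure in the proof of \cref{thm:qtsasm:corepolytope}. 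Ordering the $z$-variables so that each successive parity constraint involves exactly two new sign entries (the pair $(i,j),(j,n+1-i)$ whose positions $\widetilde C$ covers bijectively) determines the parity of $s_{i,j}+s_{j,n+1-i}$ from earlier choices. Among the $9$ pairs in $\{0,\pm1\}^2$, five have even sum ($(0,0)$ and the four $(\pm1,\pm1)$) and four have odd sum. Hence each pair admits at most $5$ options, giving $|\mathcal S|\leq 5^{|C|}$.

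The main obstacle is making this triangular ordering rigorous. I must verify that the map $(i,j)\mapsto\{(i,j),(j,n+1-i)\}$ partitions $\widetilde C$, and that the system of $|C|$ parity conditions can be ordered so that each condition introduces a fresh pair. Equivalently, the $\F_2$-linear map $S\mapsto(c_{i,j}\bmod 2)$ restricted to the pair variables is unitriangular. I would try ordering by decreasing $j$ within rows, since the suffix sums make $c_{i,j}-c_{i,j+1}$ local. I would also handle the odd-$n$ middle row, where $n_{k+1}=k$, separately.
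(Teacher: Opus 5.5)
Most of your set-up is correct: the count $n(n-1)$ of prefix bounds, the identity $|\widetilde C|=2|C|$, the fact that the sets $\{(i,j),(j,n+1-i)\}$ for $(i,j)\in C$ partition $\widetilde C$, and the observation that $5$ of the $9$ sign pairs have even sum. The gap is the central claim. You assert that the conditions can be ordered so that each one fixes the parity of a fresh pair sum $t_{i,j}+t_{j,n+1-i}$ from earlier pairs, where $t$ is the support indicator of $S$.

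The telescoping step behind this claim is miscomputed. In fact
\[
c_{i,j}-c_{i,j+1}=s_{i,j}+\sum_{i'=n+1-i}^{n}(s_{j,i'}-s_{j+1,i'}),
\]
which involves whole suffixes of two different rows of $\widetilde C\setminus C$, not only the partner entry. Moreover, the boundary conditions $c_{i,k}$ contain the entire block $\sum_{j'=k+1}^{n}s_{i,j'}$ of row $i$. It is the recursion rows of the auxiliary system, not the $z$-variables, that contain each $y_{i,j}$ exactly twice. In $z$-terms, $y_{i,j}$ enters whole suffixes, which is why the conditions are suffix sums.

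The claim is in fact false. For $n=4$ the conditions read, over $\F_2$,
\[
t_{1,1}+t_{1,2}+t_{1,3}=0,\quad t_{1,2}+t_{1,3}+t_{1,4}+t_{2,4}=0,\quad t_{2,1}+t_{2,2}+t_{2,3}+t_{2,4}+t_{1,3}+t_{1,4}=0,\quad t_{2,2}=0.
\]
Take $t_{1,3},t_{1,4},t_{2,3},t_{2,4}$ as free coordinates. The four pair sums then equal $t_{2,4}$, $t_{1,3}+t_{1,4}$, $t_{1,4}+t_{2,3}+t_{2,4}$, and $t_{2,3}$, and none vanishes on the solution space. So whichever pair you process first has unconstrained sum parity, and your ``at most $5$ per pair'' count cannot start. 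The pair $\{(1,1),(1,4)\}$ even admits all $9$ sign combinations. The same computation shows that for every $n\geq 2$ every pair sum is a non-zero linear functional on the solution space, so no ordering makes your step valid.

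The missing idea is that the triangular structure lives in the $C$-coordinates, with the coordinates on $U=\widetilde C\setminus C$ free. Reading the conditions of row $i$ in the order $j=k,k-1,\dots,1$ determines $t|_C$ uniquely from $u=t|_U$. Hence the admissible supports are exactly the vectors $(Mu,u)$ for a linear map $M$, and
\[
|\mathcal S|=\sum_{u\in\F_2^U}2^{|\supp(u)|+|\supp(Mu)|}.
\]
Let $m=|C|=\floor*{n^2/4}$. Bounding $2^{|\supp(Mu)|}$ trivially by $2^m$ only gives $6^m$. To reach $5^m$, the paper applies Cauchy--Schwarz:
\[
|\mathcal S|^2\leq\Bigl(\sum_u 4^{|\supp(u)|}\Bigr)\Bigl(\sum_u 4^{|\supp(Mu)|}\Bigr).
\]
The first factor is $5^m$. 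The second is at most $5^m$ because $M$ is a bijection for odd $n$, and for even $n$ it is at most $2$-to-$1$ with image contained in $\{x : x_{k,k}=0\}$.
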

\begin{proof}
  Let $m = |C| = \floor*{n^2/4}$.
  We first count the inequalities coming from the prefix bounds.
  In both parity cases, $|\widetilde C| = 2m$.
  The index set in~\cref{eq:qtsasm:prefix} is $\widetilde C \setminus ([k]\times\{n\})$, and $[k]\times\{n\}$ has size $k$.
  Each prefix bound in~\cref{eq:qtsasm:prefix} contributes one lower and one upper inequality.
  Hence the prefix bounds contribute $2(|\widetilde C|-k)=2(2m-k)=n(n-1)$ linear inequalities.
  It remains to bound the number of cut inequalities indexed by $\mathcal{S}$.

  \medskip
  We count the sign patterns in $\mathcal{S}$ by first counting their support vectors.
  For $S=(s_{a,b}) \in \{0,\pm1\}^{\widetilde C}$, let $t_{a,b} = \chi_{s_{a,b} \neq 0}$, and regard $t$ as an element of $\F_2^{\widetilde C}$.
  Since $1 \equiv -1 \pmod 2$, the parity condition defining $\mathcal{S}$ depends only on~$t$.
  Set $U = \widetilde C\setminus C$, and write $x = t|_C$ and $u = t|_U$.
  For $(i,j) \in C$, consider the congruence modulo $2$ obtained from the divisibility condition defining $\mathcal{S}$ for this pair $(i,j)$.
  In this congruence, the terms whose indices lie in $C$ are exactly $x_{i,j}+x_{i,j+1}+\dots+x_{i,k}$.
  All other terms have indices in $U$, and are fixed once $u$ is fixed.
  Therefore, after fixing $u$, the congruence for $(i,j)$ determines the suffix sum $x_{i,j}+x_{i,j+1}+\dots+x_{i,k}$.
  For fixed $i$, read these congruences in the order $j=k,k-1,\dots,1$.
  The first congruence determines $x_{i,k}$, and each subsequent congruence determines the next coordinate to the left after the coordinates to its right are known.
  Thus, for each $u$, there is a unique $x$ such that $(x,u)$ satisfies the parity condition.
  This defines a map $M : \F_2^U \to \F_2^C$, where $Mu$ is this unique vector $x$.
  Since the congruences are homogeneous modulo $2$, the map $M$ is linear.
  Hence the support vectors satisfying the parity condition are precisely the vectors $(Mu,u)$.

  For a finite set $I$ and a vector $v$ with coordinates indexed by $I$, write $\supp(v)=\{i\in I : v_i\neq 0\}$.
  A fixed support vector $(Mu,u)$ gives $2^{|\supp(u)|+|\supp(Mu)|}$ sign patterns in $\mathcal{S}$, because each non-zero entry may independently have sign $+1$ or $-1$.
  Therefore
  \[
    |\mathcal{S}|=\sum_{u\in\F_2^U}2^{|\supp(u)|+|\supp(Mu)|}.
  \]
  The Cauchy--Schwarz inequality gives
  \[
    |\mathcal{S}|^2
    \leq
    \left(\sum_{u\in\F_2^U}4^{|\supp(u)|}\right)
    \left(\sum_{u\in\F_2^U}4^{|\supp(Mu)|}\right).
  \]
  Since $|U| = m$, the first factor is $(1+4)^m=5^m$.
  We prove that the second factor is at most $5^m$ in both parity cases.

  \medskip
  Suppose first that $n = 2k+1$.
  We show that $M$ is injective.
  It is enough to prove that $Mu=0$ implies $u=0$.
  Let $u \in \F_2^U$, and suppose that $Mu=0$.
  The congruence modulo $2$ obtained from the divisibility condition defining $\mathcal{S}$ for the pair $(k+1,j)$ becomes
  \[
    \sum_{p=k+1}^{2k+1}u_{j,p}\equiv 0 \pmod2
  \]
  for every $j \in [k]$.
  In the congruence for the pair $(i,j)$ with $i \in [k]$, the terms from $C$ vanish and the terms from $U$ in row $i$ form the full row sum shown above to be zero.
  The remaining terms therefore give
  \[
    \sum_{p=2k+2-i}^{2k+1}u_{j,p}\equiv 0 \pmod2
  \]
  for all $i,j \in [k]$.
  For fixed $j$, the cases $i=1,\dots,k$ successively force $u_{j,2k+1},u_{j,2k},\dots,u_{j,k+2}$ to be zero.
  The full row-sum congruence then forces $u_{j,k+1}=0$.
  Thus $u=0$, and $M$ is injective.
  Since $|U| = |C| = m$, the map $M$ is bijective, and hence
  \[
    \sum_{u\in\F_2^U}4^{|\supp(Mu)|}
    =
    \sum_{x\in\F_2^C}4^{|\supp(x)|}=5^m.
  \]

  \medskip
  Suppose now that $n = 2k$.
  For $u \in \F_2^U$ and $i \in [k]$, let $A_i \in \{0,1\}$ be such that
  \[
    A_i\equiv\sum_{q=k+1}^{2k}u_{i,q}\pmod2.
  \]
  We first show that, for each $x \in \F_2^C$, there are at most two vectors $u \in \F_2^U$ with $Mu=x$.
  Let $u \in \F_2^U$, and suppose that $Mu=0$.
  The congruence modulo $2$ obtained from the divisibility condition defining $\mathcal{S}$ for the pair $(k,j)$ gives $A_k+A_j\equiv0\pmod2$, so $A_j=A_k$ for all $j \in [k]$.
  In the congruence for the pair $(i,j)$, the terms from $C$ vanish and the terms from $U$ in row $i$ contribute $A_i=A_k$.
  The remaining terms therefore satisfy
  \[
    \sum_{p=2k+1-i}^{2k}u_{j,p}\equiv A_k \pmod2
  \]
  for all $i,j \in [k]$.
  For fixed $j$, the cases $i=1,\dots,k$ successively give $u_{j,2k}=A_k$ and $u_{j,2k-1}=\dots=u_{j,k+1}=0$.
  Thus every $u$ satisfying $Mu=0$ is determined by the single value $A_k$, so there are at most two such vectors.
  Now fix $x \in \F_2^C$.
  If there is no vector $u_0$ with $Mu_0=x$, there is nothing to prove.
  Otherwise, fix one such $u_0$.
  Since $M$ is linear, every vector $u$ with $Mu=x$ is of the form $u_0+w$ with $Mw=0$.
  Hence there are at most two vectors $u \in \F_2^U$ with $Mu=x$.

  We next show that every vector in $M(\F_2^U)$ has zero $(k,k)$-coordinate.
  Let $u \in \F_2^U$, and set $x=Mu$.
  The congruence modulo $2$ obtained from the divisibility condition defining $\mathcal{S}$ for the pair $(k,k)$ gives
  \[
    x_{k,k}+A_k+A_k\equiv 0 \pmod2.
  \]
  Here the first sum in the definition of $\mathcal{S}$ contributes $x_{k,k}+A_k$, and the second sum contributes $A_k$.
  Hence $x_{k,k}=0$ for every $x \in M(\F_2^U)$.
  Therefore
  \[
    \sum_{u\in\F_2^U}4^{|\supp(Mu)|}
    = \sum_{x\in M(\F_2^U)} |\{u\in\F_2^U : Mu=x\}|4^{|\supp(x)|}
    \leq 2\sum_{x\in\F_2^C:x_{k,k}=0}4^{|\supp(x)|}
    =2\cdot5^{m-1}
    \leq 5^m.
  \]

  We have shown in both parity cases that the second factor in the Cauchy--Schwarz bound is at most $5^m$.
  Hence $|\mathcal{S}|^2\leq 5^{2m}$, and so $|\mathcal{S}|\leq 5^m$.
  Together with the count of prefix bounds, this proves the asserted bound on the number of inequalities.
  Consequently, $P^\core_\QTSASM$ has at most $5^{\floor*{n^2/4}} + n(n-1)$ facets.
  By \cref{thm:xasm:assembly}, the QTSASM assembly map restricts to an affine isomorphism from $P^\core_\QTSASM$ onto $P_\QTSASM$.
  Thus $P_\QTSASM$ also has at most $5^{\floor*{n^2/4}} + n(n-1)$ facets.
\end{proof}

\begin{remark}\label{rem:qtsasm:facet-data}
  The preceding theorem bounds the number of facets but does not determine it exactly.
  For $n=7,8,9,11,12,13$, computer calculations give $11,24,45,221,551,1508$ facets of $P_\QTSASM$, respectively.
  The corresponding numbers of non-prefix facets arising from the cut inequalities are $4,8,25,175,483,1428$.
  These data are consistent with exponential growth in $n$; the Ferrers inequalities constructed below provide an explicit exponential lower bound.
\end{remark}

\subsection{The dimension of the QTSASM polytope}\label{subsec:qtsasm-dimension}

We next determine the dimension of the convex hull of quarter-turn symmetric ASMs for all $n\geq5$ with $n\not\equiv2\pmod4$.
The proof differs from the dimension arguments in the previous sections since the description of $P_\QTSASM$ in \cref{thm:qtsasm:descr} contains additional parity inequalities, which makes direct perturbations of core variables less convenient.
Affine equations in the core variables still give the upper bound.
For the lower bound, we instead work in height coordinates: the map sending a core $Y$ to the height matrix $H^{\varphi(Y)}$ of its assembly is affine and injective, and the required affinely independent family is easier to construct in these coordinates.
For each fixed $n$, write $k=\floor*{n/2}$, $\ell=\ceil*{n/2}$, and $C_n=[\ell]\times[k]$ for the QTSASM core position set.
We now recall the standard corner-sum and height-function coordinates; see, for example, Propp's survey~\cite{propp2001many}.

For an $n\times n$ real matrix $X=(x_{i,j})$, define its \emph{corner-sum matrix} $\Sigma^X=(\sigma_{p,q}^X) \in \R^{[0,n]\times[0,n]}$ by
\[
  \sigma_{p,q}^X=\sum_{i=1}^p\sum_{j=1}^q x_{i,j}
\]
for $p,q\in[0,n]$.
The \emph{height matrix} of $X$ is the matrix $H^X=(h_{p,q}^X) \in \R^{[0,n]\times[0,n]}$ given by
\[
  h_{p,q}^X=p+q-2\sigma_{p,q}^X
\]
for $p,q\in[0,n]$.
Equivalently, $\sigma_{p,q}^X=(p+q-h_{p,q}^X)/2$.
For fixed $n$, let $R$ be the quarter-turn action on the height grid $[0,n]\times[0,n]$, given by $R(p,q)=(q,n-p)$.
For $t\in\Z$, write $R^t$ for the corresponding iterate of $R$, with exponents read modulo $4$.
For two height-grid positions $(p,q)$ and $(p',q')$, we call $|p-p'|+|q-q'|$ their \emph{grid distance}.
The following lemma translates between ASMs and their height matrices: the ASM prefix-sum constraints become local conditions between adjacent height-grid positions, and quarter-turn symmetry becomes an affine relation.

\begin{lemma}\label{lem:asm:height-functions}
A matrix $H=(h_{p,q}) \in \Z^{[0,n]\times[0,n]}$ is the height matrix of an $n\times n$ ASM if and only if it satisfies the boundary conditions
\begin{align*}
  h_{0,q}&=q                         &\forall q\in[0,n],\\
  h_{p,0}&=p                         &\forall p\in[0,n],\\
  h_{n,q}&=n-q                       &\forall q\in[0,n],\\
  h_{p,n}&=n-p                       &\forall p\in[0,n],
  \intertext{the parity condition}
  h_{p,q}&\equiv p+q\pmod2           &\forall p,q\in[0,n],
  \intertext{and the local conditions}
  |h_{p,q}-h_{p-1,q}|&=1             &\forall p\in[n],\ q\in[0,n],\\
  |h_{p,q}-h_{p,q-1}|&=1             &\forall p\in[0,n],\ q\in[n].
  \intertext{Moreover, the corresponding $n\times n$ ASM is quarter-turn symmetric if and only if its height matrix $H$ satisfies}
  h_{q,n-p}&=n-h_{p,q}               &\forall p,q\in[0,n].
\end{align*}
\end{lemma}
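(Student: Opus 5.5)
The plan is to pass between an integer matrix $X$ and its corner-sum matrix $\Sigma^X$, and to translate each ASM condition of \cref{thm:ASMpolytope} into a condition on $H=H^X$. The translation $X\mapsto \Sigma^X$ is invertible: $x_{i,j}=\sigma_{i,j}-\sigma_{i-1,j}-\sigma_{i,j-1}+\sigma_{i-1,j-1}$. Hence every integer matrix $H$ with $h_{0,q}=q$, $h_{p,0}=p$ and $h_{p,q}\equiv p+q \pmod 2$ is the height matrix of exactly one integer matrix $X$. Indeed, these conditions say that $\sigma_{p,q}=(p+q-h_{p,q})/2$ is an integer with $\sigma_{0,q}=\sigma_{p,0}=0$.

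It therefore suffices to show that, for such $X$, the ASM conditions are equivalent to the remaining boundary and local conditions. The key identity is that the prefix sum of row $i$ up to column $q$ equals
\[
  \sigma_{i,q}-\sigma_{i-1,q}=\frac{1-(h_{i,q}-h_{i-1,q})}{2}.
\]
Consequently the bound $0\le\cdot\le 1$ on an integer value is equivalent to $h_{i,q}-h_{i-1,q}\in\{\pm1\}$, which for integers is the same as $|h_{i,q}-h_{i-1,q}|=1$. The case $q=0$ is automatic from the boundary condition on column $0$.

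For the full row sum, $q=n$, the identity shows that the row sum equals $1$ exactly when $h_{i,n}=h_{i-1,n}-1$ for all $i$. Combined with $h_{0,n}=n$, this is equivalent to $h_{p,n}=n-p$. The columns are handled symmetrically using $\sigma_{p,j}-\sigma_{p,j-1}$, which gives the condition on $h_{n,q}$ together with the horizontal local conditions.

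Since by \cref{thm:ASMpolytope} the integer points of $P_\ASM$ are exactly the ASMs, prefix sums of integer matrices lying in $\{0,1\}$ together with the row and column sums being $1$ characterize ASMs. This completes the equivalence.

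For the symmetry statement, I will compute $\sigma$ of the rotated matrix. Quarter-turn invariance means $x_{i,j}=x_{j,n+1-i}$. I will show that $\sigma_{q,n-p}$ equals the sum of $X$ over the block of rows $[p+1,n]$ and columns $[1,q]$. By inclusion–exclusion and the ASM row and column sums, that block sum is $q-\sigma_{p,q}$. Substituting into $h_{q,n-p}=q+(n-p)-2\sigma_{q,n-p}$ gives $n-p-q+2\sigma_{p,q}=n-h_{p,q}$.

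Conversely, if $H$ satisfies the relation, then the corner sums of $X$ and of its rotation coincide, because both are determined by $H$ through the same computation applied to the rotated ASM. The inverse formula then gives equal matrices.

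The main obstacle is getting the index bookkeeping for the rotation right: checking that $R$ maps the rectangle $[1,p]\times[1,q]$ correctly onto the relevant block. I would verify this carefully on the corner cases $p$ or $q\in\{0,n\}$.
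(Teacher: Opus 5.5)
Your proposal is correct and follows the paper's proof essentially step for step: corner sums, vertical and horizontal height differences read as row- and column-prefix sums, the boundary rows and columns encoding the full line sums, and the complementary-rectangle identity $\sigma_{q,n-p}=q-\sigma_{p,q}$ for the quarter-turn part. The deviations are minor. First, the paper verifies the alternating-sign pattern directly rather than appealing to \cref{thm:ASMpolytope}; strictly, passing from integer solutions of that system to ASMs needs this short alternation argument anyway. Second, for the converse of the symmetry statement the paper applies the corner-sum identity at the four corners of each cell, whereas you use injectivity of $X\mapsto H^X$ applied to the rotated matrix, which is equally valid.
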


\begin{proof}
Let $X$ be an $n\times n$ ASM, and write $H=H^X=(h_{p,q})$ and $\Sigma=\Sigma^X=(\sigma_{p,q})$.
The boundary conditions for $H$ follow from $\sigma_{0,q}=\sigma_{p,0}=0$, $\sigma_{n,q}=q$, and $\sigma_{p,n}=p$.
The parity condition follows directly from $h_{p,q}=p+q-2\sigma_{p,q}$.
We check the local conditions.
For $p\in[n]$ and $q\in[0,n]$, we have $h_{p,q}-h_{p-1,q}=1-2(\sigma_{p,q}-\sigma_{p-1,q})$.
For $p\in[0,n]$ and $q\in[n]$, we have $h_{p,q}-h_{p,q-1}=1-2(\sigma_{p,q}-\sigma_{p,q-1})$.
The differences $\sigma_{p,q}-\sigma_{p-1,q}$ and $\sigma_{p,q}-\sigma_{p,q-1}$ are row-prefix and column-prefix sums of $X$.
Since these prefix sums lie in $\{0,1\}$ for an ASM, these height differences have absolute value~$1$.

Conversely, suppose that $H=(h_{p,q})$ satisfies the stated conditions.
For $p,q\in[0,n]$, define $\sigma_{p,q}=(p+q-h_{p,q})/2$.
The parity condition implies that $\sigma_{p,q}$ is integral.
The boundary conditions on $H$ give $\sigma_{0,q}=\sigma_{p,0}=0$, $\sigma_{n,q}=q$, and $\sigma_{p,n}=p$.
Rearranging the identities for $h_{p,q}-h_{p-1,q}$ and $h_{p,q}-h_{p,q-1}$ used above, the local conditions imply $\sigma_{p,q}-\sigma_{p-1,q}\in\{0,1\}$ for $p\in[n]$, $q\in[0,n]$, and $\sigma_{p,q}-\sigma_{p,q-1}\in\{0,1\}$ for $p\in[0,n]$, $q\in[n]$.
Define an $n\times n$ matrix $X=(x_{i,j})$ by
\[
  x_{i,j}=\sigma_{i,j}-\sigma_{i-1,j}-\sigma_{i,j-1}+\sigma_{i-1,j-1}
\]
for $i,j\in[n]$.
We show that $X$ is an ASM\@.
Then the row-prefix sums of $X$ are $\sigma_{i,q}-\sigma_{i-1,q}$, and the column-prefix sums of $X$ are $\sigma_{p,j}-\sigma_{p,j-1}$.
Hence all row-prefix and column-prefix sums lie in $\{0,1\}$.
The boundary values of $\sigma$ also give that every row and every column of $X$ sums to $1$.
In each row, the prefix sums start at $0$, end at $1$, and take only the values $0$ and $1$.
Therefore the non-zero entries in the row are the successive changes between $0$ and $1$, so they alternate in sign, with first and last non-zero entries equal to $1$.
The same argument applies to every column.
Thus $X$ is an ASM\@.
By construction, the matrix $\Sigma=(\sigma_{p,q})$ is the corner-sum matrix of $X$, and hence the height matrix of $X$ is $H$.

\medskip
We now prove the statement about quarter-turn symmetry.
Let $X$ be an $n\times n$ ASM, and write $H=H^X=(h_{p,q})$ and $\Sigma=\Sigma^X=(\sigma_{p,q})$.
Assume first that $X$ is quarter-turn symmetric.
For $p,q\in[0,n]$, the sum of all entries in the top-left $q\times(n-p)$ rectangle is the sum of all entries in the first $q$ rows minus the sum of all entries in the top-right $q\times p$ rectangle.
The latter rectangle is the image of the top-left $p\times q$ rectangle under quarter-turn rotation.
Hence $\sigma_{q,n-p}=q-\sigma_{p,q}$.
It follows that $h_{q,n-p}=q+n-p-2\sigma_{q,n-p}=q+n-p-2(q-\sigma_{p,q})=n-h_{p,q}$.
Thus $h_{q,n-p}=n-h_{p,q}$ for all $p,q\in[0,n]$.

Conversely, suppose that $h_{q,n-p}=n-h_{p,q}$ for all $p,q\in[0,n]$.
Equivalently, the corner sums satisfy $\sigma_{q,n-p}=q-\sigma_{p,q}$ for all $p,q\in[0,n]$.
For $i,j\in[n]$, applying the corner-sum equality at $(i,j)$, $(i-1,j)$, $(i,j-1)$, and $(i-1,j-1)$ gives
\[
  \sigma_{j,n-i}-\sigma_{j,n-i+1}-\sigma_{j-1,n-i}+\sigma_{j-1,n-i+1}
  =
  -\sigma_{i,j}+\sigma_{i-1,j}+\sigma_{i,j-1}-\sigma_{i-1,j-1}.
\]
The left-hand side is $-x_{j,n+1-i}$, and the right-hand side is $-x_{i,j}$.
Therefore $x_{i,j}=x_{j,n+1-i}$ for all $i,j\in[n]$.
Hence $X$ is invariant under quarter-turn rotation.
This proves the reverse direction and completes the proof.
\end{proof}

We next provide an extension tool for partial \emph{height specifications}, that is, height values specified only on a subset of the grid $[0,n]\times[0,n]$.
An \emph{$R$-invariant} subset of the grid $[0,n]\times[0,n]$ is a set of positions closed under $R$.
For a height-grid position $u$, write $\mathcal O(u)=\{R^t u:t\in[0,3]\}$ for its $R$-orbit.
When $u=(p,q)$, we also write $\mathcal O(p,q)$ for this orbit.
The following lemma shows that a partial height specification on an $R$-invariant set extends to the height matrix of a quarter-turn symmetric ASM whenever the displayed compatibility conditions hold.

\begin{lemma}\label{lem:qtsasm:equivariant-height-extension}
Let $n\not\equiv2\pmod4$.
Let $P\subseteq[0,n]\times[0,n]$ be $R$-invariant and contain the boundary of $[0,n]\times[0,n]$.
Let $G=(g_{p,q})\in\Z^P$ be a partial height specification on $P$.
Suppose that $G$ satisfies the boundary, parity, and quarter-turn conditions
\begin{align*}
  g_{0,q}&=q                            &\forall q\in[0,n],\\
  g_{p,0}&=p                            &\forall p\in[0,n],\\
  g_{n,q}&=n-q                          &\forall q\in[0,n],\\
  g_{p,n}&=n-p                          &\forall p\in[0,n],\\
  g_{p,q}&\equiv p+q\pmod2              &\forall (p,q)\in P,\\
  g_{q,n-p}&=n-g_{p,q}                   &\forall (p,q)\in P,\\
  \intertext{and the grid-distance condition}
  |g_{p,q}-g_{p',q'}|&\leq |p-p'|+|q-q'| &\forall (p,q),(p',q')\in P.
\end{align*}
Then $G$ extends to the height matrix of a quarter-turn symmetric ASM, that is, there exists an ASM height matrix $H=(h_{p,q}) \in \Z^{[0,n]\times[0,n]}$ satisfying $h_{p,q}=g_{p,q}$ for every $(p,q)\in P$ and $h_{q,n-p}=n-h_{p,q}$ for every $p,q\in[0,n]$.
\end{lemma}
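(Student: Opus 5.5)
The plan is to build the required height matrix one $R$-orbit at a time, keeping all hypotheses of the lemma as invariants, with $P$ grown by whole orbits. When $P$ is the whole grid, the boundary, parity and grid-distance conditions imply the local conditions of \cref{lem:asm:height-functions}. Indeed, for adjacent positions the distance condition gives $|h_{p,q}-h_{p-1,q}|\leq 1$, and parity rules out $0$. The quarter-turn condition then gives symmetry by the second part of the same lemma. So it suffices to prove a one-orbit extension step: if $P\neq[0,n]\times[0,n]$ and $u=(p,q)\notin P$, then the specification extends to $P\cup\mathcal O(u)$ with all conditions preserved.

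For the one-orbit step, write $d$ for grid distance and $T$ for the transformation $T(h)_{q,n-p}=n-h_{p,q}$. The quarter-turn condition says $G$ is $T$-fixed, and $T$ is an order-reversing isometry for $d$. We must choose $g_u$ with $g_u\equiv p+q\pmod 2$ and then set $g_{R^tu}=g_u$ for even $t$ and $n-g_u$ for odd $t$. If $u$ is the centre $(n/2,n/2)$ (which lies in the grid only for even $n$), its orbit is a singleton, so we must have $g_u=n-g_u=n/2$. Parity then needs $n/2\equiv n\pmod 2$, i.e.\ $4\mid n$; this is exactly where $n\not\equiv2\pmod4$ enters. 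The distance condition for this centre value follows from the triangle inequality applied to $v$ and $Rv$ with $g_{Rv}=n-g_v$ and $d(u,v)=d(u,Rv)$. In the generic case the orbit has four distinct points.

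The constraints on $g_u$ are of two kinds. First, the Lipschitz constraints to $P$ require $g_u\in I(u)=[\max_{v\in P}(g_v-d(u,v)),\,\min_{v\in P}(g_v+d(u,v))]$. This interval is nonempty because $G$ is itself Lipschitz, and its endpoints have parity $p+q$. By $T$-invariance of $G$, the constraints for the other orbit points $R^tu$ translate back into exactly the same interval $I(u)$. Second, the constraints inside the orbit require $|2g_u-n|\leq d(u,Ru)$, i.e.\ $g_u\in W=[\tfrac{n-d(u,Ru)}2,\tfrac{n+d(u,Ru)}2]$; the condition between $u$ and $R^2u$ is trivial. I would check that the endpoints of $W$ are integers congruent to $p+q$ modulo $2$, using $d(u,Ru)=|p-q|+|p+q-n|$. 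Given that, it remains to show $I(u)\cap W\neq\emptyset$; we then pick $g_u$ there, say closest to $n/2$.

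The hard step is showing $I(u)\cap W\neq\emptyset$, i.e.\ that the upper end of $I(u)$ is at least the lower end of $W$ and symmetrically. I would argue as follows. For any $v\in P$, compare the upper bound $g_v+d(u,v)$ with the upper bound coming from the point $R^{-1}v$, where $g_{R^{-1}v}=n-g_v$. The latter bound relates to $u$ via $d(u,R^{-1}v)=d(Ru,v)$. Adding the two gives $n+d(u,v)+d(v,Ru)\geq n+d(u,Ru)$, which suggests $\min I(u)$-type estimates of the form $2\cdot\text{upper}\geq n-d(u,Ru)$. The subtlety is that the two minima may be attained at different $v$. I would handle this by showing directly that for every pair $v,w\in P$ one has $g_v+d(u,v)\geq \tfrac{n-d(u,Ru)}2$. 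This uses $|g_v-g_w|\leq d(v,w)$ together with the boundary values, which force $g$ near $n/2$ compared with distance to the boundary. If the pairwise argument proves messy, the fallback is the standard lattice trick: take the pointwise minimum of the $T$-images of the maximal Lipschitz extension, checking that the set of Lipschitz extensions is closed under $T$ and under $\min/\max$, and extract a $T$-fixed element by a monotone iteration.
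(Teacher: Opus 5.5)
Your overall scheme is the paper's. You extend one $R$-orbit at a time, reduce the constraints between $P$ and $\mathcal O(u)$ to $g_u\in I(u)$ via the quarter-turn condition, reduce the constraints inside the orbit to $g_u\in W$, and treat the centre separately using $4\mid n$. The gap is in the step you yourself flag as hard, $I(u)\cap W\neq\emptyset$, and your sketch does not close it.

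Because both sets are intervals, what is needed is the per-point estimate $\bigl|g_v-\tfrac n2\bigr|\leq d(u,v)+\tfrac{\Delta}{2}$ for every $v\in P$, where $\Delta=d(u,Ru)$. There is no issue of minima being attained at different points. Your computation adds the two \emph{upper} bounds at $v$ and $R^{-1}v$. It only yields $\max\{g_v+d(u,v),\,g_{R^{-1}v}+d(u,R^{-1}v)\}\geq\frac{n+\Delta}{2}$, which gives no lower bound $\frac{n-\Delta}{2}$ on $\min_v(g_v+d(u,v))$.

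The boundary values do not supply the estimate either. Take $n=8$ and $u=(3,3)$, so $\Delta=2$. The value $1$ at $v=(3,4)$ is compatible with the boundary and parity, yet gives $g_v+d(u,v)=2<3$; it is excluded only through $Rv=(4,5)$.

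The missing idea is to apply the grid-distance condition to the pair $v,Rv\in P$, whose values are $g_v$ and $n-g_v$:
\[
|2g_v-n|\leq d(v,Rv)\leq d(v,u)+d(u,Ru)+d(Ru,Rv)=2d(u,v)+\Delta .
\]
This shows that $[g_v-d(u,v),\,g_v+d(u,v)]$ meets $W$ for each $v\in P$. One-dimensional Helly then gives $I(u)\cap W\neq\emptyset$. Equivalently, pair the upper bound at $v$ with the \emph{lower} bound at $R^{-1}v$.

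Your lattice fallback cannot replace this. An order-reversing map such as $T$ on a finite lattice of extensions need not have a fixed point. For $n=2$ with $P$ the boundary, the extensions are exactly $h_{1,1}\in\{0,2\}$, and $T$ swaps them. So any such argument must use $n\not\equiv2\pmod4$ in a specific way that you do not indicate.

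Separately, your claim that the endpoints of $W$ are integers congruent to $p+q$ is false. They differ by $\Delta\equiv n\pmod 2$, so for odd $n$ they have opposite parities. For example, $n=5$ and $u=(1,2)$ give $\Delta=3$ and $W=[1,4]$, while $p+q=3$. The endpoints of $W$ are integers, but only the endpoints of $I(u)$ are guaranteed to have parity $p+q$. The repair is as in the paper:
\begin{itemize}
\item if $I(u)\cap W$ has length at least $1$, it contains two consecutive integers, one of which has the right parity;
\item if it is a single point and $W$ is not a singleton, that point must be an endpoint of $I(u)$;
\item $W$ is a singleton only when $\Delta=0$, i.e.\ at the centre, which you already treat separately.
\end{itemize}
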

\begin{proof}
Let $\overline P=([0,n]\times[0,n])\setminus P$.
We prove the statement by induction on the size of $\overline P$.
If $\overline P=\emptyset$, then set $H=G$.
The boundary, parity, and quarter-turn conditions are part of the assumptions.
For adjacent height-grid positions, the grid-distance condition gives that the corresponding height difference has absolute value at most $1$; parity makes this difference odd, so its absolute value is exactly $1$.
Thus $H$ is an ASM height matrix by \cref{lem:asm:height-functions}, and the quarter-turn condition gives the required symmetry.

Assume now that $\overline P$ is non-empty and that the statement holds whenever the number of unspecified positions is smaller.
Choose $(r,s)\in\overline P$.
Since $P$ is $R$-invariant, the orbit $\mathcal O(r,s)$ is disjoint from $P$.
We extend $G$ to $P\cup\mathcal O(r,s)$ while preserving the assumptions of the lemma.
We shall find an integer $a\equiv r+s\pmod2$ for which the following assignment is valid: the orbit positions $(r,s)$ and $(n-r,n-s)$ receive height value $a$, and the orbit positions $(s,n-r)$ and $(n-s,r)$ receive height value $n-a$.
The grid-distance inequalities between $(r,s)$ and the positions in $P$ are equivalent to $a$ lying in the integer interval
\[
  I=
  \bigcap_{(p,q)\in P}
  [\,g_{p,q}-|p-r|-|q-s|,\,g_{p,q}+|p-r|+|q-s|\,].
\]
We claim that the same interval also accounts for the grid-distance inequalities between $P$ and the positions in $\mathcal O(r,s)$.
Fix $(p,q)\in P$.
The inequality involving $(s,n-r)$, whose prescribed height is $n-a$, and $(p,q)$ is equivalent to the inequality between $(r,s)$ and $(n-q,p)\in P$, since $g_{n-q,p}=n-g_{p,q}$ and the two grid distances are equal.
Similarly, the inequality involving $(n-r,n-s)$ and $(p,q)$ is equivalent to the inequality between $(r,s)$ and $(n-p,n-q)\in P$, since $g_{n-p,n-q}=g_{p,q}$ and the two grid distances are equal.
The inequality involving $(n-s,r)$, whose prescribed height is $n-a$, and $(p,q)$ is equivalent to the inequality between $(r,s)$ and $(q,n-p)\in P$, since $g_{q,n-p}=n-g_{p,q}$ and the two grid distances are equal.
Consequently, for the proposed height values, $a\in I$ is equivalent to all grid-distance inequalities with one endpoint in $P$ and the other in $\mathcal O(r,s)$.

The interval $I$ is non-empty since, for any two positions in $P$, the grid-distance condition on $P$ and the triangle inequality show that the corresponding two intervals have non-empty intersection, and pairwise intersecting intervals have non-empty total intersection.
Moreover, since the intervals defining $I$ all have endpoints congruent to $r+s$ modulo $2$, so does $I$.

It remains to impose the conditions involving only the positions in $\mathcal O(r,s)$.
These are the grid-distance inequalities inside the orbit, together with the requirement that the prescription be consistent at any position that occurs more than once among the four positions in the orbit; this can happen only when $\mathcal O(r,s)$ is a singleton.
Set $\Delta=|r-s|+|r+s-n|$.
By quarter-turn invariance of grid distance, whenever the four orbit positions are distinct, every orbit position assigned the value $a$ is at grid distance $\Delta$ from every orbit position assigned the value $n-a$; pairs assigned the same value impose no restriction.
If the four orbit positions are not distinct, then they all coincide with $(n/2,n/2)$, so $\Delta=0$, and the single compatibility requirement is $a=n-a$.
Thus, in all cases, these remaining conditions are captured by $|a-(n-a)|\leq\Delta$, or equivalently $a\in J$, where $J=[(n-\Delta)/2,(n+\Delta)/2]$.
The interval $J$ is an integer interval because $\Delta=|r-s|+|r+s-n|\equiv n\pmod2$.
We now show that $I\cap J\neq\emptyset$, so the conditions connecting $\mathcal O(r,s)$ to $P$ and the conditions internal to the orbit can be satisfied simultaneously.
For every $(p,q)\in P$, the quarter-turn condition and the grid-distance condition on $P$ give $|2g_{p,q}-n|=|g_{p,q}-g_{q,n-p}|\leq |p-q|+|p+q-n|$.
Here $|p-q|+|p+q-n|$ is the grid distance from $(p,q)$ to its quarter-turn image $(q,n-p)$.
Along the chain $(p,q)$, $(r,s)$, $(s,n-r)$, $(q,n-p)$, the first and last successive distances are both $|p-r|+|q-s|$, since quarter-turn rotation preserves grid distance, and the middle successive distance is $\Delta$.
The triangle inequality therefore gives $|p-q|+|p+q-n| \leq 2(|p-r|+|q-s|)+\Delta$.
Hence $|2g_{p,q}-n|\leq 2(|p-r|+|q-s|)+\Delta$.
After division by two, the left-hand side is the distance between the centers of
$[\,g_{p,q}-|p-r|-|q-s|,\,g_{p,q}+|p-r|+|q-s|\,]$ and $J$, while the right-hand side is the sum of their radii; hence $J$ intersects every interval appearing in the definition of~$I$.
Consequently, $J$ and the intervals defining $I$ are pairwise intersecting, so their total intersection $I\cap J$ is non-empty.

We need an integer $a \in I\cap J$ that satisfies the parity condition, that is, $a \equiv r+s\pmod2$.
If $J$ is a singleton, then $\Delta=0$, so $(r,s)=(n/2,n/2)$.
Since $n\not\equiv2\pmod4$, this can occur only when $n\equiv0\pmod4$, and the unique point $n/2$ of $J$ is congruent to $n$, hence to $r+s$, modulo $2$.
If $J$ is not a singleton and $I\cap J$ is a singleton, then the unique point in the intersection is an endpoint of $I$, and therefore has the required parity.
Finally, if $I\cap J$ is not a singleton, then it contains two consecutive integers, one of which satisfies the parity condition.

Choose an integer $a\in I\cap J$ with $a\equiv r+s\pmod2$.
The preceding discussion also covers possible coincidences among the four orbit positions: they can occur only when $\Delta=0$, and then $a=n/2$ by $a\in J$.
Let $\widetilde P=P\cup\mathcal O(r,s)$ and define $\widetilde G=(\widetilde g_{u,v})\in\Z^{\widetilde P}$ by setting $\widetilde g_{p,q}=g_{p,q}$ for $(p,q)\in P$, $\widetilde g_{r,s}=\widetilde g_{n-r,n-s}=a$, and $\widetilde g_{s,n-r}=\widetilde g_{n-s,r}=n-a$.
The boundary conditions are unchanged, and the parity and the quarter-turn conditions hold by construction.
The grid-distance condition inside $P$ is inherited from $G$.
By the choice $a\in I$ and the observation above, all grid-distance inequalities between $P$ and $\mathcal O(r,s)$ hold, while the grid-distance condition inside the orbit and the consistency of the prescription when $\mathcal O(r,s)$ is a singleton follow from $a\in J$.
Thus $\widetilde G$ satisfies the assumptions of the lemma on $\widetilde P$, and the induction hypothesis gives the desired extension.
This completes the proof.
\end{proof}

We now choose the height-grid positions that will provide the additional directions in the lower-bound induction:
\[
  Q_n=
  \{(1,q):q\in[2,n-2]\}
  \cup
  \{(2,q):q\in[2,n-3]\}
  \cup
  \{(3,3)\}.
\]
The set $Q_n$ is meant as a set of \emph{orbit representatives}.
For $n\geq 9$, every $R$-orbit meets $Q_n$ in at most one point; equivalently, distinct elements of $Q_n$ lie in distinct $R$-orbits.
The next lemma realizes, for each $(r,s)\in Q_n$, the corresponding height-coordinate direction on its $R$-orbit.
For a height-grid position $u=(p,q)$, let $\chi_u=\chi_{p,q}\in \R^{[0,n]\times[0,n]}$ denote the matrix unit supported at $u$, and set
\[
  \mathcal E_u=\chi_u-\chi_{Ru}+\chi_{R^2u}-\chi_{R^3u}.
\]
When $u=(p,q)$, we also write $\mathcal E_{p,q}$ for $\mathcal E_u$.
The following lemma constructs one \emph{height-coordinate direction} for each representative in $Q_n$.

\begin{lemma}\label{lem:qtsasm:orbit-realization}
Let $n\geq9$ and $n\not\equiv2\pmod4$.
For every $(r,s)\in Q_n$, there exist two $n\times n$ QTSASMs $X_{r,s}^-$ and $X_{r,s}^+$ such that $H^{X_{r,s}^+}-H^{X_{r,s}^-}=2\mathcal E_{r,s}$.
Consequently, $\mathcal E_{r,s}$ is a direction of the affine hull of the height matrices of $n\times n$ QTSASMs.
\end{lemma}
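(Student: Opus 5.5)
Fix $(r,s)\in Q_n$ and write $u=(r,s)$. The plan is to build both QTSASMs from a single partial height specification via \cref{lem:qtsasm:equivariant-height-extension}. Let $P$ be the union of the boundary of $[0,n]\times[0,n]$, the orbit $\mathcal O(u)$, and the full closed grid neighbourhood of the orbit, namely the $R$-orbits of the four grid neighbours $(r\pm1,s)$ and $(r,s\pm1)$. On every position of $P$ except the orbit itself we prescribe one common height, chosen so that the four neighbours of $u$ all have value $a$. The parity condition then forces $h_u\equiv a+1\pmod 2$. In the specification $G^-$ we set $h_u=a-1$, and in $G^+$ we set $h_u=a+1$. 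The rest of the orbit is filled by the quarter-turn rule $h_{Ru}=n-h_u$, $h_{R^2u}=h_u$, $h_{R^3u}=n-h_u$. This gives a local ``flip'' at $u$ together with its three rotated copies.

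Next we extend both specifications by \cref{lem:qtsasm:equivariant-height-extension}. The key observation is that we may first extend the common specification on $P\setminus\mathcal O(u)$ to all grid positions off the orbit, and only afterwards choose the orbit values. To do this, we run the induction of that lemma so that it processes every other orbit before $\mathcal O(u)$. The extension of the part off the orbit is then literally shared by $H^-$ and $H^+$. Since the four neighbours of $u$ all carry height $a$, both choices $a\pm1$ satisfy every local condition at $u$. By quarter-turn equivariance, the same holds at each point of the orbit. The two completed matrices then differ exactly by $2\chi_u-2\chi_{Ru}+2\chi_{R^2u}-2\chi_{R^3u}=2\mathcal E_{r,s}$, because the value at $Ru$ and $R^3u$ is $n-h_u$. \Cref{lem:asm:height-functions} turns $H^\pm$ into QTSASMs $X^\pm_{r,s}$. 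It follows that $\mathcal E_{r,s}$ is a direction of the affine hull of the height matrices.

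To make this concrete, I would choose explicit values for the neighbourhood. Take $a$ to be a value compatible with the boundary, e.g.\ $a=r+s-1$ near the top-left corner with the neighbours arranged as a local minimum/maximum saddle. A cleaner alternative is to specify a whole explicit base height matrix $H^0$ of some QTSASM in which $u$ is a local extremum, meaning all four neighbours are equal. One then takes $H^\pm=H^0$ and $H^0\pm2\mathcal E_u$ (or $H^0$ and $H^0+2\mathcal E_u$, depending on the orientation). For this we need a QTSASM whose height function has a local extremum at $u$, and hence at all of $\mathcal O(u)$ with the flipped sign at $Ru$. We also need the four orbit points to be pairwise non-adjacent, so that the flips do not interfere. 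For $n\geq9$ and $u\in Q_n$ (rows $1,2,3$, columns between $2$ and $n-2$), the orbit points are far apart. The exceptions are the points near the corners, e.g.\ $(1,n-2)$ versus its rotation $(n-2,n-1)$, but even these have grid distance at least $2$.

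The main obstacle is guaranteeing that a suitable neighbourhood specification satisfies the grid-distance condition of \cref{lem:qtsasm:equivariant-height-extension}. This is needed both against the boundary and between the rotated copies of the neighbourhood, since the copies must respect $g_{Rv}=n-g_v$. I would handle it case by case over the three families in $Q_n$, using the boundary value $h_{0,q}=q$ and $h_{p,0}=p$ nearby. I would try the specification $a=r+s-1$ with $h_u\in\{r+s-2,\,r+s\}$, i.e.\ the smallest and largest values allowed by the distance to the corner $(0,0)$. Then I would check the distance inequalities to the rotated neighbourhoods using $|2g-n|\le|p-q|+|p+q-n|$ as in the lemma's proof. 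I expect the delicate cases to be the boundary-adjacent points $(1,2)$ and $(1,n-2)$ and the isolated point $(3,3)$, which may need individually chosen neighbour values.
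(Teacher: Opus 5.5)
Your architecture matches the paper's: an $R$-equivariant partial specification making the four orbit points local extrema, completion by \cref{lem:qtsasm:equivariant-height-extension}, and then the flip $H^+=H^-+2\mathcal E_{r,s}$. That flip only needs to be checked on edges at the orbit points, so your detour through the lemma's induction with $\mathcal O(u)$ processed last is unnecessary. The gap is that the one substantive step, verifying the grid-distance hypothesis for your specification, is left open, and the concrete level you propose fails. Take $(r,s)=(2,n-3)\in Q_n$ with neighbour level $r+s-1=n-2$. The neighbour $(3,n-3)$ of $u$ gets $n-2$, while $Ru=(n-3,n-2)$ has the neighbour $(n-3,n-3)$ with value $n-(n-2)=2$. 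These positions are at grid distance $n-6<n-4$, so no height matrix, and in particular no extension, can realize your specification. Your $G^+$ centre $n-1$ also violates the orbit condition $|2g-n|\le|p-q|+|p+q-n|$ that you intended to use, since $n-2>n-4$. This point is not among the cases you flagged as delicate.

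What you are missing is that consecutive rotated neighbourhoods impose an \emph{upper} bound on the level, in addition to the boundary's lower bounds. Their centres are at distance $n-2r$ and carry levels $b$ and $n-b$. So the level must be chosen as small as possible, not at the top of the range allowed by the corner $(0,0)$. The paper puts the centre of $N_0$ at $a$, its neighbours at $a+1$, and the flipped centre at $a+2$. Here $a$ is the least integer with $a\ge\max\{r,s-r\}$ and $a\equiv r+s\pmod 2$; for instance $a=s-2$ when $r=2$ and $s\ge4$. The proof then only needs $\max\{r,s-r\}\le a\le\min\{r+s-2,\,n-r-2\}$. The boundary inequalities $|p-q|\le g\le\min\{p+q,2n-p-q\}$ follow from $s-r\le a$, $r\le a$ and $a\le r+s-2$. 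The adjacent-neighbourhood inequalities follow from $r\le a\le n-r-2$, and $n-2r\ge3$ makes the neighbourhoods disjoint. Opposite neighbourhoods carry values differing by at most $1$. Your level $r+s-1$ corresponds to $a=r+s-2$, which breaks $a\le n-r-2$ exactly when $2r+s>n$, i.e.\ at $(2,n-3)$. With the paper's $a$ all checks are uniform, and no separate treatment of $(1,2)$, $(1,n-2)$ or $(3,3)$ is needed.
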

\begin{proof}
Fix $(r,s)\in Q_n$.
We first construct a height matrix $H_{r,s}^-$ in which, at each of the four positions in the $R$-orbit of $(r,s)$, the value is smaller by $1$ than all adjacent height-grid values or larger by $1$ than all adjacent height-grid values.
We will then change these four values by $2$, turning the local minima into local maxima and the local maxima into local minima, to obtain $H_{r,s}^+$.
Since $(r,s)\in Q_n$, we have $r\in[3]$, $r\leq s\leq n-r$, and $r+s\leq n-1$.
Let $a$ be the smallest integer such that $r\leq a$, $s-r\leq a$, and $a\equiv r+s\pmod2$.
Equivalently, $a=s-1$ if $r=1$; $a=2$ if $(r,s)=(2,2)$; $a=3$ if $(r,s)=(2,3)$; $a=s-2$ if $r=2$ and $4\leq s$; and $a=4$ if $(r,s)=(3,3)$.
In the argument below we use only the following consequences of this choice: the congruence $a\equiv r+s\pmod2$, the lower bounds $s-r\leq a$ and $r\leq a$, and the upper bounds $a\leq r+s-2$ and $a\leq n-r-2$.

Write $R^t(r,s)=(r_t,s_t)$ for $t\in[0,3]$.
Thus $(r_0,s_0)=(r,s)$, $(r_1,s_1)=(s,n-r)$, $(r_2,s_2)=(n-r,n-s)$, and $(r_3,s_3)=(n-s,r)$.
For $t\in[0,3]$, let $N_t= \{(p,q)\in[0,n]\times[0,n]: |p-r_t|+|q-s_t|\leq1\}$.
Let $P$ be the union of the boundary of $[0,n]\times[0,n]$ and the neighborhoods $N_t$ for $t \in [0,3]$.
We define a partial height specification $G=(g_{p,q})\in\Z^P$ as follows.
On the boundary, prescribe the values required by the boundary condition in \cref{lem:qtsasm:equivariant-height-extension}, that is, $g_{0,q}=q$, $g_{p,0}=p$, $g_{n,q}=n-q$, and $g_{p,n}=n-p$.
For each $t\in[0,3]$ and each $(p,q)\in N_t$, prescribe
\[
  g_{p,q}=
  \begin{cases}
    a+|p-r_t|+|q-s_t|      & \text{if } 2 \mid t,\\
    n-a-|p-r_t|-|q-s_t|    & \text{if } 2 \nmid t.
  \end{cases}
\]

We first check that this prescription is well defined.
The grid distance between consecutive orbit positions $(r_t,s_t)$ and $(r_{t+1},s_{t+1})$, with indices taken modulo $4$, is $n-2r$; the distance between opposite orbit positions is $|n-2r|+|n-2s|$, and thus at least $n-2r$.
Since $n-2r\geq3$, the neighborhoods $N_0,N_1,N_2,N_3$ are pairwise disjoint.
The only possible overlap between one of these neighborhoods and the boundary occurs when $r=1$.
In that case, we have $a=s-1$, so the boundary position $(0,s)\in N_0$ receives the value $a+1=s$, which matches the top boundary value.
Rotation by quarter-turn gives the corresponding boundary compatibility for $N_1$, $N_2$, and $N_3$.
Hence $P$ is $R$-invariant, and there is no conflict among the prescribed values.

We record the local form of the prescription on each neighborhood, since it will be used both in the verification below and after the extension is obtained.
For even $t$, the value at $(r_t,s_t)$ is $a$, while every other position in $N_t$ has value $a+1$; for odd $t$, the value at $(r_t,s_t)$ is $n-a$, while every other position in $N_t$ has value $n-a-1$.
Hence the prescribed values differ by exactly $1$ between $(r_t,s_t)$ and each adjacent prescribed height-grid position in $N_t$.
Later, this same local form will allow us to add $2$ at the even orbit positions and subtract $2$ at the odd orbit positions without changing the absolute value of any affected local height difference.

We now verify the hypotheses of \cref{lem:qtsasm:equivariant-height-extension}.
The boundary condition holds by definition.
The prescribed boundary values satisfy the parity condition.
For $(p,q)\in N_0$, using the congruence $a\equiv r+s\pmod2$, we get
$
  g_{p,q}
  =
  a+|p-r|+|q-s|
  \equiv
  r+s+(p-r)+(q-s)
  \equiv
  p+q
  \pmod2
$,
so the parity condition holds on $N_0$.
The parity condition on the remaining neighborhoods follows because the quarter-turn prescription sends
$g_{p,q}$ to $n-g_{p,q}$, and
$
  n-g_{p,q}\equiv n-p-q\equiv q+n-p\pmod2
$.
By construction, $g_{q,n-p}=n-g_{p,q}$ for every $(p,q)\in P$, so the quarter-turn condition also holds.
It remains only to check the grid-distance condition.
There are four types of pairs to consider.
First, the prescribed boundary values satisfy the grid-distance condition among boundary positions by definition.
Second, consider two positions that both lie in a single neighborhood $N_t$.
If one of them is the orbit position $(r_t,s_t)$, then the other is adjacent to $(r_t,s_t)$, and their prescribed values differ by exactly $1$.
If neither position is $(r_t,s_t)$, then both prescribed values are equal.
Thus the grid-distance condition holds inside $N_t$.
Third, compare a prescribed neighborhood value with the boundary.
For a position $(p,q)$ with prescribed value $b$, the grid-distance condition with the four boundary sides is equivalent to $|p-q|\leq b\leq \min\{p+q,2n-p-q\}$.
By the quarter-turn condition and the fact that $R$ preserves grid distance, it suffices to verify these inequalities for $(p,q)\in N_0$.
Put $d=|p-r|+|q-s|$.
Then $d\leq1$ and $g_{p,q}=a+d$.
Since $q-p\leq s-r+d\leq a+d$ and $p-q\leq r+d\leq a+d$, we have $|p-q|\leq g_{p,q}$.
Moreover, $p+q\geq r+s-1$, while $g_{p,q}\leq a+1\leq r+s-1$; hence $g_{p,q}\leq p+q$.
For the remaining boundary inequality, $p+q\leq r+s+1\leq n$, and therefore $g_{p,q}\leq p+q\leq2n-p-q$.
Thus every neighborhood value satisfies the grid-distance condition with the boundary.
Fourth, compare values in different neighborhoods.
If the indices differ by $2$ modulo $4$, then both neighborhoods have prescribed values in either $\{a,a+1\}$ or $\{n-a-1,n-a\}$.
The absolute difference of two such values is at most $1$, and the neighborhoods are disjoint, so the grid-distance condition holds for these pairs.
We next compare adjacent neighborhoods in the cyclic order of the orbit.
Since $G$ satisfies the quarter-turn condition and $R$ preserves grid distance, it suffices to compare $N_0$ with $N_1$.
Let $(p,q)\in N_0$ and $(p',q')\in N_1$, and put $\alpha=|p-r|+|q-s|$ and $\beta=|p'-s|+|q'-(n-r)|$.
Then $\alpha,\beta\in\{0,1\}$, the prescribed values are $a+\alpha$ and $n-a-\beta$, and the distance between the two positions is at least $n-2r-\alpha-\beta$.
We claim that the difference of the two prescribed values is bounded by this same quantity.
Indeed, $r\leq a$ gives $n-2a-\alpha-\beta\leq n-2r-\alpha-\beta$, while $a\leq n-r-2$ and $\alpha+\beta\leq2$ give $2a+\alpha+\beta-n\leq n-2r-\alpha-\beta$.
These two inequalities are exactly $|(a+\alpha)-(n-a-\beta)|\leq n-2r-\alpha-\beta$.
Thus the grid-distance condition holds between $N_0$ and $N_1$.
Consequently, $G$ satisfies all assumptions of \cref{lem:qtsasm:equivariant-height-extension}.

By that lemma, $G$ extends to a height matrix $H_{r,s}^-=H^{X_{r,s}^-}$ of some $n\times n$ QTSASM $X_{r,s}^-$.
Define $H_{r,s}^+$ by adding $2$ at $(r_0,s_0)$ and $(r_2,s_2)$, subtracting $2$ at $(r_1,s_1)$ and $(r_3,s_3)$, and leaving all other height-grid values unchanged.
We show that $H_{r,s}^+$ satisfies the conditions of \cref{lem:asm:height-functions}.
The boundary entries are unchanged, and parity is preserved because each modified value is changed by $2$ or $-2$.
For the local condition, note that every neighbor of a changed position lies in the corresponding neighborhood $N_t$.
Since the neighborhoods $N_t$ are pairwise disjoint, no adjacent entries are both changed.
At the orbit positions $(r_0,s_0)$ and $(r_2,s_2)$, the change turns a local minimum into a local maximum; at the orbit positions $(r_1,s_1)$ and $(r_3,s_3)$, it turns a local maximum into a local minimum.
Thus the only affected local height differences are those between an orbit position and its adjacent height-grid positions, and these still have absolute value $1$ after the change.
We show that the quarter-turn condition is also preserved.
The changes are $+2$ at $(r_0,s_0)$ and $(r_2,s_2)$, and $-2$ at $(r_1,s_1)$ and $(r_3,s_3)$.
Since $R(r_t,s_t)=(r_{t+1},s_{t+1})$, with the index $t+1$ taken modulo $4$, the change at $R(r_t,s_t)$ is the negative of the change at $(r_t,s_t)$ for each $t\in[0,3]$.
Hence $H^+_{r,s}$ satisfies the quarter-turn condition on the modified orbit, and it agrees with $H^-_{r,s}$ outside this orbit.
Together with the local check above, \cref{lem:asm:height-functions} gives $H^+_{r,s}=H^{X^+_{r,s}}$ for some $n\times n$ QTSASM $X^+_{r,s}$.
By construction, $H^{X^+_{r,s}}-H^{X^-_{r,s}}=2\mathcal E_{r,s}$.
\end{proof}

We now introduce the affine subspace that will be shown to be the affine hull of $P^\core_{\QTSASM(n)}$.
Assume $n\geq5$ with $n\not\equiv2\pmod4$.
For $Y\in\R^{C_n}$ and $i\in[k]$, define
\[
  \rho_i(Y)=
  \sum_{j=1}^k y_{i,j}
  +
  \sum_{j=k+1}^n y_{n+1-j,i}.
\]
When $n$ is odd, also define
\[
  \rho_{k+1}(Y)=\sum_{j=1}^k y_{k+1,j}.
\]
Let $A_n\subseteq\R^{C_n}$ be the affine subspace given by the following equations.
If $n$ is even, then $A_n$ is given by $\rho_i(Y)=1$ for $i\in[k]$, by $y_{1,1}=0$, and by $y_{k,k}=0$.
If $n$ is odd, then $A_n$ is given by $\rho_i(Y)=1$ for $i\in[k]$, by $y_{1,1}=0$, and by $\rho_{k+1}(Y)=\chi_{2 \nmid k}$.
\begin{theorem}
\label{thm:qtsasm:dimension}
Let $n\geq5$ and $n\not\equiv2\pmod4$.
Then $\aff(P^\core_{\QTSASM(n)})=A_n$ and $\dim(P_{\QTSASM(n)})=\floor*{\frac{(n-1)^2}{4}}-2$.
\end{theorem}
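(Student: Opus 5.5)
The plan is to prove the inclusion $P^\core_{\QTSASM(n)}\subseteq A_n$, compute $\dim A_n$, and then match it from below by induction on $n$ in steps of $4$, working in height coordinates. Since $\varphi$ is an affine isomorphism between $P^\core_{\QTSASM(n)}$ and $P_{\QTSASM(n)}$ (\cref{thm:xasm:assembly}), both claims of the theorem follow.

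\emph{Valid equations.} Each equation defining $A_n$ must hold for every QTSASM core.
\begin{itemize}
\item The equations $\rho_i(Y)=1$ are just~\cref{eq:qtsasm:full}.
\item For odd $n$, the value of $\rho_{k+1}(Y)$ was shown in the proof of \cref{thm:qtsasm:corepolytope} to be forced: it is $\sum_j y_{k+1,j}\in\{0,1\}$ and it is congruent to $k$ modulo $2$.
\item For $y_{1,1}=0$: if $x_{1,1}=1$, then quarter-turn symmetry gives $x_{1,n}=x_{1,1}=1$. Row $1$ of an ASM has exactly one non-zero entry, so this is impossible for $n\geq 2$.
\item For even $n=2k$ (so $k$ is even), I will use heights. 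The quarter-turn condition of \cref{lem:asm:height-functions} gives $h_{k-1,k}+h_{k,k-1}=n$ and $h_{k,k}=k$. Hence
\[
  y_{k,k}=x_{k,k}=\tfrac12\bigl(k-h_{k-1,k-1}\bigr).
\]
The positions $(k-1,k-1)$ and $R(k-1,k-1)=(k-1,k+1)$ are at grid distance $2$. Their heights are $h_{k-1,k-1}$ and $n-h_{k-1,k-1}$, so $|2h_{k-1,k-1}-n|\leq 2$. Together with parity ($h_{k-1,k-1}$ is even and $k$ is even), this forces $h_{k-1,k-1}=k$, hence $y_{k,k}=0$.
\end{itemize}

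\emph{Upper bound.} Next I check that the defining equations of $A_n$ are linearly independent. Each $\rho_i$ contains variables from row $i$ of the core that appear in no other equation, and $y_{1,1}$, $y_{k,k}$ (respectively $\rho_{k+1}$) can be isolated similarly, with care needed only for small $k$.
\begin{itemize}
\item For even $n$ this gives $\dim A_n=k^2-(k+2)=\floor*{\frac{(n-1)^2}{4}}-2$.
\item For odd $n$ it gives $\dim A_n=k(k+1)-(k+2)=k^2-2$, which is the same formula.
\end{itemize}

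\emph{Lower bound.} This is by induction $n-4\to n$, with the base cases $n\in\{5,7,8\}$ verified by explicit QTSASMs (or by computer). The increment satisfies
\[
  \dim A_n-\dim A_{n-4}=2n-6=|Q_n|.
\]
This matches the number of directions supplied by \cref{lem:qtsasm:orbit-realization}, which applies since $n\geq9$.
\begin{itemize}
\item \emph{Embedding step.} Fix one frame height specification on the width-$2$ border ring that is compatible with the conditions of \cref{lem:qtsasm:equivariant-height-extension}. Insert an arbitrary height matrix of an $(n-4)\times(n-4)$ QTSASM, shifted by a constant, on the inner grid $[2,n-2]^2$. The shift should be chosen so that the inner boundary values agree with the frame. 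Each result is a height matrix of an $n\times n$ QTSASM, again by \cref{lem:qtsasm:equivariant-height-extension} or by direct checking.
\item \emph{Transport of the affine hull.} The affine map from inner to outer heights is injective. Since the map $Y\mapsto H^{\varphi(Y)}$ is affine and injective, the affine hull of size-$(n-4)$ heights, which has dimension $\dim A_{n-4}$ by induction, embeds into the affine hull of size-$n$ heights.
\item \emph{Adding new directions.} I then add the $2n-6$ directions $\mathcal E_u$ for $u\in Q_n$.
\end{itemize}

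The main obstacle is showing that these new directions are linearly independent from each other and from the embedded directions.
\begin{itemize}
\item The embedded directions are supported on the orbit interior $[3,n-3]^2$.
\item Apart from $(3,3)$, every $\mathcal E_u$ with $u\in Q_n$ has support on orbits meeting rows $1$ and $2$, which lie outside that interior. Distinct elements of $Q_n$ lie in distinct orbits, so these supports are pairwise disjoint. This gives a triangular structure.
\item The delicate point is $u=(3,3)$, whose orbit lies on the corner of the interior region. There I would first check whether $\mathcal E_{3,3}$ can be separated from the embedded span. Because the inner height matrices have fixed boundary values, the embedded directions vanish at the inner-boundary positions. Hence it suffices to evaluate at a position in the orbit of $(3,3)$ that is forced constant in all embedded matrices.
\item If that fails, I would modify the frame so that the embedded copy is placed one step differently, so that the $(3,3)$-orbit lies on the fixed inner boundary.
\end{itemize}
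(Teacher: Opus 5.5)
Your overall route is the paper's: valid equations for $A_n$, an independence count, then induction $n-4\to n$ in height coordinates using a centered embedding plus the directions $\mathcal E_u$, $u\in Q_n$, from \cref{lem:qtsasm:orbit-realization}. Your height-function proof of $y_{k,k}=0$ is correct. The paper instead notes that $x_{k,k}=x_{k,k+1}$ are equal adjacent entries of row $k$, which does not even need $k$ even. However, the step you call the main obstacle is left open, and neither route you sketch for it works as stated.

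The inference ``the embedded directions vanish at the inner-boundary positions; hence it suffices to evaluate at a position in the orbit of $(3,3)$ that is forced constant'' is a non sequitur. The orbit $\{(3,3),(3,n-3),(n-3,n-3),(n-3,3)\}$ lies at distance $3$ from the outer boundary, while the inner boundary of the embedded block lies at distance $2$. So the fixed inner boundary says nothing about it.

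The fallback of moving the embedded copy is impossible. An $R$-invariant square block of side $n-4$ must be $[2,n-2]^2$, and the relation $h_{Rv}=n-h_v$ forces the height shift to be $+2$. Hence $(3,3)$ can never lie on the inner boundary.

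Without $\mathcal E_{3,3}$ the count falls one short. The orbits meeting rows $1$ and $2$, other than $\mathcal O(1,1)$, supply only $2n-7$ directions, so one direction must come from the interior. The only interior orbit not already in $\mathcal Z_n$ that is frozen in the embedded family is $\mathcal O(3,3)$.

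The missing observation is one you already have. In inner coordinates, $(3,3)$ is the position $(1,1)$. For every $(n-4)\times(n-4)$ QTSASM $X$ we have $h^X_{1,1}=2-2x_{1,1}=2$, because $x_{1,1}=0$; this is your own argument for $y_{1,1}=0$, applied at size $n-4$. Hence every embedded height matrix takes the value $4$ at $(3,3)$, so all embedded directions vanish there. Every $\mathcal E_u$ with $u\in Q_n\setminus\{(3,3)\}$ also vanishes there, since its orbit is different, while $\mathcal E_{3,3}$ equals $1$ there. Combined with your disjoint-support observation for rows $1$ and $2$, this yields the required $\dim A_{n-4}+2n-6$ independent directions. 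This is exactly how the paper closes the induction, and it is why $y_{1,1}=0$ must be one of the equations defining $A_n$.

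Two smaller points. First, your justification for the independence of the defining equations is inaccurate as stated. The equation $\rho_1$ has no variable that appears in no other equation, and the same holds for $\rho_k$ when $n$ is even and for $\rho_{k+1}$ when $n$ is odd. A peeling argument repairs this: first kill $\alpha_i$ for the middle indices using the private diagonal variables $y_{i,i}$, then use an off-diagonal variable such as $y_{1,2}$ or $y_{k+1,j}$. Second, the base cases $n\in\{5,7,8\}$ and a compatible frame still have to be exhibited. The paper uses explicit cores for the base cases, and for the frame it uses the $1$'s at $(1,2),(2,n),(n,n-1),(n-1,1)$.
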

\begin{proof}
It suffices to prove that $\aff(P^\core_{\QTSASM(n)})=A_n$ and to compute $\dim(A_n)$.
Since the assembly map $\varphi$ is affine, injective, and satisfies $P_{\QTSASM(n)}=\varphi(P^\core_{\QTSASM(n)})$, this gives the asserted dimension of $P_{\QTSASM(n)}$.
We first prove the upper bound by showing $P^\core_{\QTSASM(n)}\subseteq A_n$ and computing $\dim(A_n)$.
Then we prove the reverse inequality by induction, using QTSASMs of size $n-4$ together with the height-coordinate directions constructed in \cref{lem:qtsasm:orbit-realization}.

\smallskip
We now show that $P^\core_{\QTSASM(n)}\subseteq A_n$.
The equations $\rho_i(Y)=1$ are precisely the row-sum equations in the QTSASM core description.
It remains to justify the equation $y_{1,1}=0$.
Let $X$ be a QTSASM with core $Y$.
Since boundary entries of an ASM are non-negative, $y_{1,1}=x_{1,1}$ is either $0$ or $1$.
If $y_{1,1}=1$, then quarter-turn symmetry gives $x_{1,n}=1$ as well.
Since all entries in the first row are boundary entries, the first row would have sum at least $2$, contradicting the ASM row-sum condition.
Thus $y_{1,1}=0$ for every QTSASM core.

Suppose first that $n$ is even.
Since $n\not\equiv2\pmod4$, the integer $k$ is even.
Quarter-turn symmetry gives $x_{k,k}=x_{k,k+1}$.
Since these two entries are adjacent in row $k$, the alternating-sign condition in that row forces their common value to be zero.
Thus every QTSASM core satisfies $y_{k,k}=0$.
Now suppose that $n$ is odd.
By \cref{lem:qtsasm:center}, the central entry is $x_{k+1,k+1}=(-1)^k$.
The middle row is palindromic by half-turn symmetry, and therefore $\sum_{j=1}^n x_{k+1,j}=2\sum_{j=1}^k y_{k+1,j}+(-1)^k=1$.
Hence every QTSASM core satisfies $\rho_{k+1}(Y)=\chi_{2 \nmid k}$.
This proves $P^\core_{\QTSASM(n)}\subseteq A_n$.

\smallskip
We compute $\dim A_n$ by showing that the left-hand sides of the defining equations of $A_n$ are linearly independent.
First, suppose that $n$ is even.
Then $C_n=[k]\times[k]$ and, for every $i\in[k]$,
$
  \rho_i(Y)=\sum_{j=1}^k y_{i,j}+\sum_{j=1}^k y_{j,i}
$.
Here $k\geq4$.
Consider a linear relation
$
  \sum_{i=1}^k \alpha_i\rho_i+\lambda y_{1,1}+\mu y_{k,k}=0
$
on $\R^{C_n}$.
The coefficient of $y_{2,2}$ is $2\alpha_2$, hence $\alpha_2=0$.
For every $s\in[k]\setminus\{2\}$, the coefficient of $y_{2,s}$ is $\alpha_2+\alpha_s$, so $\alpha_s=0$.
Thus all $\alpha_i$ are zero, and the coefficients of $y_{1,1}$ and $y_{k,k}$ give $\lambda=\mu=0$.
Therefore the equations are independent in the even case.

Second, suppose that $n$ is odd.
Consider a linear relation
$
  \sum_{i=1}^{k+1} \alpha_i\rho_i+\lambda y_{1,1}=0
$
on $\R^{C_n}$.
For each $j\in[k]$, the variable $y_{k+1,j}$ appears only in $\rho_j$ and $\rho_{k+1}$.
Thus $\alpha_j+\alpha_{k+1}=0$ for every $j\in[k]$.
Since $k\geq2$, the variable $y_{1,2}$ belongs to the core.
Its coefficient is $\alpha_1+\alpha_2$, and hence $0=\alpha_1+\alpha_2=-2\alpha_{k+1}$.
Thus $\alpha_{k+1}=0$, and the relations $\alpha_j+\alpha_{k+1}=0$ give $\alpha_j=0$ for every $j\in[k]$; the coefficient of $y_{1,1}$ then gives $\lambda=0$.
Therefore the equations are independent in the odd case.

In both cases, $A_n$ is defined by $k+2$ independent equations in $\R^{C_n}$, so
$
  \dim(A_n)
  =
  |C_n|-(k+2)
  =
  \floor*{\frac{(n-1)^2}{4}}-2
$.
Since $P^\core_{\QTSASM(n)}\subseteq A_n$, it follows that
\[
  \dim(P^\core_{\QTSASM(n)})
  \leq
  \dim(A_n)
  =
  \floor*{\frac{(n-1)^2}{4}}-2.
\]

\smallskip
It remains to prove the reverse inequality.
The lower-bound argument proceeds by induction on $n$ with step size $4$, using explicit base cases and the height-coordinate directions constructed in \cref{lem:qtsasm:orbit-realization} for the induction step.
Since the induction step increases the size by $4$, we need one base case in each admissible residue class modulo $4$.
By the upper bound proved above, it is enough to exhibit $\dim(A_n)+1$ affinely independent QTSASM cores in each case.

For $n=5$, the following three QTSASM cores are affinely independent by a direct rank check:
\begin{align*}
Y^5_0
&=
\begin{mymatrix}
0&0\\
0&1\\
1&-1
\end{mymatrix}\,,
&
Y^5_1
&=
\begin{mymatrix}
0&0\\
1&0\\
0&0
\end{mymatrix}\,,
&
Y^5_2
&=
\begin{mymatrix}
0&1\\
0&0\\
0&0
\end{mymatrix}\,.
\end{align*}
Since $\dim(A_5)=2$, we get $\aff(P^\core_{\QTSASM(5)})=A_5$.
For $n=7$, the following eight QTSASM cores are affinely independent by a direct rank check:
\begin{align*}
Y^7_0
&=
\begin{mymatrix}
0&0&0\\
0&0&0\\
1&0&0\\
0&1&0
\end{mymatrix}\,,
&
Y^7_1
&=
\begin{mymatrix}
0&0&0\\
0&0&1\\
0&1&-1\\
1&-1&1
\end{mymatrix}\,,
&
Y^7_2
&=
\begin{mymatrix}
0&0&0\\
0&0&1\\
1&0&-1\\
0&0&1
\end{mymatrix}\,,
&
Y^7_3
&=
\begin{mymatrix}
0&0&0\\
0&1&0\\
1&-1&0\\
0&0&1
\end{mymatrix}\,,
\displaybreak[0]\\[2ex]
Y^7_4
&=
\begin{mymatrix}
0&0&0\\
1&0&0\\
0&0&0\\
0&0&1
\end{mymatrix}\,,
&
Y^7_5
&=
\begin{mymatrix}
0&0&1\\
0&0&0\\
0&0&0\\
0&1&0
\end{mymatrix}\,,
&
Y^7_6
&=
\begin{mymatrix}
0&0&1\\
0&1&-1\\
0&0&0\\
0&0&1
\end{mymatrix}\,,
&
Y^7_7
&=
\begin{mymatrix}
0&1&0\\
0&0&0\\
0&0&0\\
0&0&1
\end{mymatrix}\,.
\end{align*}
Since $\dim(A_7)=7$, we get $\aff(P^\core_{\QTSASM(7)})=A_7$.
For $n=8$, the following eleven QTSASM cores are affinely independent by a direct rank check:
\begin{align*}
Y^8_0
&=
\begin{mymatrix}
0&0&0&0\\
0&0&0&1\\
1&0&0&0\\
0&0&0&0
\end{mymatrix}\,,
&
Y^8_1
&=
\begin{mymatrix}
0&0&0&0\\
0&0&1&0\\
0&0&0&0\\
1&0&0&0
\end{mymatrix}\,,
&
Y^8_2
&=
\begin{mymatrix}
0&0&0&0\\
0&0&1&0\\
0&1&-1&1\\
1&-1&0&0
\end{mymatrix}\,,
\displaybreak[0]\\[2ex]
Y^8_3
&=
\begin{mymatrix}
0&0&0&0\\
1&0&0&0\\
0&0&0&1\\
0&0&0&0
\end{mymatrix}\,,
&
Y^8_4
&=
\begin{mymatrix}
0&0&0&1\\
0&0&0&0\\
0&1&0&0\\
0&0&0&0
\end{mymatrix}\,,
&
Y^8_5
&=
\begin{mymatrix}
0&0&0&1\\
0&0&1&0\\
0&0&0&0\\
0&0&0&0
\end{mymatrix}\,,
\displaybreak[0]\\[2ex]
Y^8_6
&=
\begin{mymatrix}
0&0&1&0\\
0&0&0&0\\
0&0&0&0\\
0&1&0&0
\end{mymatrix}\,,
&
Y^8_7
&=
\begin{mymatrix}
0&0&1&0\\
0&0&0&1\\
0&0&0&0\\
0&0&0&0
\end{mymatrix}\,,
&
Y^8_8
&=
\begin{mymatrix}
0&0&1&0\\
0&1&-1&0\\
0&0&0&1\\
0&0&0&0
\end{mymatrix}\,,
\\[2ex]
Y^8_9
&=
\begin{mymatrix}
0&1&0&0\\
0&0&0&0\\
0&0&0&0\\
0&0&1&0
\end{mymatrix}\,,
&
Y^8_{10}
&=
\begin{mymatrix}
0&1&0&0\\
0&0&0&0\\
0&0&0&1\\
0&0&0&0
\end{mymatrix}\,.
\end{align*}
Since $\dim(A_8)=10$, we get $\aff(P^\core_{\QTSASM(8)})=A_8$.

\smallskip
We now prove the induction step for $n\geq9$.
Assume that $n\not\equiv2\pmod4$ and that $\aff(P^\core_{\QTSASM(n-4)})=A_{n-4}$.
The map sending a core $Y$ to the height matrix $H^{\varphi(Y)}$ of its assembly is affine and injective.
Affineness follows from the definition of $H^X$ and the affineness of $\varphi$.
Injectivity follows because a height matrix uniquely determines its corner-sum matrix, hence the underlying matrix, and projection to $C_n$ then recovers the core.
Therefore, linear independence of core differences can be checked after applying this height-coordinate map.

We first describe a way to turn an $(n-4)\times(n-4)$ QTSASM into an $n\times n$ QTSASM\@.
Given an $(n-4)\times(n-4)$ QTSASM $X$, define an $n\times n$ matrix $\ol X$ by
\[
  \ol x_{i,j}
  =
  \begin{cases}
    x_{i-2,j-2} & \text{if } i,j\in[3,n-2],\\
    1 & \text{if } (i,j)\in\{(1,2),(2,n),(n,n-1),(n-1,1)\},\\
    0 & \text{otherwise}.
  \end{cases}
\]
The only non-zero entries outside the embedded copy of $X$ are the entries equal to $1$ at positions $(1,2)$, $(2,n)$, $(n,n-1)$, and $(n-1,1)$.
Thus the rows and columns with indices in $\{1,2,n-1,n\}$ have the required ASM pattern, while every remaining row and column is obtained from the corresponding row or column of $X$ by inserting zeros outside the interval $[3,n-2]$.
Therefore $\ol X$ is again an ASM, and the construction is quarter-turn symmetric.
Thus $\ol X$ is an $n\times n$ QTSASM\@.

For the corresponding height matrices, we have $h_{p+2,q+2}^{\ol X}=h_{p,q}^X+2$ for every $p,q \in [0,n-4]$.
The $(p+2)\times(q+2)$ corner of $\ol X$ contains the $p\times q$ corner of $X$, shifted two rows down and two columns to the right, together with the added $1$ at $(1,2)$.
Thus $\sigma_{p+2,q+2}^{\ol X}=\sigma_{p,q}^X+1$.
Hence the height value of $\ol X$ at $(p+2,q+2)$ is the height value of $X$ at $(p,q)$ increased by $2$.

Choose $\dim(A_{n-4})+1$ affinely independent cores of $(n-4)\times(n-4)$ QTSASMs, which exist by the induction hypothesis.
Applying the construction above to the corresponding QTSASMs gives $\dim(A_{n-4})+1$ QTSASMs of size $n$.
Their height matrices are affinely independent, because after restricting to the coordinates $(p+2,q+2)$ with $p,q\in[0,n-4]$ and subtracting the fixed constant $2$, one recovers the height matrices of the original QTSASMs.
Therefore this construction gives $\dim(A_{n-4})$ independent height-coordinate directions realized by $n\times n$ QTSASMs.

Every height-coordinate direction obtained from the embedded construction for size $n-4$ vanishes on all coordinates $(1,q)$ and $(2,q)$ with $q\in[0,n]$, because the first two rows of the height matrix of $\ol X$ do not depend on $X$.
We also check that these directions vanish at $(3,3)$.
By the formula above, $h_{3,3}^{\ol X}=h_{1,1}^X+2$.
Since every core in $A_{n-4}$ satisfies $y_{1,1}=0$, the entry $x_{1,1}$ of the assembled matrix is fixed at $0$, and hence $h_{1,1}^X=2-2x_{1,1}=2$ is fixed as well.
Thus $h_{3,3}^{\ol X}=4$ is fixed, so every embedded height-coordinate direction vanishes at $(3,3)$.
Therefore the embedded height-coordinate directions vanish on all positions in $Q_n$.

By \cref{lem:qtsasm:orbit-realization}, every matrix $\mathcal E_{r,s}$ with $(r,s)\in Q_n$ is a realized height-coordinate direction.
Distinct elements of $Q_n$ lie in distinct $R$-orbits.
Thus the restrictions of the vectors $\mathcal E_{r,s}$ for $(r,s)\in Q_n$ to the coordinates indexed by $Q_n$ form the standard basis of $\R^{Q_n}$.
Since the embedded height-coordinate directions vanish on $Q_n$, these directions together with the directions $\mathcal E_{r,s}$ for $(r,s)\in Q_n$ are linearly independent.
By the injectivity established above, linear independence of these height-coordinate differences implies linear independence of the corresponding core differences.
Hence $\dim(P^\core_{\QTSASM(n)})\geq \dim(A_{n-4})+|Q_n|$.
Finally, $|Q_n|=(n-3)+(n-4)+1=2n-6$, and
\[
  \dim(A_n)-
  \dim(A_{n-4})
  =
  \floor*{\frac{(n-1)^2}{4}}
  -
  \floor*{\frac{(n-5)^2}{4}}
  =
  2n-6.
\]
Thus $\aff(P^\core_{\QTSASM(n)})$ has dimension at least $\dim(A_n)$.
Since $P^\core_{\QTSASM(n)}\subseteq A_n$, it follows that $\aff(P^\core_{\QTSASM(n)})=A_n$.
This proves the induction step.

\smallskip
Together with the base cases $n=5,7,8$, the induction step gives $\aff(P^\core_{\QTSASM(n)})=A_n$ for every $n\geq5$ with $n\not\equiv2\pmod4$.
Hence
\[
  \dim(P^\core_{\QTSASM(n)})
  =
  \dim(A_n)
  =
  \floor*{\frac{(n-1)^2}{4}}-2.
\]
The QTSASM assembly map $\varphi$ is affine and injective, so $\dim(P_{\QTSASM(n)}) = \dim(P^\core_{\QTSASM(n)})$.
This proves the theorem.
\end{proof}

\subsection{Ferrers inequalities and QTSASM facets}\label{subsec:qtsasm-ferrers-facets}

Throughout this subsection, fix $n\geq5$ with $n\not\equiv2\pmod4$, and put $k=\floor*{n/2}$, $\ell=\ceil*{n/2}$, and $C_n=[\ell]\times[k]$.
Consider the staircase Ferrers shape with row lengths $k-2,k-3,\dots,0$.
A Ferrers diagram contained in this staircase is a weakly decreasing sequence $\lambda=(\lambda_1,\dots,\lambda_{k-1})$ of non-negative integers such that $\lambda_i\leq k-1-i$ for every $i\in[k-1]$.
Thus row $i$ of the diagram consists of the cells $(i,j)$ with $j\in[\lambda_i]$.
For $Y\in\R^{C_n}$, define
\[
  F_\lambda(Y)=\sum_{i=1}^{k-1}\sum_{j=1}^{\lambda_i} y_{i+1,j+1}.
\]
The inequality $F_\lambda(Y)\geq0$ is the Ferrers inequality associated with $\lambda$.
The zero diagram $\lambda=(0,\dots,0)$ gives the tautology $0\geq0$, so all facet statements below concern non-zero Ferrers diagrams.
We will prove that every non-zero Ferrers diagram in the staircase gives a facet of $P^\core_{\QTSASM(n)}$, that distinct diagrams give distinct facets, and that these facets transfer through the assembly map to $P_{\QTSASM(n)}$.
Since the staircase contains $\Catalan_{k-1}$ Ferrers diagrams, including the zero diagram, this will give at least $\Catalan_{k-1}-1$ facets of $P_{\QTSASM(n)}$.

The proof has two main components.
First, we derive the Ferrers inequality from the cut system in \cref{eq:qtsasm:cut} and characterize its tight QTSASM cores by selected row-prefix coordinates associated with the boundary of the shifted diagram.
Second, we work in height coordinates to show that these tight cores span a hyperplane in the affine hull.
The selected row-prefix coordinates correspond to the edges of a quarter-turn-invariant grid cycle following the Ferrers boundary.
Height modifications away from this cycle realize all affine-hull directions on which the selected row-prefix coordinates are fixed.
Differences between tight cores for which the value $1$ occurs at different selected row-prefix coordinates provide the remaining directions on which $F_\lambda$ vanishes.
Together with \cref{thm:qtsasm:dimension}, this proves that the tight face has codimension one.

We begin by describing the height-coordinate directions in the affine hull.
Let $A_n$ denote the affine subspace defined in \cref{subsec:qtsasm-dimension}.
By \cref{thm:qtsasm:dimension}, $A_n=\aff(P^\core_{\QTSASM(n)})$.
Let $\mathcal B_n=\{(p,q)\in[0,n]\times[0,n]:p\in\{0,n\}\text{ or }q\in\{0,n\}\}$ be the boundary of the height grid.
Recall that $R(p,q)=(q,n-p)$ is the quarter-turn action on the height grid.
Define
\[
  \Psi_n:\R^{C_n}\to\R^{[0,n]\times[0,n]},
  \qquad
  \Psi_n(Y)=H^{\varphi(Y)},
\]
to be the affine map that sends a core to the height matrix of its assembly.
We use the height-coordinate directions $\mathcal E_u=\chi_u-\chi_{Ru}+\chi_{R^2u}-\chi_{R^3u}$ introduced in \cref{subsec:qtsasm-dimension}.

For a set $S$ of height-grid positions, write $R^t S=\{R^t u:u\in S\}$.
We continue to write $\mathcal O(u)=\{R^t u:t\in[0,3]\}$ for the $R$-orbit of $u$.
The following set records the interior height positions that are fixed throughout $\Psi_n(A_n)$:
\[
  \mathcal Z_n=
  \begin{cases}
    \mathcal O(1,1)\cup\mathcal O(k-1,k-1)\cup\{(k,k)\} & \text{if } n=2k,\\
    \mathcal O(1,1)\cup\mathcal O(k,k) & \text{if } n=2k+1.
  \end{cases}
\]
The next proposition identifies the entire height-difference space and shows that no other height coordinates are fixed on $A_n$.

\begin{proposition}\label{prop:qtsasm:height-tangent-space}
Let $n\geq5$ and $n\not\equiv2\pmod4$.
Then the height-difference space $\Psi_n(A_n)-\Psi_n(A_n)$ consists of the height-coordinate differences $D\in\R^{[0,n]\times[0,n]}$ such that $D$ vanishes on $\mathcal B_n\cup\mathcal Z_n$ and satisfies $D_{Rv}=-D_v$ for every $v\in[0,n]\times[0,n]$.
Moreover, if one representative $u$ is chosen from each four-element $R$-orbit disjoint from $\mathcal B_n\cup\mathcal Z_n$, then the height-coordinate directions $\mathcal E_u$ form a basis of $\Psi_n(A_n)-\Psi_n(A_n)$.
\end{proposition}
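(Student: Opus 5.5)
We prove both inclusions and then count dimensions. Let $T$ denote the space of all $D\in\R^{[0,n]\times[0,n]}$ that vanish on $\mathcal B_n\cup\mathcal Z_n$ and satisfy $D_{Rv}=-D_v$ for every $v$.

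\emph{Inclusion $\Psi_n(A_n)-\Psi_n(A_n)\subseteq T$.} Since $\Psi_n$ is affine and $A_n=\aff(P^\core_{\QTSASM(n)})$ by \cref{thm:qtsasm:dimension}, the difference space is spanned by differences $H^{X}-H^{X'}$ of QTSASM height matrices. It therefore suffices to show that every QTSASM height matrix has the same boundary values, the same values on $\mathcal Z_n$, and satisfies $h_{Rv}=n-h_v$.
\begin{itemize}
\item The boundary values and the relation $h_{Rv}=n-h_v$ come from \cref{lem:asm:height-functions}; the latter gives $D_{Rv}=-D_v$ for differences.
\item On $\mathcal O(1,1)$ the value is $h_{1,1}=2-2y_{1,1}=2$, because $y_{1,1}=0$ on $A_n$. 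The other orbit points then follow from the symmetry relation.
\item For odd $n$, $h_{k,k}=2k-2\sigma_{k,k}$. Here $\sigma_{k,k}$ is the sum of the upper-left $k\times k$ block, and quarter-turn symmetry gives $4\sigma_{k,k}+2\rho_{k+1}(Y)+x_{\ell,\ell}=n$. Hence $\sigma_{k,k}$ is fixed on $A_n$.
\item For even $n$, $(k,k)$ is $R$-fixed, so $h_{k,k}=n/2$. For $(k-1,k-1)$, write $\sigma_{k-1,k-1}=\sigma_{k,k}-\sum_{j\le k}y_{k,j}-\sum_{i\le k}y_{i,k}+y_{k,k}$. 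Using $\sigma_{k,k}=k/2$ (which follows from $4\sigma_{k,k}=n$), $\rho_k(Y)=1$, and $y_{k,k}=0$, this quantity is constant.
\end{itemize}
Because all these identities are affine in $Y$, they hold on all of $A_n$ and not only at lattice points.

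\emph{Dimension count and spanning.} Each vector in $T$ is determined by its values on one representative of each four-element orbit disjoint from $\mathcal B_n\cup\mathcal Z_n$. Two cases are excluded from this count:
\begin{itemize}
\item Orbits of size one or two force $D=0$. The only such orbit is $\{(n/2,n/2)\}$ for even $n$, and it lies in $\mathcal Z_n$.
\item Orbits of size four meeting $\mathcal Z_n$ are entirely contained in $\mathcal Z_n$.
\end{itemize}
Consequently the vectors $\mathcal E_u$, one per representative $u$, form a basis of $T$: each is supported on its own orbit with value $1$ at $u$. The number of interior grid points is $(n-1)^2$, and subtracting $|\mathcal Z_n|$ gives
\[
(n-1)^2-9=4\bigl(\tfrac{(n-1)^2}{4}-2\bigr)-1 \quad\text{for even } n,\qquad (n-1)^2-8 \quad\text{for odd } n.
\]
Hence $\dim T$ equals the number of four-orbits, which is $\floor*{(n-1)^2/4}-2=\dim A_n$ in both cases. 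Since $\Psi_n$ is injective (as argued in the proof of \cref{thm:qtsasm:dimension}), the difference space has dimension $\dim A_n$. Combined with the inclusion, it equals $T$, and the basis statement follows.

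The main obstacle is the even case of the constancy of $h$ on $\mathcal O(k-1,k-1)$. It requires correctly expressing $\sigma_{k-1,k-1}$ through $\rho_k$, $y_{k,k}$, and the block sum, and checking that the orbit indeed has four distinct elements. If that bookkeeping becomes awkward, we would fall back on checking the value directly on the spanning base cases and the embedding from the dimension proof. The alternative route, showing that each $\mathcal E_u$ is realized as in \cref{lem:qtsasm:orbit-realization}, is avoided entirely by this dimension argument.
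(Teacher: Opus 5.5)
Your proof is correct and follows the paper's route. You check that differences vanish on $\mathcal B_n\cup\mathcal Z_n$ and satisfy the quarter-turn relation, then count four-element orbits and match the count against $\dim A_n$ via injectivity of $\Psi_n$; the only difference in structure is that you reduce to QTSASM height matrices through $A_n=\aff(P^\core_{\QTSASM(n)})$, whereas the paper argues directly for arbitrary $Y\in A_n$ and, for odd $n$, fixes $h_{k,k}$ through $z_{k+1,k}$ and $h_{k,k}=n-h_{k+1,k}$.

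One slip: in the odd case the identity should read $4\sigma_{k,k}+4\rho_{k+1}(Y)+x_{\ell,\ell}=n$, because the middle row and the middle column each contribute $2\rho_{k+1}(Y)$ away from the center. Your version is false for odd $k$, but the conclusion that $\sigma_{k,k}$ is constant is unaffected.
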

\begin{proof}
Write $k=\floor*{n/2}$.
We first show that the height values at the positions in $\mathcal Z_n$ are fixed on $A_n$.
Let $Y\in A_n$, set $X=\varphi(Y)$, and let $H=\Psi_n(Y)$.
The matrix $X$ is quarter-turn symmetric by the definition of the assembly map.
Moreover, all row and column sums of $X$ are equal to $1$.
For the first $k$ rows, this is exactly the system $\rho_i(Y)=1$.
When $n$ is odd, the middle row has sum $2\rho_{k+1}(Y)+(-1)^k=1$.
The remaining row sums follow by half-turn symmetry, and the column sums follow by quarter-turn symmetry.
Consequently, $H$ has the boundary height values in \cref{lem:asm:height-functions} and satisfies $h_{Rv}=n-h_v$.

Since $y_{1,1}=x_{1,1}=0$ on $A_n$, we have $h_{1,1}=2-2x_{1,1}=2$.
The rest of $\mathcal O(1,1)$ is fixed by the quarter-turn height relation.

Suppose $n=2k$.
The height-grid position $(k,k)$ is fixed by $R$, so $h_{k,k}=n-h_{k,k}$ and $h_{k,k}=k$.
Thus the even central height position is fixed.
Since $n\not\equiv2\pmod4$, $k$ is even.
Quarter-turn symmetry makes the four $k\times k$ quadrant sums equal.
Since the total sum of $X$ is $2k$, each quadrant sum is $k/2$, and hence $\sigma_{k,k}=k/2$.
Also $y_{k,k}=x_{k,k}=0$ on $A_n$.
Using $x_{k,k}=\sigma_{k,k}-\sigma_{k-1,k}-\sigma_{k,k-1}+\sigma_{k-1,k-1}$ and the quarter-turn corner-sum relation $\sigma_{k-1,k}=k-1-\sigma_{k,k-1}$, we obtain $\sigma_{k-1,k-1}=k/2-1$.
Therefore $h_{k-1,k-1}=2k-2-2\sigma_{k-1,k-1}=k$.
The other positions in $\mathcal O(k-1,k-1)$ are fixed by the quarter-turn relation.

Now suppose $n=2k+1$.
The middle-row affine-hull equation is $\sum_{j=1}^k y_{k+1,j}=\chi_{2\nmid k}$.
Equivalently, $z_{k+1,k}=\chi_{2\nmid k}$.
In height coordinates, $z_{k+1,k}=\sigma_{k+1,k}-\sigma_{k,k}=(1+h_{k,k}-h_{k+1,k})/2$.
Since $R(k+1,k)=(k,k)$, the quarter-turn height relation gives $h_{k,k}=n-h_{k+1,k}$.
Thus $\chi_{2\nmid k}=(1+2h_{k,k}-n)/2$, so $h_{k,k}$ is fixed.
The rest of $\mathcal O(k,k)$ is fixed by quarter-turn symmetry.
This proves that every height matrix in $\Psi_n(A_n)$ has the same values on $\mathcal Z_n$.

Now let $Y,Y'\in A_n$ and set $D=\Psi_n(Y)-\Psi_n(Y')$.
The boundary height values are independent of $Y\in A_n$, so $D$ vanishes on $\mathcal B_n$.
The quarter-turn height relation gives $D_{Rv}=-D_v$.
The common values on $\mathcal Z_n$ give $D_v=0$ for $v\in\mathcal Z_n$.
Thus $\Psi_n(A_n)-\Psi_n(A_n)$ is contained in the stated space.

Conversely, let $W$ be the stated space.
Every $R$-orbit disjoint from $\mathcal B_n\cup\mathcal Z_n$ has four elements, and the vectors $\mathcal E_u$ for a set of orbit representatives form a basis of $W$.
Counting these representatives gives $\dim W=\floor*{\frac{(n-1)^2}{4}}-2$.
If $n=2k$, then the interior positions other than the fixed point $(k,k)$ form $k(k-1)$ four-element $R$-orbits.
Excluding the two additional orbits $\mathcal O(1,1)$ and $\mathcal O(k-1,k-1)$ leaves $k(k-1)-2=\floor*{(n-1)^2/4}-2$ orbit directions.
If $n=2k+1$, then the interior positions form $k^2$ four-element $R$-orbits.
Excluding $\mathcal O(1,1)$ and $\mathcal O(k,k)$ leaves $k^2-2=\floor*{(n-1)^2/4}-2$ orbit directions.
By \cref{thm:qtsasm:dimension}, $A_n=\aff(P^\core_{\QTSASM(n)})$ and $\dim P_{\QTSASM(n)}=\floor*{(n-1)^2/4}-2$.
By \cref{thm:xasm:assembly}, the assembly map sends $P^\core_{\QTSASM(n)}$ onto $P_{\QTSASM(n)}$, and it is affine and injective on cores.
Thus $\dim A_n=\floor*{(n-1)^2/4}-2$.
Since $\Psi_n$ is affine and injective, $\dim(\Psi_n(A_n)-\Psi_n(A_n))=\dim(A_n-A_n)=\dim W$.
The containment proved above is therefore equality, and the basis statement follows.
\end{proof}

Every Ferrers functional $F_\lambda$ is supported in the shifted staircase $T_k=\{(i,j):i,j\in[2,k],\ i+j\leq k+1\}$.
For a linear combination of core coordinates $c(Y)=\sum_{(i,j)\in C_n}c_{i,j}y_{i,j}$, we say that $c$ is supported in $S\subseteq C_n$ if $c_{i,j}=0$ for every $(i,j)\notin S$.
The next lemma will later ensure that a non-zero Ferrers functional cuts the affine hull in codimension one.

\begin{lemma}\label{lem:qtsasm:no-shifted-staircase-affine-hull-equation}
Let $n\geq5$, $n\not\equiv2\pmod4$, and put $k=\floor*{n/2}$.
No non-zero linear combination of core coordinates supported entirely in $T_k$ is constant on $A_n$.
\end{lemma}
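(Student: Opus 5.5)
The plan is to work entirely in height coordinates and use \cref{prop:qtsasm:height-tangent-space}. Suppose $c(Y)=\sum_{(i,j)\in T_k}c_{i,j}y_{i,j}$ is constant on $A_n$. Then $c$ vanishes on the difference space $A_n-A_n$. Because $\Psi_n$ is affine and injective, each core direction can be written as a height direction $D\in\Psi_n(A_n)-\Psi_n(A_n)$, and the corresponding core entries are second differences of corner sums. Concretely, $x_{i,j}=\sigma_{i,j}-\sigma_{i-1,j}-\sigma_{i,j-1}+\sigma_{i-1,j-1}$ and $\sigma_{p,q}=(p+q-h_{p,q})/2$, so a height change $D$ changes $y_{i,j}$ by $-\tfrac12(D_{i,j}-D_{i-1,j}-D_{i,j-1}+D_{i-1,j-1})$. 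The goal is therefore to show that $\sum_{(i,j)\in T_k}c_{i,j}\,\Delta_{i,j}(D)=0$ for all $D$ in the height-difference space forces $c=0$. Here $\Delta_{i,j}$ denotes this mixed second difference.

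To exploit this, I would test against the basis directions $\mathcal E_u$, one for each representative $u$ of a four-element orbit disjoint from $\mathcal B_n\cup\mathcal Z_n$. The direction $\mathcal E_u$ affects $y_{i,j}$ only when one of the four corners $(i,j),(i-1,j),(i,j-1),(i-1,j-1)$ lies in $\mathcal O(u)$. The cells of $T_k$ satisfy $i,j\ge2$ and $i+j\le k+1$. Their corner positions $(p,q)$ therefore satisfy $p,q\ge1$ and $p+q\le k+1$, so they all lie in the upper-left quadrant of the height grid, strictly away from the antidiagonal region. The key geometric claim is that for such $(p,q)$ the other orbit points $R(p,q)=(q,n-p)$, $R^2(p,q)$ and $R^3(p,q)$ have $p$- or $q$-coordinate at least $n-k-1\ge k-1$. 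A short check should show that they never again lie in the corner set of $T_k$. As a result, applying $\mathcal E_{(p,q)}$ with the representative $(p,q)$ itself changes only the cells of $T_k$ having $(p,q)$ as a corner, and no rotated copies interfere.

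Next I would peel off coefficients by induction from the outer boundary of the staircase inward, ordering cells by decreasing $i+j$ and then decreasing $i$. Take a cell $(i,j)\in T_k$ that is maximal in this order, and test the functional against $\mathcal E_{(i,j)}$. The height position $(i,j)$ is the lower-right corner of cell $(i,j)$. It is also a corner of cells $(i+1,j)$, $(i,j+1)$ and $(i+1,j+1)$, but those cells either lie outside $T_k$ or come earlier in the order, so their coefficients are already known to vanish. The functional then evaluates to $-\tfrac12c_{i,j}$, which must be zero. Repeating this step kills every coefficient.

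The main obstacle is ensuring that the test position $(i,j)$ is a legitimate basis direction, that is, $(i,j)\notin\mathcal Z_n\cup\mathcal B_n$. For $(i,j)\in T_k$ we have $i,j\ge2$, so the position avoids $(1,1)$ and the boundary. The risk is a collision with $\mathcal O(k-1,k-1)$ or $(k,k)$ when $n=2k$, or with $\mathcal O(k,k)$ when $n=2k+1$. Since $i+j\le k+1$, we get $(i,j)=(k-1,k-1)$ only if $2k-2\le k+1$, which means $k\le3$. The admissible small cases are $n=5,7$ (where $k=2,3$) and $n=8$. For $n=7$ the set $T_3$ is the single cell $(2,2)$, and $(2,2)=(k-1,k-1)$ is not in $\mathcal Z_7$, so there is no problem. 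For $n=8$ with $k=4$, the position $(3,3)=(k-1,k-1)$ lies in $\mathcal Z_8$ but is not in $T_4$, since $3+3>5$. I would still check that no orbit point of an excluded position is a corner of some $T_k$ cell in a way that breaks the triangularity. If a fixed corner does occur, I would instead test with the upper-left corner $(i-1,j-1)$ of the minimal cell and run the induction in the opposite direction, from the origin outward.
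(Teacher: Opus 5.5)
Your argument is correct, but it takes a different route from the paper's.

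\textbf{The paper's route.} The paper stays entirely in core coordinates. Every linear functional constant on $A_n$ lies in the span of the defining left-hand sides: $\rho_1,\dots,\rho_k,y_{1,1},y_{k,k}$ for even $n$, and $\rho_1,\dots,\rho_{k+1},y_{1,1}$ for odd $n$. The coefficients of a few coordinates outside $T_k$ must then vanish: $y_{1,b}$ and $y_{2,k}$ in the even case, $y_{k+1,b}$ and $y_{1,2}$ in the odd case, together with $y_{1,1}$ (and $y_{k,k}$). This forces every multiplier to be zero. The argument uses only the explicit equations of $A_n$, not the fact that $A_n$ is the affine hull, so it is independent of \cref{thm:qtsasm:dimension}.

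\textbf{Your route.} You work dually. For each cell $(i,j)\in T_k$ you exhibit an explicit direction of $A_n$, namely the core direction corresponding to $\mathcal E_{(i,j)}$. You then show that the $T_k$-coordinates of these directions form a unitriangular matrix. This depends on \cref{prop:qtsasm:height-tangent-space}, and through it on the dimension theorem, so it uses heavier machinery. In exchange it gives concrete tangent directions realizing each $T_k$-coordinate, in the same height-coordinate language the paper uses afterwards for the Ferrers facets.

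Two small points need fixing.
\begin{enumerate}
\item The bound ``$p$- or $q$-coordinate at least $n-k-1\geq k-1$'' does not by itself exclude corners. For example, $(1,k-1)$ is a corner of the cell $(2,k-1)\in T_k$. The clean version is this: a test position $(i,j)\in T_k$ has $i,j\leq k-1$, and every corner of a $T_k$-cell has both coordinates at most $k-1$. The images $R(i,j)$, $R^2(i,j)$, $R^3(i,j)$ each have a coordinate at least $n-k+1\geq k+1$, so none of them is a corner.
\item The closing hedge about fixed corners is unnecessary. You only ever test with lower-right corners $(i,j)\in T_k$, and you correctly checked that their orbits avoid $\mathcal B_n\cup\mathcal Z_n$ for every admissible $n$; for $n=5$, $T_2$ is empty. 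Fixed corners such as $(1,1)$ contribute zero to every direction, so they cannot break the triangularity.
\end{enumerate}
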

\begin{proof}
The left-hand sides of the defining equations of $A_n$ span all linear combinations of core coordinates that are constant on $A_n$.
Thus, in the even case, every such combination lies in the span of $\rho_1,\dots,\rho_k,y_{1,1},y_{k,k}$; in the odd case, it lies in the span of $\rho_1,\dots,\rho_k,\rho_{k+1},y_{1,1}$.

Suppose first that $n=2k$.
Let $c=\sum_{i=1}^{k}\alpha_i\rho_i+\xi y_{1,1}+\mu y_{k,k}$ be supported in $T_k$.
Since $y_{1,b}$ lies outside $T_k$ for every $b\in[2,k]$, the coefficients of these coordinates give $\alpha_1+\alpha_b=0$ for $b\in[2,k]$.
In particular, $\alpha_2=\alpha_k=-\alpha_1$.
The coordinate $y_{2,k}$ also lies outside $T_k$, and its coefficient is $\alpha_2+\alpha_k$.
Thus $0=\alpha_2+\alpha_k=-2\alpha_1$.
Hence all $\alpha_i$ are zero.
The coordinates $y_{1,1}$ and $y_{k,k}$ lie outside $T_k$, so their vanishing coefficients force $\xi=\mu=0$.
Thus $c=0$.

Now suppose that $n=2k+1$.
Let $c=\sum_{i=1}^{k}\alpha_i\rho_i+\beta\rho_{k+1}+\xi y_{1,1}$ be supported in $T_k$.
The coordinates $y_{k+1,b}$ lie outside $T_k$, so their coefficients give $\alpha_b+\beta=0$ for $b\in[k]$.
Also $y_{1,2}$ lies outside $T_k$, and its coefficient gives $\alpha_1+\alpha_2=0$.
Since $\alpha_1=\alpha_2=-\beta$, this implies $\beta=0$.
Hence all $\alpha_i$ are zero.
Finally, the coordinate $y_{1,1}$ lies outside $T_k$, so its vanishing coefficient forces $\xi=0$.
Thus $c=0$.
\end{proof}

We now fix a non-zero Ferrers diagram $\lambda$ contained in the staircase and select row-prefix coordinates corresponding to the boundary edges of the shifted diagram.
These coordinates will characterize the QTSASM cores that satisfy $F_\lambda(Y)=0$.
For $a\in[k]$, set
\[
  j_a^{\mathrm L}=
  \begin{cases}
    1 & \text{if } a=1,\\
    \lambda_{a-1}+1 & \text{if } a\in[2,k].
  \end{cases}
\]
Let $\lambda'$ be the column-length sequence of $\lambda$, so $\lambda'_c=|\{i\in[k-1]:\lambda_i\geq c\}|$ for $c\geq1$, and for $b\in[k]$ set
\[
  j_b^{\mathrm R}=
  \begin{cases}
    n-1 & \text{if } b=1,\\
    n-\lambda'_{b-1}-1 & \text{if } b\in[2,k].
  \end{cases}
\]
For $Y\in A_n$, let $z=z(Y)$ be the row-prefix variables from \cref{eq:qtsasm:z-def}.
For $a\in[2,k]$, define
\[
  \omega_a^{\mathrm L}(Y)=z_{a,j_a^{\mathrm L}},
  \qquad
  \omega_a^{\mathrm R}(Y)=1-z_{a,j_a^{\mathrm R}},
\]
and, when $n$ is odd, define $\omega^{\mathrm M}(Y)=z_{\ell,1}$.
The superscripts indicate the two families of boundary edges that will be identified explicitly when the Ferrers cycle is defined below.
Let
\[
  \mathcal T_\lambda=
  \{(a,{\mathrm L}):a\in[2,k]\}
  \cup
  \{(a,{\mathrm R}):a\in[2,k]\},
\]
and, when $n$ is odd, adjoin the symbol ${\mathrm M}$ to $\mathcal T_\lambda$.
For $\tau\in\mathcal T_\lambda$, write $\omega_\tau$ for the corresponding boundary coordinate.
When $\omega_\tau$ is evaluated on a direction $V\in A_n-A_n$, we use its linear part; equivalently, $\omega_\tau(V)=\omega_\tau(Y)-\omega_\tau(Y')$ whenever $V=Y-Y'$ with $Y,Y'\in A_n$.
For $\tau\in\mathcal T_\lambda$, define
\[
  \mathcal U_{\lambda,\tau}^n
  =
  \{Y\in\pi_C(\QTSASM(n)):
      \omega_\tau(Y)=1,
      \ \omega_{\tau'}(Y)=0\text{ for every }\tau'\neq\tau
    \}.
\]
Thus $\mathcal U_{\lambda,\tau}^n$ consists of the QTSASM cores for which $\omega_\tau$ equals $1$ and every other boundary coordinate equals $0$.

Let $\widetilde C_n$ denote the row-prefix index set appearing as $\widetilde C$ in \cref{thm:qtsasm:corepolytope}, namely
\[
  \widetilde C_n=
  \begin{cases}
    [k]\times[n] & \text{if } 2\mid n,\\
    [k]\times[n]\cup\{(k+1,j):j\in[k]\} & \text{if } 2\nmid n.
  \end{cases}
\]
Define the Ferrers cut sign pattern $S^\lambda=(s^\lambda_{u,v})\in\{0,\pm1\}^{\widetilde C_n}$ by
\[
  s^\lambda_{u,v}=
  \begin{cases}
    -1 & \text{if } (u,v)=(a,j_a^{\mathrm L}) \text{ for some } a\in[k],\\
     1 & \text{if } (u,v)=(a,j_a^{\mathrm R}) \text{ for some } a\in[k],\\
    -1 & \text{if } (u,v)=(a,n) \text{ for some } a\in[2,k],\\
    -1 & \text{if } 2\nmid n \text{ and } (u,v)=(\ell,1),\\
     0 & \text{otherwise.}
  \end{cases}
\]
No position satisfies two of the non-zero cases in this definition.
To see this, the positions $(a,j_a^{\mathrm L})$ lie in columns $[k]$.
Moreover, $j_1^{\mathrm R}=n-1$, and the staircase bounds give $\lambda'_{b-1}\leq k-b$ for $b\in[2,k]$, so $j_b^{\mathrm R}\in[k+1,n-1]$ for every $b\in[k]$.
The positions $(a,n)$ lie in column $n$, while the last position, present only in odd size, lies in row $\ell=k+1$.
Thus $S^\lambda$ is well defined.

\begin{lemma}\label{lem:qtsasm:ferrers-defect-identity}
Let $n\geq5$, $n\not\equiv2\pmod4$, and let $\lambda$ be a non-zero Ferrers diagram contained in the staircase.
Then $S^\lambda\in\mathcal{S}$, and the cut inequality in \cref{eq:qtsasm:cut} indexed by $S^\lambda$ is $-F_\lambda(Y)\leq0$, equivalently the Ferrers inequality $F_\lambda(Y)\geq0$.
Moreover, every $Y\in A_n$ satisfies
\[
  1+2F_\lambda(Y)=\sum_{\tau\in\mathcal T_\lambda}\omega_\tau(Y).
\]
Consequently, $F_\lambda(Y)\geq0$ is valid on $P^\core_{\QTSASM(n)}$, and equality holds at a QTSASM core if and only if exactly one boundary coordinate equals $1$ and all the remaining boundary coordinates equal $0$.
\end{lemma}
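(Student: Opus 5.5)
The plan is to compute the Ferrers cut directly from the formula in \cref{eq:qtsasm:cut}. First I will evaluate the numerator $\sum_{(u,v)\in\widetilde C_n}s^\lambda_{u,v}z_{u,v}$ as a linear form in the core variables. By the coefficient formula in the proof of \cref{thm:qtsasm:corepolytope}, the coefficient of $y_{i,j}$ in this numerator is
\[
  c_{i,j}=\sum_{j'=j}^{n_i}s^\lambda_{i,j'}+\sum_{i'=n+1-i}^{n}s^\lambda_{j,i'} .
\]
The main claim is that $c_{i,j}=-2$ when $(i,j)=(a+1,b+1)$ for a cell $(a,b)$ of $\lambda$, i.e.\ when $b\le\lambda_a$, and that $c_{i,j}=0$ otherwise. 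This shows at once that $S^\lambda\in\mathcal S$ and that the numerator equals $-2F_\lambda(Y)$ identically.

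To verify the claim, consider the first sum, the suffix of row $i$ from column $j$. It contains $s_{i,n}=-1$ when $i\ge2$, the $+1$ at $j_i^{\mathrm R}$ (always to the right of column $k$), and the $-1$ at $j_i^{\mathrm L}$ exactly when $j\le j_i^{\mathrm L}=\lambda_{i-1}+1$. So for $i\in[2,k]$ it equals $-1$ if $j\le\lambda_{i-1}+1$ and $0$ otherwise. For $i=1$ it equals $1-\chi_{j=1}$. In the odd case, for row $\ell$ it equals $-\chi_{j=1}$.

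The second sum is the suffix of row $j$ starting at column $n+1-i$. Write $c=n+1-i\ge k+1$. This suffix contains the $-1$ at $(j,n)$ when $j\ge2$, and the $+1$ at $j_j^{\mathrm R}$ exactly when $n+1-i\le n-\lambda'_{j-1}-1$, i.e.\ when $\lambda'_{j-1}\le i-2$; it never reaches the position $j_j^{\mathrm L}$. So for $j\ge2$ it equals $-1$ if $\lambda'_{j-1}\ge i-1$ and $0$ otherwise, and for $j=1$ it equals $1$.

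Adding the two sums, and using the conjugation equivalence $\lambda'_{j-1}\ge i-1\iff\lambda_{i-1}\ge j-1$ (with the conventions $\lambda_0=\infty$ for the boundary rows and columns), gives the claim. The separate cases are $i=1$, $j=1$, and, for odd $n$, $i=\ell$, where the $-1$ at $(\ell,1)$ is placed precisely to cancel the $+1$ coming from column $j=1$. I expect this bookkeeping, together with the boundary conventions, to be the main obstacle, and I will check it cell by cell on the four regions: the first row, the first column, the middle row, and the shifted staircase $T_k$.

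For the right-hand side of \cref{eq:qtsasm:cut}, the number of $+1$ entries of $S^\lambda$ is $k$ and $|\{i: s^\lambda_{i,n}=-1\}|=k-1$, so the bound is $\lfloor 1/2\rfloor=0$. The cut is therefore $-F_\lambda(Y)\le0$, and it is valid on $P^\core_{\QTSASM(n)}$ by \cref{thm:qtsasm:corepolytope}.

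For the identity, I will evaluate the same numerator on $Y\in A_n$ using the $z$-variables instead. We have $z_{1,1}=y_{1,1}=0$, and \cref{eq:qtsasm:z-recursion:cases} gives $z_{1,n-1}=z_{1,n}-y_{1,1}=1$. Moreover $z_{a,n}=1$ for $a\in[2,k]$, $z_{a,j_a^{\mathrm L}}=\omega_a^{\mathrm L}$, $z_{a,j_a^{\mathrm R}}=1-\omega_a^{\mathrm R}$, and $z_{\ell,1}=\omega^{\mathrm M}$. Summing,
\[
  \sum s^\lambda z = -0+1-(k-1)-\sum_{\tau\in\mathcal T_\lambda}\omega_\tau+(k-1) = 1-\sum_{\tau\in\mathcal T_\lambda}\omega_\tau .
\]
Equating this with $-2F_\lambda(Y)$ yields $1+2F_\lambda(Y)=\sum_\tau\omega_\tau(Y)$.

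Finally, at a QTSASM core every $z$-value lies in $\{0,1\}$, so each $\omega_\tau\in\{0,1\}$, and $F_\lambda(Y)$ is a nonnegative integer. Hence $F_\lambda(Y)=0$ holds exactly when $\sum_\tau\omega_\tau=1$, that is, when exactly one boundary coordinate equals $1$ and all the others equal $0$.
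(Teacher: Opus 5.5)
Your proposal follows the paper's proof essentially step for step. It uses the same coefficient formula for $y_{i,j}$ in $\sum s^\lambda_{u,v}z_{u,v}$, and the same cancellation of the signs at $(a,j_a^{\mathrm R})$ and $(a,n)$. It uses conjugacy $j-1\le\lambda_{i-1}\iff i-1\le\lambda'_{j-1}$ on $[2,k]^2$, the same right-hand side $\lfloor (k-(k-1))/2\rfloor=0$, and the same evaluation on $A_n$ via $z_{1,1}=0$, $z_{1,n-1}=1$ and $z_{a,n}=1$.

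There is one slip to fix: the second sum for $j=1$ is not always $1$. It is the suffix of row $1$ starting at column $n+1-i$. That suffix contains the $+1$ at $(1,j_1^{\mathrm R})=(1,n-1)$ only when $i\ge 2$. Moreover $s^\lambda_{1,n}=0$, because the entries equal to $-1$ in column $n$ occur only in rows $a\in[2,k]$. So the second sum equals $\chi_{i\ge 2}$. With your stated value, the coefficient of $y_{1,1}$ would be $0+1=1$, which is odd and would contradict $S^\lambda\in\mathcal S$. With the correct value it is $0+0=0$, exactly as in the paper. Your planned cell-by-cell check of the first row would catch this.

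After this correction the argument is complete. Citing \cref{thm:qtsasm:corepolytope} for validity is fine, since $S^\lambda\in\mathcal S$; the paper instead derives validity from the identity, integrality of $F_\lambda$ on cores, and convexity.
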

\begin{proof}
For $Y\in\R^{C_n}$, let $z=z(Y)$ be the row-prefix expressions from \cref{eq:qtsasm:z-def}.
In the following coefficient calculation, we use the convention $s^\lambda_{u,v}=0$ for $(u,v)\notin \widetilde C_n$.
The coefficient of $y_{a,b}$ in $\sum_{(u,v)\in \widetilde C_n}s^\lambda_{u,v}z_{u,v}$ is
\[
  \sum_{v=b}^{n}s^\lambda_{a,v}
  +
  \sum_{u=n+1-a}^{n}s^\lambda_{b,u}.
\]
Let $a,b\in[2,k]$.
The coordinate $y_{a,b}$ occurs in $F_\lambda$ exactly when $b-1\leq\lambda_{a-1}$.
By conjugacy of the row- and column-length sequences, this condition is equivalent to $a-1\leq\lambda'_{b-1}$.
In the first sum of the coefficient formula, the signs at $(a,j_a^{\mathrm R})$ and $(a,n)$ cancel, and the remaining sign at $(a,j_a^{\mathrm L})$ contributes $-1$ exactly when $b\leq j_a^{\mathrm L}$, or equivalently $b-1\leq\lambda_{a-1}$.
In the second sum, the sign at $(b,n)$ always contributes $-1$, while the sign at $(b,j_b^{\mathrm R})$ is also included exactly when $a\geq\lambda'_{b-1}+2$.
Hence the second sum equals $-1$ exactly when $a-1\leq\lambda'_{b-1}$ and equals $0$ otherwise.
It follows that the coefficient of $y_{a,b}$ is $-2$ precisely when $y_{a,b}$ occurs in $F_\lambda$, and is $0$ otherwise.

It remains to check the coordinates in the first row, the first column, and, when $n$ is odd, the last core row.
For $a=1$ and $b\in[2,k]$, the first sum contributes $s^\lambda_{1,j_1^{\mathrm R}}=1$ and the second sum contributes $s^\lambda_{b,n}=-1$.
If $a\in[2,k]$ and $b=1$, then the first sum contributes $s^\lambda_{a,j_a^{\mathrm L}}+s^\lambda_{a,j_a^{\mathrm R}}+s^\lambda_{a,n}=-1$ and the second sum contributes $s^\lambda_{1,j_1^{\mathrm R}}=1$.
For $(a,b)=(1,1)$, the two signs $s^\lambda_{1,j_1^{\mathrm L}}$ and $s^\lambda_{1,j_1^{\mathrm R}}$ cancel in the first sum, and the second sum is zero.
When $n$ is odd, the coefficient of every coordinate in row $\ell$ is also zero.
For $b=1$, the terms $s^\lambda_{\ell,1}$ and $s^\lambda_{1,j_1^{\mathrm R}}$ cancel, while for $b\in[2,k]$ the terms $s^\lambda_{b,j_b^{\mathrm R}}$ and $s^\lambda_{b,n}$ cancel.
Thus, as an identity in the core variables,
\begin{equation}\label{eq:qtsasm:ferrers-sign-sum}
  \sum_{(u,v)\in \widetilde C_n}s^\lambda_{u,v}z_{u,v}=-2F_\lambda(Y).
\end{equation}
For each $(a,b)\in C_n$, the coefficient displayed above is the integer whose parity is prescribed in the definition of $\mathcal{S}$.
Since each such coefficient is $0$ or $-2$, the sign pattern $S^\lambda$ belongs to $\mathcal{S}$.
The positive entries of $S^\lambda$ are the $k$ positions $(a,j_a^{\mathrm R})$ with $a\in[k]$, and its negative entries in column $n$ are the $k-1$ positions $(a,n)$ with $a\in[2,k]$.
Hence the right-hand side of the cut inequality in \cref{eq:qtsasm:cut} is $\floor*{(k-(k-1))/2}=0$.
By \cref{eq:qtsasm:ferrers-sign-sum}, the left-hand side of that cut inequality is $-F_\lambda(Y)$.

We next derive the boundary-coordinate identity from the same sign pattern.
Assume that $Y\in A_n$.
Since $z_{a,n}=1$ for $a\in[k]$ on $A_n$, the definition of $S^\lambda$ gives
\[
  \sum_{(u,v)\in \widetilde C_n}s^\lambda_{u,v}z_{u,v}
  =
  \sum_{a=1}^k z_{a,j_a^{\mathrm R}}
  -
  \sum_{a=1}^k z_{a,j_a^{\mathrm L}}
  -(k-1)
  -
  \begin{cases}
    0 & \text{if } 2 \mid n,\\
    z_{\ell,1} & \text{if } 2 \nmid n.
  \end{cases}
\]
Because $Y\in A_n$, we have $y_{1,1}=0$, and hence $z_{1,j_1^{\mathrm L}}=z_{1,1}=0$ and $z_{1,j_1^{\mathrm R}}=z_{1,n-1}=z_{1,n}=1$.
Combining the last display with \cref{eq:qtsasm:ferrers-sign-sum} and rearranging gives
\[
  1+2F_\lambda(Y)=\sum_{\tau\in\mathcal T_\lambda}\omega_\tau(Y).
\]
For a QTSASM core, every row-prefix variable belongs to $\{0,1\}$, and therefore every boundary coordinate belongs to $\{0,1\}$.
Since $F_\lambda(Y)$ is integral on QTSASM cores, the left-hand side $1+2F_\lambda(Y)$ is odd.
The right-hand side is non-negative, so the identity implies $1+2F_\lambda(Y)\geq1$ and hence $F_\lambda(Y)\geq0$.
Convexity gives validity on $P^\core_{\QTSASM(n)}$.
Equality holds exactly when one boundary coordinate equals $1$ and all the remaining boundary coordinates equal~$0$.
\end{proof}

We next interpret the selected boundary coordinates in height coordinates.
Their associated height-grid edges form a monotone path in the northwest part of the grid.
Define the northwest Ferrers arc to be the unique monotone grid path
\[
  \Pi_\lambda=(v_0,\dots,v_M),
  \qquad
  v_0=
  \begin{cases}
    (k,1) & \text{if } n=2k,\\
    (k+1,1) & \text{if } n=2k+1,
  \end{cases}
  \qquad
  v_M=(1,k),
\]
with steps $(-1,0)$ and $(0,1)$ whose labelled edges are as follows.
For $a\in[2,k]$, the vertical edge $[(a-1,j_a^{\mathrm L}),(a,j_a^{\mathrm L})]$ has label $(a,{\mathrm L})$, and the horizontal edge $[(\lambda'_{a-1}+1,a-1),(\lambda'_{a-1}+1,a)]$ has label $(a,{\mathrm R})$.
When $n$ is odd, the middle edge $[(k,1),(k+1,1)]$ has label ${\mathrm M}$.
These labelled edges exhaust the path, and its length is $M=n-2$.
Define the Ferrers cycle by $\Gamma_\lambda=\bigcup_{t=0}^3 R^t(\Pi_\lambda)$.

\begin{lemma}\label{lem:qtsasm:ferrers-defect-cycle}
Let $n\geq5$, $n\not\equiv2\pmod4$, and let $\lambda$ be a non-zero Ferrers diagram contained in the staircase.
Then $\Gamma_\lambda$ is a connected $R$-invariant simple grid cycle.
\end{lemma}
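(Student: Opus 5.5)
The plan is to first pin down $\Pi_\lambda$ concretely as a monotone lattice path and only then rotate it. Consider first the even case $n=2k$. The path starts at $v_0=(k,1)$ and ends at $v_M=(1,k)$ using steps $(-1,0)$ and $(0,1)$. It therefore has $k-1$ up-steps and $k-1$ right-steps, so $M=2k-2=n-2$.

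I will verify that the labelled edges are exactly the edges of the boundary walk of the shifted diagram. Row $a-1$ of $\lambda$ has length $\lambda_{a-1}$. Hence the vertical edge leaving row $a$ to row $a-1$ sits at column $j_a^{\mathrm L}=\lambda_{a-1}+1$, and the horizontal edge from column $a-1$ to column $a$ sits at height $\lambda'_{a-1}+1$. The row/column conjugacy of $\lambda$ makes these $2(k-1)$ edges consecutive along a single staircase-monotone path. The staircase bound $\lambda_i\le k-1-i$ guarantees that the path stays inside the open triangle $\{(p,q):p\ge1,\ q\ge1,\ p+q\le k+1\}$, apart from its endpoints on the lines $q=1$ and $p=1$.

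In the odd case one extra vertical edge is added, namely the edge ${\mathrm M}$ from $(k+1,1)$ to $(k,1)$. This gives $M=2k-1=n-2$.

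Next I will check the gluing of the four rotated copies. In the even case $R(v_0)=R(k,1)=(1,k)=v_M$. Thus $R(\Pi_\lambda)$ begins where $\Pi_\lambda$ ends, and the four arcs concatenate to a closed walk $v_0\to R v_0\to R^2v_0\to R^3v_0\to R^4v_0=v_0$. In the odd case, $R(k+1,1)=(1,n-k-1)=(1,k)$ gives the same conclusion. Consequently $\Gamma_\lambda$ is connected, and it is $R$-invariant by construction.

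The main work is simplicity, that is, showing that the closed walk never revisits a vertex. The approach is to place each arc in a region and check that these regions are pairwise disjoint except at the junction vertices. For the first arc I use the quadrant-type region $N=\{(p,q): q\le p' \text{ etc.}\}$. Concretely, every vertex of $\Pi_\lambda$ other than $v_M$ satisfies $q<n-p$ and $q\le k$ together with $p\ge q$ or $p\le k$. The cleanest choice is to use the two diagonals $q=p$ and $p+q=n$ of the height grid, which are permuted by $R$.

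I expect the main obstacle to be showing that the interior vertices of $\Pi_\lambda$ lie strictly on one side of one diagonal and weakly inside the other. Concretely, I will try to prove that every vertex $(p,q)$ of $\Pi_\lambda$ satisfies $p+q\le k+1<n$ and $q\le p+\varepsilon$, where $\varepsilon$ accounts for the endpoint $v_M=(1,k)$. If this fails, the fallback is a sector defined by the rotated-sector inequalities $p\ge 1,\ q\ge1,\ p\le k+\chi_{2\nmid n},\ q\le k$. I believe this sector $\{1\le p\le \ell,\ 1\le q\le k\}$ (the core block) is the right region. Its images under $R,R^2,R^3$ are the other three quadrant blocks, and these meet only along the middle lines. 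Moreover, $\Pi_\lambda$ touches the middle lines only at its endpoints $v_0$ (on $p=\ell$ or $p=k$) and $v_M$ (on $q=k$). This holds because $\lambda\ne0$ is non-zero and lies in the staircase: all interior vertices have $p\le k-1$ and $q\le k-1$, and in the odd case $p\le k$ as well.

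Finally, $\Pi_\lambda$ itself is simple because it is monotone. The four arcs therefore meet only at the four junctions $R^tv_0$. These junctions are distinct since $v_0$ is not the center of $R$: its coordinates differ from $(n/2,n/2)$. This yields a simple cycle of length $4(n-2)$.
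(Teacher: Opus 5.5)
Your fallback argument is correct and is essentially the paper's proof. The paper also confines $\Pi_\lambda$ to $[k]\times[k]$ (plus $v_0=(k+1,1)$ for odd $n$), notes that only $v_M$ has second coordinate $k$ and only $v_0$ has first coordinate $k$ (resp.\ $k+1$), and concludes that consecutive rotated arcs meet only at the junctions $R^t v_0$ while opposite arcs are disjoint. Two corrections to your write-up: the diagonal-sector idea should be discarded outright, because $\Pi_\lambda$ always contains both $(k-1,1)$ and $(1,k-1)$ and so crosses $p=q$; and the fact that interior vertices avoid the shared lines $q=k$ and $p=k$ (resp.\ $p=\ell$) comes from monotonicity and the staircase bound, which force $\lambda_{k-1}=\lambda'_{k-1}=0$, not from $\lambda\neq0$.
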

\begin{proof}
The vertical and horizontal edges labelled above are exactly the vertical and horizontal boundary steps of the shifted Ferrers boundary in the northwest quadrant.
Since $\lambda$ is weakly decreasing, these steps concatenate to a simple monotone path.
Every vertex of $\Pi_\lambda$ lies in $[k]\times[k]$, except that in the odd case the initial endpoint is $v_0=(k+1,1)$.
The only vertex with second coordinate $k$ is $v_M=(1,k)$; in the even case the only vertex with first coordinate $k$ is $v_0$, and in the odd case the only vertex with first coordinate $k+1$ is $v_0$.
Consequently $\Pi_\lambda$ meets $R\Pi_\lambda$ only at $v_M=Rv_0$, meets $R^3\Pi_\lambda$ only at $v_0=R^3v_M$, and is disjoint from $R^2\Pi_\lambda$; the same statements hold after rotating.
In the even case this path runs from $(k,1)$ to $(1,k)$, and $R(k,1)=(1,k)$.
In the odd case the additional edge extends the same path to $(k+1,1)$, and $R(k+1,1)=(1,k)$.
Thus the four quarter-turn images of the northwest arc close to one connected simple cycle.
\end{proof}

For each $\tau\in\mathcal T_\lambda$, we now prescribe height values on $\Gamma_\lambda$ so that every QTSASM height matrix agreeing with these values has core in $\mathcal U_{\lambda,\tau}^n$.
Along $\Pi_\lambda$, the coordinate sum changes by $s_{i+1}-s_i\in\{\pm1\}$ across the edge $[v_i,v_{i+1}]$.
The prescription follows this change on every edge except the edge labeled by $\tau$, where the change is reversed.
The initial value is chosen so that the two endpoints, which are quarter-turn images of one another, receive complementary heights.
Continue to write $\Pi_\lambda=(v_0,\dots,v_M)$, where $v_i=(x_i,y_i)$.
Let $\tau_i$ be the label of the edge $[v_i,v_{i+1}]$, and set $s_i=x_i+y_i$.
For $\tau\in\mathcal T_\lambda$, define
\[
  \nu_i^\tau=
  \begin{cases}
    s_i-s_{i+1} & \text{if } \tau_i=\tau,\\
    s_{i+1}-s_i & \text{if } \tau_i\neq\tau.
  \end{cases}
\]
Set $c^\tau(v_0)=(n-\sum_{i=0}^{M-1}\nu_i^\tau)/2$, and recursively define $c^\tau(v_{i+1})=c^\tau(v_i)+\nu_i^\tau$ for $i\in[0,M-1]$.
Extend $c^\tau$ to all of $V(\Gamma_\lambda)$ by $c^\tau(Rv)=n-c^\tau(v)$.

\begin{lemma}\label{lem:qtsasm:single-defect-cycle-data}
Let $n\geq5$, $n\not\equiv2\pmod4$, and let $\lambda$ be a non-zero Ferrers diagram contained in the staircase.
For every $\tau\in\mathcal T_\lambda$, the function $c^\tau$ is well defined on $V(\Gamma_\lambda)$, satisfies $c^\tau(Rv)=n-c^\tau(v)$ and $c^\tau(v)\equiv v_1+v_2\pmod2$ for every $v\in V(\Gamma_\lambda)$, and has height difference $\pm1$ across every edge of $\Gamma_\lambda$.
If a QTSASM height matrix agrees with $c^\tau$ on $V(\Gamma_\lambda)$, then the core of the corresponding QTSASM lies in $\mathcal U_{\lambda,\tau}^n$.
\end{lemma}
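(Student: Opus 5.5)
Well-definedness and the local conditions come from the construction; the core statement comes from translating heights back into row-prefix sums. Well-definedness is the main point. By \cref{lem:qtsasm:ferrers-defect-cycle}, the vertices of $\Gamma_\lambda$ are the disjoint union of the interiors of the four rotated arcs $R^t\Pi_\lambda$. These arcs are glued only at the endpoints $v_0$ and $v_M=Rv_0$ and their rotations. Hence the rule $c^\tau(Rv)=n-c^\tau(v)$ defines $c^\tau$ on each rotated arc without conflict. The only compatibility to check is $c^\tau(v_M)=n-c^\tau(v_0)$. This is exactly what the choice $c^\tau(v_0)=(n-\sum_i\nu_i^\tau)/2$ ensures, since $c^\tau(v_M)=c^\tau(v_0)+\sum_i\nu_i^\tau$. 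At the other gluing points, $R^tv_M=R^{t+1}v_0$, the same identity applies after rotation: $c^\tau(R^{t}v)=n-c^\tau(R^{t-1}v)$ alternates, so both definitions agree. On the orbit level this closes up consistently because $c^\tau(R^2v)=c^\tau(v)$ and $R^4=\mathrm{id}$.

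Next I check integrality and parity. On $\Pi_\lambda$ we have $\sum_i\nu_i^\tau=(s_M-s_0)\pm2(\text{the step at }\tau)$, which is congruent to $s_M-s_0\pmod 2$. In the even case $s_0=k+1=s_M$; in the odd case $s_0=k+2$ and $s_M=k+1$. So $n-\sum_i\nu_i^\tau\equiv n-(s_M-s_0)\pmod2$, and this is even in both cases ($n=2k$ with difference $0$; $n=2k+1$ with difference $-1$). Thus $c^\tau(v_0)$ is an integer. I then verify $c^\tau(v_0)\equiv s_0\pmod2$ using $n\not\equiv2\pmod4$. For instance, in the even case $\sum_i\nu_i^\tau=\pm2$, so $c^\tau(v_0)=k\mp1$, which is odd because $k$ is even, and $s_0=k+1$ is odd. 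The odd case is similar. Parity then propagates along the arc because each $\nu_i^\tau=\pm1$ matches the change in $s$ modulo $2$. Under $R$, parity is preserved by the computation already used in \cref{lem:qtsasm:orbit-realization}: $n-p-q\equiv q+(n-p)$. The height differences are $\pm1$ on $\Pi_\lambda$ by construction, and on the rotated arcs because $v\mapsto n-c^\tau(v)$ preserves absolute differences.

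For the final claim, let $H$ be a QTSASM height matrix agreeing with $c^\tau$ on the cycle, and let $Y$ be its core. I express each boundary coordinate through heights. A vertical edge $[(a-1,j),(a,j)]$ gives $z_{a,j}=\sigma_{a,j}-\sigma_{a-1,j}=(1+h_{a-1,j}-h_{a,j})/2$. So $\omega^{\mathrm L}_a=1$ exactly when the height decreases going down that edge, i.e.\ when the step opposes the coordinate-sum change; otherwise it is $0$. A horizontal edge $[(p,b-1),(p,b)]$ encodes a column-prefix sum. Quarter-turn symmetry, via the identification $z_{b,j^{\mathrm R}_b}$ with the relevant column prefix of $\varphi(Y)$, converts $1-z_{b,j^{\mathrm R}_b}$ into $(1+h_{p,b-1}-h_{p,b})/2$. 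This again equals $1$ exactly when the height step reverses the coordinate-sum change. The middle edge $\mathrm M$ is handled the same way with $z_{\ell,1}$.

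By the definition of $\nu^\tau$, exactly the edge labelled $\tau$ is reversed. Hence $\omega_\tau(Y)=1$ and $\omega_{\tau'}(Y)=0$ for every $\tau'\ne\tau$, so $Y\in\mathcal U^n_{\lambda,\tau}$. The main obstacle is getting the index bookkeeping of the $\mathrm R$ edges right, including the sign convention and the exact translation of $j^{\mathrm R}_b=n-\lambda'_{b-1}-1$ into the horizontal edge at row $\lambda'_{b-1}+1$. I would settle this first by unwinding \cref{eq:qtsasm:z-def} together with the corner-sum identity $\sigma_{q,n-p}=q-\sigma_{p,q}$ from \cref{lem:asm:height-functions}.
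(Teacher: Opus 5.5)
Your proposal is correct and follows essentially the same route as the paper. The choice of $c^\tau(v_0)$ forces $c^\tau(v_0)+c^\tau(v_M)=n$, and integrality, parity and the $\pm1$ steps then propagate along $\Pi_\lambda$ and under $v\mapsto n-c^\tau(v)$; the boundary coordinates are read off from $z_{a,j}=(1+h_{a-1,j}-h_{a,j})/2$, and your corner-sum identity $\sigma_{q,n-p}=q-\sigma_{p,q}$ for the $\mathrm R$ edges is equivalent to the paper's use of $R^3$ together with $h_{Rv}=n-h_v$. The only quibble is that the parity check at $v_0$ does not need $n\not\equiv2\pmod4$: in the even case $c^\tau(v_0)=k\mp1\equiv k+1=s_0\pmod 2$ for every $k$.
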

\begin{proof}
Along the northwest arc, $s_{i+1}-s_i\in\{\pm1\}$, and hence $\nu_i^\tau\in\{\pm1\}$.
Let $m$ be the unique index with $\tau_m=\tau$, and put $\varepsilon=s_{m+1}-s_m$.
Then $\sum_{i=0}^{M-1}\nu_i^\tau=s_M-s_0-2\varepsilon$.
Since $v_M=Rv_0=(y_0,n-x_0)$ and $v_0=(x_0,1)$, we get $n-s_M+s_0=2x_0$.
Thus $n-\sum_{i=0}^{M-1}\nu_i^\tau=2x_0+2\varepsilon$, so $c^\tau(v_0)$ is an integer and $c^\tau(v_0)=x_0+\varepsilon\equiv x_0+1\equiv v_{0,1}+v_{0,2}\pmod2$.
The same calculation gives $c^\tau(v_0)+c^\tau(v_M)=n$, which is the compatibility condition needed to extend the prescription by quarter-turn symmetry.
Since $v_M=Rv_0$, the rule $c^\tau(Rv)=n-c^\tau(v)$ is consistent at the endpoints and then on all four rotated arcs.
The parity condition is preserved along the arc because every step changes both grid parity and height parity, and it is preserved under quarter-turn because $n-c^\tau(v)\equiv n-v_1-v_2\equiv (Rv)_1+(Rv)_2\pmod2$.
The recursive definition changes the height by $\nu_i^\tau\in\{\pm1\}$ on each northwest-arc edge, and the quarter-turn rule gives height difference $\pm1$ on the rotated edges.
Along the northwest arc, the recursive definition uses the coordinate-sum change $s_{i+1}-s_i$ on every edge except the edge labeled by $\tau$, where it uses the negative of that change.
For a height matrix $H$ and a vertical row-prefix edge $[(a-1,j),(a,j)]$, the corresponding prefix variable is $z_{a,j}=(H_{a-1,j}-H_{a,j}+1)/2$.
Hence, on the edge labeled $(a,{\mathrm L})$, the ordinary coordinate-sum change gives $\omega_a^{\mathrm L}=0$, while the reversed change gives $\omega_a^{\mathrm L}=1$.
For the coordinate $\omega_b^{\mathrm R}$, applying $R^3$ to the vertical row-prefix edge $[(b-1,j_b^{\mathrm R}),(b,j_b^{\mathrm R})]$ gives the northwest horizontal edge $[(\lambda'_{b-1}+1,b-1),(\lambda'_{b-1}+1,b)]$.
The same endpoint-height calculation shows that the ordinary change gives $\omega_b^{\mathrm R}=0$ and the reversed change gives $\omega_b^{\mathrm R}=1$.
When $n$ is odd, the middle edge is handled by the vertical-edge calculation.
Thus the core of a QTSASM height matrix agreeing with $c^\tau$ on $V(\Gamma_\lambda)$ lies in $\mathcal U_{\lambda,\tau}^n$.
\end{proof}

The next lemma translates equality of the boundary coordinates into a condition on the height values along the Ferrers cycle.

\begin{lemma}\label{lem:qtsasm:fixed-defect-height-differences}
Let $n\geq5$, $n\not\equiv2\pmod4$, and let $\lambda$ be a non-zero Ferrers diagram contained in the staircase.
Let $Y,Y'\in A_n$ and set $D=\Psi_n(Y)-\Psi_n(Y')$.
Then $\omega_\tau(Y-Y')=0$ for every $\tau\in\mathcal T_\lambda$ if and only if $D_v=0$ for every $v\in V(\Gamma_\lambda)$.
\end{lemma}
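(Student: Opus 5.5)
The plan is to translate each boundary coordinate $\omega_\tau$ into a height difference across one labelled edge of the northwest arc $\Pi_\lambda$, and then propagate along the arc and around the cycle $\Gamma_\lambda$ by the quarter-turn relation $D_{Rv}=-D_v$. By the definition of $\Psi_n$ and the corner-sum identities, for every $Y\in A_n$ and every vertical row-prefix edge $[(a-1,j),(a,j)]$ we have $z_{a,j}=(H_{a-1,j}-H_{a,j}+1)/2$ with $H=\Psi_n(Y)$. Taking differences gives
\[
  z_{a,j}(Y)-z_{a,j}(Y')=\tfrac12\bigl(D_{a-1,j}-D_{a,j}\bigr).
\]
So $\omega^{\mathrm L}_a(Y-Y')=0$, and in the odd case $\omega^{\mathrm M}(Y-Y')=0$, is equivalent to $D$ taking equal values at the two endpoints of the corresponding vertical labelled edge. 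For $\omega^{\mathrm R}_b$, the relevant row-prefix edge is $[(b-1,j_b^{\mathrm R}),(b,j_b^{\mathrm R})]$. As in the proof of \cref{lem:qtsasm:single-defect-cycle-data}, $R^3$ maps it to the horizontal labelled edge $[(\lambda'_{b-1}+1,b-1),(\lambda'_{b-1}+1,b)]$. Since $D_{Rv}=-D_v$ by \cref{prop:qtsasm:height-tangent-space}, equality of $D$ at the endpoints of one edge is equivalent to equality at the endpoints of its rotated image. Hence $\omega^{\mathrm R}_b(Y-Y')=0$ if and only if $D$ agrees at the endpoints of the horizontal labelled edge.

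I will check the index bookkeeping for $R^3$ explicitly: $R^3(p,q)=(n-q,p)$, so $(b,j)$ with $j=n-\lambda'_{b-1}-1$ maps to $(\lambda'_{b-1}+1,b)$, as required. This is where sign and index errors could creep in, and I expect it to be the main (though routine) obstacle.

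For the direction ($\Leftarrow$), assume $D$ vanishes on $V(\Gamma_\lambda)$. Every labelled edge of $\Pi_\lambda$ has both endpoints in $V(\Gamma_\lambda)$, so the endpoint values agree and every $\omega_\tau(Y-Y')$ vanishes by the identities above.

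For the direction ($\Rightarrow$), the labelled edges exhaust $\Pi_\lambda$, so the hypothesis makes $D$ constant along $\Pi_\lambda$, say equal to $d$. The endpoint $v_M=(1,k)$ is fixed by the boundary and $\mathcal Z_n$ constraints: in the even case $R v_0=v_M$ with $v_0=(k,1)$, and the relation gives $D_{v_M}=-D_{v_0}=-d$, while constancy gives $D_{v_M}=d$, hence $d=0$. The odd case, with $v_0=(k+1,1)$ and $Rv_0=(1,k)$, is identical. So $D$ vanishes on $\Pi_\lambda$, and then on all of $\Gamma_\lambda=\bigcup_t R^t\Pi_\lambda$ by $D_{Rv}=-D_v$. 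This needs only that $D$ lies in the space described in \cref{prop:qtsasm:height-tangent-space}, which holds because $Y,Y'\in A_n$.
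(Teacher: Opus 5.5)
Your argument is correct and is essentially the paper's proof. Both translate each $\omega_\tau$ into equality of $D$ across one edge via $z_{a,j}=(H_{a-1,j}-H_{a,j}+1)/2$, and both move the $(b,\mathrm R)$ edges onto $\Pi_\lambda$ using $R^3$ and $D_{Rv}=-D_v$ before forcing the common value to vanish. The only difference is that you kill the constant using $Rv_0=v_M$ on the single arc, whereas the paper uses connectedness and $R$-invariance of the whole cycle $\Gamma_\lambda$.

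One correction: your remark that $v_M=(1,k)$ is ``fixed by the boundary and $\mathcal Z_n$ constraints'' is false. The position $(1,k)$ lies in neither $\mathcal B_n$ nor $\mathcal Z_n$, and $h_{1,k}$ varies over $A_n$. The remark is not used, though, since the $Rv_0=v_M$ argument that follows is what actually gives $d=0$.
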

\begin{proof}
For every row-prefix variable, $z_{i,j}=\sigma_{i,j}-\sigma_{i-1,j}$.
Thus for a height difference $D$, $z_{i,j}(Y)-z_{i,j}(Y')=(D_{i-1,j}-D_{i,j})/2$.
For a label $(a,{\mathrm L})$, the equality $\omega_a^{\mathrm L}(Y-Y')=0$ is equivalent to equality of $D$ at the endpoints of the vertical edge $[(a-1,j_a^{\mathrm L}),(a,j_a^{\mathrm L})]$.
For a label $(b,{\mathrm R})$, the constant term in $\omega_b^{\mathrm R}=1-z_{b,j_b^{\mathrm R}}$ cancels on the direction $Y-Y'$.
Thus $\omega_b^{\mathrm R}(Y-Y')=0$ is equivalent to equality of $D$ at the endpoints of the vertical row-prefix edge $[(b-1,j_b^{\mathrm R}),(b,j_b^{\mathrm R})]$.
Applying $R^3$ sends this edge to the horizontal edge $[(\lambda'_{b-1}+1,b-1),(\lambda'_{b-1}+1,b)]$ of $\Pi_\lambda$, and the relation $D_{Rv}=-D_v$ preserves equality of the endpoint values.
When $n$ is odd, the middle coordinate is attached directly to the edge $[(k,1),(k+1,1)]$.
The quarter-turn orbits of these edges are exactly the edges of $\Gamma_\lambda$.
Therefore all boundary coordinates vanish on $Y-Y'$ if and only if $D$ has equal values at the endpoints of every edge of $\Gamma_\lambda$.
Since the cycle is connected, this means that $D$ is constant on $V(\Gamma_\lambda)$.
The cycle is $R$-invariant and $D_{Rv}=-D_v$, so the constant must be zero.
The converse follows from the same endpoint formulas.
\end{proof}

\begin{lemma}\label{lem:qtsasm:single-defect-sets-non-empty}
Let $n\geq5$, $n\not\equiv2\pmod4$, and let $\lambda$ be a non-zero Ferrers diagram contained in the staircase.
For every $\tau\in\mathcal T_\lambda$, the set $\mathcal U_{\lambda,\tau}^n$ is non-empty.
\end{lemma}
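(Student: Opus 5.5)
The plan is to apply \cref{lem:qtsasm:equivariant-height-extension} to a partial height specification consisting of the boundary values of \cref{lem:asm:height-functions} together with the prescription $c^\tau$ on $V(\Gamma_\lambda)$. If this specification admits an extension to the height matrix of a QTSASM, then \cref{lem:qtsasm:single-defect-cycle-data} immediately shows that the core of that QTSASM lies in $\mathcal U_{\lambda,\tau}^n$. Concretely, we take $P=\mathcal B_n\cup V(\Gamma_\lambda)$, which is $R$-invariant because both $\mathcal B_n$ and $\Gamma_\lambda$ are. We set $g$ to be the standard boundary values on $\mathcal B_n$ and $g=c^\tau$ on $V(\Gamma_\lambda)$.

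\textbf{Checking the routine hypotheses.} Several hypotheses of \cref{lem:qtsasm:equivariant-height-extension} follow from \cref{lem:qtsasm:single-defect-cycle-data} with little work.
\begin{enumerate}
\item Parity and the quarter-turn condition $g_{Rv}=n-g_v$ hold on the cycle by that lemma, and on the boundary they are standard.
\item The two prescriptions agree wherever $\Gamma_\lambda$ meets $\mathcal B_n$. By construction the northwest arc has all vertices in $[1,k+1]\times[1,k]$, so it avoids $\mathcal B_n$, and hence so do its rotations. No overlap check is therefore needed.
\end{enumerate}

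\textbf{The grid-distance condition.} What remains is the condition $|g_u-g_{u'}|\le |u-u'|_1$ for all $u,u'\in P$, and this is the main obstacle. It splits into three kinds of pairs.
\begin{enumerate}
\item Boundary--boundary pairs satisfy the condition automatically.
\item For cycle--boundary pairs, the condition is equivalent to $|p-q|\le g_{p,q}\le\min\{p+q,2n-p-q\}$, as computed in the proof of \cref{lem:qtsasm:orbit-realization}. By quarter-turn symmetry it suffices to verify this on $\Pi_\lambda$. Along the arc, $c^\tau(v_i)$ equals $s_i$ plus a constant offset, which flips by $\pm2$ after the $\tau$-edge. Using $c^\tau(v_0)=x_0+\varepsilon$ with $v_0=(x_0,1)$, the values stay within one of $x_i+y_i-1$ and $|x_i-y_i|+\text{small}$. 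I would write $c^\tau(v_i)=s_i-2\cdot\mathbf 1[\text{before }\tau]\cdot\varepsilon'$ explicitly and check both inequalities. The point $(1,1)$-type corners require care, but the staircase bound $\lambda_i\le k-1-i$ keeps the arc strictly inside the northwest quadrant, away from the diagonal extremes.
\item Cycle--cycle pairs are the delicate case. Along a single arc the values change by $\pm1$ per step, so the $1$-Lipschitz property along the path together with monotonicity of the path gives $|c(v_i)-c(v_j)|\le |i-j| = |v_i-v_j|_1$. For pairs on different rotated arcs, I would use the triangle inequality through the endpoints $v_0,v_M$, where the arcs join, combined with a direct bound on $|c(v)-(n-c(v'))|$ analogous to the $N_0$ versus $N_1$ comparison in \cref{lem:qtsasm:orbit-realization}. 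If this general argument becomes messy, the fallback is to verify $|2c(v)-n|\le$ the distance from $v$ to $Rv$ and to chain the inequalities.
\end{enumerate}

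Once all three cases are verified, \cref{lem:qtsasm:equivariant-height-extension} provides the required QTSASM height matrix, and the lemma follows.
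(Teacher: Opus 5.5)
Your setup is the same as the paper's. You prescribe the ASM boundary heights on $\mathcal B_n$ and $c^\tau$ on $V(\Gamma_\lambda)$, invoke \cref{lem:qtsasm:equivariant-height-extension}, and conclude with \cref{lem:qtsasm:single-defect-cycle-data}. The $R$-invariance, parity, quarter-turn, disjointness from $\mathcal B_n$, cycle--boundary and same-arc checks are all fine. (For cycle--boundary you only need $c^\tau(v_i)\in\{s_i,s_i-2\}$, $x_i,y_i\geq1$ and $s_i\leq n$.)

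\textbf{The gap.} The cross-arc grid-distance inequalities are the entire substance of the lemma, and you do not prove them. The routes you name do not work as stated.
\begin{itemize}
\item For $v_i=(x_i,y_i)$ and $v_j=(x_j,y_j)$ on $\Pi_\lambda$, routing through the junction $v_M=Rv_0=(1,k)$ bounds $|c^\tau(v_i)-c^\tau(Rv_j)|$ by $d(v_i,v_M)+d(v_0,v_j)$. This exceeds $d(v_i,Rv_j)=|x_i-y_j|+n-x_j-y_i$ by exactly $2\min\{x_i,y_j\}-2$.
\item Your fallback of chaining with $|2c^\tau(v)-n|\leq d(v,Rv)$ only yields $d(v_i,v_j)+d(v_j,Rv_j)$, which is generally larger than $d(v_i,Rv_j)$.
\item For $R^2\Pi_\lambda$, which shares no vertex with $\Pi_\lambda$, you give no argument.
\end{itemize}
These inequalities also depend on there being exactly one reversed edge, so you must track where the offset sits. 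Your indicator formula places the $-2$ before the $\tau$-edge. That is correct only when the $\tau$-edge is vertical or the middle edge; for a horizontal $\tau$-edge the $-2$ sits after it.

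\textbf{What is needed.} Write $c^\tau(v_i)=s_i-2\varepsilon_i$ with $\varepsilon_i\in\{0,1\}$. The value of $\varepsilon_i$ switches from $0$ to $1$ across a horizontal $\tau$-edge and from $1$ to $0$ across a vertical or middle one. Put $\varepsilon=\varepsilon_i+\varepsilon_j$. The two inequalities for the pair $(v_i,Rv_j)$ are then equivalent to:
\begin{itemize}
\item $\varepsilon\leq\max\{x_i,y_j\}$; and
\item $y_i+s_j\leq n+\varepsilon$ if $x_i\geq y_j$, or $s_i+x_j\leq n+\varepsilon$ if $x_i<y_j$.
\end{itemize}
These follow from four facts.
\begin{itemize}
\item $s_i\leq\ell+1$.
\item Arc vertices with first coordinate at least $k$ have $y=1$.
\item Only $v_M$ has second coordinate $k$.
\item The single-defect structure: a vertex with $y=1$ precedes every horizontal edge, and one with $x=1$ follows every vertical edge. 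Hence $x_i=y_j=1$ with $\varepsilon_i=\varepsilon_j=1$ is impossible, and the tight endpoint pair $(v_M,v_0)$ has $\varepsilon_0+\varepsilon_M=1$.
\end{itemize}
Alternatively, your junction idea can be salvaged. It is sharp precisely when $\min\{x_i,y_j\}=1$. When $\min\{x_i,y_j\}\geq2$, both inequalities above hold without looking at $\varepsilon$, by the first three facts. You would have to supply this split and that estimate yourself.

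For opposite arcs, use $c^\tau(R^2v_j)=c^\tau(v_j)$ and $|c^\tau(v_i)-c^\tau(v_j)|\leq|s_i-s_j|+2$. Combine these with $d(v_i,R^2v_j)\geq 2n-s_i-s_j$ and $2(n-\max\{s_i,s_j\})\geq2$.

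Without these estimates, the grid-distance hypothesis of \cref{lem:qtsasm:equivariant-height-extension} is not verified, and the proof is incomplete.
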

\begin{proof}
Prescribe the ASM boundary height values on $\mathcal B_n$ and the values $c^\tau$ on $V(\Gamma_\lambda)$.
We verify the hypotheses of \cref{lem:qtsasm:equivariant-height-extension}.
By \cref{lem:qtsasm:ferrers-defect-cycle}, the prescribed set $\mathcal B_n\cup V(\Gamma_\lambda)$ is $R$-invariant.
The boundary conditions hold by construction, and the parity and quarter-turn conditions on the cycle follow from \cref{lem:qtsasm:single-defect-cycle-data}.
It remains to verify the grid-distance condition.
The grid-distance condition holds for every pair of boundary positions.
If two cycle vertices lie on the same rotated arc, then their prescribed values differ by at most the length of the intervening subpath, which equals their grid distance because the arc is monotone.
For cycle vertices on different rotated arcs, quarter-turn symmetry reduces the verification to pairs with one vertex on $\Pi_\lambda$ and the other on $R\Pi_\lambda$ or $R^2\Pi_\lambda$; the comparison with $R^3\Pi_\lambda$ is symmetric to the comparison with $R\Pi_\lambda$.
Let $[v_m,v_{m+1}]$ be the edge labeled by $\tau$, and write $c^\tau(v_i)=s_i-2\varepsilon_i$ with $\varepsilon_i\in\{0,1\}$.
If $s_{m+1}-s_m=1$, then $\varepsilon_i=0$ for $i\in[0,m]$ and $\varepsilon_i=1$ for $i\in[m+1,M]$.
If $s_{m+1}-s_m=-1$, then $\varepsilon_i=1$ for $i\in[0,m]$ and $\varepsilon_i=0$ for $i\in[m+1,M]$.
Thus the edge labeled by $\tau$ is the unique transition between the two possible levels $s_i$ and $s_i-2$ along the northwest arc.

First compare $v_i=(x,y)$ with $Rv_j=(y',n-x')$, where $v_j=(x',y')$, and put $\varepsilon=\varepsilon_i+\varepsilon_j$.
The grid distance from $v_i$ to $Rv_j$ is $|x-y'|+n-x'-y$, while $c^\tau(v_i)-c^\tau(Rv_j)=s_i+s_j-n-2\varepsilon$.
Thus the two grid-distance inequalities are equivalent to $\varepsilon\leq\max\{x,y'\}$ and
\[
  \begin{cases}
    y+s_j\leq n+\varepsilon & \text{if } x\geq y',\\
    s_i+x'\leq n+\varepsilon & \text{if } x<y'.
  \end{cases}
\]
The first inequality can fail only when $x=y'=1$ and $\varepsilon_i=\varepsilon_j=1$.
This combination is impossible.
If the edge labeled by $\tau$ is horizontal, then $\varepsilon_r=0$ before that edge and $\varepsilon_r=1$ after it; a vertex with second coordinate $1$ occurs before every horizontal edge, so $\varepsilon_j=0$.
If the edge labeled by $\tau$ is vertical, then $\varepsilon_r=1$ before that edge and $\varepsilon_r=0$ after it; a vertex with first coordinate $1$ occurs after every vertical edge, so $\varepsilon_i=0$.
For the second display, suppose first that $x<y'$.
If $x'\geq k$, then $y'=1$ on the northwest arc, contradicting $x<y'$; hence $x'\leq k-1$, and $s_i+x'\leq(\ell+1)+(k-1)=n$.
Now suppose that $x\geq y'$.
If $y<k$, then $y+s_j\leq(k-1)+(\ell+1)=n$.
If $y=k$, then $v_i=(1,k)$, so $x\geq y'$ forces $y'=1$.
Thus $y+s_j\leq n+1$, with equality only when $v_j=v_0$.
For the endpoint pair $v_i=v_M$ and $v_j=v_0$ we have $\varepsilon_i+\varepsilon_j=1$, again by the preceding description of the $\varepsilon_i$, and the inequality $y+s_j\leq n+\varepsilon$ follows.
This proves all comparisons between adjacent arcs.

For opposite arcs, compare $v_i$ with $R^2v_j$.
The value at $R^2v_j$ is $c^\tau(v_j)$, and $|c^\tau(v_i)-c^\tau(v_j)|\leq |s_i-s_j|+2$.
On the other hand, the grid distance from $v_i$ to $R^2v_j$ is $|n-x_i-x_j|+|n-y_i-y_j|$, and this is at least $2n-s_i-s_j$.
Since every northwest-arc vertex has coordinate sum at most $\ell+1\leq n-1$,
\[
  2n-s_i-s_j-|s_i-s_j|
  =2\bigl(n-\max\{s_i,s_j\}\bigr)
  \geq2.
\]
Thus the opposite-arc comparisons also satisfy the grid-distance condition.
Finally, the comparison with the boundary is equivalent to
\[
  |x-y|\leq c^\tau(x,y)\leq\min\{x+y,2n-x-y\},
\]
which follows on the northwest arc from $c^\tau(x,y)\in\{x+y,x+y-2\}$; the other arcs follow by quarter-turn symmetry.
Thus the partial height specification satisfies the hypotheses of \cref{lem:qtsasm:equivariant-height-extension}.
The extension is a QTSASM height matrix.
Since the extension has the cycle values $c^\tau$, \cref{lem:qtsasm:single-defect-cycle-data} shows that its core lies in $\mathcal U_{\lambda,\tau}^n$.
\end{proof}

By \cref{prop:qtsasm:height-tangent-space,lem:qtsasm:fixed-defect-height-differences}, the directions in $A_n-A_n$ on which every boundary coordinate vanishes correspond to the orbit directions $\mathcal E_u$ whose orbits avoid $\mathcal B_n\cup V(\Gamma_\lambda)\cup\mathcal Z_n$.
We will realize each such direction as the height difference of two QTSASMs in the same set $\mathcal U_{\lambda,\tau}^n$.
The basic height modification changes the four positions in an $R$-orbit by alternating amounts $+2,-2,+2,-2$.
The next lemma states the local conditions under which this modification preserves the ASM height conditions.

\begin{lemma}\label{lem:qtsasm:local-flip-specification}
Let $n\geq5$, $n\not\equiv2\pmod4$, and let $\lambda$ be a non-zero Ferrers diagram contained in the staircase.
Let $u$ be a height-grid position, write $u_t=R^t u$ for $t\in[0,3]$, and let $N_t$ be the height-grid neighborhood consisting of $u_t$ and its height-grid neighbors.
Assume that $\mathcal O(u)\cap\bigl(\mathcal B_n\cup V(\Gamma_\lambda)\bigr)=\emptyset$ and that the grid distance from $u_s$ to $u_t$ is at least $2$ for all $s,t\in[0,3]$ with $s<t$.
Suppose there are $\tau\in\mathcal T_\lambda$, $a\in\Z$, and a partial height specification
\[
  g:\mathcal B_n\cup V(\Gamma_\lambda)\cup\bigcup_{t=0}^3 N_t\to\Z
\]
satisfying the hypotheses of \cref{lem:qtsasm:equivariant-height-extension}, such that $g$ has the ASM boundary values on $\mathcal B_n$, agrees with $c^\tau$ on $V(\Gamma_\lambda)$, and, for every $t\in[0,3]$ and $w\in N_t\setminus\{u_t\}$,
\[
  g(u_t)=
  \begin{cases}
    a & \text{if } t \text{ is even},\\
    n-a & \text{if } t \text{ is odd},
  \end{cases}
  \qquad
  \qquad
  g(w)=
  \begin{cases}
    a+1 & \text{if } t \text{ is even},\\
    n-a-1 & \text{if } t \text{ is odd}.
  \end{cases}
\]
Then there are two QTSASMs $X^-_u$ and $X^+_u$ whose cores both lie in $\mathcal U_{\lambda,\tau}^n$ and such that $H^{X^+_u}-H^{X^-_u}=2\mathcal E_u$.
\end{lemma}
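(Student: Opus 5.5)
The plan mirrors the second half of the proof of \cref{lem:qtsasm:orbit-realization}. First, apply \cref{lem:qtsasm:equivariant-height-extension} to the given partial specification $g$. Its domain $\mathcal B_n\cup V(\Gamma_\lambda)\cup\bigcup_t N_t$ is $R$-invariant: $\Gamma_\lambda$ is $R$-invariant by \cref{lem:qtsasm:ferrers-defect-cycle}, and $R N_t=N_{t+1}$. The hypotheses are assumed, so the lemma yields a QTSASM height matrix $H^-$ that agrees with $g$. Let $X^-_u$ be the corresponding QTSASM. Since $H^-$ agrees with $c^\tau$ on $V(\Gamma_\lambda)$, \cref{lem:qtsasm:single-defect-cycle-data} shows that the core of $X^-_u$ lies in $\mathcal U_{\lambda,\tau}^n$.

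Second, define $H^+=H^-+2\mathcal E_u$. This adds $2$ at $u_0,u_2$ and subtracts $2$ at $u_1,u_3$. We check the conditions of \cref{lem:asm:height-functions} for $H^+$.

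The boundary is untouched because $\mathcal O(u)\cap\mathcal B_n=\emptyset$. Parity is preserved since every change is $\pm2$. The quarter-turn relation $h_{Rv}=n-h_v$ is preserved because the change at $R u_t$ is the negative of the change at $u_t$. Note that $R^4u=u$, and if the four $u_t$ were not distinct, the pairwise distance hypothesis would be violated; so the orbit has four elements and $\mathcal E_u$ is well defined with no cancellation.

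For the local condition, the pairwise grid distance between the $u_t$ is at least $2$, so no two modified positions are adjacent. Hence the only affected local differences are those between some $u_t$ and its neighbours, all of which lie in $N_t$.

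At even $t$, the value $a$ is surrounded by neighbours equal to $a+1$, so $u_t$ is a local minimum; adding $2$ gives $a+2$, again at distance $1$ from every neighbour. At odd $t$, the value $n-a$ is surrounded by neighbours equal to $n-a-1$, and subtracting $2$ gives $n-a-2$, again at distance $1$. One subtle point is that a neighbour $w\in N_t$ might also lie in $N_s$ for some $s\neq t$, at grid distance exactly $2$. Such a $w$ is not itself modified, so its value is simply whatever $g$ assigns there. The local condition at $u_t$ uses only that this value equals $a+1$ or $n-a-1$, which holds by hypothesis. Well-definedness of $g$ on overlaps is part of the assumption.

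Hence $H^+$ is the height matrix of a QTSASM $X^+_u$, and by construction $H^{X^+_u}-H^{X^-_u}=2\mathcal E_u$.

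Third, we show that the core of $X^+_u$ also lies in $\mathcal U_{\lambda,\tau}^n$. Since $\mathcal O(u)\cap V(\Gamma_\lambda)=\emptyset$, $H^+$ still agrees with $c^\tau$ on $V(\Gamma_\lambda)$, so \cref{lem:qtsasm:single-defect-cycle-data} applies again. Alternatively, $D=2\mathcal E_u$ vanishes on $V(\Gamma_\lambda)$, and \cref{lem:qtsasm:fixed-defect-height-differences} shows that every $\omega_\tau$ is unchanged.

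There is no serious obstacle here, since the difficult verification of the grid-distance condition is packed into the hypothesis on $g$. The step that needs the most care is the local-difference check: making sure no modified vertex is adjacent to another modified vertex or to a cycle vertex whose value would conflict. The disjointness and distance assumptions handle exactly this.
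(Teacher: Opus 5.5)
Your proposal is correct and follows essentially the same route as the paper: you extend $g$ by the equivariant height-extension lemma, shift the four orbit values by $\pm2$, check boundary, parity, quarter-turn and local conditions using the separation and disjointness hypotheses, and place both cores in $\mathcal U_{\lambda,\tau}^n$. The only minor difference is for $X^+_u$: you reapply the single-defect cycle-data lemma directly, which is valid because the modified positions avoid $V(\Gamma_\lambda)$, whereas the paper goes through the fixed-defect height-difference lemma; you note both options.
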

\begin{proof}
By \cref{lem:qtsasm:equivariant-height-extension}, $g$ extends to a QTSASM height matrix $H^-=H^{X^-_u}$.
Define $H^+$ by adding $2$ at $u_0$ and $u_2$, subtracting $2$ at $u_1$ and $u_3$, and leaving all other height values unchanged.
The disjointness assumption ensures that the changed positions are neither boundary positions nor vertices of $\Gamma_\lambda$.
Thus the boundary values are unchanged and $H^+$ still agrees with $H^-$ on $V(\Gamma_\lambda)$.
The parity condition is unchanged, and the quarter-turn relation is preserved because the changes alternate in sign.
Only local conditions on edges incident to one of the four changed positions can change.
By the separation assumption, no such edge has both endpoints changed.
For even $t$, the definition of $g$ gives $H^-_{u_t}=a$ and $H^-_w=a+1$ for every neighbor $w$ of $u_t$.
If such a neighbor belongs to another part of the prescribed domain, the assumption that $g$ is a function ensures that the prescriptions agree.
Since $w$ is not changed, after adding $2$ at $u_t$ the adjacent difference across $[u_t,w]$ still has absolute value $1$.
For odd $t$, the same argument uses $H^-_{u_t}=n-a$ and $H^-_w=n-a-1$ for every neighbor $w$ of $u_t$.
After subtracting $2$ at $u_t$, the adjacent difference across $[u_t,w]$ still has absolute value $1$.
Thus $H^+$ is the height matrix of a QTSASM by \cref{lem:asm:height-functions}.
The identity $H^{X^+_u}-H^{X^-_u}=2\mathcal E_u$ follows from the definition of $\mathcal E_u$.
Since $H^-$ agrees with $c^\tau$ on $V(\Gamma_\lambda)$, \cref{lem:qtsasm:single-defect-cycle-data} shows that the core of $X^-_u$ lies in $\mathcal U_{\lambda,\tau}^n$.
Both cores lie in $A_n$ by \cref{thm:qtsasm:dimension}, and the two height matrices agree on $V(\Gamma_\lambda)$.
By \cref{lem:qtsasm:fixed-defect-height-differences}, their boundary coordinates are equal, so the core of $X^+_u$ also lies in $\mathcal U_{\lambda,\tau}^n$.
\end{proof}

To apply the preceding lemma, we must construct a partial height specification that is compatible with the boundary, the Ferrers cycle, and the four orbit neighborhoods.
We do this in three steps.
First, we choose a northwest representative of the orbit and an admissible height at its center.
Second, we choose $\tau\in\mathcal T_\lambda$ so that the cycle prescription $c^\tau$ is compatible with that center height.
Third, we verify all grid-distance inequalities for the combined prescription.

\begin{lemma}\label{lem:qtsasm:local-flip-center-level}
Let $n\geq5$, $n\not\equiv2\pmod4$, and let $\lambda$ be a non-zero Ferrers diagram contained in the staircase.
Let $u$ be a height-grid position whose $R$-orbit is disjoint from $\mathcal B_n\cup V(\Gamma_\lambda)\cup\mathcal Z_n$.
Then there exist $u'=(p,q)\in\mathcal O(u)$ with $p,q\in[k]$ and an integer $a$ such that, writing $u_t=R^t u'$, the orbit $\mathcal O(u')$ is disjoint from $\mathcal B_n\cup V(\Gamma_\lambda)$ and the grid distance from $u_s$ to $u_t$ is at least $2$ for all $s,t\in[0,3]$ with $s<t$.
Set $m=\min\{p,q\}$.
The choice can be made so that $p+q\leq n-1$ and $a\equiv p+q\pmod2$.
It also satisfies $\max\{|p-q|,m\}\leq a\leq\min\{p+q-2,n-m-2\}$.
Set $\theta=(p+q-2-a)/2$, and write $\Pi_\lambda=(v_0,\dots,v_M)$.
For $i\in[0,M-1]$, let $\tau_i$ be the label of the edge $[v_i,v_{i+1}]$.
Writing $v_i=(x_i,y_i)$, set $s_i=x_i+y_i$.
Then $\theta$ is a non-negative integer, $s_i\leq\ell+1$ for every $i$, and the only vertex of $\Pi_\lambda$ with second coordinate $k$ is $v_M=(1,k)$.
For $t\in\Z$, write $t^+=\max\{t,0\}$ and $t^-=\max\{-t,0\}$, and for every vertex $v_i$ set $\Delta_i^\pm=(x_i-p)^\pm +(y_i-q)^\pm$.
For $i\in[0,M]$ and $t\in[0,3]$, let $d_i^t$ denote the grid distance from $u'$ to $R^t v_i$.
Then
\begin{equation}\label{eq:qtsasm:finite-check-Delta-identities}
  s_i-d_i^0=p+q-2\Delta_i^-,
  \qquad
  s_i+d_i^0=p+q+2\Delta_i^+.
\end{equation}
Finally,
\begin{equation}\label{eq:qtsasm:Deltaminus-lower-bound}
  \Delta_i^-\geq \theta\quad\text{for every }i\text{ whenever }\theta>0.
\end{equation}
\end{lemma}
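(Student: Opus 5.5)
The plan is to make the choices explicitly and then verify the listed consequences one at a time.

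\emph{Choosing $u'$.} The orbit $\mathcal O(u)$ is disjoint from $\mathcal B_n$, so it consists of interior positions. It is also disjoint from $\mathcal Z_n$, which rules out the fixed point $(k,k)$ in the even case. Hence it is a four-element orbit. The four rotations of $[1,k]\times[1,k]$ (together with the middle row or column segments when $n$ is odd) cover the interior apart from the center, so some $u'=(p,q)\in\mathcal O(u)$ has $p,q\in[k]$. Since $p+q\leq 2k\leq n$, and $p+q=n$ forces $n$ even and $u'=(k,k)\in\mathcal Z_n$, we get $p+q\leq n-1$.

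\emph{Separation of the orbit.} Consecutive orbit points are at grid distance $|p-q|+|n-p-q|$, and opposite points are at distance $|n-2p|+|n-2q|$. The second quantity is at least $2$ unless $p=q=n/2$, which is excluded. For the first, $n-p-q\geq 1$, and equality $|p-q|+|n-p-q|=1$ would force $p=q$ and $2p=n-1$, i.e.\ $u'=(k,k)$ with $n$ odd, which lies in $\mathcal Z_n$. So all pairwise distances are at least $2$.

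\emph{Choosing $a$.} Take $a$ as large as possible subject to $a\leq\min\{p+q-2,n-m-2\}$ and $a\equiv p+q\pmod 2$. The interval $[\max\{|p-q|,m\},\min\{p+q-2,n-m-2\}]$ has endpoints of the right parity up to a shift by one, so it suffices to show $\max\{|p-q|,m\}\leq\min\{p+q-2,n-m-2\}$ up to parity. Write $M'=\max\{p,q\}$. The comparison $|p-q|\leq p+q-2$ needs $m\geq1$, which holds. The comparison $m\leq p+q-2$ needs $M'\geq2$, which holds because $m=M'=1$ would mean $u'=(1,1)\in\mathcal Z_n$. The comparison $M'-m\leq n-m-2$ follows from $M'\leq k\leq n-2$. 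The comparison $m\leq n-m-2$ needs $2m\leq n-2$, and the only failure is $m=k$, which for $p,q\leq k$ means $(k,k)\in\mathcal Z_n$ (and for odd $n$, $2k=n-1$ also gives $u'=(k,k)\in\mathcal Z_n$). The remaining parity edge cases, where the interval is a single point of the wrong parity, are handled by checking that the endpoints $|p-q|$ and $p+q-2$ have the parity of $p+q$. With $a$ defined this way, $\theta=(p+q-2-a)/2$ is a non-negative integer.

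\emph{Facts about the arc.} The facts that $s_i\leq\ell+1$ and that $v_M$ is the only vertex with second coordinate $k$ follow from the proof of \cref{lem:qtsasm:ferrers-defect-cycle}. The arc lies in $[k]\times[k]$ apart from $v_0=(k+1,1)$, and it stays on the boundary of the shifted staircase, where $x+y\leq k+1$, so $s_i\leq k+2\leq\ell+1$ in the odd case and $s_i\leq k+1$ in the even case.

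\emph{The identities (eq:qtsasm:finite-check-Delta-identities).} Use $|t|=t^++t^-$ and $t=t^+-t^-$ coordinatewise with $d_i^0=|x_i-p|+|y_i-q|$. Then $s_i-d_i^0=(p+q)+(x_i-p)+(y_i-q)-d_i^0=p+q-2\Delta_i^-$, and adding $2d_i^0$ gives the second identity.

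\emph{The bound (eq:qtsasm:Deltaminus-lower-bound).} This is the step I expect to be the main obstacle. When $\theta>0$, the value $a$ is capped by $n-m-2$ rather than $p+q-2$, which means $u'$ lies deep in the quadrant, with $p+q$ large relative to $n-m$. Also, $u'$ avoids $V(\Gamma_\lambda)$, and $\Pi_\lambda$ lies within the staircase region $x+y\leq\ell+1$. Combining these, $\Delta_i^-\geq\bigl((p+q)-s_i\bigr)^+$, and I will bound $(p+q)-s_i\geq p+q-(\ell+1)\geq\theta$ using $a\geq n-m-3$ together with $p+q\leq n-1$. If this crude estimate falls short in boundary configurations, I will fall back to a case split on whether $m=p$ or $m=q$. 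In that split, I would use monotonicity of the arc to locate the vertex closest to $u'$ and compute $\Delta_i^-$ directly.
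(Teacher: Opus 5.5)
Your skeleton is the paper's: a representative $u'=(p,q)\in[k]\times[k]$, $a$ the largest integer of the parity of $p+q$ not exceeding $U=\min\{p+q-2,n-m-2\}$, and the chain $\Delta_i^-\ge p+q-s_i\ge p+q-\ell-1\ge\theta$. However, two steps are not justified as written.

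\emph{The parity adjustment for $a$.} Since $a\in\{U,U-1\}$, the only danger is that $U$ equals a lower endpoint while having the wrong parity. Checking that $|p-q|$ and $p+q-2$ have the parity of $p+q$ does not cover the case $U=n-m-2=m$. This case occurs for even $n$ with $m=k-1$. Your argument never uses $\mathcal O(k-1,k-1)\subseteq\mathcal Z_n$ or $n\not\equiv2\pmod4$, and without them the conclusion is false. Take $n=8$ and $u'=(3,3)$, whose orbit never meets $\mathcal B_n\cup V(\Gamma_\lambda)$. All four of your comparisons pass, yet $U=3$ and $a=2<m$. The paper closes this case as follows. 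Since $(k-1,k-1)$ is excluded, $(p,q)\in\{(k-1,k),(k,k-1)\}$, and there $U=k-1\equiv p+q\pmod 2$ because $k$ is even. In every other case either $U$ already has the parity of $p+q$ or it exceeds both lower endpoints, so lowering it by one is harmless. A minor point: for odd $n$ the four rotations of $[k]\times[k]$ already tile the interior, so no middle segments are involved.

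\emph{The final inequality $p+q-\ell-1\ge\theta$.} This does not follow from $a\ge n-m-3$ and $p+q\le n-1$. From $a\ge n-m-3$ you get $\theta\le(\max\{p,q\}+2m-n+1)/2$. This is at most $p+q-\ell-1$ exactly when $\max\{p,q\}\ge 2\ell+3-n$, that is, $\max\{p,q\}\ge3$ for even $n$ and $\max\{p,q\}\ge4$ for odd $n$. That is a lower bound on $\max\{p,q\}$, which an upper bound on $p+q$ cannot provide. The missing observation is that $\theta>0$ itself supplies it. Indeed, $\theta\ge1$ means $a\le p+q-4$, and together with $a\ge n-m-3$ this gives $\max\{p,q\}+2m\ge n+1$. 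Since $m\le\max\{p,q\}\le k$, this yields the required bound for $n\ge8$. For $n\in\{5,7\}$ it forces $u'=(k,k)\in\mathcal Z_n$, so $\theta>0$ cannot occur there. The paper argues equivalently: below this threshold one has $U=p+q-2$ and hence $\theta=0$, except for $u'=(k,k)$ with $k\in\{2,3\}$. With this step in place your crude estimate suffices, and no fallback case split on $m=p$ versus $m=q$ or analysis of the arc is needed.
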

\begin{proof}
Choose a representative $u'=(p,q)\in\mathcal O(u)$ with $p,q\in[k]$; such a representative exists because every four-element quarter-turn orbit has one height-grid position in the closed northwest quadrant $[k]\times[k]$.
The orbit of $u'$ is the orbit of $u$, so it is disjoint from $\mathcal B_n$ and from $V(\Gamma_\lambda)$.
The orbit of $u'$ also satisfies the grid-distance separation condition in \cref{lem:qtsasm:local-flip-specification}.
After rotating, it is enough to compare $u'$ with $Ru'$ and $R^2u'$.
Since $p,q\in[k]$, these distances are $|p-q|+n-p-q$ and $2n-2p-2q$, respectively.
The first distance is at most $1$ only when $u'=(k,k)$, either as the fixed central position for $n=2k$ or as a position in $\mathcal O(k,k)$ for $n=2k+1$.
The second distance is at most $1$ only when $n=2k$ and $u'=(k,k)$.
All such positions lie in $\mathcal Z_n$, contrary to the hypothesis.
For the inequality $p+q\leq n-1$, if $n=2k$, then equality $p+q=2k$ forces $(p,q)=(k,k)$, the excluded even central height position; if $n=2k+1$, the choice $p,q\in[k]$ already gives $p+q\leq2k=n-1$.

Put $U=\min\{p+q-2,n-m-2\}$, and choose $a$ to be the largest integer at most $U$ that is congruent to $p+q$ modulo $2$.
This gives $a\equiv p+q\pmod2$ and the two upper bounds on $a$.
We next verify that $U\geq |p-q|$ and $U\geq m$.
For the first upper candidate, $p+q-2-|p-q|=2m-2\geq0$.
Also, $p+q-2-m=\max\{p,q\}-2\geq0$, where the inequality uses the exclusion of $(p,q)=(1,1)\in\mathcal Z_n$.
For the second upper candidate, the inequality $n-m-2\geq |p-q|$ is equivalent to $\max\{p,q\}\leq n-2$, which follows from $\max\{p,q\}\leq k$.
The inequality $n-m-2\geq m$ follows from $m\leq k-1$; equality $m=k$ would force $(p,q)=(k,k)$, which is excluded in both parity cases.
Thus $U$ is at least both lower endpoints.

The integer $a$ is either $U$ or $U-1$.
It remains to show that the possible decrease by one cannot cross a lower endpoint.
The candidate $p+q-2$ already has the prescribed parity.
The equality $n-m-2=|p-q|$ would force $\max\{p,q\}=n-2$, impossible because $\max\{p,q\}\leq k$.
If $n-m-2=m$, then $n=2k$ and $m=k-1$.
The position $(k-1,k-1)$ lies in $\mathcal Z_n$, so the only eligible positions are $(k-1,k)$ and $(k,k-1)$; for both, $n-m-2=k-1\equiv p+q\pmod2$ because $k$ is even.
In every remaining case, $U$ is at least one larger than each lower endpoint.
Therefore the parity adjustment preserves both lower bounds.

The integer $\theta$ is non-negative by the upper bound $a\leq p+q-2$, and the congruence $a\equiv p+q\pmod2$ makes it integral.
Every vertex of $\Pi_\lambda$ lies in the region $x+y\leq\ell+1$, so $s_i\leq\ell+1$.
Since the second coordinate increases along the path and reaches $k$ only at its terminal endpoint, $v_M=(1,k)$ is the unique vertex with second coordinate $k$.
The identities in \eqref{eq:qtsasm:finite-check-Delta-identities} follow by separating the positive and negative parts of $x_i-p$ and $y_i-q$ in the expression $d_i^0=|p-x_i|+|q-y_i|$.
It remains to prove \eqref{eq:qtsasm:Deltaminus-lower-bound}.
Since $p+q-2\equiv p+q\pmod2$, the inequality $\theta>0$ implies that the upper bound $p+q-2$ is not active in the definition of $U$; hence $U=n-m-2$.
As $a\geq U-1$, we have $\theta=(p+q-2-a)/2\leq(\max\{p,q\}+2m-n+1)/2$.
If $n=2k$ and $\max\{p,q\}\leq2$, then $U=p+q-2$ and hence $\theta=0$; thus $\theta>0$ forces $\max\{p,q\}\geq3$ in the even case.
If $n=2k+1$ and $\max\{p,q\}\leq3$, then again $U=p+q-2$ and $\theta=0$, except possibly for $u'=(k,k)$ when $k\in\{2,3\}$; this exceptional height-grid position lies in $\mathcal Z_n$ and is excluded.
Thus $\theta>0$ forces $\max\{p,q\}\geq4$ in the odd case.
Therefore this upper bound is at most $\max\{p,q\}+m-\ell-1=p+q-\ell-1$.
On the other hand, since $s_i\leq\ell+1$ and $t^-\geq -t$ for every integer $t$,
\[
  \Delta_i^-=(x_i-p)^- +(y_i-q)^-
  \geq p+q-s_i
  \geq p+q-\ell-1
  \geq \theta.\qedhere
\]
\end{proof}

\begin{lemma}\label{lem:qtsasm:local-flip-defect-label}
Let $n\geq5$, $n\not\equiv2\pmod4$, and let $\lambda$ be a non-zero Ferrers diagram contained in the staircase.
Assume that $u'=(p,q)$ and $a$ satisfy the conclusions of \cref{lem:qtsasm:local-flip-center-level}, and use the notation $m$, $\theta$, $\Pi_\lambda=(v_0,\dots,v_M)$, $v_i=(x_i,y_i)$, $s_i$, $\Delta_i^-$, and $\Delta_i^+$ from that lemma.
Set
\[
  \Delta_{\min}^-=\min_{i\in[0,M]}\Delta_i^-,
  \qquad
  \Delta_{\min}^+=\min_{i\in[0,M]}\Delta_i^+.
\]
Then $\tau\in\mathcal T_\lambda$ can be chosen according to exactly one of the following cases.
\begin{enumerate}[label=\textup{(\roman*)}]
\item
Suppose that $\Delta_{\min}^-=\theta$, and let
\[
  I^- = \{i\in[0,M]:\Delta_i^-=\theta\},
  \qquad
  \alpha=\min I^-,
  \qquad
  \beta=\max I^-.
\]
If $\alpha>0$, choose $\tau=\tau_{\alpha-1}$; then
\[
  c^\tau(v_i)=
  \begin{cases}
    s_i & \text{if } i\in[0,\alpha-1],\\
    s_i-2 & \text{if } i\in[\alpha,M].
  \end{cases}
\]
If $\alpha=0$, choose $\tau=\tau_\beta$; then $\beta<M$ and
\[
  c^\tau(v_i)=
  \begin{cases}
    s_i-2 & \text{if } i\in[0,\beta],\\
    s_i & \text{if } i\in[\beta+1,M].
  \end{cases}
\]

\item
Suppose that $\theta>0$ and $\Delta_{\min}^->\theta$.
Choose $\tau=\tau_0$; then
\[
  c^\tau(v_i)=
  \begin{cases}
    s_i-2 & \text{if } i=0,\\
    s_i & \text{if } i\in[1,M].
  \end{cases}
\]

\item
Suppose that $\theta=0$, $\Delta_{\min}^->0$, and $\Delta_{\min}^+=0$, and let
\[
  I^+ = \{i\in[0,M]:\Delta_i^+=0\},
  \qquad
  \alpha=\min I^+,
  \qquad
  \beta=\max I^+.
\]
If $\alpha>0$, choose $\tau=\tau_{\alpha-1}$; then
\[
  c^\tau(v_i)=
  \begin{cases}
    s_i-2 & \text{if } i\in[0,\alpha-1],\\
    s_i & \text{if } i\in[\alpha,M].
  \end{cases}
\]
If $\alpha=0$, choose $\tau=\tau_\beta$; then $\beta<M$ and
\[
  c^\tau(v_i)=
  \begin{cases}
    s_i & \text{if } i\in[0,\beta],\\
    s_i-2 & \text{if } i\in[\beta+1,M].
  \end{cases}
\]

\item
Suppose that $\theta=0$, $\Delta_{\min}^->0$, and $\Delta_{\min}^+>0$.
Choose $\tau=\tau_0$; then
\[
  c^\tau(v_i)=
  \begin{cases}
    s_i-2 & \text{if } i=0,\\
    s_i & \text{if } i\in[1,M].
  \end{cases}
\]
\end{enumerate}
\end{lemma}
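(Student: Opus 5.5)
The plan is to read off every formula for $c^\tau$ from one description of the prescription, already obtained in the proof of \cref{lem:qtsasm:single-defect-lemma-placeholder}. There I will instead cite \cref{lem:qtsasm:single-defect-sets-non-empty}, whose proof writes $c^\tau(v_i)=s_i-2\varepsilon_i$ with $\varepsilon_i\in\{0,1\}$. If the edge $[v_m,v_{m+1}]$ labelled $\tau$ is a step $(0,1)$, so that $s_{m+1}-s_m=1$, then $\varepsilon_i=0$ for $i\le m$ and $\varepsilon_i=1$ for $i>m$. If it is a step $(-1,0)$, then $\varepsilon_i=1$ for $i\le m$ and $\varepsilon_i=0$ for $i>m$. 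Each displayed formula in (i)--(iv) is therefore equivalent to a statement that the chosen edge $\tau_j$ is horizontal, meaning a step $(0,1)$, or vertical, meaning a step $(-1,0)$. The proof reduces to three things: this direction check, the fact that the four cases are exhaustive and mutually exclusive, and the assertions $\beta<M$.

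\emph{The cases partition everything.} When $\theta=0$ we have $\Delta_{\min}^-\ge\theta$ trivially, and when $\theta>0$ this is \eqref{eq:qtsasm:Deltaminus-lower-bound}. So $\Delta^-_{\min}$ either equals $\theta$, giving case (i), or exceeds it. In the latter situation we split on $\theta>0$, giving (ii), versus $\theta=0$, and for $\theta=0$ on whether $\Delta^+_{\min}=0$, giving (iii), or $\Delta^+_{\min}>0$, giving (iv). These conditions are mutually exclusive.

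\emph{Monotonicity of $\Delta_i^\pm$ along the arc.} Along $\Pi_\lambda$ the first coordinate weakly decreases and the second weakly increases. A step $(-1,0)$ can only increase $(x-p)^-$ and decrease $(x-p)^+$, and it leaves the $y$-parts unchanged. A step $(0,1)$ can only decrease $(y-q)^-$ and increase $(y-q)^+$. Hence $\Delta^-$ is non-decreasing across vertical steps and non-increasing across horizontal steps, while $\Delta^+$ behaves in the opposite way.

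In case (i), with $\alpha>0$, we have $\Delta^-_{\alpha-1}>\theta=\Delta^-_\alpha$ by minimality of $\alpha$. So the edge $\tau_{\alpha-1}$ is a step $(0,1)$, which gives the first formula. With $\alpha=0$, maximality of $\beta$ gives $\Delta^-_{\beta+1}>\Delta^-_\beta$, so $\tau_\beta$ is a step $(-1,0)$, which gives the second formula. Case (iii) is identical with $\Delta^+$ and the directions reversed.

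In cases (ii) and (iv) I only need that the first edge is vertical. For odd $n$ it is the middle edge $[(k,1),(k+1,1)]$. For even $n$ it is the edge labelled $(k,\mathrm L)$, whose column is $j_k^{\mathrm L}=\lambda_{k-1}+1=1$ because $\lambda_{k-1}\le 0$. Since $\varepsilon$ switches from $1$ to $0$ after index $0$, this gives the stated formula.

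\emph{The bounds $\beta<M$.} This is the only step requiring real estimates. In case (i) with $\alpha=0$, suppose $M\in I^-$ as well. Then, using $v_0=(x_0,1)$ and $v_M=(1,k)$,
\[
2\theta=\Delta^-_0+\Delta^-_M=(p-x_0)^+ +(q-1)+(p-1)+(k-q)\ge p+k-2 .
\]
On the other hand, $a\ge|p-q|$ gives $\theta\le(p+q-2-|p-q|)/2=m-1$. So $p+k\le 2m$, which forces $p=q=k$. This is impossible, since $(k,k)$ lies in $\mathcal Z_n$ or is the even fixed point, and both are excluded by \cref{lem:qtsasm:local-flip-center-level}. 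In case (iii) with $\alpha=0$, having $0,M\in I^+$ means $(x_0-p)^+=0$ and $\Delta^+_M=(k-q)^+=0$. So $q=k$ and $p\ge x_0\ge k$, giving again $(p,q)=(k,k)$, a contradiction. I expect this $\beta<M$ estimate in case (i) to be the main obstacle, because it is where the bound $a\ge|p-q|$ from \cref{lem:qtsasm:local-flip-center-level} must be used.
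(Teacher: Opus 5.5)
Your overall route matches the paper's proof. You get exhaustiveness from \eqref{eq:qtsasm:Deltaminus-lower-bound}. You use monotonicity of $\Delta_i^\pm$ along the arc to show that the edge entering or leaving $I^\mp$ is a $(0,1)$ or $(-1,0)$ step. You read off the levels from $c^\tau(v_i)=s_i-2\varepsilon_i$. For cases (ii) and (iv) you use that the first edge is vertical, or the middle edge. For $\beta<M$ you evaluate at the endpoints. All of this is correct, and so is your $\beta<M$ argument in case~(iii).

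The gap is the $\beta<M$ step in case~(i), where you compute $\Delta_M^-$ incorrectly. You wrote $\Delta_M^-=(p-1)+(k-q)$. In fact $\Delta_M^-=(1-p)^-+(k-q)^-$, and $(k-q)^-=\max\{q-k,0\}=0$ because $q\le k$. The term $k-q$ is $(k-q)^+$, which belongs to $\Delta_M^+$; you used it correctly there in case~(iii). Likewise $\Delta_0^-=q-1$, because $x_0\ge k\ge p$. So your inequality $2\theta\ge p+k-2$ is false whenever $q<k$, and the conclusion $(p,q)=(k,k)$ does not follow.

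The bound you singled out as the key input also cannot close the argument. The correct endpoint values give $p=q=\theta+1$, hence $a=p+q-2-2\theta=0$, and this is fully compatible with $a\ge|p-q|=0$. What closes it is the other lower bound in \cref{lem:qtsasm:local-flip-center-level}. From $\Delta_0^-+\Delta_M^-=2\theta$ you get $p+q-2=2\theta$, that is, $a=0$, which contradicts $a\ge m\ge1$. When $\theta=0$ you can alternatively note that $u'=(1,1)\in\mathcal Z_n$, which is the paper's argument. With this repair your proof coincides with the paper's.
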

\begin{proof}
The four cases are exhaustive.
If $\theta>0$, then \eqref{eq:qtsasm:Deltaminus-lower-bound} gives $\Delta_{\min}^-\geq\theta$, so either case~\textup{(i)} or case~\textup{(ii)} applies.
If $\theta=0$, then case~\textup{(i)} applies when $\Delta_{\min}^-=0$; otherwise $\Delta_{\min}^->0$, and exactly one of cases~\textup{(iii)} and~\textup{(iv)} applies according to whether $\Delta_{\min}^+$ is zero or positive.

We next verify that the specified choices of $\tau$ correspond to edges of $\Pi_\lambda$ and give the stated height levels.
Along $\Pi_\lambda$, the first coordinate is non-increasing and the second coordinate is non-decreasing.
Across a vertical edge, and across the middle edge in odd size, $\Delta_i^-$ cannot decrease; across a horizontal edge, it cannot increase.
Consequently, if the set $I^-$ in case~\textup{(i)} begins after $v_0$, then the edge entering $I^-$ is horizontal, whereas the edge leaving $I^-$ is vertical or, in odd size, the middle edge.
Similarly, $\Delta_i^+=0$ means $x_i\leq p$ and $y_i\leq q$.
Hence the set $I^+$ in case~\textup{(iii)} is an interval, entered through a vertical or middle edge and left through a horizontal edge.

Reversing the coordinate-sum change on a horizontal edge changes the cycle levels from $s_i$ before that edge to $s_i-2$ after it.
Reversing the change on a vertical or middle edge changes the levels from $s_i-2$ before that edge to $s_i$ after it.
These observations give all level assignments in cases~\textup{(i)} and~\textup{(iii)}.
The first edge of $\Pi_\lambda$ is vertical or, in odd size, the middle edge, so choosing $\tau_0$ gives the assignments in cases~\textup{(ii)} and~\textup{(iv)}.

It remains to verify that $\tau_\beta$ exists in the subcases with $\alpha=0$.
In case~\textup{(i)}, suppose for contradiction that $\beta=M$.
Then $\Delta_0^-=\Delta_M^-=\theta$.
If $\theta=0$, the endpoint formulas force $p=q=1$, so $u'=(1,1)\in\mathcal Z_n$.
If $\theta>0$, they give $p=q=\theta+1$, and hence $a=p+q-2-2\theta=0$, contradicting $m\leq a$.
Thus $\beta<M$ in case~\textup{(i)}.
In case~\textup{(iii)}, the equality $\beta=M$ would give $\Delta_0^+=\Delta_M^+=0$.
This forces $n=2k$ and $p=q=k$, so $u'=(k,k)\in\mathcal Z_n$.
Therefore $\beta<M$ in case~\textup{(iii)} as well.
\end{proof}

\begin{lemma}\label{lem:qtsasm:adjacent-arc-grid-distance-checks}
Let $n\geq5$, $n\not\equiv2\pmod4$, and let $\lambda$ be a non-zero Ferrers diagram contained in the staircase.
Assume that $u'=(p,q)$ and $a$ satisfy the conclusions of \cref{lem:qtsasm:local-flip-center-level}, and choose $\tau$ as in \cref{lem:qtsasm:local-flip-defect-label}.
Use the notation $\theta$, $\Pi_\lambda=(v_0,\dots,v_M)$, $v_i=(x_i,y_i)$, $s_i$, $\Delta_i^-$, $\Delta_i^+$, and $d_i^t$ from \cref{lem:qtsasm:local-flip-center-level}.
Let $N_0$ be the height-grid neighborhood of $u'$, put $c_i=c^\tau(v_i)$, and prescribe value $a$ at $u'$, value $a+1$ on $N_0\setminus\{u'\}$, and value $n-c_i$ at $Rv_i$ and $R^3v_i$.
Then these prescriptions agree on overlaps between $N_0$ and $R\Pi_\lambda\cup R^3\Pi_\lambda$ and satisfy all grid-distance inequalities between these sets.
\end{lemma}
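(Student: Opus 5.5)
The plan is to reduce everything to two scalar inequalities per arc vertex, and then verify these using the level description of $c^\tau$ from \cref{lem:qtsasm:local-flip-defect-label} together with the bounds on $a$ from \cref{lem:qtsasm:local-flip-center-level}.

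\emph{Reduction.} Fix $i$ and $t\in\{1,3\}$, and write $d=d_i^t$ and $c=c_i$. A position $w\in N_0\setminus\{u'\}$ is at grid distance at least $d-1$ from $R^tv_i$. Hence all grid-distance inequalities between $N_0$ and $R^tv_i$ follow from the two conditions
\[
  n-c-a\leq d
  \qquad\text{and}\qquad
  a+c-n+2\leq d .
\]
The first covers the centre comparison $n-c-a\le d$ and the neighbour comparison $(n-c)-(a+1)\le d-1$. The second covers $a-(n-c)\le d$ and $(a+1)-(n-c)\le d-1$.

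\emph{Overlaps.} The orbit of $u'$ avoids $V(\Gamma_\lambda)$, so an overlap can only occur at a neighbour $w=R^tv_i$, and then $d=1$. In that case the two displayed inequalities force $n-c-a=1$, which is exactly the agreement $a+1=n-c$. Thus overlap consistency needs no separate argument.

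\emph{Distances.} I will compute the distances explicitly. Since $p,q\le k$ and every arc vertex satisfies $x_i\le\ell$ and $s_i\le\ell+1$, I expect
\[
  d_i^1=|p-y_i|+(n-x_i-q),
  \qquad
  d_i^3=|q-x_i|+(n-y_i-p),
\]
with the signs fixed. This sign check is where the endpoint $v_0=(k+1,1)$ in odd size needs care.

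\emph{Verification.} Write $c_i=s_i-2\varepsilon_i$ with $\varepsilon_i\in\{0,1\}$. The inequality $a+c-n+2\le d$ becomes a bound of the form $a+2\le q+y_i+|p-y_i|+2\varepsilon_i$ (and symmetrically for $t=3$). It follows from $a\le p+q-2$ and $|p-y_i|+y_i\ge p$, and the slack is only needed when $\varepsilon_i=0$, so this side is routine.

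The inequality $n-c-a\le d$ reduces to
\[
  q+2\varepsilon_i\leq a+y_i+|p-y_i|
\]
for $t=1$, and the analogous bound with $p,q$ and $x_i,y_i$ exchanged for $t=3$. When $\varepsilon_i=0$ this follows from $a\ge|p-q|$ and $a\ge m$ by splitting on $y_i\le p$ versus $y_i>p$.

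The main obstacle is the vertices with $\varepsilon_i=1$, where an extra $2$ must be absorbed. Here I will use the case split of \cref{lem:qtsasm:local-flip-defect-label}. The level-$(s_i-2)$ vertices are chosen exactly as the run where $\Delta_i^-$ attains $\theta$, or where $\Delta_i^+=0$, or only $v_0$, according to the case. For those vertices I will use the identities \eqref{eq:qtsasm:finite-check-Delta-identities} and \eqref{eq:qtsasm:Deltaminus-lower-bound}, together with $a=p+q-2-2\theta$, to show that the needed slack $2$ is present.

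I would treat $t=1$ in full by cases (i)--(iv). For $t=3$ I would repeat the same computation with the roles of the horizontal and vertical edges exchanged, using that the arc is monotone, that $v_M=(1,k)$ is the only vertex with second coordinate $k$, and the separate endpoint comparisons $v_0$ versus $v_M$ already isolated in \cref{lem:qtsasm:single-defect-sets-non-empty}.
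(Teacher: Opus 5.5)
Your reduction to the two scalar conditions $n-c_i-a\le d_i^t$ and $a+c_i-n+2\le d_i^t$ is correct and agrees with the paper, and so are the remark that an overlap is exactly the case $d_i^t=1$ and the distance formulas. The gap is your claim that the second condition is routine.

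Substituting $c_i=s_i-2\varepsilon_i$ and $d_i^1=|p-y_i|+n-x_i-q$ gives $a+2\le |p-y_i|+2n-2x_i-y_i-q+2\varepsilon_i$, not $a+2\le q+y_i+|p-y_i|+2\varepsilon_i$. With $a=p+q-2-2\theta$ and $\varepsilon_i=0$, this reads $q+s_i\le n+\theta$ when $y_i\le p$. Since only $q\le k$ and $s_i\le\ell+1$ are available, it genuinely fails when $\theta=0$, $q=k$ and $s_i=\ell+1$.

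For example, take $n=8$, $\lambda=(1,0,0)$ and $u'=(2,4)$. Then $a=4$, $\theta=0$, $v_0=(4,1)$, and $Rv_0=(1,4)$ is a neighbour of $u'$. At level $c_0=s_0=5$ the arc prescribes $3$ at $(1,4)$, whereas $N_0$ prescribes $a+1=5$.

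Your argument for this side does not depend on $\tau$, so it would prove a statement that is false for some choices of $\tau$. The missing step is to show that every potentially failing level-$s_i$ vertex forces case~(iii) of \cref{lem:qtsasm:local-flip-defect-label}, and that this case assigns level $s_i-2$ to all such vertices. For $R\Pi_\lambda$ this is case~(iii) with $\alpha>0$; for $R^3\Pi_\lambda$ it is case~(iii) with $\alpha=0$, and failures occur only for even $n$.

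Your description of the level sets, on which the rest of your case analysis would rest, is also inaccurate. In case~(iii) the level $s_i-2$ is given to the vertices before $I^+$ (if $\alpha>0$) or after $I^+$ (if $\alpha=0$), hence only where $\Delta_i^+>0$, not where $\Delta_i^+=0$. In case~(i) with $\alpha>0$ it is given on all of $[\alpha,M]$, not only on $I^-$.

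For the first condition at level $s_i-2$, the identities \eqref{eq:qtsasm:finite-check-Delta-identities} are stated for $d_i^0$ and do not directly control $d_i^1$ or $d_i^3$. The paper argues directly instead. A failure forces $y_i\le p\le q$ and $a=q-p$, and the maximal choice of $a$ then forces $p=1$, $y_i=1$ and $\theta=0$. Such vertices lie strictly before $I^-$, and case~(i) with $\alpha>0$ gives them level $s_i$. Symmetrically, for $R^3\Pi_\lambda$ one gets $q=1$, $x_i=1$, and case~(i) with $\alpha=0$.

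Finally, \cref{lem:qtsasm:single-defect-sets-non-empty} handles only cycle-to-cycle comparisons and plays no role here.
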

\begin{proof}
We first compare $N_0$ with $R\Pi_\lambda$.
Put $c_i=c^\tau(v_i)$.
Then $Rv_i=(y_i,n-x_i)$, $c^\tau(Rv_i)=n-c_i$, and the grid distance from $u'$ to $Rv_i$ is $d_i^1=|p-y_i|+n-x_i-q$.
The grid-distance inequalities between the prescribed values on $N_0$ and the cycle value at $Rv_i$, including the case $Rv_i\in N_0$ where the two prescriptions must agree, are equivalent to
\begin{equation}\label{eq:qtsasm:R-arc-interval}
  n-c_i-d_i^1\leq a\leq n-c_i+d_i^1-2.
\end{equation}
The lower bound is tested at the center $u'$, and the upper bound is tested at a neighbor of $u'$ on a shortest path to $Rv_i$; if that neighbor is $Rv_i$, then the upper bound is exactly the condition that the neighborhood value and the cycle value at this common vertex agree.
We check the two possible levels separately.
If $c_i=s_i$, the left inequality in \eqref{eq:qtsasm:R-arc-interval} becomes $q-y_i-|p-y_i|\leq a$, which follows from $|p-q|\leq a$.
For the same level, the right inequality is equivalent, using $a=p+q-2-2\theta$, to
\[
  \begin{cases}
    q\leq n-s_i+\theta & \text{if } y_i\leq p,\\
    p+q\leq n-x_i+\theta & \text{if } y_i>p.
  \end{cases}
\]
It remains to identify the indices for which these inequalities could fail.
First suppose $y_i>p$.
Then $x_i+p\leq s_i-1$, so a failure of the second inequality would imply $s_i+q\geq n+\theta+2$.
But $s_i\leq k+1$ for $n=2k$ and $s_i\leq k+2$ for $n=2k+1$, while $q\leq k$; hence $s_i+q\leq n+1$, a contradiction.
Thus a failure can only occur with $y_i\leq p$.
In that case a failure gives $s_i+q\geq n+\theta+1$.
The same bounds on $s_i$ and $q$ force $\theta=0$, $q=k$, and $s_i=k+1$ when $n=2k$, respectively $s_i=k+2$ when $n=2k+1$.
Since $\theta=0$, we have $a=p+k-2$, and the upper bound $a\leq n-p-2$ gives $2p\leq k$ in the even case and $2p\leq k+1$ in the odd case.
Together with $y_i\leq p$ this gives $x_i=s_i-y_i>p$.
Thus $\Delta_i^+>0$.
Since $q=k$, the set $I^+=\{i:\Delta_i^+=0\}$ consists of the vertices for which $x_i\leq p$.
It is non-empty because $v_M=(1,k)$ belongs to it, while the inequality $x_i>p$ places every possible failing index before $I^+$.
Moreover, $p>1$, since $p=1$ would give $u'=(1,k)=v_M\in V(\Gamma_\lambda)$.
The only northwest-arc vertex with second coordinate $k$ is $v_M$, so $\Delta_{\min}^->0$.
Also $\Delta_0^+>0$, whereas $\Delta_M^+=0$.
Thus case~\textup{(iii)} of \cref{lem:qtsasm:local-flip-defect-label} applies with $\alpha>0$, and it assigns the level $s_i-2$ before $I^+$ and the level $s_i$ on and after $I^+$.
Therefore every vertex assigned the level $s_i$ satisfies \eqref{eq:qtsasm:R-arc-interval}.

If $c_i=s_i-2$, the right inequality in \eqref{eq:qtsasm:R-arc-interval} follows from $a\leq p+q-2$ once we prove $p+q-2\leq n-s_i+d_i^1$.
If $y_i\leq p$, this is equivalent to $s_i+q\leq n+1$, which follows from $s_i\leq \ell+1$ and $q\leq k$.
If $y_i>p$, it is equivalent to $x_i+p+q\leq n+1$; this follows from $x_i+p\leq s_i-1\leq \ell$ and $q\leq k$.
The left inequality becomes $q-y_i-|p-y_i|+2\leq a$.
We determine the remaining cases in which this inequality could fail.
If $y_i>p$, then the left-hand side is $p+q-2y_i+2\leq q-p\leq |p-q|\leq a$.
If $y_i\leq p$ and $q<p$, then the left-hand side is $q-p+2\leq p-q=|p-q|\leq a$.
Thus a failure requires $y_i\leq p\leq q$.
In this remaining case the left-hand side is $q-p+2$, while $a\geq q-p$ and $a\equiv q-p\pmod2$; hence failure is possible only if $a=q-p$.
The choice of $a$ then forces $p=1$.
To see this, suppose $p>1$.
Then the two upper candidates defining $U$ are at least $q-p+2$; the only case in which $n-p-2<q-p+2$ is $n=5$ and $p=q=2$, which lies in $\mathcal Z_n$ and is excluded.
Since $q-p+2\equiv q-p\pmod2$, the maximal choice of $a$ would give $a\geq q-p+2$, a contradiction.
Consequently, failure can occur only in the boundary case $p=1$, $a=q-1$, and $y_i=1$.
Here $\theta=0$.
The value $q=1$ would put $u'=(1,1)$ in $\mathcal Z_n$, so $q>1$.
The set $I^-=\{i:\Delta_i^-=0\}$ is non-empty because $v_M=(1,k)$ belongs to it, and any vertex with $y_i=1<q$ lies strictly before $I^-$ along the monotone arc.
Thus case~\textup{(i)} of \cref{lem:qtsasm:local-flip-defect-label} applies with $\theta=0$ and $\alpha>0$.
That case assigns the level $s_i$ before $I^-$ and the level $s_i-2$ from $I^-$ onward.
Therefore every vertex assigned the level $s_i-2$ satisfies \eqref{eq:qtsasm:R-arc-interval}.
We have proved all inequalities between $N_0$ and $R\Pi_\lambda$.

For $R^3\Pi_\lambda$, we repeat the calculation with the two coordinates interchanged.
Now $R^3v_i=(n-y_i,x_i)$, $c^\tau(R^3v_i)=n-c_i$, and the grid distance from $u'$ to $R^3v_i$ is $d_i^3=n-y_i-p+|q-x_i|$.
The grid-distance inequalities between the prescribed values on $N_0$ and the cycle value at $R^3v_i$, including the case $R^3v_i\in N_0$ where the two prescriptions must agree, are equivalent to
\begin{equation}\label{eq:qtsasm:R3-arc-interval}
  n-c_i-d_i^3\leq a\leq n-c_i+d_i^3-2.
\end{equation}
Here again the interval tests the center and a neighbor of $u'$ on a shortest path; if that neighbor is $R^3v_i$, the upper bound is the equality of the two prescribed values at this common vertex.
This interval is obtained from the $R\Pi_\lambda$ interval by interchanging $p$ with $q$ and $x_i$ with $y_i$.
Thus, for $c_i=s_i$, the left inequality becomes $p-x_i-|q-x_i|\leq a$, which follows from $|p-q|\leq a$, and the right inequality is equivalent to
\[
  \begin{cases}
    p\leq n-s_i+\theta & \text{if } x_i\leq q,\\
    p+q\leq n-y_i+\theta & \text{if } x_i>q.
  \end{cases}
\]
It remains to identify the indices for which this right inequality could fail.
If $x_i>q$, then a failure of the second inequality would imply $s_i+p\geq n+\theta+2$, because $y_i+q\leq s_i-1$.
This contradicts $s_i\leq\ell+1$ and $p\leq k$.
Thus a failure can only occur with $x_i\leq q$.
Then a failure gives $s_i+p\geq n+\theta+1$.
The bounds on $s_i$ and $p$ force $\theta=0$, $p=k$, and $s_i=k+1$ in the even case, respectively $s_i=k+2$ in the odd case.
In the odd case the equality $s_i=k+2$ occurs only at the initial endpoint $v_0=(k+1,1)$, which is incompatible with $x_i\leq q\leq k$.
Hence only the even case remains.
There $a=k+q-2$, and the upper bound $a\leq n-q-2$ gives $2q\leq k$; together with $x_i\leq q$ this gives $y_i=s_i-x_i>q$.
Thus the failing vertex lies strictly after the interval on which $\Delta_i^+=0$.
Also $q>1$, since $q=1$ would give $u'=(k,1)=v_0\in V(\Gamma_\lambda)$.
Therefore $\Delta_{\min}^->0$, while $\Delta_{\min}^+=0$ and the interval $I^+=\{i:\Delta_i^+=0\}$ begins at $0$.
Case~\textup{(iii)} of \cref{lem:qtsasm:local-flip-defect-label} applies with $\alpha=0$ and assigns the level $s_i-2$ after $I^+$.
Every possible failing index lies after $I^+$, so every vertex assigned the level $s_i$ satisfies \eqref{eq:qtsasm:R3-arc-interval}.
For $c_i=s_i-2$, the right inequality follows from $a\leq p+q-2$ and the estimate $p+q-2\leq n-s_i+d_i^3$.
If $x_i\leq q$, this is equivalent to $s_i+p\leq n+1$, which follows from $s_i\leq\ell+1$ and $p\leq k$.
If $x_i>q$, it is equivalent to $y_i+p+q\leq n+1$; this follows from $y_i+q\leq s_i-1\leq\ell$ and $p\leq k$.
The left inequality for $c_i=s_i-2$ becomes $p-x_i-|q-x_i|+2\leq a$.
As above with the coordinates interchanged, the cases $x_i>q$ and $p<q$ satisfy the inequality.
Any failure must have $x_i\leq q\leq p$.
The parity condition then makes failure possible only when $a=p-q$, and the maximal choice of $a$ forces $q=1$.
To see the last implication, suppose $q>1$.
Then the two upper candidates defining $U$ are at least $p-q+2$; the only case in which $n-q-2<p-q+2$ is $n=5$ and $p=q=2$, which lies in $\mathcal Z_n$ and is excluded.
Since $p-q+2\equiv p-q\pmod2$, the maximal choice of $a$ would give $a\geq p-q+2$, a contradiction.
Thus failure can occur only in the boundary case $q=1$, $a=p-1$, and $x_i=1$.
Here $\theta=0$.
Also $p>1$, since $p=q=1$ would put $u'=(1,1)$ in $\mathcal Z_n$.
Because $q=1$ and $p\leq k$, the initial vertex $v_0$ satisfies $\Delta_0^-=0$ in both parities: $v_0=(k,1)$ when $n$ is even and $v_0=(k+1,1)$ when $n$ is odd.
Thus the interval $I^-=\{i:\Delta_i^-=0\}$ begins at $v_0$, while every vertex with $x_i=1<p$ lies strictly after $I^-$ along the monotone arc.
Case~\textup{(i)} of \cref{lem:qtsasm:local-flip-defect-label} applies with $\theta=0$ and $\alpha=0$.
It assigns the level $s_i-2$ on $I^-$ and the level $s_i$ after $I^-$.
Therefore every vertex assigned the level $s_i-2$ satisfies \eqref{eq:qtsasm:R3-arc-interval}.
Thus all inequalities between $N_0$ and $R^3\Pi_\lambda$ hold.
\end{proof}

\begin{lemma}\label{lem:qtsasm:ferrers-local-flip-extension-checks}
Let $n\geq5$, $n\not\equiv2\pmod4$, and let $\lambda$ be a non-zero Ferrers diagram contained in the staircase, with $\Gamma_\lambda$, $\mathcal T_\lambda$, and the boundary coordinates $\omega_\tau$ as above.
Let $u$ be a height-grid position whose $R$-orbit is disjoint from $\mathcal B_n\cup V(\Gamma_\lambda)\cup\mathcal Z_n$.
Then there exist $u'\in\mathcal O(u)$, $\tau\in\mathcal T_\lambda$, and an integer $a$ such that, with $u'$ in place of $u$, the disjointness and separation hypotheses of \cref{lem:qtsasm:local-flip-specification} hold and the boundary, cycle, and neighborhood values prescribed in that lemma satisfy the hypotheses of \cref{lem:qtsasm:equivariant-height-extension}.
\end{lemma}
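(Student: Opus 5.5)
We take $u'=(p,q)$ and $a$ from \cref{lem:qtsasm:local-flip-center-level}, and $\tau$ from \cref{lem:qtsasm:local-flip-defect-label}. That lemma already supplies the disjointness of $\mathcal O(u')$ from $\mathcal B_n\cup V(\Gamma_\lambda)$ and the pairwise separation of the four orbit positions, which are the two geometric hypotheses of \cref{lem:qtsasm:local-flip-specification}. The partial specification $g$ is then forced on $\mathcal B_n\cup V(\Gamma_\lambda)\cup\bigcup_t N_t$: ASM boundary values on $\mathcal B_n$, the values $c^\tau$ on the cycle, and values $a,a+1$ (respectively $n-a,n-a-1$) on the even (respectively odd) neighborhoods. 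The domain is $R$-invariant, since each constituent is. Parity holds: on the cycle by \cref{lem:qtsasm:single-defect-cycle-data}, and on $N_0$ by $a\equiv p+q$. The quarter-turn relation holds by construction. It therefore remains to show that $g$ is well defined on overlaps and satisfies every grid-distance inequality.

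Using the quarter-turn relation and the invariance of grid distance under $R$, we reduce to pairs with at least one endpoint in $N_0$, or pairs not involving any neighborhood. Pairs not involving any neighborhood are exactly the boundary--cycle and cycle--cycle pairs, and these were already verified in the proof of \cref{lem:qtsasm:single-defect-sets-non-empty}. Pairs inside $N_0$ are trivial. For $N_0$ against $N_1,N_2,N_3$, we copy the verification in \cref{lem:qtsasm:orbit-realization}. That argument uses only $m\le a$, $a\le n-m-2$, and the separation of the orbit, all of which are provided here (with $m=\min\{p,q\}$ replacing $r$). For $N_0$ against the boundary, we need $|x-y|\le g\le\min\{x+y,2n-x-y\}$ at each point of $N_0$. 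This follows from $\max\{|p-q|,m\}\le a\le p+q-2$ and $p+q\le n-1$, exactly as in the third check of \cref{lem:qtsasm:orbit-realization}. Pairs between $N_0$ and the rotated arcs $R\Pi_\lambda,R^3\Pi_\lambda$, including agreement on overlaps, are exactly \cref{lem:qtsasm:adjacent-arc-grid-distance-checks}.

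The remaining and main step is $N_0$ against $\Pi_\lambda$ and $R^2\Pi_\lambda$, where the choice of $\tau$ matters. Consider first the same arc. Testing the center value $a$ and the neighbor value $a+1$ reduces the inequalities to $c_i-d_i^0\le a$ and $a\le c_i+d_i^0-2$, where $c_i\in\{s_i,s_i-2\}$. By \eqref{eq:qtsasm:finite-check-Delta-identities}, with $a=p+q-2-2\theta$, these become $\Delta_i^-\ge\theta$ (respectively $\Delta_i^-\ge\theta-1$) and $\Delta_i^+\ge0$ (respectively $\Delta_i^+\ge1$) when $c_i=s_i$ (respectively $s_i-2$). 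Equality on overlaps is the boundary case of the same inequality. So the check becomes: vertices at level $s_i$ need $\Delta_i^-\ge\theta$, and vertices at level $s_i-2$ need $\Delta_i^+\ge1$. We go through cases (i)--(iv) of \cref{lem:qtsasm:local-flip-defect-label}, using the monotonicity of $\Delta^\pm$ along the arc. In case (i), the vertices at level $s_i$ are those where $\Delta^-$ is still large. In case (iii), the low level sits exactly off $I^+$. In cases (ii) and (iv), only $v_0$ is lowered, and there $\Delta_0^+\ge1$ because $v_0$ has first coordinate at least $k\ge p$ and $v_0\ne u'$. The delicate point is case (i): we must verify that $\Delta_i^+\ge1$ on the lowered part. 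This should follow because $\Delta_i^-=\theta$ together with $\Delta_i^+=0$ would place $v_i$ at $u'$ shifted by $\theta$ in both coordinates, and this forces a contradiction with $a\ge m$ as in the $\beta<M$ argument.

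For the opposite arc $R^2\Pi_\lambda$, the value is $c_i$ again, and the distance is at least $2n-s_i-(p+q)$. Since $s_i\le\ell+1$ and $p+q\le n-1$, the distance exceeds $|a+1-c_i|$ with room to spare, as in the opposite-arc estimate of \cref{lem:qtsasm:single-defect-sets-non-empty}. This completes all checks, and \cref{lem:qtsasm:equivariant-height-extension} then applies. We expect the same-arc case analysis, specifically case (i) with $\theta>0$, to be the main obstacle. There we would first try to prove the needed inequalities directly from \eqref{eq:qtsasm:Deltaminus-lower-bound} and the extremal choice of $\alpha$.
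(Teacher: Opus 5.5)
Your outline matches the paper's proof. You use the same choice of $u'$, $a$, $\tau$, the same reduction by quarter-turn symmetry to pairs meeting $N_0$, the same boundary and opposite-arc estimates, \cref{lem:qtsasm:adjacent-arc-grid-distance-checks} for the adjacent arcs, and the boundary--cycle pairs from \cref{lem:qtsasm:single-defect-sets-non-empty}. The gap is in the step you flag as the main one, $N_0$ against $\Pi_\lambda$. There, your translation of $c_i-d_i^0\le a\le c_i+d_i^0-2$ into conditions on $\Delta_i^\pm$ is miscomputed. Substituting $a=p+q-2-2\theta$ into \eqref{eq:qtsasm:finite-check-Delta-identities} gives the correct criteria: a vertex at level $s_i$ needs $\Delta_i^-\ge\theta+1$ (the upper inequality is then automatic), and a vertex at level $s_i-2$ needs $\Delta_i^-\ge\theta$ and $\Delta_i^++\theta\ge1$.

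Your thresholds are off in both directions. Requiring only $\Delta_i^-\ge\theta$ at level $s_i$ is too weak. It accepts a vertex with $\Delta_i^-=\theta$ at the high level, which is exactly what the choice of $\tau$ is meant to prevent, so your treatment of case (i) never uses the defining property of $\alpha,\beta$. Requiring $\Delta_i^+\ge1$ at level $s_i-2$ is too strong and can be false. For example, take $n=16$ and $u'=(8,6)$. Then $m=6$, $a=8$, $\theta=2$, and $\Delta_i^-\ge 14-s_i\ge5$ on the arc, so case (ii) applies and $v_0=(8,1)$ is lowered, yet $\Delta_0^+=0$. Your justification ($x_0\ge k\ge p$ and $v_0\ne u'$) does not give $\Delta_0^+\ge1$; the inequality holds there only because $\theta>0$.

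With the correct criteria, the check splits by $\theta$. If $\theta>0$, the low level needs only $\Delta_i^-\ge\theta$, which is \eqref{eq:qtsasm:Deltaminus-lower-bound}. In every applicable case of \cref{lem:qtsasm:local-flip-defect-label}, all vertices with $\Delta_i^-=\theta$ receive the low level, so every high-level vertex has $\Delta_i^-\ge\theta+1$. If $\theta=0$, the high level must avoid $I^-$, and the low level needs $\Delta_i^+\ge1$.

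In case (i), the latter holds on $I^-$ because $\Delta_i^-=\Delta_i^+=0$ means $v_i=u'$. This contradicts the disjointness of $\mathcal O(u')$ from $V(\Gamma_\lambda)$; your appeal to $a\ge m$ is not the right argument. When $\alpha>0$, the low level also covers every $i>\beta$, which your sketch omits. There $y_i\ge y_\beta\ge q$, and $y_i=q$ would force the edge leaving $v_\beta$ to be vertical with $x_\beta=p$, that is, $v_\beta=u'$. Cases (iii) and (iv) go through as you say.

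A smaller point: unlike in \cref{lem:qtsasm:orbit-realization}, the neighborhoods here need not be disjoint. Adjacent ones meet when $n-2m=2$, and opposite ones meet when $p+q=n-1$. So copying that argument also requires checking that the prescriptions agree on shared positions. Both give $k$ in the adjacent case, and $a+1$ (respectively $n-a-1$) in the opposite case.
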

\begin{proof}
Choose $u'=(p,q)$ and $a$ by \cref{lem:qtsasm:local-flip-center-level}, and choose $\tau$ by \cref{lem:qtsasm:local-flip-defect-label}.
Use the notation $m$, $\theta$, $\Pi_\lambda=(v_0,\dots,v_M)$, $v_i=(x_i,y_i)$, $s_i$, $\Delta_i^-$, $\Delta_i^+$, and $d_i^t$ from \cref{lem:qtsasm:local-flip-center-level}.
The disjointness and separation hypotheses in \cref{lem:qtsasm:local-flip-specification} hold by \cref{lem:qtsasm:local-flip-center-level}.
Let $u_t=R^t u'$ for $t\in[0,3]$, and let $N_t$ be the height-grid neighborhood consisting of $u_t$ and its height-grid neighbors.
Let $g$ be the partial height specification with the ASM boundary values, the cycle values $c^\tau$, and the following neighborhood values.
On $N_t$, the center $u_t$ has value $a$ for even $t$ and $n-a$ for odd $t$, and each height-grid neighbor of $u_t$ has value $a+1$ for even $t$ and $n-a-1$ for odd $t$.
We verify that $g$ satisfies the hypotheses of \cref{lem:qtsasm:equivariant-height-extension}.

First compare $N_0$ with the boundary.
Let $w=(r,s)\in N_0$, and let $d\in\{0,1\}$ be the grid distance from $w$ to $u'$.
Then $g_w=a+d$.
The grid-distance inequalities from $w$ to the four sides are $|r-s|\leq a+d\leq r+s$ and $a+d\leq2n-r-s$.
These are exactly the inequalities against the top, left, bottom, and right boundary values.
Since $|r-s|\leq |p-q|+d\leq a+d$, the lower inequality holds.
Also $r+s\geq p+q-1$ and $a+d\leq a+1\leq p+q-1$, so $a+d\leq r+s$.
Finally, $p+q\leq n-1$ gives $r+s\leq p+q+1\leq n$, while $a+d\leq n-m-1\leq n-2$; hence $a+d\leq2n-r-s$.
If a height-grid position in $N_0$ also lies on the boundary, then the grid-distance condition with that boundary position has distance zero, so it says that the neighborhood value equals the boundary value.
The other boundary cases follow by quarter-turn symmetry.

Next compare neighborhoods.
The grid distance between adjacent neighborhood centers is $\Delta=|p-q|+|p+q-n|=n-2m$.
Let $w\in N_0$ and $w'\in N_1$, and let $d$ and $d'$ be their grid distances from $u_0$ and $u_1$, respectively.
Then the grid distance from $w$ to $w'$ is at least $\Delta-d-d'$.
The prescribed values are $a+d$ and $n-a-d'$.
The lower bound $a\geq m$ gives $n-2a-d-d'\leq\Delta-d-d'$, while the upper bound $a\leq n-m-2$ gives $2a+d+d'-n\leq\Delta-4+d+d'\leq\Delta-d-d'$.
Thus the absolute difference between the two prescribed values is at most the grid distance from $w$ to $w'$.
The other adjacent pairs are quarter-turn images.
If $\Delta=2$, then $n=2k$, $m=k-1$, and $a=k-1$; every height-grid position common to two adjacent neighborhoods is at distance one from both centers and receives the common value $k$.
If $\Delta\leq1$, then $u'$ lies in $\mathcal Z_n$, which is excluded.
For opposite neighborhoods, the prescribed values differ by at most one.
If opposite neighborhoods have a common height-grid position, that position is at distance one from both centers and receives the same value.
Hence all neighborhood-neighborhood inequalities hold, and the neighborhood prescriptions agree at common height-grid positions.

We now compare $N_0$ with the northwest arc.
For all $w\in N_0$, the grid-distance inequalities between $w$ and $v_i$ are equivalent to
\begin{equation}\label{eq:qtsasm:neighborhood-cycle-center-form}
  c^\tau(v_i)-d_i^0\leq a\leq c^\tau(v_i)+d_i^0-2.
\end{equation}
The lower bound comes from $w=u'$, and the upper bound comes from the neighbor of $u'$ on a shortest path to $v_i$; if that neighbor is $v_i$, this is exactly the condition that the neighborhood value and the cycle value at $v_i$ agree.
Using \eqref{eq:qtsasm:finite-check-Delta-identities}, the two possible levels give the following criteria.
If $c^\tau(v_i)=s_i$, then \eqref{eq:qtsasm:neighborhood-cycle-center-form} holds exactly when $\Delta_i^-\geq \theta+1$.
If $c^\tau(v_i)=s_i-2$, then \eqref{eq:qtsasm:neighborhood-cycle-center-form} holds exactly when $\Delta_i^-\geq \theta$ and $\Delta_i^++\theta\geq1$.
Suppose first that $\theta>0$.
By \cref{lem:qtsasm:local-flip-defect-label}, every vertex with $\Delta_i^-=\theta$ is assigned the level $s_i-2$, while the level $s_i$ is assigned only when $\Delta_i^-\geq\theta+1$.
The bound \eqref{eq:qtsasm:Deltaminus-lower-bound} gives $\Delta_i^-\geq\theta$ at every vertex, and $\Delta_i^++\theta\geq1$ is automatic.
Thus \eqref{eq:qtsasm:neighborhood-cycle-center-form} holds when $\theta>0$.

Now suppose that $\theta=0$.
If case~\textup{(i)} of \cref{lem:qtsasm:local-flip-defect-label} applies with $\alpha>0$, then $I^-=[\alpha,\beta]$ and the level $s_i-2$ is assigned for every $i\in[\alpha,M]$.
On $I^-$, the inequality $\Delta_i^+>0$ follows from $v_i\neq u'$, which holds because the orbit of $u'$ is disjoint from the cycle.
For $i>\beta$, monotonicity gives $y_i\geq q$.
Equality $y_i=q$ would force the edge leaving $I^-$ to end at $(p,q)=u'$, so in fact $y_i>q$ and again $\Delta_i^+>0$.
The vertices before $I^-$ are assigned the level $s_i$ and satisfy $\Delta_i^->0$.
If case~\textup{(i)} applies with $\alpha=0$, then the level $s_i-2$ is assigned exactly on $I^-=[0,\beta]$, where $\Delta_i^+>0$ because $v_i\neq u'$, and every later vertex is assigned $s_i$ and has $\Delta_i^->0$.
In case~\textup{(iii)}, the level $s_i-2$ is assigned only where $\Delta_i^+>0$, while the level $s_i$ is assigned only where $\Delta_i^->0$.
In case~\textup{(iv)}, both $\Delta_i^-$ and $\Delta_i^+$ are positive at every vertex, so the two criteria also hold.
Hence \eqref{eq:qtsasm:neighborhood-cycle-center-form} holds for every $i$ when $\theta=0$.
This proves all inequalities between $N_0$ and $\Pi_\lambda$.

For $R^2\Pi_\lambda$, let $c_i=c^\tau(v_i)\in\{s_i,s_i-2\}$.
Then $R^2v_i=(n-x_i,n-y_i)$, $c^\tau(R^2v_i)=c_i$, and the grid distance from $u'$ to $R^2v_i$ is $d_i^2=2n-s_i-p-q$.
The grid-distance inequalities are equivalent to $c_i-d_i^2\leq a\leq c_i+d_i^2-2$.
If $c_i=s_i$, the lower endpoint is $2s_i+p+q-2n$; if $c_i=s_i-2$, it is $2s_i+p+q-2n-2$.
For the lower endpoints, use the parity-specific bound on $s_i$.
If $n=2k$, then $s_i\leq k+1$, while if $n=2k+1$, then $s_i\leq k+2$.
Together with $p+q\leq m+k$, this gives $2s_i+p+q-2n\leq m$ in both cases, and the lower endpoint for $c_i=s_i-2$ is smaller by $2$.
Thus both lower endpoints are at most $m\leq a$.
For the upper endpoints, $a\leq p+q-2$ and $p+q\leq n-1$ give $p+q-2\leq2n-p-q-4\leq2n-p-q-2$.
Hence all inequalities between $N_0$ and $R^2\Pi_\lambda$ hold, including cases where a position lies in both sets.

The adjacent-arc inequalities between $N_0$ and $R\Pi_\lambda\cup R^3\Pi_\lambda$, together with agreement at common vertices, are the conclusions of \cref{lem:qtsasm:adjacent-arc-grid-distance-checks}.

The proof of \cref{lem:qtsasm:single-defect-sets-non-empty} verifies the grid-distance inequalities involving only boundary and cycle vertices for these boundary values and cycle values $c^\tau$.
The preceding paragraphs have checked all inequalities involving at least one neighborhood position, including the cases where a position receives two prescriptions.
Because $R$ is an isometry and the prescription satisfies $g_{Rv}=n-g_v$, the corresponding inequalities for $N_1,N_2,N_3$ follow by quarter-turn symmetry.
For the parity condition, let $w\in N_0$ and let $d\in\{0,1\}$ be the grid distance from $w$ to $u'$.
Then $g_w=a+d\equiv p+q+d\equiv w_1+w_2\pmod2$.
The other neighborhoods follow by quarter-turn symmetry, and the parity on $V(\Gamma_\lambda)$ follows from \cref{lem:qtsasm:single-defect-cycle-data}.
The prescribed values satisfy the quarter-turn condition by construction on the neighborhoods and by the relation $c^\tau(Rv)=n-c^\tau(v)$ on the cycle.
The preceding checks also show that every position belonging to more than one prescribed set receives the same value from all prescriptions that contain it.
Thus $g$ satisfies the hypotheses of \cref{lem:qtsasm:equivariant-height-extension}.
\end{proof}

\begin{lemma}\label{lem:qtsasm:local-flips-away-from-ferrers-cycle}
Let $n\geq5$, $n\not\equiv2\pmod4$, and let $\lambda$ be a non-zero Ferrers diagram contained in the staircase.
Let $u$ be a height-grid position whose $R$-orbit is disjoint from $\mathcal B_n\cup V(\Gamma_\lambda)\cup\mathcal Z_n$.
Then there exist $\tau\in\mathcal T_\lambda$ and two QTSASMs $X^-_u$ and $X^+_u$ whose cores both lie in $\mathcal U_{\lambda,\tau}^n$ such that $H^{X^+_u}-H^{X^-_u}=2\mathcal E_u$.
\end{lemma}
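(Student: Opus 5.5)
This lemma is essentially the combination of the two preceding lemmas, and I would prove it by chaining them. First, I apply \cref{lem:qtsasm:ferrers-local-flip-extension-checks} to the given position $u$. Its orbit is assumed disjoint from $\mathcal B_n\cup V(\Gamma_\lambda)\cup\mathcal Z_n$, so the lemma supplies a representative $u'\in\mathcal O(u)$, a label $\tau\in\mathcal T_\lambda$, and an integer $a$. With $u'$ in place of $u$, these satisfy the disjointness and separation hypotheses of \cref{lem:qtsasm:local-flip-specification}. Moreover, the boundary values, the cycle values $c^\tau$, and the neighborhood values $a$, $a+1$, $n-a$, $n-a-1$ together form a partial height specification $g$ satisfying the hypotheses of \cref{lem:qtsasm:equivariant-height-extension}. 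In particular, $g$ is well defined on overlaps.

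Second, I feed $u'$, $\tau$, $a$, and $g$ into \cref{lem:qtsasm:local-flip-specification}. This yields QTSASMs $X^-_{u'}$ and $X^+_{u'}$ whose cores lie in $\mathcal U^n_{\lambda,\tau}$ and satisfy $H^{X^+_{u'}}-H^{X^-_{u'}}=2\mathcal E_{u'}$.

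The one remaining point is to pass from $u'$ back to $u$. Write $u'=R^t u$ for some $t\in[0,3]$. Directly from the definition $\mathcal E_w=\chi_w-\chi_{Rw}+\chi_{R^2w}-\chi_{R^3w}$ and the fact that the orbit has four distinct elements, we get $\mathcal E_{Ru}=-\mathcal E_u$. Hence $\mathcal E_{u'}=(-1)^t\mathcal E_u$. The orbit is indeed four-element: it avoids $\mathcal Z_n$, which contains the unique fixed point $(k,k)$ in the even case, and the separation hypothesis also rules out collapse.

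If $t$ is even, set $X^\pm_u=X^\pm_{u'}$. If $t$ is odd, swap the two matrices by setting $X^+_u=X^-_{u'}$ and $X^-_u=X^+_{u'}$. In both cases the two cores still lie in the same set $\mathcal U^n_{\lambda,\tau}$, since membership does not depend on which matrix is labelled $+$, and we obtain $H^{X^+_u}-H^{X^-_u}=2\mathcal E_u$.

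I see no real obstacle here, because the substantial verification, namely the grid-distance checks between the neighborhoods, the Ferrers cycle, and the boundary, has already been carried out in \cref{lem:qtsasm:ferrers-local-flip-extension-checks,lem:qtsasm:adjacent-arc-grid-distance-checks}. The only step needing care is the sign bookkeeping when the representative $u'$ differs from $u$ by an odd rotation. I would handle it explicitly by the swap described above rather than by re-choosing $a$.
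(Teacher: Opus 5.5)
Your proposal is correct and follows the paper's proof: apply \cref{lem:qtsasm:ferrers-local-flip-extension-checks} to get $u'$, $\tau$, and $a$, then apply \cref{lem:qtsasm:local-flip-specification}, and swap the two matrices when $u'$ is an odd rotation of $u$. One minor point: the identity $\mathcal E_{Ru}=-\mathcal E_u$ follows from $R^4$ being the identity, so you do not need the orbit to have four distinct elements.
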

\begin{proof}
By \cref{lem:qtsasm:ferrers-local-flip-extension-checks}, choose $u'\in\mathcal O(u)$, $\tau\in\mathcal T_\lambda$, and $a\in\Z$ so that, with $u'$ in place of $u$, all hypotheses of \cref{lem:qtsasm:local-flip-specification} are satisfied.
Applying that lemma gives a pair with height difference $2\mathcal E_{u'}$.
If $u'=R^{2m}u$ for some integer $m$, then $\mathcal E_{u'}=\mathcal E_u$.
If $u'=R^{2m+1}u$ for some integer $m$, then $\mathcal E_{u'}=-\mathcal E_u$, and we swap $X^-_u$ and $X^+_u$.
\end{proof}

We now combine the height directions realized above with differences between the sets $\mathcal U_{\lambda,\tau}^n$.
The first spanning lemma treats directions on which every boundary coordinate vanishes.

\begin{lemma}\label{lem:qtsasm:fixed-defect-spanning}
Let $n\geq5$, $n\not\equiv2\pmod4$, and let $\lambda$ be a non-zero Ferrers diagram contained in the staircase.
For every $\tau\in\mathcal T_\lambda$, the set $\mathcal U_{\lambda,\tau}^n$ is non-empty.
Moreover,
\[
  \sum_{\tau\in\mathcal T_\lambda}
  \linspan\{Y-Y':Y,Y'\in\mathcal U_{\lambda,\tau}^n\}
  =
  \{V\in A_n-A_n:\omega_\tau(V)=0\text{ for every }\tau\in\mathcal T_\lambda\}.
\]
\end{lemma}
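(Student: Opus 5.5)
Non-emptiness is exactly \cref{lem:qtsasm:single-defect-sets-non-empty}, so the substance is the span identity, which I will prove by two inclusions. Write $W_\lambda$ for the right-hand side.

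For the inclusion "$\subseteq$", take $Y,Y'\in\mathcal U_{\lambda,\tau}^n$. Both are QTSASM cores, so they lie in $A_n$ by \cref{thm:qtsasm:dimension}, and hence $Y-Y'\in A_n-A_n$. Every boundary coordinate takes the same value on $Y$ and on $Y'$: the value $1$ at $\tau$ and $0$ elsewhere. The linear parts $\omega_{\tau'}(Y-Y')$ therefore all vanish, so $Y-Y'\in W_\lambda$. Since $W_\lambda$ is a linear subspace, the sum of the spans is contained in it.

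For the inclusion "$\supseteq$", I will pass to height coordinates through the affine injective map $\Psi_n$. Its linear part is injective on $A_n-A_n$, so it suffices to show that $\Psi_n$ maps the sum of the spans onto $\Psi_n(W_\lambda)$. By \cref{lem:qtsasm:fixed-defect-height-differences}, a direction $V\in A_n-A_n$ lies in $W_\lambda$ exactly when its height difference $D$ vanishes on $V(\Gamma_\lambda)$.

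By \cref{prop:qtsasm:height-tangent-space}, these height differences are the $D$ that satisfy three conditions: $D$ vanishes on $\mathcal B_n\cup\mathcal Z_n\cup V(\Gamma_\lambda)$, and $D_{Rv}=-D_v$ for every $v$. The set $V(\Gamma_\lambda)$ is $R$-invariant by \cref{lem:qtsasm:ferrers-defect-cycle}. Hence a basis of this subspace is $\{\mathcal E_u\}$, where $u$ runs over one representative of each four-element $R$-orbit disjoint from $\mathcal B_n\cup\mathcal Z_n\cup V(\Gamma_\lambda)$. Vanishing on a whole $R$-invariant set removes exactly those orbit coordinates from the basis in the proposition.

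For each such $u$, \cref{lem:qtsasm:local-flips-away-from-ferrers-cycle} supplies a label $\tau$ and QTSASMs $X^\pm_u$ whose cores $Y^\pm$ both lie in $\mathcal U_{\lambda,\tau}^n$ and satisfy $\Psi_n(Y^+)-\Psi_n(Y^-)=2\mathcal E_u$. Therefore $\tfrac12(Y^+-Y^-)$ lies in the $\tau$-summand and maps to $\mathcal E_u$. These vectors map onto a basis of $\Psi_n(W_\lambda)$, and injectivity then gives $W_\lambda$ inside the sum of the spans.

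The hard part of the argument is the realization of each single-orbit flip inside a fixed defect set $\mathcal U_{\lambda,\tau}^n$. That work is already done in \cref{lem:qtsasm:ferrers-local-flip-extension-checks,lem:qtsasm:local-flips-away-from-ferrers-cycle}. What remains to check is bookkeeping: that vanishing on the $R$-invariant set $V(\Gamma_\lambda)$ corresponds exactly to deleting those orbit coordinates in the basis of \cref{prop:qtsasm:height-tangent-space}, and that the orbits used in the flip lemma are precisely those disjoint from $\mathcal B_n\cup V(\Gamma_\lambda)\cup\mathcal Z_n$.
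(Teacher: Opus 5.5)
Your proof is correct and follows essentially the same route as the paper. Non-emptiness comes from \cref{lem:qtsasm:single-defect-sets-non-empty}, the inclusion $\subseteq$ from the definition of $\mathcal U_{\lambda,\tau}^n$, and the reverse inclusion from four steps: translating vanishing boundary coordinates into vanishing height differences on the $R$-invariant set $V(\Gamma_\lambda)$ via \cref{lem:qtsasm:fixed-defect-height-differences}, restricting the basis of \cref{prop:qtsasm:height-tangent-space} to the orbits avoiding $V(\Gamma_\lambda)$, realizing each such $\mathcal E_u$ inside a single $\mathcal U_{\lambda,\tau}^n$ by \cref{lem:qtsasm:local-flips-away-from-ferrers-cycle}, and pulling back by injectivity of $\Psi_n$. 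Your explicit treatment of the factor $\tfrac12$ and of the final injectivity argument spells out steps the paper leaves implicit.
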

\begin{proof}
The first assertion follows from \cref{lem:qtsasm:single-defect-sets-non-empty}.
If $Y,Y'\in\mathcal U_{\lambda,\tau}^n$, then $Y-Y'\in A_n-A_n$ and every boundary coordinate vanishes on $Y-Y'$.
Thus the left-hand side is contained in the right-hand side.
By \cref{prop:qtsasm:height-tangent-space}, $\Psi_n(A_n)-\Psi_n(A_n)$ has basis $\mathcal E_u$, where $u$ ranges over representatives of the four-element $R$-orbits disjoint from $\mathcal B_n\cup\mathcal Z_n$.
By \cref{lem:qtsasm:fixed-defect-height-differences}, for a direction $V=Y-Y'\in A_n-A_n$, the conditions $\omega_\tau(V)=0$ for all $\tau$ are equivalent to requiring the height difference $\Psi_n(Y)-\Psi_n(Y')$ to vanish on $V(\Gamma_\lambda)$.
Since $\Gamma_\lambda$ is $R$-invariant, the added vanishing condition removes precisely the basis directions whose orbits meet $V(\Gamma_\lambda)$.
Therefore the remaining basis directions are precisely those whose orbits are disjoint from $\mathcal B_n\cup V(\Gamma_\lambda)\cup\mathcal Z_n$.
By \cref{lem:qtsasm:local-flips-away-from-ferrers-cycle}, every vector $2\mathcal E_u$ for such an orbit direction is the height difference of two QTSASMs whose cores lie in the same set $\mathcal U_{\lambda,\tau}^n$.
The corresponding core difference therefore lies in the left-hand side.
Injectivity of $\Psi_n$ gives the asserted spanning equality in core coordinates.
\end{proof}

\begin{lemma}\label{lem:qtsasm:single-defect-spanning}
Let $n\geq5$, $n\not\equiv2\pmod4$, and let $\lambda$ be a non-zero Ferrers diagram contained in the staircase.
Let
\[
  F_\lambda^{=}
  =
  \{Y\in P^\core_{\QTSASM(n)}:F_\lambda(Y)=0\}.
\]
Then
\[
  \linspan\{Y-Y':Y,Y'\in F_\lambda^{=}\cap\pi_C(\QTSASM(n))\}
  =
  \{V\in A_n-A_n:F_\lambda(V)=0\}.
\]
\end{lemma}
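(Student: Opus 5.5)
The inclusion ``$\subseteq$'' is immediate: every QTSASM core lies in $A_n$ by \cref{thm:qtsasm:dimension}, so differences of tight cores lie in $A_n-A_n$. Since $F_\lambda$ is linear, it vanishes on differences of points where $F_\lambda=0$. For ``$\supseteq$'' we combine the boundary-coordinate identity of \cref{lem:qtsasm:ferrers-defect-identity} with the spanning result \cref{lem:qtsasm:fixed-defect-spanning}. By \cref{lem:qtsasm:ferrers-defect-identity}, a QTSASM core is tight exactly when it lies in $\bigcup_{\tau\in\mathcal T_\lambda}\mathcal U_{\lambda,\tau}^n$. Each of these sets is non-empty by \cref{lem:qtsasm:single-defect-sets-non-empty}. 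Let $L$ denote the left-hand span.

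First, \cref{lem:qtsasm:fixed-defect-spanning} gives $L\supseteq W_0:=\{V\in A_n-A_n:\omega_\tau(V)=0\ \forall\tau\}$. Next, for $\tau\neq\tau'$ pick $Y^\tau\in\mathcal U_{\lambda,\tau}^n$ and $Y^{\tau'}\in\mathcal U_{\lambda,\tau'}^n$. The difference $Y^\tau-Y^{\tau'}$ lies in $L$, and its vector of boundary coordinates is $e_\tau-e_{\tau'}$. Consider the linear map $\omega:A_n-A_n\to\R^{\mathcal T_\lambda}$, $V\mapsto(\omega_\tau(V))_\tau$. Its kernel is $W_0\subseteq L$. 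Its image restricted to $L$ contains all $e_\tau-e_{\tau'}$, hence the whole sum-zero subspace $\{w:\sum_\tau w_\tau=0\}$ (note $|\mathcal T_\lambda|\geq2$).

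Now let $V\in A_n-A_n$ with $F_\lambda(V)=0$. Taking the linear part of the identity $1+2F_\lambda=\sum_\tau\omega_\tau$ on $A_n$ gives $\sum_\tau\omega_\tau(V)=2F_\lambda(V)=0$. So $\omega(V)$ is sum-zero, and some $V'\in L$ satisfies $\omega(V')=\omega(V)$. Then $V-V'\in W_0\subseteq L$, so $V\in L$. This proves equality.

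The main obstacle is simply bookkeeping about linear versus affine parts. We must check that $\omega_\tau$ restricted to directions is the linear part, as stipulated before \cref{lem:qtsasm:fixed-defect-height-differences}. We must also check that the identity of \cref{lem:qtsasm:ferrers-defect-identity}, which holds for all $Y\in A_n$, differentiates to $2F_\lambda(V)=\sum_\tau\omega_\tau(V)$ on $A_n-A_n$: the constant $1$ and the constants in $\omega^{\mathrm R}_a$ cancel, which is immediate since the identity holds on the whole affine space. All the heavy lifting (non-emptiness and local flips) has already been done in the preceding lemmas.
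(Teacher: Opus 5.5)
Your proposal is correct and is essentially the paper's proof. Both arguments use the differences $Y_\tau-Y_\sigma$ of cores taken from distinct sets $\mathcal U_{\lambda,\tau}^n$ to realize every sum-zero vector of boundary coordinates, then absorb the remaining direction, on which every $\omega_\tau$ vanishes, via \cref{lem:qtsasm:fixed-defect-spanning}, with the linearized identity $2F_\lambda(V)=\sum_\tau\omega_\tau(V)$ from \cref{lem:qtsasm:ferrers-defect-identity} linking the two steps; your kernel-and-image formulation for $V\mapsto(\omega_\tau(V))_\tau$ is just a slightly cleaner packaging of the same argument.
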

\begin{proof}
By \cref{lem:qtsasm:ferrers-defect-identity}, on $A_n$ we have
\[
  1+2F_\lambda(Y)=\sum_{\tau\in\mathcal T_\lambda}\omega_\tau(Y).
\]
Thus, for directions $V\in A_n-A_n$,
\[
  2F_\lambda(V)=\sum_{\tau\in\mathcal T_\lambda}\omega_\tau(V).
\]
Let
\[
  W=\linspan\{Y-Y':Y,Y'\in F_\lambda^{=}\cap\pi_C(\QTSASM(n))\}.
\]
If $Y,Y'\in F_\lambda^{=}\cap\pi_C(\QTSASM(n))$, then $Y-Y'\in A_n-A_n$ and
$F_\lambda(Y-Y')=F_\lambda(Y)-F_\lambda(Y')=0$.
Therefore
\[
  W\subseteq\{V\in A_n-A_n:F_\lambda(V)=0\}.
\]
For each $\tau$, choose $Y_\tau\in\mathcal U_{\lambda,\tau}^n$, possible by \cref{lem:qtsasm:fixed-defect-spanning}.
By the equality characterization in \cref{lem:qtsasm:ferrers-defect-identity}, each $Y_\tau$ lies in $F_\lambda^{=}\cap\pi_C(\QTSASM(n))$.
Thus $Y_\tau-Y_\sigma\in W$.
Moreover, $\omega_\tau(Y_\tau-Y_\sigma)=1$, $\omega_\sigma(Y_\tau-Y_\sigma)=-1$, and every other boundary coordinate vanishes on $Y_\tau-Y_\sigma$.
These vectors span all vectors $u\in\R^{\mathcal T_\lambda}$ with $\sum_\tau u_\tau=0$.
If $V\in A_n-A_n$ satisfies $F_\lambda(V)=0$, then $\sum_\tau \omega_\tau(V)=0$.
The differences $Y_\tau-Y_\sigma$ therefore allow us to choose $W_0\in W$ such that $V'=V-W_0$ has every boundary coordinate equal to zero.
By \cref{lem:qtsasm:fixed-defect-spanning}, $V'$ lies in the sum of the spans generated by differences within the individual sets $\mathcal U_{\lambda,\tau}^n$.
Every such difference is a difference of tight QTSASM cores by \cref{lem:qtsasm:ferrers-defect-identity}, so each of these spans is contained in $W$.
Hence $V'\in W$, and then $V=V'+W_0\in W$.
\end{proof}

\begin{lemma}\label{lem:qtsasm:ferrers-facets}
Let $n\geq5$ and $n\not\equiv2\pmod4$.
For every non-zero Ferrers diagram $\lambda$ contained in the staircase, the Ferrers inequality
\[
  F_\lambda(Y)=\sum_{i=1}^{k-1}\sum_{j=1}^{\lambda_i} y_{i+1,j+1}\geq0
\]
defines a facet of $P^\core_{\QTSASM(n)}$.
\end{lemma}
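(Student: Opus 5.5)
The proof assembles the preceding lemmas. The plan has four steps: validity, a proper non-empty face, dimension of the tight face, and conclusion.

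\emph{Validity.} By \cref{lem:qtsasm:ferrers-defect-identity}, $S^\lambda\in\mathcal S$ and the corresponding cut inequality in~\cref{eq:qtsasm:cut} is exactly $F_\lambda(Y)\geq0$. Hence $F_\lambda(Y)\geq0$ is valid on $P^\core_{\QTSASM(n)}$ (validity also follows directly from the boundary-coordinate identity in that lemma).

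\emph{A proper non-empty face.} The face $F_\lambda^{=}$ is non-empty: by \cref{lem:qtsasm:single-defect-sets-non-empty} some $\mathcal U_{\lambda,\tau}^n$ is non-empty, and every core in it is tight by the equality characterization in \cref{lem:qtsasm:ferrers-defect-identity}. The face is proper because $F_\lambda$ is a non-zero linear combination of core coordinates supported in the shifted staircase $T_k$. By \cref{lem:qtsasm:no-shifted-staircase-affine-hull-equation}, $F_\lambda$ is therefore not constant on $A_n$. Since $A_n=\aff(P^\core_{\QTSASM(n)})$ by \cref{thm:qtsasm:dimension}, $F_\lambda$ is not constant on the polytope, so $F_\lambda^{=}\neq P^\core_{\QTSASM(n)}$.

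\emph{Dimension of the tight face.} The polytope $P^\core_{\QTSASM(n)}$ is integral, so $F_\lambda^{=}$ is the convex hull of the tight QTSASM cores. The linear space parallel to $\aff(F_\lambda^{=})$ is thus $\linspan\{Y-Y' : Y,Y'\in F_\lambda^{=}\cap\pi_C(\QTSASM(n))\}$. By \cref{lem:qtsasm:single-defect-spanning}, this space equals $\{V\in A_n-A_n : F_\lambda(V)=0\}$. We know $F_\lambda$ does not vanish identically on the linear space $A_n-A_n$, since otherwise it would be constant on $A_n$, contradicting \cref{lem:qtsasm:no-shifted-staircase-affine-hull-equation}. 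Hence this space is a hyperplane in $A_n-A_n$, of dimension $\dim A_n-1$.

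\emph{Conclusion.} Combining these steps, $\dim F_\lambda^{=}=\dim P^\core_{\QTSASM(n)}-1$, so $F_\lambda^{=}$ is a facet. All the heavy lifting, namely the realization of the height directions and the single-defect spanning, is already contained in \cref{lem:qtsasm:single-defect-spanning}. The only step needing care is the integrality reduction that passes from vertices to the face: $F_\lambda^{=}$ is the convex hull of its integral points because the face of an integral polytope is spanned by its vertices, which are QTSASM cores.
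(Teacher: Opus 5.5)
Your proposal is correct and follows essentially the same route as the paper. Both get validity from the defect identity, use the shifted-staircase lemma to show $F_\lambda$ is not constant on $A_n=\aff(P^\core_{\QTSASM(n)})$, and use the single-defect spanning lemma to identify the direction space of the tight face with the hyperplane $\{V\in A_n-A_n : F_\lambda(V)=0\}$. Your remark that the tight face is the convex hull of the tight QTSASM cores makes explicit a step the paper leaves implicit, and your non-emptiness/properness check is harmless extra detail.
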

\begin{proof}
Validity follows from \cref{lem:qtsasm:ferrers-defect-identity}.
Since $\lambda$ is non-zero, the left-hand side $F_\lambda$ is non-zero.
The coordinate support of $F_\lambda$ is contained in $T_k$.
To see this, suppose that the term $y_{i+1,j+1}$ occurs in $F_\lambda$.
Then
\[
  (i+1)+(j+1)\leq i+1+\lambda_i+1\leq i+1+(k-1-i)+1=k+1.
\]
By \cref{lem:qtsasm:no-shifted-staircase-affine-hull-equation}, $F_\lambda$ is not constant on $A_n$.
Hence
\[
  \dim\{V\in A_n-A_n:F_\lambda(V)=0\}=\dim A_n-1.
\]
By \cref{lem:qtsasm:single-defect-spanning}, the linear span of differences of tight QTSASM cores is exactly this direction space.
Therefore the tight face has dimension
\[
  \dim A_n-1=\dim P^\core_{\QTSASM(n)}-1,
\]
where the last equality follows from $A_n=\aff(P^\core_{\QTSASM(n)})$ in \cref{thm:qtsasm:dimension}.
Thus the inequality defines a facet.
\end{proof}

\begin{lemma}\label{lem:qtsasm:distinct-ferrers-facets}
Let $n\geq5$ and $n\not\equiv2\pmod4$.
If $\lambda$ and $\mu$ are distinct non-zero Ferrers diagrams, then no scalar $\alpha$ makes $F_\lambda-\alpha F_\mu$ constant on $A_n$.
Consequently, whenever the corresponding Ferrers inequalities define facets of $P^\core_{\QTSASM(n)}$, those facets are distinct.
\end{lemma}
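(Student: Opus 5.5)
The plan is to reduce to the linear-algebra description of the functions that are constant on $A_n$. By the argument opening the proof of \cref{lem:qtsasm:no-shifted-staircase-affine-hull-equation}, a linear combination of core coordinates is constant on $A_n$ exactly when it lies in the span of the left-hand sides of the defining equations of $A_n$. Now $F_\lambda-\alpha F_\mu$ is a linear combination of core coordinates whose support lies in $T_k$, because both $F_\lambda$ and $F_\mu$ are supported there (this was checked in the proof of \cref{lem:qtsasm:ferrers-facets}). So \cref{lem:qtsasm:no-shifted-staircase-affine-hull-equation} applies directly: if $F_\lambda-\alpha F_\mu$ were constant on $A_n$, it would have to be the zero functional.

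It therefore remains to show that $F_\lambda\neq\alpha F_\mu$ as linear functionals for every scalar $\alpha$. All coefficients of $F_\lambda$ and $F_\mu$ lie in $\{0,1\}$, and their supports are the shifted cell sets $\{(i+1,j+1):j\leq\lambda_i\}$ and $\{(i+1,j+1):j\leq\mu_i\}$.

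\begin{itemize}
\item If $\alpha=0$, the equality $F_\lambda=0$ is impossible, since $\lambda$ is non-zero.
\item If $\alpha\neq0$, comparing coefficients at any cell of the support of $F_\mu$ (non-empty, since $\mu$ is non-zero) gives $\alpha\in\{0,1\}$, hence $\alpha=1$. Then $F_\lambda=F_\mu$ forces the two diagrams to have the same cell sets, so $\lambda=\mu$, contradicting distinctness.
\end{itemize}

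For the consequence about facets, suppose $\lambda$ and $\mu$ define the same facet $F$ of $P^\core_{\QTSASM(n)}$. Then $F$ spans the affine hyperplane $\{Y\in A_n:F_\lambda(Y)=0\}$, and also $\{Y\in A_n:F_\mu(Y)=0\}$. Two affine functions on $A_n$ that are non-constant on $A_n$ and share a zero hyperplane are proportional modulo constants. Hence $F_\lambda-\alpha F_\mu$ is constant on $A_n$ for some $\alpha$, contradicting the first part.

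No step looks like a real obstacle. The only point needing care is justifying the reduction to the zero-functional case: \cref{lem:qtsasm:no-shifted-staircase-affine-hull-equation} requires the support to lie in $T_k$, and that holds because each $F_\lambda$ is supported in $T_k$.
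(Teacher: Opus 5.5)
Your proposal is correct and follows essentially the same route as the paper's proof: both use the $T_k$-support of every $F_\eta$ together with \cref{lem:qtsasm:no-shifted-staircase-affine-hull-equation} to reduce to $F_\lambda=\alpha F_\mu$, and then compare $0/1$ coefficients to force $\alpha=1$ and $\lambda=\mu$. Your facet argument is also the paper's: coinciding facets would give coinciding hyperplanes in $A_n$, so the restrictions of $F_\lambda$ and $F_\mu$ to $A_n$ would be proportional up to a constant, contradicting the first part.
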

\begin{proof}
The coordinate support of $F_\eta$ is contained in $T_k$ for every Ferrers diagram $\eta$ contained in the staircase.
To see this, suppose that the coordinate $y_{i+1,j+1}$ occurs in $F_\eta$.
Then
\[
  (i+1)+(j+1)\leq i+1+\eta_i+1\leq k+1.
\]
Thus every difference $F_\lambda-\alpha F_\mu$ is supported in $T_k$.
Suppose that, for some scalar $\alpha$, the combination $F_\lambda-\alpha F_\mu$ is constant on $A_n$.
If $\alpha=0$, then $F_\lambda$ is constant on $A_n$, and \cref{lem:qtsasm:no-shifted-staircase-affine-hull-equation} gives $F_\lambda=0$, impossible because $\lambda$ is non-zero.
Hence $\alpha\neq0$.
By \cref{lem:qtsasm:no-shifted-staircase-affine-hull-equation}, the combination $F_\lambda-\alpha F_\mu$ is zero.
Thus $F_\lambda=\alpha F_\mu$.
The coefficient vectors of $F_\lambda$ and $F_\mu$ are non-zero $0/1$-vectors, so $\alpha=1$ and the same coordinates occur in the two sums.
Hence $\lambda_i=\mu_i$ for every $i$, so $\lambda=\mu$, a contradiction.
If two facet-defining Ferrers inequalities determined the same facet, then their supporting hyperplanes in the affine space $A_n$ would coincide.
Their restrictions to $A_n$ would therefore differ by multiplication by a non-zero scalar and addition of a constant.
Equivalently, $F_\lambda-\alpha F_\mu$ would be constant on $A_n$ for some non-zero scalar $\alpha$, which the first part excludes.
\end{proof}

\begin{proposition}\label{prop:qtsasm:ferrers-facet-lower-bound}
Let $n\geq4$, $n\not\equiv2\pmod4$, and put $k=\floor*{n/2}$.
The number of facets of $P_{\QTSASM(n)}$ is at least
\[
  \Catalan_{k-1}-1
  =
  \frac1k\binom{2k-2}{k-1}-1.
\]
Consequently, the number of facets of $P_{\QTSASM(n)}$ is at least
\[
  \frac{4^{k-1}}{k(2k-1)}-1
  \geq
  \frac{2^{n-3}}{n^2}-1,
\]
and is therefore $2^{\Omega(n)}$ as $n\to\infty$ with $n\not\equiv2\pmod4$.
\end{proposition}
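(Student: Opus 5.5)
Our plan is to combine \cref{lem:qtsasm:ferrers-facets,lem:qtsasm:distinct-ferrers-facets} with a count of Ferrers diagrams in the staircase, treating small $n$ separately. Then we transfer the facets through the assembly map and finish with an elementary estimate of the Catalan number.

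\emph{The case $n=4$.} Here $k=2$ and $\Catalan_{1}-1=0$, so the bound is vacuous. From now on let $n\geq5$ with $n\not\equiv2\pmod4$.

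\emph{Counting the facets of the core polytope.} Weakly decreasing sequences $(\lambda_1,\dots,\lambda_{k-1})$ with $\lambda_i\leq k-1-i$ are exactly the Ferrers diagrams inside the staircase $(k-2,k-3,\dots,0)$. Such diagrams correspond bijectively to Dyck paths of semilength $k-1$, by reading the boundary of the diagram as a lattice path. Hence there are $\Catalan_{k-1}$ of them, the zero diagram included. By \cref{lem:qtsasm:ferrers-facets}, each of the $\Catalan_{k-1}-1$ non-zero diagrams gives a facet of $P^\core_{\QTSASM(n)}$. By \cref{lem:qtsasm:distinct-ferrers-facets}, these facets are pairwise distinct.

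\emph{Transfer to the full-matrix polytope.} By \cref{thm:xasm:assembly}, the assembly map $\varphi$ restricts to an affine bijection from $P^\core_{\QTSASM(n)}$ onto $P_{\QTSASM(n)}$, and $\varphi$ is injective. Therefore $\varphi$ induces a bijection between the face lattices of the two polytopes and preserves dimensions, so facets correspond to facets. This gives the first bound.

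\emph{The exponential estimate.} We use the standard central binomial bound $\binom{2m}{m}\geq 4^m/(2m+1)$ with $m=k-1$. This gives
\[
  \Catalan_{k-1}=\frac1k\binom{2k-2}{k-1}\geq \frac{4^{k-1}}{k(2k-1)}.
\]
Since $k\geq (n-1)/2$, we have $4^{k-1}\geq 2^{n-3}$. Since $k\leq n/2$, we have $k(2k-1)\leq n^2$. Together these yield $\frac{4^{k-1}}{k(2k-1)}-1\geq \frac{2^{n-3}}{n^2}-1$, and therefore the number of facets is $2^{\Omega(n)}$.

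The only step requiring care is the Catalan count, namely checking that the staircase constraint $\lambda_i\leq k-1-i$ matches the Dyck condition exactly. Everything else is direct citation of the earlier lemmas and routine arithmetic.
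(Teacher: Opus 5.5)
Your proposal is correct and follows the paper's proof essentially step for step. The steps are: the vacuous case $n=4$; distinct facets of the core polytope from \cref{lem:qtsasm:ferrers-facets,lem:qtsasm:distinct-ferrers-facets}; transfer to $P_{\QTSASM(n)}$ via the affine isomorphism of \cref{thm:xasm:assembly}; the Catalan count of staircase Ferrers diagrams; and the central binomial estimate. The only cosmetic difference is that you treat both parities at once through $k\geq(n-1)/2$ and $k\leq n/2$, whereas the paper checks $n=2k$ and $n=2k+1$ separately.
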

\begin{proof}
If $n=4$, then $k=2$ and $\Catalan_{k-1}-1=0$.
Assume from now on that $n\geq5$.

By \cref{lem:qtsasm:ferrers-facets,lem:qtsasm:distinct-ferrers-facets}, non-zero Ferrers diagrams contained in the staircase give distinct facets of $P^\core_{\QTSASM(n)}$.
By \cref{thm:xasm:assembly}, the QTSASM assembly map sends $P^\core_{\QTSASM(n)}$ onto $P_{\QTSASM(n)}$.
The core projection is inverse to the assembly map on $P^\core_{\QTSASM(n)}$, so this restriction is an affine isomorphism from $P^\core_{\QTSASM(n)}$ onto $P_{\QTSASM(n)}$.
Thus these core facets give distinct facets of $P_{\QTSASM(n)}$.

The Ferrers diagrams contained in the staircase with row lengths $k-2,\dots,0$ are counted by the Catalan number $\Catalan_{k-1}$, by the standard lattice-path bijection.
Removing the diagram $(0,\dots,0)$ removes exactly the tautological inequality $0\geq0$.
Therefore the number of facets of $P_{\QTSASM(n)}$ is at least
\[
  \Catalan_{k-1}-1
  =
  \frac1k\binom{2k-2}{k-1}-1.
\]

Using $\binom{2m}{m}\geq 4^m/(2m+1)$ with $m=k-1$, we get
\[
  \Catalan_{k-1}
  =
  \frac1k\binom{2k-2}{k-1}
  \geq
  \frac{4^{k-1}}{k(2k-1)}.
\]
Consequently, the number of facets of $P_{\QTSASM(n)}$ is at least
\[
  \frac{4^{k-1}}{k(2k-1)}-1.
\]
If $n=2k$, then $4^{k-1}=2^{n-2}$ and $k(2k-1)<n^2$.
If $n=2k+1$, then $4^{k-1}=2^{n-3}$ and again $k(2k-1)<n^2$.
In both cases,
\[
  \frac{4^{k-1}}{k(2k-1)}
  \geq
  \frac{2^{n-3}}{n^2}.
\]
This proves the proposition.
\end{proof}

\section{Diagonally symmetric ASMs (DSASMs)}\label{sec:DSASM}
In this class, we impose invariance under reflection across the main (northwest--southeast) diagonal; in other words, the matrix equals its transpose.
The corresponding symmetry subgroup is $\mathcal{G} = \{\mathcal{I}, \mathcal{D}\}$.
Let $P_\DS$ denote the linear subspace of diagonally symmetric real matrices, i.e.,
\[
  P_\DS
  =
  \left\{
    X \in \R^{n \times n} : x_{i,j} = x_{j,i}\ \forall i,j \in [n]
  \right\}.
\]
Clearly, any DSASM satisfies the ASM constraints in~\crefrange{eq:asm:real}{eq:asm:col-sum} and also the symmetry constraints defining $P_\DS$; thus $P_\DSASM \subseteq P_\ASM \cap P_\DS$.
We will prove that these constraints are in fact sufficient, i.e., $P_\DSASM = P_\ASM \cap P_\DS$.

\paragraph{Core and assembly map.}
Let the \emph{core} of a DSASM be its upper-triangular part including the main diagonal, i.e.,
\[
  C = \left\{(i,j) \in [n] \times [n] : i \leq j\right\}
\]
is the set of \emph{core positions}, and let $\pi_C$ be the coordinate-wise projection onto $C$.
Define the affine map $\varphi : \R^C \to \R^{n \times n}$ by
\[
  \varphi(Y)_{i,j} =
  \begin{cases}
    y_{i,j} & \text{if } i \in [j],\\
    y_{j,i} & \text{if } i \in [j+1, n]
  \end{cases}
\]
for $Y \in \R^C$ and $i,j \in [n]$.
Thus $\varphi$ places the core $Y$ on and above the main diagonal and fills the lower-triangular part by reflection across the main diagonal, yielding a diagonally symmetric matrix.
Clearly, the map $\varphi$ is an assembly map: it is affine, satisfies $\pi_C(\varphi(Y)) = Y$ for every $Y \in \R^C$, and $\varphi(\pi_C(X)) = X$ for every $X \in \DSASM(n)$, because $X$ is completely determined by its entries in $C$ together with the imposed diagonal symmetry.

\bigskip
We now describe the core polytope of DSASMs.
\begin{theorem}\label{thm:dsasm:corepolytope}
  Let $n \geq 1$.
  Then the core polytope $P^\core_\DSASM \subseteq \R^C$ of $n \times n$ DSASMs is described by the following system.
  \begin{align}
                                   y&_{i,j} \in \R                            &\forall i \in [n], j \in [i,n], \label{eq:dsasm:real}\\
    0 \leq \sum_{i'=1}^{\min\{i,j\}} y&_{i',i} + \sum_{j'=i+1}^{j} y_{i,j'} \leq 1 &\forall i \in [n], j \in [n-1], \label{eq:dsasm:L-prefix}\\
    \sum_{i'=1}^{i}                 y&_{i',i} + \sum_{j'=i+1}^{n} y_{i,j'} = 1   &\forall i \in [n]. \label{eq:dsasm:L-sum}
  \end{align}
\end{theorem}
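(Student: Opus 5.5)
We follow the template used for the earlier classes. First we show that the integer solutions of the system \crefrange{eq:dsasm:real}{eq:dsasm:L-sum} are exactly the cores of DSASMs. Then we show that the system defines an integral polytope.

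The key observation is the following. For a diagonally symmetric $X=\varphi(Y)$ and $i\in[n]$, the prefix of row $i$ up to column $j$ has sum
\[
  \sum_{j'=1}^{j}x_{i,j'}=\sum_{i'=1}^{\min\{i,j\}}y_{i',i}+\sum_{j'=i+1}^{j}y_{i,j'}.
\]
This holds because the entries $x_{i,j'}$ with $j'\le i$ equal $y_{j',i}$, and those with $j'>i$ equal $y_{i,j'}$. This identity is the analogue of \cref{eq:htsasm:phi-row-prefix-used}.

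\emph{Cores of DSASMs satisfy the system.} If $X$ is a DSASM, the ASM row-prefix bounds \cref{eq:asm:row-prefix} and row sums \cref{eq:asm:row-sum} translate through this identity into \cref{eq:dsasm:L-prefix,eq:dsasm:L-sum}.

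\emph{Integer solutions are cores of DSASMs.} Let $Y$ be an integer solution and put $X=\varphi(Y)$. The same identity gives all row constraints of \cref{thm:ASMpolytope} for $X$. Since $X$ is symmetric, column $j$ of $X$ is the transpose of row $j$, so the column constraints follow as well. Hence $X$ is a DSASM.

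\emph{Integrality.} This is the main step. For each $i$, the left-hand sides of \cref{eq:dsasm:L-prefix,eq:dsasm:L-sum} use the variables of an ``L-shaped'' path $L_i$: column $i$ of the core read downward from $(1,i)$ to $(i,i)$, followed by row $i$ read rightward from $(i,i+1)$ to $(i,n)$. The constraint for $(i,j)$ bounds the sum over an initial segment of $L_i$.

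Every variable $y_{a,b}$ with $a<b$ lies on exactly two paths: on $L_b$, in its column part, and on $L_a$, in its row part. A diagonal variable $y_{i,i}$ lies on $L_i$ only. The constraint matrix therefore has at most two nonzero entries per column, but the prefix sets of different paths overlap, so they do not form two laminar families. \cref{thm:laminarSystem} does not apply directly.

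We instead pass to prefix variables, as in the QTSASM proof. Let $z_{i,j}$ denote the $j$-th prefix sum along $L_i$, with $z_{i,0}=0$. The system becomes
\[
  0\le z_{i,j}\le 1 \quad (j<n), \qquad z_{i,n}=1,
\]
together with linking equations stating that each variable $y_{a,b}$ equals the increment $z_{\cdot,t}-z_{\cdot,t-1}$ at its position on each path containing it.

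For $a<b$, eliminating $y_{a,b}$ equates two differences:
\[
  z_{b,a}-z_{b,a-1}=z_{a,b}-z_{a,b-1}.
\]
This is a constraint with four $\pm1$ entries in the $z$-variables, which is not a network matrix. So the elimination should go the other way: keep the $y$-variables and consider the matrix expressing prefix sums in terms of $y$. That matrix is not obviously totally unimodular either.

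What we would try first is to transcribe the cited result on prefix-bounded matrices and the HTSASM argument more cleverly. Pair row $i$ with column $i$ of the full matrix and use the symmetric version of the laminar argument: the sets $\{L_i\text{ prefixes}\}$ split into a column-part chain and a row-part chain. The first family consists of the column-prefix sets of the upper triangle, namely sets $\{(i',i): i'\le m\}$ for $m\le i$, which is laminar. The second consists of the full column of $i$ plus a row prefix; these sets are disjoint across different $i$ only if the column parts are included.

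If this fails, the fallback is to show directly that every vertex is integral. We would take a fractional vertex $Y$ and build a perturbation $Y\pm\varepsilon D$ with $D$ supported on fractional entries. The perturbation is obtained by alternating along tight prefix constraints, using the bidirected two-occurrence structure, as in Edmonds--Johnson. The parity obstruction that appears in the QTSASM case should vanish here, because diagonal entries occur in only one path and serve as free ends of the alternating walks.

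We expect this integrality step to be the main obstacle. The translation steps before it are routine.
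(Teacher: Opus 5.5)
Your translation steps are correct and agree with the paper: the row-prefix identity for $\varphi(Y)$, the two inclusions between integer solutions and DSASM cores, and the column constraints via symmetry. The gap is the integrality step, which you leave unproved. None of your three routes closes it.
\begin{itemize}
\item The $L$-path prefix variables give four-term equations, as you note yourself.
\item The laminar attempt fails as stated, because your ``second family'' is not laminar. For $a<b$, the equation sets $L_a$ and $L_b$ both contain $(a,b)$, while $(a,a)$ lies only in $L_a$ and $(b,b)$ only in $L_b$.
\item The Edmonds--Johnson fallback is only a heuristic.
\end{itemize}
Your claim that the constraint matrix has at most two nonzeros per column is also false in $y$-coordinates. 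The column of $y_{a,b}$ is nonzero in every prefix constraint of $L_a$ or $L_b$ that reaches $(a,b)$. Two occurrences hold only in the auxiliary linking system, and that bidirected matrix is not totally unimodular for $n\ge 3$. A valid re-signing would force all linking rows of one path to share a sign, since consecutive rows share a $z$-variable with opposite signs. It would also force any two paths to get opposite signs, since each pair shares a $y$-variable with equal signs. Together these amount to a proper $2$-colouring of $K_n$, which does not exist. So parity cuts would a priori appear, and showing they are redundant is exactly the part you have not done.

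The missing idea is to use \cref{eq:dsasm:L-sum} to complement every constraint with $j\ge i$. Such a constraint is equivalent to $0\le\sum_{j'=j+1}^{n}y_{i,j'}\le 1$, a bound on a row suffix strictly right of the diagonal. For $j<i$, the constraint is already a bound on a strictly-above-diagonal prefix of column $i$.

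The paper then substitutes the corner sums $z_{i,j}=\sum_{i'\le i}\sum_{j'\ge j}y_{i',j'}$ for $0\le i\le n$ and $i\le j\le n+1$. Under this substitution the constraints become:
\begin{itemize}
\item $0\le z_{j,i}-z_{j,i+1}\le 1$ for $j<i$;
\item $0\le z_{i,j+1}-z_{i-1,j+1}\le 1$ for $j\ge i$;
\item $z_{i,i}-z_{i-1,i+1}=1$;
\end{itemize}
together with $z_{0,j}=z_{i,n+1}=0$. Every row is a difference of two variables, so \cref{thm:digraphIncidenceTU} gives integrality. The substitution is unimodular, with inverse $y_{i,j}=z_{i,j}-z_{i,j+1}-z_{i-1,j}+z_{i-1,j+1}$.

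Alternatively, your own laminar idea can be repaired directly. After complementing, use \cref{eq:dsasm:L-sum} also to eliminate each $y_{i,i}$. The remaining system on the strictly upper-triangular entries consists of the column prefixes $\{(i',i):i'\le j\}$ with $j<i$ and the row suffixes $\{(i,j'):j'>j\}$ with $j\ge i$. These are two laminar families with integer bounds, so \cref{thm:laminarSystem} applies, and the diagonal entries are integral affine functions of the rest.
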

\begin{proof}
  We show that the integer solutions to the system~\crefrange{eq:dsasm:real}{eq:dsasm:L-sum} are exactly the cores of DSASMs, and then we argue that the system defines an integral polytope.

  \medskip
  First, let $X$ be an $n\times n$ DSASM, and let $Y=\pi_C(X)$ be its core.
  Since $X$ is diagonally symmetric, we have $X=\varphi(Y)$.
  By the definition of $\varphi$, for every $i\in[n]$ and $j\in[n-1]$,
  \begin{equation}\label{eq:dsasm:phi-prefix-used}
    \sum_{j'=1}^{j} x_{i,j'} = \sum_{i'=1}^{\min\{i,j\}} y_{i',i} + \sum_{j'=i+1}^{j} y_{i,j'},
  \end{equation}
  and, in the case $j=n$,
  \begin{equation}\label{eq:dsasm:phi-sum-used}
    \sum_{j'=1}^{n} x_{i,j'} = \sum_{i'=1}^{i} y_{i',i} + \sum_{j'=i+1}^{n} y_{i,j'}.
  \end{equation}
  Since every DSASM is in particular an ASM, the ASM row-prefix bounds in~\cref{eq:asm:row-prefix}, together with the identity~\cref{eq:dsasm:phi-prefix-used}, yield the constraints in~\cref{eq:dsasm:L-prefix}; the row-sum equations in~\cref{eq:asm:row-sum}, together with the identity~\cref{eq:dsasm:phi-sum-used}, give the equations in~\cref{eq:dsasm:L-sum}; and the real-variable condition~\cref{eq:dsasm:real} follows from the ASM real-variable condition~\cref{eq:asm:real}.
  Thus the cores of DSASMs satisfy the system~\crefrange{eq:dsasm:real}{eq:dsasm:L-sum}.

  \medskip
  Second, let $Y\in\Z^C$ satisfy the system~\crefrange{eq:dsasm:real}{eq:dsasm:L-sum}, and set $X=\varphi(Y)$.
  By construction, $X$ is an $n\times n$ integer matrix with $X=X^\top$ and core $Y$.
  Using the identities~\cref{eq:dsasm:phi-prefix-used,eq:dsasm:phi-sum-used}, the constraints in~\cref{eq:dsasm:L-prefix,eq:dsasm:L-sum} translate exactly into the ASM row-prefix and row-sum constraints for $X$.
  By symmetry, the ASM column-prefix bounds and column sums follow as well.
  Hence $X$ satisfies all ASM constraints from \cref{thm:ASMpolytope} and, being diagonally symmetric, $X$ is a DSASM\@.
  We conclude that the integer solutions to the system~\crefrange{eq:dsasm:real}{eq:dsasm:L-sum} are precisely the cores of DSASMs.

  \medskip
  It remains to prove that the system~\crefrange{eq:dsasm:real}{eq:dsasm:L-sum} defines an integral polytope.
  For $i \in [0,n], j \in [i,n+1]$, let
  \[
    z_{i,j} = \sum_{i'=1}^{i} \sum_{j'=j}^{n} y_{i',j'}.
  \]
  Note that $z_{0,j}=z_{i,n+1}=0$ by empty sums.
  This defines an injective linear map $Y\mapsto Z$ with inverse
  \[
    y_{i,j} = z_{i,j}-z_{i,j+1}-z_{i-1,j}+z_{i-1,j+1}
  \]
  for $i \in [n], j \in [i,n]$.
  For $i \in [n]$ and $j \in [i]$, we have the identity
  \begin{equation}\label{eq:dsasm:core:id1}
    \sum_{i'=1}^{j} y_{i',i} = z_{j,i} - z_{j,i+1},
  \end{equation}
  and, for $i \in [n]$ and $j \in [i,n]$,
  \begin{equation}\label{eq:dsasm:core:id2}
    \sum_{j'=j+1}^{n} y_{i,j'} = z_{i,j+1} - z_{i-1,j+1}.
  \end{equation}
  Now we prove that the constraints in~\cref{eq:dsasm:L-prefix,eq:dsasm:L-sum} are equivalent to
  \begin{align}
    0 \leq z&_{j,i}-z_{j,i+1} \leq 1     & \forall i\in[n], j\in[i-1], \label{eq:dsasm:L-prefix-Z-1}\\
    0 \leq z&_{i,j+1}-z_{i-1,j+1} \leq 1 & \forall i\in[n], j\in[i,n-1], \label{eq:dsasm:L-prefix-Z-2}\\
           z&_{i,i}-z_{i-1,i+1} = 1      & \forall i\in[n]. \label{eq:dsasm:row-sum-Z}
  \end{align}
  Indeed, for $j\in[i-1]$, substituting the identity~\cref{eq:dsasm:core:id1} into the constraints in~\cref{eq:dsasm:L-prefix} yields the bounds in~\cref{eq:dsasm:L-prefix-Z-1}.
  For $j \in [i,n-1]$, rewrite the constraints in~\cref{eq:dsasm:L-prefix} as $0 \leq \sum_{j'=j+1}^{n} y_{i,j'} \leq 1$ using the equations in~\cref{eq:dsasm:L-sum}, and then apply the identity~\cref{eq:dsasm:core:id2} to obtain the bounds in~\cref{eq:dsasm:L-prefix-Z-2}.
  Finally, setting $j=i$ in~\cref{eq:dsasm:core:id1,eq:dsasm:core:id2} and substituting into the equations in~\cref{eq:dsasm:L-sum} gives the equations in~\cref{eq:dsasm:row-sum-Z} for $i \in [n]$.

  \smallskip
  Observe that each constraint in~\crefrange{eq:dsasm:L-prefix-Z-1}{eq:dsasm:row-sum-Z} involves the difference of two variables.
  Therefore, the coefficient matrix is the transpose of the node--arc incidence matrix of a digraph, and hence it is totally unimodular.
  Since the right-hand sides are integers, the system~\crefrange{eq:dsasm:L-prefix-Z-1}{eq:dsasm:row-sum-Z} together with $z_{0,j}=z_{i,n+1}=0$ defines an integral polytope; see \cref{thm:digraphIncidenceTU}.
  The map $Y \mapsto Z$ is a linear bijection that preserves integrality, and thus every vertex $Y$ of the polytope defined by the system~\crefrange{eq:dsasm:real}{eq:dsasm:L-sum} is the preimage of a vertex $Z$ of the polytope defined by the system~\crefrange{eq:dsasm:L-prefix-Z-1}{eq:dsasm:row-sum-Z}.
  Therefore, the system~\crefrange{eq:dsasm:real}{eq:dsasm:L-sum} defines an integral polytope.
\end{proof}

By \cref{thm:xasm:assembly,thm:dsasm:corepolytope}, we obtain the following description of $P_\DSASM$.
\begin{theorem}\label{thm:dsasm:coreDescr}
  Let $n \geq 1$, and $\widehat P^\core_\DSASM = \{X \in \R^{n \times n} : \pi_C(X) \in P^\core_\DSASM\}$.
  Then
  \[
    P_\DSASM = \widehat P^\core_\DSASM \cap P_\DS.
  \]
\end{theorem}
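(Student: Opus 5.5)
This follows the same pattern as \cref{thm:vsasm:coreDescr,thm:vhsasm:coreDescr}. By \cref{thm:xasm:assembly} applied to the assembly map $\varphi$ for DSASMs, we have
\[
  P_\DSASM=\widehat P^\core_\DSASM\cap\varphi(\R^C).
\]
So the only thing to prove is the identity $\varphi(\R^C)=P_\DS$.

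For the inclusion $\varphi(\R^C)\subseteq P_\DS$, I will read it off the definition of $\varphi$. Take $Y\in\R^C$ and $i,j\in[n]$. If $i\leq j$, then $\varphi(Y)_{i,j}=y_{i,j}$; since $j\geq i$, the second case of the definition gives $\varphi(Y)_{j,i}=y_{i,j}$ as well. If $i>j$, the same argument works with the roles of $i$ and $j$ swapped. Hence $\varphi(Y)=\varphi(Y)^\top$, so $\varphi(Y)\in P_\DS$.

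For the reverse inclusion, take $X\in P_\DS$ and set $Y=\pi_C(X)$. I will check entrywise that $\varphi(Y)=X$. For $i\leq j$ we have $\varphi(Y)_{i,j}=y_{i,j}=x_{i,j}$ directly. For $i>j$ we have $\varphi(Y)_{i,j}=y_{j,i}=x_{j,i}=x_{i,j}$, where the last step uses the symmetry of $X$. Thus $X=\varphi(\pi_C(X))\in\varphi(\R^C)$.

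Combining the two inclusions gives $\varphi(\R^C)=P_\DS$, and substituting into the identity from \cref{thm:xasm:assembly} yields the claim. Unlike the vertical and half-turn cases, no affine constraints such as fixed middle entries appear, because $\varphi$ here is linear. I do not expect any real obstacle; the argument is routine bookkeeping with the case split $i\leq j$ versus $i>j$.
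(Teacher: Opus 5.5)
Your proof is correct and is exactly the argument the paper has in mind: the paper states this theorem as an immediate consequence of \cref{thm:xasm:assembly} (with \cref{thm:dsasm:corepolytope} supplying the explicit description) via the identification $\varphi(\R^C)=P_\DS$, as in the proofs of \cref{thm:vsasm:coreDescr,thm:vhsasm:coreDescr}. One small correction: for $i=j$ it is the first case of $\varphi$ that applies, not the second, but diagonal entries impose no symmetry condition, so the argument is unaffected.
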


Using the assembly map $\varphi$ and the identities~\cref{eq:dsasm:phi-prefix-used,eq:dsasm:phi-sum-used} to translate the DSASM core constraints in \cref{thm:dsasm:coreDescr} to and from the ASM constraints in \cref{thm:ASMpolytope}, we obtain the following.
\begin{theorem}\label{thm:dsasm:descr}
  For every $n \geq 1$,
  \[
    P_\DSASM = P_\ASM \cap P_\DS.
  \]
\end{theorem}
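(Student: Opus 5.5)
The plan is to prove the two inclusions. The inclusion $P_\DSASM \subseteq P_\ASM \cap P_\DS$ was already noted before the core--assembly paragraph: every DSASM is an ASM and lies in the subspace $P_\DS$, and both sets on the right are convex. So the work is the reverse inclusion. For that I will use \cref{thm:dsasm:coreDescr}, which writes $P_\DSASM = \widehat P^\core_\DSASM \cap P_\DS$. It therefore suffices to show that every $X \in P_\ASM \cap P_\DS$ has core $Y = \pi_C(X)$ in $P^\core_\DSASM$, i.e., that $Y$ satisfies the system~\crefrange{eq:dsasm:real}{eq:dsasm:L-sum} of \cref{thm:dsasm:corepolytope}.

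Fix $X \in P_\ASM \cap P_\DS$ and set $Y = \pi_C(X)$. Since $X = X^\top$, the assembly map recovers $X$, that is, $X = \varphi(Y)$: the entries on and above the diagonal are those of $Y$, and the entries below it are their reflections. The identities~\cref{eq:dsasm:phi-prefix-used,eq:dsasm:phi-sum-used} were derived purely from the definition of $\varphi$, with no integrality used. Hence they hold for this real $X$ and express each row-prefix sum and each row sum of $X$ through the core entries.

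The ASM row-prefix bounds~\cref{eq:asm:row-prefix} then give exactly~\cref{eq:dsasm:L-prefix}, and the row-sum equations~\cref{eq:asm:row-sum} give exactly~\cref{eq:dsasm:L-sum}. Condition~\cref{eq:dsasm:real} is trivial. Consequently $Y \in P^\core_\DSASM$ by \cref{thm:dsasm:corepolytope}, so $X \in \widehat P^\core_\DSASM \cap P_\DS = P_\DSASM$.

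I do not expect a real obstacle here. The substantive content, namely integrality of the core system via the total unimodularity argument in the $z$-coordinates, is already contained in \cref{thm:dsasm:corepolytope}. The only point that needs care is to note explicitly that identities~\cref{eq:dsasm:phi-prefix-used,eq:dsasm:phi-sum-used} remain valid for real symmetric matrices. As a side remark, the column constraints of $P_\ASM$ are not needed, because they coincide with the row constraints under symmetry.
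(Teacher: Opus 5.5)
Your proposal is correct and follows essentially the paper's own route: you reduce to \cref{thm:dsasm:coreDescr} and translate the ASM row constraints of a real symmetric matrix into the core system of \cref{thm:dsasm:corepolytope} via the identities~\cref{eq:dsasm:phi-prefix-used,eq:dsasm:phi-sum-used}, which indeed use only $X=\varphi(\pi_C(X))$ and no integrality. Your treatment of the easy inclusion by convexity, rather than translating the core constraints back to the ASM constraints as the paper does, is a harmless simplification.
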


\begin{theorem}\label{thm:dsasm:dim}
  For every $n \geq 1$, the dimension of $P_\DSASM$ is $\frac{n(n-1)}{2}$.
\end{theorem}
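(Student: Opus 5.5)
By \cref{thm:xasm:assembly} the assembly map $\varphi$ restricts to an affine isomorphism from $P^\core_\DSASM$ onto $P_\DSASM$. It therefore suffices to show $\dim P^\core_\DSASM = \frac{n(n-1)}{2}$. Here $|C| = \frac{n(n+1)}{2}$, and the only equations in \cref{thm:dsasm:corepolytope} are the $n$ row-sum equations~\cref{eq:dsasm:L-sum}. The target is thus $|C| - n$, and the plan is to show that these $n$ equations are independent and are the only implicit equations.

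For the upper bound, the equation for row $i$ contains the variable $y_{i,i}$. The only other equation containing $y_{i,i}$ is this same one, because $y_{i,i}$ appears in row $i$ of $\varphi(Y)$ and nowhere else. The coefficient matrix restricted to the diagonal variables is therefore the identity, so the rank is $n$. This gives $\dim \le \frac{n(n+1)}{2} - n = \frac{n(n-1)}{2}$.

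For the lower bound, I would take $\ol Y$ to be the core with all entries equal to $1/n$. Its assembly is the uniform matrix, whose row prefix sums are $j/n \in (0,1)$ for $j \in [n-1]$. Hence $\ol Y$ satisfies every inequality in~\cref{eq:dsasm:L-prefix} strictly and every equation in~\cref{eq:dsasm:L-sum}. Then for each off-diagonal core position $(i,j)$ with $i<j$, define
\[
  \ol Y^{i,j} = \ol Y + \varepsilon\chi_{i,j} - \varepsilon\chi_{i,i} - \varepsilon\chi_{j,j}.
\]
The entry $y_{i,j}$ contributes to the row sums of rows $i$ and $j$ only, and the two diagonal corrections cancel those contributions. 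So $\ol Y^{i,j}$ satisfies~\cref{eq:dsasm:L-sum}, and for small $\varepsilon>0$ it also satisfies every strict inequality. Hence $\ol Y^{i,j} \in P^\core_\DSASM$ by \cref{thm:dsasm:corepolytope}. Among these perturbations, only $\ol Y^{i,j} - \ol Y$ is non-zero at the off-diagonal position $(i,j)$, so the $\frac{n(n-1)}{2}$ differences are linearly independent. This gives the matching lower bound. The case $n=1$ is trivial, since there are no off-diagonal positions and the polytope is a single point.

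I expect no step to be a real obstacle. The only point needing care is checking the sign pattern of the perturbation, namely that $y_{i,j}$ with $i<j$ appears exactly in the equations for rows $i$ and $j$. This holds because in $\varphi(Y)$ the entry $y_{i,j}$ occupies positions $(i,j)$ and $(j,i)$.
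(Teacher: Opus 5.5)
Your proposal is correct and follows the paper's proof essentially step for step. Both arguments get the upper bound from the diagonal variables $y_{i,i}$, which certify that the $n$ row-sum equations are independent, and both get the lower bound from the uniform core $\ol Y$ with entries $1/n$ together with the perturbations $\ol Y^{i,j}=\ol Y+\varepsilon\chi_{i,j}-\varepsilon\chi_{i,i}-\varepsilon\chi_{j,j}$ for $i<j$.
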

\begin{proof}
  It suffices to prove that the dimension of $P^\core_\DSASM$ is $\frac{n(n-1)}{2}$, because the assembly map $\varphi$ restricts to an affine isomorphism between $P^\core_\DSASM$ and $P_\DSASM$, which preserves dimension.
  First, we give an upper bound.
  The polytope $P^\core_\DSASM \subseteq \R^{C}$ is described by the system~\crefrange{eq:dsasm:real}{eq:dsasm:L-sum}.
  The system of linear equations given in~\cref{eq:dsasm:L-sum} consists of $n$ equations, which are linearly independent since the variable $y_{i,i}$ appears only in the $i^\text{th}$ equation of~\cref{eq:dsasm:L-sum}.
  Thus, the affine subspace defined by the equations in~\cref{eq:dsasm:L-sum} has dimension
  $
    |C|-n = \frac{n(n+1)}{2}-n=\frac{n(n-1)}{2},
  $
  which gives the bound $\dim(P^\core_\DSASM)\leq \frac{n(n-1)}{2}$.

  \medskip
  Second, we construct $\frac{n(n-1)}{2} + 1$ affinely independent cores in $P^\core_\DSASM$ and hence obtain a matching lower bound.
  Let $\ol Y\in\R^C$ be the core with all entries equal to $1/n$.
  It is easy to see that $\ol Y$ satisfies all inequalities in~\cref{eq:dsasm:L-prefix} strictly and also satisfies every equation in~\cref{eq:dsasm:L-sum}.
  For each $i,j \in [n]$ with $i < j$, define
  $
    \ol Y^{i,j}
    = \ol Y
    + \varepsilon \chi_{i,j}
    - \varepsilon \chi_{i,i}
    - \varepsilon \chi_{j,j},
  $
  where $\varepsilon$ is a small positive constant.
  By the definition of~\cref{eq:dsasm:L-sum}, the variable $y_{i,j}$ appears in exactly the $i^\text{th}$ and $j^\text{th}$ equations, and the increment of $y_{i,j}$ in these equations is canceled by the decrement of $y_{i,i}$ and $y_{j,j}$, respectively.
  Hence $\ol Y^{i,j}$ satisfies the equations in~\cref{eq:dsasm:L-sum}.
  Since $\ol Y$ satisfies the inequalities in~\cref{eq:dsasm:L-prefix} strictly, choosing $\varepsilon>0$ small enough guarantees that $\ol Y^{i,j}$ does not violate any inequality in~\cref{eq:dsasm:L-prefix}, and therefore $\ol Y^{i,j}\in P^\core_\DSASM$ for $i,j \in [n]$ with $i < j$.

  The cores $\ol Y$ and $\ol Y^{i,j}$ for $i,j \in [n]$ with $i < j$ are affinely independent: only the difference $\ol Y^{i,j}-\ol Y$ has a non-zero off-diagonal entry at $(i,j)$, so the cores $\{\ol Y^{i,j}-\ol Y : i,j \in [n], i < j\}$ are linearly independent.
  Therefore, the dimension of $P^\core_\DSASM$ is at least $\frac{n(n-1)}{2}$.

  Combining the lower and upper bounds yields $\dim(P^\core_\DSASM)=\dim(P_\DSASM)=\frac{n(n-1)}{2}$.
\end{proof}

We continue with identifying the facets of $P^\core_\DSASM$ and $P_\DSASM$.
\begin{theorem}
  Let $n \geq 3$.
  The facets of $P^\core_\DSASM$ are given by tightening the lower bound in~\cref{eq:dsasm:L-prefix} to equality for $(i,j) \in ([2,n-1] \times [2,n-2]) \cup ([n] \times \{1\})$, and the upper bound for $(i,j) \in ([2,n-1] \times [2,n-2]) \cup ([2,n] \times \{n-1\})$.
  In particular, the number of facets of $P^\core_\DSASM$ is $2(n-2)^2 + 3$.
\end{theorem}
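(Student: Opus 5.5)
I will follow the two-step scheme used for \cref{thm:vsasm:facets,thm:htsasm:facets}. Write $z_{i,j}$ for the left-hand side of~\cref{eq:dsasm:L-prefix}, i.e.\ the $j$-th prefix of row $i$ of $X=\varphi(Y)$, and set $z_{i,n}=1$ via~\cref{eq:dsasm:L-sum}. The core $\ol Y$ with all entries $1/n$ from the proof of \cref{thm:dsasm:dim} satisfies every inequality strictly, so none of them is an implicit equation. By \cref{thm:irredundant-ineq-facets}, it then suffices to prove two things: (a) the listed inequalities, together with~\cref{eq:dsasm:L-sum}, imply all the others; and (b) each listed inequality is irredundant. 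The count is then immediate: $2(n-2)(n-3)+n+(n-1)=2(n-2)^2+3$.

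For (a), the non-facet inequalities are the following:
\begin{itemize}
\item the lower bounds at $(1,j)$ for $j\geq 2$ and at $(i,n-1)$ for $i\in[2,n]$;
\item the upper bounds at $(1,j)$ for $j\leq n-1$ and at $(i,1)$ for $i\in[2,n]$;
\item the lower bound at $(1,n-1)$.
\end{itemize}
The facet lower bounds at $(i,1)$ give $z_{i,1}=y_{1,i}\geq0$ for every $i$, so the whole first row of $X$ is non-negative. Since the first row sums to $1$, every prefix $z_{1,j}$ lies in $[0,1]$, and in particular $y_{1,i}\leq1$, which is the upper bound at $(i,1)$. By symmetry, row $n$ is the last column. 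The facet upper bounds at $(i,n-1)$ give $z_{i,n-1}\leq 1$, hence $x_{i,n}=1-z_{i,n-1}\geq0$ for $i\geq2$. Together with $x_{1,n}=y_{1,n}\geq0$, this makes the last column non-negative, and since it sums to $1$, each $x_{i,n}\leq 1$. This yields $z_{i,n-1}=1-x_{i,n}\geq0$, the missing lower bounds at $(i,n-1)$. These derivations are short, and I expect them to close the argument, mirroring the HTSASM case.

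For (b), for each facet inequality I will exhibit a core $Y$ (equivalently, a symmetric integer matrix $X=\varphi(Y)$ with row sums $1$) that violates exactly that inequality and satisfies every other facet inequality. For $(i,1)$ lower, I take $x_{1,i}=x_{i,1}=-1$ and repair using positive entries placed elsewhere, so that all other prefix sums stay in $\{0,1\}$; the upper bound at $(i,n-1)$ is handled symmetrically via $x_{i,n}=-1$. For interior $(i,j)\in[2,n-1]\times[2,n-2]$, I will use a local symmetric pattern. For the lower bound this has row $i$ reading $\dots,0,-1,1,\dots$ starting at column $j$, compensated by symmetric entries. Care is needed when $j$ lies near the diagonal $i$, where a single entry contributes to both row $i$ and column $i$. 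Rather than working out each pattern by hand for every size, I plan to build small base certificates for $n=3,4,5$ and then apply extension operators, as in the VSASM and HTSASM proofs. These operators border a certificate with a new row and column carrying a single $1$ in a symmetric corner position (for example, prepend a row and column with $x_{1,1}=1$, or append one with $x_{n,n}=1$). They shift indices by $(1,1)$ or keep them, preserve~\cref{eq:dsasm:L-sum}, and ensure that any prefix lying in the new row or column sums to a value in $\{0,1\}$.

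The main obstacle is step (b), specifically the certificates near the diagonal and near the corners $(2,2)$, $(n-1,n-2)$, $(n-1,2)$, where the two extension operators cannot reach every index pair. In particular, an index with $j=n-2$ cannot be produced from size $n-1$ by a pure shift. I would first try to handle these cases with an extra operator that inserts a new row and column in the middle, at index $i$, with a diagonal $1$ at $(i,i)$. Failing that, I would add explicit base cases, as the paper did with $U^{9,\mathrm H}_{5,2}$.
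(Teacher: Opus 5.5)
Your proposal follows essentially the paper's own route: you derive the non-facet bounds from the non-negativity of the first row and of the last column, obtained from the facet bounds at $(i,1)$ and $(i,n-1)$. You rule out implicit equations with the all-$1/n$ core. You certify irredundancy with base cases for $n\in\{3,4,5\}$, extended by operators that insert a row and column whose only non-zero entry is a diagonal $1$; these are the paper's $\ext_r$, used with $r\in\{1,3,4,n\}$. The one slip is that your list of non-facet inequalities omits the lower and upper bounds at $(n,j)$ for $j\in[2,n-2]$, but these follow immediately from your remark that row $n$ equals the last column, which you show is non-negative with sum $1$.
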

\begin{proof}
  The facets are obtained by tightening a single inequality in~\cref{eq:dsasm:L-prefix} to equality for the index pairs listed in the statement of the theorem.
  We call the instances of the lower bounds in~\cref{eq:dsasm:L-prefix} that are tightened to equality the \emph{facet lower bounds}, and we define the \emph{facet upper bounds} analogously.
  We refer to the union of these two families as the \emph{facet inequalities}.

  We proceed in two steps.
  First, we show that the facet inequalities together with the equations in~\cref{eq:dsasm:L-sum} imply every inequality in~\cref{eq:dsasm:L-prefix}.
  Then, for every facet inequality, we construct a core of an $n \times n$ matrix violating that facet inequality and no other, thereby proving that no facet inequality is redundant.
  The core $\ol Y$ constructed in the second step of the proof of \cref{thm:dsasm:dim} shows that no facet inequality is an implicit equation; thus the two steps together imply that the facet inequalities form a minimal system that, extended with the equations in~\cref{eq:dsasm:L-sum}, describes the convex hull of the cores of DSASMs, which proves the theorem.

  \medskip
  Now we prove that the facet inequalities together with the equations in~\cref{eq:dsasm:L-sum} imply every inequality in~\cref{eq:dsasm:L-prefix}.
  We first derive a few bounds that will be used later.
  The facet lower bounds in~\cref{eq:dsasm:L-prefix} for $i\in[n]$ and $j=1$ give $y_{1,i}\geq 0$ for every $i\in[n]$.
  Moreover, for each $i\in[2,n]$, subtracting the facet upper bound in~\cref{eq:dsasm:L-prefix} for $i$ and $j=n-1$ from~\cref{eq:dsasm:L-sum} for $i$ yields $y_{i,n}\geq 0$.
  Together with the equation in~\cref{eq:dsasm:L-sum} for $n$, this also implies $y_{i,n}\leq 1$ for all $i\in[n]$.

  To derive the non-facet lower bounds in~\cref{eq:dsasm:L-prefix}, we distinguish three cases.
  For $i=1$ and $j\in[2,n-1]$, the left-hand side of~\cref{eq:dsasm:L-prefix} equals $\sum_{t=1}^{j} y_{1,t}$, which is non-negative since $y_{1,t}\geq 0$ for all $t\in[n]$.
  For $i=n$ and $j\in[2,n-1]$, the left-hand side equals $\sum_{t=1}^{j} y_{t,n}$, which is non-negative since $y_{t,n}\geq 0$ for all $t\in[n]$.
  For $i\in[2,n-1]$ and $j=n-1$, the left-hand side equals $1-y_{i,n}$ by~\cref{eq:dsasm:L-sum} for $i$, and this is non-negative because $y_{i,n}\leq 1$.

  To derive the non-facet upper bounds in~\cref{eq:dsasm:L-prefix}, we again distinguish three cases.
  For $i=1$ and $j\in[n-1]$, we have $\sum_{t=1}^{j} y_{1,t}\leq \sum_{t=1}^{n} y_{1,t}=1$ by~\cref{eq:dsasm:L-sum} for $i=1$ and $y_{1,t}\geq 0$.
  For $i=n$ and $j\in[n-2]$, we have $\sum_{t=1}^{j} y_{t,n}\leq \sum_{t=1}^{n} y_{t,n}=1$ by~\cref{eq:dsasm:L-sum} for $i=n$ and $y_{t,n}\geq 0$.
  For $i\in[2,n-1]$ and $j=1$, the left-hand side equals $y_{1,i}$, and $y_{1,i}\leq \sum_{t=1}^{n} y_{1,t}=1$ by~\cref{eq:dsasm:L-sum} for $i=1$ together with $y_{1,t}\geq 0$.
  The rest of the inequalities in~\cref{eq:dsasm:L-prefix} are exactly the facet inequalities, and thus we conclude that the facet inequalities together with the equations in~\cref{eq:dsasm:L-sum} imply every inequality in~\cref{eq:dsasm:L-prefix}.

  \medskip
  It remains to show that no facet inequality is redundant.
  For each $i \in [2,n-1], j \in [2,n-2]$ and $i \in [n], j = 1$, we construct a core $L^n_{i,j} \in \R^C$ that violates the facet lower bound in~\cref{eq:dsasm:L-prefix} for the given $i,j$ and satisfies every other facet inequality as well as the equations in~\cref{eq:dsasm:L-sum}.
  Similarly, for each $i \in [2,n-1], j \in [2,n-2]$ and $i \in [2,n], j = n-1$, we construct a core $U^n_{i,j} \in \R^C$ that violates the facet upper bound in~\cref{eq:dsasm:L-prefix} for the given $i,j$ and satisfies every other facet inequality as well as the equations in~\cref{eq:dsasm:L-sum}.

  Let $r \in [n]$ denote an integer, set $\tilde r = r-1$ and $\tilde n = n-1$.
  For the core $Z$ of an $(n-1) \times (n-1)$ matrix $X'$, we define the \emph{extension operator}
  \begin{center}
    \begin{tikzpicture}[scale=.9]
      \draw[fill=cyan!15] (1,7) -| (0,7) -| (0,8) |- (4,8) |- (4,4) |- (3,4) -| (3,5);
      \ThreeDotsAt{(2-.15,6-.15)}{.21213203}{-45}
      \draw[fill=cyan!15] (6,2) -| (5,2) -| (5,3) |- (8,3) |- (8,0) |- (7,0) -| (7,1);
      \ThreeDotsAt{(6.5-.15,1.5-.15)}{.21213203}{-45}
      \draw[fill=cyan!15] (5,4) -| (5,8) -| (8,8) |- (8,4) -| cycle;
      \draw (4,3) -| (4,8) -| (5,8) |- (5,4) |- (8,4) |- (8,3) |- cycle;
      \node[cell, draw=none] at (4,3) {$1$};
      \node[cell, draw=none] at (4,4) {$0$};
      \node[cell, draw=none] at (5,3) {$0$};
      \node[cell, draw=none] at (4,7) {$0$};
      \node[cell, draw=none] at (7,3) {$0$};
      \ThreeDotsAt{(4.5,6)}{.21213203}{90}
      \ThreeDotsAt{(6.5,3.5)}{.21213203}{0}

      \node[cell, draw=none] at (0,7) {$z_{1,1}$};
      \node[cell, draw=none] at (3,7) {$z_{1,\tilde r}$};
      \node[cell, draw=none] at (3,4) {$z_{\tilde r,\tilde r}$};
      \ThreeDotsAt{(2,7.5)}{.21213203}{0}
      \ThreeDotsAt{(3.5,6)}{.21213203}{90}

      \node[cell, draw=none] at (5,7) {$z_{1,r}$};
      \node[cell, draw=none] at (7,7) {$z_{1,\tilde n}$};
      \node[cell, draw=none] at (5,4) {$z_{\tilde r,r}$};
      \node[cell, draw=none] at (7,4) {$z_{\tilde r,\tilde n}$};
      \ThreeDotsAt{(6.5,7.5)}{.21213203}{0}
      \ThreeDotsAt{(6.5,4.5)}{.21213203}{0}
      \ThreeDotsAt{(5.5,6)}{.21213203}{90}
      \ThreeDotsAt{(7.5,6)}{.21213203}{90}

      \node[cell, draw=none] at (5,2) {$z_{r,r}$};
      \node[cell, draw=none] at (7,2) {$z_{r,\tilde n}$};
      \node[cell, draw=none] at (7,0) {$z_{\tilde n,\tilde n}$};
      \ThreeDotsAt{(7.5,1.5)}{.21213203}{90}
      \ThreeDotsAt{(6.5,2.5)}{.21213203}{0}

      \node[draw=none] at (-1.75,4) {$\ext_r(Z)\ =$};
      \node[draw=none] at (8.15,4) {$.$};
    \end{tikzpicture}
  \end{center}
  More precisely, $\ext_r$ takes the core of an $(n-1) \times (n-1)$ matrix and yields the core of an $n \times n$ matrix by adding a new $r^\text{th}$ row and a new $r^\text{th}$ column with the new entries being uniformly $0$ except for the entry at the intersection of the new row and column, which is set to $1$.
  Note that if $r=1$, then a new first row and column are inserted; if $r=n$, then a new last row and column are inserted.

  Now we are ready to construct $L^n_{i,j}$ and $U^n_{i,j}$ via an inductive approach.
  For the base case $n = 3$, we set
  \[
    L^{3}_{1,1}=
    \vcenter{\hbox{\begin{ytableau}
          -1    & 1     & 1\\
          \none & 0     & 0\\
          \none & \none & 0
        \end{ytableau}}}\,,
    \quad
    L^{3}_{2,1} =
    \vcenter{\hbox{\begin{ytableau}
          1     & -1    & 1\\
          \none & 2     & 0\\
          \none & \none & 0
        \end{ytableau}}}\,,
    \quad
    L^{3}_{3,1} =
    \vcenter{\hbox{\begin{ytableau}
          1     & 1     & -1\\
          \none & -1    & 1\\
          \none & \none & 1
        \end{ytableau}}}\,,
    \quad
    U^{3}_{2,2} =
    \vcenter{\hbox{\begin{ytableau}
          0     & 0     & 1\\
          \none & 2     & -1\\
          \none & \none & 1
        \end{ytableau}}}\,,
    \quad
    U^{3}_{3,2} =
    \vcenter{\hbox{\begin{ytableau}
          0     & 0     & 1\\
          \none & 0     & 1\\
          \none & \none & -1
        \end{ytableau}}}\,.
  \]
  For $n \geq 4$, we set
  \begin{align*}
    L^{n}_{2,n-2} &=
    \begin{cases}
      \vspace{1em}
      \vcenter{\vspace{3pt}\hbox{\begin{ytableau}
            1     & 0     & 0     & 0\\
            \none & -1    & 1     & 1\\
            \none & \none & 0     & 0\\
            \none & \none & \none & 0
          \end{ytableau}}} & \text{if } n=4,\\
      \vspace{1em}
      \vcenter{\hbox{\begin{ytableau}
            0     & 0     & 1     & 0     & 0\\
            \none & 0     & -1    & 1     & 1\\
            \none & \none & 1     & 0     & 0\\
            \none & \none & \none & 0     & 0\\
            \none & \none & \none & \none & 0
          \end{ytableau}}} & \text{if } n=5,\\
      \ext_3(L^{n-1}_{2,n-3}) & \text{if } n\geq6,
    \end{cases}
    &U^{n}_{n-1,2} =
      \begin{cases}
        \vspace{1em}
        \vcenter{\vspace{3pt}\hbox{\begin{ytableau}
              0     & 0     & 1     & 0\\
              \none & 0     & 1     & 0\\
              \none & \none & -1    & 0\\
              \none & \none & \none & 1
            \end{ytableau}}} & \text{if } n=4,\\
        \vspace{1em}
        \vcenter{\hbox{\begin{ytableau}
              0     & 0     & 0     & 1     & 0\\
              \none & 0     & 0     & 1     & 0\\
              \none & \none & 1     & -1    & 1\\
              \none & \none & \none & 0     & 0\\
              \none & \none & \none & \none & 0
            \end{ytableau}}} & \text{if } n=5,\\
        \ext_4(U^{n-1}_{n-2,2}) & \text{if } n\geq6,
      \end{cases}
  \end{align*}
  \begin{align*}
    L^{n}_{n-1,1} &=
    \begin{cases}
      \vcenter{\vspace{3pt}\hbox{\begin{ytableau}
            1     & 0     & -1    & 1 & \none\\
            \none & 0     & 1     & 0\\
            \none & \none & 1     & 0\\
            \none & \none & \none & 0
          \end{ytableau}}} & \text{if } n=4,\\
      \ext_3(L^{n-1}_{n-2,1}) & \text{if } n\geq5,
    \end{cases}
    &U^{n}_{2,n-1} =
    \begin{cases}
      \vspace{1em}
      \vcenter{\vspace{3pt}\hbox{\begin{ytableau}
            0     & 0     & 0     & 1 & \none\\
            \none & 1     & 1     & -1\\
            \none & \none & 0     & 0\\
            \none & \none & \none & 1
          \end{ytableau}}} & \text{if } n=4,\\
      \ext_3(U^{n-1}_{2,n-2}) & \text{if } n\geq5,
    \end{cases}\displaybreak[0]\\[1em]
    L^{n}_{n,1} &=
    \begin{cases}
      \vspace{1em}
      \vcenter{\vspace{3pt}\hbox{\begin{ytableau}
            1     & 0     & 1     & -1 & \none\\
            \none & 0     & 0     & 1\\
            \none & \none & 0     & 0\\
            \none & \none & \none & 1
          \end{ytableau}}} & \text{if } n=4,\\
      \ext_3(L^{n-1}_{n-1,1}) & \text{if } n\geq5.
    \end{cases}
  \end{align*}
  For the remaining indices, we define $L^n_{i,j}$ and $U^n_{i,j}$ recursively.
  In particular, for $i=j=1$ and for $i \in [2,n-2], j \in [n-3]$, we set
  \begin{align*}
    L&^n_{i,j} = \ext_{n}(L^{n-1}_{i,j}).
    \intertext{For $i \in [3,n-2], j = n-2$ and $i = n-1, j \in [2,n-2]$, we set}
    L&^n_{i,j} = \ext_{1}(L^{n-1}_{i-1,j-1}).
    \intertext{For $i \in [2,n-2], j \in [2,n-2]$ and $i = n-1, j = n-2$, we set}
    U&^n_{i,j} = \ext_{n}(U^{n-1}_{i,j}).
    \intertext{For $i \in [3,n], j = n-1$ and $i = n-1, j \in [3,n-3]$, we set}
    U&^n_{i,j} = \ext_{1}(U^{n-1}_{i-1,j-1}).
  \end{align*}

  It is straightforward to verify that the cores $L^n_{i,j}$ and $U^n_{i,j}$ as defined above violate the corresponding facet inequality for the given parameters $i$ and $j$; moreover, the construction ensures that they satisfy the equations in~\cref{eq:dsasm:L-sum} and all remaining facet inequalities.
  Indeed, for $n=3$ and for the explicitly listed cores for $n=4$ and $n=5$, this can be checked directly.
  By definition, $\ext_r(Z)$ is obtained from $Z$ by inserting a new $r^\text{th}$ row and column whose only non-zero entry is $y_{r,r}=1$.
  Consequently, the left-hand side of any inequality in~\cref{eq:dsasm:L-prefix} either coincides with the corresponding left-hand side for $Z$ (possibly after an index shift), or differs only by adding terms that are $0$; in particular, every facet inequality that holds for $Z$ continues to hold for $\ext_r(Z)$.
  Furthermore, since the new row and column contribute only the entry $y_{r,r}=1$, the equations in~\cref{eq:dsasm:L-sum} remain valid under $\ext_r$.
  The recursive constructions of $L^n_{i,j}$ and $U^n_{i,j}$ ensure that, whenever $Z$ violates exactly one facet lower or upper bound, the core $\ext_r(Z)$ violates exactly the corresponding shifted inequality, namely the one for $i,j$, among the facet inequalities for the enlarged dimension.
  It follows by induction on $n$ that, for every admissible index pair $i,j$, the core $L^n_{i,j}$ satisfies the equations in~\cref{eq:dsasm:L-sum} and all facet inequalities except the lower facet inequality for $i,j$, and analogously $U^n_{i,j}$ satisfies the equations in~\cref{eq:dsasm:L-sum} and all facet inequalities except the upper facet inequality for $i,j$.
  Hence no facet inequality is redundant.

  \medskip
  The number of facets obtained by tightening lower and upper bounds to equality is $(n-2)(n-3) + n$ and $(n-2)(n-3) + n - 1$, respectively.
  Since every facet inequality is violated by exactly one of the cores constructed above, the argument implies that these facets are pairwise distinct; thus their total number is $2(n-2)(n-3) + 2n - 1 = 2(n-2)^2 + 3$, as stated in the theorem.
\end{proof}

\section{Diagonally and antidiagonally symmetric ASMs (DASASMs)}\label{sec:DASASM}
In this class, we impose invariance under reflections across both the main (northwest--southeast) diagonal and the antidiagonal (northeast--southwest), and hence also under rotation by $\pi$ induced by composing these two reflections.
The corresponding symmetry subgroup is $\mathcal{G} = \{\mathcal{I}, \mathcal{D}, \mathcal{A}, \mathcal{R}_{\pi}\}$.
Let $P_\DAS$ denote the linear subspace of diagonally and antidiagonally symmetric real matrices, i.e.,
\[
  P_\DAS =
  \left\{
    X \in \R^{n \times n} : x_{i,j} = x_{j,i} = x_{n+1-j,n+1-i}\ \forall i,j \in [n]
  \right\}.
\]
Clearly, any DASASM satisfies the ASM constraints in~\crefrange{eq:asm:real}{eq:asm:col-sum} and also the symmetry constraints defining $P_\DAS$; thus $P_\DASASM \subseteq P_\ASM \cap P_\DAS$.
We will prove that these constraints are in fact sufficient, i.e., $P_\DASASM = P_\ASM \cap P_\DAS$.

\paragraph{Core and assembly map.}
Let $n \geq 1$ and set $k = \floor*{n/2}$.
Let the \emph{core} of a DASASM be the part of the matrix lying on and above both the main diagonal and the antidiagonal, except the middle entry $(k+1,k+1)$ when $n$ is odd, i.e.,
\[
  C = \left\{(i,j) \in [k] \times [n] : i \leq j \leq n+1-i\right\}
\]
is the set of \emph{core positions}, and let $\pi_C$ be the coordinate-wise projection onto $C$.
Equivalently, $C$ consists of the full first row, the second row without its first and last entries, the third row without its first and last two entries, and so on, down to row $k$.
Define the affine map $\varphi : \R^C \to \R^{n \times n}$ by
\[
  \varphi(Y)_{i,j} =
  \begin{cases}
    y_{i,j}                       & \text{if } (i,j) \in C,\\
    1 - 2\sum_{i'=1}^{k} y_{i',k+1}& \text{if } 2 \nmid n, i=j=k+1,\\
    y_{j,i}                       & \text{if } j \in [\min\{i-1, n+1-i\}],\\
    y_{n+1-j,n+1-i}                & \text{if } j \in [\max\{i, n+2-i\},n],\\
    y_{n+1-i,n+1-j}                & \text{if } j \in [n+2-i,i-1]
  \end{cases}
\]
for $Y \in \R^C$ and $i,j \in [n]$.
Thus $\varphi$ places the core $Y$ in the trapezoid-shaped region above both diagonals, assigns the central entry according to the second case when $n$ is odd, and completes the matrix by reflecting across the main diagonal, the antidiagonal, and their composition, yielding a diagonally and antidiagonally symmetric matrix.
Clearly, the map $\varphi$ is an assembly map: it is affine, satisfies $\pi_C(\varphi(Y)) = Y$ for every $Y \in \R^C$, and $\varphi(\pi_C(X)) = X$ for every $X \in \DASASM(n)$, because $X$ is completely determined by its entries in $C$ together with the imposed diagonal and antidiagonal symmetries and the ASM constraints.

\begin{theorem}\label{thm:dasasm:corepolytope}
  Let $n \geq 1$ and $k = \floor*{n/2}$.
  Then the core polytope $P^\core_\DASASM \subseteq \R^C$ of $n \times n$ DASASMs is described by the following system.
  \begin{align}
                                     y&_{i,j} \in \R                                                                           &\forall (i,j) \in C, \label{eq:dasasm:real}\\
    0 \leq \sum_{i'=1}^{\min\{i-1,j\}} y&_{i',i} + \sum_{j'=i}^{\min\{n+1-i,j\}} y_{i,j'}  + \sum_{i'=n-i+2}^{j} y_{n+1-i',n+1-i} \leq 1 &\forall i \in [k], j \in [n-1], \label{eq:dasasm:row-prefix}\\
                   \sum_{i'=1}^{i-1} y&_{i',i} + \sum_{j'=i}^{n+1-i} y_{i,j'}  + \sum_{i'=n-i+2}^{n} y_{n+1-i',n+1-i} = 1             &\forall i \in [k]; \label{eq:dasasm:row-sum}
    \intertext{if $n$ is odd, then we also add the constraint}
    0 \leq \sum_{i'=1}^{j}           y&_{i',k+1} \leq 1                                                                         &\forall j \in [k]. \label{eq:dasasm:mid-col-pref}
  \end{align}
\end{theorem}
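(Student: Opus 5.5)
The proof follows the same three-part template as \cref{thm:htsasm:corepolytope,thm:dsasm:corepolytope}: the integer points of the system are exactly the DASASM cores, the system is valid on these cores, and the polytope it defines is integral.

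\emph{Validity.} Let $X$ be a DASASM and $Y=\pi_C(X)$. Since $X=\varphi(Y)$, for $i\in[k]$ the row-prefix sum $\sum_{j'=1}^{j}x_{i,j'}$ splits into three pieces. The first is the part left of the core row, given by entries $x_{i,j'}=y_{j',i}$ for $j'<i$. The second is the core part $y_{i,j'}$ for $i\le j'\le n+1-i$. The third is the part right of the core row, given by entries $x_{i,j'}=y_{n+1-j',n+1-i}$ for $j'\ge n+2-i$, using antidiagonal symmetry. These pieces are exactly the three sums in \cref{eq:dasasm:row-prefix}. The ASM row-prefix bounds and row sums therefore give \cref{eq:dasasm:row-prefix,eq:dasasm:row-sum}. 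For odd $n$, the middle column above the center consists of core entries $y_{i',k+1}$, so the column-prefix bounds give \cref{eq:dasasm:mid-col-pref}.

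\emph{Integer solutions are cores.} Let $Y$ be an integer solution and set $X=\varphi(Y)$. By construction $X$ is invariant under $\mathcal D$, $\mathcal A$ and $\mathcal R_\pi$. The identity above shows that rows $1,\dots,k$ satisfy the ASM row constraints. Rows $n+1-i$ are reverses of rows $i$ by half-turn symmetry, so they satisfy the row constraints by the argument used in the HTSASM proof. All columns then follow by transposition symmetry. For odd $n$ it remains to handle the middle row, which equals the middle column by $\mathcal D$. That column is palindromic, and its central entry is $1-2\sum_{i'\le k}y_{i',k+1}$, so its total sum is $1$. Its prefix sums lie in $\{0,1\}$ by \cref{eq:dasasm:mid-col-pref} together with \cref{lem:vsasm:symmetricHalfExtends}.

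\emph{Integrality.} This is the main obstacle, because each core variable appears in up to three row constraints, not two: $y_{a,b}$ with $a<b$ appears in row $a$ as a core entry, in row $b$ when $b\le k$ (via $\mathcal D$), and in row $n+1-b$ when $n+1-b\le k$ (via $\mathcal A$). For $a<b\le n+1-a$, at most one of $b\le k$ and $n+1-b\le k$ holds, except that the middle column $b=k+1$ (odd $n$) appears in neither. Hence each variable lies in row $a$'s chain and in at most one other row's chain. The plan is to emulate the DSASM proof. I would introduce partial-sum variables $z$ so that every constraint in \cref{eq:dasasm:row-prefix,eq:dasasm:row-sum} becomes a difference of two $z$-variables, making the constraint matrix a transposed network matrix and invoking \cref{thm:digraphIncidenceTU}. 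For example, take $z_{i,j}=\sum_{i'\le i}\sum_{j'\ge j}y_{i',j'}$ on the trapezoid, suitably folded at the antidiagonal. If that substitution fails to produce pure differences near the antidiagonal, I would fall back on \cref{thm:laminarSystem}. For this, I would show the constraint sets split into two laminar families: row $i$'s chain is assigned by whether it is read from the left or from the right. The first family consists of prefixes up to column $n+1-i$, each taken together with the mid-column prefixes \cref{eq:dasasm:mid-col-pref}. The second family consists of the complementary suffixes, obtained from the row sums. Checking disjointness or nestedness within each family, using the ``at most two rows per variable'' observation, is the delicate step.
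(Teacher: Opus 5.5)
Your validity argument and your check that integer solutions assemble to DASASMs are correct and essentially match the paper. The gap is the integrality step. You leave it as two untested plans, and neither works as stated.

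\emph{The potential plan.} Take the potentials $U_{p,q}=\sum_{i'\le p,\ j'\ge q,\ (i',j')\in C}y_{i',j'}$. For $i\in[2,k]$ and $j\in[i,n-i]$, the row-$i$ prefix becomes $U_{i,i}-U_{i-1,i+1}-U_{i,j+1}+U_{i-1,j+1}$. In the DSASM case the first difference $U_{i,i}-U_{i-1,i+1}$ is the full row sum and hence the constant $1$. Here it misses the borrowed segment $\sum_{i'<i}y_{i',n+1-i}$ of column $n+1-i$, so four terms remain. You never specify what the folding should be.

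\emph{The laminar fallback.} This cannot work at all. For $k\geq 3$, the row-sum equations of rows $1,2,3$ have supports $P_1=\{(1,1),\dots,(1,n)\}$, $P_2=\{(1,2),(2,2),\dots,(2,n-1),(1,n-1)\}$ and $P_3=\{(1,3),(2,3),(3,3),\dots,(3,n-2),(2,n-2),(1,n-2)\}$. These pairwise intersect without being nested, so they cannot be distributed among two laminar families. Worse, their restriction to the columns $(1,2),(1,3),(2,3)$ has determinant $-2$. So the coefficient matrix in the $y$-variables is not totally unimodular, however you rewrite the inequalities as prefixes or suffixes. Even your first family as described is not laminar: the row-$2$ prefix to column $3$, $\{(1,2),(2,2),(2,3)\}$, and the row-$3$ prefix to column $2$, $\{(1,3),(2,3)\}$, cross. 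A change of variables is therefore unavoidable.

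\emph{The missing idea (the paper's route).} Take the left-hand sides of \cref{eq:dasasm:row-prefix,eq:dasasm:row-sum,eq:dasasm:mid-col-pref} themselves as new variables $z_{i,j}$, together with $z_{i,0}=0$. Then every constraint becomes a bound $0\leq z\leq 1$ or $z_{i,n}=1$.
\begin{itemize}
\item Each core entry is a consecutive difference $z_{i,j}-z_{i,j-1}$ in its own chain.
\item Each entry off both diagonals is also such a difference in exactly one other chain, and equating the two gives $z_{i,j}-z_{i,j-1}-z_{r,s}+z_{r,s-1}=0$.
\item Along chain $i$, the borrowed positions $[1,i-1]$ and $[n+2-i,n]$ are separated from the own positions $[i+1,n-i]$ by the private entries $y_{i,i}$ and $y_{i,n+1-i}$, which carry no such equation.
\end{itemize}
Hence every $z$-variable receives opposite signs in at most two equations. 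The equation matrix is a submatrix of a node--arc incidence matrix, and \cref{thm:digraphIncidenceTU} applies.

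\emph{Repairing your tension plan.} If you prefer the tension viewpoint, the correct folding is to use the corner sums $\sigma_{p,q}$ of the full matrix $\varphi(Y)$. Every row- and column-prefix sum is then a difference of two adjacent $\sigma$'s. Given the line sums, diagonal and antidiagonal symmetry become the difference equations $\sigma_{p,q}=\sigma_{q,p}$ and $\sigma_{p,q}-\sigma_{n-q,n-p}=p+q-n$. This shows $P_\ASM\cap P_\DAS$ is integral. Combined with the real-valued version of your second step, that yields the core statement.
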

\begin{proof}
  We prove that, when $n$ is even, the integer solutions to the system~\crefrange{eq:dasasm:real}{eq:dasasm:row-sum} are exactly the cores of DASASMs, while for odd $n$ the cores arise precisely as the integer solutions to the extended system~\crefrange{eq:dasasm:real}{eq:dasasm:mid-col-pref}.
  We then show that the system defines an integral polytope in both cases.

  \medskip
  First, let $X$ be an $n\times n$ DASASM, and let $Y = \pi_C(X)$ be its core.
  Since $X$ is both diagonally and antidiagonally symmetric, we have $X = \varphi(Y)$.
  By the definition of $\varphi$, for every $i\in[k]$ and $j\in[n]$, the row-prefix sum within row $i$ up to column $j$ can be written in terms of the core entries as
  \begin{equation}\label{eq:dasasm:phi-row-prefix-used}
    \sum_{j'=1}^{j} x_{i,j'}
    = \sum_{i'=1}^{\min\{i-1,j\}} y_{i',i}
    + \sum_{j'=i}^{\min\{n+1-i,j\}} y_{i,j'}
    + \sum_{i'=n-i+2}^{j} y_{n+1-i',n+1-i}.
  \end{equation}
  If $n$ is odd, then $(i,k+1)\in C$ for every $i \in [k]$, so along the middle column we also obtain that, for every $i\in[k]$,
  \begin{equation}\label{eq:dasasm:phi-mid-col-prefix-used}
    \sum_{i'=1}^{i} x_{i',k+1}
    = \sum_{i'=1}^{i} y_{i',k+1}.
  \end{equation}
  Since every DASASM is in particular an ASM, the ASM row-prefix bounds in~\cref{eq:asm:row-prefix}, together with the identity~\cref{eq:dasasm:phi-row-prefix-used} for $j\in[n-1]$, yield the constraints in~\cref{eq:dasasm:row-prefix}.
  The case $j = n$ of the same identity, together with the ASM row-sum constraint in~\cref{eq:asm:row-sum}, gives the equations in~\cref{eq:dasasm:row-sum}.
  In the odd case, combining the ASM column-prefix bounds in~\cref{eq:asm:col-prefix} with the identity~\cref{eq:dasasm:phi-mid-col-prefix-used} yields the middle-column-prefix bounds in~\cref{eq:dasasm:mid-col-pref}.
  Thus the cores of DASASMs satisfy the system~\crefrange{eq:dasasm:real}{eq:dasasm:mid-col-pref}.

  \medskip
  Conversely, let $Y$ be an integer solution to the system and put $X = \varphi(Y)$.
  The identity~\cref{eq:dasasm:phi-row-prefix-used} shows that the constraints in~\cref{eq:dasasm:row-prefix,eq:dasasm:row-sum} are exactly the ASM row-prefix bounds and row-sum constraints for the top half of the matrix, while the identity~\cref{eq:dasasm:phi-mid-col-prefix-used} gives the middle-column prefixes for odd $n$.
  The remaining ASM constraints follow by symmetry.
  Hence $X$ satisfies all ASM constraints, and by construction, it is diagonally and antidiagonally symmetric.
  Therefore, $X$ is a DASASM and $Y = \pi_C(X)$ is its core.
  This proves that the integer solutions to the system are exactly the cores of DASASMs.

  \medskip
  It remains to prove that the system~\crefrange{eq:dasasm:real}{eq:dasasm:row-sum} and the system~\crefrange{eq:dasasm:real}{eq:dasasm:mid-col-pref} define integral polytopes when $n$ is even and odd, respectively.
  For $i \in [k]$ and $j \in [0,n]$, define
  \[
    z_{i,j}
    = \sum_{i'=1}^{\min\{i-1,j\}} y_{i',i}
    + \sum_{j'=i}^{\min\{n+1-i,j\}} y_{i,j'}
    + \sum_{i'=n-i+2}^{j} y_{n+1-i',n+1-i},
  \]
  For $j\in[n-1]$, this is the left-hand side of~\cref{eq:dasasm:row-prefix}, while $z_{i,n}$ is the left-hand side of~\cref{eq:dasasm:row-sum}.
  If $n$ is odd, then, for $j \in [0,k]$, we let
  \[
    z_{k+1,j} = \sum_{i'=1}^{j} y_{i',k+1}.
  \]
  Note that $z_{i,0}=0$ for every $i$ by empty sums.
  By definition, the constraints in~\cref{eq:dasasm:row-prefix,eq:dasasm:row-sum} become
  \begin{align}
    0 \leq z&_{i,j} \leq 1   &\forall i \in [k], j \in [n-1], \label{eq:dasasm:z-box}\\
    z&_{i,n} = 1      &\forall i \in [k], \label{eq:dasasm:z-rowsum}\\
    \intertext{and, for odd $n$,~\cref{eq:dasasm:mid-col-pref} turns into}
    0 \leq z&_{k+1,j} \leq 1 &\forall j \in [k]. \label{eq:dasasm:z-mid-box}
  \end{align}

  Next we express the differences of consecutive $z$-variables in terms of the core entries.
  Fix $i\in[k]$.
  From the definition of $z_{i,j}$ we see that, as $j$ increases by $1$, at most one of the three upper limits changes, so $z_{i,j}-z_{i,j-1}$ is always a single core entry.
  Direct inspection of the three sums gives
  \begin{align}
    z&_{i,j} - z_{i,j-1} = y_{j,i} &\forall j\in[i-1], \label{eq:dasasm:dz-left}\\
    z&_{i,j} - z_{i,j-1} = y_{i,j} &\forall j\in[i,n+1-i], \label{eq:dasasm:dz-middle}\\
    z&_{i,j} - z_{i,j-1} = y_{n+1-j,n+1-i} &\forall j\in[n+2-i,n], \label{eq:dasasm:dz-right}\\
    \intertext{furthermore, if $n$ is odd, then we have}
    z&_{k+1,j}-z_{k+1,j-1}=y_{j,k+1} &\forall j\in[k]. \label{eq:dasasm:dz-midcol}
  \end{align}

  For every core position $(i,j) \in C$, the equation~\cref{eq:dasasm:dz-middle} gives
  \begin{align}
    y&_{i,j} = z_{i,j} - z_{i,j-1}            &\forall (i,j)\in C, \label{eq:dasasm:y-from-z-own}\\
    \intertext{in addition, for core positions that are off both diagonals, the identities~\cref{eq:dasasm:dz-left,eq:dasasm:dz-right} give the alternative expression}
    y&_{i,j} = z_{j,i} - z_{j,i-1}            &\forall (i,j)\in C \text{ with } j \in [k], i \in [j-1], \label{eq:dasasm:y-from-z-col}\\
    y&_{i,j} = z_{n+1-j,n+1-i} - z_{n+1-j,n-i} &\forall (i,j)\in C \text{ with } j \in [n+1-k,n], i \in [n-j], \label{eq:dasasm:y-from-z-anti}\\
    \intertext{and, if $n$ is odd, then we have}
    y&_{i,k+1} = z_{k+1,i} - z_{k+1,i-1}       &\forall i \in [k]. \label{eq:dasasm:y-from-z-midcol}
  \end{align}

  \medskip
  We now eliminate the core variables $y$ to obtain a system in the $z$-variables --- and later define $y$ in terms of $z$ by~\cref{eq:dasasm:y-from-z-own}.
  For every $(i,j)\in C$ that is off both the main diagonal and the antidiagonal, there is exactly one alternative formula for $y_{i,j}$, given by one of~\cref{eq:dasasm:y-from-z-col,eq:dasasm:y-from-z-anti,eq:dasasm:y-from-z-midcol}.
  Subtracting this alternative expression from~\cref{eq:dasasm:y-from-z-own} for the same $(i,j)$ cancels $y_{i,j}$ and produces a linear relation among the $z$-variables.
  Explicitly, for such a pair of indices $i,j$, we obtain an equation of the form
  \begin{equation}\label{eq:dasasm:core:z-system}
    z_{i,j} - z_{i,j-1} - z_{r,s} + z_{r,s-1} = 0,
  \end{equation}
  where $(r,s)$ is the index pair appearing in the alternative expression for $y_{i,j}$ in one of the expressions~\cref{eq:dasasm:y-from-z-col,eq:dasasm:y-from-z-anti,eq:dasasm:y-from-z-midcol}.
  Let $Az = 0$ denote the system of all equations of the form~\cref{eq:dasasm:core:z-system}.
  Each row of $A$ has four non-zero entries in $\{0,\pm1\}$, corresponding to the coefficients of $z_{i,j}$, $z_{i,j-1}$, $z_{r,s}$, and $z_{r,s-1}$.

  We claim that each column of $A$ contains at most one $1$ and at most one $-1$.
  Fix a variable $z_{u,v}$.
  It can only appear in the two consecutive differences $z_{u,v}-z_{u,v-1}$ and $z_{u,v+1}-z_{u,v}$, as right and left endpoint, respectively.
  By construction, every difference $z_{r,t}-z_{r,t-1}$ corresponds to a unique core entry $y_{i,j}$ (either via the expression~\cref{eq:dasasm:y-from-z-own} or via one of the expressions~\cref{eq:dasasm:y-from-z-col,eq:dasasm:y-from-z-anti,eq:dasasm:y-from-z-midcol}), and for each off-diagonal $y_{i,j}$ we introduce at most one consistency equation.
  Thus each difference $z_{r,t}-z_{r,t-1}$ appears in at most one row of $A$.
  Since a variable $z_{u,v}$ can occur only in the two consecutive differences $z_{u,v}-z_{u,v-1}$ and $z_{u,v+1}-z_{u,v}$, and these occurrences have opposite signs by construction, the column of $A$ indexed by $z_{u,v}$ has at most one $1$ and at most one $-1$.
  Hence $A$ is a $\{0,\pm1\}$-matrix with at most one $1$ and at most one $-1$ in each column, and therefore it is a submatrix of the node--arc incidence matrix of a digraph, and hence it is totally unimodular.

  The system in the $z$-variables consists of the bounds in~\cref{eq:dasasm:z-box}, the equations in~\cref{eq:dasasm:z-rowsum}, the system $Az=0$ coming from~\cref{eq:dasasm:core:z-system}, and, when $n$ is odd, the bounds in~\cref{eq:dasasm:z-mid-box}.
  The equations have a totally unimodular coefficient matrix and integer right-hand sides, while the remaining constraints are simple lower and upper bounds on individual variables.
  By the standard integrality theorem for systems with a totally unimodular coefficient matrix, integer right-hand side, and integral variable bounds, the feasible region in the $z$-space is an integral polytope; see \cref{thm:digraphIncidenceTU}.

  \medskip
  The map $z \mapsto y$ defined by $y_{i,j} = z_{i,j} - z_{i,j-1}$  for $(i,j)\in C$ is a linear bijection and it preserves integrality.
  Conversely, from any feasible $y$, the system of equations in~\crefrange{eq:dasasm:dz-left}{eq:dasasm:dz-midcol} recovers a unique feasible $z$, and the constraints in~\crefrange{eq:dasasm:z-box}{eq:dasasm:z-mid-box} are precisely the images of~\crefrange{eq:dasasm:row-prefix}{eq:dasasm:mid-col-pref} under this correspondence.
  In particular, these constructions define integer-linear bijections between the feasible $z$- and $y$-regions.
  Thus the polytopes of the feasible $y$ and $z$ solutions are affinely isomorphic.

  Since the $z$-polytope is integral, the core polytope $P^\core_\DASASM$ is the image of an integral polytope under an integer-linear bijection, and hence is itself integral.
  Therefore, the system~\crefrange{eq:dasasm:real}{eq:dasasm:row-sum} and the system~\crefrange{eq:dasasm:real}{eq:dasasm:mid-col-pref} define integral polytopes for even $n$ and odd $n$, respectively.
\end{proof}

By \cref{thm:xasm:assembly,thm:dasasm:corepolytope}, we obtain the following description of $P_\DASASM$.
\begin{theorem}\label{thm:dasasm:coreDescr}
  Let $n \geq 1$, $k=\floor*{n/2}$, and $\widehat P^\core_\DASASM=\{X\in\R^{n\times n}:\pi_C(X)\in P^\core_\DASASM\}$.
  Then
  \[
    P_\DASASM
    =\widehat P^\core_\DASASM\cap P_\DAS \cap \Bigl\{X\in\R^{n\times n}:\ \sum_{i=1}^n x_{i,k+1}=1\Bigr\}.
  \]
\end{theorem}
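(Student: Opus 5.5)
The plan is to follow the proof of \cref{thm:htsasm:coreDescr}: combine \cref{thm:xasm:assembly} with \cref{thm:dasasm:corepolytope}. By \cref{thm:xasm:assembly}, $P_\DASASM=\widehat P^\core_\DASASM\cap\varphi(\R^C)$. It therefore suffices to show that
\[
  \varphi(\R^C)\cap\widehat P^\core_\DASASM
  = P_\DAS\cap\Bigl\{X:\sum_{i=1}^n x_{i,k+1}=1\Bigr\}\cap\widehat P^\core_\DASASM .
\]
In fact I will prove the stronger identity $\varphi(\R^C)=P_\DAS\cap\{X:\sum_i x_{i,k+1}=1\}$ of affine subspaces, and then intersect both sides with $\widehat P^\core_\DASASM$.

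\emph{Inclusion ``$\subseteq$''.} Let $Y\in\R^C$ and $X=\varphi(Y)$. First I check $X\in P_\DAS$. The five cases in the definition of $\varphi$ assign each position the core value at the representative of its orbit under $\{\mathcal I,\mathcal D,\mathcal A,\mathcal R_\pi\}$, and the centre, being fixed by the group, receives a single value. So $x_{i,j}=x_{j,i}=x_{n+1-j,n+1-i}$ follows by inspecting which case each image position falls into.

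Next comes the middle-column sum. If $n$ is even, then $k+1=n+1-k$, so column $k+1$ is the $\mathcal A$-image of row $k$ under antidiagonal symmetry; hence its sum equals the sum of row $k$. \emph{This is where care is needed}, because that row sum is not forced to be $1$ for arbitrary $Y\in\R^C$. I therefore expect the even case to require either that the constraint is already implied on $\widehat P^\core_\DASASM$ (the row-sum equation~\cref{eq:dasasm:row-sum} for $i=k$ together with the $\mathcal A$-symmetry gives column $k+1$ sum $1$ there), or that the right-hand affine subspace is cut down only after intersecting with $\widehat P^\core_\DASASM$. So I will argue the intersected identity directly rather than equality of the subspaces. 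If $n$ is odd, the middle column is palindromic by $\mathcal R_\pi$, so its sum is $2\sum_{i\le k}y_{i,k+1}+x_{k+1,k+1}=1$ by the choice of the central entry in $\varphi$.

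\emph{Inclusion ``$\supseteq$''.} Let $X\in P_\DAS$ satisfy the column condition. The $\mathcal G$-orbits of positions each meet $C$, except the centre when $n$ is odd. Hence $X$ agrees with $\varphi(\pi_C(X))$ off the centre. At the centre, the middle-column condition together with palindromicity gives $x_{k+1,k+1}=1-2\sum_{i\le k}x_{i,k+1}$, which is exactly the value $\varphi$ assigns. Therefore $X=\varphi(\pi_C(X))\in\varphi(\R^C)$.

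The only genuinely delicate point is the orbit bookkeeping in the case analysis of $\varphi$ near the diagonals and the centre, in particular checking that each of its index ranges covers the orbit correctly. This is routine verification, which I would do case by case by applying $\mathcal D$ and $\mathcal A$ to each region.
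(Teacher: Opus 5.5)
Your argument is correct and follows the paper's route: combine \cref{thm:xasm:assembly} with \cref{thm:dasasm:corepolytope}, identify $\varphi(\R^C)$ with the symmetric subspace, and, when $n$ is even, obtain the middle-column sum from the row-sum equation~\cref{eq:dasasm:row-sum} for $i=k$ via $\mathcal{A}$-symmetry. Delete your opening claim of the ``stronger identity'' $\varphi(\R^C)=P_\DAS\cap\{X:\sum_i x_{i,k+1}=1\}$, because it is false for even $n$: there $\varphi(\R^C)=P_\DAS$, and the column sum is non-constant on it, so the intersected identity you actually prove is the correct statement.
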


Using the assembly map $\varphi$ and the identities~\cref{eq:dasasm:phi-row-prefix-used,eq:dasasm:phi-mid-col-prefix-used} to translate the DASASM core constraints in \cref{thm:dasasm:coreDescr} to and from the ASM constraints in \cref{thm:ASMpolytope}, we obtain the following.
\begin{theorem}\label{thm:dasasm:descr}
  For every $n \geq 1$,
  \[
    P_\DASASM = P_\ASM \cap P_\DAS.
  \]
\end{theorem}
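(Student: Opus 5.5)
The plan follows the pattern of \cref{thm:htsasm:descr,thm:dsasm:descr}. The inclusion $P_\DASASM \subseteq P_\ASM \cap P_\DAS$ has already been noted, so only the reverse inclusion needs proof. Take $X \in P_\ASM \cap P_\DAS$ and set $Y = \pi_C(X)$. I will show two things: that $Y \in P^\core_\DASASM$, and that $X = \varphi(Y)$. Together with \cref{thm:xasm:assembly}, or equivalently \cref{thm:dasasm:coreDescr}, these give $X \in P_\DASASM$.

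\textbf{Step 1: $X = \varphi(Y)$.} Every position off the core and off the center is the image of a core position under $\mathcal D$, $\mathcal A$, or $\mathcal R_\pi$. Since $X \in P_\DAS$, these entries of $X$ agree with the corresponding cases in the definition of $\varphi$. When $n$ is odd, the central entry also has to be checked. By diagonal symmetry, the middle column equals the middle row, and by $\mathcal R_\pi$ that column is palindromic. The column-sum equation \cref{eq:asm:col-sum} for column $k+1$ then gives $x_{k+1,k+1} = 1 - 2\sum_{i'=1}^{k} x_{i',k+1}$, which is exactly the central case of $\varphi$. This is the one place where the ASM equations are used, not only the symmetry.

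\textbf{Step 2: $Y$ satisfies the system of \cref{thm:dasasm:corepolytope}.} For $i \in [k]$, the identity \cref{eq:dasasm:phi-row-prefix-used} holds for any $X = \varphi(Y)$, real or integral, since it only uses the symmetric filling. Applied to our $X$, it turns the ASM row-prefix bounds \cref{eq:asm:row-prefix} and row sums \cref{eq:asm:row-sum} for rows $1,\dots,k$ into \cref{eq:dasasm:row-prefix,eq:dasasm:row-sum}. When $n$ is odd, \cref{eq:dasasm:phi-mid-col-prefix-used} together with the column-prefix bounds \cref{eq:asm:col-prefix} in column $k+1$ gives \cref{eq:dasasm:mid-col-pref}. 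By \cref{thm:dasasm:corepolytope}, this system describes $P^\core_\DASASM$, so $Y \in P^\core_\DASASM$.

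\textbf{Conclusion and main obstacle.} From $X = \varphi(Y)$ with $Y \in P^\core_\DASASM$, \cref{thm:xasm:assembly} gives $X \in P_\DASASM$. The only step needing care is the index bookkeeping in Step 1: checking that the five cases of $\varphi$ match the $\mathcal D_4$-orbits, so that every non-core entry of a matrix in $P_\DAS$ really is the prescribed core entry. In particular one must confirm that the last case, $j \in [n+2-i, i-1]$, is the $\mathcal R_\pi$-image of the core. I would verify this by checking directly that the three reflection images of $C$, together with $C$ itself and the center when $n$ is odd, cover $[n]\times[n]$. No integrality argument is needed, because \cref{thm:dasasm:corepolytope} already supplies it.
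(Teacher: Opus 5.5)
Your proof is correct and takes the same approach as the paper. The paper proves this theorem by using the assembly map $\varphi$ and the prefix identities \cref{eq:dasasm:phi-row-prefix-used,eq:dasasm:phi-mid-col-prefix-used} to translate between the core system (via \cref{thm:dasasm:coreDescr}) and the ASM constraints; your check that the central entry of any $X\in P_\ASM\cap P_\DAS$ equals $1-2\sum_{i'=1}^{k}x_{i',k+1}$, so that $X=\varphi(\pi_C(X))$, does the job of the middle-column-sum equation in \cref{thm:dasasm:coreDescr}.
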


\begin{theorem}\label{thm:dasasm:dim}
  For every $n \geq 1$, the dimension of $P_\DASASM$ is $\floor*{\frac{n^2}{4}}$.
\end{theorem}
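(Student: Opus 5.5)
Following the pattern of the earlier dimension proofs, I will work with the core polytope $P^\core_\DASASM$, since $\varphi$ restricts to an affine isomorphism onto $P_\DASASM$. First count the core positions: row $i\in[k]$ of $C$ contains the positions $j\in[i,n+1-i]$, so it has $n+2-2i$ entries. Summing over $i\in[k]$ gives $|C|=k(n+2)-k(k+1)=k(n+1-k)$. For $n=2k$ this is $k^2+k$, and for $n=2k+1$ it is $k^2+2k$. In both cases $|C|-k=\floor*{n^2/4}$. The upper bound therefore follows once the $k$ equations~\cref{eq:dasasm:row-sum} are shown to be linearly independent. The diagonal variable $y_{i,i}$ occurs only in the $i^\text{th}$ row expression: it lies in the middle sum for row $i$, and it cannot appear as $y_{i',i}$ with $i'<i$ or as an antidiagonal-reflected term in another row. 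So $y_{i,i}$ appears only in the $i^\text{th}$ equation, which gives independence and $\dim\leq\floor*{n^2/4}$.

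For the lower bound I will mimic the DSASM argument. Take $\ol Y$ with all entries $1/n$. Then every prefix sum in~\cref{eq:dasasm:row-prefix} equals $j/n\in(0,1)$ and the row sums equal $1$. In the odd case, the middle-column prefixes $j/n$ also lie strictly in $(0,1)$ for $j\leq k$. So $\ol Y$ is a relative interior point of the system in \cref{thm:dasasm:corepolytope}.

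Next, for each non-diagonal core position $(i,j)\in C$ with $j\neq i$, perturb $\ol Y$ by $+\varepsilon$ at $(i,j)$ and compensate with $-\varepsilon$ on diagonal entries. The compensation depends on which row expressions contain $y_{i,j}$:
\begin{itemize}
\item Each $y_{i,j}$ appears in row $i$'s expression.
\item If $j\leq k$, it also appears in row $j$'s expression as a left term $y_{i',j}$ with $i'=i<j$.
\item If $j\geq n+1-k$ and $j\neq n+1-i$, it also appears in row $n+1-j$'s expression as an antidiagonal-reflected term.
\item For odd $n$ and $j=k+1$, it appears only in row $i$'s expression.
\item For antidiagonal positions $j=n+1-i$, it appears only in row $i$'s expression.
\end{itemize}
Set $\ol Y^{i,j}=\ol Y+\varepsilon\chi_{i,j}-\varepsilon\sum_{r}\chi_{r,r}$, where $r$ ranges over the (one or two) rows whose expressions contain $y_{i,j}$, counted with multiplicity. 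Then the row-sum equations~\cref{eq:dasasm:row-sum} are preserved, and for small $\varepsilon$ all strict inequalities survive, so $\ol Y^{i,j}\in P^\core_\DASASM$.

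The differences $\ol Y^{i,j}-\ol Y$ are linearly independent, because each has a unique non-zero off-diagonal entry, namely at $(i,j)$. Their number is $|C|-k=\floor*{n^2/4}$, which matches the upper bound. The step I expect to need the most care is the bookkeeping that a non-diagonal core entry occurs in at most two row expressions and that the diagonal $y_{r,r}$ occurs only in row $r$. Both follow from the definitions of $z_{i,j}$ and the identities~\crefrange{eq:dasasm:dz-left}{eq:dasasm:dz-midcol}, where each core entry is attached to at most one alternative expression. The small cases $n\leq 2$ (and $n=1$, where $C=\emptyset$) are checked directly.
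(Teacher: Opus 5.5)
Your proposal is correct and follows the paper's proof essentially step for step. It uses the same count $|C|=k(n+1-k)$, the same independence of~\cref{eq:dasasm:row-sum} via the diagonal variables $y_{i,i}$, and the same lower-bound construction from the constant core $1/n$, with $\varepsilon$-perturbations at each off-diagonal core position compensated on the diagonal entries of the one or two rows whose equations contain that variable. Your case analysis of those rows, including that antidiagonal positions and, for odd $n$, middle-column positions occur only in row $i$, is exactly the paper's three-case definition of $\ol Y^{i,j}$.
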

\begin{proof}
  It suffices to prove that the dimension of $P^\core_\DASASM$ is $\floor*{\frac{n^2}{4}}$, because the assembly map $\varphi$
  restricts to an affine isomorphism between $P^\core_\DASASM$ and $P_\DASASM$, which preserves dimension.
  First, we give an upper bound.
  Let $k=\floor*{n/2}$ and let $C$ be the set of core positions.
  Then
  \[
    |C|
    = \sum_{i=1}^{k}\bigl((n+1-i)-i+1\bigr)
    = \sum_{i=1}^{k}(n+2-2i)
    = k(n+1-k).
  \]
  By \cref{thm:dasasm:corepolytope}, the polytope $P^\core_\DASASM\subseteq \R^{C}$ is described by the system~\crefrange{eq:dasasm:real}{eq:dasasm:row-sum} if $n$ is even, and by the system~\crefrange{eq:dasasm:real}{eq:dasasm:mid-col-pref} if $n$ is odd.
  In either case, the system of linear equations given in~\cref{eq:dasasm:row-sum} consists of $k$ equations, which are linearly independent, since the variable $y_{i,i}$ appears only in the $i^\text{th}$ equation of~\cref{eq:dasasm:row-sum}.
  Thus, the affine subspace defined by the equations in~\cref{eq:dasasm:row-sum} has dimension
  $
    |C|-k = k(n+1-k)-k = k(n-k) = \floor*{\frac{n^2}{4}},
  $
  which gives the bound $\dim(P^\core_\DASASM)\leq \floor*{\frac{n^2}{4}}$.

  \medskip
  Second, we construct $\floor*{\frac{n^2}{4}}+1$ affinely independent cores in $P^\core_\DASASM$ and hence obtain a matching lower bound.
  Let $\ol Y\in\R^C$ be the core with all entries equal to $1/n$.
  It is easy to see that $\ol Y$ satisfies the equations in~\cref{eq:dasasm:row-sum}, and the inequalities in~\cref{eq:dasasm:row-prefix} strictly; moreover, when $n$ is odd, $\ol Y$ also satisfies the inequalities in~\cref{eq:dasasm:mid-col-pref} strictly.
  Thus $\ol Y\in P^\core_\DASASM$.
  Let $S=\{(i,j)\in C : i<j\}$ be the set of off-diagonal core positions.
  Since the diagonal core positions are exactly $(i,i)$ for $i\in[k]$, we have $|S| = |C|-k = k(n-k)=\floor*{\frac{n^2}{4}}$.
  For each $(i,j)\in S$, define
  \[
    \ol Y^{i,j}=
    \begin{cases}
      \ol Y+\varepsilon\chi_{i,j}-\varepsilon\chi_{i,i}-\varepsilon\chi_{j,j}        & \text{if } j\in[k],\\
      \ol Y+\varepsilon\chi_{i,j}-\varepsilon\chi_{i,i}-\varepsilon\chi_{n+1-j,n+1-j} & \text{if } j\in[n+1-k,n] \text{ and } i\neq n+1-j,\\
      \ol Y+\varepsilon\chi_{i,j}-\varepsilon\chi_{i,i}                              & \text{otherwise,}
    \end{cases}
  \]
  where $\varepsilon$ is a small positive constant.
  Every off-diagonal variable $y_{i,j}$ appears in the $i^\text{th}$ equation in~\cref{eq:dasasm:row-sum}, and it appears in at most one further equation: if $j\leq k$, then it appears also in the $j^\text{th}$ equation, while if $j\in[n+1-k,n]$, then it appears also in the $(n+1-j)^\text{th}$ equation; in particular, when $n$ is odd and $j=k+1$, no such second equation occurs.
  Moreover, in the third case of the definition of $\ol Y^{i,j}$, the only potential second index is $n+1-j$, which either does not lie in $[k]$ or equals $i$, so the two potential equations coincide and we avoid subtracting twice from the same diagonal entry.
  Thus, in every equation of~\cref{eq:dasasm:row-sum} in which $y_{i,j}$ appears, the increment $+\varepsilon$ is cancelled by one of the diagonal decrements, and hence each $\ol Y^{i,j}$ satisfies the equations in~\cref{eq:dasasm:row-sum}.
  Since $\ol Y$ satisfies all defining inequalities strictly, choosing $\varepsilon>0$ small enough guarantees that the inequalities in~\cref{eq:dasasm:row-prefix} and, when $n$ is odd, in~\cref{eq:dasasm:mid-col-pref} are satisfied, and therefore $\ol Y^{i,j}\in P^\core_\DASASM$ for all $(i,j)\in S$.

  The cores $\ol Y$ and $\ol Y^{i,j}$ for $(i,j)\in S$ are affinely independent: only the difference $\ol Y^{i,j}-\ol Y$ has a non-zero off-diagonal entry at $(i,j)$, so the set $\{\ol Y^{i,j}-\ol Y : (i,j)\in S\}$ is linearly independent.
  Therefore, the dimension of $P^\core_\DASASM$ is at least $|S| = \floor*{\frac{n^2}{4}}$.

  Combining the lower and upper bounds yields $\dim(P^\core_\DASASM)=\dim(P_\DASASM)=\floor*{\frac{n^2}{4}}$.
\end{proof}

\begin{theorem}\label{thm:dasasm:facets}
  Let $n \geq 2$.
  The facets of $P^\core_\DASASM$ are given by tightening the lower bound in~\cref{eq:dasasm:row-prefix} to equality for
  $
  (i,j)\in \{(1,1)\} \cup \bigl([2,k]\times[n-2]\bigr)
  $, and the upper bound for
  $
  (i,j)\in \{(1,n-1)\} \cup \bigl([2,k]\times[2,n-1]\bigr)
  $.
  If $n$ is odd, then in addition the facets include those obtained by tightening the lower bound in~\cref{eq:dasasm:mid-col-pref} to equality for
  $
    j\in [k]
  $,
  and the upper bound for
  $
    j \in [2,k]
  $.
  In particular, the number of facets of $P^\core_\DASASM$ is $(n-2)^2 + 2$.
\end{theorem}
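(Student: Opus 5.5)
We follow the template of the HTSASM facet theorem (\cref{thm:htsasm:facets}), to which the DASASM core description is structurally very close. The row-prefix expressions $z_{i,j}$ of \cref{eq:dasasm:row-prefix} play the role of the HTSASM ones, and for odd $n$ the middle-column prefixes of \cref{eq:dasasm:mid-col-pref} play the role of the middle-row and middle-column prefixes there. We call the listed inequalities the \emph{facet inequalities}. The core $\ol Y$ with all entries $1/n$ from the proof of \cref{thm:dasasm:dim} satisfies every inequality strictly, so none is an implicit equation. By \cref{thm:irredundant-ineq-facets} it then suffices to show two things: (a) the facet inequalities together with \cref{eq:dasasm:row-sum} imply every remaining inequality; (b) each facet inequality is irredundant.

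For (a), the non-facet inequalities are the lower bounds at $(1,j)$ for $j\in[2,n-1]$, the lower bounds at $(i,n-1)$ for $i\in[2,k]$, the upper bounds at $(1,j)$ for $j\in[n-2]$, the upper bounds at $(i,1)$ for $i\ge 2$, and, for odd $n$, the upper bound of \cref{eq:dasasm:mid-col-pref} at $j=1$.
\begin{itemize}
\item First we show that every core entry in row $1$ is non-negative. For $y_{1,1}$ this is the lower bound at $(1,1)$. For $y_{1,i}$ with $i\in[2,k]$, note that $z_{i,1}=y_{1,i}$, so the lower bound at $(i,1)$ applies. For $y_{1,n+1-i}$, use $1-z_{i,n-1}=y_{1,n+1-i}$ together with the upper bound at $(i,n-1)$. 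For odd $n$, $y_{1,k+1}$ is covered by the middle-column lower bound at $j=1$.
\item Row $1$ of the full matrix is then a non-negative vector. Summing $y_{1,j}$ for $j\in[n]$ gives $z_{1,n}=1$, so all prefixes of row $1$ lie in $[0,1]$, which gives every non-facet bound with $i=1$.
\item Each $y_{1,j}$ is a summand of that non-negative row summing to $1$, hence $y_{1,j}\le 1$. This gives $z_{i,1}\le 1$ and $z_{i,n-1}=1-y_{1,n+1-i}\ge 0$ for $i\in[2,k]$, and, for odd $n$, $y_{1,k+1}\le 1$.
\end{itemize}
The count then follows directly. Lower bounds contribute $1+(k-1)(n-2)$ and upper bounds contribute $1+(k-1)(n-2)$, plus $k+(k-1)$ in the odd case. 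For even $n=2k$ the total is $2+(n-2)^2$. For odd $n$ it is $2+(n-3)(n-2)+n-2=(n-2)^2+2$, as claimed.

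For (b), we build certifying cores $L^n_{i,j}$ and $U^n_{i,j}$ (and middle-column certificates $L^{n,\mathrm M}_j$, $U^{n,\mathrm M}_j$ for odd $n$). Each certificate satisfies \cref{eq:dasasm:row-sum} and violates exactly one facet inequality. As in the DSASM and HTSASM sections, we use extension operators. The natural operator here inserts a new row and column $r$, together with the symmetric row and column $n+1-r$ forced by antidiagonal symmetry, so size increases by $2$. The inserted lines are zero except for a $1$ at $(r,r)$ and its image $(n+1-r,n+1-r)$; in core terms, a new diagonal core entry $y_{r,r}=1$. Inserting at $r=1$ shifts indices $(i,j)\mapsto(i+1,j+2)$. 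Inserting at $r=k$, just outside the old half, keeps $(i,j)$ for $j$ in the left half and shifts $j\mapsto j+2$ in the right half. Inserting at the middle shifts only $j$. Every prefix sum either agrees with an old prefix sum after the index shift or lies in $\{0,1\}$, and the row-sum equations are preserved.

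We would verify base cases $n=2,3,4,5$ (and possibly $6,7$) by hand, then recurse with step $2$ on $n$, choosing the insertion position by case analysis on where $(i,j)$ sits: left of the diagonal, between the two diagonals, or right of the antidiagonal. The main obstacle is exactly this case bookkeeping. We must ensure every facet index has a predecessor, and that the extra non-facet violations some base certificates carry stay non-facet under the transports, as happened in the VSASM proof. Where an index has no predecessor, for example near the middle column in odd size, we would add further explicit base certificates, as the HTSASM proof did for $n=6,7$.
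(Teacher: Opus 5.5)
Your part (a) and your facet count agree with the paper's argument and are correct. The gap is part (b), which carries the whole weight of the theorem. You exhibit no certificates, and the step-$2$ scheme you outline does not go through as stated, for three reasons.

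\emph{The transport rule for $r=1$ is wrong.} Inserting row/column $1$ together with its image $n$ shifts prefixes by one, not two. The new first column vanishes below row $1$, and the new last column lies beyond every prefix of length at most $n-2$. Hence the new $z_{i+1,j+1}$ equals the old $z_{i,j}$, and your rule $(i,j)\mapsto(i+1,j+2)$ would pick the wrong predecessors.

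\emph{Inserted columns duplicate prefix sums.} For $r\geq2$ the two inserted columns are zero in every old row. So the new prefix sums at $J=r-1$ and $J=r$ coincide, as do those at $J=n-r$ and $J=n+1-r$; for even $n$ and $r=k$, three consecutive ones coincide. If the predecessor's violated index sits at such a column, the new core violates two facet inequalities. This is not a boundary effect. To keep row $2$ you need $r\in[3,k]$. For $n=7$, the lower facets $(2,3),(2,4),(2,5)$ land on a duplicated column for every such $r$. For $n=8$ and $n=9$, both the lower and the upper facet at $(2,3)$ do as well. Meanwhile $r=1$ would require a predecessor violating a non-facet bound in row $1$. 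So your base range $n\leq7$ is insufficient, and you would need explicit certificates in both parities well beyond it.

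\emph{Under $r=1$, non-facet inequalities become facets.} Every inequality of the smaller system becomes a facet inequality of the larger one. This includes the non-facet bounds in row $1$, at $(i,1)$, at $(i,m-1)$ with $m=n-2$, and the middle-column upper bound at $j=1$. So predecessors used with $r=1$ must violate exactly one inequality of the whole smaller system, not merely one facet inequality. Your remark that extra non-facet violations ``stay non-facet under the transports'' is therefore false for this operator. For $r=2$ the same thing happens at $(i,2)$, at $(i,n-2)$, and at the middle-column bound $j=2$.

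The paper avoids most of this by also using operators that increase the size by one and switch parity. The operator $\extI$ inserts an all-zero middle column into an even-size core. The operators $\extL$ and $\extRL$ insert an all-zero column just before or after the middle of an odd-size core, and append a new last core row $(\eta,0)$ with $\eta=1-\sum_{i=1}^{\tilde k}z_{i,\tilde k+1}$, which restores the row-sum equation. These are combined with $\ext^{\sqcup}_r$ for $r\in\{1,3,4\}$, with $\vrefl$ for the two corner cases in even size, and with explicit base certificates for $n\leq7$. The case distinctions are arranged so that the violated index never lands on a duplicated column.

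A step-$2$-only recursion could in principle be made to work. For example, insertions with $r\geq3$ send every non-facet inequality to a non-facet one, and once $k$ is large some such $r$ avoids the duplicated columns for any given index. But that case analysis, and the many base certificates it needs, are exactly what your proposal is missing.
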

\begin{proof}
  The facets are obtained by tightening a single inequality in~\cref{eq:dasasm:row-prefix}, and if $n$ is odd also in~\cref{eq:dasasm:mid-col-pref}, to equality for the index pairs listed in the statement of the theorem.
  We call the instances of the lower bounds that are tightened to equality the \emph{facet lower bounds}, and we define the \emph{facet upper bounds} analogously.
  We refer to the union of these two families as the \emph{facet inequalities}.

  We proceed in two steps.
  First, we show that the facet inequalities together with the equations in~\cref{eq:dasasm:row-sum} imply every inequality in~\cref{eq:dasasm:row-prefix} and, if $n$ is odd, every inequality in~\cref{eq:dasasm:mid-col-pref}.
  Then, for every facet inequality, we construct a core of an $n \times n$ matrix violating that facet inequality and no other, thereby proving that no facet inequality is redundant.
  The core $\ol Y$ constructed in the second step of the proof of \cref{thm:dasasm:dim} shows that no facet inequality is an implicit equation; thus the two steps together imply that the facet inequalities form a minimal system that, extended with the equations in~\cref{eq:dasasm:row-sum}, describes the convex hull of the cores of DASASMs, which proves the theorem.

  \medskip
  Now we prove that the facet inequalities together with the equations in~\cref{eq:dasasm:row-sum} imply every inequality in~\cref{eq:dasasm:row-prefix}.
  Clearly, we need to treat only those inequalities that are non-facet inequalities, namely, the lower bounds in~\cref{eq:dasasm:row-prefix} for
  \[
    (i,j) \in (\{1\} \times [2,n-1]) \cup ([2,k] \times \{n-1\}),
  \]
  and the upper bounds for
  \[
    (i,j) \in (\{1\} \times [n-2]) \cup ([2,k] \times \{1\}).
  \]
  If $n$ is odd, then in addition the only non-facet inequality in~\cref{eq:dasasm:mid-col-pref} is the upper bound for $j = 1$.

  \smallskip
  For $i \in [k]$ and $j \in [n-1]$, let $z_{i,j}$ denote the left-hand side of~\cref{eq:dasasm:row-prefix}, and extend the notation by setting $z_{i,n}=1$ using the equations in~\cref{eq:dasasm:row-sum}.
  We also set $z_{i,0}=0$ for $i \in [k]$.
  We first record that every entry in the first core row is non-negative.
  Indeed, the facet lower bound at $(1,1)$ gives $z_{1,1} \geq 0$, and for each $i \in [2,k]$ the facet lower bound at $(i,1)$ gives $z_{i,1} \geq 0$.
  From the difference equations for the $z$ variables, we have $z_{i,1} - z_{i,0} = y_{1,i}$ for all $i \in [2,k]$, and $z_{1,1} = y_{1,1}$.
  Thus $y_{1,i} \geq 0$ for all $i \in [k]$.
  Moreover, for each $i \in [k]$, the facet upper bound at $(i,n-1)$ yields $z_{i,n-1} \leq 1$.
  Using the equations in~\cref{eq:dasasm:row-sum}, we have $z_{i,n}=1$, and from the difference equations, we obtain $z_{i,n} - z_{i,n-1} = y_{1,n+1-i}$.
  Hence, $y_{1,n+1-i} = 1 - z_{i,n-1} \geq 0$ for all $i \in [2,k]$.
  For odd $n$, non-negativity of the middle entry $y_{1,k+1}$ follows from the facet lower bound in~\cref{eq:dasasm:mid-col-pref} for $j=1$.
  Consequently, $y_{1,t} \geq 0$ for all $t \in [n]$.
  The case $i=1$ of the equations in~\cref{eq:dasasm:row-sum} gives $\sum_{t=1}^n y_{1,t} = z_{1,n} = 1$, which together with non-negativity implies $y_{1,t} \leq 1$ for all $t \in [n]$.

  To derive the non-facet lower bounds in~\cref{eq:dasasm:row-prefix}, we distinguish two cases.
  For $i=1$ and $j \in [2,n-1]$, the left-hand side is $z_{1,j} = \sum_{t=1}^j y_{1,t}$, which is non-negative since $y_{1,t} \geq 0$.
  For $i \in [2,k]$ and $j=n-1$, the left-hand side is $z_{i,n-1} = 1 - y_{1,n+1-i}$, which is non-negative because $y_{1,n+1-i} \leq 1$.
  To derive the non-facet upper bounds in~\cref{eq:dasasm:row-prefix}, we again distinguish two cases.
  For $i=1$ and $j \in [n-2]$, we have $z_{1,j} \leq \sum_{t=1}^n y_{1,t} = 1$ since $y_{1,t} \geq 0$.
  For $i \in [2,k]$ and $j=1$, the left-hand side is $z_{i,1} = y_{1,i}$, which satisfies $y_{1,i} \leq 1$.
  This completes the derivation of all inequalities in~\cref{eq:dasasm:row-prefix}.

  If $n$ is odd, then we must also derive the non-facet inequalities in~\cref{eq:dasasm:mid-col-pref}.
  The only non-facet inequality in this group is the upper bound for $j=1$, which reads $y_{1,k+1} \leq 1$, since the prefix sum of length $1$ is just the first entry.
  As shown above, $y_{1,t} \leq 1$ holds for all $t$, in particular for $t=k+1$, so this inequality is implied.

  \medskip
  It remains to show that no facet inequality is redundant.
  For each $(i,j) \in \{(1,1)\} \cup ([2,k] \times [n-2])$, we construct a core $L^n_{i,j} \in \R^C$ that violates the facet lower bound in~\cref{eq:dasasm:row-prefix} for the given $i,j$ and satisfies every other facet inequality as well as the equations in~\cref{eq:dasasm:row-sum}.
  Similarly, for each $(i,j) \in \{(1,n-1)\} \cup ([2,k] \times [2,n-1])$, we construct a core $U^n_{i,j} \in \R^C$ that violates the facet upper bound in~\cref{eq:dasasm:row-prefix} for the given $i,j$ and satisfies every other facet inequality as well as the equations in~\cref{eq:dasasm:row-sum}.
  If $n$ is odd, we additionally construct a core $L^n_{k+1,j}$ for each $j \in [k]$ that violates the lower bound in~\cref{eq:dasasm:mid-col-pref} for $j$, and a core $U^n_{k+1,j}$ for each $j \in [2,k]$ that violates the upper bound in~\cref{eq:dasasm:mid-col-pref} for $j$, while satisfying all other constraints.

  In order to build the certifying cores of $n\times n$ DASASMs from the certifying cores for smaller sizes, we introduce four \emph{extension operators}.
  Set $\tilde n = n - 1$, $\tilde k = \floor*{\tilde n / 2}$, and let $Z$ be the core of an $\tilde n \times \tilde n$ matrix.
  For odd $n$, i.e., $\tilde n$ even, we define
  \begin{center}
    \begin{tikzpicture}[scale=.95, baseline=(current bounding box.center)]
      \node at (-1.6,1.5) {$\extI(Z)\ =$};
      \node[draw=none] at (7.15,1.5) {$.$};

      \draw (3,3) -- (4,3);
      \draw (3,0) -- (4,0);

      \draw[fill=cyan!15] (2,1) -- (2,0) -- (3,0) -- (3,3) -- (0,3) -- (0,2) -- (1,2);

      \draw[fill=cyan!15] (5,1) -- (5,0) -- (4,0) -- (4,3) -- (7,3) -- (7,2) -- (6,2);

      \node at (0.5,2.5) {$z_{1,1}$};
      \ThreeDotsAt{(1.5,2.5)}{.21213203}{0}
      \node at (2.5,2.5) {$z_{1,\tilde k}$};
      \ThreeDotsAt{(2.5,1.5)}{.21213203}{90}
      \node at (2.5,0.5) {$z_{\tilde k,\tilde k}$};
      \ThreeDotsAt{(1.5-.15,1.5-.15)}{.21213203}{-45}

      \node at (4.5,2.5) {$z_{1,\tilde k+1}$};
      \node at (6.5,2.5) {$z_{1,\tilde n}$};
      \node at (4.5,0.5) {$z_{\tilde k,\tilde k+1}$};
      \ThreeDotsAt{(5.5,2.5)}{.21213203}{0}
      \ThreeDotsAt{(4.5,1.5)}{.21213203}{90}
      \ThreeDotsAt{(5.5+.15,1.5-.15)}{.21213203}{45}

      \node at (3.5,2.5) {$0$};
      \ThreeDotsAt{(3.5,1.5)}{.21213203}{90}
      \node at (3.5,0.5) {$0$};
    \end{tikzpicture}
  \end{center}
  For even $n$, i.e., $\tilde n$ odd, we define
  \begin{center}
    \begin{tikzpicture}[scale=.95, baseline=(current bounding box.center)]
      \node at (-1.6,2.0) {$\extL(Z)\ =$};
      \node[draw=none] at (8.15,2.0) {\phantom{$,$}};

      \draw (3,4) -- (4,4);
      \draw (3,1) -- (3,0) -- (5,0) -- (5,1);

      \draw[fill=cyan!15] (2,2) -- (2,1) -- (3,1) -- (3,4) -- (0, 4) -- (0,3) -- (1,3);

      \draw[fill=cyan!15] (6,2) -- (6,1) -- (4,1) -- (4,4) -- (8,4) -- (8,3) -- (7,3);

      \node at (0.5,3.5) {$z_{1,1}$};
      \ThreeDotsAt{(1.5,3.5)}{.21213203}{0}
      \node at (2.5,3.5) {$z_{1,\tilde k}$};
      \ThreeDotsAt{(2.5,2.5)}{.21213203}{90}
      \node at (2.5,1.5) {$z_{\tilde k,\tilde k}$};
      \ThreeDotsAt{(1.5-.15,2.5-.15)}{.21213203}{-45}

      \node at (4.5,3.5) {$z_{1,\tilde k+1}$};
      \node at (5.5,3.5) {$z_{1,\tilde k+2}$};
      \ThreeDotsAt{(6.5,3.5)}{.21213203}{0}
      \node at (7.5,3.5) {$z_{1,\tilde n}$};
      \node at (4.5,1.5) {$z_{\tilde k,\tilde k+1}$};
      \node at (5.5,1.5) {$z_{\tilde k,\tilde k+2}$};
      \ThreeDotsAt{(6.5+.15,2.5-.15)}{.21213203}{45}
      \ThreeDotsAt{(4.5,2.5)}{.21213203}{90}
      \ThreeDotsAt{(5.5,2.5)}{.21213203}{90}

      \node at (3.5,3.5) {$0$};
      \ThreeDotsAt{(3.5,2.5)}{.21213203}{90}
      \node at (3.5,1.5) {$0$};

      \node at (3.5,0.5) {$\eta$};
      \node at (4.5,0.5) {$0$};
    \end{tikzpicture}
  \end{center}
  and
  \begin{center}
    \begin{tikzpicture}[scale=.95, baseline=(current bounding box.center)]
      \node at (-1.6,2.0) {$\extRL(Z)\ =$};
      \node[draw=none] at (8.15,2.0) {$,$};

      \draw (4,4) -- (5,4);
      \draw (3,1) -- (3,0) -- (5,0) -- (5,1);

      \draw[fill=cyan!15] (2,2) -- (2,1) -- (4,1) -- (4,4) -- (0,4) -- (0,3) -- (1,3);

      \draw[fill=cyan!15] (6,2) -- (6,1) -- (5,1) -- (5,4) -- (8,4) -- (8,3) -- (7,3);

      \node at (0.5,3.5) {$z_{1,1}$};
      \ThreeDotsAt{(1.5,3.5)}{.21213203}{0}
      \node at (2.5,3.5) {$z_{1,\tilde k}$};
      \node at (3.5,3.5) {$z_{1,\tilde k+1}$};
      \ThreeDotsAt{(2.5,2.5)}{.21213203}{90}
      \node at (2.5,1.5) {$z_{\tilde k,\tilde k}$};
      \node at (3.5,1.5) {$z_{\tilde k,\tilde k+1}$};
      \ThreeDotsAt{(1.5-.15,2.5-.15)}{.21213203}{-45}
      \ThreeDotsAt{(3.5,2.5)}{.21213203}{90}

      \node at (4.5,3.5) {$0$}; %
      \node at (5.5,3.5) {$z_{1,\tilde k+2}$};
      \ThreeDotsAt{(6.5,3.5)}{.21213203}{0}
      \node at (7.5,3.5) {$z_{1,\tilde n}$};
      \node at (4.5,1.5) {$0$};
      \node at (5.5,1.5) {$z_{\tilde k,\tilde k+2}$};
      \ThreeDotsAt{(6.5+.15,2.5-.15)}{.21213203}{45}
      \ThreeDotsAt{(4.5,2.5)}{.21213203}{90}
      \ThreeDotsAt{(5.5,2.5)}{.21213203}{90}

      \node at (3.5,0.5) {$\eta$};
      \node at (4.5,0.5) {$0$};
    \end{tikzpicture}
  \end{center}
  where $\eta = 1-\sum_{i=1}^{\tilde k} z_{i,\tilde k+1}$.

  Let $n$ be arbitrary, and set $\tilde n = n - 2$, $\tilde k = \floor*{\tilde n / 2}$.
  For a core $Z$ of an $\tilde n \times \tilde n$ matrix and an index $r \in [k]$, we set $\tilde r = r-1$ and define
  \begin{center}
    \begin{tikzpicture}[scale=.95, baseline=(current bounding box.center)]
      \node at (-1.6,1.0) {$\ext^{\sqcup}_{r}(Z)\ =$};
      \node[draw=none] at (12.15,1.0) {$.$};

      \draw (3,4) -- (5,4);
      \draw (8,4) -- (9,4);
      \draw (3,1) -- (3,0) -- (9,0) -- (9,1);

      \draw[fill=cyan!15] (2,2) -- (2,1) -- (3,1) -- (3,4) -- (0,4) -- (0,3) -- (1,3);

      \draw[fill=cyan!15] (10,2) -- (10,1) -- (9,1) -- (9,4) -- (12,4) -- (12,3) -- (11,3);

      \draw[fill=cyan!15] (4,1) rectangle (8,4);

      \draw[fill=cyan!15,draw=none] (5,-1) -- (4,-1) -- (4,0) -- (8,0) -- (8,-1) -- (7,-1) -- (6,-2);
      \draw (5,-1) -- (4,-1) -- (4,0);
      \draw (7,-1) -- (8,-1) -- (8,0);

      \ThreeDotsAt{(1.5-.15,2.5-.15)}{.21213203}{-45}
      \ThreeDotsAt{(10.5+.15,2.5-.15)}{.21213203}{45}
      \ThreeDotsAt{(2.5,2.5)}{.21213203}{90}
      \ThreeDotsAt{(9.5,2.5)}{.21213203}{90}

      \node at (0.5,3.5) {$z_{1,1}$};
      \ThreeDotsAt{(1.5,3.5)}{.21213203}{0}
      \node at (2.5,3.5) {$z_{1,\tilde r}$};
      \node at (9.5,3.5) {$z_{1,n-r}$};
      \ThreeDotsAt{(10.5,3.5)}{.21213203}{0}
      \node at (11.5,3.5) {$z_{1,\tilde n}$};

      \node at (3.5,3.5) {$0$};
      \node at (4.5,3.5) {$z_{1,r}$};
      \node at (4.5,1.5) {$z_{\tilde r,r}$};
      \node at (7.5,1.5) {$z_{\tilde r,\tilde n - \tilde r}$};
      \ThreeDotsAt{(6,3.5)}{.21213203}{0}
      \ThreeDotsAt{(6,1.5)}{.21213203}{0}
      \node at (7.5,3.5) {$z_{1,\tilde n - \tilde r}$};
      \node at (8.5,3.5) {$0$};
      \ThreeDotsAt{(3.5,2.5)}{.21213203}{90}
      \ThreeDotsAt{(4.5,2.5)}{.21213203}{90}
      \ThreeDotsAt{(7.5,2.5)}{.21213203}{90}
      \ThreeDotsAt{(8.5,2.5)}{.21213203}{90}
      \node at (3.5,1.5) {$0$};
      \node at (8.5,1.5) {$0$};

      \node at (2.5,1.5) {$z_{\tilde r,\tilde r}$};
      \node at (9.5,1.5) {$z_{\tilde r,n-r}$};

      \node at (3.5,0.5) {$1$};
      \node at (4.5,0.5) {$0$};
      \ThreeDotsAt{(6,0.5)}{.21213203}{0}
      \node at (7.5,0.5) {$0$};
      \node at (8.5,0.5) {$0$};

      \node at (4.5,-.5) {$z_{r,r}$};
      \ThreeDotsAt{(6,-0.5)}{.21213203}{0}
      \node at (7.5,-.5) {$z_{r,\tilde n - \tilde r}$};
      \ThreeDotsAt{(5.5-.15,-1.5-.15)}{.21213203}{-45}
      \ThreeDotsAt{(6.5+.15,-1.5-.15)}{.21213203}{45}
    \end{tikzpicture}
  \end{center}

  More precisely, if $n$ is odd, then $\extI$ takes the core of an $(n-1) \times (n-1)$ matrix, inserts an all-zero column of height $k$ in the unique middle position, thereby creating the core of an $n \times n$ matrix.
  If $n$ is even, then $\extL$ takes the core of an $(n-1) \times (n-1)$ matrix, inserts an all-zero column of height $k$ immediately \emph{before} the middle position, whereas $\extRL$ inserts it immediately \emph{after} the middle position; then both operators append a new last core row of length $2$ with first entry
  $
  \eta = 1-\sum_{i=1}^{\tilde k} z_{i,\tilde k+1}
  $
  and second entry $0$, where $\tilde k=\floor*{(n-1)/2}$ and $z_{i,\tilde k+1}$ denotes the $i^\text{th}$ entry in the middle column of the predecessor core.
  Therefore, we again obtain the core of an $n \times n$ matrix.
  For an index $r\in[k]$, $\ext^{\sqcup}_r$ takes the core of an $(n-2) \times (n-2)$ matrix, inserts two new all-zero columns of height $r-1$ in the symmetric positions $r$ and $n+1-r$, and inserts a new all-zero row in position $r$.
  All newly created core entries are $0$, except the entry at position $(r,r)$, which is set to $1$.
  Clearly, this is the core of an $n \times n$ matrix.
  Note that for $r=1$, $\ext^{\sqcup}_r$ inserts only a new first row.
  The superscript indicates the shape of the newly inserted region.

  \medskip
  Now we are ready to construct $L^n_{i,j}$ and $U^n_{i,j}$ via an inductive approach.
  For $n \in [2,4]$, we set
  \begin{align*}
    L^2_{1,1}&=
                           \vcenter{\hbox{\begin{ytableau}
                                 -1 & 2
                               \end{ytableau}}}\,,
    &
      U^2_{1,1}&=
                             \vcenter{\hbox{\begin{ytableau}
                                   2 & -1
                                 \end{ytableau}}}\,,
    \\[12pt]
    L^3_{1,1}&=
                           \vcenter{\hbox{\begin{ytableau}
                                 -1 & 1 & 1
                               \end{ytableau}}}\,,
    &
      L^3_{2,1}&=
                             \vcenter{\hbox{\begin{ytableau}
                                   1 & -1 & 1
                                 \end{ytableau}}}\,,
    &
      U^3_{1,2}&=
                             \vcenter{\hbox{\begin{ytableau}
                                   1 & 1 & -1
                                 \end{ytableau}}}\,,
    \\[12pt]
    L^4_{1,1}&=
                           \vcenter{\hbox{\begin{ytableau}
                                 -1 & 1 & 0 & 1\\
                                 \none & 0 & 0 & \none
                               \end{ytableau}}}\,,
    &
      L^4_{2,1}&=
                             \vcenter{\hbox{\begin{ytableau}
                                   1 & -1 & 1 & 0\\
                                   \none & 1 & 0 & \none
                                 \end{ytableau}}}\,,
    &
      L^4_{2,2}&=
                             \vcenter{\hbox{\begin{ytableau}
                                   0 & 0 & 1 & 0\\
                                   \none & -1 & 1 & \none
                                 \end{ytableau}}}\,,
    \\[6pt]
    U^4_{1,3}&=
                           \vcenter{\hbox{\begin{ytableau}
                                 1 & 0 & 1 & -1\\
                                 \none & 0 & 0 & \none
                               \end{ytableau}}}\,,
    &
      U^4_{2,2}&=
                             \vcenter{\hbox{\begin{ytableau}
                                   0 & 1 & 0 & 0\\
                                   \none & 1 & -1 & \none
                                 \end{ytableau}}}\,,
    &
      U^4_{2,3}&=
                             \vcenter{\hbox{\begin{ytableau}
                                   1 & 0 & -1 & 1\\
                                   \none & 1 & 1 & \none
                                 \end{ytableau}}}\,.
    \intertext{Furthermore, for $n \in [5,7]$, we include the further explicit certificates}
    L^5_{2,2}&=
                           \vcenter{\hbox{\begin{ytableau}
                                 0 & 0 & 0 & 1 & 0\\
                                 \none & -1 & 1 & 0 & \none
                               \end{ytableau}}}\,,
    &
      L^5_{2,3}&=
                             \vcenter{\hbox{\begin{ytableau}
                                   0 & 0 & 1 & 0 & 0\\
                                   \none & 0 & -1 & 2 & \none
                                 \end{ytableau}}}\,,
    &
      L^5_{3,1}&=
                             \vcenter{\hbox{\begin{ytableau}
                                   1 & 0 & -1 & 1 & 0\\
                                   \none & 0 & 1 & -1 & \none
                                 \end{ytableau}}}\,,
    \displaybreak[0]\\[6pt]
    U^5_{2,2}&=
                           \vcenter{\hbox{\begin{ytableau}
                                 0 & 0 & 1 & 0 & 0\\
                                 \none & 2 & -1 & 0 & \none
                               \end{ytableau}}}\,,
    &
      U^5_{2,3}&=
                             \vcenter{\hbox{\begin{ytableau}
                                   0 & 1 & 0 & 0 & 0\\
                                   \none & 0 & 1 & -1 & \none
                                 \end{ytableau}}}\,,
    &
      U^5_{3,2}&=
                             \vcenter{\hbox{\begin{ytableau}
                                   0 & 0 & 1 & 0 & 0\\
                                   \none & 0 & 1 & 0 & \none
                                 \end{ytableau}}}\,,
    \displaybreak[0]\\[12pt]
    L^6_{2,3}&=
                           \vcenter{\hbox{\begin{ytableau}
                                 0 & 0 & 1 & 0 & 0 & 0\\
                                 \none & 0 & -1 & 1 & 1 & \none\\
                                 \none & \none & 0 & 0 & \none & \none
                               \end{ytableau}}}\,,
    &
      U^6_{2,3}&=
                             \vcenter{\hbox{\begin{ytableau}
                                   0 & 0 & 0 & 1 & 0 & 0\\
                                   \none & 1 & 1 & -1 & 0 & \none\\
                                   \none & \none & 0 & 0 & \none & \none
                                 \end{ytableau}}}\,,
    \displaybreak[0]\\[12pt]
    L^7_{2,3}&=
                           \vcenter{\hbox{\begin{ytableau}
                                 0 & 0 & 1 & 0 & 0 & 0 & 0\\
                                 \none & 0 & -1 & 1 & 1 & 0 & \none\\
                                 \none & \none & 0 & 0 & 0 & \none & \none
                               \end{ytableau}}}\,,
    &
      L^7_{2,4}&=
                             \vcenter{\hbox{\begin{ytableau}
                                   0 & 0 & 0 & 1 & 0 & 0 & 0\\
                                   \none & 0 & 0 & -1 & 1 & 1 & \none\\
                                   \none & \none & 0 & 1 & -1 & \none & \none
                                 \end{ytableau}}}\,,
    \\[6pt]
    U^7_{2,3}&=
                           \vcenter{\hbox{\begin{ytableau}
                                 0 & 0 & 0 & 1 & 0 & 0 & 0\\
                                 \none & 1 & 1 & -1 & 0 & 0 & \none\\
                                 \none & \none & -1 & 1 & 0 & \none & \none
                               \end{ytableau}}}\,,
    &
      U^7_{2,4}&=
                             \vcenter{\hbox{\begin{ytableau}
                                   0 & 0 & 0 & 0 & 1 & 0 & 0\\
                                   \none & 0 & 1 & 1 & -1 & 0 & \none\\
                                   \none & \none & 0 & 0 & 0 & \none & \none
                                 \end{ytableau}}}\,,
    &
      U^7_{4,2}&=
                             \vcenter{\hbox{\begin{ytableau}
                                   0 & 0 & 0 & 1 & 0 & 0 & 0\\
                                   \none & 0 & 0 & 1 & 0 & 0 & \none\\
                                   \none & \none & 1 & -1 & 1 & \none & \none
                                 \end{ytableau}}}\,.
  \end{align*}

  For $n \geq 5$, assume that all certifying cores for sizes at most $n-1$ are already given.
  For $n\in[5,7]$, some certificates are given explicitly above.
  All remaining certificates for $n \geq 5$ are obtained by applying the extension operators as follows.

  Let $n$ be even.
  For
  $
    (i,j)\in \{(1,1)\} \cup \bigl([2,k]\times[n-2]\bigr),
  $
  define
  \begin{align*}
    L^{n}_{i,j}&=
                 \begin{cases}
                   \extL \bigl(L^{n-1}_{i,j}\bigr)               & \text{if } (i,j)=(1,1) \text{ or } (i\in[2,k-1] \text{ and } j\in[k-2]),\\
                   \extL \bigl(L^{n-1}_{i,j-1}\bigr)             & \text{if } i\in[2,k-1], j\in[k+1,n-2],\\
                   \extRL \bigl(L^{n-1}_{i,j}\bigr)              & \text{if } \bigl(i\in[2,k-1] \text{ and } j=k-1\bigr) \text{ or } \bigl(i=k \text{ and } j\in[k-1]\bigr),\\
                   \ext^{\sqcup}_{1} \bigl(L^{n-2}_{i-1,j-1}\bigr) & \text{if } \bigl(i\in[3,k-1] \text{ and } j=k\bigr) \text{ or } \bigl(i=k \text{ and } j\in[k,n-3]\bigr),\\
                   \ext^{\sqcup}_{3} \bigl(L^{n-2}_{2,k-1}\bigr)   & \text{if } i=2, j=k,\\
                   \vrefl \bigl(U^{n}_{k,2}\bigr)                & \text{if } i=k, j=n-2,
                 \end{cases}\\
    \intertext{
    and for
    $
    (i,j)\in \{(1,n-1)\} \cup \bigl([2,k]\times[2,n-1]\bigr),
    $
    define}
    U^n_{i,j}&=
               \begin{cases}
                 \extL \bigl(U^{n-1}_{i,j}\bigr)               & \text{if } i\in[2,k-1], j\in[2,k-2],\\
                 \extL \bigl(U^{n-1}_{i,j-1}\bigr)             & \text{if } (i,j)=(1,n-1) \text{ or } (i\in[2,k-1] \text{ and } j\in[k+1,n-1]),\\
                 \extRL \bigl(U^{n-1}_{i,j}\bigr)              & \text{if } \bigl(i\in[2,k-1] \text{ and } j=k-1\bigr) \text{ or } \bigl(i=k \text{ and } j\in[2,k-1]\bigr),\\
                 \ext^{\sqcup}_{1} \bigl(U^{n-2}_{i-1,j-1}\bigr) & \text{if } \bigl(i\in[3,k-1] \text{ and } j=k\bigr) \text{ or } \bigl(i=k \text{ and } j\in[k,n-2]\bigr),\\
                 \ext^{\sqcup}_{3} \bigl(U^{n-2}_{2,k-1}\bigr)  & \text{if } i=2, j=k,\\
                 \vrefl \bigl(L^{n}_{k,1}\bigr)               & \text{if } i=k, j=n-1,
               \end{cases}
  \end{align*}
  where $\vrefl:\R^{C}\to\R^{C}$ denotes the vertical reflection, i.e., it reverses each core row.

  Let $n$ be odd.
  For
  $
    (i,j)\in \{(1,1)\} \cup \bigl([2,k]\times[n-2]\bigr) \cup \bigl(\{k+1\}\times[k]\bigr),
  $
  define
  \begin{align*}
    L^n_{i,j}&=
               \begin{cases}
                 \extI \bigl(L^{n-1}_{i,j}\bigr)               & \text{if } (i,j)=(1,1) \text{ or } (i\in[2,k] \text{ and } j\in[k-1]),\\
                 \extI \bigl(L^{n-1}_{i,j-1}\bigr)             & \text{if } i\in[2,k], j\in[k+2,n-2],\\
                 \ext^{\sqcup}_{1} \bigl(L^{n-2}_{i-1,j-1}\bigr)& \text{if } \bigl(i\in[3,k] \text{ and } j\in\{k,k+1\}\bigr) \text{ or } \bigl(i=k+1 \text{ and } j\in[2,k]\bigr),\\
                 \ext^{\sqcup}_{3} \bigl(L^{n-2}_{k,1}\bigr)    & \text{if } i=k+1, j=1,\\
                 \ext^{\sqcup}_{3} \bigl(L^{n-2}_{2,j-1}\bigr)  & \text{if } i=2, j\in\{k,k+1\},
               \end{cases}\\
    \intertext{
    and for
    $
    (i,j)\in \{(1,n-1)\} \cup \bigl([2,k]\times[2,n-1]\bigr) \cup \bigl(\{k+1\}\times[2,k]\bigr),
    $
    define
    }
    U^n_{i,j}&=
               \begin{cases}
                 \extI \bigl(U^{n-1}_{i,j}\bigr)                & \text{if } i\in[2,k], j\in[2,k-1],\\
                 \extI \bigl(U^{n-1}_{i,j-1}\bigr)              & \text{if } (i,j)=(1,n-1) \text{ or } (i\in[2,k] \text{ and } j\in[k+2,n-1]),\\
                 \ext^{\sqcup}_{1} \bigl(U^{n-2}_{i-1,j-1}\bigr) & \text{if } \bigl(i\in[3,k] \text{ and } j\in\{k,k+1\}\bigr) \text{ or } \bigl(i=k+1 \text{ and } j\in[3,k]\bigr),\\
                 \ext^{\sqcup}_{4} \bigl(U^{n-2}_{k,2}\bigr)     & \text{if } i=k+1, j=2,\\
                 \ext^{\sqcup}_{3} \bigl(U^{n-2}_{2,j-1}\bigr)   & \text{if } i=2, j\in\{k,k+1\}.
               \end{cases}
  \end{align*}

  \medskip
  We claim that the definitions above provide the desired certificates.
  We prove this by induction on~$n$.
  The explicitly listed cores for $n \in [3, 7]$ satisfy the claim by direct inspection.
  Now let $n \geq 5$, and assume that the claim holds for smaller sizes.
  By construction, every certifying core for size $n$ is obtained from a certifying core for size $n-1$ or size $n-2$ by applying one of the operators $\extI$, $\extL$, $\extRL$, $\ext^{\sqcup}_{r}$, or possibly $\vrefl$.

  Consider a facet inequality with parameter $(i,j)$.
  If it involves entries only inside the embedded smaller core, then the sum coincides with the corresponding sum in the smaller certificate after the evident index shift; hence it is satisfied by the induction hypothesis, except for the unique facet inequality violated by the smaller certificate.
  The recursive case distinctions above were chosen precisely so that this unique violated facet inequality is transported to the intended parameter $(i,j)$.

  If an inequality involves only newly inserted entries, then its left-hand side can be read off directly from the definition of the applied extension:
  for $\extI$ and $\ext^{\sqcup}_{r}$, all newly created core entries are $0$ except for a single new diagonal entry equal to $1$, so every such sum belongs to $\{0,1\}$, and hence all corresponding facet inequalities are satisfied.
  For $\extL$ and $\extRL$, the only newly created entry that may be non-zero is the diagonal entry in the new last core row, whose predecessor-coordinate value $\eta$ is the one displayed above; therefore, any facet inequality involving this entry is again controlled by the induction hypothesis for the predecessor certificate, except for the intended transported violation.
  Finally, $\vrefl$ preserves all defining equations and interchanges lower and upper bounds while sending $(i,j)$ to $(i,n+1-j)$, so the certificate property is preserved under $\vrefl$.
  Moreover, each extension operator preserves all defining equations of $P^\core_\DASASM$ by construction; for $\extL$ and $\extRL$, the displayed choice of $\eta$ is precisely what enforces the new row-sum equation.

  For every facet inequality of $P^\core_\DASASM$, we have constructed a core that violates exactly this facet inequality and satisfies every other facet inequality as well as all defining equations.
  This shows that no facet inequality is redundant.
  Together with the first part of the proof and the fact that no facet inequality is an implicit equation, the facet inequalities, together with the equations in~\cref{eq:dasasm:row-sum}, form a minimal description of $P^\core_\DASASM$.
\end{proof}

\section{Totally symmetric ASMs (TSASMs)}\label{sec:TSASM}
In this final symmetry class, we impose invariance under all symmetries of the square, that is, under all reflections and all rotations.
The corresponding symmetry subgroup is $\mathcal{G} = \mathcal{D}_4$.
Since the reflections $\mathcal{V}$ and $\mathcal{D}$ generate $\mathcal{D}_4$, a real matrix is totally symmetric if and only if it is both vertically and diagonally symmetric.
In particular, the linear subspace of totally symmetric real matrices can be written as
\[
  P_\TS = P_\VS \cap P_\DS,
\]
where $P_\VS$ and $P_\DS$ are as defined in \cref{sec:VSASM,sec:DSASM}, respectively.
Clearly, any TSASM satisfies the ASM constraints in~\crefrange{eq:asm:real}{eq:asm:col-sum} and also the symmetry constraints defining $P_\TS$; thus $P_\TSASM \subseteq P_\ASM \cap P_\TS$.
We note, however, that $P_\ASM \cap P_\TS$ does not equal $P_\TSASM$.
In fact, $P_\TSASM \subsetneq P_\ASM \cap P_\TS$ for every odd $n \geq 5$, as witnessed by the fractional vertices constructed for VSASMs in \cref{sec:VSASM}.
Nevertheless, we will show that $P_\TSASM = P_\VSASM \cap P_\DS$.

Since every TSASM is in particular a VHSASM, the properties established in \cref{sec:VHSASM} for VHSASMs continue to hold for TSASMs.
In particular, we immediately obtain the following parity restriction by \cref{lem:vhsasm:noEvenN}.
\begin{lemma}\label{lem:tsasm:noEvenN}
  There is no $n \times n$ TSASM if $n$ is even.
\end{lemma}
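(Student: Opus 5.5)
The plan is to deduce the statement directly from the vertically--horizontally symmetric case. Every totally symmetric matrix is invariant under all of $\mathcal{D}_4$, so in particular under $\mathcal{V}$ and $\mathcal{H}$. Hence every $n\times n$ TSASM is in particular a VHSASM, and $\TSASM(n)\subseteq\VHSASM(n)$. By \cref{lem:vhsasm:noEvenN}, $\VHSASM(n)=\emptyset$ for even $n$, and therefore $\TSASM(n)=\emptyset$ as well.

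A self-contained alternative uses only vertical symmetry, via \cref{lem:vsasm:noEvenN}. Every TSASM is vertically symmetric, so for even $n=2k$ each row is palindromic and its sum equals $2\sum_{j=1}^{k}x_{i,j}$, which is even. This contradicts the ASM row-sum condition~\cref{eq:asm:row-sum}.

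There is no real obstacle here. The only point to state explicitly is the containment of symmetry groups, $\{\mathcal I,\mathcal V\}\subseteq\{\mathcal I,\mathcal V,\mathcal H,\mathcal R_\pi\}\subseteq\mathcal D_4$, which gives the inclusion of the matrix classes. Consequently $P_\TSASM=\emptyset$ for even $n$, and the remainder of the section may assume that $n$ is odd.
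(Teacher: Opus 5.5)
Your proposal is correct and follows the paper's route. The paper also notes that every TSASM is in particular a VHSASM and invokes \cref{lem:vhsasm:noEvenN}, which in turn rests on the same row-parity argument from \cref{lem:vsasm:noEvenN} that you give as your self-contained alternative.
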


Thus, $P_\TSASM = \emptyset$ for even $n$; hence, we assume that $n$ is odd in the rest of the section.
Let $k = \floor*{n/2}$.
Applying \cref{lem:vhsasm:middleColAndRowAlternate} to a TSASM, we obtain that the middle row and the middle column are fixed to the alternating pattern, which we now state formally.
\begin{lemma}\label{lem:tsasm:middleColAndRow}
  Let $n \geq 1$ be odd and set $k = \floor*{n/2}$.
  For every TSASM $X \in \{0,\pm1\}^{n \times n}$, we have $x_{i,k+1} = (-1)^{i+1}$ for every $i \in [n]$ and $x_{k+1,j} = (-1)^{j+1}$ for every $j \in [n]$.
\end{lemma}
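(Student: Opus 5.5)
The plan is to reduce \cref{lem:tsasm:middleColAndRow} to the corresponding statement for VHSASMs. The first step is to check that every TSASM is a VHSASM. Since a TSASM $X$ is invariant under all of $\mathcal{D}_4$, it is in particular invariant under the subgroup $\{\mathcal{I},\mathcal{V},\mathcal{H},\mathcal{R}_{\pi}\}$. Hence $X$ is both vertically and horizontally symmetric, and it is an ASM by assumption, so $X\in\VHSASM(n)$.

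The second step is to apply \cref{lem:vhsasm:middleColAndRowAlternate} to $X$, with $n$ odd and $k=\floor*{n/2}$. It gives directly $x_{i,k+1}=(-1)^{i+1}$ for all $i\in[n]$ and $x_{k+1,j}=(-1)^{j+1}$ for all $j\in[n]$, which is exactly the claim.

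If one prefers a self-contained argument, one can instead apply \cref{lem:vsasm:middleColAlternate} to $X$, which is a VSASM, to obtain the middle column. For the middle row, note that $X=X^\top$ by diagonal symmetry, so the middle row of $X$ equals the transpose of its middle column. There is no genuine obstacle here; the only point requiring care is recording that the subgroup inclusion places TSASMs inside the earlier symmetry classes.
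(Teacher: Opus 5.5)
Your proposal is correct and matches the paper's argument: the paper notes that every TSASM is in particular a VHSASM and obtains the lemma by applying \cref{lem:vhsasm:middleColAndRowAlternate}. Your alternative route, which uses \cref{lem:vsasm:middleColAlternate} for the middle column and $X=X^\top$ for the middle row, is also valid.
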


\paragraph{Core and assembly map.}
Assume $n$ is odd and let $k = \floor*{n/2}$.
Let the \emph{core} of a TSASM be the upper-triangular part (including the main diagonal) of its upper-left $k \times k$ block, i.e.,
\[
  C = \left\{(i,j) \in [k] \times [k] : i \leq j\right\}
\]
is the set of \emph{core positions}, and let $\pi_C$ be the coordinate-wise projection onto~$C$.
Define the affine map $\varphi : \R^{C} \to \R^{n \times n}$ by
\[
  \varphi(Y)_{i,j} =
  \begin{cases}
    y_{\min\{i,j\},\max\{i,j\}}               & \text{if } i \in [k], j \in [k],\\
    (-1)^{i+1}                             & \text{if } j = k+1,\\
    (-1)^{j+1}                             & \text{if } i = k+1,\\
    y_{\min\{i,n+1-j\},\max\{i,n+1-j\}}        & \text{if } i \in [k], j \in [k+2, n],\\
    y_{\min\{n+1-i,j\},\max\{n+1-i,j\}}        & \text{if } i \in [k+2, n], j \in [k],\\
    y_{\min\{n+1-i,n+1-j\},\max\{n+1-i,n+1-j\}} & \text{if } i \in [k+2, n], j \in [k+2, n]
  \end{cases}
\]
for $Y \in \R^{C}$ and $i,j \in [n]$.
Note that the second and third cases both apply for $(i,j) = (k+1,k+1)$, but they give the same value $(-1)^{k+2}$, so $\varphi(Y)$ is well defined.
By definition, $\varphi$ uses the entries of $Y$ to fill the upper-left $k \times k$ block symmetrically with respect to the main diagonal, fixes the middle row and the middle column to the alternating pattern from \cref{lem:tsasm:middleColAndRow}, and completes the matrix by reflecting this block across the middle column and across the middle row, thereby filling all four quadrants.
In particular, $\varphi(Y)$ is invariant under vertical reflection and reflection across the main diagonal, and hence is totally symmetric.
Clearly, the map $\varphi$ is an assembly map: it is affine, satisfies $\pi_C(\varphi(Y)) = Y$ for every $Y \in \R^{C}$, and $\varphi(\pi_C(X)) = X$ for every $X \in \TSASM(n)$, because $X$ is determined by its core together with the prescribed middle row and column and the imposed vertical and diagonal symmetries.

\bigskip
We now describe the core polytope of TSASMs.
\begin{theorem}\label{thm:tsasm:corepolytope}
  Let $n \geq 1$ be odd, and set $k = \floor*{n/2}$.
  Then the core polytope $P^\core_\TSASM \subseteq \R^C$ of $n \times n$ TSASMs is described by the following system.
  \begin{align}
                                   y&_{i,j} \in \R                              &\forall (i,j) \in C, \label{eq:tsasm:real}\\
    0 \leq \sum_{i'=1}^{\min\{i,j\}} y&_{i',i} + \sum_{j'=i+1}^{j} y_{i,j'} \leq 1   &\forall i \in [k], j \in [k-1], \label{eq:tsasm:L-prefix}\\
    \sum_{i'=1}^{i}                 y&_{i',i} + \sum_{j'=i+1}^{k} y_{i,j'} = \chi_{2 \mid i}   &\forall i \in [k]. \label{eq:tsasm:L-sum}
  \end{align}
\end{theorem}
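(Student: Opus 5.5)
The proof follows the template of \cref{thm:vhsasm:corepolytope,thm:dsasm:corepolytope}. First I show that the integer solutions of~\crefrange{eq:tsasm:real}{eq:tsasm:L-sum} are exactly the cores of TSASMs. Then I show that the system defines an integral polytope.

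\emph{Cores of TSASMs satisfy the system.} Let $X$ be a TSASM and let $Y=\pi_C(X)$ be its core. Since $X$ is totally symmetric and has the middle row and column from \cref{lem:tsasm:middleColAndRow}, we have $X=\varphi(Y)$. For $i\in[k]$ and $j\in[k]$, the row-prefix sum $\sum_{j'=1}^{j}x_{i,j'}$ equals the left-hand side of~\cref{eq:tsasm:L-prefix}. This is because the upper-left block of $X$ is the symmetric matrix built from $Y$, exactly as in the DSASM identity~\cref{eq:dsasm:phi-prefix-used}. Hence the ASM row-prefix bounds give~\cref{eq:tsasm:L-prefix}. Row $i$ of $X$ is palindromic with middle entry $(-1)^{i+1}$, so the computation in \cref{lem:vsasm:middleColAlternate} and \cref{thm:vsasm:corepolytope} shows that $\sum_{j=1}^{k}x_{i,j}=(1-(-1)^{i+1})/2=\chi_{2\mid i}$. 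This is~\cref{eq:tsasm:L-sum}.

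\emph{Integer solutions are cores of TSASMs.} Let $Y\in\Z^C$ satisfy the system and put $X=\varphi(Y)$. By construction $X$ is totally symmetric. For each $i\in[k]$, the first half of row $i$ has all prefixes in $\{0,1\}$ and sums to $\chi_{2\mid i}$. Together with $x_{i,k+1}=(-1)^{i+1}$, \cref{lem:vsasm:symmetricHalfExtends} gives that every prefix of row $i$ lies in $\{0,1\}$ and that the row sums to $1$. The middle row alternates, so it has the same properties. The remaining rows follow by horizontal symmetry, and all columns follow by diagonal symmetry. Hence $X$ is an ASM, and therefore a TSASM.

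\emph{Integrality.} This is the main obstacle, because the prefix sets here are not a union of two laminar families. I would copy the $z$-substitution used for DSASMs. Set $z_{i,j}=\sum_{i'=1}^{i}\sum_{j'=j}^{k}y_{i',j'}$ for $i\in[0,k]$ and $j\in[i,k+1]$, with the convention that empty sums are zero. The identities~\cref{eq:dsasm:core:id1,eq:dsasm:core:id2} hold verbatim with $n$ replaced by $k$. Rewrite~\cref{eq:tsasm:L-prefix} for $j\in[i-1]$ as $0\le z_{j,i}-z_{j,i+1}\le 1$. For $j\in[i,k-1]$, use~\cref{eq:tsasm:L-sum} to turn the prefix into the suffix $\sum_{j'=j+1}^{k}y_{i,j'}$, which lies in $[\chi_{2\mid i}-1,\chi_{2\mid i}]$; this gives $\chi_{2\mid i}-1\le z_{i,j+1}-z_{i-1,j+1}\le\chi_{2\mid i}$. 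Finally,~\cref{eq:tsasm:L-sum} becomes $z_{i,i}-z_{i-1,i+1}=\chi_{2\mid i}$. Every constraint is then a difference of two $z$-variables with integer bounds, and the boundary variables are fixed to $0$. By \cref{thm:digraphIncidenceTU}, the $z$-system is therefore integral.

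The map $Y\mapsto Z$ is a unimodular linear bijection, with inverse $y_{i,j}=z_{i,j}-z_{i,j+1}-z_{i-1,j}+z_{i-1,j+1}$. Thus vertices correspond to vertices and integrality transfers back to $Y$. The step I would check most carefully is that this rewriting is an exact equivalence for all index ranges, including $j=i$ and $i=k$; this is where the shifted right-hand side $\chi_{2\mid i}$ must be tracked correctly.
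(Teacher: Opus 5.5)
Your proposal is correct and follows the paper's proof essentially step by step. It uses the same DSASM-type prefix identity and palindromic row-sum computation, the same reconstruction via $\varphi$ and \cref{lem:vsasm:symmetricHalfExtends}, and the same $z$-substitution. That substitution yields the difference constraints $0\le z_{j,i}-z_{j,i+1}\le 1$, $\chi_{2\mid i}-1\le z_{i,j+1}-z_{i-1,j+1}\le\chi_{2\mid i}$, and $z_{i,i}-z_{i-1,i+1}=\chi_{2\mid i}$, whose matrix is a digraph incidence matrix and hence totally unimodular.

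One small point to tidy in the second part: compute the row sum $2\chi_{2\mid i}+(-1)^{i+1}=1$ first. That row sum is a hypothesis of \cref{lem:vsasm:symmetricHalfExtends}, not one of its conclusions.
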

\begin{proof}
  We show that the integer solutions to the system~\crefrange{eq:tsasm:real}{eq:tsasm:L-sum} are exactly the cores of TSASMs, and then we argue that the system defines an integral polytope.

  \medskip
  First, let $X$ be an $n\times n$ TSASM, and let $Y=\pi_C(X)$ be its core.
  Since $X$ is totally symmetric, we have $X=\varphi(Y)$ for the assembly map $\varphi$ defined above.
  For $i\in[k]$ and $j\in[k-1]$, all entries of row $i$ in the first $j$ columns lie in the upper-left $k\times k$ block, and this block is symmetric with respect to the main diagonal.
  Hence, by the definition of $\varphi$, we obtain for every $i\in[k], j\in[k-1]$ that
  \begin{equation}\label{eq:tsasm:phi-prefix-used}
    \sum_{j'=1}^{j} x_{i,j'}
    = \sum_{i'=1}^{\min\{i,j\}} y_{i',i}
    + \sum_{j'=i+1}^{j} y_{i,j'}.
  \end{equation}
  Likewise, for each $i \in [k]$, the sum of the first $k$ entries within row $i$ can be written as
  \begin{equation}\label{eq:tsasm:phi-sum-used}
    \sum_{j'=1}^{k} x_{i,j'}
    = \sum_{i'=1}^{i} y_{i',i}
    + \sum_{j'=i+1}^{k} y_{i,j'}.
  \end{equation}

  Since every TSASM is in particular an ASM, the ASM row-prefix bounds in~\cref{eq:asm:row-prefix}, together with the identity~\cref{eq:tsasm:phi-prefix-used}, yield the constraints in~\cref{eq:tsasm:L-prefix}.
  Now we consider the row-sum constraints.
  For $i\in[n]$, we have $x_{i,k+1} = (-1)^{i+1}$ by \cref{lem:tsasm:middleColAndRow}.
  For $i \in [k]$, using the ASM row-sum constraint in~\cref{eq:asm:row-sum}, we get
  \[
    \sum_{j'=1}^{n} x_{i,j'}
    = 2 \sum_{j'=1}^{k} x_{i,j'} + x_{i,k+1}
    = 2 \sum_{j'=1}^{k} x_{i,j'} + (-1)^{i+1}
    = 1.
  \]
  Therefore, $\sum_{j'=1}^{k}x_{i,j'}=(1-(-1)^{i+1})/2=\chi_{2 \mid i}$.
  Substituting $\sum_{j'=1}^{k}x_{i,j'}=\chi_{2 \mid i}$ into the identity~\cref{eq:tsasm:phi-sum-used} yields the equations in~\cref{eq:tsasm:L-sum}.
  Thus the cores of TSASMs satisfy the system~\crefrange{eq:tsasm:real}{eq:tsasm:L-sum}.

  \medskip
  Second, let $Y \in \Z^C$ satisfy the system~\crefrange{eq:tsasm:real}{eq:tsasm:L-sum}, and set $X = \varphi(Y)$.
  By construction, $X$ is an $n\times n$ integer matrix, it is invariant under vertical reflection and also under reflection across the main diagonal, and its core is $Y$; in particular, $X$ is totally symmetric.
  For every $i \in [k]$ and $j \in [k-1]$, the identity~\cref{eq:tsasm:phi-prefix-used} expresses the row-prefix sum $\sum_{j'=1}^{j} x_{i,j'}$ in terms of $Y$, and the constraints in~\cref{eq:tsasm:L-prefix} are exactly the ASM row-prefix constraints in~\cref{eq:asm:row-prefix} for those prefixes.
  The case $j = k$ of the identity~\cref{eq:tsasm:phi-sum-used}, together with the equations in~\cref{eq:tsasm:L-sum}, shows that $\sum_{j'=1}^{k} x_{i,j'} = \chi_{2 \mid i}$ for $i \in [k]$.
  By the definition of $\varphi$, the middle column is fixed to $x_{i,k+1} = (-1)^{i+1}$ for every $i$, and horizontal symmetry yields the same row sums and first $k$ prefix sums for the bottom $k$ rows, while the middle row $k+1$ has the alternating pattern $x_{k+1,j} = (-1)^{j+1}$.
  Thus every row of $X$ is palindromic and has total sum $1$, and in each row, the first $k$ prefix sums lie in $\{0,1\}$.
  Applying \cref{lem:vsasm:symmetricHalfExtends} row-wise, we obtain that all row-prefix sums of $X$ lie in $\{0,1\}$, so the ASM row-prefix bounds in~\cref{eq:asm:row-prefix} and row-sum constraints in~\cref{eq:asm:row-sum} hold for $X$.
  Since $X$ is symmetric with respect to the main diagonal, every column of $X$ is also a row of $X$, and the ASM column-prefix bounds in~\cref{eq:asm:col-prefix} and column-sum constraints in~\cref{eq:asm:col-sum} follow from the corresponding row constraints.
  Hence $X$ satisfies all ASM constraints from \cref{thm:ASMpolytope} and, being totally symmetric, $X$ is a TSASM\@.
  We conclude that the integer solutions to the system~\crefrange{eq:tsasm:real}{eq:tsasm:L-sum} are precisely the cores of TSASMs.

  \medskip
  It remains to prove that the system~\crefrange{eq:tsasm:real}{eq:tsasm:L-sum} defines an integral polytope.
  We proceed in a way similar to the proof of \cref{thm:dsasm:corepolytope}.
  For $i \in [0,k]$ and $j \in [i,k+1]$, let
  \[
    z_{i,j} = \sum_{i'=1}^{i} \sum_{j'=j}^{k} y_{i',j'}.
  \]
  Note that $z_{0,j} = z_{i,k+1} = 0$ by empty sums.
  This defines an injective linear map $Y \mapsto Z$ with inverse
  \[
    y_{i,j} = z_{i,j} - z_{i,j+1} - z_{i-1,j} + z_{i-1,j+1}
  \]
  for $i \in [k]$ and $j \in [i,k]$.
  For $i \in [k]$ and $j \in [i]$, we have the identity
  \begin{equation}\label{eq:tsasm:core:id1}
    \sum_{i'=1}^{j} y_{i',i} = z_{j,i} - z_{j,i+1},
  \end{equation}
  and, for $i \in [k]$ and $j \in [i,k]$,
  \begin{equation}\label{eq:tsasm:core:id2}
    \sum_{j'=j+1}^{k} y_{i,j'} = z_{i,j+1} - z_{i-1,j+1}.
  \end{equation}

  Now we prove that the constraints in~\cref{eq:tsasm:L-prefix,eq:tsasm:L-sum} are equivalent to
  \begin{align}
                                 0 \leq z&_{j,i} - z_{j,i+1} \leq 1             & \forall i \in [k], j \in [i-1], \label{eq:tsasm:L-prefix-Z-1}\\
    \chi_{2 \mid i} - 1 \leq z&_{i,j+1} - z_{i-1,j+1} \leq \chi_{2 \mid i}         & \forall i \in [k], j \in [i,k-1], \label{eq:tsasm:L-prefix-Z-2}\\
                                        z&_{i,i} - z_{i-1,i+1} = \chi_{2 \mid i} & \forall i \in [k]. \label{eq:tsasm:row-sum-Z}
  \end{align}
  Indeed, for $j \in [i-1]$, substituting the identity~\cref{eq:tsasm:core:id1} into the constraints in~\cref{eq:tsasm:L-prefix} yields the bounds in~\cref{eq:tsasm:L-prefix-Z-1}.
  For $j \in [i,k-1]$, we use the equations in~\cref{eq:tsasm:L-sum} to write
  \[
    \sum_{i'=1}^{i} y_{i',i} + \sum_{j'=i+1}^{j} y_{i,j'}
    = \chi_{2 \mid i} - \sum_{j'=j+1}^{k} y_{i,j'},
  \]
  so the inequality in~\cref{eq:tsasm:L-prefix} is equivalent to
  \[
    \chi_{2 \mid i} - 1
    \leq \sum_{j'=j+1}^{k} y_{i,j'}
    \leq \chi_{2 \mid i},
  \]
  and the identity~\cref{eq:tsasm:core:id2} gives the bounds in~\cref{eq:tsasm:L-prefix-Z-2}.
  Finally, setting $j=i$ in~\cref{eq:tsasm:core:id1,eq:tsasm:core:id2} and substituting into the equations in~\cref{eq:tsasm:L-sum} yields the equations in~\cref{eq:tsasm:row-sum-Z} for $i \in [k]$.

  \smallskip
  Observe that each constraint in~\crefrange{eq:tsasm:L-prefix-Z-1}{eq:tsasm:row-sum-Z} involves the difference of two $z$-variables with coefficients $+1$ and $-1$.
  Therefore, the coefficient matrix is the transpose of the node--arc incidence matrix of a digraph, and hence it is totally unimodular.
  The right-hand sides are integers, so the system~\crefrange{eq:tsasm:L-prefix-Z-1}{eq:tsasm:row-sum-Z} together with $z_{0,j} = z_{i,k+1} = 0$ defines an integral polytope in the $z$-variables; see \cref{thm:digraphIncidenceTU}.
  The map $Y \mapsto Z$ is a linear bijection that preserves integrality, and thus every vertex $Y$ of the polytope defined by the system~\crefrange{eq:tsasm:real}{eq:tsasm:L-sum} is the preimage of a vertex $Z$ of the polytope defined by the system~\crefrange{eq:tsasm:L-prefix-Z-1}{eq:tsasm:row-sum-Z}.
  Therefore, the system~\crefrange{eq:tsasm:real}{eq:tsasm:L-sum} defines an integral polytope.
\end{proof}

We obtain the following description of the polytope $P_\TSASM$ of TSASMs.
\begin{theorem}\label{thm:tsasm:coreDescr}
  Let $n \geq 1$ be odd, let $k = \floor*{n/2}$, and $\widehat P^\core_\TSASM = \{X \in \R^{n \times n} : \pi_C(X) \in P^\core_\TSASM\}$.
  Then
  \[
    P_\TSASM
    = \widehat P^\core_\TSASM \cap P_\TS \cap \left\{X \in \R^{n \times n} : x_{i,k+1} = x_{k+1,i} = (-1)^{i+1}\ \forall i \in [n]\right\}.
  \]
\end{theorem}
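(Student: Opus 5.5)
By \cref{thm:xasm:assembly} and \cref{thm:tsasm:corepolytope}, we already have $P_\TSASM=\widehat P^\core_\TSASM\cap\varphi(\R^C)$. So the only thing to prove is that the image of the TSASM assembly map is
\[
\varphi(\R^C)=P_\TS\cap\{X\in\R^{n\times n}: x_{i,k+1}=x_{k+1,i}=(-1)^{i+1}\ \forall i\in[n]\}.
\]
Call the right-hand side $P$. We follow the pattern of the proofs of \cref{thm:vsasm:coreDescr,thm:vhsasm:coreDescr}.

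\emph{The inclusion $\varphi(\R^C)\subseteq P$.} Let $Y\in\R^C$. The first case of $\varphi$ fills the upper-left $k\times k$ block with a matrix that is symmetric about the main diagonal. The remaining cases reflect this block across the middle column and the middle row. Together with the second and third cases, this shows that $\varphi(Y)$ is invariant under $\mathcal V$ and under $\mathcal D$. These two reflections generate $\mathcal D_4$, so $\varphi(Y)\in P_\TS$. The second and third cases also prescribe exactly the alternating middle column and middle row, and these agree at $(k+1,k+1)$. Hence $\varphi(Y)\in P$.

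\emph{The inclusion $P\subseteq\varphi(\R^C)$.} Let $X\in P$ and put $Y=\pi_C(X)$; we show that $\varphi(Y)=X$.
\begin{itemize}
\item In the upper-left block: for $i\le j\le k$, the entry $x_{i,j}=y_{i,j}$ by definition of $\pi_C$. For $j<i\le k$, diagonal symmetry $x_{i,j}=x_{j,i}$ gives $x_{i,j}=y_{j,i}=y_{\min\{i,j\},\max\{i,j\}}$.
\item In the middle row and middle column: these entries are fixed by the defining equations of $P$, and they coincide with the second and third cases of $\varphi$.
\item In the other three blocks: we use the vertical reflection $x_{i,j}=x_{i,n+1-j}$ and the horizontal reflection $x_{i,j}=x_{n+1-i,j}$, which holds because $\mathcal H\in\mathcal D_4$. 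These express each such entry through an entry of the upper-left block, exactly as in the last three cases of $\varphi$.
\end{itemize}
Hence $X=\varphi(Y)\in\varphi(\R^C)$. Combining both inclusions with \cref{thm:xasm:assembly} proves the statement.

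No step is a genuine obstacle. The only point needing care is the index bookkeeping in the last three cases of $\varphi$. There we must check that after reflecting into the upper-left block, the formula $\min/\max$ correctly selects the core coordinate, which is precisely what diagonal symmetry inside that block guarantees.
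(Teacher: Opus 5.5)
Your proposal is correct and follows the paper's proof: both use \cref{thm:xasm:assembly} to reduce the statement to $\varphi(\R^C)=P$ and then check the two inclusions, and your version simply writes out the block-by-block index bookkeeping that the paper leaves implicit. One point you use tacitly (as does the paper) is that oddness of $n$ makes the alternating middle row and column palindromic, which is needed for the $\mathcal V$- and $\mathcal H$-invariance of $\varphi(Y)$.
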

\begin{proof}
  By \cref{thm:xasm:assembly,thm:tsasm:corepolytope}, it suffices to prove that
  $
    \varphi(\R^C)
    = P_\TS \cap \{X \in \R^{n \times n} : x_{i,k+1} = x_{k+1,i} = (-1)^{i+1}\ \forall i \in [n]\}.
  $
  Let $P$ denote the right-hand side.
  By definition, $\varphi$ inserts its argument $Y$ on the core positions $C$ in the upper-left $k \times k$ block, fixes the $i^\text{th}$ entries of the middle row and the middle column to $(-1)^{i+1}$ for every $i$, and fills the remaining entries of the matrix using total symmetry.
  Thus $\varphi(Y) \in P$ for every~$Y \in \R^C$.

  Conversely, take any $X \in P$.
  The equations defining $P_\TS$ together with the prescribed middle row and column imply that $X$ is completely determined by its core $\pi_C(X)$.
  Therefore, $\varphi(\pi_C(X)) = X$, and hence $X \in \varphi(\R^C)$.
  This shows $\varphi(\R^C) = P$, and the statement follows.
\end{proof}

\begin{theorem}\label{thm:tsasm:descr}
  Let $n \geq 1$ be arbitrary, and set $k = \floor*{n/2}$.
  Then
  \[
    P_\TSASM
    = P_\ASM \cap P_\TS \cap \left\{X \in \R^{n \times n} : x_{i,k+1} = x_{k+1,i} = (-1)^{i+1}\ \forall i \in [n]\right\}
    = P_\VSASM \cap P_\DS.
  \]
\end{theorem}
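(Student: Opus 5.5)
The plan is to prove the two equalities in \cref{thm:tsasm:descr} separately, treating odd and even $n$ apart.

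For odd $n$, the first equality is obtained from \cref{thm:tsasm:coreDescr} by translating the core constraints in \cref{thm:tsasm:corepolytope} into full-matrix constraints through the assembly map $\varphi$. This mirrors the passage from \cref{thm:vsasm:coreDescr} to \cref{thm:vsasm:descr}.
\begin{itemize}
\item The inclusion $\subseteq$ is immediate. Every TSASM lies in $P_\ASM\cap P_\TS$ and has the alternating middle row and column by \cref{lem:tsasm:middleColAndRow}, and all these constraints are convex.
\item For $\supseteq$, take $X$ in the right-hand side and set $Y=\pi_C(X)$. Because $X\in P_\TS$ has the prescribed middle row and column, $X=\varphi(Y)$.
\item The identities \cref{eq:tsasm:phi-prefix-used,eq:tsasm:phi-sum-used} hold for any totally symmetric real matrix, not only for integer ones. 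Together with the ASM row-prefix bounds, they give \cref{eq:tsasm:L-prefix} for $Y$.
\item The row-sum equation of row $i\in[k]$, combined with palindromicity and $x_{i,k+1}=(-1)^{i+1}$, gives $2\sum_{j\le k}x_{i,j}+(-1)^{i+1}=1$. This is exactly \cref{eq:tsasm:L-sum}.
\item Hence $Y\in P^\core_\TSASM$, and \cref{thm:tsasm:coreDescr} yields $X\in P_\TSASM$.
\end{itemize}

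For the second equality with $n$ odd, use $P_\TS=P_\VS\cap P_\DS$. By \cref{thm:vsasm:descr},
\[
P_\VSASM\cap P_\DS=P_\ASM\cap P_\VS\cap P_\DS\cap\{x_{i,k+1}=(-1)^{i+1}\ \forall i\}.
\]
Diagonal symmetry then forces $x_{k+1,i}=x_{i,k+1}=(-1)^{i+1}$. So the middle-row condition in the first expression is implied, and the two sets coincide.

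For even $n$, the polytope $P_\TSASM$ is empty by \cref{lem:tsasm:noEvenN}, and $P_\VSASM$ is empty by \cref{lem:vsasm:noEvenN}, so $P_\VSASM\cap P_\DS=\emptyset$. The middle expression is also empty: the constraint $x_{i,k+1}=(-1)^{i+1}$ for all $i$ makes column $k+1$ sum to $0$ when $n$ is even, which contradicts \cref{eq:asm:col-sum}. This is the same argument as in \cref{thm:vsasm:descr}.

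The main point requiring care is the odd-case inclusion $\supseteq$ in the first equality. We need the prefix identities, and the reduction of the row-sum equation to \cref{eq:tsasm:L-sum}, to be valid for arbitrary real totally symmetric matrices rather than only integer ones. This holds because the identities are purely linear consequences of the symmetries and of the fixed middle column.
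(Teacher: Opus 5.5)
Your proposal is correct and follows the paper's proof. You obtain the first equality for odd $n$ by translating the core system of \cref{thm:tsasm:corepolytope} through $\varphi$ and \cref{thm:tsasm:coreDescr}, the second from $P_\TS=P_\VS\cap P_\DS$ and \cref{thm:vsasm:descr}, and the even case from the alternating middle column summing to $0$. (Your separate emptiness argument for the second equality at even $n$ is harmless but unnecessary, since \cref{thm:vsasm:descr} already holds for every $n$.)
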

\begin{proof}
  The second equation follows immediately by \cref{thm:vsasm:descr} and the definition of $P_\TS$, so we focus on the first one.

  For odd $n$, we obtain the statement by straightforward transformations of the system given in \cref{thm:tsasm:coreDescr}, in complete analogy with the passage from \cref{thm:vhsasm:coreDescr} to \cref{thm:vhsasm:descr}.

  For even $n$, the polytope $P_\TSASM$ is empty; thus we need to show that the right-hand side is empty as well.
  Notice that
  $
    \{X \in \R^{n \times n} : x_{i,k+1} = x_{k+1,i} = (-1)^{i+1}\ \forall i \in [n]\}
  $
  forces the entries of column $k+1$ to alternate between $+1$ and $-1$, with the first entry equal to $+1$.
  Hence $\sum_{i=1}^n x_{i,k+1} = 0$ because $n$ is even.
  On the other hand, if $X \in P_\ASM$, then we have $\sum_{i=1}^n x_{i,k+1} = 1$ by~\cref{eq:asm:col-sum}, a contradiction.
  Thus, the right-hand side is empty for even $n$.
\end{proof}

\begin{theorem}\label{thm:tsasm:dim}
  For every odd $n \geq 3$, the dimension of $P_\TSASM$ is $\frac{(n-5)(n-3)}{8}$.
\end{theorem}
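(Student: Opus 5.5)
The plan follows the pattern of \cref{thm:vhsasm:dim,thm:dsasm:dim}. Because the assembly map is an affine isomorphism, it suffices to show $\dim(P^\core_\TSASM)=\frac{(n-5)(n-3)}{8}$. Write $k=\floor*{n/2}$, so $n=2k+1$ and the target becomes $\frac{(k-2)(k-1)}{2}=\binom{k-1}{2}$. For $n=3$ ($k=1$) the single core entry is forced to $0$, giving dimension $0$, so assume $k\geq2$. Note $|C|=\binom{k+1}{2}$.

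\textbf{Upper bound.} Take $i=1$ in \cref{eq:tsasm:L-sum}: $\sum_{j}y_{1,j}=0$. Each $y_{1,j}$ with $j\in[k-1]$ lies in $[0,1]$ by \cref{eq:tsasm:L-prefix} applied at $(i,j)=(j,1)$. The prefix bounds in row $1$ also give $0\leq \sum_{j'\leq j}y_{1,j'}\leq 1$, so $y_{1,k}\leq 0$. Hence every partial sum of row $1$ is nonnegative and the total is $0$, which forces $y_{1,k}=-\sum_{j<k}y_{1,j}$. We also need $y_{1,k}\geq 0$. For this, use \cref{eq:tsasm:L-prefix} at $(k,1)$, which gives $y_{1,k}\geq0$ when $k\geq2$. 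Together these yield $y_{1,j}=0$ for all $j\in[k]$, i.e.\ $k$ equations analogous to \cref{eq:vhsasm:dim:firstrow}. After eliminating row $1$, the remaining equations \cref{eq:tsasm:L-sum} for $i\in[2,k]$ are independent of them and of each other, since $y_{i,i}$ occurs only in the $i$th equation. This gives $2k-1$ independent equations, so the dimension is at most $\binom{k+1}{2}-(2k-1)=\binom{k-1}{2}$.

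\textbf{Lower bound.} Let $\ol Y$ be the average of all TSASM cores. The main step is to show that $\ol Y$ satisfies strictly every inequality of \cref{eq:tsasm:L-prefix} whose prefix does not reduce to row-$1$ entries, i.e.\ those with $i\geq2$ and $j\geq2$. For each such $(i,j)$ one must exhibit TSASM cores in which the prefix equals $0$ and cores in which it equals $1$. I would do this as in \cref{thm:vhsasm:dim}: place symmetric permutation-type patterns on the even-indexed rows/columns of the symmetric $k\times k$ block, and put alternating $\pm1$ entries in column $k$ (equivalently, in row $k$ by symmetry) on the odd rows, checking the constraints through \cref{thm:tsasm:corepolytope}. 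This is the step I expect to be the main obstacle, because diagonal symmetry couples row and column prefixes, and the constructions have to respect the parity pattern $\chi_{2\mid i}$. If such cores are hard to produce directly, a fallback is to check integrality via the totally unimodular $z$-reformulation and produce a strictly feasible fractional point of the reduced system instead.

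Given the strict interior point, perturb it. For each pair $2\leq i<j\leq k-1$ (that is, $\binom{k-2}{2}$ pairs), set $\ol Y^{i,j}=\ol Y+\varepsilon(\chi_{i,j}-\chi_{i,k}-\chi_{j,k}+\chi_{k,k})$. For each $i\in[2,k-1]$ (that is, $k-2$ more indices), set $\ol Y^{i,i}=\ol Y+\varepsilon(\chi_{i,i}-2\chi_{i,k}+\chi_{k,k})$. In both cases one must adjust the coefficients so that every equation \cref{eq:tsasm:L-sum} is preserved, accounting for the fact that $y_{i,j}$ enters equations $i$ and $j$. Each difference has a unique nonzero entry inside $[2,k-1]^2\cap C$, so the differences are linearly independent. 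Their number is $\binom{k-2}{2}+(k-2)=\binom{k-1}{2}$, which matches the upper bound.
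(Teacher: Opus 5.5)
Your upper bound is correct and is the paper's argument. The lower bound fails as written at the perturbation step. In the TSASM core, an off-diagonal variable $y_{a,b}$ with $a<b$ occurs in equations $a$ and $b$ of \cref{eq:tsasm:L-sum}, whereas a diagonal variable $y_{a,a}$ occurs only in equation $a$. Hence $\chi_{i,j}-\chi_{i,k}-\chi_{j,k}+\chi_{k,k}$ changes the left-hand side of equation $k$ by $-1$. Likewise, $\chi_{i,i}-2\chi_{i,k}+\chi_{k,k}$ changes equations $i$ and $k$ by $-1$ each. So your points $\ol Y^{i,j}$ and $\ol Y^{i,i}$ violate equations that hold on $P^\core_\TSASM$. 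Saying that the coefficients must be adjusted does not settle this, because the adjustment is the content of the step. The correct directions are $\chi_{i,j}-\chi_{i,k}-\chi_{j,k}+2\chi_{k,k}$ for $2\le i<j\le k-1$, and $\chi_{i,i}-\chi_{i,k}+\chi_{k,k}$ for $i\in[2,k-1]$. With these, your independence argument and the count $\binom{k-2}{2}+(k-2)=\binom{k-1}{2}$ go through. This gives a basis different from the paper's. The paper instead varies the off-diagonal entries $y_{i,j}$ with $2\le i<j\le k$ and compensates on the diagonal via $\chi_{i,j}-\chi_{i,i}-\chi_{j,j}$.

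The strictness step, which you leave as a sketch, is also not covered by the two cores you describe. The corrected directions increase prefixes only in block rows $2,\dots,k-1$. There, one of your two cores (even diagonal, or alternating column $k$ plus odd diagonal) always has prefix $0$. But the directions decrease the row-$k$ prefixes at every $s\in[i,k-1]$, so you need a core with row-$k$ prefix $1$ at each $s\in[2,k-1]$. Both of your cores have row-$k$ prefix $0$ at odd $s$. So for $k\ge4$ you must add a further core, for example the one with $y_{2,k}=1$, $y_{k,k}=\chi_{2\mid k}-1$, $y_{a,a}=1$ for even $a\in[4,k-1]$, and all other entries $0$. Your blanket claim of strictness at every $(i,j)$ with $i,j\ge2$ is true, but it is more than you need. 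It would also require still more cores, e.g.\ for endpoints $2\le j<i\le k-1$, where all three cores above have prefix $0$. The paper's directions avoid this. They decrease only the row-$i$ prefixes at $s\in[i,j-1]$, whose lower bounds the two cores already make strict. They increase only the row-$j$ prefixes at $s<j$, which vanish in the even-diagonal core. So two cores suffice there.
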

\begin{proof}
  It suffices to prove that the dimension of $P^\core_\TSASM$ is $\frac{(n-5)(n-3)}{8}$, because the assembly map $\varphi$ restricts to an affine isomorphism between $P^\core_\TSASM$ and $P_\TSASM$, which preserves dimension.
  First, we give an upper bound.
  Let $n \geq 3$ be odd, set $k=\floor*{n/2}$, and recall that $C=\{(i,j)\in[k]\times[k]: i\leq j\}$.
  By~\cref{thm:tsasm:corepolytope}, the inequalities in~\cref{eq:tsasm:L-prefix} with $j=1$ give $0\leq y_{1,i}\leq1$ for every $i\in[k]$, while the equation in~\cref{eq:tsasm:L-sum} with $i=1$ gives $\sum_{j=1}^k y_{1,j}=0$.
  It follows that
  \begin{align}\label{eq:tsasm:dim:firstrow}
    y&_{1,j}=0 &\forall j\in[k].
  \end{align}

  By \cref{thm:tsasm:corepolytope}, the polytope $P^\core_\TSASM\subseteq\R^{C}$ is described by the system~\crefrange{eq:tsasm:real}{eq:tsasm:L-sum}.
  The system of linear equations in~\cref{eq:tsasm:L-sum} consists of $k$ equations, and these are linearly independent since the variable $y_{i,i}$ appears only in the $i^\text{th}$ equation.
  Moreover, after imposing the equations in~\cref{eq:tsasm:dim:firstrow}, the $i=1$ equation in~\cref{eq:tsasm:L-sum} becomes redundant.
  For each $i\in[2,k]$, the variable $y_{1,i}$ appears in the $i^\text{th}$ equation of~\cref{eq:tsasm:L-sum}.
  Using the equations $y_{1,i}=0$ from~\cref{eq:tsasm:dim:firstrow}, we eliminate these terms as follows: for each $i\in[2,k]$, we replace the $i^\text{th}$ equation of~\cref{eq:tsasm:L-sum} by the difference of that equation and $y_{1,i}=0$.
  This elementary row operation does not change the solution set and thus the rank of the equation system.
  After this replacement, none of the modified equations in~\cref{eq:tsasm:L-sum} for $i\in[2,k]$ involves a variable of the form $y_{1,j}$, and hence their support is disjoint from that of~\cref{eq:tsasm:dim:firstrow}.
  Therefore, the combined system has $k+(k-1)=2k-1$ independent equations and defines an affine subspace of dimension
  $
    |C|-(2k-1)
    =\frac{k(k+1)}{2}-2k+1
    =\frac{(k-1)(k-2)}{2}
    =\frac{(n-5)(n-3)}{8}.
  $
  Since $P^\core_\TSASM$ is contained in this affine subspace, we obtain the upper bound $\dim(P^\core_\TSASM)\leq \frac{(n-5)(n-3)}{8}$.

  \medskip
  Second, we construct $\frac{(n-5)(n-3)}{8}+1$ affinely independent cores in $P^\core_\TSASM$ and hence obtain a matching lower bound.
  Let $\ol Y$ denote the average of the cores of all TSASMs.
  Then $\ol Y \in P^\core_\TSASM$ and it satisfies the equations in~\cref{eq:tsasm:dim:firstrow}.
  We claim that $\ol Y$ reaches neither the lower nor the upper bound in~\cref{eq:tsasm:L-prefix} for any $i\in[2,k-1]$ and $j\in[i,k-1]$.
  It suffices to show that, for each such $i,j$, there exists a TSASM core for which the left-hand side of~\cref{eq:tsasm:L-prefix} equals $1$, and there exists a TSASM core for which it equals $0$.
  To see this, set $y_{i,i}=1$ for every even $i\in[2,k]$ and set all remaining core entries to $0$.
  This yields an integer solution to the system~\crefrange{eq:tsasm:real}{eq:tsasm:L-sum}, and hence the core of a TSASM by \cref{thm:tsasm:corepolytope}.
  For this core, the left-hand side of~\cref{eq:tsasm:L-prefix} equals $1$ for every even $i\in[2,k-1]$ and every $j\in[i,k-1]$, while for every odd $i\in[3,k-1]$ it equals $0$ for every $j\in[i,k-1]$.
  Next, set $y_{i,k}=(-1)^i$ for every $i\in[2,k]$, set $y_{i,i}=1$ for every odd $i\in[3,k-1]$, and fill the rest of the core entries with $0$.
  Again we obtain an integer solution to the system~\crefrange{eq:tsasm:real}{eq:tsasm:L-sum}, and hence the core of a TSASM by \cref{thm:tsasm:corepolytope}.
  For this core, the left-hand side of~\cref{eq:tsasm:L-prefix} equals $1$ for every odd $i\in[3,k-1]$ and every $j\in[i,k-1]$, while for every even $i\in[2,k-1]$ it equals $0$ for every $j\in[i,k-1]$.
  Thus, for every $i\in[2,k-1]$ and $j\in[i,k-1]$, the left-hand side in~\cref{eq:tsasm:L-prefix} attains both values $0$ and $1$ on TSASM cores, and therefore $\ol Y$ does not reach equality in~\cref{eq:tsasm:L-prefix} for any such $i,j$.
  Let
  $
    S=\{(i,j)\in C : 2\leq i < j\leq k\}.
  $
  For each $(i,j)\in S$, define
  $
    \ol Y^{i,j}=\ol Y+\varepsilon\chi_{i,j}-\varepsilon\chi_{i,i}-\varepsilon\chi_{j,j},
  $
  where $\varepsilon$ is a small positive constant.
  By definition, $\ol Y^{i,j}$ satisfies the equations in~\cref{eq:tsasm:L-sum,eq:tsasm:dim:firstrow}.
  The left-hand side of an inequality in~\cref{eq:tsasm:L-prefix} decreases by $\varepsilon$ when its indices are $i,s$ with $s\in[i,j-1]$, increases by $\varepsilon$ when its indices are $j,s$ with $s\in[i,j-1]$, and is unchanged otherwise.
  The inequalities whose left-hand sides decrease are strict at $\ol Y$ by the argument above.
  For those whose left-hand sides increase, the first TSASM core constructed above has left-hand side $0$, so the corresponding value at $\ol Y$ is strictly smaller than $1$.
  Therefore, choosing $\varepsilon>0$ small enough ensures that $\ol Y^{i,j}\in P^\core_\TSASM$ for all $(i,j)\in S$.
  The cores $\ol Y$ and $\ol Y^{i,j}$ for $(i,j)\in S$ are affinely independent: only the difference $\ol Y^{i,j}-\ol Y$ has a non-zero entry at $(i,j)$, so the cores $\{\ol Y^{i,j}-\ol Y : (i,j)\in S\}$ are linearly independent.
  Therefore, the dimension of $P^\core_\TSASM$ is at least $|S| = \frac{(n-5)(n-3)}{8}$.

  Combining the lower and upper bounds yields $\dim(P^\core_\TSASM)=\dim(P_\TSASM)=\frac{(n-5)(n-3)}{8}$.
\end{proof}

\begin{theorem}\label{thm:tsasm:facets}
  Let $n \geq 9$ be odd, and set $k=\floor*{n/2}$.
  The facets of $P^\core_\TSASM$ are given by tightening the lower bound in~\cref{eq:tsasm:L-prefix} to equality for
  $
    (i,j)\in
    \{(2,2)\}
    \cup
    \{(i,j) : i \in [3,k-1], j \in [2,k-1-\chi_{2 \mid i}]\}
    \cup
    \bigl(\{k\} \times \{j\in[3,k-1-\chi_{2 \nmid k}] : 2 \nmid j\}\bigr)
  $,
  and the upper bound for
  $
    (i,j)\in
    \{(i,j) : i \in [4,k-1], j \in [4,k-1-\chi_{2 \nmid i}]\}
    \cup
    \bigl(\{k\} \times \{j\in[4,k-1-\chi_{2 \mid k}] : 2 \mid j\}\bigr)
  $.
  In particular, the number of facets of $P^\core_\TSASM$ is $\frac{n^2 -15n + 62}{2}$.
\end{theorem}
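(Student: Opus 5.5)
The proof will follow the two-step scheme used for \cref{thm:vhsasm:facets,thm:dsasm:corepolytope}. Call the listed tightened lower and upper bounds in~\cref{eq:tsasm:L-prefix} the \emph{facet inequalities}. We show two things: (a) the facet inequalities, together with~\cref{eq:tsasm:L-sum} and~\cref{eq:tsasm:dim:firstrow}, imply every inequality in~\cref{eq:tsasm:L-prefix}; (b) for each facet inequality there is a core violating it and satisfying all other facet inequalities and all equations. The average core $\ol Y$ from the proof of \cref{thm:tsasm:dim} shows that no listed inequality is an implicit equation. Its prefix sums are strict for $i\in[2,k-1]$, $j\in[i,k-1]$. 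For the remaining listed indices with $j<i$ or $i=k$, we check strictness directly using the two TSASM cores built there together with their diagonal-symmetric variants. Given (a), (b) and this, \cref{thm:irredundant-ineq-facets} yields the facet list. Counting the index sets, with separate cases by the parity of $k$, gives $\frac{n^2-15n+62}{2}$, which we sanity-check at $n=9,11$.

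For step (a), the key observation is a transfer from \cref{thm:vhsasm:facets}. Since $P_\TSASM=P_\VHSASM\cap P_\DS$, the TSASM core is the VHSASM core restricted to symmetric $k\times k$ matrices. Under this restriction the row-prefix inequality at $(i,j)$ and the column-prefix inequality at $(j,i)$ coincide. The VHSASM row family, after identifying $y_{i,j}=y_{j,i}$, is exactly the TSASM family~\cref{eq:tsasm:L-prefix}. Moreover, the TSASM listed index set is the VHSASM horizontal facet set, with the $i=k$ part trimmed by one column depending on the parity of $k$.

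We would therefore redo the derivations in the proof of \cref{thm:vhsasm:facets}: nonnegativity of $y_{2,j}$, the row-$3$ bounds, the odd/even rows at $j=k-1$, and row $k$. Whenever that proof invoked a vertical facet at $(i,j)$, we substitute the horizontal facet at $(j,i)$, which is the same inequality here. The extra non-facet bounds in row $k$ at $j=k-1$ are handled by the argument already given there for the case ``$k$ odd and $j=k-1$''. That argument uses the column equation at $k-1$, which is now the row equation~\cref{eq:tsasm:L-sum}. This trimmed inequality is the only genuinely new redundancy, and it is what reduces the VHSASM count to the TSASM count.

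Step (b) is the main obstacle. The plan is again to symmetrize the VHSASM certificates. If $L$ certifies a horizontal facet at $(i,j)$ for VHSASMs, then the symmetric matrix built from $L$ and $L^\top$ might work, but symmetrizing can create new violations, so this is not automatic. Instead we redefine the VHSASM extension operators so that they preserve diagonal symmetry. For $\extUL$ this is already the case, since its single $1$ sits at $(2,2)$. We replace the asymmetric pairs ($\extUR$ with $\extBL$) by a symmetric operator that appends a row and a column with alternating $\pm1$, plus a diagonal corner chosen so that~\cref{eq:tsasm:L-sum} holds. $\extBR$ already adds only a diagonal corner $\chi_{2\mid k}$.

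With these symmetric operators in hand, we give explicit base certificates for $n=9,11$ (and $13$ where a predecessor is missing) and define the remaining ones recursively. The index transport must be checked so that every listed $(i,j)$ with $i\le j$ or $j<i$ is reached exactly once. The induction is then verbatim the one in \cref{thm:vhsasm:facets}: prefixes inside the embedded copy are inherited, and prefixes in new boundary rows take values in $\{0,1\}$. The hardest part is making sure that the symmetrized boundary row/column operator does not violate the new row-$k$ facets of opposite parity. That requires a case check by parity of $k$, which is where the trimmed index ranges $k-1-\chi_{2\nmid k}$ and $k-1-\chi_{2\mid k}$ come from.
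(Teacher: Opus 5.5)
Your step~(b) is the genuine gap: the operators you propose cannot produce all the certificates.

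\textbf{The symmetric operator you describe.} A diagonally symmetric operator cannot simply append an alternating row and column to a $(k-1)\times(k-1)$ core. Rows $1,\dots,k-1$ of the predecessor already have the sums $\chi_{2\mid i}$ required by~\cref{eq:tsasm:L-sum}, so every new entry $y_{i,k}$ with $i<k$ is forced to be~$0$. In $\extUR$, the alternating column compensates for the parity flip caused by the new zero top row. To keep diagonal symmetry you must also insert a zero first column. Your operator is therefore in effect $\extBL\circ\extUR$: it acts from size $n-4$ and sends the violated index $(i,j)$ to $(i+1,j+1)$.

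\textbf{Why the recursion cannot close.} Together with $\extUL$ (which sends $(i,j)$ to $(i+2,j+2)$) and $\extBR$ (which fixes $(i,j)$), every operator in your toolkit preserves $j-i$ of the violated index. Finitely many base certificates therefore reach only facets with finitely many values of $j-i$. But for every $k$ the facet list contains the lower bounds at $(3,k-1)$ and $(k-1,2)$ and the upper bounds at $(4,k-1)$ and $(k-1,4)$, whose $|j-i|$ is unbounded. These would need fresh explicit certificates for infinitely many $n$.

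The one-sided shifts $\extUR$ and $\extBL$, on which the VHSASM recursion relies, have no symmetric counterpart, so that induction cannot be taken over verbatim. The missing idea is an operator that inserts rows and columns in the interior. The paper's $\ext^{\llcorner}_r$ adds rows and columns $r$ and $r+1$ with diagonal entries $\chi_{2\mid r}$ and $\chi_{2\nmid r}$, so all parities are kept. It fixes indices below $r$ and shifts the others by two, and hence changes $j-i$. Even with it, the paper needs explicit certificates up to $n=19$ for several boundary families such as $L^n_{3,k-1}$, $L^n_{4,k-2}$, $U^n_{5,k-2}$ and $U^n_{k-1,5}$. Newly inserted boundary rows have prefix sums in $\{0,1\}$ and never violate anything, so the obstacle is reachability, not the parity check you anticipate.

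\textbf{Step~(a).} Your idea here is fine, and in fact shorter than the paper's direct derivation, if you simply slice. The VHSASM assembly map produces a diagonally symmetric matrix exactly when $Y=Y^\top$, so $P^\core_\TSASM$ is the slice of $P^\core_\VHSASM$ by $Y=Y^\top$. Intersecting the VHSASM facet description with this subspace turns each vertical facet at $(i,j)$ into the inequality at $(j,i)$, which is on the horizontal list. It turns the VHSASM equations into~\cref{eq:tsasm:L-sum,eq:tsasm:dim:firstrow}.

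Cite the VHSASM theorem rather than re-running its proof line by line. For even $k$, that proof obtains the upper bound at $(k,k-1)$ from the column bound at $(k-1,k)$. That column bound is not on the vertical list, and on symmetric cores it is the very inequality being derived. The paper's TSASM proof avoids this by using $z_{k,k-1}=z_{k,k-2}+y_{k-1,k}\le 1$ with $y_{k-1,k}\le 0$.

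\textbf{The index sets.} Your reading of them is off: nothing is trimmed. For odd $k$ no odd $j$ equals $k-1$, and for even $k$ no even $j$ equals $k-1$. So the TSASM lists coincide exactly with the VHSASM horizontal lists, and there is no new redundancy at $(k,k-1)$. The drop from $n^2-15n+60$ to $\frac{n^2-15n+62}{2}$ comes from the horizontal facet at $(i,j)$ and the vertical facet at $(j,i)$ collapsing to one inequality. The VHSASM vertical list is the transposed horizontal list minus $(2,2)$ and $(k-1,k)$, where the two versions already coincide modulo the equations.
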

\begin{proof}
  The facets are obtained by tightening a single inequality in~\cref{eq:tsasm:L-prefix} to equality for the index pairs listed in the statement of the theorem.
  We call the instances of the lower bounds that are tightened to equality the \emph{facet lower bounds}, and we define the \emph{facet upper bounds} analogously.
  We refer to the union of these two families as the \emph{facet inequalities}.

  We proceed in two steps.
  First, we show that the facet inequalities together with the equations in~\cref{eq:tsasm:L-sum,eq:tsasm:dim:firstrow} imply every inequality in~\cref{eq:tsasm:L-prefix}.
  Then, for every facet inequality, we construct a core of an $n \times n$ matrix violating that facet inequality and no other, thereby proving that no facet inequality is redundant.
  The core $\ol Y$ constructed in the second step of the proof of \cref{thm:tsasm:dim} shows that no facet inequality is an implicit equation; thus the two steps together imply that the facet inequalities form a minimal system that, extended with the equations in~\cref{eq:tsasm:L-sum,eq:tsasm:dim:firstrow}, describes the convex hull of the cores of TSASMs, which proves the theorem.

  \medskip
  Now we prove that the facet inequalities together with the equations in~\cref{eq:tsasm:L-sum,eq:tsasm:dim:firstrow} imply every inequality in~\cref{eq:tsasm:L-prefix}.
  Clearly, we need to treat only those inequalities that are non-facet inequalities, namely, the lower bounds in~\cref{eq:tsasm:L-prefix} for
  \begin{align*}
    (i,j)\in\;&(\{1\}\times[k-1])
                \ \cup\ \bigl(\{2\}\times(\{1\}\cup[3,k-1])\bigr)
                \ \cup\ \bigl([3,k-1]\times\{1\}\bigr)\\
              &\cup\ \bigl(\{i\in[4,k-1]:2 \mid i\}\times\{k-1\}\bigr)
                \ \cup\ \Bigl(\{k\}\times\bigl(\{1,2\}\ \cup\ \{j\in[4,k-1-\chi_{2 \mid k}]:2 \mid j\}\bigr)\Bigr),
  \end{align*}
  and the upper bounds for
  \begin{align*}
    (i,j)\in\;&([3]\times[k-1])
              \ \cup\ \bigl([4,k-1]\times[3]\bigr)
                \ \cup\ \bigl(\{i\in[4,k-1]:2 \nmid i\}\times\{k-1\}\bigr)\\
              &\cup\ \Bigl(\{k\}\times\bigl([3]\ \cup\ \{j\in[4,k-1-\chi_{2 \nmid k}]:2 \nmid j\}\bigr)\Bigr).
  \end{align*}

  For $i\in[k]$ and $j\in[k-1]$, let $z_{i,j}$ denote the left-hand side of~\cref{eq:tsasm:L-prefix}, and extend the notation in accordance with the equations in~\cref{eq:tsasm:L-sum} by setting $z_{i,k} = \chi_{2 \mid i}$ for $i\in[k]$.
  For $j \in [2,k]$, we have
  \[
    z_{i,j}-z_{i,j-1}=
    \begin{cases}
      y_{j,i} & \text{if } 2\leq j\leq i,\\
      y_{i,j} & \text{if } i+1\leq j\leq k,
    \end{cases}
  \]
  by the definition of $z_{i,j}$ and, for $j=k$, by the extension through the equations in~\cref{eq:tsasm:L-sum}.
  We also have $z_{1,j}=\sum_{t=1}^{j} y_{1,t}$ for $j\in[k-1]$, and $z_{i,1}=y_{1,i}$ for $i\in[k]$, again by the definition of $z_{i,j}$.
  The equations in~\cref{eq:tsasm:dim:firstrow} give $y_{1,t}=0$ for all $t\in[k]$, hence $z_{1,j}=0$ for every $j\in[k-1]$ and also $z_{i,1}=0$ for every $i\in[k]$.
  Therefore,~\cref{eq:tsasm:L-prefix} holds whenever $i=1$ or $j=1$.

  We continue with $i=2$.
  For each $t\in[2,k-1]$, the facet lower bound at $(t,2)$ yields $0\leq z_{t,2}=y_{2,t}$, hence $y_{2,t}\geq 0$.
  Moreover,
  $
    y_{2,k}=z_{k,3}-y_{3,k}=z_{k,3}-(z_{3,k}-z_{3,k-1})=z_{k,3}+z_{3,k-1},
  $
  where $z_{3,k}=0$ by~\cref{eq:tsasm:L-sum}.
  Since $z_{k,3}\geq0$ and $z_{3,k-1}\geq0$ are facet lower bounds, this gives $y_{2,k}\geq0$ and thus $y_{2,t}\geq 0$ for all $t\in[2,k]$.
  Consequently, for every $j\in[3,k-1]$,
  $
    0\leq z_{2,j}=\sum_{t=2}^j y_{2,t}\leq \sum_{t=2}^k y_{2,t}=z_{2,k}=1,
  $
  which proves the non-facet bounds in~\cref{eq:tsasm:L-prefix} for $i=2$ and $j\in[3,k-1]$.
  Since $z_{i,2}=y_{2,i}$ for every $i\in[2,k]$, we also obtain the non-facet upper bounds for $j=2$.

  Next we derive the non-facet upper bounds for $i\in[4,k]$, $j=3$.
  For every $t\in[3,k]$, the facet lower bound at $(t,3)$ gives $0\leq z_{t,3}=y_{2,t}+y_{3,t}$, where we use $y_{1,t}=0$.
  In particular, $y_{2,3}+y_{3,3}\geq 0$, and $y_{3,t}\geq -y_{2,t}$ for every $t\in[4,k]$.
  The case $i=3$ of the equations in~\cref{eq:tsasm:L-sum} gives
  $
    0=z_{3,k}=(y_{2,3}+y_{3,3})+\sum_{t=4}^k y_{3,t}.
  $
  Hence, for every $i\in[4,k]$,
  $
    y_{3,i}=-(y_{2,3}+y_{3,3})-\sum_{t\in[4,k]\setminus\{i\}} y_{3,t} \leq \sum_{t\in[4,k]\setminus\{i\}} y_{2,t}
  $,
  and therefore
  $
    z_{i,3}=y_{2,i}+y_{3,i}\leq \sum_{t=4}^k y_{2,t}\leq \sum_{t=2}^k y_{2,t}=1
  $.
  This proves the non-facet upper bounds in~\cref{eq:tsasm:L-prefix} for $j=3$ and $i\in[4,k]$.

  We now derive the remaining non-facet upper bounds in row $i=3$, namely those with $j\in[3,k-1]$.
  For such $j$, the facet lower bounds at $(t,3)$ for $t\in[j+1,k]$ give $0\leq y_{2,t}+y_{3,t}$, hence $-y_{3,t}\leq y_{2,t}$.
  Using $z_{3,k}=0$ from~\cref{eq:tsasm:L-sum} (and $y_{1,3}=0$), we can write
  $
    z_{3,j}
    = y_{2,3} + y_{3,3} + \sum_{t=4}^j y_{3,t}
    =-\sum_{t=j+1}^k y_{3,t}
    \leq \sum_{t=j+1}^k y_{2,t}
    \leq \sum_{t=2}^k y_{2,t}
    = z_{2,k}
    = 1
  $,
  where the second inequality uses $y_{2,t}\geq 0$ for $t\in[2,k]$.
  This proves the remaining non-facet upper bounds in row $i=3$.

  We continue with the case $i\in[4,k-1]$, $j=k-1$.
  By the difference relations and the definition of $z_{i,k}$, we have
  $
    z_{i,k-1}=z_{i,k}-y_{i,k}=\chi_{2 \mid i}-y_{i,k}
  $.
  Moreover, the same difference relations give
  $
    y_{i,k}=z_{k,i}-z_{k,i-1}
  $.
  If $i$ is even, then the facet upper bound at $(k,i)$ gives $z_{k,i}\leq 1$ and the facet lower bound at $(k,i-1)$ gives $z_{k,i-1}\geq 0$, hence $y_{i,k}\leq 1$, and therefore $z_{i,k-1}=1-y_{i,k}\geq 0$, proving the non-facet lower bound.
  If $i$ is odd, then the facet lower bound at $(k,i)$ gives $z_{k,i}\geq 0$ and the facet upper bound at $(k,i-1)$ gives $z_{k,i-1}\leq 1$, hence $y_{i,k}\ge-1$ and thus $z_{i,k-1}=-y_{i,k}\leq 1$, proving the non-facet upper bound.

  Finally, we treat the remaining non-facet bounds in row $i=k$.
  We first record the sign of $y_{t,k}$ for $t\in[2,k-1]$.
  For odd $t\in[3,k-1]$, we have $z_{t,k}=0$ and the facet lower bound at $(t,k-1)$ yields $z_{t,k-1}\geq 0$, hence $y_{t,k}=z_{t,k}-z_{t,k-1}\leq 0$.
  For even $t\in[4,k-1]$, we have $z_{t,k}=1$ and the facet upper bound at $(t,k-1)$ yields $z_{t,k-1}\leq 1$, hence $y_{t,k}=z_{t,k}-z_{t,k-1}\geq 0$.
  Moreover, we already showed $y_{2,k}\geq 0$ above.
  Let $j\in[2,k-1]$ be even.
  Then $y_{j,k}\geq 0$, and we also have $z_{k,j-1}\geq 0$ because either $j-1=1$ (so $z_{k,1}=0$) or $j-1\geq 3$ is odd, in which case the lower bound at $(k,j-1)$ is a facet inequality.
  Using $z_{k,j}=z_{k,j-1}+y_{j,k}$, we obtain $z_{k,j}\geq 0$, proving the non-facet lower bounds in row $k$.

  For the remaining non-facet upper bounds in row $k$, it remains only to consider odd endpoints at least $5$.
  First, let $j\in[5,k-2]$ be odd.
  Then $j+1$ is even and $y_{j+1,k}\geq 0$, and the facet upper bound at $(k,j+1)$ gives $z_{k,j+1}\leq 1$.
  Using $z_{k,j}=z_{k,j+1}-y_{j+1,k}$, we obtain $z_{k,j}\leq 1$.
  If $k\geq 6$ is even, then the last remaining endpoint is $j=k-1$.
  In this case, $k-1$ is odd and we use $z_{k,k-1}=z_{k,k-2}+y_{k-1,k}\leq z_{k,k-2}\leq 1$, because $z_{k,k-2}\leq 1$ is a facet inequality and $y_{k-1,k}\leq 0$ as shown above.

  This completes the derivation of all inequalities in~\cref{eq:tsasm:L-prefix} from the facet inequalities and the equation system.

  \medskip
  It remains to show that none of the facet inequalities in~\cref{eq:tsasm:L-prefix} is redundant.
  For every horizontal facet lower bound indexed by $(i,j)$, we construct a core $L^n_{i,j}\in\R^C$ that violates this lower bound while satisfying all other facet inequalities.
  Analogously, for every horizontal facet upper bound indexed by $(i,j)$, we construct a core $U^n_{i,j}\in\R^C$ that violates this upper bound while satisfying all other facet inequalities.

  In order to build the certifying cores of $n\times n$ TSASMs from the certifying cores for smaller sizes, we define two \emph{extension operators}.
  For $\tilde n = n - 2$, $\tilde k = \floor*{\tilde n / 2}$, and a core $Z$ of an $\tilde n \times \tilde n$ matrix, we define
  \begin{center}
    \begin{tikzpicture}[scale=.9, baseline=(current bounding box.center)]
      \node at (-1.6,1) {$\extI(Z)\ =$};
      \node[draw=none] at (4.15,1) {$;$};

      \draw (3,3) -- (4,3) -- (4,-1) -- (3,-1) -- (3,0);

      \draw[fill=cyan!15] (2,1) -- (2,0) -- (3,0) -- (3,3) -- (0,3) -- (0,2) -- (1,2);

      \node at (0.5,2.5) {$z_{1,1}$};
      \ThreeDotsAt{(1.5,2.5)}{.21213203}{0}
      \node at (2.5,2.5) {$z_{1,\tilde k}$};
      \ThreeDotsAt{(2.5,1.5)}{.21213203}{90}
      \node at (2.5,0.5) {$z_{\tilde k,\tilde k}$};
      \ThreeDotsAt{(1.5-.15,1.5-.15)}{.21213203}{-45}

      \node at (3.5,2.5) {$0$};
      \ThreeDotsAt{(3.5,1.5)}{.21213203}{90}
      \node at (3.5,0.5) {$0$};
      \node at (3.5,-0.5) {$\chi_{2 \mid k}$};
    \end{tikzpicture}
  \end{center}
  furthermore, for $\tilde n = n - 4$, $\tilde k = \floor*{\tilde n / 2}$, a core $Z$ of an $\tilde n \times \tilde n$ matrix, and an index $r \in [\tilde k + 1]$, we define
  \begin{center}
    \begin{tikzpicture}[scale=.9]
      \draw[fill=cyan!15] (1,7) -| (0,7) -| (0,8) |- (4,8) |- (4,4) |- (3,4) -| (3,5);
      \ThreeDotsAt{(2-.15,6-.15)}{.21213203}{-45}
      \draw[fill=cyan!15] (7,1) -| (6,1) -| (6,2) |- (9,2) |- (9,-1) |- (8,-1) -| (8,0);
      \ThreeDotsAt{(7.5-.15,0.5-.15)}{.21213203}{-45}
      \draw[fill=cyan!15] (6,4) -| (6,8) -| (9,8) |- (9,4) -| cycle;
      \draw (4,4) -- (4,3) -- (5,3) -- (5,2) -- (6,2);
      \draw (4,8) -- (6,8);
      \draw (9,4) -- (9,2);
      \node[cell, draw=none] at (8,3) {$0$};
      \node[cell, draw=none] at (8,2) {$0$};
      \node[cell, draw=none] at (6,3) {$0$};
      \node[cell, draw=none] at (6,2) {$0$};
      \node[cell, draw=none] at (5,2) {$\chi_{2 \nmid r}$};
      \node[cell, draw=none] at (5,7) {$0$};
      \node[cell, draw=none] at (5,3) {$0$};
      \node[cell, draw=none] at (5,4) {$0$};
      \node[cell, draw=none] at (4,7) {$0$};
      \node[cell, draw=none] at (4,4) {$0$};
      \node[cell, draw=none] at (4,3) {$\chi_{2 \mid r}$};
      \ThreeDotsAt{(4.5,6)}{.21213203}{90}
      \ThreeDotsAt{(5.5,6)}{.21213203}{90}
      \ThreeDotsAt{(7.5,3.5)}{.21213203}{0}
      \ThreeDotsAt{(7.5,2.5)}{.21213203}{0}

      \node[cell, draw=none] at (0,7) {$z_{1,1}$};
      \node[cell, draw=none] at (3,7) {$z_{1,\tilde r}$};
      \node[cell, draw=none] at (3,4) {$z_{\tilde r,\tilde r}$};
      \ThreeDotsAt{(2,7.5)}{.21213203}{0}
      \ThreeDotsAt{(3.5,6)}{.21213203}{90}

      \node[cell, draw=none] at (6,7) {$z_{1,r}$};
      \node[cell, draw=none] at (8,7) {$z_{1,\tilde k}$};
      \node[cell, draw=none] at (6,4) {$z_{\tilde r,r}$};
      \node[cell, draw=none] at (8,4) {$z_{\tilde r,\tilde k}$};
      \ThreeDotsAt{(7.5,7.5)}{.21213203}{0}
      \ThreeDotsAt{(7.5,4.5)}{.21213203}{0}
      \ThreeDotsAt{(6.5,6)}{.21213203}{90}
      \ThreeDotsAt{(8.5,6)}{.21213203}{90}

      \node[cell, draw=none] at (6,1) {$z_{r,r}$};
      \node[cell, draw=none] at (8,1) {$z_{r,\tilde k}$};
      \node[cell, draw=none] at (8,-1) {$z_{\tilde k,\tilde k}$};
      \ThreeDotsAt{(8.5,0.5)}{.21213203}{90}
      \ThreeDotsAt{(7.5,1.5)}{.21213203}{0}

      \node[draw=none] at (-1.75,3.5) {$\ext^{\llcorner}_{r}(Z)\ =$};
      \node[draw=none] at (9.15,3.5) {$.$};
    \end{tikzpicture}
  \end{center}
  More precisely, $\extI$ takes the core of an $(n-2)\times(n-2)$ matrix and adjoins a new last column of core entries that is identically $0$ except the new diagonal entry, which is set to $y_{k,k}=\chi_{2 \mid k}$ as prescribed by~\cref{eq:tsasm:L-sum}.
  Hence $\extI$ yields the core of an $n\times n$ matrix.
  The operator $\ext^{\llcorner}_r$ takes the core of an $(n-4)\times(n-4)$ matrix and produces a core of an $n\times n$ matrix by inserting two new rows at positions $r$ and $r+1$, and two new columns at positions $r$ and $r+1$ so that the triangular shape of cores is retained.
  All newly created core entries are set to $0$, except for the two new diagonal entries: the one with even index is set to $1$ and the other is set to $0$ in accordance with the equations in~\cref{eq:tsasm:L-sum}, i.e.,
  $
  y_{r,r}=\chi_{2 \mid r}
  $
  and
  $
    y_{r+1,r+1}=\chi_{2 \nmid r}
  $.

  Now we are ready to construct $L^n_{i,j}$ and $U^n_{i,j}$ via an inductive approach.
  For $n \in \{9,11\}$, we set
  \begin{align*}
    L^9_{2,2}&=\vcenter{\hbox{$
               \begin{ytableau}
                 0 & 0 & 0 & 0\\
                 \none & -1 & 1 & 1\\
                 \none & \none & -1 & 0\\
                 \none & \none & \none & 0\\
               \end{ytableau}
               $}}\,,
    &
      L^9_{3,2}&=\vcenter{\hbox{$
                 \begin{ytableau}
                   0 & 0 & 0 & 0\\
                   \none & 1 & -1 & 1\\
                   \none & \none & 1 & 0\\
                   \none & \none & \none & 0\\
                 \end{ytableau}
                 $}}\,,
    &
    L^9_{3,3}&=\vcenter{\hbox{$
               \begin{ytableau}
                 0 & 0 & 0 & 0\\
                 \none & 1 & 0 & 0\\
                 \none & \none & -1 & 1\\
                 \none & \none & \none & 0\\
               \end{ytableau}
               $}}\,,
    &
      L^9_{4,3}&=\vcenter{\hbox{$
                 \begin{ytableau}
                   0 & 0 & 0 & 0\\
                   \none & 0 & 1 & 0\\
                   \none & \none & 0 & -1\\
                   \none & \none & \none & 2\\
                 \end{ytableau}
                 $}}\,,
  \end{align*}
  \begin{align*}
    L^{11}_{2,2} &= \vcenter{\hbox{$
                   \begin{ytableau}
                     0 & 0 & 0 & 0 & 0\\
                     \none & -1 & 1 & 1 & 0\\
                     \none & \none & -1 & 0 & 0\\
                     \none & \none & \none & 0 & 0\\
                     \none & \none & \none & \none & 0\\
                   \end{ytableau}
                   $}}\,,
    &
      L^{11}_{3,2} &= \vcenter{\hbox{$
                     \begin{ytableau}
                       0 & 0 & 0 & 0 & 0\\
                       \none & 1 & -1 & 1 & 0\\
                       \none & \none & 1 & 0 & 0\\
                       \none & \none & \none & 0 & 0\\
                       \none & \none & \none & \none & 0\\
                     \end{ytableau}
                     $}}\,,
    &
      L^{11}_{3,3} &= \vcenter{\hbox{$
                     \begin{ytableau}
                       0 & 0 & 0 & 0 & 0\\
                       \none & 1 & 0 & 0 & 0\\
                       \none & \none & -1 & 1 & 0\\
                       \none & \none & \none & 0 & 0\\
                       \none & \none & \none & \none & 0\\
                     \end{ytableau}
                     $}}\,,
    \\[2mm]
    L^{11}_{3,4} &= \vcenter{\hbox{$
                   \begin{ytableau}
                     0 & 0 & 0 & 0 & 0\\
                     \none & 0 & 0 & 1 & 0\\
                     \none & \none & 0 & -1 & 1\\
                     \none & \none & \none & 1 & 0\\
                     \none & \none & \none & \none & -1\\
                   \end{ytableau}
                   $}}\,,
    &
      L^{11}_{4,2} &= \vcenter{\hbox{$
                     \begin{ytableau}
                       0 & 0 & 0 & 0 & 0\\
                       \none & 1 & 0 & -1 & 1\\
                       \none & \none & 0 & 1 & -1\\
                       \none & \none & \none & 1 & 0\\
                       \none & \none & \none & \none & 0\\
                     \end{ytableau}
                     $}}\,,
    &
      L^{11}_{4,3} &= \vcenter{\hbox{$
                     \begin{ytableau}
                       0 & 0 & 0 & 0 & 0\\
                       \none & 0 & 1 & 0 & 0\\
                       \none & \none & 0 & -1 & 0\\
                       \none & \none & \none & 1 & 1\\
                       \none & \none & \none & \none & -1\\
                     \end{ytableau}
                     $}}\,,
    \\[2mm]
    L^{11}_{5,3} &= \vcenter{\hbox{$
                   \begin{ytableau}
                     0 & 0 & 0 & 0 & 0\\
                     \none & 0 & 1 & 0 & 0\\
                     \none & \none & 0 & 0 & -1\\
                     \none & \none & \none & 0 & 1\\
                     \none & \none & \none & \none & 0\\
                   \end{ytableau}
                   $}}\,,
    &
      U^{11}_{4,4} &= \vcenter{\hbox{$
                     \begin{ytableau}
                       0 & 0 & 0 & 0 & 0\\
                       \none & 0 & 0 & 0 & 1\\
                       \none & \none & 0 & 0 & 0\\
                       \none & \none & \none & 2 & -1\\
                       \none & \none & \none & \none & 0\\
                     \end{ytableau}
                     $}}\,,
    &
      U^{11}_{5,4} &= \vcenter{\hbox{$
                     \begin{ytableau}
                       0 & 0 & 0 & 0 & 0\\
                       \none & 0 & 0 & 0 & 1\\
                       \none & \none & 0 & 0 & 0\\
                       \none & \none & \none & 0 & 1\\
                       \none & \none & \none & \none & -2\\
                     \end{ytableau}
                     $}}\,.
  \end{align*}

  \newpage
  For $n\geq 13$, define\vspace{-6mm}
  \begin{center}
    \begin{minipage}[t]{0.49\linewidth}
      \begin{align*}
        L^n_{3,k-1} \;&=\;
                        \begin{cases}
                          \vspace{1em}
                          \vcenter{\vspace{3pt}\hbox{$
                          \begin{ytableau}
                            0&0&0&0&0&0\\
                            \none&0&0&0&1&0\\
                            \none&\none&0&0&-1&1\\
                            \none&\none&\none&1&0&0\\
                            \none&\none&\none&\none&0&0\\
                            \none&\none&\none&\none&\none&0
                          \end{ytableau}$}}                    & \text{if } n=13,\\
                          \vspace{1em}
                          \vcenter{\hbox{$
                          \begin{ytableau}
                            0&0&0&0&0&0&0\\
                            \none&0&0&0&0&1&0\\
                            \none&\none&0&0&0&-1&1\\
                            \none&\none&\none&0&0&1&0\\
                            \none&\none&\none&\none&0&0&0\\
                            \none&\none&\none&\none&\none&0&0\\
                            \none&\none&\none&\none&\none&\none&-1
                          \end{ytableau}$}}                    & \text{if } n=15,\\
                          \ext^{\llcorner}_{4}\bigl(L^{n-4}_{3,k-3}\bigr) & \text{if } n\geq 17,
                        \end{cases}
        \\[2.5ex]
        L^n_{k-1,2} \;&=\;
                        \begin{cases}
                          \vspace{1em}
                          \vcenter{\vspace{3pt}\hbox{$
                          \begin{ytableau}
                            0&0&0&0&0&0\\
                            \none&1&0&0&-1&1\\
                            \none&\none&0&0&1&-1\\
                            \none&\none&\none&0&0&1\\
                            \none&\none&\none&\none&0&0\\
                            \none&\none&\none&\none&\none&0&\none
                          \end{ytableau}$}}                     & \text{if } n=13,\\
                          \ext^{\llcorner}_{4}\bigl(L^{n-4}_{k-3,2}\bigr) & \text{if } n\geq 15,
                        \end{cases}
        \\[2.5ex]
        L^n_{k-1,3} \;&=\;
                        \begin{cases}
                          \vspace{1em}
                          \vcenter{\vspace{3pt}\hbox{$
                          \begin{ytableau}
                            0&0&0&0&0&0\\
                            \none&0&1&0&0&0\\
                            \none&\none&0&0&-1&0\\
                            \none&\none&\none&0&1&0\\
                            \none&\none&\none&\none&0&0\\
                            \none&\none&\none&\none&\none&1
                          \end{ytableau}$}}                    & \text{if } n=13,\\
                          \vspace{1em}
                          \vcenter{\hbox{$
                          \begin{ytableau}
                            0&0&0&0&0&0&0\\
                            \none&0&1&0&0&0&0\\
                            \none&\none&0&0&0&-1&0\\
                            \none&\none&\none&0&0&1&0\\
                            \none&\none&\none&\none&0&0&0\\
                            \none&\none&\none&\none&\none&1&0\\
                            \none&\none&\none&\none&\none&\none&0
                          \end{ytableau}$}}                    & \text{if } n=15,\\
                          \ext^{\llcorner}_{5}\bigl(L^{n-4}_{k-3,3}\bigr) & \text{if } n\geq 17,
                        \end{cases}
      \end{align*}
    \end{minipage}\hspace{-3mm}
    \begin{minipage}[t]{0.49\linewidth}
      \begin{align*}
        L^n_{4,k-2} \;&=\;
                        \begin{cases}
                          \vspace{1em}
                          \vcenter{\vspace{3pt}\hbox{$
                          \begin{ytableau}
                            0&0&0&0&0&0\\
                            \none&1&0&0&0&0\\
                            \none&\none&0&0&0&0\\
                            \none&\none&\none&-1&1&1\\
                            \none&\none&\none&\none&-1&0\\
                            \none&\none&\none&\none&\none&0
                          \end{ytableau}$}}                    & \text{if } n=13,\\
                          \vspace{1em}
                          \vcenter{\hbox{$
                          \begin{ytableau}
                            0&0&0&0&0&0&0\\
                            \none&0&0&0&1&0&0\\
                            \none&\none&0&0&0&0&0\\
                            \none&\none&\none&0&-1&1&1\\
                            \none&\none&\none&\none&0&0&0\\
                            \none&\none&\none&\none&\none&0&0\\
                            \none&\none&\none&\none&\none&\none&-1
                          \end{ytableau}$}}                    & \text{if } n=15,\\
                          \vspace{1em}
                          \vcenter{\hbox{$
                          \begin{ytableau}
                            0&0&0&0&0&0&0&0\\
                            \none&0&0&0&0&1&0&0\\
                            \none&\none&0&0&0&0&0&0\\
                            \none&\none&\none&0&0&-1&1&1\\
                            \none&\none&\none&\none&0&0&0&0\\
                            \none&\none&\none&\none&\none&1&0&0\\
                            \none&\none&\none&\none&\none&\none&-1&0\\
                            \none&\none&\none&\none&\none&\none&\none&0
                          \end{ytableau}$}}                    & \text{if } n=17,\\
                          \ext^{\llcorner}_{5}\bigl(L^{n-4}_{4,k-4}\bigr) & \text{if } n\geq 19,
                        \end{cases}
        \\[2.5ex]
        L^n_{k,3} \;&=\;
                      \begin{cases}
                        \vspace{1em}
                        \vcenter{\vspace{3pt}\hbox{$
                        \begin{ytableau}
                          0&0&0&0&0&0\\
                          \none&0&1&0&0&0\\
                          \none&\none&0&0&0&-1\\
                          \none&\none&\none&0&0&1\\
                          \none&\none&\none&\none&0&0\\
                          \none&\none&\none&\none&\none&1&\none&\none
                        \end{ytableau}$}}                      & \text{if } n=13,\\
                        \ext^{\llcorner}_{5}\bigl(L^{n-4}_{k-2,3}\bigr)   & \text{if } n\geq 15.
                      \end{cases}
      \end{align*}
    \end{minipage}
  \end{center}
  Furthermore, for $n \geq 13$, assume that the certificates for the facet lower bounds are already given for sizes at most $n-2$.
  For the rest of the indices, we define
  \begin{equation}
    L^n_{i,j} \;=\;
    \begin{cases}
      \extI\bigl(L^{n-2}_{i,j}\bigr)        & \text{if } (i,j)=(2,2) \text{ or } (i \in [3, k-2] \text{ and } j \in [2, k-2-\chi_{2 \mid i}]),\\[1mm]
      \ext^{\llcorner}_{1}\bigl(L^{n-4}_{i-2,j-2}\bigr) & \text{if }
        \begin{aligned}[t]
          &i \in [5, k-1] \text{ and } j=k-1-\chi_{2 \mid i}, \text{ or}\\
          &(i,j)=(k,k-1) \text{ and } 2 \mid k, \text{ or}\\
          &i=k, j \in [5, k-2-\chi_{2 \mid k}] \text{ and } 2 \nmid j, \text{ or}\\
          &i=k-1, j \in [4, k-2+\chi_{2 \nmid k}].
        \end{aligned}
    \end{cases}
  \end{equation}

  Now we turn to the upper bounds.
  For $n\geq 13$, define\vspace{-6mm}
  \begin{center}
    \begin{minipage}[t]{0.49\linewidth}
      \begin{align*}
        U^n_{4,k-1} \;&=\;
                        \begin{cases}
                          \vspace{1em}
                          \vcenter{\vspace{3pt}\hbox{$
                          \begin{ytableau}
                            0&0&0&0&0&0\\
                            \none&0&0&0&0&1\\
                            \none&\none&0&0&0&0\\
                            \none&\none&\none&1&1&-1\\
                            \none&\none&\none&\none&-1&0\\
                            \none&\none&\none&\none&\none&1
                          \end{ytableau}$}}                    & \text{if } n=13,\\
                          \vspace{1em}
                          \vcenter{\vspace{3pt}\hbox{$
                          \begin{ytableau}
                            0&0&0&0&0&0&0\\
                            \none&0&0&0&0&0&1\\
                            \none&\none&0&0&0&0&0\\
                            \none&\none&\none&1&0&1&-1\\
                            \none&\none&\none&\none&0&0&0\\
                            \none&\none&\none&\none&\none&0&0\\
                            \none&\none&\none&\none&\none&\none&0&\none
                          \end{ytableau}$}}                    & \text{if } n=15,\\
                          \ext^{\llcorner}_{5}\bigl(U^{n-4}_{4,k-3}\bigr) & \text{if } n\geq 17,
                        \end{cases}
        \\[2.5ex]
        U^n_{k-1,4} \;&=\;
                        \begin{cases}
                          \vspace{1em}
                          \vcenter{\vspace{3pt}\hbox{$
                          \begin{ytableau}
                            0&0&0&0&0&0&0\\
                            \none&0&0&0&0&1&0\\
                            \none&\none&0&0&0&0&0\\
                            \none&\none&\none&0&0&1&0\\
                            \none&\none&\none&\none&1&-1&0\\
                            \none&\none&\none&\none&\none&0&0\\
                            \none&\none&\none&\none&\none&\none&0
                          \end{ytableau}$}}                    & \text{if } n=15,\\
                          \vspace{1em}
                          \vcenter{\vspace{3pt}\hbox{$
                          \begin{ytableau}
                            0&0&0&0&0&0&0&0\\
                            \none&0&0&0&0&0&1&0\\
                            \none&\none&0&0&0&0&0&0\\
                            \none&\none&\none&0&0&0&1&0\\
                            \none&\none&\none&\none&1&0&-1&0\\
                            \none&\none&\none&\none&\none&1&0&0\\
                            \none&\none&\none&\none&\none&\none&-1&0\\
                            \none&\none&\none&\none&\none&\none&\none&1
                          \end{ytableau}$}}                    & \text{if } n=17,\\
                          \ext^{\llcorner}_{6}\bigl(U^{n-4}_{k-3,4}\bigr) & \text{if } n\geq 19,
                        \end{cases}
      \end{align*}
    \end{minipage}\hspace{-3mm}
    \begin{minipage}[t]{0.49\linewidth}
      \begin{align*}
        U^n_{5,k-2} \;&=\;
                        \begin{cases}
                          \vspace{1em}
                          \vcenter{\vspace{3pt}\hbox{$
                          \begin{ytableau}
                            0&0&0&0&0&0\\
                            \none&0&0&0&0&1\\
                            \none&\none&0&0&1&-1\\
                            \none&\none&\none&0&1&0\\
                            \none&\none&\none&\none&-2&0\\
                            \none&\none&\none&\none&\none&1
                          \end{ytableau}$}}                    & \text{if } n=13,\\
                          \vspace{1em}
                          \vcenter{\vspace{3pt}\hbox{$
                          \begin{ytableau}
                            0&0&0&0&0&0&0\\
                            \none&0&0&0&0&1&0\\
                            \none&\none&0&0&0&0&0\\
                            \none&\none&\none&0&0&0&1\\
                            \none&\none&\none&\none&2&-1&-1\\
                            \none&\none&\none&\none&\none&1&0\\
                            \none&\none&\none&\none&\none&\none&0
                          \end{ytableau}$}}                    & \text{if } n=15,\\
                          \vspace{1em}
                          \vcenter{\vspace{3pt}\hbox{$
                          \begin{ytableau}
                            0&0&0&0&0&0&0&0\\
                            \none&0&0&0&0&0&1&0\\
                            \none&\none&0&0&0&0&0&0\\
                            \none&\none&\none&0&0&0&0&1\\
                            \none&\none&\none&\none&1&1&-1&-1\\
                            \none&\none&\none&\none&\none&0&0&0\\
                            \none&\none&\none&\none&\none&\none&0&0\\
                            \none&\none&\none&\none&\none&\none&\none&1
                          \end{ytableau}$}}                    & \text{if } n=17,\\
                          \vspace{1em}
                          \vcenter{\vspace{3pt}\hbox{$
                          \begin{ytableau}
                            0&0&0&0&0&0&0&0&0\\
                            \none&0&0&0&0&0&0&1&0\\
                            \none&\none&0&0&0&0&0&0&0\\
                            \none&\none&\none&0&0&0&0&0&1\\
                            \none&\none&\none&\none&1&0&1&-1&-1\\
                            \none&\none&\none&\none&\none&1&0&0&0\\
                            \none&\none&\none&\none&\none&\none&-1&0&0\\
                            \none&\none&\none&\none&\none&\none&\none&1&0\\
                            \none&\none&\none&\none&\none&\none&\none&\none&0
                          \end{ytableau}$}}                    & \text{if } n=19,\\
                          \ext^{\llcorner}_{6}\bigl(U^{n-4}_{5,k-4}\bigr) & \text{if } n\geq 21.
                        \end{cases}
      \end{align*}
    \end{minipage}
  \end{center}

  \noindent
  \begin{center}
    \begin{minipage}[t]{0.49\linewidth}
      \vspace{0pt}
      \begin{align*}
        U^n_{k-1,5} \;&=\;
                        \begin{cases}
                          \vspace{1em}
                          \vcenter{\vspace{3pt}\hbox{$
                          \begin{ytableau}
                            0&0&0&0&0&0&0\\
                            \none&0&0&0&0&0&1\\
                            \none&\none&0&0&0&0&0\\
                            \none&\none&\none&0&0&1&0\\
                            \none&\none&\none&\none&0&1&-1\\
                            \none&\none&\none&\none&\none&-1&0\\
                            \none&\none&\none&\none&\none&\none&0
                          \end{ytableau}$}}                    & \text{if } n=15,\\
                          \vspace{1em}
                          \vcenter{\vspace{3pt}\hbox{$
                          \begin{ytableau}
                            0&0&0&0&0&0&0&0\\
                            \none&0&0&0&0&0&0&1\\
                            \none&\none&0&0&0&0&0&0\\
                            \none&\none&\none&0&0&0&1&0\\
                            \none&\none&\none&\none&0&0&1&-1\\
                            \none&\none&\none&\none&\none&1&-1&1\\
                            \none&\none&\none&\none&\none&\none&-1&0\\
                            \none&\none&\none&\none&\none&\none&\none&0
                          \end{ytableau}$}}                    & \text{if } n=17,\\
                          \vspace{1em}
                          \vcenter{\vspace{3pt}\hbox{$
                          \begin{ytableau}
                            0&0&0&0&0&0&0&0&0\\
                            \none&0&0&0&0&0&0&0&1\\
                            \none&\none&0&0&0&0&0&0&0\\
                            \none&\none&\none&0&0&0&0&1&0\\
                            \none&\none&\none&\none&0&0&0&1&-1\\
                            \none&\none&\none&\none&\none&1&0&-1&1\\
                            \none&\none&\none&\none&\none&\none&0&0&0\\
                            \none&\none&\none&\none&\none&\none&\none&0&0\\
                            \none&\none&\none&\none&\none&\none&\none&\none&-1
                          \end{ytableau}$}}                    & \text{if } n=19,\\
                          \ext^{\llcorner}_{7}\bigl(U^{n-4}_{k-3,5}\bigr) & \text{if } n\geq 21,
                        \end{cases}
      \end{align*}
    \end{minipage}\hspace{-3mm}
    \begin{minipage}[t]{0.49\linewidth}
      \vspace{0pt}
      \begin{align*}
        U^n_{k,4} \;&=\;
                      \begin{cases}
                        \vspace{1em}
                        \vcenter{\vspace{3pt}\hbox{$
                        \begin{ytableau}
                          0&0&0&0&0&0\\
                          \none&0&0&0&0&1\\
                          \none&\none&0&0&0&0\\
                          \none&\none&\none&0&0&1\\
                          \none&\none&\none&\none&1&-1\\
                          \none&\none&\none&\none&\none&0
                        \end{ytableau}$}}                      & \text{if } n=13,\\
                        \vspace{1em}
                        \vcenter{\vspace{3pt}\hbox{$
                        \begin{ytableau}
                          0&0&0&0&0&0&0\\
                          \none&0&0&0&0&0&1\\
                          \none&\none&0&0&0&0&0\\
                          \none&\none&\none&0&0&0&1\\
                          \none&\none&\none&\none&0&1&-1\\
                          \none&\none&\none&\none&\none&0&0\\
                          \none&\none&\none&\none&\none&\none&-1
                        \end{ytableau}$}}                      & \text{if } n=15,\\
                        \vspace{1em}
                        \vcenter{\vspace{3pt}\hbox{$
                        \begin{ytableau}
                          0&0&0&0&0&0&0&0\\
                          \none&0&0&0&0&0&0&1\\
                          \none&\none&0&0&0&0&0&0\\
                          \none&\none&\none&0&0&0&0&1\\
                          \none&\none&\none&\none&0&1&0&-1\\
                          \none&\none&\none&\none&\none&0&0&0\\
                          \none&\none&\none&\none&\none&\none&0&0\\
                          \none&\none&\none&\none&\none&\none&\none&0
                        \end{ytableau}$}}                      & \text{if } n=17,\\
                        \ext^{\llcorner}_{6}\bigl(U^{n-4}_{k-2,4}\bigr) & \text{if } n\geq 19.
                      \end{cases}
      \end{align*}
    \end{minipage}
  \end{center}
  Furthermore, for $n \geq 13$, assume that the certificates for the facet upper bounds are already given for sizes at most $n-2$.
  For the rest of the indices, we define
  \[
    U^n_{i,j} \;=\;
    \begin{cases}
      \extI\bigl(U^{n-2}_{i,j}\bigr)        & \text{if } i \in [4, k-2] \text{ and } j \in [4, k-2-\chi_{2 \nmid i}],\\
      \ext^{\llcorner}_{1}\bigl(U^{n-4}_{i-2,j-2}\bigr) & \text{if }
        \begin{aligned}[t]
          &i \in [6, k-1] \text{ and } j=k-1-\chi_{2 \nmid i}, \text{ or}\\
          &i=k-1 \text{ and } j \in [6, k-2-\chi_{2 \mid k}], \text{ or}\\
          &i=k, j \in [6, k-2-\chi_{2 \nmid k}] \text{ and } 2 \mid j, \text{ or}\\
          &(i,j) = (k, k-1).
        \end{aligned}
    \end{cases}
  \]

  \medskip
  We claim that the cores $L^n_{i,j}$ and $U^n_{i,j}$ defined above are certificates for the corresponding facet lower and facet upper bounds, respectively, which we verify by induction on~$n$.
  The claim holds by direct inspection for all cores that are listed explicitly in the construction: each such core satisfies the defining equations in~\cref{eq:tsasm:L-sum,eq:tsasm:dim:firstrow} and all facet inequalities in~\cref{eq:tsasm:L-prefix} other than the indicated inequality, which it violates.

  Now let $n\geq 13$ be odd and assume that the claim holds for all smaller odd sizes.
  Let $Y$ be one of the cores $L^n_{i,j}$ or $U^n_{i,j}$ defined recursively, and let $Z$ be the predecessor core appearing on the right-hand side of its definition.
  Then we have either $Y=\extI(Z)$ for a core $Z$ of an $(n-2) \times (n-2)$ matrix, or $Y=\ext^{\llcorner}_r(Z)$ for a core $Z$ of an $(n-4) \times (n-4)$ matrix.
  By the definition of $\extI$ and $\ext^{\llcorner}_r$, the core $Y$ contains an embedded copy of the core~$Z$, and every newly created core entry of~$Y$ is~$0$ except possibly for the prescribed newly created diagonal entries.
  In particular, the equations in~\cref{eq:tsasm:L-sum} corresponding to indices in the embedded copy are inherited from~$Z$, and the equations corresponding to the newly created indices are satisfied by the setting of the new entries.
  Hence $Y$ satisfies all equations in~\cref{eq:tsasm:L-sum,eq:tsasm:dim:firstrow}.

  \smallskip
  We now check the facet inequalities for size~$n$.
  Every facet inequality is one of the lower or upper bounds in~\cref{eq:tsasm:L-prefix}.
  Fix such a facet inequality and consider the defining prefix sum on its left-hand side.

  If the endpoint of the defining prefix lies in the embedded copy of~$Z$ inside~$Y$, then any newly created entries occurring in that defining sum are equal to~$0$ by construction: both extension operators create only diagonal non-zero entries, which never contribute to such prefixes.
  Hence the defining prefix sum on~$Y$ coincides with the corresponding defining prefix sum on~$Z$ after the evident index shift induced by $\extI$ or $\ext^{\llcorner}_r$.
  Therefore, the facet inequality holds for~$Y$ by the induction hypothesis, except for the unique facet inequality violated by~$Z$.
  Moreover, the case distinctions in the recursive definitions were chosen precisely so that this uniquely violated facet inequality of~$Z$ is transported to the intended facet inequality for~$Y$.

  If the endpoint of the defining prefix lies in a newly created row or column, then either the defining prefix involves only newly created entries, or otherwise newly created entries occurring in that defining sum are equal to~$0$.
  In the former case, all such entries are~$0$ except possibly for a single prescribed new diagonal entry, and the construction ensures that all other entries in the same defining prefix are~$0$; hence the defining prefix sum belongs to~$\{0,1\}$ and in particular satisfies both the facet lower and the facet upper bound.
  In the latter case, the defining prefix sum on~$Y$ coincides with the corresponding defining prefix sum on~$Z$ after the evident index shift, and the induction hypothesis ensures that the latter is non-violating.

  For every facet inequality in~\cref{eq:tsasm:L-prefix}, we have constructed a core that violates exactly this inequality and satisfies all other facet inequalities in~\cref{eq:tsasm:L-prefix} as well as the equations in~\cref{eq:tsasm:L-sum,eq:tsasm:dim:firstrow}.
  Therefore, no inequality in~\cref{eq:tsasm:L-prefix} is redundant.
  Together with the first part of the proof and the fact that no facet inequality in~\cref{eq:tsasm:L-prefix} is an implicit equation, this implies that the facet inequalities in~\cref{eq:tsasm:L-prefix}, together with the equations in~\cref{eq:tsasm:L-sum,eq:tsasm:dim:firstrow}, form a minimal description of~$P^\core_{\TSASM}$.
\end{proof}

\section{Linear optimization and the integer Carath\'eodory property}\label{sec:linear-optimization}

We conclude the main results with a linear\nobreakdash-optimization result for every non-empty symmetry class and an integer Carath\'eodory result for $P_\ASM$ and, at every admissible size, for the full-matrix and core polytopes of every non-trivial class other than the quarter-turn class.

\subsection{Linear optimization}\label{subsec:linear-optimization}
For each non-empty symmetry class, linear optimization reduces to the standard combinatorial optimization problems underlying the integrality proofs.
We state the result only for the non-empty cases; as discussed above, these are easy to characterize: VSASMs, VHSASMs, and TSASMs require odd $n$, QTSASMs require $n\not\equiv2\pmod4$, and the remaining classes are non-empty for every~$n$.

\begin{theorem}\label{thm:linear-optimization}
  For every dihedral symmetry class $\XASM$ and every size $n$ with $\XASM(n)\neq\emptyset$, a minimum\nobreakdash-cost $n\times n$ $\XASM$ for any rational cost matrix can be found in strongly polynomial time.
\end{theorem}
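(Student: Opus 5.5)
The plan is to reduce each class, through its core--assembly map, to a linear program over the corresponding core polytope. For each class XASM, $\varphi$ is affine and $P_\XASM=\varphi(P^\core_\XASM)$ by \cref{thm:xasm:assembly}, so for a rational cost matrix $W$ we have $\langle W,\varphi(Y)\rangle=\langle c,Y\rangle+\gamma$ with rational $c$ and $\gamma$. Here $c$ is obtained by summing the entries of $W$ over each orbit of core positions, and $\gamma$ collects the fixed entries (the middle row and column, and the centre). This reduces the problem to minimizing $\langle c,Y\rangle$ over $P^\core_\XASM$ and then assembling the optimum. Since every vertex of the core polytope is the core of an XASM, it suffices to find an \emph{optimal vertex} in strongly polynomial time. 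The entries of $c$ are sums of at most eight entries of $W$, so their encoding size is polynomial in that of $W$. The interesting requirement is therefore strong polynomiality in the number of arithmetic operations.

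For the six classes other than QTSASM, each core polytope was shown to be integral through a totally unimodular system of polynomial size, and the right-hand sides and bounds lie in $\{0,\pm1\}$.
\begin{itemize}
\item For VSASM, VHSASM and HTSASM, the constraints come from \cref{thm:laminarSystem}.
\item For DSASM, DASASM and TSASM, the constraints come from \cref{thm:digraphIncidenceTU} via the $z$-reformulations.
\end{itemize}
I would invoke Tardos' algorithm: a linear program $\min\{c^\top x : Ax\le b\}$ can be solved in time polynomial in the size of $A$ alone whenever the entries of $A$ are small integers, in particular $\{0,\pm1\}$. This returns an optimal vertex, which is integral by total unimodularity. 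In the digraph-incidence cases one can alternatively pass to the dual, which is a minimum-cost circulation problem solvable by strongly polynomial min-cost flow algorithms. For the laminar cases, the TU matrix formed from two laminar families can similarly be rewritten as a network matrix after prefix-differencing. Either way, the resulting strongly polynomial bound is in $n$ only.

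The main obstacle is QTSASM, because \cref{thm:qtsasm:corepolytope} uses exponentially many cut inequalities, so no compact LP is available. Here I would work directly with the auxiliary bidirected system in the $(y,z)$-variables from that proof, namely \crefrange{eq:qtsasm:z-bounds}{eq:qtsasm:redund2}. Its integer solutions are exactly the QTSASM cores together with their prefix variables. Minimizing $c^\top y$ over these is an integer program with a bidirected constraint matrix and polynomially many rows and columns, that is, a capacitated $b$-matching (bidirected flow) problem. Such problems admit strongly polynomial algorithms, for example via the reduction to weighted perfect matching in an auxiliary graph of polynomial size followed by the Edmonds--Gabow algorithm. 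The reduction stays polynomial because all capacities and right-hand sides lie in $\{0,\pm1\}$ (with the artificial $\pm\infty$ bounds replaceable by $\pm n$). The key check is that this auxiliary graph has size polynomial in $n$, independently of the costs; the capacity bounds above give this.

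Finally, the optimal $Y$ is mapped back by $\varphi$, which costs $O(n^2)$ operations, and admissibility of $n$ is decided by the parity rules already recorded.
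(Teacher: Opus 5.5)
Your argument is correct, and its skeleton matches the paper's. Both proofs transfer the cost through the affine assembly map and optimize over the integral core polytope. For QTSASMs, both optimize over the integer points of the auxiliary bidirected $(y,z)$-system from the proof of \cref{thm:qtsasm:corepolytope} by weighted matching, rather than over the exponential cut description.

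The difference is the engine used for the other classes. The paper gives an explicit combinatorial formulation for each:
\begin{itemize}
\item minimum-cost flow via the inclusion forests of the two laminar families (ASM, VSASM, VHSASM, HTSASM);
\item bounded-tension problems dual to minimum-cost circulation, after the $z$-substitution (DSASM, TSASM);
\item a circulation with bounds (DASASM).
\end{itemize}
You instead run Tardos' algorithm on the $\{0,\pm1\}$ core systems and rely only on the integrality already proved. In the DSASM, DASASM and TSASM cases you do not even need the $z$-substitution: the $y$-systems already have $\{0,1\}$ coefficients, and it is integrality of the polytope, not total unimodularity of that particular matrix, that makes the returned optimal vertex integral. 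Your route is shorter and uniform across classes. The paper's route yields genuinely combinatorial network-flow algorithms with much better running times and avoids general strongly polynomial LP machinery. For QTSASMs, you note that all bounds are $O(1)$ once $y\in[-1,1]$ is used and the $\pm\infty$ bounds are replaced, so the elementary reduction to weighted perfect matching stays polynomial in $n$. The paper leaves this point to the cited $b$-matching framework.

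Two small repairs are needed.
\begin{itemize}
\item The statement covers every dihedral class, including the trivial one $\mathcal{G}=\{\mathcal{I}\}$, which you never treat. It is handled in the same way, since the system in \cref{thm:ASMpolytope} consists of $\{0,1\}$ rows from two laminar families and is totally unimodular by \cref{thm:edmonds-lemma}.
\item Your description of $c$ as orbit sums of $W$, with $\gamma$ collecting fixed entries, is slightly off for odd HTSASMs and DASASMs. There $\varphi$ sets the centre to $1-2\sum_{j=1}^{k}y_{k+1,j}$, respectively $1-2\sum_{i=1}^{k}y_{i,k+1}$, so $-2w_{k+1,k+1}$ must be added to those coordinates of $c$. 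This does not affect rationality, encoding size, or anything downstream.
\end{itemize}
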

\begin{proof}
  We first identify the standard optimization problems to which the unrestricted ASM description and the descriptions of the core polytopes reduce.
  The unrestricted ASM description and the VSASM, VHSASM, and HTSASM core descriptions have constraint matrices coming from two laminar families of index sets; the standard construction from inclusion forests gives minimum\nobreakdash-cost flow formulations for their linear\nobreakdash-optimization problems.
  For~QTSASMs, linear optimization is carried out on the polynomial-size auxiliary bidirected system used in the proof of \cref{thm:qtsasm:corepolytope}; the parity inequalities in~\cref{eq:qtsasm:cut} need not be enumerated, and the resulting problem falls under the Edmonds--Johnson weighted non-bipartite $b$-matching framework.
  For DSASMs and TSASMs, the substitutions used in the proofs of \cref{thm:dsasm:corepolytope,thm:tsasm:corepolytope} turn the core descriptions into bounded\nobreakdash-tension problems, equivalently the duals of minimum\nobreakdash-cost circulation problems.
  For DASASMs, the $z$-substitution used in the proof of \cref{thm:dasasm:corepolytope} yields a system with a node--arc incidence matrix and lower and upper bounds, hence a minimum\nobreakdash-cost circulation formulation with bounds.
  These formulations admit strongly polynomial combinatorial optimization algorithms; see, for example, Schrijver~\cite[Chapters~12, 19, 36, and~41]{schrijver2003combinatorial}.

  It remains to transfer a full-matrix objective to core coordinates.
  For unrestricted ASMs, we optimize directly over $P_\ASM$.
  For every other symmetry class, write the assembly map as $\varphi(Y)=L(Y)+B$, where $L$ is linear and $B$ is fixed.
  If $M\in\Q^{n\times n}$ is the given cost matrix and $X=\varphi(Y)$, then $\langle M,X\rangle = \langle M,L(Y)\rangle+\langle M,B\rangle$.
  Hence minimizing $\langle M,X\rangle$ over $P_\XASM=\varphi(P^\core_\XASM)$ is equivalent, up to the additive constant $\langle M,B\rangle$, to minimizing the induced rational linear objective $Y\mapsto \langle M,L(Y)\rangle$ over $P^\core_\XASM$.
  This induced objective is obtained by substituting the affine assembly formula, and is therefore computable in polynomial time from $M$ and $\varphi$.

  In the symmetry classes whose integrality proofs use auxiliary variables or substitutions, the induced core objective is transferred through the same construction.
  For~QTSASMs, if $c$ denotes the induced objective on the $y$-variables, we optimize $(c,0)$ over the integer hull of the auxiliary $(y,z)$-system from the proof of \cref{thm:qtsasm:corepolytope}.
  For DSASMs and TSASMs, the objective is written in the tension variables used in the proofs of \cref{thm:dsasm:corepolytope,thm:tsasm:corepolytope}.
  For DASASMs, the objective is written in the $z$-variables from the proof of \cref{thm:dasasm:corepolytope}.
  In each case, the transfer has a polynomial-size description and changes the objective only by an explicitly computable linear transformation and, where boundary values are fixed, by an additive constant.

  The resulting objective can therefore be optimized in strongly polynomial time over $P_\ASM$, over the appropriate core polytope, or, in the QTSASM case, over the integer hull of the auxiliary $(y,z)$-system.
  Since the relevant feasible region is non-empty, bounded, and integral, an optimal integral point may be chosen.
  In the unrestricted ASM case and in the cases optimized directly over a core polytope, such points are exactly the corresponding ASMs or ASM cores.
  In the QTSASM case, the $y$-projections of integral points of the auxiliary $(y,z)$-system are exactly the QTSASM cores.
  Applying the assembly map, with the identity map in the unrestricted case, yields a minimum\nobreakdash-cost $n\times n$ $\XASM$.
\end{proof}

\subsection{The integer Carath\'eodory property}\label{subsec:integer-caratheodory}

Following Gijswijt and Regts~\cite{gijswijt2012polyhedra}, we say that a polyhedron $Q\subseteq\R^d$ has the \emph{integer Carath\'eodory property} if the following holds.
For every positive integer $r$ and every $x\in rQ\cap\Z^d$, there are affinely independent points $v_1,\dots,v_m\in Q\cap\Z^d$ and positive integers $\lambda_1,\dots,\lambda_m$ such that $x=\sum_{i=1}^m\lambda_i v_i$ and $\sum_{i=1}^m\lambda_i=r$.
The polyhedron $Q$ has the \emph{integer decomposition property} if, for every positive integer $r$, every $x\in rQ\cap\Z^d$ is a sum of $r$ points of $Q\cap\Z^d$.
Thus the integer Carath\'eodory property implies the integer decomposition property by taking each $v_i$ with multiplicity~$\lambda_i$.
For a full-matrix polytope, integer points are taken in $\Z^{n\times n}$; for a core polytope in $\R^C$, they are taken in~$\Z^C$.
We first record that the integer Carath\'eodory property is preserved by the affine maps between core and full-matrix coordinates.
\begin{lemma}\label{lem:integer-caratheodory:affine-isomorphism}
  Let $Q\subseteq\R^d$ and $Q'\subseteq\R^{d'}$ be polytopes, and let $f(x)=Lx+b$ be an affine isomorphism from $Q$ onto~$Q'$.
  Assume that $L$ and $b$ have integer entries and that the inverse of $f$ on $Q'$ is the restriction of an affine map $g(y)=My+c$ with $M$ and $c$ having integer entries.
  Then $Q$ has the integer Carath\'eodory property if and only if $Q'$ does.
  The same equivalence holds for the integer decomposition property.
\end{lemma}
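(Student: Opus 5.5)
By symmetry of the hypotheses (the roles of $(Q,f)$ and $(Q',g)$ can be exchanged once we check that $g$ maps $Q'$ onto $Q$ with inverse $f$), it suffices to prove one direction. So I will assume that $Q'$ has the integer Carath\'eodory property and deduce it for $Q$. The whole argument is a transport of decompositions through the integral affine maps. The one subtlety is that $f$ is affine rather than linear, so dilations $rQ$ must be handled with the homogenized maps $x\mapsto Lx+rb$ and $y\mapsto My+rc$.

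\textbf{Step 1: integer points correspond.} I will record that $f$ restricts to a bijection $Q\cap\Z^d\to Q'\cap\Z^{d'}$. Integrality of $L,b$ sends integer points to integer points, and integrality of $M,c$ together with $g=f^{-1}$ on $Q'$ sends them back.

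\textbf{Step 2: dilations correspond.} For $r\geq1$, define $f_r(x)=Lx+rb$ and $g_r(y)=My+rc$. If $x\in rQ$, write $x=rq$ with $q\in Q$. Then $f_r(x)=rf(q)\in rQ'$ and $g_r(f_r(x))=r\,g(f(q))=x$. Likewise $g_r$ maps $rQ'$ into $rQ$, with $f_r\circ g_r$ the identity there. Both maps preserve integrality, so $f_r$ is a bijection $rQ\cap\Z^d\to rQ'\cap\Z^{d'}$.

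\textbf{Step 3: transport a decomposition.} Take $x\in rQ\cap\Z^d$. Apply the integer Carath\'eodory property of $Q'$ to $f_r(x)$. This gives affinely independent $w_1,\dots,w_m\in Q'\cap\Z^{d'}$ and positive integers $\lambda_i$ with $\sum_i\lambda_i=r$ and $f_r(x)=\sum_i\lambda_iw_i$. Set $v_i=g(w_i)\in Q\cap\Z^d$. Because $\sum_i\lambda_i=r$, the affine map satisfies
\[
g_r\Bigl(\sum_i\lambda_iw_i\Bigr)=\sum_i\lambda_i(Mw_i+c)=\sum_i\lambda_iv_i ,
\]
so $x=g_r(f_r(x))=\sum_i\lambda_iv_i$.

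\textbf{Step 4: affine independence is preserved.} The map $g$ is affine and injective on $\aff(Q')$, since it inverts $f$ there. Affine maps send affine combinations to affine combinations, so an affine dependence among the $v_i$ pulls back under $f$ to one among the $w_i$. This contradicts the choice of the $w_i$, so the $v_i$ are affinely independent.

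For the integer decomposition property, I will run the same argument without Step 4: writing $f_r(x)$ as a sum of $r$ points of $Q'\cap\Z^{d'}$ and applying $g_r$ gives $x$ as a sum of their $r$ images under $g$.

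The only point I expect to need care is the affine-versus-linear bookkeeping in Steps 2 and 3. It works precisely because the coefficients sum to $r$, which turns $r$ copies of the constant $c$ into $rc$. I do not anticipate any genuine obstacle.
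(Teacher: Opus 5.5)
Your proposal is correct and matches the paper's proof: you use the homogenized maps $x\mapsto Lx+rb$ and $y\mapsto My+rc$ as inverse bijections between $rQ\cap\Z^d$ and $rQ'\cap\Z^{d'}$, you transport decompositions using $\sum_i\lambda_i=r$, and you note that affine independence is preserved. The only difference is cosmetic: you transport from $Q'$ to $Q$ via $g$ and appeal to symmetry, whereas the paper transports from $Q$ to $Q'$ via $f$ and says the converse is analogous.
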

\begin{proof}
  For every positive integer $r$, the map $x\mapsto Lx+rb$ restricts to a bijection from $rQ\cap\Z^d$ onto $rQ'\cap\Z^{d'}$, with inverse $y\mapsto My+rc$.
  If $x=\sum_{i=1}^m\lambda_i v_i$ and $\sum_{i=1}^m\lambda_i=r$, then $Lx+rb=\sum_{i=1}^m\lambda_i f(v_i)$.
  The points $f(v_i)$ are integral, and an affine isomorphism preserves affine independence, so a decomposition for $x$ gives one for $Lx+rb$.
  The converse follows in the same way from~$g$.
  The same calculation, without the affine-independence condition, proves the assertion for the integer decomposition property.
\end{proof}

Gijswijt and Regts proved that a polyhedron given by a totally unimodular constraint matrix and an integral right-hand side has the integer Carath\'eodory property~\cite{gijswijt2012polyhedra}.
The descriptions obtained earlier allow us to apply this result to $P_\ASM$ and, at every admissible size, to the core polytopes of the six non-trivial classes other than the quarter-turn class.
The assembly maps then transfer the property to the corresponding full-matrix polytopes.
\begin{corollary}\label{cor:xasm:integer-caratheodory}
  Let $\XASM$ be a dihedral symmetry class other than the quarter-turn class, and let $n$ be an admissible size.
  Then $P_\XASM$ has the integer Carath\'eodory property.
  If $\XASM$ is non-trivial and $C$ is its set of core positions, then $P^\core_\XASM\subseteq\R^C$ also has the integer Carath\'eodory property.
  Consequently, all these polytopes have the integer decomposition property.
\end{corollary}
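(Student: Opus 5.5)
The plan is to apply the Gijswijt--Regts theorem to a totally unimodular description of each relevant polytope, and then transfer the property to full-matrix coordinates with \cref{lem:integer-caratheodory:affine-isomorphism}.

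\emph{Unrestricted ASMs.} The description in \cref{thm:ASMpolytope} is built from two laminar families, row prefixes and column prefixes, with integral bounds. By \cref{thm:edmonds-lemma} its constraint matrix, which stacks the incidence matrix and its negative together with the equations, is totally unimodular. Hence $P_\ASM$ has the integer Carath\'eodory property.

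\emph{VSASMs, VHSASMs, HTSASMs.} The same argument applies verbatim to the core systems in \cref{thm:vsasm:corepolytope,thm:vhsasm:corepolytope,thm:htsasm:corepolytope}. The laminar families there are exactly those identified in the integrality proofs, and the right-hand sides are integral.

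\emph{DSASMs, TSASMs, DASASMs.} For these classes the core systems themselves are not obviously totally unimodular. Instead I pass to the $z$-coordinates used in the proofs of \cref{thm:dsasm:corepolytope,thm:tsasm:corepolytope,thm:dasasm:corepolytope}.
\begin{itemize}
\item For DSASMs and TSASMs, the map $Y\mapsto Z$ and its inverse $y_{i,j}=z_{i,j}-z_{i,j+1}-z_{i-1,j}+z_{i-1,j+1}$ are both integral linear maps. The image polytope is given by a transposed digraph incidence matrix, with the boundary variables fixed to $0$ by integral equations. This matrix is totally unimodular, so Gijswijt--Regts applies to the $z$-polytope.
\item For DASASMs, the $z$-system consists of integral box bounds together with $Az=0$, where $A$ has at most one $+1$ and one $-1$ per column. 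Appending identity rows keeps the matrix totally unimodular.
\end{itemize}
In each of the three cases the $y$--$z$ correspondence is an affine isomorphism whose forward and inverse maps are integral. By \cref{lem:integer-caratheodory:affine-isomorphism}, the property therefore transfers back to $P^\core_\XASM$.

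\emph{Transfer to full matrices.} For every non-trivial class, the assembly map $\varphi$ has integral linear part and integral constant: entries are copies of core entries, the fixed values are $\pm1$, and the central entries have the form $1-2\sum y$. Its inverse on $P_\XASM$ is the restriction of the coordinate projection $\pi_C$, which is integral. \cref{thm:xasm:assembly} gives that $\varphi$ is an affine isomorphism from $P^\core_\XASM$ onto $P_\XASM$. \cref{lem:integer-caratheodory:affine-isomorphism} then yields the property for $P_\XASM$. The integer decomposition property follows immediately from the integer Carath\'eodory property.

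The main obstacle is ensuring that \cref{lem:integer-caratheodory:affine-isomorphism} really applies to the $z$-substitutions. The lemma is stated for polytopes, but the $z$-systems contain extra fixed coordinates and, for DASASMs, auxiliary variables. So one must check two things: that the $z$-polytope is exactly the integral image of the core polytope, and that it is itself a polyhedron defined by a totally unimodular matrix with integral right-hand side, so that the Gijswijt--Regts hypothesis is met literally. I would handle the fixed boundary coordinates either by including them as integral equality rows, which keeps the matrix totally unimodular since each such row is a unit vector, or by eliminating them before invoking the theorem.
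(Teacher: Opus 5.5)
Your proposal is correct and follows essentially the same route as the paper. Both use Edmonds' laminar-family total unimodularity plus the Gijswijt--Regts theorem for $P_\ASM$ and the VSASM, VHSASM, and HTSASM cores; both use the integral $z$-substitutions to network-type systems for the DSASM, DASASM, and TSASM cores; and both then apply \cref{lem:integer-caratheodory:affine-isomorphism} with the integral assembly maps and core projections to pass to full matrices. Your extra care about the fixed boundary coordinates, treating them as unit-vector equality rows, which preserves total unimodularity, makes explicit a detail the paper leaves implicit but does not change the argument.
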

\begin{proof}
  In each system from~\cref{thm:ASMpolytope,thm:vsasm:corepolytope,thm:vhsasm:corepolytope,thm:htsasm:corepolytope}, the sets of variables in the row constraints form one laminar family and those in the column constraints form another.
  Thus~\cref{thm:edmonds-lemma} shows that the coefficient matrix is totally unimodular after each equation is written as two inequalities, and all right-hand sides are integral.
  The theorem of Gijswijt and Regts therefore gives the integer Carath\'eodory property for $P_\ASM$ and for the VSASM, VHSASM, and HTSASM core polytopes.
  The proofs of~\cref{thm:dsasm:corepolytope,thm:dasasm:corepolytope,thm:tsasm:corepolytope} give linear bijections from the DSASM, DASASM, and TSASM core polytopes to polytopes defined by node--arc incidence systems with integral right-hand sides and bounds, and the maps and their inverses have integer coefficients.
  Adding the bound rows and writing each equation as two inequalities preserves total unimodularity, so the theorem of Gijswijt and Regts gives the integer Carath\'eodory property for these transformed polytopes.
  By~\cref{lem:integer-caratheodory:affine-isomorphism}, the corresponding core polytopes have the integer Carath\'eodory property as well.
  By~\cref{thm:xasm:assembly}, each assembly map restricts to an affine isomorphism from the corresponding core polytope onto the full-matrix polytope, with inverse the core projection.
  These maps have integer coefficients and integer constant terms, so~\cref{lem:integer-caratheodory:affine-isomorphism} transfers the integer Carath\'eodory property to the full-matrix polytopes.
  Repeating each point according to its multiplicity in an integer Carath\'eodory decomposition gives the integer decomposition property.
\end{proof}

The quarter-turn class is not covered by~\cref{cor:xasm:integer-caratheodory}.
The proof of~\cref{thm:qtsasm:corepolytope} uses an auxiliary bidirected system rather than a system with a totally unimodular constraint matrix.
Integer hulls of bidirected systems need not have the integer decomposition property and therefore need not have the integer Carath\'eodory property.
For example, the perfect matching problem in a non-bipartite graph has such a formulation.
The node--edge incidence matrix is bidirected, and the integer solutions of the equations $\sum_{e\ni v}x_e=1$ for all vertices $v$, together with the bounds $0\leq x_e\leq1$ for all edges $e$, are exactly the incidence vectors of perfect matchings.
Thus the integer hull of this system is the perfect matching polytope.
The Petersen graph has six perfect matchings, and every edge belongs to two of them, so the vector with value $1/3$ on every edge belongs to its perfect matching polytope.
Hence the all-ones edge vector belongs to three times this polytope.
It cannot be written as a sum of three perfect matchings, because such a sum would partition the edges into three matchings and hence give a $3$-edge-coloring of the Petersen graph; see Schrijver~\cite[Chapters~28 and~38]{schrijver2003combinatorial}.
The perfect matching polytope of the Petersen graph therefore fails the integer decomposition property and hence also the integer Carath\'eodory property.
Consequently, the bidirected description does not settle either property for the quarter-turn polytopes; both questions remain open, as recorded in~\cref{prob:qtsasm:integer-caratheodory}.

\section{Open problems}\label{sec:open-problems}

We conclude by recording several open problems suggested by the preceding results.

\begin{problem}[Quarter-turn facets]\label{prob:qtsasm-facets}
Determine the full facet structure of the quarter-turn symmetric ASM polytope.
\Cref{thm:qtsasm:corepolytope} gives a complete inequality description of $P^\core_\QTSASM$, but not a minimal facet description.
It remains to decide exactly which prefix bounds in~\cref{eq:qtsasm:prefix} and which parity inequalities in~\cref{eq:qtsasm:cut} are facet-defining.
Since the assembly map is an affine isomorphism between $P^\core_\QTSASM$ and $P_\QTSASM$, this is equivalent to the corresponding full-matrix question.
The exact facet counts reported in \cref{rem:qtsasm:facet-data} for small values of $n$, together with the Catalan lower bound in \cref{prop:qtsasm:ferrers-facet-lower-bound}, leave the precise asymptotic growth unresolved.
The strongest form of a solution would be an explicit formula for the number of facets, but even determining the asymptotic growth would be valuable.
\end{problem}

\begin{problem}[Quarter-turn decompositions]\label{prob:qtsasm:integer-caratheodory}
Determine whether $P_\QTSASM$ has the integer Carath\'eodory property for every admissible size~$n$.
More weakly, determine whether it has the integer decomposition property for every admissible size~$n$.
By~\cref{thm:xasm:assembly}, the QTSASM assembly map restricts to an affine isomorphism from $P^\core_\QTSASM$ onto $P_\QTSASM$, with inverse the core projection.
Both maps have integer coefficients and integer constant terms, so~\cref{lem:integer-caratheodory:affine-isomorphism} shows that the full-matrix and core versions of both questions are equivalent.
The bidirected formulation used in the proof of~\cref{thm:qtsasm:corepolytope} does not resolve either question, as explained in~\cref{subsec:integer-caratheodory}.
\end{problem}

\begin{problem}[Ehrhart theory]\label{prob:ehrhart-theory}
For the unrestricted ASM polytope, Behrend and Knight identified the integer points in $rP_\ASM$ with higher-spin ASMs of line sum~$r$~\cite{behrend2007higher}.
Knight's 2009 thesis established Ehrhart polynomial or quasi-polynomial results for the fixed-point polytopes $P_\ASM\cap P_{\mathcal{G}}$ and computed examples for small sizes~\cite[Chapter~4]{knightPhD2009alternating}.
Izanloo's 2019 thesis later derived a formula for the Ehrhart polynomial of $P_\ASM$ and obtained its relative volume from the leading coefficient~\cite{izanloo2019volume}.
Knight's fixed-point polytopes agree with $P_\XASM$ in some classes~\cite[Section~6.2.3]{knightPhD2009alternating}, but are strict relaxations in others, so their Ehrhart functions do not in general determine the Ehrhart polynomial of $P_\XASM$.
For each non-trivial dihedral symmetry class, determine the Ehrhart polynomial of $P_\XASM$ for all admissible sizes, or equivalently that of its core polytope $P^\core_\XASM$, where integer points are taken in $\Z^C$.
It would also be interesting to compute the normalized volumes and $h^*$-polynomials of these hulls and to understand how these invariants depend on the symmetry class and on the parity restrictions on~$n$.
\end{problem}

\begin{problem}[Triangulations and toric ideals]\label{prob:idp-triangulations}
By~\cref{cor:xasm:integer-caratheodory}, at every admissible size, the full-matrix polytope of each symmetry class other than the quarter-turn class, and the core polytope of each non-trivial such class, have the integer Carath\'eodory property and hence the integer decomposition property.
Determine whether these polytopes admit unimodular triangulations, and describe their toric ideals and affine semigroups explicitly.
Whether the quarter-turn full-matrix and core polytopes have even the integer decomposition property for every admissible size remains open by~\cref{prob:qtsasm:integer-caratheodory}.
\end{problem}

\begin{problem}[Adjacency and graph structure]\label{prob:adjacency}
The face structure beyond facets is also largely open for the symmetric classes.
For each dihedral symmetry class $\XASM$, characterize when two $\XASM$s are adjacent vertices of $P_\XASM$.
More generally, determine the graph of $P_\XASM$, its diameter, and perhaps the full face lattice or $f$-vector, and decide whether these structures admit descriptions in terms of symmetry-preserving local moves on ASMs or on their height matrices.
\end{problem}

\begin{problem}[Prescribed and restricted entries]\label{prob:restricted-entries}
Brualdi and Kim studied special completion problems for ordinary and symmetric ASMs~\cite{brualdi2014symmetric,brualdi2015completions}.
The following position-wise allowed-set problem is substantially more general.
Given subsets $A_{i,j}\subseteq\{0,\pm1\}$ of allowed values, characterize when there exists an $\XASM$ such that $x_{i,j}\in A_{i,j}$ for every $i,j\in[n]$.
\Cref{thm:linear-optimization} decides the special case in which each $A_{i,j}$ is one of $\{-1\}$, $\{1\}$, or $\{0,\pm1\}$, but it does not provide a structural characterization.
Even for ordinary ASMs, the case in which $A_{i,j}=\{\pm1\}$ is imposed at some positions does not seem to follow from the linear\nobreakdash-optimization result.
\end{problem}

\section*{Acknowledgments}

The author thanks Nóra A.\ Borsik and Tamás Takács for helpful comments on an earlier version of the manuscript, and is especially grateful to Nóra A.\ Borsik for discussions on an earlier approach to the proof of \cref{thm:htsasm:corepolytope} and for suggestions regarding the facet characterizations.

This work was supported by the Ministry of Innovation and Technology NRDI Office within the framework of the Artificial Intelligence National Laboratory Program; by the Ministry of Innovation and Technology of Hungary from the National Research, Development and Innovation Fund, financed under the ELTE TKP 2021-NKTA-62 funding scheme; and by the grant NKFIH-154121.

OpenAI ChatGPT was used to assist with literature search and language editing.

\bibliographystyle{plain}
\bibliography{bibliography}

\end{document}